\documentclass[11pt]{amsart}
\usepackage[letterpaper,margin=1in]{geometry}
\usepackage[T1]{fontenc}

\usepackage[dvipsnames]{xcolor}

\newcommand\myshade{85}
\colorlet{mylinkcolor}{Red}
\colorlet{mycitecolor}{Cerulean}
\colorlet{myurlcolor}{Plum}

\usepackage{lmodern}

\usepackage{quiver}

\usepackage[utf8]{inputenc}

\usepackage[inline,shortlabels]{enumitem}

\usepackage{amssymb,mathrsfs,stmaryrd}
\usepackage{mathtools}
\usepackage{thmtools,thm-restate}
\usepackage{colonequals}
\usepackage{tikz}

\usetikzlibrary{cd}

\usepackage[pdfusetitle,pagebackref,linktocpage]{hyperref}
\usepackage{verbatim}

\usepackage[colorinlistoftodos,textsize=scriptsize]{todonotes}
\usepackage{marginnote}

\usepackage[capitalize,noabbrev]{cleveref}

\hypersetup{
	linkcolor  = mylinkcolor!\myshade!black,
	citecolor  = mycitecolor!\myshade!black,
	urlcolor   = myurlcolor!\myshade!black,
	colorlinks = true
}

\mathchardef\mhyphen="2D

\usepackage[normalem]{ulem} 

\usepackage{relsize}
\usepackage[bbgreekl]{mathbbol}
\usepackage{amsfonts}
\DeclareSymbolFontAlphabet{\mathbb}{AMSb} 
\DeclareSymbolFontAlphabet{\mathbbl}{bbold}

\newcommand{\suchthat}{\;\ifnum\currentgrouptype=16 \middle\fi\vert\;}

\newcommand\restr[2]{{\left.\kern-\nulldelimiterspace#1\vphantom{\big|}\right|_{#2}}}

\interdisplaylinepenalty=500

\theoremstyle{plain}
\newtheorem{theorem}{Theorem}[section]

\newtheorem{proposition}[theorem]{Proposition}
\newtheorem{conjecture}[theorem]{Conjecture}
\newtheorem{lemma}[theorem]{Lemma}

\newtheorem{corollary}[theorem]{Corollary}

\theoremstyle{definition}
\newtheorem{definition}[theorem]{Definition}
\newtheorem{example}[theorem]{Example}
\newtheorem{construction}[theorem]{Construction}

\newtheorem{convention}[theorem]{Convention}

\newtheorem{remark}[theorem]{Remark}

\definecolor{mb}{rgb}{0.36, 0.54, 0.66}

\newcommand{\Spf}{{\mathrm{Spf}}}
\newcommand{\Spa}{{\mathrm{Spa}}}
\newcommand{\st}{{\mathrm{st}}}
\newcommand{\dR}{{\mathrm{dR}}}
\newcommand{\pe}{{\mathrm{pro\acute{e}t}}}
\newcommand{\Ainf}{{\mathbb{A}_{\mathrm{inf}}}}
\newcommand{\Acrys}{{\mathbb{A}_{\mathrm{crys}}}}
\newcommand{\Ast}{{\mathbb{A}_{\mathrm{st}}}}

\newcommand{\OA}{{\mathcal{O}\mathbb{A}}}
\newcommand{\OB}{{\mathcal{O}\mathbb{B}}}

\newcommand{\crys}{{\mathrm{crys}}}

\newcommand{\Spec}{{\mathrm{Spec}}}
\newcommand{\et}{{\mathrm{\acute{e}t}}}

\newcommand{\FEt}{\mathsf{F\Acute{E}t}}

\newcommand{\Gal}{{\mathrm{Gal}}}

\newcommand{\Isoc}{\mathrm{Isoc}}
\newcommand{\wIsoc}{\mathrm{wIsoc}}
\newcommand{\Shv}{{\mathrm{Shv}}}
\newcommand{\Perfd}{{\mathrm{Perfd}}}

\newcommand{\rAinf}{{\mathrm{A_{inf}}}}    
\newcommand{\rAst}{{\mathrm{A_{st}}}}
\newcommand{\rhAst}{{\mathrm{\widehat{A}_{st}}}}
\newcommand{\rAcrys}{{\mathrm{A_{crys}}}}
\newcommand{\lcrys}{{{\mathrm{log}\textrm{-}\mathrm{crys}}}}
\newcommand{\lcris}{{{\textrm{$\ell$-}\mathrm{crys}}}}
\newcommand{\Vect}{{\mathrm{Vect}}}
\newcommand{\Loc}{{\mathrm{Loc}}}
\newcommand{\gr}{{\mathrm{gr}}}

\newcommand{\IA}{\mathbb{A}}
\newcommand{\IB}{\mathbb{B}}
\newcommand{\IC}{\mathbb{C}}

\newcommand{\IN}{\mathbb{N}}
\newcommand{\IP}{\mathbb{P}}
\newcommand{\IQ}{\mathbb{Q}}
\newcommand{\IZ}{\mathbb{Z}}

\newcommand{\sE}{\mathcal{E}}
\newcommand{\sF}{\mathcal{F}}
\newcommand{\sO}{\mathcal{O}}

\newcommand{\Fil}{\mathrm{Fil}}

\newcommand{\tensor}{\otimes}

\newcommand{\fp}{\mathfrak{p}}

\newcommand{\<}{\left<}
\renewcommand{\>}{\right>}

\def\wt{\widetilde}
\def\what{\widehat}

\newcommand{\sC}{\mathcal{C}}
\newcommand{\sD}{\mathcal{D}}
\newcommand{\sI}{\mathcal{I}}
\newcommand{\sK}{\mathcal{K}}
\newcommand{\sL}{\mathcal{L}}
\newcommand{\sM}{\mathcal{M}}

\newcommand{\sR}{\mathcal{R}}
\newcommand{\sX}{\mathcal{X}}

\newcommand{\fm}{\mathfrak{m}}
\newcommand{\fn}{\mathfrak{n}}

\newcommand{\into}{\hookrightarrow}

\newcommand{\Aut}{\mathrm{Aut}}

\newcommand{\sfC}{\mathsf{C}}
\newcommand{\sfN}{\mathsf{N}}

\newcommand{\sfF}{\mathsf{F}}

\renewcommand{\bf}{{\{f\}}}
\newcommand{\bg}{{\{g\}}}

\newcommand{\sto}{\stackrel{\sim}{\to}}

\newcommand{\fS}{\mathfrak{S}}

\newcommand{\p}{\partial}
\newcommand{\ctensor}{\what{\tensor}}
\newcommand{\Coh}{\mathrm{Coh}}

\newcommand{\logu}{\log(u/[\pi^\flat])}

\newcommand{\shM}{\mathcal{M}}



\newcommand{\ket}{{\mathrm{k}\et}}


\title{Pointwise criteria of $p$-adic local systems}
	\vspace{-1ex}
	\author{Haoyang Guo}
        \author{Ziquan Yang}

\begin{document}
	
\begin{abstract}
		Given a $\mathbb{Z}_p$-linear local system over a smooth rigid space, we show that it is crystalline (resp. semi-stable) with respect to any smooth (resp. semi-stable) integral model if and only if its restrictions at many classical points are crystalline (resp. semi-stable) representations.
		To this end, we introduce a crystalline Riemann--Hilbert functor, and give several applications, including a semi-stable comparison theorem in the relative setting.
\end{abstract}

	\maketitle
	\tableofcontents
	
\section{Introduction}
\label{sec intro}

\subsection{Pointwise criteria}

Let $K$ be a complete discretely valued $p$-adic field of characteristic $0$ with ring of integers $\mathcal{O}_K$ and perfect residue field $k$, and let $\Gal_K$ be the absolute Galois group of $K$.
Given a continuous representation of $\Gal_K$ on a finite dimensional $\mathbb{Q}_p$-vector space $V$, a fundamental question in arithmetic geometry is to characterize those representations which ``arise from geometry''; namely those which are subquotients of the $p$-adic \'etale cohomology of algebraic varieties $Y$ over the field $K$.
Celebrated results in $p$-adic Hodge theory, which used to be conjectures of Fontaine and Fontaine--Jannsen, state that $V$ is a \emph{de Rham} representation if it arises from geometry, and is moreover \emph{crystalline} (resp. \emph{semi-stable}) if the variety $Y$ admits \emph{good reduction} (resp. \emph{semi-stable reduction}), i.e., $Y$ extends to a smooth (resp. semi-stable) scheme over $\sO_K$.

It is then natural to consider these notions in the relative setting, in which a $p$-adic $\Gal_K$-representation is generalized by a \textit{$p$-adic local system} over a smooth rigid space $X_\eta$ over $K$. 
Moreover, the notions of being de Rham, crystalline, or semi-stable make sense for $p$-adic local systems as well. There are two fundamental results which tell us when a $p$-adic local system is de Rham: 

\begin{enumerate}[label=\upshape{(\Roman*)}]
    \item In his foundational work \cite{Sch13}, Scholze established a relative de Rham comparison isomorphism, which implies that given a family of smooth proper rigid varieties $f_\eta: Y_\eta \to X_\eta$, the local system $R^if_{\eta, *}\mathbb{Q}_p$ over $X_\eta$ is de Rham.
    \item Liu--Zhu's celebrated rigidity theorem \cite{LZ17} tells us that a $p$-adic local system on a connected rigid space $X_\eta$ is de Rham if and only if its restriction at any single classical point\footnote{By a \emph{classical point} of a rigid space, we mean a point that corresponds to a map $\Spa(K')\to X_\eta$, where $K'/K$ is a finite extension of $K$ and in particular has perfect residue field.
	When $X_\eta$ is an algebraic variety over $K$, a classical point is the same as a closed point.} $x_\eta$ is de Rham---this is a pointwise criterion for de Rham local systems.
\end{enumerate}

In this article, we aim to investigate how to extend Scholze and Liu--Zhu's results to crystalline and semistable local systems. 
It is well known that the naive analogue of Liu--Zhu's rigidity theorem is false for either of the two notions (\cite[Rmk~1.4]{LZ17}). 
In fact, for a local system $T$ over a rigid space $X_\eta$ with good reduction, even if it
is crystalline at a set of points whose reductions are Zariski
dense on the special fiber, $T$ may still not be crystalline or semistable (cf. \cref{counterexample of Liu-Zhu}). 
However, the situation becomes different if we consider more points. 
In particular, we have the following.

\begin{theorem}[Pointwise Criteria, cf. \Cref{thm:PC for semi-stable reduction}]
	\label{intro:thm pc}
	Let $X_\eta$ be a rigid space over $K$ that has a smooth (resp. semi-stable) $p$-adic integral model $X$. 
    Let $T$ be a $\mathbb{Z}_p$ local system on $X_\eta$.

    Then $T$ is crystalline (resp. semi-stable) with respect to $X$ if and only if the Galois representation associated to the restriction of $T$ at each classical point is crystalline (resp. semi-stable). 
\end{theorem}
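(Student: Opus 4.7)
The forward direction is immediate from the definitions: restricting a crystalline (resp.\ semi-stable) local system along a classical point $x_\eta \colon \Spa(K') \into X_\eta$ produces a crystalline (resp.\ semi-stable) representation of $\Gal_{K'}$. The substance of the theorem is the converse, which I would tackle in three stages.

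First, I would use the Liu--Zhu rigidity theorem (II) to reduce to a de Rham input. Since pointwise crystallinity (resp.\ semi-stability) in particular forces pointwise de Rham-ness, Liu--Zhu yields that $T$ is globally a de Rham local system on $X_\eta$, and hence via Scholze's relative de Rham comparison produces an associated filtered module with integrable connection $(\shM,\nabla,\Fil^\bullet)$ on $X_\eta$. This supplies the ``de Rham half'' of the expected crystalline structure for free.

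Next, I would invoke the crystalline Riemann--Hilbert functor announced in the abstract to manufacture a candidate (log) $F$-isocrystal $\sE$ on the special fiber $X_s$ of $X$. Concretely, one extracts horizontal sections of $T \tensor_{\IZ_p} \OBcrys$ (resp.\ $\OBst$) over a suitable site refining the pro-\'etale site of $X_\eta$ and endows them with the Frobenius transported from the period sheaves. The statement that $T$ is crystalline (resp.\ semi-stable) with respect to $X$ is then equivalent to $\sE$ being a genuine (log) $F$-isocrystal of the expected rank together with the natural comparison recovering $T$. In the semi-stable case this requires a careful handling of log structures on both $X$ and the period sheaves, which I expect to be the most delicate book-keeping.

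The heart of the argument is stage three: using the pointwise hypothesis to verify the required global statement. At each classical point $x$, the hypothesis provides $\Dcrys(T|_x)$ (resp.\ $\Dst(T|_x)$) as a bona fide filtered (log) $F$-isocrystal, and by construction this must agree with the fiber $\sE|_x$. The principal obstacle is to upgrade these fiber-wise coincidences into a global statement: as emphasized in the introduction and \cref{counterexample of Liu-Zhu}, crystallinity at merely a Zariski-dense set of classical reductions does \emph{not} suffice, so a naive density argument on the special fiber cannot work---one genuinely needs every classical point of $X_\eta$. I expect the key input to be a rigidity or fullness property of the crystalline Riemann--Hilbert functor strong enough that the existence of a global Frobenius-compatible extension of $\sE$ can be detected by its values at all classical points of the generic fiber. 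Combined with the de Rham datum from the first step, this would promote $T$ to a crystalline (resp.\ semi-stable) local system with respect to $X$.
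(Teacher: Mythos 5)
Your stages one and two match the paper's setup: Liu--Zhu reduces the input to a globally de Rham local system, and the crystalline Riemann--Hilbert functor $T \mapsto \mathcal{E}_{\crys,T}$ (resp.\ $\mathcal{E}_{\st,T}$) produces a candidate weak $F$-isocrystal whose rank equaling $\mathrm{rank}(T)$ everywhere is equivalent to crystallinity (resp.\ semi-stability). What is missing is the content of your stage three: you correctly identify that naive density arguments fail (citing \cref{counterexample of Liu-Zhu}), but you replace the actual mechanism by an unspecified ``rigidity or fullness property'' of the Riemann--Hilbert functor. That is a genuine gap, because there is no direct way to detect the rank of $\mathcal{E}_{\crys,T}$ from its fibers at classical points of the \emph{generic fiber}: those fibers factor through the reduced special fiber, and the restriction of a weak $F$-isocrystal to closed points of the special fiber gives no control over drops in rank along proper closed subsets.

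The paper's actual argument goes through a structurally different and essential chain. First, one restricts $T$ to the \emph{Shilov point} $\Spa(L)$, where $L$ is a complete discretely valued field with imperfect residue field; by Morita and Ohkubo, the de Rham representation $T|_{\Spa(L)}$ is potentially semi-stable, so there is a finite Galois extension $L''/L$ over which it becomes semi-stable. Second, by the approximation results of \cref{prop: tower} and \cref{thm: pre-tower}, after shrinking $X$ one realizes $L''/L$ as coming from a tower $X'' = X_m \to \cdots \to X_1 \to X_0 = X_{\mathcal{O}_{K'}} \to X$ where $X_0 \to X$ is finite \'etale extension of scalars and each $X_i \to X_{i-1}$ is a \emph{primitive inseparable cover}. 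Third, the purity theorem of Du--Liu--Moon--Shimizu spreads $T|_{\Spa(L'')}$ being semi-stable to $T|_{X''_\eta}$ being semi-stable. Fourth---and this is where the pointwise hypothesis actually enters---one descends down the tower: at each primitive inseparable cover, one finds (using \cref{thm: effectivity}, since the full set of classical points is effective) a classical point whose preimage is a single ramified point; by functoriality of the crystalline Riemann--Hilbert functor the relative Galois group acts on the associated $F$-isocrystal, and the hypothesis that $T$ is crystalline (resp.\ semi-stable) at that point forces this action to be trivial at the point, hence trivial everywhere by Ogus's theorem that morphisms of $F$-isocrystals are detected on closed fibers (\cite[Thm.\ 4.1]{Ogu84}). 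The descent of the association datum in \cref{thm:descend along primitive insep} then completes the step. Without the Shilov point, the tower, the purity input, and Ogus's rigidity for isocrystals (as opposed to a rigidity of the Riemann--Hilbert functor itself, which does not hold in the needed form), the pointwise hypothesis cannot be converted into the required global rank statement.
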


Along the way, we also gave an analogous pointwise criterion for unramified $\ell$-adic local systems, see \cref{Thm:l-adic PC}.

\begin{remark}[Effective version]
	The above results are stated in their cleanest forms.
	In fact, it suffices to test crystallinity or semi-stability at any \emph{effective} subset $\mathcal{C}$ of classical points in $X_\eta$, rather than at all classical points.
        Roughly speaking, a subset $\mathcal{C}$ is effective if the inertia groups of these points topologically generate the global inertia group $\ker(\pi_1^\mathrm{alg}(X_\eta) \to \pi_1^\et(X_s))$, and the same property is preserved after replacing $X_\eta$ by certain types of \'etale morphisms.
        For example, given a smooth $p$-adic formal scheme $X$ over $\mathcal{O}_K$ and any Zariski dense subset of closed points in the special fiber $X_s$, the set of classical points $\mathrm{red}^{-1}(\mathcal{C}_s)$ is effective in $X_\eta$, where $\mathrm{red}:X_\eta\to X_s$ is the reduction map of the rigid space.
	We refer the reader to \Cref{def:eff set} for the precise definition.
    We would like to mention that it is a natural question if the set of special points form an effective subset in a Shimura variety with good reduction, which we do not have an answer.
\end{remark}

\begin{remark}
    One may naturally compare \cref{intro:thm pc} with a result of Shimizu \cite[Thm~1.6]{Shi22}, where the latter implies that if the restriction of the $p$-adic local system $T$ at a classical point $x_\eta$ is potentially crystalline or semistable, then the same holds in a small analytic neighborhood of $x_\eta$. 
As far as the authors are aware of, our result cannot be deduced from Shimizu's.
One major reason is that the union of such open neighborhoods at all classical points is in general far from being a cover of the entire $X_\eta$ in an appropriate sense of rigid analytic geometry.
\end{remark}

Here we also remark on a pleasant consequence of \cref{intro:thm pc}. 
Unlike that of de Rham local systems, the definitions of crystalline and semistable local systems a priori make references to the choice of a smooth or semi-stable integral model of $X_\eta$, despite the fact the local systems themselves are objects defined on $X_\eta$. 
However, the property of being crystalline or semi-stable at every classical point clearly only depends on the generic fiber.
As a consequence, we see the crystallinity and the semi-stability are independent of the choice of models (see also \cite[Cor.\ 5.5]{DLMS2}). In fact, along the same lines one deduces a stronger consequence. Namely, both the crystallinity and the semi-stability are preserved under pullback along a map of rigid spaces, even if the map does not extend integrally.
\begin{corollary}[Pullback preservation]
	\label{intro:cor pullback}
	Let $f_\eta:X'_\eta \to X_\eta$ be a map of two rigid spaces, and assume both $X'_\eta$ and $X_\eta$ admit good (resp. semi-stable) reductions.
	Let $T$ be a crystalline (resp. semi-stable) local system over $X_\eta$.
	Then $f_\eta^{-1}T$ is crystalline (resp. semi-stable).
\end{corollary}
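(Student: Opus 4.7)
The plan is to apply the pointwise criterion \Cref{intro:thm pc} in both directions, transferring crystallinity/semi-stability from $T$ to $f_\eta^{-1}T$ one classical point at a time. Since the notion of being crystalline or semi-stable at a classical point depends only on the generic fiber, the corollary should follow without ever invoking $f_\eta$ at the integral level (which, in general, need not exist).

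Concretely, I would proceed as follows. First, by the ``only if'' direction of \Cref{intro:thm pc} applied to $T$, for every classical point $x_\eta \in X_\eta$ the restriction $T|_{x_\eta}$ is a crystalline (resp. semi-stable) representation of $\Gal_{k(x_\eta)}$, where $k(x_\eta)/K$ is a finite extension. Next, for any classical point $x'_\eta \in X'_\eta$, its image $f_\eta(x'_\eta) \in X_\eta$ is again a classical point: its residue field $k(f_\eta(x'_\eta))$ embeds as a subfield of $k(x'_\eta)$, which is finite over $K$, so $k(f_\eta(x'_\eta))$ is too. Moreover, via the standard identification of stalks of $\mathbb{Z}_p$-local systems at classical points with continuous Galois representations, one obtains a canonical isomorphism
\[
(f_\eta^{-1}T)|_{x'_\eta} \;\cong\; \bigl(T|_{f_\eta(x'_\eta)}\bigr)\big|_{\Gal_{k(x'_\eta)}},
\]
where the right-hand side denotes restriction along the open inclusion $\Gal_{k(x'_\eta)} \hookrightarrow \Gal_{k(f_\eta(x'_\eta))}$ induced by the finite extension $k(x'_\eta)/k(f_\eta(x'_\eta))$.

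It is classical in $p$-adic Hodge theory that both the crystalline and semi-stable properties of a Galois representation are preserved under restriction to an open subgroup corresponding to a finite extension of the base field. Combining this with the two previous observations, one deduces that $(f_\eta^{-1}T)|_{x'_\eta}$ is crystalline (resp. semi-stable) at every classical point $x'_\eta \in X'_\eta$. Since $X'_\eta$ admits a smooth (resp. semi-stable) integral model by hypothesis, the ``if'' direction of \Cref{intro:thm pc} applied to this model then yields that $f_\eta^{-1}T$ is crystalline (resp. semi-stable), as desired.

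The argument is essentially formal once \Cref{intro:thm pc} is in hand; the only point worth checking is the stalk compatibility at classical points together with the observation that the image of a classical point under a morphism of rigid spaces is again classical. No genuine obstacle arises here, which reflects the fact that the pointwise criterion completely decouples the question from any choice of integral model for either $X_\eta$ or $X'_\eta$.
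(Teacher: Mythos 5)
Your proof is correct and is essentially the argument the paper has in mind: the corollary is presented in the introduction as a direct consequence of \Cref{intro:thm pc}, with exactly this reasoning (test pointwise, observe the image of a classical point is classical and the restriction is crystalline/semi-stable after a finite extension, conclude by the pointwise criterion for $X'$). The paper does not spell out the stalk compatibility or the stability of crystallinity/semi-stability under finite extensions of the base $p$-adic field, both of which you correctly supply.
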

Another immediate application is that Fontaine's conjecture in \cite{Fon97}, originally about crystalline and semi-stable representations and was proved by Liu \cite{Liu07}, extends naturally to the relative setting.
Without much additional effort, the latter in particular implies that the locus of crystalline (resp. semi-stable) local systems with fixed Hodge--Tate weights defines a closed subscheme in the universal deformation scheme, generalizing the fundamental work of Kisin (\cite{Kis08}) and Liu (\cite{Liu07}) to the relative setting.
\begin{corollary}[Fontaine's conjecture in the relative setting]
\footnote{Here we also kindly mention that for crystalline local systems, this was showed recently by Moon in \cite[Thm.\ 1.2]{Moo23}, by extending Liu's arguments to the setting of $p$-adic valuation rings with imperfect residue field, and then using the purity result of crystalline local systems.}
    Let $X_\eta$ be a rigid space over $K$ that has a smooth (resp. semi-stable) $p$-adic integral model $X$. 
    Let $T$ be a $\mathbb{Z}_p$ local system on $X_\eta$, and let $h \in \mathbb{N}$.

    Assume that for each $n\in \mathbb{N}$, the torsion local system $T/p^nT$ is the reduction of a crystalline (resp. semi-stable) local system $T_n$ with Hodge--Tate weights in $[0,h]$.
    Then $T$ is crystalline (resp. semi-stable) with Hodge--Tate weights in $[0,h]$.
\end{corollary}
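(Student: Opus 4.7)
The plan is to combine the Pointwise Criterion \cref{intro:thm pc} with the arithmetic case of Fontaine's conjecture, originally proved by Kisin (\cite{Kis08}) for crystalline representations and by Liu (\cite{Liu07}) for semi-stable ones. The reduction is essentially formal: testing at each classical point separately converts the relative problem into the already-known arithmetic one.

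By \cref{intro:thm pc}, to show that $T$ is crystalline (resp.\ semi-stable) with respect to $X$, it suffices to show that the restriction $T|_{x_\eta}$ is a crystalline (resp.\ semi-stable) $\Gal_{K(x)}$-representation at every classical point $x_\eta$ of $X_\eta$. So fix such an $x_\eta$, set $K' = K(x)$, and observe that $x_\eta$ extends uniquely to an integral map $\Spf \sO_{K'} \to X$ since $X$ is $p$-adically formal over $\sO_K$. Pulling back along this map, each $T_n|_{x_\eta}$ becomes a crystalline (resp.\ semi-stable) $\Gal_{K'}$-representation with Hodge--Tate weights in $[0,h]$, essentially by the definition of being crystalline/semi-stable with respect to an integral model. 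Moreover, the isomorphisms $T/p^n T \cong T_n/p^n T_n$ restrict to $T|_{x_\eta}/p^n \cong T_n|_{x_\eta}/p^n$. Thus $T|_{x_\eta}$ arises as a $p$-adic limit of crystalline (resp.\ semi-stable) representations with uniformly bounded Hodge--Tate weights, and the arithmetic Fontaine conjecture then guarantees that $T|_{x_\eta}$ itself is crystalline (resp.\ semi-stable) with Hodge--Tate weights in $[0,h]$.

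Applying \cref{intro:thm pc} now yields that $T$ is crystalline (resp.\ semi-stable) with respect to $X$. The global bound on Hodge--Tate weights follows once one notes that $T$ is in particular de Rham and so carries an associated filtered vector bundle on $X_\eta$; the ranks of the graded pieces $\gr^i$ of its Hodge filtration form a locally constant function on $X_\eta$ whose values at any classical point may be read off from the corresponding arithmetic Hodge--Tate filtration (by the Liu--Zhu rigidity theorem \cite{LZ17}). Since the uniform bound in $[0,h]$ has been secured pointwise, it forces $\gr^i = 0$ for $i \notin [0,h]$ on all of $X_\eta$.

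The main obstacle is really the Pointwise Criterion itself, which the present corollary simply feeds into; given \cref{intro:thm pc} as input, the remaining work is formal. The only additional care required is checking that restriction to a classical point is compatible with the integral structures used to define the crystalline/semi-stable property on $X_\eta$, and that the Hodge--Tate weight bound obtained pointwise propagates globally — both of which are implicit in the formalism already set up in the paper.
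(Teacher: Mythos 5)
The paper states this corollary in the introduction without giving an explicit proof, indicating that it follows "without much additional effort" from the pointwise criterion. Your proposal fleshes out exactly this implied reduction — restrict to a classical point, use Liu's absolute version of Fontaine's conjecture, and reglobalize via the pointwise criterion and the local-constancy of the Hodge filtration ranks on $D_\dR(T)$ — and it is correct.
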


\subsection{Relative semi-stable comparison}\label{sec: intro rel Cst}
One of the main conceptual contributions of \cref{intro:thm pc} is the idea that Fontaine and Fontaine--Jannsen's crystalline and semi-stable comparison conjectures in the relative setting should be consequences of their absolute versions, namely over a $p$-adic field with \textit{perfect} residue field.
The crystalline version in the relative setting has recently been established by the first named author and Reinecke in \cite{GR22} via prismatic methods. 
However, despite been widely expected, the semi-stable comparison is much more technical and its relative version in full generality so far has not been justified.
In fact, there has been little detailed exposition of the notion of semi-stable local systems until the very recent work \cite{DLMS2}, though the idea for such a notion dates back to \cite{Fal89} and \cite{AI12}. 
Here we also mention that unlike smooth morphisms, ``semi-stable morphisms'' lack a standard definition.

In this paper, we use \cref{intro:thm pc} to provide a working prototype of semi-stable comparison theorems in the relative setting, shed light on the optimal generality for such a theorem, and provide a new strategy which simplifies the entire program to the absolute setting.

\begin{theorem}[Semi-stability of \'etale cohomology sheaf, cf. \Cref{thm:relative Cst}]
    \label{intro:thm:relative Cst}
    Let $X$ be a semi-stable $p$-adic formal scheme over $\sO_K$ with the standard log structure $M_X$, let $(Y, M_Y)$ be a fine and saturated $p$-adic formal log scheme over $\sO_K$.
    Let $f:(Y, M_Y) \to (X, M_X)$ be a proper and log smooth morphism with Cartier type mod $\pi$ reduction, and let $f_\eta$ be the induced morphism between generic fibers.
    
    Assume one of the following conditions is true. 
    \begin{enumerate}[label=\upshape{(\alph*)}]
        \item\label{intro:thm: rel CN17} The map $f$ is algebrizable.
        \item\label{intro:thm: rel CK19} The underlying map between formal schemes of $f$ is pointwise weakly semi-stable. 
    \end{enumerate}
    Then the higher direct image $R^i f_{\eta, \ket *} \IQ_p$ is a semi-stable local system over $X_\eta$. 
\end{theorem}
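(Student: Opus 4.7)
The strategy is to reduce the relative semi-stable comparison to its absolute counterpart by means of the pointwise criterion \cref{intro:thm pc}. Concretely, set $\mathbb{L} := R^if_{\eta,\ket*}\IQ_p$. The first task is to verify that $\mathbb{L}$ is actually a $\IQ_p$-local system on the rigid generic fiber $X_\eta$; this should follow from properness, log smoothness, and the Cartier-type assumption together with standard constructibility and base-change results for log étale cohomology (perhaps after invoking a Kummer étale descent from the underlying étale site of $X_\eta$). Once this is established, by \cref{intro:thm pc} it suffices to show that for every classical point $x_\eta:\Spa(K')\to X_\eta$, the restriction $\mathbb{L}|_{x_\eta}$ is a semi-stable $\Gal_{K'}$-representation.

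Next I would invoke proper base change for log étale cohomology, combined with the compatibility of log étale pushforward with passage to the generic fiber, to identify $\mathbb{L}|_{x_\eta}$ with $H^i_{\ket}(Y_{x,\overline{\eta}}, \IQ_p)$, where $(Y_x, M_{Y_x})$ denotes the fiber of $f$ at the corresponding point $x:\Spf(\sO_{K'})\to X$. Since $x$ factors through a point whose image in $X_s$ is a closed point, and since $X$ is semi-stable, the induced log structure on $\Spf(\sO_{K'})$ is the standard one. The hypothesis that $f$ is of Cartier type mod $\pi$ guarantees that $(Y_x, M_{Y_x})\to(\Spf(\sO_{K'}), \mathrm{std})$ is proper, log smooth, and of Cartier type. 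Thus the problem is reduced to the classical absolute setting: showing that the $p$-adic étale cohomology of such a log scheme is semi-stable.

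At this point the two hypotheses \ref{intro:thm: rel CN17} and \ref{intro:thm: rel CK19} are each designed to make an absolute semi-stable comparison theorem applicable to every fiber. In case \ref{intro:thm: rel CN17}, the algebraizability of $f$ propagates to the fiber, so $(Y_x,M_{Y_x})$ is algebraizable, and one can invoke the algebraic version of the $C_\st$ conjecture (Tsuji, Yamashita, Česnavičius--Niziol). In case \ref{intro:thm: rel CK19}, the pointwise weak semi-stability assumption means precisely that the underlying formal scheme $Y_x$ is weakly semi-stable over $\sO_{K'}$, placing it in the setting of the formal/rigid semi-stable comparison of Česnavičius--Koshikawa, which then gives semi-stability of $H^i_{\ket}(Y_{x,\overline{\eta}},\IQ_p)$.

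The main obstacles I anticipate are twofold. First, one must carefully justify the identification of the stalk of the higher direct image at a classical point with the cohomology of the fiber; while proper base change for étale cohomology in the rigid analytic setting is standard, the logarithmic and integral/generic-fiber interaction requires care (in particular, one must ensure that the generic fiber of the fibered log formal scheme agrees with the fiber of the rigid generic fiber, and that the log structure behaves well under specialization at $x$). Second, verifying that $\mathbb{L}$ is a bona fide local system---rather than merely a constructible sheaf---likely requires some smoothness/base change compatibility that is stronger in the log setting than in the classical one. Once these pieces are in place, the reduction to the absolute theorems is mechanical, and \cref{intro:thm pc} does the rest.
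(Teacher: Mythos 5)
Your proposal follows the same route as the paper: reduce via the pointwise criterion \cref{intro:thm pc} to checking semi-stability of each stalk, identify stalks with cohomology of fibers via proper base change, and then invoke the absolute semi-stable comparison theorems (\cite{CN17} in case (a), \cite{CK19} in case (b)). The one place where you are less precise than the paper is worth flagging: the two cases are \emph{not} handled by a single uniform appeal to ``proper base change for log \'etale cohomology.'' In case \ref{intro:thm: rel CK19}, the log structure $M_Y$ is actually trivial on $Y_\eta$ (because the fibers are weakly semi-stable formal schemes, hence smooth on the generic fiber), so $R^i f_{\eta,\ket*}\IQ_p = R^i f_{\eta,\et*}\IQ_p$ is honest \'etale cohomology; the paper uses the rigid-analytic smooth-proper base change of \cite[Thm.~10.5.1]{SW20} and Huber's proper base change, not any log machinery. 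In case \ref{intro:thm: rel CN17}, the log structure may be nontrivial generically, and the paper invokes Nakayama's results \cite[Thm.~13.1]{Nak17} (local system) and \cite[Thm.~5.1]{Nak97} (log proper base change) under the algebraizability hypothesis; then, rather than directly invoking ``an algebraic $C_{\st}$ for the fiber'' as you suggest, it uses \cite[Cor.~6.3.4]{DLLZ1} to rewrite the Kummer-\'etale cohomology of the fiber as the \'etale cohomology of the open complement of a normal crossing divisor, and applies \cite[Cor.~5.15]{CN17} in that form. These distinctions are precisely what the two separate hypotheses \ref{intro:thm: rel CN17} and \ref{intro:thm: rel CK19} are designed to make possible; your sketch correctly anticipated both worries (local system property, and the stalk identification) but did not resolve them case-by-case with the correct references.
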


The subscript ``$\ket$'' stands for ``Kummer-\'etale''. The condition in \ref{intro:thm: rel CK19} is defined below. Note that in case \ref{intro:thm: rel CK19} $M_Y$ is in fact trivial on $Y_\eta$, and the underlying morphism between rigid spaces of $f_\eta$ is smooth and proper, so we have $R^i f_{\eta, \ket *} \IQ_p = R^i f_{\eta, \et *} \IQ_p$.

\begin{definition}
We say that a morphism $f : Y \to X$ between $p$-adic formal schemes which are flat and topologically of finite type over $\sO_K$ is \textit{pointwise weakly semi-stable} if for every classical point $x_\eta \in X_\eta$, the fiber $Y_x$ at the integral extension $x \in X(\sO_{K(x_\eta)})$ of $x_\eta$ (\cref{prop: extend classical points}) is weakly semi-stable (\cref{def st models}).
\end{definition}

Continuing with the theme in \Cref{intro:thm:relative Cst}, one naturally asks if the $F$-isocrystal on the special fiber of the base $(X_k, M_{X_k})$ associated to $R^i f_{\eta, \ket *} \IQ_p$ is precisely the relative log crystalline cohomology of the special fiber $f_k$ of $f$. Let us call it the ``association statement''. 
We give a positive answer when the base admits good reduction, by combining Scholze's de Rham comparison theorem in \cite{Sch13}, the crystalline Riemann--Hilbert functor, and a relative Hyodo--Kato comparison theorem, where the latter two will be introduced soon.

\begin{theorem}[Association statement]
\label{intro:thm:association}
    In the setting of \Cref{intro:thm:relative Cst}.\ref{intro:thm: rel CK19}, 
    assume $X$ is in addition smooth over $\sO_K$. 
    Then the local system $R^i f_{\eta, \et *} \IQ_p$ is naturally associated to the (log) $F$-isocrystal $R^i f_{k, \crys, *} (\sO_{(Y_k, M_{Y_k})/W}[1/p])$ on $(X_k, M_{X_k})_\lcrys$, where $\sO_{(Y_k, M_{Y_k})/W}$ is the structure sheaf on the log-crystalline site $((Y_k, M_{Y_k})/W)_\lcrys$. 
\end{theorem}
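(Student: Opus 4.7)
The plan is threefold: (i) feed $\IL := R^i f_{\eta,\et,*}\IQ_p$ into the crystalline Riemann--Hilbert functor introduced in this paper to produce a candidate log $F$-isocrystal $\sE_{\mathrm{RH}}(\IL)$ on $(X_k, M_{X_k})_\lcrys$; (ii) construct a natural comparison map from $\sE_{\mathrm{RH}}(\IL)$ to $R^i f_{k,\crys,*}\sO_{(Y_k,M_{Y_k})/W}[1/p]$ via Scholze's relative de Rham comparison and a relative Hyodo--Kato isomorphism; and (iii) verify that the map is an isomorphism by specializing at classical points, where the statement reduces to Tsuji's absolute semi-stable comparison.

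Note first that although $X$ is smooth over $\sO_K$, the standard log structure $M_X$ coming from the special fiber is nontrivial, so $(X, M_X)$ is genuinely a log smooth integral model of the trivially logged rigid space $X_\eta$. By \Cref{intro:thm:relative Cst}\ref{intro:thm: rel CK19}, $\IL$ is a semi-stable local system on $X_\eta$ with respect to $(X, M_X)$, so feeding it to the crystalline Riemann--Hilbert functor yields $\sE_{\mathrm{RH}}(\IL)$ together with a canonical $\OBst$-linear comparison isomorphism respecting Frobenius, connection, Hodge filtration, and monodromy. On the analytic side, Scholze's relative de Rham theorem identifies the de Rham realization of $\IL$ with $R^i f_{\eta,\dR,*}\sO_{Y_\eta}$ equipped with its Gauss--Manin connection and Hodge filtration; a relative Hyodo--Kato comparison, applied to the proper log smooth Cartier-type morphism $f$ over the smooth base $X$, then produces a $\varphi$-equivariant identification of this relative de Rham cohomology (after pullback to an appropriate period sheaf) with $R^i f_{k,\crys,*}\sO_{(Y_k,M_{Y_k})/W}[1/p]$, compatibly with monodromy. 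Composition yields the desired natural map.

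For the verification, both the source and target are coherent log $F$-isocrystals on $(X_k, M_{X_k})_\lcrys$, so the locus where the natural map is an isomorphism is closed and stable under generization; it therefore suffices to check at each classical point $x \in X(\sO_{K(x_\eta)})$ obtained via \cref{prop: extend classical points}. There the fiber $Y_x$ is weakly semi-stable by hypothesis, and Tsuji's theorem furnishes a natural isomorphism $\Dst(H^i_{\et}(Y_{x,\bar\eta},\IQ_p)) \simeq H^i_{\lcrys}((Y_{x, k(x)}, M)/W(k(x)))[1/p]$ compatible with Frobenius and monodromy, which matches the two pointwise specializations and completes the argument. The main obstacle will be step (ii): formulating and proving a clean relative Hyodo--Kato isomorphism in this generality, and ensuring it is simultaneously compatible with the crystalline Riemann--Hilbert functor on the generic fiber and with the absolute Hyodo--Kato isomorphism at each classical fiber. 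Since the classical Hyodo--Kato map depends on a choice of uniformizer and carries nontrivial monodromy, care is needed to ensure these choices propagate coherently across $X$; once this is achieved, the pointwise argument combined with the coherence of both sides upgrades the fiberwise identification to a global one.
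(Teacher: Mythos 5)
Your overall architecture matches the paper's proof of \cref{thm:association}: apply the crystalline Riemann--Hilbert functor to $T := R^i f_{\eta,\et*}\IQ_p$ to obtain the candidate log $F$-isocrystal $\sE_{\st,T}$, build the comparison via Scholze's relative de~Rham theorem and the relative Hyodo--Kato isomorphism (\cref{thm:relative Hyodo-Kato}), and then reduce to the absolute semi-stable comparison at classical points. That is exactly the strategy in the paper.

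However, there is a genuine gap in step (ii) that you flag in passing but do not resolve. Scholze's de~Rham comparison identifies $D_\dR(T)$ with $R^i f_{\eta*}\Omega^\bullet_{(Y_\eta, M_{Y_\eta})/X_\eta}$ as a flat connection over $X_\eta$, i.e.\ over $K$. The relative Hyodo--Kato isomorphism likewise lands in a $K$-linear statement, cf.\ (\ref{eqn: HK iso}). Composing therefore only gives you a map $\alpha$ between the $K$-linearizations $\sE_{\st,T}(X,X_{p=0},M_X)$ and $\sE(X,X_{p=0},M_X)$; what you want is a map of $F$-isocrystals on $(X_k,M_{X_k})_\lcrys$, which amounts to a $K_0$-linear map $\alpha_0$ compatible with Frobenius and monodromy. ``Composition yields the desired natural map'' skips this descent. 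The paper handles it explicitly via \cref{lem: Galois descent at a point}: it shows that $\alpha$ descends to $\alpha_0$ because its fiber at a classical point $x$ defined over an unramified extension descends, by invoking the compatibility of the absolute comparison with de~Rham and by applying \cite[Thm.~4.1]{Ogu84}. Your phrase about ``coherence upgrading the fiberwise identification to a global one'' is the right intuition, but without a concrete descent device it is not a proof.

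Two smaller points. First, the correct pointwise input is \cite[Thm.~9.5]{CK19} rather than Tsuji's $C_\st$ theorem: the hypothesis in \cref{intro:thm: rel CK19} is that $Y_x$ is only \emph{weakly} semi-stable (the $\pi$ in \cref{def st models} need not be a uniformizer), so Tsuji's original theorem does not directly apply. One also needs the stated compatibility of \cite{CK19} with Scholze's de~Rham comparison for the descent argument. Second, the ``closed and stable under generization'' reasoning for locally free $F$-isocrystals is not the cleanest tool here; the relevant fact is Ogus' theorem that a morphism of (log) $F$-isocrystals over a connected smooth base is an isomorphism if and only if its fiber at a single closed point is, which is precisely what lets the pointwise check propagate.
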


Here we mention that the two conditions required in \Cref{intro:thm:relative Cst}.\ref{intro:thm: rel CN17} and \ref{intro:thm: rel CK19} together with the smoothness assumption on $X$ in \cref{intro:thm:association} are only artifacts of our proof, and we expect them to be unnecessary.
We refer the reader to \cref{rmk: assumptions in rel Cst} for a detailed discussion.

Semi-stable families considered in \cref{intro:thm:relative Cst} appear naturally in practice.
For example, when we compactify moduli spaces of varieties of a fixed type (e.g., curves, abelian varieties, K3 surfaces, etc.), the boundary of the compactification is often given by semi-stable degenerations of the smooth fibers.
Then, semi-stable families naturally arise when we deform the boundary from characteristic $p$ into the interior in characteristic $0$. 
To illustrate the phenomenon, let us consider the moduli space of pointed stable curves. 
Namely, we denote by $\sM_{g, n}$ (resp. $\overline{\sM}_{g, n}$) the moduli stack over $\IZ$ of $n$-marked smooth (resp. stable) curves of genus $g$, i.e., $\overline{\sM}_{g, n}$ is the Deligne-Mumford compactification $\sM_{g, n}$. Let $\sD \colonequals \overline{\sM}_{g, n} \smallsetminus \sM_{g, n}$ be the boundary (equipped with a reduced substack structure).  

\begin{proposition}
    Let $S$ be a semi-stable $p$-adic formal scheme over $\sO_K$. 
    Suppose that $f : C \to S$ is a stable curve given by a morphism $\gamma : S \to \overline{\sM}_{g, n}$ such that the special fiber $\gamma_k$ factors through $\sD_k$ but the generic fiber $\gamma_\eta$ factors through the analytification of $\sM_{g, n, K}$. Let $\sigma_1, \cdots, \sigma_n: S \to C$ be the marked points and let $C^\circ := C \smallsetminus \cup_{i = 1}^n \sigma_i(S)$ be the open complement. 
    
    Then $R^1 f^\circ_{\eta, \et *} \IQ_p$ is a semi-stable $p$-adic local system on $S_\eta$, where $f^\circ$ is the restriction of $f$ to the open curve $C^\circ$. 
\end{proposition}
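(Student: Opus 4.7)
The plan is to reduce the claim to \Cref{intro:thm:relative Cst} under condition \ref{intro:thm: rel CK19} by enhancing $f : C \to S$ with an appropriate logarithmic structure. First, let $D \subset C$ denote the union of the marked sections $\sigma_i(S)$ together with the singular locus of $f$; by the hypothesis that $\gamma_\eta$ factors through the analytification of $\sM_{g,n,K}$, this singular locus is concentrated in the special fiber $C_k$. Following F. Kato's theory of log structures on pointed stable curves, the divisorial log structure on $C$ attached to $D$, combined with the pullback of $M_S$, defines a fine and saturated log structure $M_C$ for which $f : (C, M_C) \to (S, M_S)$ is proper and log smooth, with mod-$\pi$ reduction of Cartier type (verifiable on the local models $\sO_K[x,y]/(xy - g)$ at nodes sitting over the standard local model of the semi-stable base).

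Second, I would identify the Kummer-\'etale pushforward on the log enhancement with the \'etale pushforward from the open curve. On the generic fiber both $M_S|_{S_\eta}$ and the nodal contribution to $M_C|_{C_\eta}$ are trivial, so $M_{C_\eta}$ reduces to the divisorial log structure attached to the relative strict normal crossings divisor $\cup_i \sigma_i(S_\eta)$ inside the smooth proper curve $C_\eta$ over $S_\eta$. For this shape of log structure the Kummer-\'etale direct image of $\IQ_p$ from $(C_\eta, M_{C_\eta})$ agrees with the ordinary \'etale direct image of $\IQ_p$ from $C^\circ_\eta$, as fiberwise the first $p$-adic cohomology of a punctured smooth proper curve is detected by Kummer covers tamely ramified along the punctures. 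This yields
\[ R^1 f_{\eta, \ket *} \IQ_p \;\cong\; R^1 f^\circ_{\eta, \et *} \IQ_p. \]

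Third, to invoke condition \ref{intro:thm: rel CK19} of \Cref{intro:thm:relative Cst}, I would verify that the underlying map of formal schemes $C \to S$ is pointwise weakly semi-stable. For a classical point $x_\eta \in S_\eta$ with integral extension $x \in S(\sO_{K(x_\eta)})$, the fiber $C_x$ is a stable pointed curve over $\sO_{K(x_\eta)}$ with smooth generic fiber; its completed local rings at nodes of the special fiber have the form $\sO_{K(x_\eta)}[[X, Y]]/(XY - g)$ for some $g$ in the maximal ideal, and $C_x$ is smooth over $\sO_{K(x_\eta)}$ elsewhere, which is precisely the local shape of a weakly semi-stable formal scheme in the sense of \cref{def st models}.

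Applying \Cref{intro:thm:relative Cst} to $f : (C, M_C) \to (S, M_S)$ under condition \ref{intro:thm: rel CK19} produces the semi-stability of $R^1 f_{\eta, \ket *} \IQ_p$, which the second step transports to the desired statement about $R^1 f^\circ_{\eta, \et *} \IQ_p$. The principal technical obstacle is the identification in the second step: verifying the Kummer-\'etale versus open-\'etale comparison uniformly at the level of higher direct images along the family $f_\eta$, rather than only on geometric stalks, which requires log-proper base change and some care with the tame quotient of the log fundamental group along the punctures.
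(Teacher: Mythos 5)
The proposal breaks down at the third step. You verify correctly that the underlying map of formal schemes $C \to S$ is pointwise weakly semi-stable, but then invoke \Cref{intro:thm:relative Cst}.\ref{intro:thm: rel CK19}, which does not apply here. The issue is that condition \ref{intro:thm: rel CK19} is designed for families whose log structure is trivial on the generic fiber; the paper's own proof of \Cref{thm:relative Cst} under assumption \ref{thm: rel CK19} begins by noting ``since $M_Y|_{Y_\eta}$ is trivial, $Y_\eta$ is proper and smooth over $X_\eta$'' and then applies the semi-stable comparison theorem of \v{C}esnavi\v{c}ius--Koshikawa, which is a statement about proper \emph{smooth} cohomology. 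For the pointed stable curve with F.~Kato's log structure, the marked sections $\sigma_i(S)$ are horizontal divisors meeting $C_\eta$, so $M_C|_{C_\eta}$ is the nontrivial log structure of a relative NCD. This means the setup falls outside what condition \ref{intro:thm: rel CK19} can handle; even if you tried to force it by using the standard log structure on $C$ (trivial on $C_\eta$), you would then obtain semi-stability of $R^1 f_{\eta, \et*}\IQ_p$ from the full projective curve $C_\eta$, not of $R^1 f^\circ_{\eta, \et*}\IQ_p$, and passing between these via the Gysin sequence does not automatically preserve semi-stability because the category of semi-stable local systems is not closed under extensions.

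Your second step---the Kummer-\'etale versus open-\'etale comparison---is actually the right ingredient, but it belongs inside the proof of case \ref{intro:thm: rel CN17}, not as a separate reduction. The paper's proof instead checks that the pulled-back log structure $(C, M_C) \to (S, M_S)$ is \emph{algebrizable} (it is pulled back from the universal log stable curve over $\overline{\sM}_{g,n}$, which is an algebraic object), so condition \ref{intro:thm: rel CN17} applies. In the proof of that case, one reduces by the pointwise criterion to a single classical point $x_\eta$, where Nakayama's log-proper base change identifies the stalk with $\mathrm{H}^i_{\ket}$ of the log fiber, and then \cite[Cor.~6.3.4]{DLLZ1} gives exactly the comparison you want (Kummer-\'etale cohomology of a log NCD pair equals \'etale cohomology of the open complement) \emph{fiberwise}; semi-stability of that stalk is then supplied by the absolute open comparison theorem of Colmez--Nizio{\l} \cite[Cor.~5.15]{CN17} rather than by \cite{CK19}. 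This both sidesteps the need to establish the Kummer-\'etale/open-\'etale identification at the level of higher direct images and makes the log structure with markings a feature rather than an obstruction. You should therefore replace the appeal to \ref{intro:thm: rel CK19} by a verification of algebrizability and an appeal to \ref{intro:thm: rel CN17}.
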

Note that since $\overline{\shM}_{g, n}$ is smooth over $\IZ$ and $\sD \subseteq \shM_{g, n}$ is a relative normal crossing divisor (\cite[Thm.~5.2]{DM69}), it is very easy to construct a family $\gamma$ as above such that $\gamma_k$ is a smooth cover of $\sD_k$ (see \cref{ex: lift boundary}).

\subsection{Crystalline Riemann--Hilbert functors}
To prove the pointwise criteria, we introduce crystalline analogues of the Riemann--Hilbert functor, which are compatible with the $p$-adic Riemann--Hilbert functor $D_\dR$ studied in Liu--Zhu, and extend the functors $D_\crys$ and $D_\st$ from the case of a $p$-adic field to the relative setting. 
The construction of these functors is of independent interest and is, in itself, a major contribution of the article.
We assume that $X$ is a smooth $p$-adic formal scheme over $\mathcal{O}_K$ in this subsection, and let $T$ be a $p$-adic local system on the rigid generic fiber $X_\eta$.

We first recall the simplest setting when $X = \Spf(\sO_K)$ is a point.
Under the assumption, a $\mathbb{Z}_p$-local system $T$ of rank $d$ is equivalent to a continuous representation of $\Gal_K$ on a free $\mathbb{Z}_p$-module of rank $d$.
In addition, the local system $T$ is crystalline (resp. the semi-stable) when the associated $\varphi$-module $D_\crys(T)$ (resp. $(\varphi,N)$-module $D_\st(T)$) is of dimension $d$. 
Here the $\varphi$-module $D_\crys(T)$ can be recovered by $D_\st(T)$ by taking the kernel of the monodromy operator $N$ on the latter.

Now let $X$ be a smooth $p$-adic formal scheme over $\mathcal{O}_K$, and let $T$ be a $\mathbb{Z}_p$-local system over $X_\eta$, such that the restriction of $T$ at every classical point is crystalline. 
When $X$ is of positive relative dimension over $\sO_K$, Faltings's definition of the crystallinity  (\cite{Fal89}; cf. \Cref{def crys/st Faltings}) only requires the existence of some $F$-isocrystal that is \emph{associated} to the given local system $T$. 
Although this definition agrees with the preceeding one when $X = \Spf(\sO_K)$, the associated $F$-isocrystals are not functorially assigned and a priori may not be unique.

In order to resolve this lack of functoriality, we introduce a higher dimensional extension of $D_\crys$ and $D_\st$ functors.
To explain the construction, we setup the notations first: Let  $K_0=W(k)[1/p]$ be the maximal unramified subextension in $K$, and let $X_s$ be the reduced special fiber of $X$, regarded as a smooth variety over the residue field $k$.
For a map of $p$-adic formal schemes $f:X'\to X$ over $\mathcal{O}_K$, we let $f_s:X'_s\to X_s$ be the induced map of the reduced special fibers.
We use $\Loc_{\mathbb{Z}_p}(X_\eta)$ to denote the category of $\mathbb{Z}_p$ local systems over $X_\eta$, and let $\Isoc^\varphi(X_{s,\crys})$ be the category of $F$-isocrystals over the crystalline site $(X_s/W)_\crys$.
In addition, we let $\Isoc^{\varphi,N}(X_{s,\crys})$ be the category of data $(\mathcal{E},\varphi_\mathcal{E}, N_\mathcal{E})$, where $(\mathcal{E},\varphi_\mathcal{E})$ is an $F$-isocrystal over $(X_s/W)_\crys$, and $N_\mathcal{E}$ is a nilpotent endomorphism of the isocrystal $\mathcal{E}$, satisfying the formula $N_{\varphi^*\mathcal{E}}=p\varphi^* N_\mathcal{E}$.
Then there are natural functors of categories
\[
\Isoc^\varphi(X_{s,\crys}) \longrightarrow \Isoc^{\varphi,N}(X_{s,\crys}) \xrightarrow{\ker(N)} \Isoc^{\varphi}(X_{s,\crys}),
\]
where the first functor is defined by equipping an $F$-isocrystal with the zero endomorphism, and the second functor is defined by taking the kernel of the $N_{\mathcal{E}}$.
Note that the first functor is fully faithful, and the composition is the identity.

Next, we may sheafify the above notions with respect to the Zariski topology of $X_s$, and define a \emph{weak isocrystal} to be a sheaf of isocrystals over $(X_s)_{\mathrm{Zar}}$ with \textit{injective} transition morphisms (cf. \Cref{def:weak_F-isoc}). A usual isocrystal is a weak isocrystal whose transition morphisms are all isomorphisms. 
We let $\wIsoc^\varphi(X_{s,\crys})$ and $\wIsoc^{\varphi,N}(X_{s,\crys})$ be the categories of weak $F$-isocrystals with additional structures that are analogous to the above. 
Now we can state the main construction of the article.

\begin{theorem}[Crystalline Riemann--Hilbert functors (\Cref{thm:gluing of D functors})]
	\label{intro:thm:RH}
	There are natural functors $\mathcal{E}_\crys$ and $\mathcal{E}_\st$ fitting into the following commutative diagrams
 \[
 \begin{tikzcd}
 & \wIsoc^{\varphi,N}(X_{k,\crys}) \arrow[dd, "\ker(N)"] \\
 \Loc_{\mathbb{Z}_p}(X_\eta) \ar[ru, "\mathcal{E}_\st"] \ar[rd, "\mathcal{E}_\crys"]& \\
 & \wIsoc^{\varphi}(X_{s,\crys}),
 \end{tikzcd}
 \quad
 \begin{tikzcd}
 & (\mathcal{E}_{\st,T}, \varphi_{\mathcal{E}_{\st,T}}, N_{\mathcal{E}_{\st,T}}) \arrow[dd, mapsto] \\
 T \arrow[ru, mapsto] \arrow[rd, mapsto] &\\
 & (\mathcal{E}_{\crys, T}, \varphi_{\mathcal{E}_{\crys,T}}) = \ker(N_{\mathcal{E}_{\st,T}}),
 \end{tikzcd}
 \]
    satisfying the following properties:
    \begin{enumerate}[label=\upshape{(\roman*)}]
    	\item\label{intro:thm:RH rank}  \emph{(Rank inequalities)} The weak isocrystal $\mathcal{E}_{\crys,T}$ (resp. $\mathcal{E}_{\st,T}$) is of rank $\leq \mathrm{rank}(T)$, and the local system $T$ is crystalline (resp. semi-stable) with respect to $X$ if and only if $\mathrm{rank}(\mathcal{E}_{\crys,T})$ is constant and is equal to $\mathrm{rank}(T)$ (resp. $\mathrm{rank}(\mathcal{E}_{\st,T})=\mathrm{rank}(T)$).
    	\item\label{intro:thm:RH pullback} \emph{(Injectivity of base change morphism)} Assume $f:Y\to X$ is a map of $p$-adic formal schemes such that $Y$ is smooth over $\mathcal{O}_{K'}$ for a finite extension $K'/K$.
    	Then there are natural injections of weak $F$-isocrystals with nilpotent endomorphisms 
    	\[
    	f_s^*\mathcal{E}_{\crys,T} \into \mathcal{E}_{\crys,f_\eta^{-1}T}, \quad f_s^*\mathcal{E}_{\st,T} \into \mathcal{E}_{\st,f_\eta^{-1}T},
    	\]
    	which are functorial with respect to $T\in \Loc_{\mathbb{Z}_p}(X_\eta)$ and satisfy the cocycle condition with respect to compositions of maps between $p$-adic formal schemes.
        \item\label{intro:thm:RH smooth bc} \emph{(Smooth base change)} In the setting of \ref{intro:thm:RH pullback}, assume the map $f$ is $p$-completely smooth.
        Then the base change morphisms in \ref{intro:thm:RH pullback} are all isomorphisms.
    	\item\label{intro:thm:RH dR} \emph{(Compatibility with $D_\dR$)} By evaluating at the divided power thickening $(X,X_{p=0})$
    	\footnote{Here the thickening is an object in the crystalline site $(X_{p=0}/W)_\crys$, and the evaluation makes sense thanks to Dwork's trick; cf. \Cref{Dwork's trick}.},
    	there are functorial injections of flat connections over the generic fiber $X_\eta$ that are compatible with pullbacks:
    	\begin{equation}
    	    \label{eqn:intro:thm:RH dR}
         \mathcal{E}_{\crys,T}(X,X_{p=0})\otimes_{K_0} K \into \mathcal{E}_{\st,T}(X,X_{p=0})\otimes_{K_0} K \into D_\dR(T),
    	\end{equation}
    	where $D_\dR(-)$ is the $p$-adic Riemann--Hilbert functor of $T$ defined in \cite[Thm~3.9(v)]{LZ17}.
     \item\label{intro:thm:RH point} \emph{(Point case)} When $X = \Spf(\sO_K)$ is a point, there are isomorphisms $\mathcal{E}_{\crys,T}(X,X_{p=0}) \simeq D_\crys(T)$ and $\mathcal{E}_{\st,T}(X,X_{p=0}) \simeq D_\st(T)$ respecting Frobenius and nilpotent endomorphisms, such that (\ref{eqn:intro:thm:RH dR}) become the canonical injections $D_\crys(T) \tensor_{K_0} K \into D_\st(T) \tensor_{K_0} K \into D_\dR(T)$. 
    \end{enumerate}
\end{theorem}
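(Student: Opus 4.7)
The plan is to construct both functors locally on small affine opens of $X$ via Faltings's period sheaves $\OBcrys$ and $\OBst$, glue the result into a Zariski sheaf on $X_s$, and verify that evaluation on the thickening $(X, X_{p=0})$ gives the asserted comparison with $D_\dR$. On an affine open $U = \Spf R \subseteq X$ admitting a toric chart, I would form the pro-finite-\'etale Galois tower $R \to R_\infty$ with Galois group $\Gamma_U$, and set
\[
  D_\crys(T|_U) \colonequals (T \tensor_{\IZ_p} \OBcrys(R_\infty))^{\Gamma_U},
  \qquad
  D_\st(T|_U) \colonequals (T \tensor_{\IZ_p} \OBst(R_\infty))^{\Gamma_U},
\]
each inheriting a Frobenius and a connection, with $D_\st$ carrying in addition a nilpotent monodromy $N$ satisfying $N\varphi = p\varphi N$. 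The identity $\OBst^{N=0} = \OBcrys$ forces $\ker(N_{\mathcal{E}_{\st,T}}) = \mathcal{E}_{\crys,T}$, yielding the required commutative diagram, while the standard $(\varphi,N)$-regularity of $\OBst$ over $K_0$ gives the rank inequality in \ref{intro:thm:RH rank}.

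To sheafify, I would note that a Zariski inclusion $U' \subseteq U$ together with compatible framings induces a $\Gamma_{U'}$-equivariant map of period rings, whose induced map on invariants is \emph{injective} by left exactness of $\Gamma$-invariants. This injectivity---but not in general surjectivity---is the technical reason one is forced to work with weak isocrystals rather than isocrystals, since the Galois invariants can genuinely grow as $U$ shrinks. Evaluating the resulting Zariski sheaf at the thickening $(X, X_{p=0})$, using Dwork's trick (cf.\ \Cref{Dwork's trick}) to make sense of the evaluation on a non-reduced thickening of $X_s$, promotes the data to a weak $F$-isocrystal on $(X_s/W)_\crys$. Faltings's definition (\Cref{def crys/st Faltings}) of associated $F$-isocrystals then gives the equivalence in \ref{intro:thm:RH rank}: maximal rank at every $U$ means $T$ admits an associated $F$-isocrystal on each $U$, i.e.\ $T$ is crystalline (resp.\ semi-stable) with respect to $X$.

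For the functoriality assertions \ref{intro:thm:RH pullback} and \ref{intro:thm:RH smooth bc}, a morphism $f\colon Y \to X$ of smooth formal schemes induces, after choosing compatible charts, a Galois-equivariant map between the respective period rings; taking invariants gives the base change morphism, left exactness yields its injectivity, and the cocycle condition is formal. When $f$ is $p$-completely smooth, a faithfully flat descent argument for period sheaves in the style of \cite{Sch13, LZ17} upgrades the injection to an isomorphism. Compatibility \ref{intro:thm:RH dR} with $D_\dR$ follows from the canonical inclusions $\OBcrys \hookrightarrow \OBst \hookrightarrow \OBdR$ and the description of $D_\dR$ in \cite[Thm~3.9]{LZ17}, and the point case \ref{intro:thm:RH point} is the tautology that when $X = \Spf \sO_K$ no Zariski sheafification is needed and one recovers Fontaine's classical $D_\crys$ and $D_\st$.

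The main obstacle is promoting the local modules $D_\bullet(T|_U)$ from mere $K_0$-vector spaces with Frobenius and connection to genuine sections of a weak $F$-isocrystal on $X_s$: one must exhibit locally free structures of the predicted generic rank over the correct PD-envelope evaluated at $(X, X_{p=0})$, check that their connections and Frobenii are compatible with both Galois descent and varying $U$, and verify that restriction maps respect all this structure and are independent of the chart. Since the $D_\bullet(T|_U)$ need not assemble into an honest crystal, the standard descent machinery in $\Isoc$ is unavailable; instead one has to set up the weak-isocrystal formalism directly and reconcile competing local constructions via $\Gamma$-equivariant isomorphisms of period rings attached to different framings. Once this bookkeeping is in place, the remaining assertions reduce to period-sheaf manipulations already standard in the work of Faltings, Scholze, and Liu--Zhu.
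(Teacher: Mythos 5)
Your overall architecture — local definition via $\Gamma$-invariants of $\OB_{\crys,R_0}$ and $\OB_{\st,R_0}$, then Zariski-gluing into a weak $F$-isocrystal — matches the paper's strategy, and you correctly identify the passage to weak isocrystals as the central device. However, your proposal misattributes the source of injectivity of the restriction maps, which is the crux of the construction. You claim it follows ``by left exactness of $\Gamma$-invariants,'' but for a Zariski inclusion $U' \subseteq U$ the pro-\'etale Galois group $\Gamma_{U'}$ is not a subgroup of $\Gamma_U$ and the period rings over the respective pro-finite-\'etale towers are genuinely different objects, so there is no containment of invariant subspaces to appeal to. The paper's actual argument (\Cref{thm:pullback for D functors}) proves injectivity by embedding the base change morphism into a square whose bottom row is the Liu--Zhu base change \emph{isomorphism} for $D_\dR$, and whose vertical maps are injective because $D_\crys(T) \otimes_{K_0} K \into D_\dR(T)$ is injective (\Cref{thm: D functors}.\ref{thm: D functors relation}). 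This is an essentially different mechanism: the injectivity of $\iota_{\crys,f}$ is inherited from the stronger \emph{isomorphism} property of Liu--Zhu's de Rham functor, not from any Galois-cohomological left-exactness.

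There are two further gaps you acknowledge but do not address. First, the local modules $D_{\crys,R_0}(T)$ depend on a choice of unramified $W$-model $R_0$ with Frobenius lift; showing that the associated $F$-isocrystal is canonically independent of this choice (\Cref{cor:D is independent of R_0}) requires an auxiliary period sheaf $\OB_{\crys,A}$ built over the PD-coproduct of two models and a careful cocycle check for triples of models — this is not a routine ``reconciliation via $\Gamma$-equivariant isomorphisms.'' Second, the equivalence in \ref{intro:thm:RH rank} (``maximal rank implies crystalline'') is not merely $(\varphi,N)$-regularity of the period ring as in the absolute case: the paper's proof (\Cref{thm:rank max implies alpha is iso}) reduces to the rank-one case and then invokes Liu--Zhu's $p$-adic Simpson correspondence together with relative Sen theory to control the determinant of the transition matrix, a step that has no direct analogue in the classical $(K_0, \Gal_K)$-regularity argument because the category of flat connections over the perfectoid period ring is not known to be abelian. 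You would need to supply these arguments for the proposal to go through.
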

Here we name our functors as the \emph{crystalline Riemann--Hilbert functors}, in light of the compatibility with \cite{LZ17} as in \Cref{intro:thm:RH}.\ref{intro:thm:RH dR}.

\begin{remark}[Relation to Tan--Tong \cite{TT19}]
    When the base ring $\sO_K$ is absolutely unramified, the evaluation of our functor $\mathcal{E}_\crys$ on $X$ is identical to $D_\crys(T)$ in \cite[Def.~3.12]{TT19}. 
    One of the main observations, which enables us to extend their construction to allow ramifications, is precisely the crystal interpretation of the construction. 
    Namely, by deformation theory, one can always find a Zariski cover $\{U_\alpha\}$ of $X$ such that each $U_\alpha$ admits an unramified model and apply $D_\crys(T)$ to the unramified models. 
    Although the construction depends on the choice of the unramified models, we can glue the outputs, not in the category of vector bundles (with additional structures) on rigid spaces as in \cite{TT19}, but in the category of $F$-isocrystals on the special fiber $X_s$. 
    As a consequence, although Tan-Tong's crystalline period sheaf $\sO \IB_\crys$, which is the crystalline analogue of Scholze's de Rham period sheaf $\sO \IB_\dR$, cannot be globalized, the resulting functor $\mathcal{E}_\crys$ can. 
\end{remark}

\begin{remark}[Breuil's equivalence and relative Hyodo--Kato isomorphism]
	Although the functor $\mathcal{E}_{\st,T}$ behaves like the classical $D_\st$ functor, the cautious reader might notice that the semi-stability is usually defined via $F$-isocrystals over the log-crystalline site (cf. \cite{AI12}, \cite{DLMS2}).
	In fact, for a smooth $p$-adic formal scheme $X$, we show in \Cref{equiv def log isoc} that there is a natural equivalence of categories
	\[
	\Isoc^{\varphi,N}(X_{s, \crys}) \simeq \Isoc^\varphi ((X_s, (0^\IN)^a)_\lcrys),
	\]
	where the latter is the category of $F$-isocrystals over the log crystalline site of the special fiber $(X_s, (0^\IN)^a)$ over $W$.
	One of the technical ingredients is a re-interpretation of the category of $\varphi$-modules over certain non-noetherian divided power polynomial (cf. \Cref{thm:Breuil}), which in the case of a point was shown by Breuil in \cite[\S 6]{Bre97}.
    As an application of the latter result, we in particular extend the Hyodo--Kato isomorphism of \cite{HK94} to the relative setting (cf. \Cref{thm:relative Hyodo-Kato}).
\end{remark}

\begin{remark}[Purity for arbitrary $p$-adic local systems]
		Let $\mathcal{E}$ be a weak $F$-isocrystal over a smooth connected variety $Z$ in characteristic $p$.
		There is a largest open subvariety $U_\mathcal{E}$ of $Z$, which we define as the \emph{pure locus} of $\mathcal{E}$, such that the restriction of $\mathcal{E}$ on the open subvariety $U_\mathcal{E}\subset Z$ is an actual $F$-isocrystal (cf. \Cref{def:various_notions_of_wIsoc}).
		For example, when $\mathcal{E}$ is an $F$-isocrystal over $Z$, its pure locus is $Z$ itself; and if $\mathcal{F}$ is an isocrystal on an open subscheme $V$ and if $i:V\to Z$ is the open immersion, the pure locus of $i_!\mathcal{F}$ is equal to $V$.
		In light of the purity of crystalline and semi-stable local systems, we make the following conjecture on arbitrary $p$-adic local systems.
		\begin{conjecture}
			Let $X$ be a smooth connected $p$-adic formal scheme over $\mathcal{O}_K$, let $\Spa(L)$ be the Shilov point of $X_\eta$, and let $T$ be a $\mathbb{Z}_p$-local system over $X_\eta$.
			Then we have the following equalities of integers
			\begin{align*}
				&\mathrm{rank}(\mathcal{E}_{\crys,T}|_{U_{\mathcal{E},_{\crys,T}}}) = \mathrm{rank}(D_{\crys,L}(T|_{\Spa(L)})), \\
				& \mathrm{rank}(\mathcal{E}_{\st,T}|_{U_{\mathcal{E},_{\st,T}}})  = \mathrm{rank}(D_{\st,L}(T|_{\Spa(L)})),
			\end{align*}
			where $D_{\crys,L}(T|_{\Spa(L)})$ (resp. $D_{\st,L}(T|_{\Spa(L)})$) is the $\varphi$-module (resp. $(\varphi,N)$-module) associated to the $\Gal(L)$-representation $T|_{\Spa(L)}$.\footnote{The reader may look at \cite[\S2.2]{Mor14} for an exposition of these functors in the imperfect residue field case.}
		\end{conjecture}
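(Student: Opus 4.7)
The plan is to reduce both sides to a computation at the generic point $\eta_s$ of $X_s$, and then identify this generic computation with Morrow's functor $D_{\crys,L}$ (respectively $D_{\st,L}$) for the Shilov point. I will treat the crystalline case; the semi-stable version is parallel via the second diagram in \Cref{intro:thm:RH}. Since $X$ is smooth and connected, $X_s$ is integral and any nonempty open of $X_s$ contains $\eta_s$. On $U := U_{\mathcal{E}_{\crys,T}}$ the weak $F$-isocrystal is a genuine $F$-isocrystal, hence locally free of a single well-defined rank, so the left-hand side equals the generic rank of $\mathcal{E}_{\crys,T}$ at $\eta_s$ (with the convention that it is $0$ if $U$ is empty). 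The conjecture thus reduces to identifying this generic rank with $\mathrm{rank}(D_{\crys,L}(T|_{\Spa(L)}))$.

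Next, I would construct a natural injective comparison map realizing the $\le$ direction. Let $\widehat{R} := \what{\sO}_{X,\eta_s}$ be the completed local ring at $\eta_s$, a complete DVR with imperfect residue field $\kappa := k(\eta_s)$ whose adic generic fiber realizes $\Spa(L)$ as the Shilov point of $X_\eta$. Zariski-locally on $X$ around $\eta_s$, one picks an unramified model $A_0$ over $W(k)$ as in the Tan--Tong-style construction sheafified in \Cref{intro:thm:RH}; pro-Zariski localizing at $\eta_s$ and $p$-adically completing then produces a smooth $W$-algebra that approximates a Cohen-ring lift of $\kappa$, which is precisely the input datum Morrow uses to define $D_{\crys,L}$ in \cite[\S 2.2]{Mor14}. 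The injectivity-of-base-change property of the crystalline Riemann--Hilbert functor, extended along the filtered colimit expressing $\widehat{R}[1/p]$ as a limit of \'etale neighborhoods of the generic point, then produces a canonical injection of the stalk of $\mathcal{E}_{\crys,T}$ at $\eta_s$ into $D_{\crys,L}(T|_{\Spa(L)})$, yielding $\mathrm{rank}(\mathcal{E}_{\crys,T}|_U) \le \mathrm{rank}(D_{\crys,L}(T|_{\Spa(L)}))$.

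The hard part is the reverse inequality: extending each $D_{\crys,L}$-class to a section of $\mathcal{E}_{\crys,T}$ on a Zariski open neighborhood of $\eta_s$. The expected strategy is a spreading-out argument. Given a $\Gal_L$-invariant element in the relevant period tensor at the Shilov point, one trivializes $T$ on a pro-\'etale perfectoid cover refining an affinoid neighborhood of $\Spa(L)$, approximates its defining cocycle by one defined on a large affinoid open $V \subseteq X_\eta$ whose reduction contains $\eta_s$, and uses the smoothness of $X$ near $\eta_s$ together with the continuity of the $\pi_1^{\mathrm{alg}}(V_\eta)$-action to descend the class to a section of $\mathcal{E}_{\crys,T}$ over a Zariski open of $X_s$ containing $\eta_s$. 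The technical obstruction is that $\kappa$ is imperfect, so Morrow's period ring is not an \'etale extension of the absolute crystalline period ring, and the descent must be performed directly on the Cohen-ring model; this is essentially the imperfect-residue-field analogue of Scholze's global production of de Rham classes. Once the crystalline equality is established, the semi-stable version follows by applying the same descent to $\mathcal{E}_{\st,T}$ and $D_{\st,L}$, using the compatibility of the monodromy operator $N$ with specialization to the Shilov point built into the construction of $\mathcal{E}_{\st,T}$.
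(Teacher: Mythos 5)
The statement you are addressing is posed in the paper as an \emph{open conjecture}: the authors explicitly write ``we make the following conjecture'' and offer no proof. There is therefore no argument in the paper against which to compare yours, and what you have written should be read as a proposed program rather than a verification of a known result.

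Assessed on its own terms, your proposal is incomplete. The reduction to the rank at the generic point $\eta_s$ of $X_s$ is sound (the pure locus is nonempty and the rank of $\mathcal{E}_{\crys,T}$ is constant there), and the sketch of the ``$\le$'' inequality has the right shape. But even that direction has gaps: the injectivity-of-base-change property of the crystalline Riemann--Hilbert functor is established in the paper only for morphisms of topologically finite type $p$-adic formal schemes, whereas $\Spf(\widehat{\mathcal{O}}_{X,\eta_s})$ is a pro-object outside that category, so you would need to justify both that the injectivity passes to the colimit and that the resulting colimit of the paper's $D_{\crys,R_0}$-constructions agrees with Morita/Ohkubo's $D_{\crys,L}$ in the imperfect-residue-field setting. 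Neither identification is formal, and the second requires matching two independently built period rings.

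The decisive gap is the ``$\ge$'' direction, which you yourself only describe as an ``expected strategy'' with an acknowledged unresolved obstruction. To prove it you would need to show that every $\Gal_L$-invariant period class spreads out to a section of $\mathcal{E}_{\crys,T}$ over a Zariski open of $X_s$ containing $\eta_s$. You correctly identify the difficulty --- the residue field of $L$ is imperfect, so Morrow's period ring is not an \'etale extension of the horizontal crystalline period ring, and the descent must happen on a Cohen-ring model --- but you do not explain how to perform it. The spreading-out machinery available in the paper (the purity theorems and the pointwise criterion) controls the generic rank only under the hypothesis that $T$ is crystalline or semi-stable somewhere, which is exactly what one does not have for an arbitrary local system. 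As written, the argument does not go through, and the conjecture remains open.
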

\end{remark}
 
\subsection{Descending the crystallinity}
Now we explain the strategy of the proof for the pointwise criteria in \Cref{intro:thm pc}. We let $T$ be a local system of rank $d$ on the Raynaud generic fiber $X_\eta$ of a smooth $p$-adic formal scheme $X$ over $\Spf(\sO_K)$.
We will consider the good reduction case in the introduction; the case when $X$ has semi-stable reduction follows quickly from the former, together with the recent advance of Du--Liu--Moon--Shimizu on the purity of semi-stable local systems \cite{DLMS2}.

We start by recalling the strategy in the simplest situation, when $X_\eta=\mathrm{Spa}(K)$ is a single point, and identify the local system $T$ with its corresponding $\Gal_K$-representation.
In this case, the crystallinity of $T$ (with respect to $K$) can usually be checked in the following two steps:
\begin{enumerate}
	\item First show that $T$ is a de Rham representation, and hence semi-stable when restricted to a finite totally ramified Galois extension $K'$ of $K$.
	\item Then show that the action of $\Gal(K'/K)$ on the associated $(\varphi,N)$-module $D_{\st, K'}(T)$ is trivial, and the monodromy operator $N$ is zero. 
\end{enumerate}
Here we emphasize that the second step makes an essential use of the functoriality of the construction $D_{\st, K'}(T)$ with respect to $T$, so that there is a canonical $\Gal(K'/K)$-action on $D_{\st, K'}(T)$, identifying $D_{\st,K}(T)$ as the invariant submodule:
\[
D_{\st,K}(T) \simeq D_{\st, K'}(T)^{\Gal(K'/K)}.
\]

We then return to the general setting for an arbitrary smooth $p$-adic formal scheme $X$.
In this case, thanks to the rigidity of the de Rham local system by Liu--Zhu (\cite{LZ17}), our pointwise assumption implies that the local system $T$ is de Rham.
So by taking the restriction of $T$ at the Shilov point $\Spa(L)$, we know (the corresponding representation of) $T|_{\Spa(L)}$ is de Rham.
Moreover, thanks to the work of Morita (\cite{Mor14}) and Ohkubo (\cite{Ohk13}), $T|_{\Spa(L)}$ is in fact a potentially semi-stable representation of $\Gal_L$.
This means that there is a finite Galois cover $L'/L$ such that the restriction $T|_{\Spa(L'/L)}$ is semi-stable.

To proceed, we want to relate the representation of $\Gal_{L'}$ with the local system over the original $p$-adic formal scheme $X$.
We first notice that by approximation arguments in \Cref{prop: tower} and \Cref{thm: pre-tower}, the finite extension $L'/L$ can be refined so that it comes from the map of Shilov points with respect to the following tower of $p$-adic formal schemes:
\[
X''=X_m\xrightarrow{g_m} \cdots \xrightarrow{g_1} X_1=X'_{\mathcal{O}_{K'}}  \xrightarrow{g_0} X_0=X' \xrightarrow{f} X,
\]
where
\begin{itemize}
	\item the map $f$ is $p$-completely \'etale;
	\item the field $K'$ is a finite Galois extension of $K$ that is totally ramified, and $g_0$ is the extension map;
	\item for each $1\leq i\leq m$, the map $g_i:X_i\to X_{i-1}$ is a primitive inseparable cover between two smooth $p$-adic formal schemes over $\mathcal{O}_{K'}$ (cf. \Cref{def: purely inseparable cover}).
\end{itemize}
Here the \emph{primitive inseparable cover} is introduced in \Cref{def: purely inseparable cover}, and roughly means a degree $p$ Galois cover of a rigid space (with good reduction) such that the induced map on the reduced special fiber is purely inseparable.
We can then spread out the semi-stability of the restriction $T|_{\Gal_L}$ to the entire $p$-adic formal scheme $X''$, thanks to the purity result in \cite{DLMS2} mentioned before.

Next, to show the original local system $T$ is crystalline, we need to descend the semi-stability from $X_i$ to $X_{i-1}$ using the pointwise assumption.
We let $x_\eta \in X_{i,\eta}$ be a classical point such that $y_\eta\colonequals g_{i,\eta}^{-1}(x_\eta)$ is a single ramified point.
Then the relative Galois group $G_i\colonequals \Gal(X_{i,\eta}/X_{i-1,\eta})$ is naturally isomorphic to the Galois group of the field extension, namely $\Gal(K(y_\eta)/K(x_\eta))$.
By taking the rings of integers, we in particular obtain a $G_i$-equivariant commutative diagram of $p$-adic formal schemes
\[
\begin{tikzcd}
	y\colonequals\Spf(\mathcal{O}_{K(y_\eta)}) \arrow[rr, "{G_i\text{-equivariant}}"] \arrow[d]  && X_i \arrow[d, "g"]\\
	x\colonequals\Spf(\mathcal{O}_{K(x_\eta)}) \arrow[rr] && X_{i-1}.
\end{tikzcd}
\]
Now by the functoriality of the construction $\mathcal{E}_{\st,T|_{X_i}}$, there is a natural action of $G_i$ on $\mathcal{E}_{\st,T|_{X_i}}$ in the categories of $F$-isocrystals.
Moreover, by the base change theorem of the crystalline Riemann--Hilbert functor in \Cref{intro:thm:RH}.\ref{intro:thm:RH pullback} and by \Cref{intro:thm:RH}.\ref{intro:thm:RH point}, the $G_i$ action on $\mathcal{E}_{\st,T|_{X_i}}$ restricts to the $\Gal(K(y_\eta)/K(x_\eta))$ action on $D_{\st, K(y_\eta)}(T|_{y_\eta})$.
However, by the assumption that $T|_{x_\eta}$ is crystalline, the latter Galois action is trivial.
As a consequence, thanks to the faithfulness of the restriction functor of $F$-isocrystals along the map of special fibers $y_s\to X_{i,s}$ (\cite[Thm.\ 4.1]{Ogu84}), we see the $G_i$ action on $\mathcal{E}_{\st,T|_{X_i}}$ is also trivial.
Finally, using the triviality of $G_i$ action, we can show that the semi-stability of $T|_{X_{i,\eta}}$ implies that of $T_{X_{i-1,\eta}}$,
\footnote{Here a technical caveat is the fact that the map of reduced special fibers $X_{i,s}\to X_{i-1,s}$ is only purely inseparable in general (instead of being an isomorphism, as happened in the case of a point), so additional descent arguments for vector bundles over period sheaves are needed in order to obtain the aforementioned implication of semi-stability.}
 and hence the semi-stability of $T$.
Similar argument can also be used to show that the pointwise crystalline assumption will enforce the semi-stable local system $T$ to be crystalline.
We refer the reader to \Cref{thm:descend along primitive insep} and \Cref{thm:descend along base extension} for details.

\begin{remark}
	The curious reader might wonder if the pointwise criteria follows quickly from the conjectural equivalence between the de Rhamness and the potential semi-stability for $p$-adic local systems.
	In fact, even assuming the conjecture, we are not aware of an argument without shrinking and descending as described above.
	This is essentially due to the complexity of semi-stable reductions of rigid spaces: given a finite \'etale cover $X'_\eta \to X_\eta$, it is not known if $X'_\eta$ admits a global semi-stable reduction without further refinement by open and (non-proper) \'etale morphisms.
\end{remark}

\subsection{Outline}
\label{sub:outline}
We begin with the basics of log-crystalline sites in \Cref{sec prep}, where we provide various structural results for (log) $F$-isocrystal over a smooth finite type $k$-scheme with the hollow log structure, and prove the Breuil's equivalence theorem and the Hyodo--Kato isomorphism in the relative setting.
Then we collect the constructions and the properties for the horizontal period sheaves and the period sheaves with connections in \Cref{sec period sheaves}.
In particular, we introduce the semi-stable period sheaves for a rigid space with good reduction.

After the above preparations, we proceed in \Cref{sec def local system} to introduce the crystalline and the semi-stable local systems.
Our first definition in \Cref{sub def local system Faltings} is in the spirit of Faltings and has no regularity assumption on the reduction of the rigid space.
In the special case when the rigid space admits a smooth affine reduction with a prescribed unramified model, we construct the $D_\crys$ and the $D_\st$ functors from $p$-adic local systems to $F$-isocrystals in \Cref{sub D functors}, and thus obtain a local formula of the associated $F$-isocrystals.

We then study the pullback behaviors of the aforementioned constructions in \Cref{sec pullback}.
We show in \Cref{sub general pullback} that the general notions of the crystallinity and the semi-stability are preserved by the pullback functor, for the generic fiber of a map of $p$-adic formal schemes $f:X\to Y$.
When the map $f$ is between affine smooth $p$-adic formal schemes with unramified models, we show in \Cref{sub pullback of D functors} that the above $D_\crys$ and the $D_\st$ functors admit natural pullback injections.
As an interlude, we introduce and study the notion of weak $F$-isocrystals in \Cref{sub: weak F-isoc}.
In \Cref{sec: global RH}, we prove that when interpreted as weak $F$-isocrystals, the $D_\crys$ and the $D_\st$ functors can be globalized at face value, yielding the crystalline Riemann--Hilbert functors for general smooth $p$-adic formal schemes.

Next in \Cref{sec nor} we establish the geometric preparations for the pointwise criteria.
We show in \Cref{sub towers} that any finite \'etale cover of the Shilov point of a rigid space can spreads out to a finite tower of standard covers of rigid spaces, up to shinking to a smaller neighborhood.
We also introduce the notion of the effective set of classical points, and in particular show the $\ell$-adic pointwise criterion in \Cref{sub:effective}.

Finally, assembling all the previous ingredients, we prove the pointwise criteria in \Cref{sec pc}, and give various applications.
We start with the primitive reductions in \Cref{sub pc first reduction}.
The main technical arguments are established in \Cref{sub pc descend}, where we show the descent of the crystallinity and the semi-stability.
Finally we provide several applications of the pointwise criteria in \Cref{sub application}, including the semi-stability of the higher direct image along a semi-stable proper smooth map.

\subsection{Acknowledgment}	
	A special thank goes to Sasha Petrov for his collaboration during the early stages of the project and for offering many important insights. We thank Fabrizio Andreatta, Bhargav Bhatt, Sasha Beilinson, Frank Calegari, {K\k{e}stutis} {\v{C}}esnavi{\v{c}}ius, Heng Du, Matt Emerton, H\'el\`ene Esnault, Asvin G., Michael Larsen, Shizhang Li, Tong Liu, Yong-Suk Moon, Kiran Kedlaya, Mark Kisin, Teruhisa Koshikawa, Emanuel Reinecke, Peter Scholze, Ananth Shankar, Zijian Yao and John Yin for helpful conversations or correspondence. During the preparation of the article, the first author was supported by Max-Planck-Institut f\"ur Mathematik and the University of Chicago, the second author was supported by Simons AMS travel fund.

\subsection{Conventions}
\label{subsec conv}
Given a ring $R$ and an ideal $I$, we use $R^\wedge_I$ to denote the $I$-adic completion of $R$.
When $A$ and $B$ are $R$-algebras, we use $A\widehat{\otimes}_R B$ to denote the $I$-complete tensor product $(A\otimes_R B)^\wedge_I$ whenever the topology is clear.
We use $R\langle x_i\rangle$ (resp. $R\{x_i\}$ ) to denote the convergent power series ring (resp. $I$-complete divided power polynomial ring) over $R$, 
namely the $I$-adic completion of the polynomial ring of variables $x_i$ over $R$ (resp. the divide power polynomial ring of variables $x_i$ over $R$).
\footnote{Here we warn the reader that our usage of the notation is different from the common convention in prismatic cohomology, where $R\{x_i\}$ means the free $\delta$-algebra over a given $\delta$-ring $R$, as introduced by Bhatt--Scholze \cite{BS22}.}
To lighten the notation, we also abbreviate the notion ``divided power'' using the term ``PD'' for many places in the article.

Given a pre-log structure $(R,N_R)$, we use $N_R^a$ to denote the associated log structure on $R$.
Moreover, for a (pre-)log structure $M$, we often abuse the same symbol to denote the associated monoid when the meaning is clear.

For a $p$-adic formal scheme $X$ over $\mathcal{O}_K$, where $K$ is a $p$-adic field with perfect residue field $k$, we use $X_{p=0}$ to denote the mod $p$ fiber of $X$, use $X_k$ to denote the base change $X\times_{\Spf(\mathcal{O}_K)} \Spec(k)$, and use $X_s$ to denote the reduced closed scheme $(X_k)_\mathrm{red}$.

For a scheme $Z$ over a perfect field $k$ in characteristic $p$, we let $Z_\crys$ be its absolute crystalline site, namely the crystalline site of $Z$ over $W(k)$, where the latter has the canonical divided power structure on the ideal $p$.
If $Z$ admits a log structure $M_Z$, we let $(Z,M_Z)_\lcrys$ be its absolute log-crystalline site, namely the log-crystalline of $(Z,M_Z)$ over $W(k)$, where the latter is equipped with the trivial log structure.

For a given $p$-adic discretely valued field $K$ with perfect residue field, we let $C$ be its complete algebraic closure.
We fix a compatible system $\zeta_{p^n}$ of $p^n$-th roots of unity in $\sO_C$ and let $\epsilon=(1,\zeta_p,\ldots)$ be the corresponding element of $\sO_C^\flat=\lim_{x\mapsto x^p} \sO_C/p$.
Let $\mathrm{A}_{\inf} = \mathrm{A}_{\inf}(\sO_C) \colonequals W(\sO_C^\flat)$ and	$q \colonequals [\epsilon] \in \mathrm{A}_{\inf}(\sO_C)$ be the Teichm\"uller lift.
We set $\mu \colonequals q - 1$ and $\xi \colonequals [p]_{q^{1/p}} = \frac{q-1}{q^{1/p}-1}$.
They satisfy the relation $\varphi(\mu)=\mu\cdot \varphi(\xi)$.
We call the quotient map $\mathrm{A}_{\inf} \to \mathrm{A}_{\inf}/\xi\simeq \sO_C$ the \emph{de Rham specialization map} and denote it by $\theta$ as usual.

For a $p$-adic topological $K$-algebra $A$, to simplify the notation, we let $\Spa(A)$ be the adic spectrum $\Spa(A,A^\circ)$, where $A^\circ$ is the subring of power-bounded elements.

\section{Log structure, and logarithmic $F$-isocrystals}
\label{sec prep}
Let $K$ be a complete discretely valued $p$-adic field with perfect residue field $k$, and let $\mathcal{O}_K$ be its ring of integers. In this section, we recall the basics of the log structures used in the article, and prove various structural results on $F$-isocrystals and their cohomology over (log-)crystalline sites.

\subsection{Log structures, log-crystalline site, and semi-stable reduction}
\label{sub review of log} We recall some basic definitions for readers' convenience and set up some notations and terminology which we shall use later, following \cite{Kat89} and \cite{HK94}. Let $X$ be a scheme.
A \textbf{pre-log structure} on $X$ is a sheaf of monoids $M$ on the \'etale site $X_\et$ endowed with a homomorphism $\alpha : M \to \sO_X$ with respect to the multiplication on $X$, where $\alpha$ is often assumed. 
We call it a \textbf{log structure} if $\alpha$ restricts to an isomorphism $\alpha^{-1}(\sO_X^*) \simeq \sO_X^*$. 
Given any pre-log structure $(M, \alpha)$ on $X$, we can always form the \textbf{associated} log structure $M^a$ by taking the pushout of $M$ along $\alpha^{-1}(\sO_X^\times) \to \sO_X^\times$. 
We shall often refer to a ``scheme with log structure'' simply as a ``log scheme''. By the \textbf{trivial log structure} on $X$, we mean $\sO_X^\times \into \sO_X$. For convenience we impose the following convention: \textit{When we view a scheme $X$ as a log scheme without specifying a log structure, the trivial log structure is assumed.}

 For a monoid $P$, let us write $P^{\mathrm{gp}}$ for the associated commutative group $\{ ab^{-1} \mid a, b \in P \}$, and we say that $P$ is \textbf{integral} if the natural map $P \to P^{\mathrm{gp}}$ is injective. Given $P$, denote by $P_X$ the constant sheaf of monoid over the scheme $X$ defined by $P$. 
 A log structure $M$ on $X$ is said to be \textbf{quasi-coherent} (resp. \textbf{coherent}) if \'etale-locally on $X$, there exists a monoid (resp. finitely generated monoid) $P$ and a homomorphism $P_X \to \sO_X$ such that $M \simeq (P_X \to \sO_X)^a$. 
 We say that $M$ is \textbf{fine} if it is integral (i.e., given by a sheaf of integral monoids) and coherent. When $(X, M)$ is a fine log scheme, such homomorphism $P_X \to \sO_X$ will be called a \textbf{chart for $M$}.

 For a morphism of schemes $f : X \to Y$ and a log structure $N$ on $X$, the \textbf{inverse image} $f^* N$ is defined to be $(f^{-1}(N))^a$. 
 A morphism between log schemes $f : (X, M) \to (Y, N)$ is defined to be a morphism $f : X \to Y$ between the underlying schemes together with a morphism $f^* N \to M$, such that the latter is compatible with the map of structure sheaves. 
 Now let $f : (X, M) \to (Y, N)$ be a morphism between log schemes. 
 We say the morphism is \textbf{strict} (resp. \textbf{exact}) if the map of monoids $f^*N\to M$ is an isomorphism (resp. $f^{-1} N = f^{-1}(N)^{\mathrm{gp}} \times_{M^{\mathrm{gp}}} M$).
 We also say that $f$ is a \textbf{closed immersion} (resp. \textbf{exact closed immersion}) if the underlying morphism $f : X \to Y$ is a closed immersion and the associated morphism $f^* N \to M$ is surjective (resp. bijective). 
 In particular, an exact closed immersion is strict. 
 One can also define the notions of (formally) \textbf{\'etale} or \textbf{smooth} morphisms between log schemes, and we refer the reader to the standard references (e.g., \cite[\S3.3]{Kat89}).
 When $(X, M)$ and $(Y, N)$ are both fine log schemes, then \'etale locally on $X$ we can always find a \textbf{chart for $f$}, i.e., a triple $(P_X \to \sO_X, Q_Y \to \sO_Y, Q \to P)$ where $P_X \to \sO_X$ and $Q_Y \to \sO_Y$ are charts for $M$ and $N$ respectively, and $Q \to P$ is a morphism between monoids for which the following diagram commutes
 \[\begin{tikzcd}
	{Q_X} & {P_X} \\
	{f^{-1}(N)} & M.
	\arrow[from=1-1, to=1-2]
	\arrow[from=1-1, to=2-1]
	\arrow[from=1-2, to=2-2]
	\arrow[from=2-1, to=2-2]
\end{tikzcd}\]
For our purposes, charts are mainly used to find \textit{exact localizations} for closed immersions, by which we mean diagrams like (\ref{diag: exact localization}) below. 
\begin{lemma}
\label{lem: exact localization}
    Let $\iota : (X, M) \to (Y, N)$ be a closed immersion between fine log schemes. Then \'etale locally on $X$, $\iota'$ factors as 
    \begin{equation}
        \label{diag: exact localization}
        \begin{tikzcd}
	& {(Y', N')} \\
	{(X, M)} & {(Y, N)}
	\arrow["{\textrm{\'etale}}", from=1-2, to=2-2]
	\arrow["\iota"', from=2-1, to=2-2]
	\arrow["\iota'", hook, from=2-1, to=1-2]
    \end{tikzcd}
    \end{equation}
    such that $\iota'$ is an exact closed immersion and the natural projection $(Y', N') \to (Y, N)$ is \'etale. 
    
    More concretely, if $(P_X, \to M, Q_Y \to N, h : Q \to P)$ is a chart for $\iota$ such that $h^{\mathrm{gp}}$ is surjective, then $(Y', N')$ can be constructed by 
    \begin{equation}
        \label{eqn: exact localization}
        Y' \colonequals Y \tensor_{\IZ[Q]} \IZ[Q'] \text{ and } N' \colonequals N|_{Y'},
    \end{equation}
    where $Q' \colonequals (h^{\mathrm{gp}})^{-1}(P)$. 
\end{lemma}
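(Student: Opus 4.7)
The plan is to construct $(Y', N')$ explicitly via the formula (\ref{eqn: exact localization}) for a well-chosen chart of $\iota$, and then verify the three required properties. The argument breaks into: (i) finding a chart of $\iota$ with $h^{\mathrm{gp}}$ surjective, \'etale locally on $X$; (ii) forming $Q'$ and $Y'$ as prescribed and producing the lift $\iota' : X \hookrightarrow Y'$; (iii) verifying that $\iota'$ is an exact closed immersion and that $(Y', N') \to (Y, N)$ is \'etale on a neighborhood of $\iota'(X)$.

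For step (i), I would begin from any chart $(P_X \to M, Q_{0,Y} \to N, h_0 : Q_0 \to P)$ of $\iota$, which exists \'etale locally by the fineness of the log structures. Since $\iota$ is a closed immersion, $\iota^*N \to M$ is surjective, so each of the finitely many generators of $P$ lifts to a section of $N^{\mathrm{gp}}$ on a sufficiently small \'etale neighborhood. Adjoining these lifts to $Q_0$ produces an enlarged $Q$ (still giving the same log structure $N$) for which $h^{\mathrm{gp}} : Q^{\mathrm{gp}} \to P^{\mathrm{gp}}$ is surjective.

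For step (ii), define $Q' \colonequals (h^{\mathrm{gp}})^{-1}(P) \subseteq Q^{\mathrm{gp}}$. Then $Q'$ is fine: it contains $Q$ (so $Q'^{\mathrm{gp}} = Q^{\mathrm{gp}}$, finitely generated), and is finitely generated as a monoid over $Q$ by preimages of a finite generating set of $P$. The restriction $h' \colonequals h^{\mathrm{gp}}|_{Q'} : Q' \to P$ is well-defined and surjective (for any $p \in P$, surjectivity of $h^{\mathrm{gp}}$ gives a preimage in $Q^{\mathrm{gp}}$, which by definition lies in $Q'$). Setting $Y' \colonequals Y \times_{\IZ[Q]} \IZ[Q']$ with $N' \colonequals N|_{Y'}$ makes $(Y', N') \to (Y, N)$ strict by construction. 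The composite $Q \xrightarrow{h} P \to \sO_X$ extends along $Q \hookrightarrow Q'$ to $Q' \to P \to \sO_X$ via $h'$, so the universal property of the base change yields the desired lift $\iota' : (X, M) \hookrightarrow (Y', N')$.

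For step (iii), exactness of $\iota'$ reduces to chart-level identities: surjectivity of $\iota'^*N' \to M$ is immediate from surjectivity of $h'$, while the exactness condition $\iota'^{-1}(N') = \iota'^{-1}(N')^{\mathrm{gp}} \times_{M^{\mathrm{gp}}} M$ translates to $Q' = (h'^{\mathrm{gp}})^{-1}(P)$, which is exactly the defining property of $Q'$. The main technical point is \'etaleness of $(Y', N') \to (Y, N)$: since $Q \hookrightarrow Q'$ is an integral inclusion of fine monoids with $Q^{\mathrm{gp}} = Q'^{\mathrm{gp}}$, the monoid-algebra map $\IZ[Q] \to \IZ[Q']$ is log \'etale. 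To upgrade this to strict \'etaleness over a Zariski open of $Y'$ containing $\iota'(X)$, I would use the exactness of $h'$ to show that the non-strict-\'etale locus of $\IZ[Q] \to \IZ[Q']$—where additional toric fibers can appear—is disjoint from the image of $X$; after shrinking $Y'$ to such a Zariski open (which preserves both the factorization and strictness), one obtains the required strict \'etale morphism. Pinning down and controlling this non-\'etale locus is the delicate part of the argument and the main obstacle.
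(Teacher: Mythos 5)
Your proposal follows Kato's route (which is exactly what the paper cites, since the paper's proof is just a pointer to \cite[Prop.~4.1]{Kat89}), but Step (iii) contains a conceptual error that undermines the whole argument. You write that setting $N' \colonequals N|_{Y'}$ ``makes $(Y', N') \to (Y, N)$ strict by construction.'' In Kato's construction — and in the paper's intended reading, as one sees from how the lemma is applied in \cref{Cech for log crys} where the output log structure is the one generated by the enlarged monoid $\{u^{a_0}\cdots u_n^{a_n} : \sum a_i \ge 0\}$ — the log structure $N'$ is the one \emph{associated to the chart} $Q' \to \sO_{Y'}$, not the pullback of $N$ along $Y' \to Y$. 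These are different log structures: the pullback has chart $Q$, while $N'$ has the strictly larger chart $Q'$. Your argument is internally inconsistent on exactly this point: you declare the projection strict (treating $N'$ as pullback) and then, in checking exactness of $\iota'$, you use $Q'$ as the chart for $\iota'^*N'$. If $N'$ were really the pullback of $N$, the exactness condition would read $Q = (h^{\mathrm{gp}})^{-1}(P)$ — which is the very thing that fails for $\iota$ and that the construction is designed to fix.

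With the correct choice of $N'$, the morphism $(Y', N') \to (Y, N)$ is \emph{not} strict but \emph{is} log \'etale, and this is immediate from Kato's chart criterion \cite[Thm.~3.5]{Kat89}: the chart morphism $Q \hookrightarrow Q'$ induces an isomorphism on group completions (so the kernel and cokernel conditions are trivially satisfied), and $Y' \to Y \times_{\IZ[Q]}\IZ[Q']$ is an isomorphism. There is no further work to do and no ``delicate part.'' In fact, the stronger assertion you set out to prove in your final paragraph — strict \'etaleness of $Y' \to Y$ on a Zariski open containing $\iota'(X)$ — is \emph{false}. Take $Q = \IN^2$, $P = \IN$, $h(a,b) = a+b$, $Y = \Spec k[x,y]$, $X = \Spec k[z]$ with $x,y \mapsto z$. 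Then $Q' = \{(a,b)\in \IZ^2 : a+b\ge 0\}$, $\IZ[Q'] \cong \IZ[u, v^{\pm1}]$ with $x = u$, $y = uv$, so $Y' = \Spec k[u, v^{\pm1}]$ and $\iota'(X) = V(v-1)$. The underlying map $Y' \to Y$ has Jacobian determinant $u$, hence its non-\'etale locus is $V(u)$, which meets $\iota'(X)$ at $(u,v) = (0,1)$. So the obstruction you identified is not a technical gap to be filled but a signal that the wrong statement is being targeted. Finally, a minor point on Step (ii): $Q'$ is not generated over $Q$ by preimages of generators of $P$; a correct argument for its finite generation is that $\ker(h^{\mathrm{gp}}) \subseteq Q'^*$, so $Q'/Q'^* \cong P/P^*$ is finitely generated, and $Q'^*$ is a subgroup of the finitely generated $Q^{\mathrm{gp}}$.
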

\begin{proof}
    See \cite[Prop.~4.1]{Kat89} and its proof.
\end{proof}

Next, we recall the definition of log-crystalline sites, up to the generality we need. Let $k$ be a perfect field of characteristic $p > 0$ and set $W \colonequals W(k)$ and $W_n \colonequals W/p^n W$ for each $n \in \IN$. Let $\gamma$ denote the standard PD structure on $W$ or $W_n$. First, we summarize in the following proposition the universal property and construction of \textbf{log PD-envelopes}.

\begin{proposition}
\label{prop: log PD-envelope}
    For each $n \in \IN$, consider the following two categories.
    \begin{enumerate}[label=\upshape{(\alph*)}]
        \item The category $\mathscr{C}$ of closed immersions of log schemes $\iota : (X, M) \into (Y, N)$ over $W_n$ such that $M$ is integral and coherent and $N$ is quasi-coherent. 
        \item The category $\wt{\mathscr{C}}$ of pairs $(\iota, \delta)$, where $\iota$ is an exact closed immersions $\iota : (X, M) \into (Y, N)$ of integral and quasi-coherent log schemes over $W_n$, and $\delta$ is a PD structure on the ideal of $Y$ defining $X$ which is compatible with the standard PD-structure $\gamma$. 
    \end{enumerate}
Then the natural forgetful functor $\mathsf{F} : \wt{\mathscr{C}} \to \mathscr{C}$ admits a right adjoint $\mathsf{D}$, which sends an object to what is called its log PD-envelope. Moreover, if $\iota : (X, M) \into (Y, N)$ is an object in $\mathscr{C}$ with $M$ fine and $N$ coherent, and $\iota' : (X, M) \into (Y',N')$ is an \'etale localization as in (\ref{diag: exact localization}), then the log PD-envelope $\mathsf{D}(\iota)$ is of the form $(\wt{\iota} : (X, M') \into (D, M_D), \delta)$, where $(\wt{\iota}, \delta)$ is the usual PD-envelope of $\iota'$ with respect to $\gamma$, $M' = M|_{\wt{X}}$ and $M_D = N'|_{D}$.
\end{proposition}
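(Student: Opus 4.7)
The plan is to construct the right adjoint $\mathsf{D}$ by first settling the exact case via the classical PD-envelope, and then descending the general case to this one through the \'etale-local exact localizations provided by \Cref{lem: exact localization}.

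First I would handle the case in which $\iota : (X, M) \into (Y, N)$ is itself exact. Here I take $(D, \delta)$ to be the classical PD-envelope of $X$ in $Y$ (compatible with the standard $\gamma$ on $W_n$), and equip $D$ with the log structure $N_D \colonequals N|_D$ obtained by restricting $N$ along $D \to Y$. Since exactness is preserved under the map $D \to Y$, the induced closed immersion $\wt{\iota} : (X, M) \into (D, N_D)$ is again exact, and the pair $(\wt{\iota}, \delta)$ belongs to $\wt{\mathscr{C}}$. The universal property reduces directly to the classical one: given $(j : (X', M') \into (Y', N'), \delta')$ in $\wt{\mathscr{C}}$ together with a morphism over $\iota$, the underlying morphism $Y' \to Y$ factors uniquely through $D$ by the classical universal property, and exactness of $j$ together with compatibility of log structures forces a unique lift of the map of log structures.

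Next, for a general closed immersion with $M$ fine and $N$ coherent, I would apply \Cref{lem: exact localization} to find, \'etale-locally on $X$, an exact localization $\iota' : (X, M) \into (Y', N')$ with $(Y', N') \to (Y, N)$ \'etale, and define $\mathsf{D}(\iota) \colonequals \mathsf{D}(\iota')$ using the exact case. The explicit chart-theoretic construction $Y' = Y \tensor_{\IZ[Q]} \IZ[Q']$ in the statement is precisely one realization of such a localization, so the ``Moreover'' assertion will come along for free. To promote this to a global object over $Y$, I would show that any two exact localizations can be dominated by a common \'etale refinement, on which the universal property established in the exact case supplies canonical isomorphisms of the resulting log PD-envelopes; the \'etale descent then glues the local data into an object of $\wt{\mathscr{C}}$ attached to $\iota$. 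Finally, for $M$ integral and coherent and $N$ merely quasi-coherent, I would express $N$ as a filtered colimit of coherent sub-log-structures containing the image of $M$ (as in Kato's formalism) and define $\mathsf{D}$ on $\iota$ as the induced colimit; adjunctions pass through such colimits, yielding the universal property in full generality.

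The main obstacle I anticipate is the gluing step in the second paragraph: the log PD-envelope $\mathsf{D}(\iota')$ depends a priori on the choice of chart realizing the exact localization, and one must check that the objects of $\wt{\mathscr{C}}$ produced on different \'etale patches are canonically isomorphic and satisfy the cocycle condition on triple overlaps. This hinges on showing that exact localizations themselves enjoy a universal property among \'etale neighborhoods $(Y'', N'') \to (Y, N)$ through which $\iota$ factors as an exact closed immersion, so that the uniqueness clause of the PD-envelope universal property in the exact case transports to uniqueness of gluing data. A secondary subtlety is verifying that the restricted log structure $N_D$ is well-behaved (in particular, that it remains quasi-coherent and that exactness of $\wt{\iota}$ is preserved under the PD envelope), which I would check by tracing through the chart-level description in (\ref{eqn: exact localization}).
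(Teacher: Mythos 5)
Your proposal is a reconstruction of the argument rather than a citation, which is the route the paper does \emph{not} take: the paper's proof of \cref{prop: log PD-envelope} consists entirely of citing Beilinson's general construction of log PD-envelopes together with Kato's \cite[Prop.~5.2]{Kat89}, and noting that the two agree where Kato's applies. Your outline is essentially a re-derivation of Kato's proof (exact case via the classical PD-envelope, then \'etale localization and gluing), extended to the quasi-coherent case by a colimit. This is a reasonable alternative, and your exact case is correct: when $\iota$ is an exact closed immersion, the classical PD-envelope $D$ with $N_D \colonequals N|_D$ automatically makes $\wt\iota$ strict (since $\wt\iota^* N_D = \iota^*N = M$), and the log-structure map on a test object $(Y',N') \to (D, N_D)$ comes for free from the original datum $(Y',N') \to (Y,N)$ once the scheme-theoretic factorization $Y' \to D$ is produced, so the universal property reduces cleanly to the classical one.

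Two points deserve more care. First, for the gluing step, you should make explicit that the cocycle condition is not something you must check by hand; it is automatic once you establish that each \'etale-local construction satisfies the universal property of a right adjoint of $\mathsf{F}$ restricted to that \'etale neighborhood, since right adjoints are unique up to unique isomorphism. (You also want to observe that the glued object is a scheme and not merely an algebraic space; this is clear because the PD-envelope is affine over $X$.) Second, the quasi-coherent case is the genuinely delicate part and is exactly where the paper appeals to Beilinson rather than Kato: Kato's chart-theoretic exact localization in \cref{lem: exact localization} requires $N$ coherent, so your colimit presentation $N = \varinjlim N_\alpha$ has to be shown to interact correctly with the PD-envelope. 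Concretely, you would need to verify that (i) the surjection $\iota^*N \to M$ factors, \'etale-locally, through some $\iota^*N_\alpha$ with $N_\alpha$ coherent (which follows from coherence of $M$, but only locally, so this again requires gluing over possibly non-quasi-compact $X$), and (ii) the resulting log PD-envelopes form a compatible system whose limit in $\wt{\mathscr{C}}$ computes $\mathsf{D}(\iota)$. The statement ``adjunctions pass through such colimits'' does not hold for free: $\mathsf{D}$ is a right adjoint and so preserves limits, but the relevant diagram of log schemes $(Y,N_\alpha)$ arranges itself as a \emph{limit} only after some unwinding, and the commutation of PD-envelope formation with these limits is not automatic. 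Beilinson's construction sidesteps all of this by building $\mathsf{D}$ directly via a categorical argument without charts; if you prefer the chart-based route, you should either restrict to coherent $N$ (which suffices for the ``moreover'' clause and all subsequent applications in the paper) or spell out the limit argument in full.
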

\begin{proof}
    This follows from \cite[\S1.3, Thm]{Bei} and \cite[Prop.~5.2]{Kat89}. Note that the Beilinson's construction generalizes Kato's and they agree whenever the latter applies. 
\end{proof}

Now we fix a log scheme $(X, M)$ over $k$ of finite type. The \textbf{(big absolute) log-crystalline site} $((X, M)/W_n)_\lcrys$ for $n \in \IN$ (resp. $((X, M)/W)_\lcrys$) is the category whose objects are 5-tuples of the form $(U, T, M_T, \iota, \delta)$ such that $U$ is a scheme over $X$, $(T, M_T)$ is a log scheme over $W_n$ (resp. $W_m$ for some $m$), and $(\iota: (U, M|_U) \into (T, M_T), \delta)$ is an object of $\wt{\mathscr{C}}$ as in \cref{prop: log PD-envelope}. 
As in \Cref{subsec conv} and to simplify notation, we use $(X, M)_\lcrys$ to abbreviate the notation $((X, M)/W)_\lcrys$, and impose the following convention.
\begin{convention}
    When we use $W$ or $W_n$ equipped with the trivial log structure as the base of log-crystalline site, we omit it the log structure from the notation. 
\end{convention}
Define the structure sheaf $\sO_{(X,M)/W_n}$ (or $\sO_{(X, M)/W}$) to be the functor which sends each object $(U, T, M_T, \iota, \delta)$ to $\Gamma(T, \sO_T)$. A \textbf{coherent crystal} on $\sO_{(X,M)/W_n}$ (or $\sO_{(X, M)/W}$) is a sheaf of coherent $\sO_{X/W}$-modules $\sE$ such that for every morphism $g : T' \to T$ in the given site, the induced map $g^* \sE(T) \to \sE(T')$ is an isomorphism. The category of \textbf{isocrystals}, which we shall denote by $\mathrm{Isoc}((X,M)_\lcrys)$, is defined as the $\IQ$-linearization of the category of coherent crystals over $(X,M)_\lcrys$. 
To define $F$-isocrystals, let us first call how the \textbf{absolute Frobenius} morphism $F_{(X, M)} : (X, M) \to (X, M)$ for a log scheme $(X, M)$ over $k$ is defined: On the underlying scheme it is just the standard absolute Frobenius morphism, and $F^{-1}_{(X, M)} M \to M$ is the multiplication by $p$ map when $F^{-1}_{(X, M)} M$ is canonically identified with $M$. An \textbf{$F$-isocrystal} consists of an isocrystal $\sE$ together with an isomorphism $\varphi: F^*_{(X,M)} \sE \stackrel{\sim}{\to} \sE$ in $\Isoc((X,M)_\lcrys)$. We denote the category of $F$-isocrystals by $\Isoc^\varphi((X, M)_\lcrys)$. 

For book-keeping purposes, let us also introduce the category $(X, M)^\wedge_\lcrys$ whose objects are inverse systems $\{ \mathcal{T}_n = (U_n, T_n, M_{T_n}, \iota_n, \delta_n) \in ((X,M)/W_n)_\lcrys \}_{n \in \IN}$ such that the mod $p^n$-reduction of $\mathcal{T}_m$ is $\mathcal{T}_n$ whenever $m \ge n$ and each $\mathcal{T}_n$ is flat over $W_n$. 
We identify such inverse systems with the exact closed immersion of formal $\sO_K$-schemes
$(U, M|_U) \into \varprojlim_n (T_n, M_{T_n})$
with log PD-structure $\varprojlim_n \delta_n$ (cf. \cite[Def.~2.2.1]{Shi20})
, and call such an object \textbf{pro-PD-thickening}. 
Given such $\{ \mathcal{T}_n \}$ and a sheaf $\sE$ of $\sO_{(X, M)/W}$-modules, we may evaluate $\sE$ on $\{ \mathcal{T}_n \}$ and obtain a sheaf of modules $\varprojlim \sE(T_n, M_{T_n})$ on the formal scheme $\varprojlim T_n$. 
The lemma below describes categorical products in $(X, M)^\wedge_\lcrys$. 

\begin{lemma}
\label{lem: products in lcrys site}
    Let $\Lambda$ be a finite set and let $\mathcal{T}_\lambda \colonequals (U_\lambda, T_\lambda, M_{T_\lambda}, \iota_\lambda, \delta_\lambda)$ be objects of $((X, M) / W_n)_\lcrys$ (resp. $(X, M)_\lcrys^\wedge$) indexed by $\lambda \in \Lambda$. Then the (categorical) product $\prod_{\lambda \in \Lambda} \mathcal{T}_\lambda$ is given by the log PD-envelope (resp. the $p$-complete log PD-envelope) $\mathcal{T}$ of the closed immersion 
    $$ \iota : \biggl(U \colonequals \prod_X U_\lambda, M|_U \biggr) \into \biggl(T \colonequals \prod_{W_n} T_\lambda, M_T \colonequals \Bigl(\prod M_{T_\lambda} \Bigr|_T \Bigr)^a \biggr). $$
\end{lemma}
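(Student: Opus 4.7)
The plan is to verify the universal property of $\mathcal{T}$ as the categorical product by combining the adjunction $\mathsf{F} \dashv \mathsf{D}$ of \cref{prop: log PD-envelope} with the universal properties of the products of schemes and of quasi-coherent log schemes. I handle the finite-level case $((X, M)/W_n)_\lcrys$; the $p$-complete version is obtained by the same argument applied to the $p$-complete log PD-envelope, or equivalently by passing to inverse limits in $n$.

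First, I would observe that $\iota$ is the categorical product of $\{\iota_\lambda\}_{\lambda \in \Lambda}$ in the category $\mathscr{C}$ of \cref{prop: log PD-envelope}. Indeed, $U = \prod_X U_\lambda$ is the product of schemes over $X$, and $(T, M_T)$ is the product of the log schemes $(T_\lambda, M_{T_\lambda})$ over $W_n$ in the category of quasi-coherent log schemes (the underlying scheme is the fiber product, while the log structure is the associated log structure of the direct sum of the pullbacks $\bigoplus_\lambda \mathrm{pr}_\lambda^* M_{T_\lambda}$). Consequently, for any closed immersion $\iota' \in \mathscr{C}$, one has $\mathrm{Hom}_{\mathscr{C}}(\iota', \iota) = \prod_\lambda \mathrm{Hom}_{\mathscr{C}}(\iota', \iota_\lambda)$.

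Next, I would construct canonical projections $\mathcal{T} \to \mathcal{T}_\lambda$ in the log-crystalline site: on underlying schemes and log structures these are the composition of the counit $(D, M_D) \to (T, M_T)$ of the adjunction with the projection $(T, M_T) \to (T_\lambda, M_{T_\lambda})$. The PD-compatibility with respect to the universal PD-structure $\gamma$ on $J_D \subseteq \sO_D$ and the given $\delta_\lambda$ on $J_\lambda \subseteq \sO_{T_\lambda}$ is verified by reducing, via the explicit \'etale-local construction in \cref{prop: log PD-envelope}, to the classical PD-envelope, where $\gamma$ agrees with $\delta_\lambda$ on the sub-ideal pulled back from $J_\lambda$ by the uniqueness of compatible divided powers over the base.

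Finally, for any $\mathcal{S} = (U', T', M_{T'}, \iota', \delta') \in ((X, M)/W_n)_\lcrys$, the adjunction and the first step give
\[
    \mathrm{Hom}_{((X, M)/W_n)_\lcrys}(\mathcal{S}, \mathcal{T}) = \mathrm{Hom}_{\mathscr{C}}(\mathsf{F}(\mathcal{S}), \iota) = \prod_\lambda \mathrm{Hom}_{\mathscr{C}}(\mathsf{F}(\mathcal{S}), \iota_\lambda).
\]
Composing with the canonical projections from the second step realizes this as a bijection with $\prod_\lambda \mathrm{Hom}_{((X, M)/W_n)_\lcrys}(\mathcal{S}, \mathcal{T}_\lambda)$: any tuple $(f_\lambda)$ on the right corresponds via the product bijection to a morphism $\mathsf{F}(\mathcal{S}) \to \iota$ in $\mathscr{C}$, which lifts via the adjunction to a morphism $\mathcal{S} \to \mathcal{T}$ in $\widetilde{\mathscr{C}}$, and composing with the canonical projections $\mathcal{T} \to \mathcal{T}_\lambda$ recovers each $f_\lambda$ as the underlying morphism of a morphism in $\widetilde{\mathscr{C}}$, hence automatically PD-compatible. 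The main technical obstacle is the PD-compatibility of the canonical projections $\mathcal{T} \to \mathcal{T}_\lambda$ verified in the second step; the remainder is a formal assembly of universal properties.
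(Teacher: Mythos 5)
Your plan---reduce everything to the adjunction $\mathsf{F}\dashv\mathsf{D}$ of \cref{prop: log PD-envelope}---is the right one, and is presumably how the cited Beilinson \S1.5 argument also proceeds. Steps 1 and 3 are formally correct, provided one reads the $\mathrm{Hom}_{\mathscr{C}}$'s in the appropriate slice over $(X,M)$ and $W_n$, so that the products $U = \prod_X U_\lambda$ and $(T, M_T) = \prod_{W_n}(T_\lambda, M_{T_\lambda})$ really are the categorical products in the category where $\iota$ lives. The soft spot is step 2. Your justification that $\sO_{T_\lambda}\to\sO_D$ respects divided powers invokes ``the uniqueness of compatible divided powers over the base,'' but that uniqueness controls only the extension of $\gamma$ to $I\sO_D$; it says nothing about whether the envelope's $\bar\gamma$ restricted to the larger sub-ideal $J_\lambda\sO_D\subseteq\bar J_D$ agrees with the image of $\delta_\lambda$. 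As written this is not a proof of the compatibility, and your bijection in step 3 genuinely relies on the projections being morphisms in $\wt{\mathscr{C}}$.

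The clean repair, which makes step 2 superfluous, is to observe that the unit of $\mathsf{F}\dashv\mathsf{D}$ is an isomorphism, i.e.\ $\mathsf{D}(\iota_\lambda)\cong\mathcal{T}_\lambda$ for each $\lambda$. Since $\iota_\lambda$ is already exact, the last part of \cref{prop: log PD-envelope} reduces this to the classical statement that the $\gamma$-compatible PD-envelope of $J_\lambda\subseteq\sO_{T_\lambda}$ is $\sO_{T_\lambda}$ itself---the standard fact that a PD-thickening is its own PD-envelope, via the universal property applied to $(\sO_{T_\lambda},J_\lambda,\delta_\lambda)$ over $(W_n,(p),\gamma)$. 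Equivalently, $\mathsf{F}$ is fully faithful. The lemma then is the one-line chain
\[
\mathrm{Hom}_{\wt{\mathscr{C}}}(\mathcal{S},\mathsf{D}(\iota))=\mathrm{Hom}_{\mathscr{C}}(\mathsf{F}\mathcal{S},\iota)=\prod_\lambda\mathrm{Hom}_{\mathscr{C}}(\mathsf{F}\mathcal{S},\iota_\lambda)=\prod_\lambda\mathrm{Hom}_{\wt{\mathscr{C}}}(\mathcal{S},\mathsf{D}(\iota_\lambda))=\prod_\lambda\mathrm{Hom}_{\wt{\mathscr{C}}}(\mathcal{S},\mathcal{T}_\lambda),
\]
and the PD-compatibility of the projections comes out for free as the image of $\mathrm{pr}_\lambda\circ\epsilon$ under the inverse of this bijection. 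One further loose end: for the $p$-complete version, ``passing to inverse limits'' is too quick, because objects of $(X,M)^\wedge_\lcrys$ are by definition level-wise flat over $W_n$, so you should at least record why the level-wise envelopes remain flat.
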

\begin{proof} 
This can be read off from \cite[\S1.5~Prop.]{Bei} and the arguments therein. 
\end{proof}

We also recall the definition of semi-stable $p$-adic formal schemes over $\sO_K$.
\begin{definition}
\label{def st models}
We say that a $p$-adic formal scheme $Y$ over $\sO_K$ is \textit{semi-stable} (resp. \textit{weakly semi-stable}) if $Y$ can be covered in \'etale topology by open affines $U$ \'etale over $$\Spf(\sO_K \langle y_1, \cdots, y_r, y_{r+1}^\pm, \cdots, y_n^\pm \rangle / (y_1 \cdots y_r - \varpi))$$ where $\varpi$ is a uniformizer of $\sO_K$ (resp. $0 \neq \varpi$ is a non-unit in $\sO_K$). Here $r, n, \varpi$ may depend on $U$. 
\end{definition}

Now we review the log structures on the $p$-adic discrete valuation ring $\mathcal{O}_K$ and its Breuil--Kisin ring, which will also be the setup for the rest of the section.
\begin{example}\label{conv of log of base}
	We let  $K$  be  a complete discretely valued $p$-adic field, with $\mathcal{O}_K$ the ring of integers and $k$ the perfect residue field.
	Let $W=W(k)$ be the ring of Witt vectors, and let $\mathfrak{S}$ be the Breuil--Kisin ring $W(k)[\![u]\!]$.
	For a given uniformizer $\pi$ of $\mathcal{O}_K$, 
	we can equip the ring $\mathcal{O}_K$ with the log structure $M_{\mathcal{O}_K}$ associated to the pre-log structure $\mathbb{N}\to \mathcal{O}_K$, $n\mapsto \pi^n$.
	Equivalently, the log structure $M_{\mathcal{O}_K}$ is given by the monoid $\mathcal{O}_K\cap K^\times \to \mathcal{O}_K$.
	Moreover, it induces a log structure $M_k$ on the residue field $k$ by base change, namely the map of monoids $\mathbb{N}\to k$, sending $n$ onto $0^n$, and one can equip a hollow log structure $M_0$ on $W$ that is associated to $\mathbb{N}\to W$, $n\mapsto 0^n$.\footnote{Note that $M_0$ is different from $M_{\mathcal{O}_K}$ even when $W=\mathcal{O}_K$.
		This marks the difference between the \emph{crystalline log structure} and the \emph{de Rham log structure}, which traces back to Hyodo--Kato's comparison theorem of log analogue of crystalline-de Rham comparison.}
	On the other hand, the choice of $\pi$ induces a log structure $M_\mathfrak{S}$ on $\mathfrak{S}$ deforming $M_{\mathcal{O}_K}$, defined by $\mathbb{N}\to \mathfrak{S}$, $n\mapsto u^n$.
	The various (pre-)log structures can be summarized into the following commutative diagram
	\[
	\begin{tikzcd}
		& \mathbb{N} \arrow[ld, "n\mapsto \pi^n"'] \arrow[d, "n\mapsto u^n" description] \arrow[rd, "n\mapsto 0^n"] &\\
		\mathcal{O}_K & \mathfrak{S} \arrow[l, "u\mapsto \pi"] \arrow[r, "u\mapsto 0^n"'] & W.
	\end{tikzcd}
	\]

	Denote by $E(u)$ the Eisenstein polynomial which generates the kernel of $\mathfrak{S}\to \mathcal{O}_K$. Note that with the given log structures, the surjection $\mathfrak{S}\to \mathcal{O}_K$ is already an exact closed immersion. We let $D_\mathfrak{S}(E)$ be the $p$-complete PD-envelope of $(E(u))$, and use $M_\mathfrak{S}$ to denote the induced log structure on  $D_\mathfrak{S}(E)$. More explicitly, 
    \begin{equation}
    \label{eqn: formula for D}
        D =W\llbracket u \rrbracket [ \frac{E(u)^m}{m!}, \forall m\geq 1]^\wedge_p
    \end{equation}
    By the formal smoothness of the ring $\mathfrak{S}$ over $W$, the log PD-thickening $( D_\mathfrak{S}(E), \mathcal{O}_K/p, M_\mathfrak{S})$ is a weakly initial object in the absolute log-crystalline site $\bigl( \mathcal{O}_K/p, M_{\mathcal{O}_K/p} \bigr)_\lcrys$.
\end{example}

Our next example is the log structure on a framed $p$-adic formal scheme with semi-stable reduction.
\begin{example}
	\label{log of framed semi-stable reduction}
	Let $R$ be a $p$-complete algebra over $\mathcal{O}_K$.
	Assume $R$ is ($p$-completely) smooth over $\mathcal{O}_K\langle y_1,\ldots, y_s \rangle/(\prod_{j=1}^s y_j-\pi)$ for some integer $s\geq 1$.
	Then we can equip $R$ with the log structure $M_R$ associated with the map of monoids
	\[
	\mathbb{N}^s \to R, \quad (n_1,\ldots,n_s)\mapsto \prod y_j^{n_j}.
	\]
	This in particular enhances the map of rings $\mathcal{O}_K\to R$ to a map of log pairs $(\mathcal{O}_K, M_{\mathcal{O}_K}) \to (R,M_R)$.
	Here we note that in the special case when $s=1$ and thus $R$ is smooth over $\mathcal{O}_K$, the log structure $M_R$ is nothing but the base change of $M_{\mathcal{O}_K}$ along the map of rings $\mathcal{O}_K\to R$.

More canonically, by \cite[Claim.~1.6.1]{CK19} the log structure $M_R\to R$ is equal to the map of monoids 
	\[
	(R\cap (R[1/p])^\times) \longrightarrow R,
	\]
	which in particular can be globalized to a log structure on a $p$-adic formal scheme $X$, namely
	\[
	M_X: (\mathcal{O}_X \cap \lambda_*\mathcal{O}_{X_\eta}^\times) \longrightarrow \mathcal{O}_X,
	\]
	where $\lambda:X_{\eta,\mathrm{an}} \to X_\mathrm{Zar}$ is the canonical map of sites.
We call this latter construction the \emph{standard log structure} on a $p$-adic formal scheme $X$.
Here we also remark that by construction, the generic fiber $M_{X_\eta}$ is the trivial log structure on $X_\eta$.
\end{example}
	We then record the log structure on perfectoid rings.
\begin{example}[log structure on perfectoids]
	\label{log of perfectoid}
	Let $S$ be a $p$-torsionfree perfectoid ring over $\mathcal{O}_K$.
	We let $M_S$ be the monoid $S\cap S[1/p]^\times$ on $S$, this then forms a log structure on the ring $S$ which is compatible with $M_{\mathcal{O}_K}$ under the natural map $\mathcal{O}_K\to S$.
	More generally, assume $R$ is a topologically finite type $p$-adic formal scheme over $\mathcal{O}_K$ with the standard log structure $M_R$ as in \Cref{log of framed semi-stable reduction}.
	Then any map $R\to S$ can be naturally extended to a map of log pairs $(R,M_R)\to (S,M_S)$, since the image of $y_j$ in $S$ is also contained in $S[1/p]$ as well.	
\end{example}

For later usage, we give an explicit construction of the \v{C}ech nerve for the log PD-thickening associated with Breuil--Kisin ring. 
\begin{lemma}
	\label{Cech for log crys}
	Let $(\mathfrak{S},M_\mathfrak{S})$ be Breuil--Kisin ring with its log structure as defined in \Cref{conv of log of base}, and let $(D,\mathcal{O}_K/p,M_D)\colonequals (D_\mathfrak{S}(E)^\wedge_p,\mathcal{O}_K/p,M_\mathfrak{S})$ be the associated log pro-PD-thickening in the site $(\mathcal{O}_K/p, M_{\mathcal{O}_K})^\wedge_\lcrys$.
	Then the \v{C}ech nerve $(D^\bullet, \sO_K / p, M_{D^\bullet})$ of the covering object $(D, \sO_K/p, M_D)$ in $(\mathcal{O}_K/p, M_{\mathcal{O}_K})^\wedge_\lcrys$ is naturally isomorphic to the cosimplicial $p$-complete PD-polynomials with log structures
	\[
	([n]\longmapsto \bigl( D^n\colonequals D\{t_i,~1\leq i\leq n\},~\mathcal{O}_K/p, ~M_{D^n}\colonequals \{u^{a_0}u_1^{a_1}\cdots u_n^{a_n}~|~ a_i\in \mathbb{Z}, ~\sum a_i \geq 0\}^a \bigr).
	\]
    where $t_i \colonequals u/ u_i - 1$.
\end{lemma}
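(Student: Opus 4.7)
My plan is to directly compute the categorical product in the pro-log-crystalline site using \Cref{lem: products in lcrys site} and \Cref{lem: exact localization}, then identify the result with the explicit formula stated.

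First, I apply \Cref{lem: products in lcrys site} to identify the $n$-th term of the Čech nerve of $(D, \sO_K/p, M_D)$ with the $p$-complete log PD-envelope of the diagonal closed immersion
\[
(\sO_K/p, M_{\sO_K/p}) \into \bigl(\mathfrak{S}^{\ctensor_W (n+1)}, M\bigr),
\]
the tensor product being taken in (the pro category of) log $W$-algebras. Using the chart $\IN \to \mathfrak{S}, \, 1 \mapsto u$ for $M_\mathfrak{S}$, the coproduct $\mathfrak{S}^{\ctensor_W (n+1)}$ naturally carries the log structure associated to the pre-log map $\IN^{n+1} \to \mathfrak{S}^{\ctensor_W (n+1)}$ sending the $i$-th generator to $u_i$ (the image of $u$ from the $i$-th factor).

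Next, I find an exact localization via \Cref{lem: exact localization}. The relevant chart morphism is $h : \IN^{n+1} \to \IN$, the summation map, because each $u_i \mapsto \pi \in \sO_K/p$. Thus $Q' \colonequals (h^{\mathrm{gp}})^{-1}(\IN) = \{(a_0, \ldots, a_n) \in \IZ^{n+1} : \sum a_i \geq 0\}$, and the exact localization produces a log $W$-algebra $Y'$ whose log structure monoid is precisely $\{u^{a_0} u_1^{a_1} \cdots u_n^{a_n} : a_i \in \IZ,\, \sum a_i \geq 0\}^a$ appearing in the statement. Concretely, inside $Y'$ the ratios $u/u_i$ have become units, while no $u_i$ has been individually inverted.

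The third step is to compute the $p$-complete PD-envelope of the kernel $J$ of $Y' \to \sO_K/p$. Since $u/u_i \mapsto 1$ under this quotient, the elements $t_i \colonequals u/u_i - 1$ lie in $J$; together with $E(u) \in J$ they generate $J$ after the exact localization, because each $u_i = u/(1+t_i)$ and so $E(u_i)$ is expressible in terms of $E(u)$ and $t_i$ inside $Y'$. Therefore the $p$-complete log PD-envelope is precisely $D\{t_1, \ldots, t_n\}$ equipped with the log structure pulled back from $Y'$, matching the asserted formula. The cosimplicial face and degeneracy maps come for free from the universal property of the Čech nerve and can be read off on the generators $u_i$ (correspondingly on the $t_i$).

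The main technical point I would verify carefully is the third step: one needs to confirm that in $Y'$ the ideal $J$ is truly generated by $E(u)$ together with the $t_i$, so that the resulting $p$-complete PD-envelope is a free PD-polynomial algebra over $D$ on $t_1, \ldots, t_n$ with no additional PD-variables appearing. This is where the interplay between the specific choice of uniformizer $\pi$, the Eisenstein polynomial $E$, and the exact localization (which turns the ratios $u/u_i$ into units congruent to $1$) enters, and is the only non-formal ingredient in the argument.
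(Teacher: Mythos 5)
Your approach is essentially the same as the paper's: compute the categorical self-product via \Cref{lem: products in lcrys site}, form an exact localization via \Cref{lem: exact localization} with the chart morphism being summation $\IN^{n+1}\to\IN$, and identify the resulting $p$-complete log PD-envelope with $D\{t_1,\ldots,t_n\}$.

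One small difference worth flagging: \Cref{lem: products in lcrys site} literally tells you to take the log PD-envelope of $(\sO_K/p, M_{\sO_K/p})\into (D^{\ctensor_W(n+1)}, (\prod M_\mathfrak{S})^a)$, since each factor is $(D,\sO_K/p,M_D)$. You instead start from $\mathfrak{S}^{\ctensor_W(n+1)}$. This is fine but implicitly uses that PD-envelopes compose (the PD-envelope of the ideal in $D^{\ctensor}$ agrees with the PD-envelope directly from $\mathfrak{S}^{\ctensor}$), which you should state. The paper avoids this by starting from $D^{\ctensor}$, writing out the PD-envelope with divided powers of $E(u)$, of each $E(u_i)$, and of each $t_i$ explicitly, and then observing that the divided powers of $E(u_i)$ are redundant because $E(u_i)\in(E(u),t_i)$ already lies in the PD-ideal. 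Your argument, that $J$ is topologically generated by $E(u),t_1,\ldots,t_n$ so the $E(u_i)$'s never need to be adjoined, achieves the same thing and is perhaps cleaner; the $Y'/(t_1,\ldots,t_n)\cong\mathfrak{S}$ identification (killing each $t_i$ identifies $u_i$ with $u$) makes the generation claim transparent. The ``no extra PD-variables'' concern you flag at the end is exactly the point the paper addresses by its explicit formula, so you should resolve it by noting that $(E(u),t_1,\ldots,t_n)$ is a regular sequence in the formally smooth ring $Y'$ (of relative dimension $n+1$ over $W$), which makes the PD-envelope free.
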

\begin{proof}
	By \cref{lem: products in lcrys site}, the $(n+1)$-th self coproduct of $(D, \mathcal{O}_K/p,M_D)$ in $(\mathcal{O}_K/p, M_{\mathcal{O}_K/p})^\wedge_\lcrys$ is the $p$-adic completion of the log PD-envelope for the surjection 
 \begin{equation}
 \label{eqn: BK pre-envelope}
     \bigl( (D^{\otimes_W n+1})^\wedge_p, (\prod_{i=0}^n M_\mathfrak{S})^a \bigr) \longrightarrow (\mathcal{O}_K/p, M_{\mathcal{O}_K/p}).
 \end{equation}
    Applying formula (\ref{eqn: exact localization}) in \cref{lem: exact localization}, one finds an exact localization of (\ref{eqn: BK pre-envelope}) given by
    \begin{equation}
        \label{eqn: BK exact localization}
        \bigl( (D^{\otimes_W n+1})^\wedge_p[ \frac{u}{u_i}, \frac{u_i}{u},~\forall 1\leq i\leq n] , \{u^{a_0}u_1^{a_1}\cdots u_n^{a_n}~|~ a_i\in \mathbb{Z}, \sum a_i \geq 0\}^a \bigr) \longrightarrow (\mathcal{O}_K/p , M_{\mathcal{O}_K/p}),
    \end{equation}
	the kernel of which is topologically generated by the elements $u u^{-1}_i-1$, for $1\leq i\leq n$.
	So the ring  $D^n$ is isomorphic to the $p$-complete PD-envelope of the above surjection, namely
	\[
	\bigl( W\llbracket u, u_1,\ldots, u_n\rrbracket[ \frac{E(u)^m}{m!}, \frac{E(u_i)^m}{m!}, \frac{(\frac{u}{u_i}-1)^m}{m!}~|~ \forall 1\leq i\leq n, m\geq 1] )^\wedge_p.
	\]
	This then is naturally isomorphic to the complete divided power polynomial ring of the variables $u/u_i-1$ for $1\leq i\leq n$ over the ring $D$ in \cref{conv of log of base}, namely the ring 
	\[
	D\{\frac{u}{u_i} -1,~1\leq i\leq n\} \simeq D\{t_i,1\leq i\leq n\},
	\]
	for variables $t_i$ representing $u_i/ u -1$.
	Here, the log structure of $D^n$ by construction is associated to the pre-log structure for the monoid $\{u^{a_0}u_1^{a_1}\cdots u_n^{a_n}~|~ a_i\in \mathbb{Z}, ~\sum a_i \geq 0\}$, which we denote as $M_{D^n}$.
\end{proof}

The previous lemma in particular allows us to define the \emph{monodromy operator} on the log PD-thickening $(D,M_D)$. 
\begin{construction}[Monodromy on $D$]
	\label{const: monodromy on D} We let $p_0$ and $p_1$ be the two coface maps from $(D,M_D)$ to $(D^1,M_{D^1})$ in the cosimplicial diagram of \Cref{Cech for log crys}.
	By construction, they are $W$-linear and are induced by sending the element $u\in D$ onto $u$ and $u_1$ in $D^1$ respectively.
	In particular, by taking their difference and translating the formula in terms of $u$ and $t_1$, we get 
	\[
	D\xrightarrow{p_0-p_1} t_1\cdot D\{t_1\},\quad u^m\mapsto u^m-u_1^m=u^m(1-\frac{1}{(1+t_1)^m}).
	\]
	By taking the image of $p_0-p_1$ in the quotient $t_1D\{t_1\}/(t_1^{[\geq2]})^\wedge_p D\{t_1\} \simeq t_1\cdot D \simeq D$,
	we get the \emph{monodromy} operator on $D$, namely the endomorphism
	\[
	N_D\colon D\longrightarrow D,\quad u^m\longmapsto mu^m.
	\]
	It in particular sends a power series $f(u)\in D\subset K_0\llbracket u \rrbracket$ onto the element $f'(u)u$.
	As a consequence, the kernel of the endomorphism $N_D$ is equal to $W$, and by induction the subset of nilpotent elements under $N_D$ is the polynomial subring $W[\log{u}]\subset D$.	
 	Moreover, since the Frobenius $\varphi_\mathfrak{S}$ sends $u$ onto $u^p$, one readily checks that $N_D$ and $\varphi_D$ satisfy the formula $N_D\varphi_D=p\varphi_DN_D$.
  
  Alternatively, regarding $D$ as a subring of $K_0\llbracket u \rrbracket$, one may view $N_D$ as characterized by the property that $\nabla (f(u)) = N_D(f(u)) d \log (u)$, where $\nabla :D \to D \otimes_{W\{u\}} \Omega^1_{(W\{ u \}, (u^\IN)^a)/W} = D\cdot  d \log (u)$ is the standard differentiation.
\end{construction}

\subsection{Logarithmic $F$-isocrystals}
\label{sub: log F isocrystals}
In this subsection, we discuss the various descriptions of the category of $F$-isocrystals and their logarithmic analogues for smooth finite type schemes in characteristic $p$.
Before the discussion, we recall our convention that unless otherwise mentioned, the base of any crystalline site is $\mathcal{O}_{K_0}=W(k)$, and the base of any log-crystalline site is $W(k)$ with the trivial log structure.

First, we recall an explicit local formula for $F$-isocrystals over the crystalline site.
\begin{proposition}\cite{Ogu84}\label{equiv def isoc}
	let $Z$ be a smooth scheme over $\mathcal{O}_K/p$, and let $Z_s$ be its reduced special fiber.
	The following categories are equivalent:
	\begin{enumerate}[label=\upshape{(\alph*)},series=innerlist]
		\item\label{item: a in Ogus} The category of $F$-isocrystals over the convergent site $(Z/W)_{\mathrm{conv}}$ in the sense of Ogus \cite[\S 2]{Ogu84}.
		\item\label{item: b in Ogus} The category of $F$-isocrystals over the crystalline site $Z_{s,\crys}$.
	\end{enumerate}
In addition, if there is a smooth $p$-adic formal scheme $Z_0$ over $W$ that admits a Frobenius lift $\varphi_{Z_0}$ such that $Z\simeq Z\times_{\Spf(W)} \Spf(\mathcal{O}_K)$, then the above categories are also equivalent to the following
    \begin{enumerate}[label=\upshape{(\alph*)},resume=innerlist]
		\item The category $\mathrm{Vect}^{\varphi, \nabla_{Z_0}}(\mathcal{O}_{Z_{0,\eta}})$ of flat connections $\sF$ over the generic fiber $(Z_0)_\eta$ with a horizontal $\varphi_{Z_0}$-linear Frobenius endomorphism whose linearization $\varphi_{\sF}\colon\varphi_{Z_0}^* \sF \to \sF$ is an isomorphism.
	\end{enumerate}
\end{proposition}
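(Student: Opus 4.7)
The plan is as follows. The equivalence of \ref{item: a in Ogus} and \ref{item: b in Ogus} is essentially Ogus's comparison theorem in \cite{Ogu84}. There is a natural restriction functor from crystals on $Z_{s,\crys}$ to convergent isocrystals on $(Z/W)_{\mathrm{conv}}$, built from the observation that any enlargement in the convergent site produces, after reduction modulo $p^n$, a PD-thickening of a subscheme of $Z_s$. Without Frobenius structures, this functor is only fully faithful onto the subcategory of ``convergent'' crystalline isocrystals (those for which the Taylor expansion of the associated connection satisfies an appropriate convergence condition). My plan is to invoke Ogus's theorem, whose main point is that an $F$-structure automatically forces convergence: iterating $\varphi$ contracts nilpotent ideals by arbitrarily large powers of $p$, so every $F$-isocrystal on $Z_{s,\crys}$ comes from a convergent one, and the functor becomes an equivalence.

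For \ref{item: b in Ogus} $\Leftrightarrow$ (c), I would use the smooth Frobenius-equivariant lift $Z_0$ as a covering object in the crystalline site. Given an $F$-isocrystal $\mathcal{E}$ on $Z_{s,\crys}$, evaluating at the tower $\{Z_s \into Z_0/p^n\}$ produces a $p$-complete module on $Z_0$; applying the crystal property to the \v{C}ech nerve of this cover equips it with an integrable connection, while the Frobenius lift $\varphi_{Z_0}$ together with $\varphi_\mathcal{E}$ furnishes a horizontal Frobenius, which becomes an isomorphism after inverting $p$. This defines the forward functor $\Isoc^\varphi(Z_{s,\crys}) \to \Vect^{\varphi,\nabla_{Z_0}}(\mathcal{O}_{Z_{0,\eta}})$.

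For the quasi-inverse, given $(\mathcal{F},\nabla,\varphi_\mathcal{F})$ on $Z_{0,\eta}$, I would reconstruct a crystal by defining its value on a PD-thickening $(U,T,\delta)$ as $g^*\mathcal{F}$, where $g \colon T \to Z_0$ is a Zariski-local lift of $U \to Z_s \into Z_0$ produced from the formal smoothness of $Z_0/W$; independence of the choice of lift is controlled by the Taylor series associated to $\nabla$. The main obstacle is convergence of this Taylor series on a general PD-thickening, whose defining ideal is only topologically nilpotent rather than strictly nilpotent, so that the naive series need not terminate nor converge. This is precisely where the $F$-structure is essential: each Frobenius pullback contracts the defining ideal by a factor of $p$, so after enough iterations the series converges, and the isomorphism $\varphi_\mathcal{F}$ transports the resulting convergent data back to $\mathcal{F}$. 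I would formalize this Dwork-style Frobenius descent following the arguments of \cite[\S 2]{Ogu84}, and finally verify compatibility with Frobenius and functoriality in $Z_0$ to conclude that the two constructions are mutually inverse.
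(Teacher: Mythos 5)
Your proposal is correct and follows essentially the same route as the paper. For (a) $\Leftrightarrow$ (b) both you and the paper defer to Ogus; the paper cites \cite[Prop.\ 2.18, Ex.\ 2.7.3]{Ogu84} precisely. For (b) $\Leftrightarrow$ (c), the paper compresses the argument into two citations: the standard crystal--connection dictionary (isocrystals on $Z_{s,\crys}$ correspond to vector bundles on $(Z_0)_\eta$ with integrable \emph{quasi-nilpotent} connection) plus Berthelot's theorem \cite[Thm.~2.4.2]{Ber96} that quasi-nilpotence is automatic in the presence of a Frobenius structure. You instead unwind what Berthelot's theorem proves: you identify the quasi-nilpotence condition as the obstruction to defining the crystal on general PD-thickenings via Taylor series, and you argue that the Frobenius resolves it because $\varphi$ contracts the PD ideal into $p\sO_T$, so iterating $\varphi$ forces convergence. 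That is the correct Dwork-trick mechanism underlying Berthelot's result, and your sketch of the forward functor (evaluate on the pro-thickening $Z_0/p^n$, \v{C}ech nerve for the connection, $\varphi_{Z_0}$ plus $\varphi_\sE$ for the Frobenius) matches the standard construction. The net difference is exposition, not mathematics: the paper buys brevity by citing Berthelot, while you buy self-containedness by re-deriving the key step; both are valid, and your version makes the role of the Frobenius more transparent to the reader.
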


\begin{proof}
	Similar to \Cref{Dwork's trick}, the equivalence between the first two categories were proved by Ogus in \cite[Prop.\ 2.18, Ex.\ 2.7.3]{Ogu84}.
	Assume the existence of the lift, then the equivalence between the last two categories is a classical result. 
 It is well known (e.g., by \cite[\href{https://stacks.math.columbia.edu/tag/07JH}{Tag 07JH}]{stacks-project}) that the category of isocrystals on $Z_{s, \crys}$ is equivalent to that of vector bundles on $(Z_0)_\eta$ equipped with an integrable and quasi-nilpotent connection. Then \cite[Thm 2.4.2]{Ber96} tells us that the quasi-nilpotence is automatic when a Frobenius structure is present.
\end{proof}

Our goal is to establish the analogues of the above facts in the log-crystalline setting. 
Let us start with the following.

\begin{proposition}
	\label{Dwork's trick}
	Let $Z$ be a finite type scheme over $\mathcal{O}_K/p$, and let $Z_s$ be the base change $Z\times_{\Spec(\mathcal{O}_K/p)} \Spec(k)$.
	\begin{enumerate}[label=\upshape{(\roman*)}]
		\item\label{Dwork's trick crys} The natural pullback functor along $Z_{s, \crys} \to Z_{\crys}$ induces an equivalence of categories of $F$-isocrystals
		\[
		\Isoc^\varphi(Z_{\crys}) \simeq \Isoc^\varphi(Z_{s,\crys}).
		\]
		
		\item\label{Dwork's trick log-crys} Assume $Z$ admits a log structure $M_Z$, and let $M_s$ be its base change along $\Spec(k)\to \Spec(\mathcal{O}_K/p)$.
		The natural pullback functor along $(Z_s,M_s)_{\lcrys} \to (Z,M)_{\lcrys}$ induces an equivalence of categories of $F$-isocrystals
		\[
		\Isoc^\varphi((Z,M)_{\lcrys}) \simeq \Isoc^\varphi((Z_s,M_s)_{\lcrys}).
		\]
	\end{enumerate}
\end{proposition}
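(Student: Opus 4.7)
The plan is to reduce both statements to Ogus's convergent-site description of $F$-isocrystals, where the Frobenius structure automatically encodes the convergence needed to make the theory insensitive to nilpotent closed immersions.

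The starting observation is that the closed immersion $Z_s \hookrightarrow Z$ is defined by the ideal $(\pi) \subseteq \mathcal{O}_Z$, and this ideal is nilpotent: indeed $\pi^e = p \cdot u$ for some unit $u$ and some $e \geq 1$ (the ramification index), and $p = 0$ on $\mathcal{O}_Z$ because $Z$ lives over $\mathcal{O}_K/p$, so $\pi^e = 0$ on $\mathcal{O}_Z$. In particular $Z$ and $Z_s$ share the same underlying topological space, and any reduced scheme mapping to $Z$ factors uniquely through $Z_s$. The pullback functor in the proposition is induced by the morphism of ringed topoi coming from this closed immersion.

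For part \ref{Dwork's trick crys}, I would pass through the convergent site. Ogus's equivalence $\Isoc^\varphi(Z_{s,\crys}) \simeq \Isoc^\varphi((Z_s/W)_{\mathrm{conv}})$ (which appears as the equivalence (a)$\Leftrightarrow$(b) in the preceding proposition via its proof in \cite{Ogu84}) extends to the ``slightly non-reduced'' base $Z$ over $\mathcal{O}_K/p$, since the definition of an enlargement $(T, T_0 \to Z)$ requires $T$ to be a flat $p$-adic formal $W$-scheme with a map from its reduction $T_0$ to the base: any such map factors through $Z_s$ because $T_0$ is reduced but $(\pi) \subset \mathcal{O}_Z$ is nilpotent. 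Hence the convergent sites $(Z/W)_{\mathrm{conv}}$ and $(Z_s/W)_{\mathrm{conv}}$ are canonically equivalent, and composing with Ogus's theorem yields the desired equivalence of $F$-isocrystal categories. A direct crystalline reformulation of what this accomplishes: given $(\sE, \varphi)$ on $Z_{s,\crys}$, its value at $(U, T, \delta) \in Z_\crys$ is obtained by setting $\sE'(U, T, \delta) \colonequals \sE(U \times_Z Z_s, T, \delta)$ modulo the issue that the ideal of $U_s$ in $T$ may not a priori admit divided powers on the ``$\pi$-part''; the Frobenius $\varphi$ replaces $\pi$ by $\pi^p$ up to units, and iterating this pulls every offending element into $p \cdot \mathcal{O}_T$ where the canonical PD structure takes over. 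This is exactly Dwork's trick.

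For part \ref{Dwork's trick log-crys}, the same strategy applies after replacing the convergent site with Shiho's log-convergent site (\cite{Shi20} and its precursors). Since $M_s$ is by construction the pullback of $M_Z$ along $Z_s \hookrightarrow Z$, a log enlargement $(T, M_T, T_0 \to Z)$ for $(Z, M_Z)$ automatically yields one for $(Z_s, M_s)$ via the factorization of $T_0 \to Z$ through $Z_s$, and vice versa; thus the two log-convergent sites agree, and Shiho's log analogue of Ogus's equivalence delivers the desired equivalence of $F$-isocrystal categories. Faithfulness of the pullback is then automatic. The main obstacle in the argument is verifying cleanly that the Shiho machinery is applicable to log schemes over $\mathcal{O}_K/p$ (rather than only over the perfect residue field $k$)—once the framework is set up, the topological invariance under nilpotent thickenings is formal, exactly as in the non-log case. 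The alternative route, working directly with log-crystalline sites, requires handling the failure of $(\pi)$ to admit a PD structure on a general log PD thickening; this can be arranged using the Frobenius-induced convergence argument sketched above, but the convergent-site route is substantially cleaner.
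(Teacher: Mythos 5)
Your convergent-site route for part \ref{Dwork's trick crys} is spiritually the same mechanism Ogus himself uses, and the observation that every enlargement $(T, T_1 \to Z)$ must factor through $Z_s$ (because $T_1$ is reduced and $\pi$ is nilpotent in $\mathcal{O}_Z$) is precisely \cite[Ex.~2.7.3]{Ogu84}, which the paper cites. However, there are two concrete concerns. First, the equivalence between $F$-isocrystals on the convergent site and on the crystalline site that you invoke --- the (a)$\Leftrightarrow$(b) of \Cref{equiv def isoc} --- is stated in the paper under a \emph{smoothness} hypothesis on $Z$, whereas \Cref{Dwork's trick} allows $Z$ of arbitrary finite type; you would need to check that the relevant direction of Ogus's comparison (or at least the full faithfulness plus a Frobenius-descent argument for essential surjectivity) holds without smoothness before your composite equivalence closes. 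Second, and more seriously, for part \ref{Dwork's trick log-crys} you explicitly punt on whether Shiho's log-convergent machinery is set up over $\mathcal{O}_K/p$; you flag this as ``the main obstacle'' and leave it unresolved, so as written your proof of part \ref{Dwork's trick log-crys} is incomplete.

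The paper takes a different and more self-contained route that sidesteps both issues and treats the two parts uniformly. Choose $n$ large enough that the $n$-th power of the absolute Frobenius on $\mathcal{O}_K/p$ factors through the quotient $\mathcal{O}_K/p \twoheadrightarrow k$ (this is your observation $\pi^e = 0$, rephrased: $\pi^{p^n}$ dies mod $p$ once $p^n \geq e$). Then $F_Z^n$ factors as $Z \xrightarrow{f} Z_s \xrightarrow{\iota} Z$, compatibly with the log structures. Pulling back $F$-isocrystals, $\iota^* \circ f^*$ is identified with $F_{Z_s}^{n,*}$ and $f^* \circ \iota^*$ (conjugated appropriately) with $F_Z^{n,*}$; since the Frobenius structure on an $F$-isocrystal gives a natural isomorphism $F^{n,*}\mathcal{E} \simeq \mathcal{E}$, both composites are naturally isomorphic to the identity, so $\iota^*$ is an equivalence. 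This argument never touches the convergent site, never needs smoothness, and applies verbatim with log-crystalline sites replacing crystalline ones. Your own parenthetical sketch of a ``direct crystalline reformulation'' gestures at this idea (iterating Frobenius to push $\pi$ into the regime where the canonical PD structure takes over), but the paper's version turns it into a clean categorical argument by working with the factorization of $F_Z^n$ rather than trying to extend PD structures on individual thickenings. I would recommend making the Frobenius-factorization argument your primary proof; it resolves both gaps at once.
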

\begin{proof}
	This is known as Dwork's trick, and part \ref{Dwork's trick crys} follows from the proof of \cite[Prop.~2.18]{Ogu84}.
	For part \ref{Dwork's trick log-crys}, it can be proved in the same way by replacing the crystalline site of loc.~cit. by the log-crystalline site (cf. \cite[Rmk~B.23]{DLMS2}).
	For readers' convenience, we explain the construction of the functor that is inverse to the pullback. 
	
	Let $n\in \mathbb{N}$ be an integer so that the $n$-th power of the Frobenius on $\mathcal{O}_K/p$ factors through the natural surjection $\mathcal{O}_K/p\to k$.
	By the functoriality of Frobenius morphisms, we then have a commutative diagram of schemes over $\mathcal{O}_K/p$ that is compatible with their log structures
	\[
	\begin{tikzcd}
		Z \arrow[rr,"F_Z^n"] \arrow[rd, "f"] & &Z\\
		& Z_s, \arrow[ru, hook,"\iota"] &
	\end{tikzcd}
	\]
	where $\iota:Z_s\to Z$ is the natural closed immersion.
	Now we define a functor from $\Isoc^\varphi((Z_s,M_s)_{\lcrys})$ to $\Isoc^\varphi((Z,M)_{\lcrys})$, by sending $(\mathcal{F},\varphi_\mathcal{F})$ onto its pullback $(f^*\mathcal{F},\varphi_{f^*\mathcal{F}})$ along the map of sites $f:(Z_s, M_s)_{\lcrys} \to (Z,M)_{\lcrys}$.
	
	To check that this forms an inverse to the pullback functor along the closed immersion $\iota$, we notice that for an $F$-isocrystal $(\mathcal{E},\varphi_{\mathcal{E}})$ over $\Isoc^\varphi((Z,M)_{\lcrys})$, 
	the Frobenius structure $\varphi_\mathcal{E}$ gives an isomorphism of $F$-isocrystals $F_Z^{n,*} \mathcal{E}\simeq \mathcal{E}$ in $\Isoc^\varphi((Z,M)_{\lcrys})$.
	Notice that by the commutative diagram above, we have
	\[
	F_Z^{n,*} \mathcal{E} \simeq f^*\mathcal{F},
	\]
	where $\mathcal{F}\colonequals \iota^*\mathcal{E}$ is the restriction.
	So we get $f^*\mathcal{F}\simeq \mathcal{E}$, which is functorial with respect to $\mathcal{E}$.
	On the other hand, by the functoriality of Frobenius morphisms, the composition $\iota^*\circ f^*$ is equivalent to the pullback functor $F_{Z_s}^{n,*}$ for the $n$-th power Frobenius $F_{Z_s}^n:Z_s\to Z_s$.
	Hence thanks to the Frobenius structure on isocrystals again, we see the functor $\iota^*\circ f^*$ is an equivalence on $F$-isocrystals.        
\end{proof}

By the above proposition, we can simplify the study of log F-isocrystals on a finite type log scheme over $\sO_K/p$ to the study of log F-isocrystals on a log $k$-variety.

\begin{definition}
\label{def: isocrystals with N}
    Let $Z$ be a $k$-variety. Let $F_Z$ be the absolute Frobenius endomorphism on $Z$. Let $\Isoc^{N}(Z_\crys)$ be the category of pairs $(\sE, N)$ where $(\sE, \varphi_\sE)$ is an F-isocrystal in $ \Isoc^\varphi(Z_\crys)$ and $N$ is an endomorphism of $\sE$.
    Let $\Isoc^{\varphi, N}(Z_\crys)$ be the category of triples $(\sE, \varphi_{\sE}, N_{\sE})$ where $(\sE, \varphi_\sE)$ is an object of $\Isoc^\varphi(Z_\crys)$ and $N_{\sE}$ is an endomorphism of the isocrystal $\sE$, with an induced endomorphism $N_{F_Z^*\sE}\colonequals pF_Z^*N_\sE$ acting on $F_Z^*\sE$.
    We often call $N$ the \textbf{monodromy operator} on $\sE$.
\end{definition}

Note that by definition, the isomorphism of isocrystals $\varphi_\sE:F_Z^*\sE \simeq \sE$ is compatible with the operators $N_{F_Z^*\sE}$ and $N_\sE$.
\begin{proposition}
    For every $(\sE, \varphi_\sE, N_\sE) \in \Isoc^{\varphi, N}(Z_\crys)$, the endomorphism $N_\sE$ is nilpotent. 
\end{proposition}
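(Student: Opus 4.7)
The plan is to extract a characteristic polynomial for $N_\sE$ and use the Frobenius compatibility to force its non-leading coefficients to vanish. The defining compatibility between $N_{F_Z^*\sE} = p F_Z^* N_\sE$ and $N_\sE$ under $\varphi_\sE$ rearranges to
\[
N_\sE \;=\; \varphi_\sE \circ (p F_Z^* N_\sE) \circ \varphi_\sE^{-1},
\]
so $N_\sE$ and $p F_Z^* N_\sE$ are conjugate as $\sO$-linear endomorphisms of $\sE$. Restricting to a connected component so that $\sE$ has constant rank $r$, the characteristic polynomial $\chi(t) \colonequals \det(t - N_\sE)$ gives a well-defined polynomial with coefficients in global sections of $\sO_{Z/W}[1/p]$, since $N_\sE$ is a morphism in $\Isoc$ and hence its local matrix entries (and in particular the coefficients of $\chi$) are horizontal and glue across crystal evaluations. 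Conjugation invariance of $\det$ together with the compatibility of the characteristic polynomial with pullback yields
\[
\chi(t) \;=\; \det(t - p F_Z^* N_\sE) \;=\; p^r \, F_Z^* \chi(t/p).
\]
Writing $\chi(t) = t^r + \sum_{k=0}^{r-1} c_k t^k$ and comparing coefficients, each $c_k$ satisfies $c_k = p^{r-k} F_Z^*(c_k)$.

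To conclude, I would work locally on $Z$. Choose an affine open cover $\{U_\alpha = \Spec B_\alpha\}$ such that each $U_\alpha$ admits a smooth formal lift $\wt{U}_\alpha = \Spf \wt{B}_\alpha$ over $W$ equipped with a Frobenius lift $\varphi_{\wt{B}_\alpha}$; such lifts exist after choosing \'etale coordinates and sending them to their $p$-th powers. Evaluating the relation on $\wt{U}_\alpha$ produces $c_k \in \wt{B}_\alpha[1/p]$ satisfying $c_k = p^{r-k} \varphi_{\wt{B}_\alpha}(c_k)$. Writing $c_k = p^{-n} b$ with $b \in \wt{B}_\alpha$ and iterating gives $b = p^{m(r-k)} \varphi_{\wt{B}_\alpha}^m(b) \in p^{m(r-k)} \wt{B}_\alpha$ for every $m \ge 0$; since $r-k \ge 1$ and $\wt{B}_\alpha$ is $p$-adically separated, we get $b = 0$, hence $c_k = 0$. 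Thus $\chi(t) = t^r$ globally, and Cayley--Hamilton applied to the locally free module $\sE(\wt{U}_\alpha)$ gives $N_\sE^r = 0$ on every $\wt{U}_\alpha$; since a crystal is determined by its value on such a thickening, this implies $N_\sE^r = 0$ in $\End(\sE)$.

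The step that I expect to require the most care is justifying that the characteristic polynomial defines a bona fide global invariant --- namely, that the coefficients of $\det(t - N_\sE)$ computed in any local chart are horizontal for the canonical connection on $\sE$, so that they coincide on overlaps and assemble into a global section of $\sO_{Z/W}[1/p]$. This is essentially formal once one unwinds that $N_\sE$ is a morphism in $\Isoc(Z_\crys)$, but it is the only nontrivial input beyond standard linear algebra. After that, the argument is a straightforward Frobenius contraction on a Frobenius-lifted affine, exploiting $p$-adic separatedness of $\wt{B}_\alpha$; no further geometric input is needed.
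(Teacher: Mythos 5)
Your proof is correct but follows a genuinely different route than the paper's. The paper invokes \cite[Thm~4.1]{Ogu84} to reduce to the case $Z = \Spec(k)$, where $\sE$ is a finite-dimensional $K_0$-vector space; there the relation $N\widetilde{\varphi} = p\widetilde{\varphi} N$ forces $N$ to have no nonzero eigenvalues, and nilpotence follows from the Jordan normal form. You bypass Ogus by running the Frobenius contraction directly on a Frobenius-lifted chart $\Spf(\widetilde{B}_\alpha)$, trading the valuation argument over $K_0$ for the $p$-adic separatedness of $\widetilde{B}_\alpha$ and closing with Cayley--Hamilton. The two proofs share the same core identity $c_k = p^{r-k}F^*(c_k)$ on characteristic-polynomial coefficients, which at a point is precisely the statement that $N$ has no nonzero eigenvalues; but your argument is self-contained, whereas the paper leans on Ogus's theorem as a genuine external input. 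Both implicitly assume $Z$ is smooth (you need it for the existence of the lifts $\widetilde{B}_\alpha$ and for local freeness of $\sE$, the paper needs it to apply Ogus). One simplification you should note: the step you flag as the most delicate --- that the coefficients of $\chi(t)$ are horizontal and glue to a global section of $\sO_{Z/W}[1/p]$ --- is unnecessary. You only ever use the relation $c_k = p^{r-k}\varphi_{\widetilde{B}_\alpha}(c_k)$ chart by chart, where it follows immediately from evaluating $\varphi_\sE$ on $\widetilde{U}_\alpha$; the chartwise conclusion $N_\sE^r = 0$ already determines the global crystal without any gluing of coefficients.
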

\begin{proof}
    By \cite[Thm~4.1]{Ogu84} it suffices to show that for some closed point $z$ on each connected component of $Z$, the fiber of $N$ at $z$ is nilpotent. 
    Therefore, we reduce to the case when $Z = \Spec(k)$ is a point itself, so that $\sE$ is nothing but a $K_0$-vector space and $\varphi_\sE$ is a $\sigma$-linear automorphism of $\sE$. 
    We let $\widetilde{\varphi}_\sE$ be the semi-linear endomophism of $\sE$ induced from the isomorphism $\varphi_\sE$.
    Then the condition on the monodromy operator translates to an equality of semi-linear endomorphisms $N_\sE \widetilde{\varphi}_\sE  = p \widetilde{\varphi}_\sE N_\sE$, which implies that $N_\sE \in \mathrm{End}_{K_0}(\sE)$ has no nonzero eigenvalues. 
    Therefore, $(N_\sE)^{\dim \sE} = 0$ by the existence of Jordan normal form. 
\end{proof}

\begin{construction}
\label{const: decompose connections}
    Let $P$ be a topologically finite type $p$-complete and $p$-torsionfree $W$-algebra equipped with the trivial log structure. 
    Set $D_P \colonequals (P \tensor_{W} D)^\wedge_p$.
    As $(D, (u^\IN)^a)$ is (formally) log smooth over $W$, so is $(D_P, (u^\IN)^a)$ over $P$. Hence the conormal sequence below is short exact (see e.g., \cite[IV~Thm~3.2.3]{OgusBook}):
    \begin{equation}
        \label{eqn: conormal sequence}
        0 \to D_P \what{\tensor}_P \Omega^1_{P/W} \to \Omega^1_{(D_P, (u^\IN)^a)/W} \to \Omega^1_{(D_P, (u^\IN)^a)/ P} \to 0.
    \end{equation}
    Since the formation of logarithmic diffentials commutes with base change, there is a canonical isomorphism $\Omega^1_{(D_P, (u^\IN)^a)/ P} \simeq D_P \what{\tensor}_D \Omega^1_{(D, (u^\IN)^a)/W} $. 
    The section $D_P \to P$ given by $u \mapsto 0$ of the natural map $P \to D_P$ provides a splitting of (\ref{eqn: conormal sequence}), or equivalently a decomposition 
    \begin{equation}
    \label{eqn: decompose Omega}
        \Omega^1_{(D_P, (u^\IN)^a)/W} \simeq \bigr(D_P \what{\tensor}_P \Omega^1_{P/W} \bigr) \oplus \bigr(D_P \what{\tensor}_D \Omega^1_{(D, (u^\IN)^a)/W} \bigr) = \bigr(D_P \what{\tensor}_P \Omega^1_{P/W} \bigr) \oplus \bigr(D_P \cdot d\log(u) \bigr).
    \end{equation}
    The decomposition above and the projection formula implies that given a quasi-coherent $D_P$-module $\sM$, to give a continuous connection $\nabla_\sM$ on $\sM$ that has the log-pole at $\{u=0\}$, it is equivalent to give the connections $$\nabla_{\sM, P} :  \sM \to \sM \what{\tensor}_{P} \Omega^1_{P/W} \quad \textrm{and} \quad \nabla_{\sM, u} : \sM \to \sM \cdot d \log(u), $$
    where $\sM$ is viewed as a module over $P$ and $D$ respectively\footnote{This means that we suppressed the notations for the direct image functor after applying the projection formula.}. 
    Let $\nabla_{\sM, \p_u}$ denote the $W$-linear endomorphism on $\sM$ such that 
    \begin{equation}
        \label{eqn: divide dlog(u)}
        \nabla_{\sM, u}(m) = \nabla_{\sM, \p_u}(m) \cdot d \log(u), ~\forall m \in \sM. 
    \end{equation}
    Direct computation shows that $\nabla_\sM^2 = 0$ if and only if $\nabla_{\sM, P}^2 = 0, \nabla_{\sM, u}^2 = 0$, and $\nabla_{\sM, P}$ commutes with $\nabla_{\sM, u}$ (i.e., the following diagram commutes).
    \begin{equation}
    \label{diag: two nablas commute}
        \begin{tikzcd}
	\sM & {\sM \what{\tensor}_P \Omega^1_{P/W}} \\
	\sM & {\sM \what{\tensor}_P \Omega^1_{P/W}}
	\arrow["{\nabla_{\sM, P}}", from=1-1, to=1-2]
	\arrow["{\nabla_{\sM, \p_u}}"', from=1-1, to=2-1]
	\arrow["{\nabla_{\sM, \p_u} \tensor \mathrm{id}}", from=1-2, to=2-2]
	\arrow["{\nabla_{\sM, P}}", from=2-1, to=2-2]
\end{tikzcd}
    \end{equation}
\end{construction}

\begin{construction}
    \label{const: from log to non log}
    Let $Z$ be a $k$-variety. There is a natural fully faithful functor of sites $Z_\crys \to (Z, (0^\IN)^a)_\lcrys$ given be sending a PD-thickening $(T, U) \in Z_\crys$ to $(T, U, (0^\IN)^a)$. This induces a natural forgetful functor $ \mathsf{F} : \Isoc((Z, (0^\IN)^a)_\lcrys) \to \Isoc(Z_\crys)$. 
    Now we construct a \textbf{residue functor} $\mathrm{Res}$ which equips a monodromy operator $N_{\sfF(\sE)}$ on $\sfF(\sE)$ for each $\sE \in \Isoc((Z, 0^\IN)^a)_\lcrys$ which will fit into the following commutative diagram:
    \[\begin{tikzcd}
	& {\Isoc^N(Z_\crys)} \\
	{\Isoc((Z,(0^\IN)^a)_\lcrys)} & {\Isoc(Z_\crys)}.
	\arrow["{\textrm{forget $N$}}", from=1-2, to=2-2]
	\arrow["{\mathrm{Res}}", from=2-1, to=1-2]
	\arrow["{\mathsf{F}}", from=2-1, to=2-2]
    \end{tikzcd}\]
    Fix such an isocrystal $\sE$. 
    Our goal is to construct for each pro-PD-thickening $(T, U) \in Z_\crys$ an endomorphism $N_\sE (T)$ of $\sE(T, U, (0^\IN)^a)$ which is functorial in $(T, U)$. 

    We may assume $T=\Spf(P)$ is affine and perform \cref{const: decompose connections} for $\sM=\sE(\Spf(D_P),U,(u^\IN)^a)$.
    Note that the exact surjection of the log pairs $(D_P, (u^\IN)^a) \to (P, (0^\IN)^a)$ that is given by $u \mapsto 0$ induces a morphism between objects in $(Z, (0^\IN)^a)_\lcrys$, so that by the crystalline nature of $\sE$ there is a canonical isomorphism $\sM \otimes_{D_P} P \simeq \sE(T, U, (0^\IN)^a)$.
    By the Leibniz rule, for any $f(u) \in D \subseteq K_0[\![u ]\!]$ and $m \in \sM$, we have
    $$ \nabla_{\sM, \p_u} ( f(u) m) = u f'(u) m + f(u) \nabla_{\sM, u} (m). $$
    This implies that $\nabla_{\sM, \p_u}(u \sM) \subseteq u \sM$. Therefore, $\nabla_{\sM, \p_u}$ induces an endomorphism on $$ \sM \cdot d \log (u) / \sM \cdot du \simeq \sM / u \sM \simeq \sE(T, U, (0^\IN)^a),$$ which is the sought after $N_{\sfF(\sE)}(T,U)$. The commutativity of the diagram (\ref{diag: two nablas commute}) implies that $N_{\sfF(\sE)}(T,U)$ is $P$-linear. 
    Putting $N_{\sfF(\sE)}(T,U)$ together as $(T, U)$ runs through objects in $Z_\crys$, we obtain the desired endomorphism $N_{\sfF(\sE)}$ on $\sfF(\sE)$. 
    The construction is functorial, yielding the functor $\mathrm{Res} : \sE \mapsto (\sfF(\sE), N_{\sfF(\sE)})$.
\end{construction}

The functor $\sfF$ clearly preserves Frobenius structures when they are present, and naturally induces a functor $\Isoc^\varphi : \Isoc^\varphi ((Z, (0^\IN)^a)_\lcrys) \to \Isoc^{\varphi}(Z_\crys)$ which is denoted by $\sfF$ as well. 
In addition, the residue functor $\mathrm{Res}$ can be upgraded to a functor $\Isoc^\varphi((Z, (0^\IN)^a)_\lcrys) \to \Isoc^{\varphi, N}(Z_\crys)$, as we shall see below.

\begin{proposition}
\label{prop:from_log_F_isoc_to_varphi-N-isoc}
    Let $(\sE, \varphi_\sE) \in \Isoc^\varphi((Z, (0^\IN)^a)_\lcrys)$, let $(\sE', \varphi_{\sE'})$ be the $F$-isocrystal $\sfF(\sE, \varphi_\sE)$ over the crystalline site $Z_\crys$, and let $(\sE', N_{\sE'})$ be the isocrystal with the monodromy operator $\mathrm{Res}(\sE)$. Then the triple $(\sE', \varphi_{\sE'}, N_{\sE'})$ is naturally an object in $\Isoc^{\varphi, N}(Z_\crys)$, yielding a functor
    \[
    \mathrm{Res}\colon \Isoc^\varphi((Z, (0^\IN)^a)_\lcrys) \longrightarrow \Isoc^{\varphi, N}(Z_\crys).
    \]
\end{proposition}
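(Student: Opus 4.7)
\medskip

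\textbf{Proof plan.} The construction $\mathrm{Res}$ already produces an endomorphism $N_{\sE'}$ of $\sE'$ as an isocrystal, since it was defined functorially in the PD-thickening $(T,U)\in Z_\crys$ in \cref{const: from log to non log}. Hence the only thing that remains is the compatibility relation $N_{F_Z^*\sE'} = pF_Z^*N_{\sE'}$ between $N_{\sE'}$ and the Frobenius structure $\varphi_{\sE'}$. My plan is to verify this locally on the crystalline site, using the decomposition of the log connection introduced in \cref{const: decompose connections} together with the key observation that the absolute Frobenius on the log scheme $(\Spf(D),(u^\IN)^a)$ scales $d\log(u)$ by $p$.

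More precisely, I would first reduce to a local computation: take an affine object $(T=\Spf(P),U,(0^\IN)^a)\in Z_\crys$ and work with the log PD-thickening $(\Spf(D_P),U,(u^\IN)^a)$; up to replacing $P$ by a simplicial resolution by $W$-algebras admitting Frobenius lifts, we may assume that $P$ carries a lift $\varphi_P$ of the absolute Frobenius on $P/p$, which together with $u\mapsto u^p$ yields a Frobenius lift $\varphi_{D_P}$ on $(D_P,(u^\IN)^a)$ compatible with $\varphi_P$ via the natural map $P\to D_P$. The crystal-theoretic compatibility of $\mathrm{Res}$ then ensures that the result of the computation is independent of this choice.

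Next, write $\sM\colonequals \sE(\Spf(D_P),U,(u^\IN)^a)$ and decompose the log connection $\nabla_\sM$ into $\nabla_{\sM,P}$ and $\nabla_{\sM,u}=\nabla_{\sM,\p_u}\cdot d\log(u)$ as in \eqref{eqn: decompose Omega}–\eqref{eqn: divide dlog(u)}. The Frobenius isomorphism $\varphi_\sE$ induces a horizontal isomorphism $\varphi_{D_P}^*\sM\stackrel{\sim}{\to}\sM$ of modules with log connection. Since $\varphi_{D_P}^*(d\log(u))=d\log(u^p)=p\cdot d\log(u)$, unwinding the definition of $\nabla_{\sM,\p_u}$ shows that the residue operator transfers along $\varphi_\sE$ with a factor of $p$; passing to the quotient $\sM/u\sM\simeq \sE'(T,U)$ then gives exactly the relation $N_{F_Z^*\sE'}(T,U) = p\,F_Z^*N_{\sE'}(T,U)$ under the identification $\varphi_{\sE'}$. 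Functoriality in $(T,U)$ promotes this pointwise identity to an equality of morphisms of isocrystals, so the triple $(\sE',\varphi_{\sE'},N_{\sE'})$ lies in $\Isoc^{\varphi,N}(Z_\crys)$, and the assignment is manifestly functorial in $(\sE,\varphi_\sE)$.

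The main obstacle I anticipate is bookkeeping rather than conceptual: one must carefully identify the pullback of the log differentials under the Frobenius lift $\varphi_{D_P}$, and verify that the splitting \eqref{eqn: decompose Omega} interacts compatibly with this pullback, so that $\nabla_{\sM,\p_u}$ transforms in the expected way while $\nabla_{\sM,P}$ contributes nothing to the residue on $\sM/u\sM$. Once this local computation is established the rest is a formal gluing argument, using that every object of $Z_\crys$ admits a covering by affines on which a Frobenius lift exists.
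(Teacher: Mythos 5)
Your proposal is correct and follows essentially the same route as the paper's proof: both reduce the desired relation $N_{F_Z^*\sE'} = pF_Z^*N_{\sE'}$ to a computation at a PD-thickening $(\Spf(P),U)$ equipped with a Frobenius lift, apply the splitting of \cref{const: decompose connections}, and invoke the identity $\varphi_D^*(d\log u) = p\, d\log u$. The only small difference is that you invoke a simplicial resolution and a gluing step, whereas the paper notes directly that the thickening is weakly final in $Z_\crys$ (Zariski-locally on $Z$), which obviates any gluing and is a slightly cleaner way to close the argument.
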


\begin{proof}
    We let $\sE'$ be the underlying isocrystal of $\mathsf{F}(\sE)$ over $Z_\crys$. 
    Note that $F_Z^* \sE$ is an isocrystal on $(Z, (0^\IN)^a)_\crys$ in its own and $\sfF(F_Z^* \sE) = F_Z^* \sE'$. 
    Let $N_{F_Z^* \sE'}$ be the monodromy operator on $F_Z^* \sE'$ such that $\mathrm{Res}(F_Z^* \sE) = (F_Z^* \sE, N_{F_Z^* \sE'})$. 
    Now we compare the following two diagrams. 
    \begin{equation}
    \label{diag: two diagrams for monodromy}
        \begin{tikzcd}
	{F_Z^* \sE'} & {\sE'} && {F_Z^* \sE'} & {\sE'} \\
	{F_Z^* \sE'} & {\sE'} && {F_Z^* \sE'} & {\sE'}
	\arrow["{\varphi_{\sE'}}", from=1-1, to=1-2]
	\arrow["{N_{F_Z^* \sE'}}"', from=1-1, to=2-1]
	\arrow["{N_\sE'}", from=1-2, to=2-2]
	\arrow["{\varphi_{\sE'}}", from=1-4, to=1-5]
	\arrow["{pF_Z^* N_{\sE'}}"', from=1-4, to=2-4]
	\arrow["{ N_{\sE'}}", from=1-5, to=2-5]
	\arrow["{\varphi_{\sE'}}", from=2-1, to=2-2]
	\arrow["{\varphi_{\sE'}}", from=2-4, to=2-5]
        \end{tikzcd}
    \end{equation}
    The first diagram commutes by the functoriality of $\mathrm{Res}$, and our goal is to show that the second one also commutes. Therefore, we reduce to showing that $N_{F_Z^* \sE'} = p F_Z^* N_{\sE'}$.

    Zariski locally we can always embed $Z$ into a smooth formal scheme over $W$, and by taking PD-envelope, we find an object $(T=\Spf(P), U) \in Z_\crys$ which admits a Frobenius endomorphism $\varphi_P$ compatible with that of $Z$. Moreover, such $(T, U)$ is weakly final, so it suffices to show that the above equality is satisfied once evaluated at $(T,U)$. Now perform \cref{const: decompose connections}. The object $D_P$ is equipped with a diagonal Frobenius action $\varphi_{D_P} \colonequals \varphi_P \what{\tensor}_W \varphi_D$. 
    Let $\sM \colonequals \sE(D_P, U, (u^\IN)^a)$. 
    Then by \Cref{const: from log to non log}, the monodromy operator $N_{F_Z^* \sE'}$ is computed by taking the residue of the pullback (log) connection $\varphi_{D_P}^* (\nabla_{\sM})$ on $\varphi_{D_P}^* \sM = (F_Z^* \sE')(D_P)$. Hence the equation $N_{F_Z^* \sE'}(T,U) = p F_Z^* N_{\sE'}(T,U)$ follows from the fact that $\varphi_D^* (d \log(u)) = d \log(u^p) = p \cdot d \log(u)$. 
\end{proof}

\begin{lemma}
	\label{prop:equiv def of crys vs pullback new}
    Let $f : Z' \to Z$ be a morphism between $k$-varieties. Then the following diagrams of categories commute.
    \begin{equation*}
    \label{diag: equiv def of crys vs pullback}
        \begin{tikzcd}
	{\Isoc((Z, (0^\IN)^a)_\lcrys)} & {\Isoc^N(Z_\crys)} & {\Isoc^\varphi((Z, (0^\IN)^a)_\lcrys)} & {\Isoc^{\varphi, N}(Z_\crys)} \\
	{\Isoc((Z', (0^\IN)^a)_\lcrys)} & {\Isoc^N(Z'_\crys)} & {\Isoc^\varphi((Z', (0^\IN)^a)_\lcrys)} & {\Isoc^{\varphi, N}(Z_\crys)}
	\arrow["{\mathrm{Res}}", from=1-1, to=1-2]
	\arrow["{f^*}"', from=1-1, to=2-1]
	\arrow["{f^*}", from=1-2, to=2-2]
	\arrow["{\mathrm{Res}}", from=1-3, to=1-4]
	\arrow["{f^*}"', from=1-3, to=2-3]
	\arrow["{f^*}"', from=1-4, to=2-4]
	\arrow["{\mathrm{Res}}", from=2-1, to=2-2]
	\arrow["{\mathrm{Res}}", from=2-3, to=2-4]
        \end{tikzcd}
    \end{equation*}
\end{lemma}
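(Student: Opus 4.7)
The plan is to reduce the commutativity of both diagrams to a local computation with the log PD-envelope $D$. Begin by noting that for every morphism $f:Z'\to Z$ of $k$-varieties, the forgetful functor $\sfF$ and the pullback $f^*$ manifestly commute on both (log) crystalline sites, since $f^*$ is defined via pre-composition of structure maps of objects with $f$. Thus the underlying (F-)isocrystals on $Z'_\crys$ agree between $\mathrm{Res}(f^*\sE)$ and $f^*\mathrm{Res}(\sE)$ for each $\sE$, and only the monodromy operators need to be compared.

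For the left diagram, fix $\sE\in\Isoc((Z,(0^\IN)^a)_\lcrys)$ and test the equality $f^*N_{\sfF(\sE)}=N_{\sfF(f^*\sE)}$ on a weakly final family of objects in $Z'_\crys$. Working Zariski-locally on $Z'$, take an affine pro-PD-thickening $(T'=\Spf(P'),U')\in Z'_\crys$ coming from a closed embedding of $Z'$ into a smooth $W$-formal scheme. The prescription in \Cref{const: from log to non log} computes both monodromy operators at $(T',U')$ by the same recipe: (i) form $D_{P'}=(P'\what{\tensor}_W D)^\wedge_p$; (ii) evaluate the log isocrystal on $(\Spf(D_{P'}),U',(u^\IN)^a)$; (iii) decompose the resulting log connection using (\ref{eqn: decompose Omega}) from \Cref{const: decompose connections}; (iv) extract the residue along $d\log(u)$ modulo $u$ according to (\ref{eqn: divide dlog(u)}). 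Since pullback along $f$ on the log-crys sites acts on objects merely by pre-composing the structure map $U'\to Z'$ with $f$, the modules, log PD-envelopes, and log connections that enter the recipe are literally identified on both sides; hence the two endomorphisms of $\sE(T',U',(0^\IN)^a)$ coincide. Gluing as $(T',U')$ varies in the local weakly final family gives the commutativity of the left diagram.

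For the right diagram, I would invoke \Cref{prop:from_log_F_isoc_to_varphi-N-isoc}: the upgraded $\mathrm{Res}$ on $\Isoc^\varphi$ equips the plain $\mathrm{Res}$ with the Frobenius structure $\varphi_\sE$ already present on $\sE$, with no additional data involved. Pullback along $f$ on both $\Isoc^\varphi$ and $\Isoc^{\varphi,N}$ preserves these Frobenius structures by definition, so the commutativity of the right diagram follows from the left together with the obvious naturality of $\varphi_\sE$ under $f^*$.

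The only point requiring genuine care — rather than a real obstacle — is to verify that $f^*$ commutes with the formation of log PD-envelopes when evaluating a crystal. This is, however, built into the crystal axiom together with the universal property of log PD-envelopes recorded in \Cref{prop: log PD-envelope}, so no additional work is needed; the functoriality of the residue construction is then formal.
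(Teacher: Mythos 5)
Your proposal is correct and takes essentially the same approach as the paper, which disposes of this lemma with the single remark that the commutativity is ``clear from the construction of $\mathrm{Res}$.'' You are simply unfolding that remark: the key observation is precisely yours, namely that crystalline pullback along $f$ is computed by pre-composing structure maps of thickenings with $f$, so that for any pro-PD-thickening $(T'=\Spf(P'),U')\in Z'_\crys$, the modules $(f^*\sE)(\Spf(D_{P'}),U',(u^\IN)^a)$ and $\sE(\Spf(D_{P'}),U',(u^\IN)^a)$ coincide, and the residue extraction via \Cref{const: decompose connections} is a purely module-theoretic recipe that does not see $Z$ versus $Z'$. One small remark on your closing paragraph: the caveat about $f^*$ commuting with log PD-envelope formation is actually a non-issue for a simpler reason than you suggest, because $D_{P'}=(P'\what{\tensor}_W D)^\wedge_p$ with $D=W\{u\}$ is built solely from the ring $P'$ of the thickening, with no reference to $Z$, $Z'$, or $f$ at all; so there is no envelope formation to check compatibility against. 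That said, this does not affect the correctness of your argument.
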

\begin{proof}
    This is clear from the construction of $\mathrm{Res}$. 
\end{proof}

Motivated by the above results, we make the following construction. 

\begin{definition}
\label{const: Frob flat modules} Let $P$ be a topologically finite type $p$-complete and $p$-torsionfree $W$-algebra equipped with the trivial log structure. Set $D \colonequals W\{u\}$ and $D_P \colonequals (D \tensor_W P)^\wedge_p$. 
Assume $\varphi_P$ is an endomorphism of $P$ which lifts the absolute Frobenius morphism of the mod $p$ reduction. 
Define the Frobenius action $\varphi_{D_P}$ on $D_P$ by $\varphi_D \tensor \varphi_P$. 
Then we define the following categories of Frobenius modules with various additional structures.  
    \begin{enumerate}[label=(\alph*)]
    \item The category $\mathrm{Coh}^\varphi(D_P[1/p])$ (resp. $\Coh^\varphi(P[1/p])$) of coherent $D_P[1/p]$-modules $\sM$ (resp. coherent $P[1/p]$-modules $\sM_0$) together with a Frobenius structure $\varphi_\sM : \varphi^*_{D_P} \sM \stackrel{\sim}{\to} \sM$ (resp. $\varphi_{\sM_0} : \varphi_P^* \sM_0 \stackrel{\sim}{\to} \sM_0$). 
    \item The category $\mathrm{Coh}^\nabla(D_P[1/p])$ of coherent $D_P[1/p]$-modules $\sM$ together with a continuous quasi-nilpotent\footnote{We are referring to condition (iii) in \cite[Thm~6.2(b)]{Kat89}.} integrable log connection $\nabla_\sM : \sM \to \sM \ctensor_{D_P} \Omega^1_{(D_P, (u^\IN)^a)/W}$. 
    \item The category $\mathrm{Coh}^{\varphi, \nabla}(D_P[1/p])$ of triples $(\sM, \varphi_\sM, \nabla_\sM)$ where $(\sM, \nabla_\sM) \in \mathrm{Coh}^\nabla(D_P[1/p])$ and $\varphi_\sM : \varphi_{D_P}^* \sM \stackrel{\sim}{\to} \sM$ is a horizontal Frobenius structure on $\sM$. 
    \item The category $\mathrm{Coh}^{\nabla_P,\nabla_u}({D_P}[1/p])$ of coherent ${D_P}[1/p]$-modules $\sM$ equipped with two commuting (cf. Diagram~\ref{diag: two nablas commute}) quasi-nilpotent continuous integrable log connections $\nabla_{\sM, P}$ and $\nabla_{\sM, u}$ on $\sM$ when $\sM$ is viewed as a module over $P$ and $(D, (u^\IN)^a)$ respectively.
    \item The category $\mathrm{Coh}^{\varphi, \nabla_P,\nabla_u}({D_P}[1/p])$ of tuples $(\sM, \varphi_{\sM}, \nabla_{\sM, P}, \nabla_{\sM, u})$, such that the triple $(\sM, \nabla_{\sM, P}, \nabla_{\sM, u})$ belongs to $\mathrm{Coh}^{\nabla_P,\nabla_u}({D_P}[1/p])$, and $\varphi_\sM : \varphi_{D_P}^* \sM \stackrel{\sim}{\to} \sM$ is a Frobenius structure on $\sM$ which is horizontal with respect to both $\nabla_{\sM, P}$ and $\nabla_{\sM, u}$. 
    \item The category $\mathrm{Coh}^{\varphi, \nabla_P, N}(P[1/p])$ of coherent $P[1/p]$-modules $\sM_0$ with quasi-nilpotent integrable connection $\nabla_{\sM_0,P}$, horizontal Frobenius action $\varphi_{\sM_0} : \varphi_{P}^* \sM_0 \stackrel{\sim}{\to} \sM_0$ and horizontal endomorphism $N_{\sM_0}$ of $\sM_0$ such that $N_{\varphi_P^*\sM_0} = p\varphi_P^* N_{\sM_0}$. 
\end{enumerate}
\end{definition}

\begin{remark}
\label{rmk: Frob modules}
\begin{enumerate}[label=\upshape{(\alph*)}]
    \item By \cref{const: decompose connections}, there are natural identifications $\Coh^{\nabla}({D_P}[1/p]) \simeq \Coh^{\nabla_P,\nabla_u}({D_P}[1/p])$ and $\Coh^{\varphi, \nabla}({D_P}[1/p]) \simeq \Coh^{\varphi, \nabla_P,\nabla_u}({D_P}[1/p])$. 
    We remark that if a log connection $\nabla_\sM$ on a ${D_P}$-module $\sM$ decomposes into $\nabla_{\sM, P}\oplus \nabla_{\sM, u}$ as in \cref{const: decompose connections}, then $\nabla_\sM$ is quasi-nilpotent if and only if both $\nabla_{\sM, P}$ and $\nabla_{\sM, u}$ are quasi-nilpotent.
    \item As in \cref{const: from log to non log} and proof of \Cref{prop:from_log_F_isoc_to_varphi-N-isoc}, given $(\sM, \nabla_\sM) \in \Coh^{\nabla_P,\nabla_u}({D_P}[1/p])$, we can attach a natural monodromy operator $N_{\sM_0}$ on $\sM_0 = \sM/u \sM$ by reducing $\nabla_{\sM, u}$ mod $u$. 
    Hence there is a natural functor $\Coh^{\varphi, \nabla_P,\nabla_u}({D_P}[1/p]) \to \Coh^{\varphi, \nabla_P, N}(P[1/p])$ induced by $u \mapsto 0$. 
\end{enumerate}
\end{remark}

The following equivalence on Frobenius equivariant flat connections over two different base rings is inspired by Breuil's work \cite[\S 6]{Bre97} and can be used to prove the higher dimensional version of loc.\ cit..
Roughly speaking, the statement can be thought as a pro-analogue of Dwork's trick.

\begin{theorem}
	\label{thm:Breuil}
	Consider the set up in \cref{const: Frob flat modules}.  
	We view $P$ (resp. $D = W\{ u \}$) as a ${D_P}$-algebra (resp. $W$-algebra) via the surjection induced by $u \mapsto 0$. Then the base change functor $\Coh^{\varphi}({D_P}[1/p]) \to \Coh^\varphi(P[1/p])$ induced by $u \mapsto 0$ is an equivalence of categories, with the quasi-inverse of which given by $$(\sM_0, \varphi_{\sM_0}) \mapsto (D \ctensor_W \sM_0\simeq D_P[1/p]\otimes_P \sM_0, \varphi_D \tensor \varphi_{\sM_0}). $$
    Equivalently, we have the following two statements. 
    \begin{enumerate}[label=\upshape{(\roman*)}]
        \item\label{eqn: Breuil 0.0} For every $(\sM, \varphi_\sM) \in \mathrm{Coh}^{\varphi}({D_P}[1/p])$ with mod $u$ reduction $(\sM_0, \varphi_{\sM_0})$, there exists a unique Frobenius-equivariant section $s$ to the reduction map $\sM \to \sM_0$. 
        \item\label{eqn: Breuil 0.5} In Part \ref{eqn: Breuil 0.0}, the morphism $D \ctensor_W \sM_0\to \sM$ induced by $s$ is a Frobenius-equivariant isomorphism, with the Frobenius action on the domain given by $\varphi_D \tensor \varphi_{\sM_0}$. 
    \end{enumerate}
\end{theorem}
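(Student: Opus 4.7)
My plan is to prove the equivalent reformulation \ref{eqn: Breuil 0.0}: for each $(\sM, \varphi_\sM) \in \mathrm{Coh}^\varphi(D_P[1/p])$, there is a unique Frobenius-equivariant $P[1/p]$-linear section $s$ of the reduction $\sM \twoheadrightarrow \sM_0 = \sM/u\sM$. From this, \ref{eqn: Breuil 0.5} follows by extending $s$ to a $D_P[1/p]$-linear map $\widetilde{s}: D \ctensor_W \sM_0 \simeq D_P[1/p] \ctensor_{P[1/p]} \sM_0 \to \sM$. This map is Frobenius-equivariant by construction and reduces to the identity mod $u$, so its cokernel $C$ satisfies $uC = C$; applied to a $p$-adic lattice $\sM^\circ \subseteq \sM$ this forces $C = 0$ via $\bigcap_n u^n \sM = 0$, and the kernel is handled similarly. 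The full equivalence of categories is then formal once $s$ is shown to be natural in $\sM$, which follows from a lifting-uniqueness argument analogous to the one used for the section itself.

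The construction of $s$ is by successive approximation, exploiting that on $D_P[1/p]$ the PD-filtration $J^{[n]}$ coincides with the $u$-adic filtration $u^n D_P[1/p]$ (since $u^{[n]} = u^n/n!$ and $n!$ is a unit in $W[1/p]$), and that $\varphi_{D_P}(u^n D_P[1/p]) \subseteq u^{pn} D_P[1/p]$. Pick any $P[1/p]$-linear section $s_0: \sM_0 \to \sM$ (existence by locally lifting a finite generating set). Define the operator $T(s) := \varphi_\sM \circ \varphi_{D_P}^*(s) \circ \varphi_{\sM_0}^{-1}$, which sends $P[1/p]$-linear sections to themselves (using that $\sM \twoheadrightarrow \sM_0$ is Frobenius-compatible), and whose fixed points are precisely the Frobenius-equivariant sections. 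Using the identity $\varphi_{D_P}^*(u^n \sM) = u^{pn}\varphi_{D_P}^*\sM$ together with $D_P[1/p]$-linearity of $\varphi_\sM$, the key contraction estimate reads: if $(s_1 - s_2)(\sM_0) \subseteq u^n\sM$, then $(T(s_1) - T(s_2))(\sM_0) \subseteq u^{pn}\sM$. Iterating, $s_n := T^n(s_0)$ is Cauchy in the $u$-adic filtration with $(s_{n+1} - s_n)(\sM_0) \subseteq u^{p^n}\sM$, and uniqueness is immediate since any two Frobenius-equivariant sections differ by a map into $\bigcap_n u^n\sM = 0$.

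The main obstacle is that $D_P[1/p]$, and hence $\sM$, is not $u$-adically complete in general, so a priori the Cauchy sequence $\{s_n\}$ need not converge in $\sM$. The key to overcoming this is the integral identity $u^{p^n} = (p^n)! \, u^{[p^n]} \in p^{(p^n-1)/(p-1)} D_P$, which converts $u$-adic decay into exponential $p$-adic decay, \emph{provided} denominators stay uniformly bounded under iteration. I would fix a $D_P$-lattice $\sM^\circ \subseteq \sM$ (possible since $\sM$ is coherent) so that the Frobenius matrix $A$ and its inverse satisfy $A, A^{-1} \in p^{-a} \mathrm{End}_{D_P}(\sM^\circ)$ for some uniform $a \geq 0$; such a lattice exists because any two lattices in $\sM$ and in $\varphi_{D_P}^*\sM$ are commensurable. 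A straightforward induction then shows that $s_{n+1} - s_n$ lands in $p^{-Cn + (p^n-1)/(p-1)}\sM^\circ$ with $Cn$ growing only linearly in $n$. Since the $p$-adic exponent tends to $+\infty$, the series $\sum_n (s_{n+1} - s_n)$ converges $p$-adically in the $p$-adically complete lattice $\sM^\circ$, producing the Frobenius-equivariant limit $s_\infty \in \sM$ which is a fixed point of $T$ by continuity.
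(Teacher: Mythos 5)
Your proof of part \ref{eqn: Breuil 0.0} is essentially the paper's argument, packaged slightly differently. Both rest on the same key observation that $u^{p^n} = (p^n)!\,u^{[p^n]} \in p^{(p^n-1)/(p-1)}D_P$ converts $u$-adic decay into $p$-adic decay. The paper normalizes via Tate twists so that the Frobenius preserves an integral lattice $\sL$ with $\varphi_\sM^{-1}(\sL)\subseteq \varphi_{D_P}^*\sL$ (hence no denominators appear at all), whereas you keep an arbitrary lattice $\sM^\circ$ and bound the denominators by a linear term $Cn$ that is dominated by the exponential gain $(p^n-1)/(p-1)$. Both normalizations are legitimate; the paper's is slightly cleaner for the bookkeeping, yours is slightly more self-contained. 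Your iteration $T(s) := \varphi_\sM\circ\varphi_{D_P}^*(s)\circ\varphi_{\sM_0}^{-1}$ is precisely a repackaging of the paper's sequence $\psi_n$, with the paper's choice of lifts $y_n$ corresponding to your seed $s_0$.

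However, your sketch of part \ref{eqn: Breuil 0.5} has a real gap. For the cokernel, the reasoning you cite (``via $\bigcap_n u^n\sM = 0$'') does not apply: the cokernel is a quotient of $\sM$, not a submodule, so $u$-adic separatedness of $\sM$ gives you nothing about $C$. The correct tool is Nakayama's lemma applied to the induced map of finitely generated $D_P$-lattices, using that $u$ lies in the Jacobson radical of $D_P$ (since $1-uf$ is invertible via the $p$-adically convergent geometric series $\sum (uf)^n$). More seriously, the assertion that ``the kernel is handled similarly'' is not true: the kernel $K$ only satisfies $K\subseteq u\sL'$, not $K = uK$, so Nakayama is useless and separatedness alone does not finish the job. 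You must instead use Frobenius-equivariance to bootstrap: $\varphi_{D_P}^*K \subseteq u^p\varphi_{D_P}^*\sL'$ and the commutative square with $\varphi_{\sM'}, \varphi_\sM$ force $K\subseteq u^p\sL'$, which you then iterate to obtain $K \subseteq u^{p^m}\sL'$ for all $m$ and conclude by $u$-separatedness of the specific lattice $\sL' = (D\otimes_W \sL_0)^\wedge_p$. Without this iteration, injectivity does not follow.
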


\begin{proof}
As the Tate twists on both categories are compatible with the functor, it suffices to show the equivalence for the subcategories consisting of effective objects (namely those Frobenius modules where the inverse of the Frobenius structures can be defined on a finitely generated submodule over $D_P$ (resp. $P$)).
    So we let $\sL \subseteq \sM$ be a ${D_P}$-lattice such that $\varphi^{-1}_\sM(\sL) \subseteq \varphi_{D_P}^* \sL$.

    To begin, we produce a sequence of maps $\{ \psi_n:\varphi_{T}^{n,*} \sM \to \sM \}$ as below.
    Let $\iota: \varphi_{P}^* \sM\to \varphi_{{D_P}}^*\sM= (\varphi_{P}^*\sM)\otimes_{D,\varphi_D} D$ be the $P[1/p]$-linear map which sends an element $x$ to $x\otimes 1$, and set $\psi_1 \colonequals \iota \circ \varphi_\sM$. 
    Note that $\psi_1$ is semi-linear with respect to $\varphi_D$.
    Then we have the following commutative diagram: 
	\begin{equation}
		\label{eq:Breuil diagram 1}
		\begin{tikzcd}
	{\varphi_{P}^*\sM} & {\varphi_{{D_P}}^* \sM} & \sM \\
	& {\varphi_{P}^*\sM_0} & {\sM_0}
	\arrow["\iota", from=1-1, to=1-2]
	\arrow["{\psi_1}", curve={height=-18pt}, from=1-1, to=1-3]
	\arrow["{u \mapsto 0}"', from=1-1, to=2-2]
	\arrow["{\varphi_\sM}","\sim"', from=1-2, to=1-3]
	\arrow["{u\mapsto 0}", from=1-2, to=2-2]
	\arrow["{u\mapsto 0}", from=1-3, to=2-3]
	\arrow["{\varphi_{\sM_0}}","\sim"', from=2-2, to=2-3]
		\end{tikzcd}
	\end{equation}
	Now we iterate the construction by constructing the diagram below, where the map $\varphi^{i, *}_P \sM \to \varphi^{i - 1, *}_P \sM$ is given by pulling back $\psi_1$ along the map $\varphi^{i - 1}_P$. 
	\begin{equation}
		\label{eq:Breuil diagram 2}
		\begin{tikzcd}
					\varphi_{P}^{n,*} \sM \arrow[r] \arrow[d, "u\mapsto 0"'] & \varphi_{P}^{n-1,*} \sM \arrow[d, "u\mapsto 0"'] \ar[r] & \cdots \ar[r] & \sM \arrow[d, "u\mapsto 0"'] \\
			\varphi_{P}^{n,*} \sM_0 \arrow[r] & \varphi_{P}^{n-1,*} \sM_0 \arrow[r] &\cdots \arrow[r] & \sM_0,
		\end{tikzcd}
	\end{equation}
    Let $\psi_n:\varphi_{P}^{n,*} \sM \to \sM$ be the composite of the maps on the top row. 
    
    Now we produce the section map.
    Given any $x_0\in \sL_0 \colonequals \sL/u \sL$, we let $x_n$ be its unique preimage in $\varphi_P^{n,*}\sL_0\subset \varphi_{P}^{n,*} \sM_0$.
    Then we note the following lemma. 

    \begin{lemma}
    \label{lem: for Breuil}
       Let $n$ be a positive integer and let $y_n, z_n$ be any two lifts of the element $x_n\in \varphi_P^{n,*}\sL_0$ in $\varphi_P^{n, *} \sL$.
       Then $\psi_n(y_n) - \psi_n(z_n) \in p^n! \sL$. 
    \end{lemma}
    \begin{proof}
        Consider the top row of Diagram~\ref{eq:Breuil diagram 2}.  
        As both $y_n, z_n$ are lifts of the element $x_n$ along the mod $u$ surjection $\varphi_P^{n,*}\sM\to \varphi_P^{n,*}\sM_0$, we have the difference element $y_n - z_n$ is contained in $u \sL$. 
        By the $\varphi_D^n$-semi-linearity of the map $\psi_n$, we then have 
        \begin{align}
    	\label{eq:Breuil difference}
    	\psi_n(y_n)-\psi_n(y_n') \in u^{p^n}\sL=p^n!\cdot \frac{u^{p^n}}{p^n!}\sL \subseteq p^n! \sL,
        \end{align}
        where the last inclusion follows from the construction that $u^{p^n}/ p^n ! \in D$. 
    \end{proof}

    For each $n\in \IN$, let $y_n \in \varphi_P^{n, *} \sM$ be a sequence of lifts of $x_n$. It follows from the above lemma that $\psi_n(y_n)$ converges in the $p$-adic topology of $\sL$ and its limit is independent of the choice of $y_n$'s.
    Therefore, we obtain a well-defined map $\sL_0 \to \sL$ by sending $x_0$ to $\lim_n \psi_n(y_n)$. By inverting $p$, we obtain a Frobenius-equivariant section map $s : \sM_0 \to \sM$. 
    One readily checks that the construction of $s$ is independent of the choice of the lattice $\sL$ in the beginning. 

    To see that the section map $s$ is the unique Frobenius-equivariant section, we note that if $s'$ is another such a section, then Diagram~(\ref{eq:Breuil diagram 2}) can be extended as below: 
        \[\begin{tikzcd}
	{\varphi_{P}^{n,*} \sM} & {\varphi_{P}^{n-1,*} \sM} & \cdots & \sM \\
	{\varphi_{P}^{n,*} \sM_0} & {\varphi_{P}^{n-1,*} \sM_0} & \cdots & {\sM_0,}
	\arrow[from=1-1, to=1-2]
	\arrow[from=1-1, to=2-1]
	\arrow[from=1-2, to=1-3]
	\arrow[from=1-2, to=2-2]
	\arrow[from=1-3, to=1-4]
	\arrow[from=1-4, to=2-4]
	\arrow["{\varphi^{n, *}_{P}(s')}"', curve={height=12pt}, from=2-1, to=1-1]
	\arrow[from=2-1, to=2-2]
	\arrow["{\varphi^{n-1, *}_{P}(s')}"', curve={height=12pt}, from=2-2, to=1-2]
	\arrow[from=2-2, to=2-3]
	\arrow[from=2-3, to=2-4]
	\arrow["{s'}"', curve={height=12pt}, from=2-4, to=1-4]
        \end{tikzcd} \]
    Given $x \in \sM_0$ and the preimages $x_n \in \varphi^{n, *}_{P} \sM_0$ as before, we may take $y_n$ to be ${\varphi^{n, *}_{P}(s')}(x_n)$. 
    Then by construction, we get $s(x) = \lim_n \psi_n(y_n)$. This completes the proof of \ref{eqn: Breuil 0.0}. 

    Now we prove \ref{eqn: Breuil 0.5}. 
    As above, we let $\sL$ be a lattce that is preserved by $\varphi_{\sM}^{-1}$, and write $(\sM', \varphi_{\sM'})$ for $((D \tensor_W \sL_0)^\wedge_p[1/p], \varphi_D \tensor \varphi_{\sM_0})$, with $\sL'=(D\otimes_W \sL_0)^\wedge_p$ the induced lattice of $\sM'$.
    We let $h$ be the natural map $\sL' \to \sL$ induced by $s$, which naturally extends to a map $\widetilde{h}:\sM'\to \sM$.
    Since the element $u$ is contained in the Jacobson ideal of the ring $D_P$ and both $\sL$ and $\sL'$ are finitely generated over $D_P$, we know by Nakayama's lemma that the map $h$ and hence $\widetilde{h}$ are surjections.
	In addition, since the composition of $h$ with the map $\sL \to \sL_0$ is a surjection of $P$-modules, the kernel module $\ker(h)$ is contained in $u \sL'$.
 We want to show that in fact $\ker(h) = 0$. 
	Indeed, consider the following commutative diagram.
    \[\begin{tikzcd}
	 {\varphi_{D_P}^* \sM'} & {\sM'} \\
{\varphi_{D_P}^* \sM} & \sM
	\arrow["{\varphi_{\sM'}}", "\sim"', from=1-1, to=1-2]
	\arrow["{\varphi_{D_P}^* \widetilde{h}}"', from=1-1, to=2-1]
\arrow["\widetilde{h}"', from=1-2, to=2-2]
	\arrow["{\varphi_\sM}","\sim"', from=2-1, to=2-2]
    \end{tikzcd}\]
    On the one hand, by taking the Frobenius pullback, we know that $\ker(\varphi_{D_P}^* h)$ is contained in $u^p\sL'$.
    So by inverting the element $p$ and the commutative diagram above, we see that $\ker(\widetilde{h})$ is contained in $u^p\sM'$.
    On the other hand, by the explicit construction of $\sM'$ and $\sL'$, we know $u^p\sM'\cap \sL'=u^p\sL'$.
    Hence we have $\ker(h)=\ker(\widetilde{h})\cap \sL'\subset u^p\sL'$.
    Iterating this process, we must have $\ker(h) \subseteq u^{p^m} \sL'$ for each $m \in \IN$. 
    However, as $\sL'$ is $u$-separated, we must have $\ker(h) = 0$. 
\end{proof}

\begin{corollary}
\label{cor: Breuil}
    In the setting of \cref{const: Frob flat modules}, the base change functor $\Coh^{\varphi, \nabla_P,\nabla_u}({D_P}[1/p]) \to \Coh^{\varphi, \nabla_P, N}(P[1/p])$ induced by $u \mapsto 0$ is an equivalence of categories. The quasi-inverse functor is given by sending $(\sM_0,\varphi_{\sM_0},\nabla_{\sM_0},N_{\sM_0}) \in \Coh^{\varphi, \nabla_P, N}(P[1/p])$ to the object $(\sM, \varphi_{\sM}, \nabla_{\sM,P},\nabla_{\sM,u})$ defined by 
    \begin{equation}
    \label{eqn: quasi-inverse of mod u functor}
         \bigl(D \ctensor_W \sM_0,~ \varphi_D \tensor \varphi_{\sM_0},~ \mathrm{id}_D \tensor \nabla_{\sM_0},~ [\mathrm{id}_D \tensor N_{\sM_0} + N_D \ctensor \mathrm{id}_{\sM_0}] \cdot d\log(u) \bigr)
    \end{equation}
    in $\Coh^{\varphi, \nabla_P,\nabla_u}({D_P}[1/p])$.
\end{corollary}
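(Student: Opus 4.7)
The plan is to bootstrap the Frobenius-module equivalence of \Cref{thm:Breuil} so that it intertwines the connection data on both sides. The argument splits into three tasks: verify that the proposed quasi-inverse is well-defined, check that the mod-$u$ reduction inverts it, and show that the Frobenius-equivariant isomorphism from \Cref{thm:Breuil} respects both connections.

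For the quasi-inverse, the Leibniz rule for $\nabla_{\sM, \p_u}$ on $\sM = D \ctensor_W \sM_0$ follows from $N_D$ being a derivation on $D$ (equivalent to $N_D = u\p_u$) and from the $P$-linearity of $N_{\sM_0}$. The commutativity of $\nabla_{\sM, P}$ with $\nabla_{\sM, u}$ (cf.\ Diagram~\ref{diag: two nablas commute}) reduces to the horizontality of $N_{\sM_0}$ with respect to $\nabla_{\sM_0, P}$, supplemented by the trivial commutation between $N_D$ (acting only on $D$) and $\nabla_{\sM_0, P}$ (acting only on $\sM_0$). Integrability and quasi-nilpotence are immediate, while the Frobenius horizontality of $\varphi_\sM = \varphi_D \tensor \varphi_{\sM_0}$ with respect to $\nabla_{\sM, u}$ combines the identity $N_D \varphi_D = p\varphi_D N_D$ from \Cref{const: monodromy on D} with the condition $N_{\varphi_P^* \sM_0} = p \varphi_P^* N_{\sM_0}$. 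Setting $u = 0$ then recovers the input, since $N_D \tensor \mathrm{id}$ kills $1 \tensor \sM_0$ (as $N_D(u^n) = n u^n \in u D$) while $\mathrm{id}_D \tensor N_{\sM_0}$ and $\mathrm{id}_D \tensor \nabla_{\sM_0, P}$ restrict tautologically to $N_{\sM_0}$ and $\nabla_{\sM_0, P}$.

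For the converse, let $(\sM, \varphi_\sM, \nabla_{\sM, P}, \nabla_{\sM, u})$ be an object with mod-$u$ reduction $(\sM_0, \varphi_{\sM_0}, \nabla_{\sM_0, P}, N_{\sM_0})$, and let $s : \sM_0 \to \sM$ be the unique Frobenius-equivariant section from \Cref{thm:Breuil}, inducing a Frobenius isomorphism $f : D \ctensor_W \sM_0 \stackrel{\sim}{\to} \sM$. For the $P$-direction, both $\nabla_{\sM, P} \circ s$ and $(s \tensor \mathrm{id}) \circ \nabla_{\sM_0, P}$ are Frobenius-equivariant maps $\sM_0 \to \sM \ctensor_P \Omega^1_{P/W}$ that agree mod $u$; applying the uniqueness in \Cref{thm:Breuil} to the Frobenius module $\sM \ctensor_P \Omega^1_{P/W}$ forces the two to coincide on $s(\sM_0)$, and $D$-linearity of $\nabla_{\sM, P}$ extends the identity to all of $\sM$.

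The main obstacle is the $u$-direction, where the preceding uniqueness argument breaks down because $\nabla_{\sM, \p_u}$ satisfies only the twisted relation $\nabla_{\sM, \p_u} \varphi_\sM = p \varphi_\sM \varphi^*(\nabla_{\sM, \p_u})$, reflecting $\varphi_D^*(d\log(u)) = p \cdot d\log(u)$. I would handle this by direct contraction: the $D_P$-linear difference $\Delta \colonequals \nabla_{\sM, \p_u} - f \circ (N_D \tensor \mathrm{id} + \mathrm{id} \tensor N_{\sM_0}) \circ f^{-1}$ inherits the twisted relation and has vanishing mod-$u$ reduction, so $\Delta(\sL) \subseteq u \sL$ for any $D_P$-lattice $\sL \subseteq \sM$. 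Choosing $\sL$ so that $\varphi_\sM$ induces an isomorphism $\varphi_{D_P}^* \sL \stackrel{\sim}{\to} \sL$ after a $p$-power scaling, the identity $\varphi^*(u^a \sL) = u^{ap} \varphi^* \sL$ together with the twisted relation yield $\Delta(\sL) \subseteq p u^p \sL$, and iterating gives $\Delta(\sL) \subseteq p^n u^{p^n} \sL$ for every $n$. The PD-inclusion $u^{p^n}/p^n! \in D$ turns this into $\Delta(\sL) \subseteq p^{n + v_p(p^n!)} \sL$, which forces $\Delta = 0$ by the $p$-adic separatedness of the coherent module $\sL$, completing the proof.
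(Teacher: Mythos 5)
Your overall architecture matches the paper's: verify the proposed quasi-inverse lands in $\Coh^{\varphi, \nabla_P,\nabla_u}({D_P}[1/p])$, then show that starting from any object on the $D_P$-side, the Frobenius-equivariant identification $\sM \simeq D \ctensor_W \sM_0$ of \Cref{thm:Breuil} is automatically compatible with both connections. The place where you diverge is in how this compatibility is proved.

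The paper combines $\nabla_{\sM, P}$ and $\nabla_{\sM, u}$ back into the single log connection $\nabla_\sM : \sM \to \sM \ctensor_{D_P} \Omega^1_{(D_P, (u^\IN)^a)/W}$, forms the difference $\nabla_\sM - \nabla'_\sM$ of two candidate log connections, observes that the Leibniz terms cancel so this is $D_P$-linear, and notes that horizontality of $\varphi_\sM$ for both connections makes the difference Frobenius-equivariant when the target carries the natural Frobenius structure (which, because $\varphi^*_{D_P}(d\log u) = p\, d\log u$, is a Tate twist in the $d\log u$-component). Since the difference vanishes mod $u$, the full faithfulness of the reduction functor in \Cref{thm:Breuil} -- which applies to Tate-twisted objects, as noted at the start of its proof -- forces it to be zero. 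One invocation of full faithfulness finishes both directions simultaneously.

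You instead split the $P$- and $u$-directions. For the $P$-direction you phrase the argument via ``the uniqueness in \Cref{thm:Breuil}''; what you actually need is the full faithfulness (equivalently, the uniqueness of morphisms matching a given reduction), since $\nabla_{\sM, P}\circ s$ and $(s\tensor\mathrm{id})\circ\nabla_{\sM_0, P}$ are not sections of a reduction map. The cleaner formulation is to take their difference, which is $P$-linear, extend by $D$-linearity to a $D_P$-linear Frobenius morphism $\sM \to \sM \ctensor_P \Omega^1_{P/W}$ vanishing mod $u$, and apply full faithfulness. This is the same mechanism as the paper's. For the $u$-direction, you identify the twist by $p$ as an obstruction to applying this mechanism and instead run a manual $p$-adic contraction using the PD-structure. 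This works -- your estimate $\Delta(\sL) \subseteq p^{n + v_p(p^n!)}\sL$ is essentially the same divided-power bound that drives the proof of \Cref{thm:Breuil} itself -- but it is unnecessary. The twist by $p$ on $d\log u$ does not break the full faithfulness argument; it just changes the target to a Tate twist of $\sM$ (equivalently, of the target $F$-module), and morphisms between twisted objects still reduce fully faithfully. Treating $\Delta$ as a $D_P$-linear morphism $\sM \to \sM(1)$ (or keeping both components in the single log-differential module with its natural Frobenius) recovers the paper's one-step argument. What your route buys is independence from the Tate-twist bookkeeping, at the cost of re-proving a special case of \Cref{thm:Breuil}; what the paper's route buys is brevity and uniformity across the two directions.
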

\begin{proof}
    First, one checks by routine computations that the formula (\ref{eqn: quasi-inverse of mod u functor}) indeed defines an object of $\Coh^{\varphi, \nabla_P,\nabla_u}({D_P}[1/p])$ (cf. \cref{rmk: Frob modules}). 
    We remark that the condition that $ N_{\varphi_P^*\sM_0} = p\varphi_{P} ^* N_{\sM_0}$ ensures that the log-connection $\varphi_\sM$, defined by putting $\nabla_{\sM,P}$ and $\nabla_{\sM,u}$ together, is horizontal with respect to the linear isomorphism $\nabla_{\sM,P}:\varphi_{D_P}^*\sM\to \sM$.
    
    Suppose that $(\sM, \varphi_\sM, \nabla_{\sM, P}, \nabla_{\sM, u})$ is an object of $\Coh^{\varphi, \nabla_P,\nabla_u}({D_P}[1/p])$ with mod $u$ reduction $(\sM_0,\varphi_{\sM_0},\nabla_{\sM_0},N_{\sM_0})$. 
    Then \cref{thm:Breuil} tells us that there is a unique Frobenius-equivariant isomorphism $\sM \simeq D \ctensor_W \sM_0$. 
    It remains to show that under this isomorphism, both $\nabla_{\sM, P}$ and $\nabla_{\sM, u}$ have to be given by the formula in (\ref{eqn: quasi-inverse of mod u functor}).  
    Note that by putting $\nabla_{\sM, P}$ and $\nabla_{\sM, u}$ together to form a log connection $\nabla_\sM$ on $\sM$, we obtain an equivalence of categories $\Coh^{\varphi, \nabla_P,\nabla_u}({D_P}[1/p]) \simeq \Coh^{\varphi, \nabla}({D_P}[1/p])$  (cf. \cref{const: decompose connections}). 
    So it suffices to show that $\nabla_\sM$ is the unique log connection for which $\varphi_\sM$ is horizontal and whose mod $u$ reduction gives $\nabla_{\sM_0}$ and $N_{\sM_0}$. 
    Indeed, if $\nabla'_\sM$ is another such log connection, then $\nabla_\sM - \nabla_\sM'$ is a Frobenius equivariant ${D_P}$-linear morphism $\sM \to \sM \tensor \Omega^1_{({D_P}, (u^\IN)^a)/W}$ whose mod $u$ reduction is $0$, where we equip $\sM \tensor \Omega^1_{D_P/W}$ with the  the Frobenius structure 
 defined by $\varphi_\sM \tensor \mathrm{id}$.
    So the map $\nabla_\sM - \nabla'_\sM$ is a morphism between two objects in $\Coh^{\varphi}({D_P}[1/p])$ which vanishes under the reduction functor $\Coh^\varphi({D_P}[1/p]) \to \Coh^\varphi(P[1/p])$. 
    Since this reduction functor is in particular fully faithful, we see the map $\nabla_\sM - \nabla'_\sM$ must be zero. 
\end{proof}

\begin{theorem} 
\label{thm: log crys = crys + N}
Suppose that $Z$ is a smooth $k$-variety. Then the residue functor $$\mathrm{Res} : \Isoc^{\varphi}((Z, (0^\IN)^a) \to \Isoc^{\varphi, N}(Z_\crys)$$ is an equivalence of categories. Moreover, every object of $\Isoc^{\varphi}((Z, (0^\IN)^a)$ is locally free. 
\end{theorem}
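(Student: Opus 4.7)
The plan is to reduce the theorem to the algebraic equivalence already established in \cref{cor: Breuil}, and then globalize via the pullback compatibility of \cref{prop:equiv def of crys vs pullback new}.

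First I would work locally. Since $Z$ is smooth over $k$, cover $Z$ by affine opens $U = \Spec(A)$ such that $A$ lifts to a smooth formal $W$-scheme $\Spf(P)$ equipped with a Frobenius endomorphism $\varphi_P$ lifting the absolute Frobenius of $A$. Then $\Spf(P)$ is a weakly initial pro-PD-thickening in $U_\crys^\wedge$, while $(\Spf(D_P), (u^\IN)^a)$ is a weakly initial pro-log-PD-thickening in $(U, (0^\IN)^a)_\lcrys^\wedge$ (with $u \mapsto 0$ providing the exact closed immersion). Evaluating (log) $F$-isocrystals on these thickenings, combined with the decomposition of log differentials from \cref{const: decompose connections}, yields equivalences of categories
\begin{align*}
    \Isoc^\varphi((U, (0^\IN)^a)_\lcrys) & \simeq \Coh^{\varphi, \nabla}(D_P[1/p]) \simeq \Coh^{\varphi, \nabla_P, \nabla_u}(D_P[1/p]), \\
    \Isoc^{\varphi, N}(U_\crys) & \simeq \Coh^{\varphi, \nabla_P, N}(P[1/p]).
\end{align*}
Unwinding \cref{const: from log to non log} and \cref{prop:from_log_F_isoc_to_varphi-N-isoc}, the functor $\mathrm{Res}$ is identified under these equivalences with the base change along $u \mapsto 0$, which is an equivalence by \cref{cor: Breuil}. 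Hence $\mathrm{Res}|_U$ is an equivalence for every such $U$.

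To globalize, I would use that both sides satisfy Zariski descent along the cover $\{U\} \to Z$, and that $\mathrm{Res}$ is compatible with restriction to open subschemes by \cref{prop:equiv def of crys vs pullback new}. The local equivalences therefore glue into a global equivalence $\mathrm{Res}: \Isoc^\varphi((Z, (0^\IN)^a)_\lcrys) \stackrel{\sim}{\to} \Isoc^{\varphi, N}(Z_\crys)$. No global Frobenius lift is required, only local ones, which always exist affine-locally on a smooth $k$-variety.

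For the local freeness claim, under $\mathrm{Res}$ any $\sE \in \Isoc^\varphi((Z,(0^\IN)^a)_\lcrys)$ corresponds to a pair $(\sE', N_{\sE'})$ whose underlying $F$-isocrystal $\sE'$ on the smooth $k$-variety $Z$ is locally free by the classical fact that $F$-isocrystals on smooth varieties are locally free. Pulling this back through the local chart, the isomorphism $\sM \simeq D \ctensor_W \sM_0$ provided by \cref{thm:Breuil}\ref{eqn: Breuil 0.5} transports local freeness of $\sM_0$ over $P[1/p]$ to local freeness of $\sM$ over $D_P[1/p]$, yielding local freeness of $\sE$. The principal technical obstacle — the unique Frobenius-equivariant lifting of modules from $P$ to $D_P$ — has already been overcome in \cref{thm:Breuil}; what remains is essentially bookkeeping to verify that the local identifications are compatible under restriction so they glue, and that the local freeness transports across the Breuil isomorphism.
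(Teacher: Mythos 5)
Your proposal is correct and follows essentially the same route as the paper's proof: locally identify both sides with the module categories $\Coh^{\varphi,\nabla_P,\nabla_u}(D_P[1/p])$ and $\Coh^{\varphi,\nabla_P,N}(P[1/p])$, recognize $\mathrm{Res}$ as the $u\mapsto 0$ functor, apply \cref{cor: Breuil}, and deduce local freeness of $\sM$ from that of $\sM_0$ via \cref{thm:Breuil}.\ref{eqn: Breuil 0.5}. The only (minor) difference is that you spell out the Zariski-gluing step explicitly via \cref{prop:equiv def of crys vs pullback new}, whereas the paper simply reduces to the affine case by saying ``we may assume $Z$ is affine.''
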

\begin{proof}
    We may assume that $Z$ is affine and lifts to some smooth formal scheme $\Spf(P)$ over $W$. 
    Moreover, assume that $F_Z$ lifts to $\varphi_P$. 
    Apply \cref{const: decompose connections} and then we obtain the following commutative diagram. 
    \begin{equation}
		\label{eq:diagram of cat of isoc}
		\begin{tikzcd}
	{\Isoc^\varphi \bigl( (Z, (0^\IN)^a)_\lcrys \bigr)} && { \Isoc^{\varphi,N} (Z_{\crys})} \\
	{\mathrm{Coh}^{\varphi, \nabla_P,\nabla_u}({D_P}[1/p])} && {\Coh^{\varphi,\nabla_P,N}(P[1/p])}
	\arrow["\mathrm{Res}", from=1-1, to=1-3]
	\arrow[from=1-1, to=2-1]
	\arrow[from=1-3, to=2-3]
	\arrow["u \mapsto 0", from=2-1, to=2-3]
        \end{tikzcd}
	\end{equation}
    The left vertical arrow is given by the composition 
    \[\begin{tikzcd}
	{\Isoc^\varphi \bigl( (Z, (0^\IN)^a)_\lcrys \bigr)} & {\Coh^{\varphi, \nabla}({D_P}[1/p])} & {\Coh^{\varphi, \nabla_P,\nabla_u}({D_P}[1/p])}
	\arrow["\sim", from=1-1, to=1-2]
	\arrow["\sim", from=1-2, to=1-3]
    \end{tikzcd}\]
    where the first equivalence is provided by \cite[Thm~6.2]{Kat89} and the second by \cref{const: decompose connections}. 
    The right vertical arrow in Diagram \ref{eq:diagram of cat of isoc} is an equivalence, provided by \cref{equiv def isoc}. 
    The bottom horizontal arrow is an equivalence, thanks to \cref{cor: Breuil}. Therefore, $\mathrm{Res}$ must also be an equivalence. 

    Now given $\sE \in \Isoc^\varphi ((Z, (0^\IN)^a)_\lcrys)$, it is locally free if and only if the ${D_P}[1/p]$-module $\sM \colonequals \sE({D_P}, Z, (u^\IN)^a)$ is locally free, as $(\Spf({D_P}), Z, (u^\IN)^a)$ is a weakly final object in $(Z, (0^\IN)^a)_\lcrys$. 
    Since $Z$ is smooth, the isocrystal $\sfF(\sE)$ on $Z_\crys$ is automatically locally free (cf. \cite[Lem.~4.9]{Xu19}). 
    Therefore, the $P[1/p]$-module $\sM_0 \colonequals \sfF(\sE)(\Spf(P), Z)$ is locally free. 
    Now note that $\sM_0 = \sM / u \sM = \sE(\Spf(P), Z, (0^\IN)^a)$. 
    Therefore, $\sM$ must also be locally free as $\sM \simeq D_P[1/p] \tensor_{P[1/p]} \sM_0$, thanks to the equivalence by \cref{thm:Breuil}. 
\end{proof}

\begin{remark}
\label{rmk: non loc free example}
    The log version of the local-freeness assertion \cite[Lem.~4.9]{Xu19} is not true without assuming the existence of Frobenius action. 
    Moreover, it is not true that $\Isoc((Z, (0^\IN)^a)_\lcrys) \to \Isoc^N(Z_\crys)$ is an equivalence even when $Z$ is smooth. Here is an example: Let $Z = \Spec(k)$ be a point. Set $\sM \colonequals W$, viewed as a $D = W\{ u \}$-module via $u \mapsto 0$. 
    Then we may define a ``sky-scraper'' log-connection on the $D$-module $\sM$ by equipping it with the zero log connection $\nabla_{\sM} = 0$. 
    The resulting isocrystal $\sE$ on $(\Spec(k), (0^\IN)^a)_\lcrys$ is clearly not locally free. 
    On the other hand, both $\sE$ and the trivial isocrystal $\sO_{(Z, (0^\IN)^a)/W}[1/p]$ on $(Z, (0^\IN)^a)_\lcrys$ have the isomorphic image in $\Isoc^N(Z_\crys)$. 
    
    However, the isocrystal $\sE$ cannot be equipped with a Frobenius structure, because $K_0$ is not isomorphic to $\varphi^*_{W \{ u \}} K_0 \simeq K_0 [u]/u^p$ as a $W\{ u \}$-module. 
\end{remark}

Combining all the previous results established in this subsection, we obtain the following equivalent descriptions of $F$-isocrystals over the log-crystalline site, extending \Cref{equiv def isoc} to the log setting.
\begin{corollary}
        \label{equiv def log isoc}
	Let $Z$ be a smooth scheme over $\mathcal{O}_K/p$, let $M_Z$ be the log structure $(\pi^\IN)^a$, and let $(Z_s,M_s)$ be the reduced special fiber.
	The following categories are equivalent:
	\begin{enumerate}[label=\upshape{(\alph*)},series=innerlist]
		\item The category $\Isoc^\varphi ((Z,M_Z)_\lcrys)$ of $F$-isocrystals over the log-crystalline site $(Z,M_Z)_\lcrys$. 
		\item The category $\Isoc^\varphi ((Z_s,M_{Z_s})_\lcrys)$ of $F$-isocrystals over the log-crystalline site $(Z_s,M_{Z_s})_\lcrys$. 
		\item\label{equiv def log isoc, with monodromy} The category $\Isoc^{\varphi,N} (Z_{s,\crys})$ of $F$-isocrystals with nilpotent endomorphisms over the crystalline site $Z_{s,\crys}$.
	\end{enumerate}
	In addition, if there is a smooth $p$-adic formal scheme $Z_0$ over $W$ that admits a Frobenius lift $\varphi_{Z_0}$ such that $Z\simeq Z_0 \tensor_W (\sO_K/p)$, then the above categories are also equivalent to the following
	\begin{enumerate}[label=\upshape{(\alph*)},resume=innerlist]
		\item\label{equiv def log isoc, conn} The category $\mathrm{Vect}^{\varphi,\nabla_{Z_0},N}(Z_{0,\eta})$ of the tuples $(\mathcal{F},\varphi_\mathcal{F},\nabla_{\mathcal{F},Z_0}, N_\mathcal{F})$, where $(\mathcal{F},\varphi_\mathcal{F},\nabla_{\mathcal{F},Z_0})$ is an object of $\Vect^{ \varphi, \nabla_{Z_0}}(Z_{0, \eta})$ and $N_\sF$ is a nilpotent horizontal endomorphism on $\sF$ such that $N_{\varphi_{Z_0}^*\mathcal{F}} (\colonequals \varphi_\sF^{-1} \circ N_\sF \circ \varphi_\sF $) is equal to $p\varphi_{Z_0}^*N_\mathcal{F}$.
  \end{enumerate}
  In addition, the canonical functor $$\Isoc^\varphi(Z_\crys) \to \Isoc^\varphi((Z, (\pi^\IN)^a)_\lcrys)$$is fully faithful with essential image consisting of objects such that $N = 0$ in \ref{equiv def log isoc, with monodromy} and \ref{equiv def log isoc, conn}. 
\end{corollary}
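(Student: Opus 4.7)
The plan is to assemble the equivalences by chaining together the results already established in this subsection, so that most of the work reduces to recording compatibilities.

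For (a) $\Leftrightarrow$ (b), I would invoke the log version of Dwork's trick (\Cref{Dwork's trick}\ref{Dwork's trick log-crys}). Note that the log structure $M_Z = (\pi^\IN)^a$ on $Z$ base changes along $\Spec(k) \to \Spec(\sO_K/p)$ to $(0^\IN)^a$ on $Z_s$ (since $\pi \mapsto 0$ in $k$), so the pullback along $(Z_s, (0^\IN)^a)_\lcrys \to (Z, M_Z)_\lcrys$ yields the desired equivalence.

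For (b) $\Leftrightarrow$ (c), I would apply \Cref{thm: log crys = crys + N}, which states precisely that the residue functor $\mathrm{Res} : \Isoc^\varphi((Z_s, (0^\IN)^a)_\lcrys) \to \Isoc^{\varphi, N}(Z_{s, \crys})$ is an equivalence when $Z_s$ is smooth over $k$ (which holds here since $Z$ is smooth over $\sO_K/p$).

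For (c) $\Leftrightarrow$ (d) under the additional lifting hypothesis, I would first note that \Cref{equiv def isoc} gives the underlying equivalence $\Isoc^\varphi(Z_{s, \crys}) \simeq \mathrm{Vect}^{\varphi, \nabla_{Z_0}}(\sO_{Z_{0,\eta}})$. An endomorphism of an object in $\Isoc^\varphi(Z_{s, \crys})$ corresponds by functoriality to a horizontal endomorphism $N_\sF$ on the associated $(\sF, \varphi_\sF, \nabla_{\sF, Z_0})$, and the compatibility condition $N_{F_Z^* \sE} = p F_Z^* N_\sE$ (which is part of the data in \Cref{def: isocrystals with N}) translates, after pullback along $\varphi_{Z_0}$, to exactly $N_{\varphi_{Z_0}^* \sF} = p \varphi_{Z_0}^* N_\sF$. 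Conversely, given such a tuple in \ref{equiv def log isoc, conn}, nilpotence of $N_\sF$ is automatic from the relation $N_{\varphi_{Z_0}^* \sF} = p \varphi_{Z_0}^* N_\sF$ combined with $\varphi_\sF$ being an isomorphism (same argument as in \Cref{prop:from_log_F_isoc_to_varphi-N-isoc}, reducing to a closed point and using Jordan form).

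For the last assertion, the canonical functor $\Isoc^\varphi(Z_\crys) \to \Isoc^\varphi((Z, (\pi^\IN)^a)_\lcrys)$ is induced by the morphism of sites forgetting the log structure. Under the chain of equivalences above, it corresponds to the functor $\Isoc^\varphi(Z_{s, \crys}) \to \Isoc^{\varphi, N}(Z_{s, \crys})$, $\sE \mapsto (\sE, 0)$, which is patently fully faithful with essential image given by $N = 0$; the corresponding statement on the connection side follows by transport of structure. The main subtlety to verify is that this last identification of functors is indeed correct, which amounts to checking that the residue functor $\mathrm{Res}$ (\Cref{const: from log to non log}) produces the zero monodromy operator precisely when the log connection is the pullback of a connection from the non-log site---this is immediate from the construction, since in that case $\nabla_{\sM, u} = 0$ in the decomposition (\ref{eqn: decompose Omega}). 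No step is a serious obstacle; the main care is bookkeeping the compatibilities.
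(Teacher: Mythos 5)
Your proposal is correct and follows exactly the route the paper implies (the corollary has no explicit proof in the paper beyond the sentence ``Combining all the previous results established in this subsection, we obtain...''): Dwork's trick (\Cref{Dwork's trick}.\ref{Dwork's trick log-crys}) for (a)$\Leftrightarrow$(b), \Cref{thm: log crys = crys + N} for (b)$\Leftrightarrow$(c), \Cref{equiv def isoc} plus the monodromy dictionary for (c)$\Leftrightarrow$(d), and the residue functor for the fully-faithfulness claim.

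Two small imprecisions worth fixing. First, when you write ``An endomorphism of an object in $\Isoc^\varphi(Z_{s,\crys})$'': the datum $N_\sE$ in \Cref{def: isocrystals with N} is an endomorphism of the \emph{underlying isocrystal} $\sE$, not of the $F$-isocrystal --- it is not Frobenius-equivariant, and the compatibility $N_{F_Z^*\sE}=pF_Z^*N_\sE$ (which is what translates to the relation in (d)) replaces equivariance. Second, and more substantively: your claim that ``$\nabla_{\sM,u}=0$'' when the crystal is pulled back from $Z_\crys$ is not what happens. Pulling back a crystal from $Z_\crys$ to $(Z,(0^\IN)^a)_\lcrys$ produces a module whose $u$-direction log connection $\nabla_{\sM,u}$ is nonzero in general; what is true is that it factors through $\sM\cdot du = u\sM\cdot d\log(u)$, so that $\nabla_{\sM,\partial_u}(\sM)\subseteq u\sM$, and hence the induced endomorphism of $\sM/u\sM$ --- which is precisely $N_{\sfF(\sE)}$ as defined in \Cref{const: from log to non log} --- vanishes. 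This is what yields $\mathrm{Res}(\sE')=(\sE,0)$; conversely, if the residue vanishes then the log connection factors through the image of $\Omega^1_{D_P/W}\to\Omega^1_{(D_P,(u^\IN)^a)/W}$ and so descends to an ordinary connection, giving essential surjectivity onto $\{N=0\}$.
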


Finally, we record a tautological lemma about the compatibility of the pullback functor with respect to the evaluation.
\begin{lemma}
\label{prop:equiv def of crys vs pullback}
	Let $f:Z_s\to Z_s'$ be a map of $k$-varieties, and let $f_0:Z_0\to Z_0'$ be a map of $p$-adic formal schemes over $W$ with compatible Frobenius lifts.
	The pullback of $F$-isocrystals along $f$ is compatible with the coherent pullback along $f_0$ under the equivalences in \Cref{equiv def log isoc}
\end{lemma}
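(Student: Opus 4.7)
The claim is really a tautology unwinding the construction of the equivalences, so the plan is simply to trace naturality through each step of \Cref{equiv def log isoc}. The equivalences in question factor as
\[
\Isoc^\varphi((Z,M_Z)_\lcrys) \xrightarrow{\sim} \Isoc^\varphi((Z_s,M_{Z_s})_\lcrys) \xrightarrow{\mathrm{Res}} \Isoc^{\varphi,N}(Z_{s,\crys}) \xrightarrow{\sim} \Vect^{\varphi,\nabla_{Z_0},N}(Z_{0,\eta}),
\]
and each arrow is already known to be natural in $Z_s$; it only remains to assemble this into a coherent diagram of pullback functors.

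The first arrow is given by pullback along the closed immersion $(Z_s,M_{Z_s})\into(Z,M_Z)$ (\Cref{Dwork's trick}), which is manifestly functorial; the second is the residue functor, whose compatibility with pullback is exactly \Cref{prop:equiv def of crys vs pullback new}. For the last arrow I would argue as follows. Forgetting $N$, the classical evaluation equivalence $\Isoc^\varphi(Z_{s,\crys})\simeq \Vect^{\varphi,\nabla_{Z_0}}(Z_{0,\eta})$ of \Cref{equiv def isoc} is compatible with $(f,f_0)$: indeed, given $\sE'\in\Isoc^\varphi(Z'_{s,\crys})$, the morphism $f_0:Z_0\to Z_0'$ (regarded as a morphism of PD-thickenings in the respective sites via the compatible Frobenius lifts) yields a canonical isomorphism $(f^*\sE')(Z_0)\simeq f_0^*(\sE'(Z_0'))$ of vector bundles with connection and Frobenius, which is precisely the statement that the crystal structure is compatible with base change. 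The monodromy operator $N_\sE$ is just an extra endomorphism in the crystal category, so its evaluation on $Z_0$ pulls back along $f_0$ to the evaluation on $Z_0$ of $f^*N_{\sE'}=N_{f^*\sE'}$, by functoriality of evaluation.

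Concretely, the plan is: (i) state the diagram of categories with vertical pullback arrows along $f$ (resp.\ $f_0$) and horizontal arrows given by the three equivalences above; (ii) observe that the left and middle squares commute by the already-cited naturality results; (iii) for the right square, write down the natural base-change isomorphism $(f^*\sE')(Z_0)\simeq f_0^*(\sE'(Z_0'))$ (with its induced Frobenius, connection, and $N$) and check it is compatible with the structures on both sides. Since all these isomorphisms are canonical, the composed diagram commutes.

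I do not anticipate any genuine obstacle: the only point requiring a small verification is that under the decomposition of connections in \Cref{const: decompose connections}, the pullback $f_0^*$ respects the splitting $\Omega^1_{(D_P,(u^\IN)^a)/W}\simeq (D_P\what{\otimes}_P\Omega^1_{P/W})\oplus D_P\cdot d\log(u)$, which is true because $f_0$ is a $W$-morphism and the $u$-coordinate is inherited from $D$ on both sides. Given this, the residue operation and the coherent pullback commute on the nose, and the lemma follows.
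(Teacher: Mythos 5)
The paper offers no proof of this lemma at all: the surrounding text announces it as a ``tautological lemma'' and moves on. Your write-up is therefore supplying precisely the unwinding the authors deemed too routine to include, and it is correct. The decomposition into the three stages (Dwork's trick, $\mathrm{Res}$, evaluation on $Z_0$) mirrors the way \Cref{equiv def log isoc} is assembled; the middle square is \Cref{prop:equiv def of crys vs pullback new} (which the authors likewise justify with ``this is clear from the construction of $\mathrm{Res}$''); and the right square is indeed just the crystal property of $\sE'$ applied to the map $(Z_0, Z_s)\to(Z_0', Z_s')$ of pro-PD-thickenings. The one point you correctly flag as needing a check --- that $f_0^*$ respects the splitting $\Omega^1_{(D_P,(u^\IN)^a)/W}\simeq (D_P\what{\otimes}_P\Omega^1_{P/W})\oplus D_P\cdot d\log(u)$ from \Cref{const: decompose connections} --- is as you say automatic, because $D_P=(P\otimes_W D)^\wedge_p$ is constructed by a $W$-linear tensor product with a fixed $D=W\{u\}$, so the $u$-coordinate and the splitting section $u\mapsto 0$ are inherited functorially in $P$. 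No gap.
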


\subsection{Relative log crystalline cohomology}
\label{sec:relative_log_crys_coh}

In this subsection, we establish a crystal property of the log-crystalline cohomology in a relative setting, and in particular prove a relative version of the Hyodo--Kato comparison theorem.
We start with a remark commenting on the relationship between various constructions over the big and the small log-crystalline sites.
\begin{remark}
\label{rmk: comparison between sites}
In many standard references, e.g., \cite{Shi07, Kat89, HK94}, one considers the small log crytalline site, whereas in the present paper we use the big (absolute) log crystalline site throughout. 
It is well-known that the notion of crystals and their cohomology are independent of the choice of the big or the small crystalline site (see for example the arguments in \cite[Cor.\ 2.2.8, Prop.\ 3.1.7]{Guo21}), for the reader's convenience let us briefly explain it as below.

Let $X$ be a finite type $k$-scheme with a fine log structure $M$. Let $(X, M)_\lcris$ be small log-crystalline site, consists of those objects $(U, T, M_T, \iota, \delta)$ in the big site $(X, M)_\lcrys$ such that $U \to X$ is \'etale and $M_T$ is fine (i.e., integral and coherent).
Zariski locally, $(X,M)$ admits a closed immersion into a log-smooth scheme over $W(k)$.
We let $(\mathcal{D},M_\mathcal{D})$ be the log pro-PD-envelope of the closed immersion, which is an object in both the small and the big sites.
So by the log-smoothness, the object $(\mathcal{D},M_\mathcal{D})$ is a covering object of both $(X, M)_\lcrys$ and $(X, M)_\lcris$, up to shrinking $X$ by an open subscheme.
In addition, the \v{C}ech nerves of the covering object $(\mathcal{D},M_\mathcal{D})$ within the two sites are by construction identical to each other, as the embedding $(X, M)_\lcrys \into (X, M)_\lcris$ preserves finite limits.
As a consequence, as in the classical translation between the (iso)crystal and the (iso)-stratifications (cf. \cite[(4.3.9)]{Shi07}), we see the category of (iso)crystals over $(X, M)_\lcrys$ is equivalent to the category of those over $(X, M)_\lcris$, identifying the cohomology of (iso)crystals computed in the big and the small sites.
\end{remark}

In the following, we let $f_{\crys}$ denote the morphism of the small log-crystalline topoi $(Y, M_Y)_\lcris \to (X, M_X)_\lcris$, for a map of fine log $k$-varieties $f : (Y, M_Y) \to (X, M_X)$.
\begin{proposition}
\label{prop: relative log crys}
    Let $X$ be a smooth variety over $k$ equipped with the log structure $M_{X} \colonequals (0^\IN)^a$. 
    Let $(Y, M_Y)$ be a fine log $k$-variety and $f : (Y, M_Y) \to (X, M_X)$ be a proper and log-smooth morphism of Cartier type. Let $i\in \mathbb{N}$.
    \begin{enumerate}[label=\upshape{(\roman*)}]
        \item\label{log crys coh BC} Let $g : X' \to X$ be a morphism between smooth $k$-varieties and $f' : (Y', M_{Y'}) \to (X', M_{X'} \colonequals (0^\IN)^a)$ be the pullback of the map $f$ along the map $g$. 
        Then the canonical base change morphism $$g^* R^i f_{\crys *}(\sO_{(Y, M_{Y})/W}[1/p]) \to R^i f'_{\crys *}(\sO_{(Y', M_{Y'})/W}[1/p]) $$
        is an isomorphism. 
        \item\label{is an F-isocrystal} The higher direct image sheaf $R^i f_{\crys *}(\sO_{(Y, M_Y)/W}[1/p])$ is an $F$-isocrystal, i.e., an object of $\Isoc^\varphi((X, M_X)_\lcris)$. 
    \end{enumerate}
\end{proposition}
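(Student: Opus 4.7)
The plan is to prove both parts by localizing on $X$ and computing the log-crystalline cohomology through the log-de Rham complex of an explicit simplicial log-smooth lift, following the template of \Cref{Cech for log crys} and \Cref{const: decompose connections}. Throughout, I use freely the equivalence between the big and small crystalline sites recorded in \Cref{rmk: comparison between sites}.

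First I would reduce to the case when $X = \Spec A$ is affine and admits a smooth formal lift $\wt X = \Spf \wt A$ over $W$. Set $D_{\wt A} \colonequals \wt A \{u\}$ equipped with log structure $(u^\IN)^a$. The exact closed immersion $(X, M_X) \into (\Spf D_{\wt A}, (u^\IN)^a)$ determined by $u \mapsto 0$ realizes $(\Spf D_{\wt A}, X, (u^\IN)^a)$ as a weakly final object of $(X, M_X)_\lcris^\wedge$. Using properness and log-smoothness of $f$, I cover $Y$ by affine opens each admitting a log-smooth formal lift over $(\Spf D_{\wt A}, (u^\IN)^a)$, and glue them through a \v{C}ech hypercover to obtain a simplicial log-smooth formal $D_{\wt A}$-scheme $(\wt Y_\bullet, M_{\wt Y_\bullet})$. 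Its relative log-de Rham complex then computes $R f_{\crys *} \sO_{(Y, M_Y)/W}$ evaluated at the chosen thickening, by the standard translation of \cite[\S 6]{Kat89}.

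For part \ref{log crys coh BC}, choose a compatible smooth lift $\wt X' = \Spf \wt A'$ of $X'$ with a formal morphism $\wt g : \wt X' \to \wt X$ lifting $g$. The induced ring map $D_{\wt A} \to D_{\wt A'} \simeq \wt A' \ctensor_{\wt A} D_{\wt A}$ is flat, and base-changing $(\wt Y_\bullet, M_{\wt Y_\bullet})$ along $\wt g$ gives the analogous simplicial lift for $Y'$. Flat base change applied to the relative log-de Rham complex, combined with properness, then yields the required base-change isomorphism for $R^i f_{\crys *}(\sO_{(Y, M_Y)/W}[1/p])$.

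For part \ref{is an F-isocrystal}, Kato's finiteness theorem for proper log-smooth Cartier-type cohomology guarantees that $\sM \colonequals R^i f_{\crys *}(\sO_{(Y, M_Y)/W}[1/p])(\Spf D_{\wt A}, X, (u^\IN)^a)$ is coherent over $D_{\wt A}[1/p]$. The Gauss--Manin construction equips $\sM$ with an integrable quasi-nilpotent log-connection with log pole along $u = 0$, and functoriality with respect to the absolute Frobenius of $(Y, M_Y)$ supplies a Frobenius structure $\varphi_\sM$ compatible with the diagonal $\varphi_{D_{\wt A}}$, the horizontality of which is automatic. The base change of part \ref{log crys coh BC} applied to morphisms in $(X, M_X)_\lcris^\wedge$ then upgrades $\sM$ into a crystal on $(X, M_X)_\lcris$, i.e., an object of $\Coh^{\varphi, \nabla}(D_{\wt A}[1/p])$ in the language of \Cref{const: Frob flat modules}. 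Under the equivalence of \Cref{equiv def log isoc} this is exactly an $F$-isocrystal on $(X, M_X)_\lcris$. The main obstacle is the construction of the simplicial log-smooth lift $(\wt Y_\bullet, M_{\wt Y_\bullet})$, which requires gluing local log-smooth lifts along products of log-PD-thickenings (cf. \Cref{lem: products in lcrys site}) with careful use of the exact localization of \Cref{lem: exact localization}; this is standard log-smooth deformation theory but must be checked to ensure that flat base change applies uniformly and that the Frobenius structure descends unambiguously to a morphism of $F$-isocrystals.
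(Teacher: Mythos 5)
Your overall strategy is close to the paper's: localize to an affine $X$ with a smooth formal lift $\wt X$ over $W$, take the weakly final object $(\Spf(\wt A\{u\}), X, (u^\IN)^a)$ of $(X, M_X)_\lcris^\wedge$, and compute $R f_{\crys*}$ through relative log-de Rham cohomology of a log-smooth (simplicial) formal lift of $Y$. The construction of the Frobenius structure and the use of \Cref{equiv def log isoc} are also parallel to the paper (which cites \cite[Prop.~2.24]{HK94} for the Frobenius).

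However, there is a genuine gap in your treatment of part \ref{log crys coh BC}. You assert that the map $D_{\wt A} \to D_{\wt A'} \simeq \wt A' \ctensor_{\wt A} D_{\wt A}$ is flat, but this is precisely the flatness of $\wt A \to \wt A'$, which fails for a general morphism $g : X' \to X$ between smooth $k$-varieties (e.g., when $g$ is a closed immersion such as a point in $\IA^1_k$, or more generally any non-flat map). Plain ``flat base change for the de Rham complex'' therefore does not yield \ref{log crys coh BC}. The paper resolves this by \emph{first} constructing the F-isocrystal structure on $\sE$ using only the \emph{analytically flat} morphisms in the \v{C}ech nerve of the covering object (these particular morphisms are flat, so Shiho's base change \cite[Cor.~3.10]{Shi07} applies under condition~(1)), and then invoking \Cref{thm: log crys = crys + N} to deduce from the Frobenius structure that $\sF({D_P}, X, M_{D_P})[1/p]$ is \emph{locally free}; only after that does it apply Shiho's theorem under the alternative condition~(1)' (local freeness of the source module) to get base change along an arbitrary $g$. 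In other words, the logical order must be: establish the F-isocrystal / local freeness input \emph{before} proving the general base change statement, not the other way around. Your proof proves \ref{log crys coh BC} first and uses it for \ref{is an F-isocrystal}; besides resting on the false flatness claim, this reversal is not reparable as written because the non-flat base change you need for arbitrary $g$ genuinely requires the local freeness coming from the Frobenius/log-isocrystal argument.
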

\begin{proof} 
    We first mention that the evaluation of the coherent crystalline sheaf $\sF \colonequals R^i f_{\crys *}(\sO_{(Y, M_Y)/W})$ on an object $(T, U, M_T) \in (X, M_X)_\lcris$ can be computed by localization. 
    Namely, consider the functor of topoi $$f_{Y_U/T, \crys *} : ((Y|_U, M_Y|_{Y_U})/(T, M_T))^\sim_\lcris \to T^\sim_{\mathrm{zar}}$$
    introduced in \cite[Def.~1.4]{Shi07}. Then we have a commutative diagram 
    \[\begin{tikzcd}
	{(Y, M_Y)^\sim_\lcris} & {((Y|_U, M_Y|_{Y_U})/(T, M_T))^\sim_\lcris} \\
	{(X, M_X)_\lcris^\sim} & {T^\sim_{\mathrm{zar}}}
	\arrow["{j_{T}^*}", from=1-1, to=1-2]
	\arrow["{f_{\crys*}}"', from=1-1, to=2-1]
	\arrow["{f_{Y_U/T, \crys*}}", from=1-2, to=2-2]
	\arrow["{e_T}", from=2-1, to=2-2]
    \end{tikzcd}\]
    where $j_T^*$ (resp. $e_{T*}$) is the obvious restriction (resp. evaluation) functor. Since both $e_{T*}$ and $j_T^*$ are exact, and $j^*_T(\sO_{(Y, M_Y)/W}) = \sO_{(Y_U, M_Y|_{Y_U})/(T, M_T)}$, we obtain a canonical isomorphism 
    \begin{equation}
        \label{eqn: localization}
        \sF(T, U, M_T) = e_{T*}(R^i f_{\crys *}(\sO_{(Y, M_Y)/W})) \simeq R^i f_{Y_U/T, \crys *}(\sO_{(Y_U, M_Y|_{Y_U})/(T, M_T)}).
    \end{equation}
    Here we note that as the map $f$ is assumed to be smooth, the right hand side of (\ref{eqn: localization}), after inverting $p$, can be computed by log-convergent cohomology \cite[Thm~2.36]{Shi07}. 
    We shall repeatedly use this fact below. 

    We then produce an $F$-isocrystal $\sE$ on $(X, M_X)_\lcris$ and will eventually show that it coincides with $\sF[1/p]$ as sheaves over the small log-crystalline site. 
    Without loss of generality, let us assume that $X = \Spec(R_s)$ is affine and admits a lifting $\Spf(R_0)$ over $W$.
    To construct $\sE$, we take $({D_P}, X, M_{D_P})$ to be the covering object $(\Spf(R_0 \{ u \}), X, (u^\IN)^a)$ of $(X, M)_\lcris$ and consider its \v{C}ech nerve $({D_P}^\bullet, X, M_{{D_P}^\bullet})$. 
    Note that for every $n\in \mathbb{N}$, each structural morphism in the \v{C}ech nerve $({D_P}^n, X, M_{{D_P}^n}) \to ({D_P},X,M_{D_P})$ is analytically flat. 
    Therefore, by \cite[Cor.~3.10]{Shi07} (using the conditions (1) and (2) of loc.\ cit.), for each projection morphism $p_i : ({D_P}^1, X, M_{{D_P}^1}) \to ({D_P}, X, M_{D_P})$ there is a canonical isomorphism $p_i^* \sF({D_P}, X, M_{D_P})[1/p] \simeq \sF({D_P}^1, X, M_{{D_P}^1})[1/p]$.
    It is tautological that the resulting transition map $p_0^* \sF({D_P}, X, M_{D_P})[1/p] \simeq p_1^* \sF({D_P}, X, M_{D_P})[1/p]$ satisfies the cocycle condition. Hence $\sF({D_P}, X, M_{D_P})[1/p]$ together with this transition map defines an isocrystal $\sE$ on $(X, M)_\lcris$. 
    The fact that $\sE$ has the structure of an $F$-isocrystal is the consequence of \cite[Prop.~2.24]{HK94}. By \Cref{thm: log crys = crys + N}, the existence of Frobenius structure forces $\sE$, and hence $\sF({D_P}, X, M_{D_P})[1/p]$, to be locally free. 

    Now, let the maps $g$ and $f'$ be as in \ref{log crys coh BC}. 
    Let $(T', X', M_{T'})$ be an object of $(X', (0^\IN)^a)_\lcris$, which is also an object of the big log crystalline site $(X, (0^\IN)^a)_\lcrys$. 
    Since $({D_P}, X, M_{D_P})$ is a covering object of $(X, (0^\IN)^a)_\lcrys$ as well, the map $g$ lifts to a morphism $\wt{g} : (T', M_{T'}) \to ({D_P}, M_{D_P})$ between the log schemes. 
    Moreover, as the ${D_P}[1/p]$-module $\sF({D_P}, X, M_{D_P})[1/p]$ is locally free, by \cite[Cor.~3.10]{Shi07} (using the conditions (1)' and (2) of loc.\ cit.), the canonical base change map below is an isomorphism
    \begin{equation}
        \label{eqn: log crys coh BC 2} 
        \wt{g}^* \sF({D_P}, X, M_{D_P})[\frac{1}{p}] \simeq R^i f'_{Y'/T', \crys *}(\sO_{(Y', M_{Y'})/(T', M_{T'})}[\frac{1}{p}]).
    \end{equation}
    This in particular implies the base change formula of the log-crystalline cohomology in Part \ref{log crys coh BC}.
    
    Suppose now that $X'$ is \'etale over $X$. Then $(T', X', M_{T'})$ is an object of $(X, M_X)_\lcris$.
    In particular, the left (resp. right) hand side of \ref{eqn: log crys coh BC 2} is canonically identified with $\sE(T', X', M_{T'})$ (resp. $\sF(T', X', M_{T'})[1/p]$). This implies that $\sE = \sF[1/p]$ and hence we obtain Part \ref{is an F-isocrystal}.
\end{proof}

We now prove the relative version of the Hyodo--Kato isomorphism over a smooth $p$-adic formal scheme.

\begin{theorem}
\label{thm:relative Hyodo-Kato}
    Let $X$ be a smooth $p$-adic formal scheme over $\sO_K$ equipped with the standard log structure $M_X$.
    Let $(Y, M_Y)$ be a fine log $p$-adic formal scheme, and let $f : (Y, M_Y) \to (X, M_X)$ be a proper and log-smooth morphism with Cartier type reduction.
    Suppose that there is a smooth $p$-adic formal scheme $X_0$ over $W$ such that $X\simeq X_0 \tensor_K \sO_K$.
    \begin{enumerate}[label=\upshape{(\roman*)}]
        \item\label{thm:relative Hyodo-Kato map} For each choice of uniformizer $\pi \in \sO_K$ there is an isomorphism of flat connections
    \begin{equation}
        \label{eqn: HK iso}
        (R^i f_{s, \crys} \sO_{(Y_s, M_{Y_s})/W})(X_{0}, (0^\IN)^a)\otimes_{W} K \simeq R^i f_{\eta *} \Omega^\bullet_{(Y_\eta, M_{Y_\eta})/X_\eta}.
    \end{equation}
    \item\label{thm:relative Hyodo-Kato point} For every point $x_0 \in X_0(W)$ with the special fiber $x_s \in X_s(k)$ and the base extension $x \colonequals x_0 \tensor_W \mathcal{O}_K \in X(\sO_K)$, the base change of the isomorphism (\ref{eqn: HK iso}) along the closed immersion $x_0\to X_0$, namely the isomorphism
    \begin{equation}
        \label{eqn: HK iso 2}
        \mathrm{H}^i_\lcris((Y_{x_s}, M_{Y_{x_s}})/(W, (0^\IN)^a)) \tensor_W K \simeq \mathrm{H}^i_\dR((Y_{x_\eta}, M_{Y_{x_\eta}})/ K),
    \end{equation}
    is identical to the Hyodo--Kato isomorphism map $\rho_\pi$ in \cite[Thm.~5.1]{HK94}. 
    \end{enumerate} 
\end{theorem}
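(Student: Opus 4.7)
The plan is to establish Part (i) by a relative adaptation of the classical Hyodo--Kato recipe. Since the problem is local on $X_0$, I will work on a sufficiently small affine open $X_0 = \Spf(R_0)$ that admits a Frobenius lift $\varphi_{R_0}$ lifting the absolute Frobenius of $X_s$. Setting $R \colonequals R_0 \tensor_W \sO_K$ and $D_{R_0} \colonequals (R_0 \ctensor_W D)^\wedge_p$ where $D = W\{ u \}$ as in \cref{conv of log of base}, I consider the log pro-PD-thickening $(D_{R_0}, X_s, (u^\IN)^a)$ in the absolute site $(X_s, M_{X_s})^\wedge_\lcrys$. It admits two natural surjections of log-PD-thickenings: $\sigma_0 : (D_{R_0}, (u^\IN)^a) \to (R_0, (0^\IN)^a)$ via $u \mapsto 0$, and $\sigma_\pi : (D_{R_0}, (u^\IN)^a) \to (R, (\pi^\IN)^a)$ via $u \mapsto \pi$, the latter being well-defined after inverting $p$ (using the log-PD structure on $(\pi) \subset R$).

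By \cref{prop: relative log crys}\ref{is an F-isocrystal}, $\sF \colonequals R^i f_{s, \crys *}\sO_{(Y_s, M_{Y_s})/W}[1/p]$ is an object of $\Isoc^\varphi((X_s, M_{X_s})_\lcrys)$, so the crystal property automatically links its evaluations on the three log-PD-thickenings above via base change. The main step is to apply \cref{cor: Breuil} with $P = R_0$, which produces a canonical Frobenius-equivariant isomorphism
\[ \sF(D_{R_0}, (u^\IN)^a) \simeq D_{R_0}[\tfrac{1}{p}] \tensor_{R_0[\tfrac{1}{p}]} \sF(R_0, (0^\IN)^a). \]
Specializing both sides along $\sigma_\pi$ then yields $\sF(R, (\pi^\IN)^a) \simeq \sF(R_0, (0^\IN)^a) \tensor_W K$. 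To identify the left-hand side with the relative log de Rham cohomology, I plan to invoke the log crystalline-to-log de Rham comparison: since $(Y, M_Y) \to (X, M_X)$ is itself a proper log-smooth lift of $(Y_s, M_{Y_s}) \to (X_s, M_{X_s})$ over the log-PD thickening $(R, (\pi^\IN)^a)$, the log Poincar\'e lemma (e.g., \cite[\S 6]{Kat89}, \cite[Thm.~2.36]{Shi07}) will identify $\sF(R, (\pi^\IN)^a)$ with $R^i f_{\eta *} \Omega^\bullet_{(Y_\eta, M_{Y_\eta})/X_\eta}$. Compatibility with flat connections will follow from the horizontality built into \cref{cor: Breuil}, and uniqueness in \cref{thm:Breuil} makes the construction independent of $\varphi_{R_0}$, so \'etale descent globalizes to any $X_0$.

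Part (ii) will then follow from the functoriality of the construction under base change along $x_0 : \Spf(W) \to X_0$, which is supplied by \cref{prop: relative log crys}\ref{log crys coh BC}. In the absolute case $X_0 = \Spf(W)$, the entire recipe reduces precisely to the original construction of $\rho_\pi$ in \cite[Thm.~5.1]{HK94}: Breuil's equivalence with $P = W$ is exactly Dwork's trick, followed by the same specialization $u \mapsto \pi$. The main technical obstacle in the whole argument is the log de Rham comparison alluded to above, together with the careful tracking of connection compatibility through Breuil's equivalence; both are handled by standard but somewhat lengthy bookkeeping.
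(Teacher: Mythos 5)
Your overall template—Breuil's equivalence to move between $u \mapsto 0$ and the absolute case, then the log Poincar\'e lemma to reach de Rham cohomology on the generic fiber—is the same architecture as the paper's proof. However, there is a genuine gap in the specialization step $\sigma_\pi : u \mapsto \pi$.

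The ring $D_{R_0}$ in your setup is $(W\{u\} \ctensor_W R_0)^\wedge_p$ where $W\{u\}$ is the $p$-complete divided power polynomial ring, so it contains the elements $u^n/n!$ for all $n$. The ring map $W\{u\} \to \sO_K$, $u \mapsto \pi$, would have to send $u^n/n!$ to $\pi^n/n!$, but $\val_p(\pi^n/n!) \approx n(1/e - 1/(p-1))$ tends to $-\infty$ when $e = [K:K_0] \geq p$. So $\sigma_\pi$ is not a morphism of $p$-adically complete rings, and inverting $p$ does not repair this: $D_{R_0}[1/p] \to K$ is a plain ring map, not a morphism in the (log-)crystalline site, so you are not entitled to ``specialize the crystal $\sF$ along $\sigma_\pi$''. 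This is precisely the technical knot that the Hyodo--Kato isomorphism exists to untie, and it is where the ramification of $K$ enters.

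The paper's proof resolves this by introducing a second ring: $D := D_\fS(E(u))^\wedge_p$ (the $p$-complete log PD-envelope of $\fS \twoheadrightarrow \sO_K$ from \cref{conv of log of base}, whose divided powers live on $E(u)$ rather than on $u$, so the specialization $u \mapsto \pi$ to $\sO_K$ \emph{is} well-defined integrally) together with the Frobenius twist $g : W\{u\} \to D$ obtained by taking log PD-envelopes of the diagram built from $\varphi_\fS^r$ (for $r$ with $p \mid \pi^r$). The isomorphism $h_\pi$ in diagram (\ref{diag: construct HK}) is then the composite of Breuil's section with a base change along $g$ and an application of $\varphi_\sE^r$. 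Your proposal omits $D_\fS(E)^\wedge_p$, $g$, and the Frobenius twist entirely, so it only works for $e < p$. This also breaks Part (ii): the classical $\rho_\pi$ of \cite[Thm.~5.1]{HK94} is defined with the same $\Phi^r$-twist built in, so a Frobenius-free recipe cannot literally reduce to it.

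One smaller notational point: you cite \cref{conv of log of base} for $D = W\{u\}$, but that construction defines $D$ as $D_\fS(E)^\wedge_p$; the ring $W\{u\}$ appears in \cref{const: Frob flat modules}. The two are distinct unless $K = K_0$, and the distinction is exactly the crux of the argument.
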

\begin{proof}
    Set $\sE \colonequals R^i f_{s, \crys} \sO_{(Y_s, M_{Y_s})/W}[1/p]$. Then Part \ref{is an F-isocrystal} of \cref{prop: relative log crys} says that $\sE$ is an $F$-isocrystal on $(X_s, (0^\IN)^a)_\lcris$, and Part \ref{log crys coh BC} says that the restriction $x_s^* \sE$ which is an $F$-isocrystal on $(\Spec(k), (0^\IN)^a)_\lcris$ (or equivalently a $(\varphi, N)$-module over $K_0$) is indeed canonically isomorphic to $\mathrm{H}^i_\lcris((Y_{x_s}, M_{Y_{x_s}})/(W, (0^\IN)^a))[1/p]$. 

    To prove the proposition we closely follow the argument of \cite[Thm~5.1]{HK94}. 
    Without loss of generality, let us assume that $X = \Spf(R)$, $X_s = \Spec(R_s)$ and $X_0 = \Spf(R_0)$ are all affine. 
    Let $\varphi_{R_0}$ be the lift of the absolute Frobenius $F_{R_s}$ of $R_s$ and $\varphi_{R_0 \{ u \}}$ be the endomorphism of $R_0\{ u \}$ which acts as $\varphi_{R_0}$ on $R_0$ and sends $u$ to $u^p$. 
    Similarly, let $\varphi_\fS$ be the Frobenius endomorpshim on $\fS = W[\![u]\!]$ which sends $u$ to $u^p$ and acts as $\sigma$ on $W$. 
    Let $D$ be the log-PD envelope of the exact surjection $(\fS, (u^\IN)^a) \to (\sO_K, (\pi^\IN)^a)$ given by sending $u$ to $\pi$ (cf. \Cref{conv of log of base}) and set $D_{R_0} \colonequals (D \tensor_W R_0)^\wedge_p$. 
    Choose $r \in \IN$ sufficiently large such that $p \mid \pi^r$ in $\sO_K$. Then we obtain a commutative diagram 
    \[\begin{tikzcd}
	\fS & \fS \\
	k & {\sO_K/p}
	\arrow["{\varphi_\fS^r}", from=1-1, to=1-2]
	\arrow["{\mod (p, u)}"', from=1-1, to=2-1]
	\arrow["{\mod (p, E(u))}", from=1-2, to=2-2]
	\arrow[hook, from=2-1, to=2-2]
    \end{tikzcd}.\]
    By taking the log PD-envelops of the vertical surjections, we obtain a morphism $W\{ u \} \to D$, which we denote by $g$, following the notation of \textit{loc. cit.} Note that $g$ induces a morphism $R_0\{ u \} \to D_{R_0}$ by base change, which we denote by the same letter. 
    
    By \Cref{thm:Breuil}, there exists a unique Frobenius equivariant isomorphism  
    $$ s : \sE(R_0\{ u \}, R_s, (u^\IN)^a) \simeq \sE(R_0, R_s, (0^\IN)^a) \tensor_{R_0} R_0\{ u \}. $$
    When $Y = \Spf(\sO_K)$ is just a point, the above becomes an isomorphism 
    $$s : \mathrm{H}^i_\lcris((Y_s, M_{Y_s})/(W, (0^\IN)^a)) \tensor_W W\{ u \} \stackrel{\sim}{\to} \mathrm{H}^i((Y_s, M_{Y_s})/(W\{u\}, (u^\IN)^a)).$$
    Alternatively, one obtains such an isomorphism by applying \cite[Prop.~4.13]{HK94}. More precisely, the result in \textit{loc. cit.} produces a Frobenius-equivariant isomorphism between two complexes of sheaves on $Y_{s, \et}$, which gives an isomorphism as above after we take $\mathrm{H}^i(Y_{s, \et}, -)$. 

    We define an isomorphism $h_\pi$ using the following commutative diagram of the isomorphisms. 
    \begin{equation}
    \label{diag: construct HK}
        \begin{tikzcd}
	{\bigr(\sE(R_0, R_s, (0^\IN)^a) \tensor_{R_0} R_0\{u\} \bigr) \tensor_g D_{R_0}} && {\sE(R_0 \{ u \}, R_s, (u^\IN)^a) \tensor_g D_{R_0}} \\
	{\sE(R_0, R_s, (0^\IN)^a) \tensor_{\varphi^r_{R_0}} D_{R_0}} && {\sE(D_{R_0}, R_s, (u^\IN)^a)} \\
	{(\varphi^r_{R_0})^*\sE(R_0, R_s, (0^\IN)^a) \tensor_{R_0} D_{R_0}} && {\sE(R_0, R_s, (0^\IN)^a) \tensor_{R_0} D_{R_0}}
	\arrow["{s \tensor_g D_{R_0}}", from=1-1, to=1-3]
	\arrow[equal, from=1-1, to=2-1]
	\arrow["{(\ast)}"', from=1-3, to=2-3]
	\arrow[equal, from=2-1, to=3-1]
	\arrow["{{\varphi^r \tensor 1}}", from=3-1, to=3-3]
	\arrow["{h_\pi}", from=3-3, to=2-3, dashed]
    \end{tikzcd}
    \end{equation}
    Here in the diagram $\varphi_\sE^r : (\varphi^r_{R_0})^* \sE(R_0, R_s, (0^\IN)^a) \stackrel{\sim}{\to} \sE(R_0, R_s, (0^\IN)^a)$ is the Frobenius isomorphism given by the $F$-isocrystal structure of $\sE$. The isomorphism $(\ast)$ is induced by $g$ and the crystalline nature of $\sE$, viewed as a morphism $(R_0\{ u \}, R_s, (u^\IN)^a) \to (D_{R_0}, R_s, (u^\IN)^a)$ between objects in $(X_s, (0^\IN)^a)_\lcris$. 
    One checks that the definition of $h_\pi$ is independent of the choice of $r$. We then define an isomorphism $\sE(X_0, X_s, (0^\IN)^a) \tensor_{K_0} K \simeq \sE(X, X_{p = 0}, M_X = (\pi^\IN)^a)$ by the $K$-linearization of the following maps
    \begin{equation}
    \label{eqn: Breuil to evaluate E}
        \sE(X_0, X_s, (0^\IN)^a) \into \sE(X_0, X_s, (0^\IN)^a) \tensor_{R_0} D_{R_0} \stackrel{h_\pi}{\to} \sE(D_{R_0}, R_s, (u^\IN)^a) \stackrel{u \mapsto \pi}{\to} \sE(X, X_s, (\pi^\IN)^a).
    \end{equation}   
    
    Now, using \cite[Thm~6.4]{Kat89} (see also the paragraph above \cite[Thm~2.36]{Shi07}), one obtains a canonical isomorphism 
    $$ R^i f_{Y_{p = 0}/ X, \crys *}(\sO_{(Y_{p = 0}, M_{Y_{p = 0}})/(X, M_X)}) \simeq R^i f_{*} \Omega^\bullet_{(Y,M_Y)/(X, M_X)} $$
    where $\Omega^\bullet_{(Y,M_Y)/(X, M_X)}$ is the (relative) log de Rham complex of the morphism $f$. After inverting $p$, one obtains an isomorphism 
    \begin{equation}
        \label{eqn: HK iso 3}
        \sE(X, X_s, M_X) \simeq R^i f_{\eta *} \Omega^\bullet_{(Y_\eta, M_{Y_\eta})/X_\eta},
    \end{equation}
    which finishes the proof of Part \ref{thm:relative Hyodo-Kato map}. 
    
    For Part \ref{thm:relative Hyodo-Kato point}, we note that for the given point $x_0\in X_0(W)$, there is a commutative diagram of log schemes 
    \begin{equation}
        \label{diag: BC to point for HK iso}
        \begin{tikzcd}
            (\Spf(W), (0^\IN)^a) \arrow[d, "x_0"] & (\Spf(\sO_K), (\pi^\IN)^a) \arrow[d, "x"] \arrow[l] & (X_s, (0^\IN)^a) \arrow[d, "x_s"] \arrow[l, hook] \\
            (X_0,(0^\IN)^a ) & (X,M_X)  \arrow[l] & (X_s, (0^\IN)^a). \arrow[l, hook]
        \end{tikzcd}
    \end{equation}
    Therefore, one indeed obtains an isomorphism as in (\ref{eqn: HK iso 2}) from (\ref{eqn: HK iso}) via a base change.
    Then one can check readily that the construction of (\ref{eqn: HK iso 3}) agrees with the construction of $\rho_\pi$ in \cite[Thm.~5.1]{HK94}. 
    Here the essential point is that the diagram \cite[(5.2.1)]{HK94} about certain complexes of sheaves on $Y_{s, \et}$ gives rise to diagram (\ref{diag: construct HK}) after we take $\mathrm{H}^i(Y_{s, \et}, -)$, and we have chosen notations to facilitate the comparison. 
\end{proof}

\section{Period sheaves}
\label{sec period sheaves}
We recall the construction and the properties for the crystalline horizontal period sheaf and the crystalline period sheaf with connection in this section, and introduce their semi-stable analogues, for a rigid space with good reduction.

\subsection{Horizontal period sheaves}
\label{sub horizontal period sheaves}

We first recall the construction of horizontal period sheaves and study their basic properties. Before doing so, we first discuss horizontal period rings. We assume the basics of perfectoid algebras as in \cite{Sch13}. 
As before, we let $\sO_K$ be the ring of integers of some fixed $p$-adic field with a perfect residue field $k$, and recall our conventions in \cref{subsec conv}. 

\begin{definition}
\label{def roman horizontal period rings}
Let $S$ be a $p$-torsionfree perfectoid  $\sO_C$-algebra. Let $S^\flat$ be the tilt $\varprojlim_{x \mapsto x^p} S$. 
Equip $S$ with the standard log structure $M_S \colonequals S \cap S[1/p]^\times$.
\begin{enumerate}[label=\upshape{(\roman*)}]
    \item Set $\rAinf(S) \colonequals W(S^\flat)$, which is equipped with a canonical surjection $\theta : \rAinf(S) \to S$, sending the Teichm\"uller lift $[(x_0,x_1,\ldots)]\in W(S^\flat)$ onto the element $x_0$.
    Let $\mathrm{B}_\dR^+(S)$ be the completion of $\rAinf(S)[1/p]$ at $\ker(\theta)$, and $\mathrm{B}_\dR(S)$ be the localization $\mathrm{B}_\dR^+(S)[1/\mu]$. 
    Recall that the constant $\mu$ was introduced in \cref{subsec conv}. 
    \item Let $\rAcrys(S)$ be the $p$-completion of the PD-envelope of the surjection $\theta$. Set $\mathrm{B}_\crys^+(S) \colonequals \rAcrys(S)[1/p]$ and $\mathrm{B}_\crys(S) \colonequals \mathrm{B}_\crys^+(S)[1/\mu] = \rAcrys(S)[1/\mu]$. 
    \item Let $M_{\rAinf(S)}$ be the log structure on $\rAinf(S)$ associated to the monoid $\theta^{-1}(M_S)$, and let $M_{\rAcrys(S)}$ be the log structure induced by $M_{\rAinf(S)}$.
    \item The Frobenius operator $\varphi_{\rAinf(S)}$ induces Frobenius operators $\varphi_{\rAcrys(S)}, \varphi_{\mathrm{B}_\crys^+(S)}$ and $\varphi_{\mathrm{B}_\crys(S)}$ on the respective rings. 
    \end{enumerate}
\end{definition}

\begin{remark}
    	\label{rmk: log envelope is envelope}
    	The pro log PD envelope of the map $\theta$, when viewed as a surjection of log pairs $(\rAinf(S),M_{\rAinf(S)})\to (S,M_S)$, is precisely $(\rAcrys(S),M_{\rAcrys})$, because $\theta$ is by construction exact. 
        As a consequence, the triple $(\rAcrys(S),S/p,M_{\rAcrys})$ forms an object in $(\mathcal{O}_K/p, M_{\mathcal{O}_K/p})_\lcrys$.
\end{remark}

Recall from \Cref{conv of log of base} that we introduced the ring $D \colonequals D_\mathfrak{S}(E(u))^\wedge_p$, the $p$-complete PD-envelope for the surjection $\mathfrak{S}=W\llbracket u\rrbracket \to \mathcal{O}_K$. We use $M_D$ to denote the induced log structure of $M_\mathfrak{S}=(u^\IN)^a$ on $D$.
Then the triple $(D_\mathfrak{S}(E(u))^\wedge_p, \mathcal{O}_K/p, M_\mathfrak{S})$ forms a pro-object in the log crystalline site $(\mathcal{O}_K/p,M_{\mathcal{O}_K/p})_{\lcrys}$ that is weakly initial. 
Let $\varphi_\fS$ be the Frobenius operator on $\fS$ which acts as $\sigma$ on $W$ and sends $u$ to $u^p$.

Now we construct the semi-stable period rings.

\begin{construction}
    	\label{Explicit formula of Ast}
    	Let $S$ be a $p$-torsionfree perfectoid $\mathcal{O}_C$-algebra.
    	Define $\rhAst(S)$ to be the $p$-complete log PD envelope of the surjection 
    	\[
    	\bigl((\mathrm{A}_{\inf}(S), M_{\mathrm{A}_{\inf}(S)}) \otimes_W (\mathfrak{S}, M_\mathfrak{S}) \bigr)^\wedge_p \xrightarrow{\theta\otimes (\text{mod}~E(u))} (S,M_S).
    	\]
        Let $\varphi_{\rhAst(S)}$ be the Frobenius operator on $\rhAst(S)$ induced by $\varphi_{\Ainf(S)} \tensor \varphi_{\fS}$ on the left hand side above. 
        To compute $\rhAst(S)$ explicitly, we first form the exact localization of the the source by slightly adapting \Cref{lem: exact localization}.
    	By \Cref{def roman horizontal period rings}, the map of monoids $M_{\Ainf(S)}\to M_S$ induces an equivalence of associated log structures on the ring $S$.
    	Moreover, since the image of $u\in M_{\mathfrak{S}}$ in $S$ coincides with that of $[\pi^\flat]\in \mathrm{A}_{\inf}(S)$, the $p$-complete exact localization for the map of log pairs above is given by 
    	\[
    	\bigl(\rAinf(S)\otimes_W \mathfrak{S}[\frac{u}{[\pi^\flat]}, \frac{[\pi^\flat]}{u}]\bigr)^\wedge_p.
    	\]
    	To get the ring $\rhAst(S)$, we then take the $p$-complete divided power envelope of the surjection
    	\[
    	\bigl( \rAinf(S)\otimes_W \mathfrak{S}[\frac{u}{[\pi^\flat]}, \frac{[\pi^\flat]}{u}] \bigr)^\wedge_p \longrightarrow S.
    	\]
    	Notice that since the kernel of the above map is generated by $\ker(\theta)$ and $u/[\pi^\flat]-1$, we can rewrite the ring $\rhAst(S)$ as the $p$-complete divided power polynomial for the variable $u/[\pi^\flat]-1$, namely
    	\[
    	\rhAst(S)\simeq \rAcrys(S)\langle \frac{(u/[\pi^\flat]-1)^n}{n!} , n\in \IN\rangle = \rAcrys(S) \{\frac{u}{[\pi^\flat]}-1\}.
    	\]
    	Here the log structure $M_{\rhAst}(S)$ is by construction associated to the map of monoids
    	\[
    	\left( M_{\rAinf(S)} \times u^\mathbb{N} \right) \coprod_{[\pi^\flat]^\mathbb{N} \times u^\mathbb{N}} N^{(1)} \longrightarrow \rhAst(S),
    	\]
    	with $N^{(1)}\colonequals \{u^x\cdot(\pi^\flat)^y~|~x+y\geq 0\}$ being the submonoid of $u^\mathbb{Z} \times(\pi^\flat)^\mathbb{Z}$. 
    	
    	We also note that by construction and \Cref{rmk: log envelope is envelope}, the associated log divided power thickening $(\rhAst(S),S/p, M_{\rhAst(S)})$ is naturally identified with the coproduct of $(D_\mathfrak{S}(E(u)), \mathcal{O}_K/p, M_\mathfrak{S})$ and $(\rAcrys(S), S/p, M_{\rAcrys(S)})$ in the log crystalline site $(\mathcal{O}_K/p,M_{\mathcal{O}_K/p})_\lcrys$, fitting into a diagram of pro log PD thickenings
    	\[
    	\begin{tikzcd}
    		& (D_\mathfrak{S}(E(u)), \mathcal{O}_K/p, M_\mathfrak{S}) \ar[d]\\
    		(\rAcrys(S), S/p, M_{\rAcrys(S)}) \ar[r]& (\rhAst(S),S/p, M_{\rhAst(S)}).
    	\end{tikzcd}
        \]
\end{construction}

Similar to \Cref{const: monodromy on D}, we can define a monodromy operator on $\rhAst(S)$ that is linear with respect to $\rAcrys(S)$. 
\begin{construction}[Monodromy operator on $\rhAst$]
	\label{const: monodromy on hAst}
	Let $S$ be a $p$-torsionfree perfectoid $\mathcal{O}_C$-algebra.
	By \Cref{const: monodromy on D}, the \v{C}ech nerve of the covering object $(D,\mathcal{O}_K/p,M_D)$ in $(\mathcal{O}_K/p, M_{\mathcal{O}_K/p})_\lcrys$ is isomorphic to the cosimplicial complete log divided power polynomials
	\[
	(D^\bullet, \mathcal{O}_K/p, M_{D^\bullet}) = (D\{ t_1,\ldots,t_\bullet \} , \mathcal{O}_K/p, \{u^{a_0}\cdots u_\bullet^{a_\bullet}~|~\sum a_i \geq 0\}^a),
	\]
	where $t_i$ is the formal variable corresponding to the element $u/u_i-1$.
	On the other hand, by \Cref{rmk: log envelope is envelope}, we can associate the perfectoid ring $S$ with the log pair $(\rAcrys(S), M_{\rAcrys(S)})$, such that the triple $(\rAcrys(S), S/p, M_{\rAcrys(S)})$ forms an object in $(\mathcal{O}_K/p, M_{\mathcal{O}_K/p})_\lcrys$.
	In particular, we can form the coproduct of the cosimplicial objects $(D^\bullet, \mathcal{O}_K/p, M_{D^\bullet})$ with $(\rAcrys(S), S/p, M_{\rAcrys(S)})$ in the log crystalline site $(\mathcal{O}_K/p, M_{\mathcal{O}_K/p})_\lcrys$, to get the 
	cosimplicial object $(\rhAst(S)^\bullet, S/p, M_{\rhAst(S)^\bullet})$, which fits into a diagram 
	\[
	\begin{tikzcd}
		& (D^\bullet, \mathcal{O}_K/p, M_{D^\bullet}) \ar[d] \\
		(\rAcrys(S), S/p, M_{\rAcrys(S)}) \ar[r] & (\rhAst(S)^\bullet, S/p, M_{\rhAst(S)^\bullet}).
	\end{tikzcd}
    \]
	Moreover, by \Cref{Explicit formula of Ast}, the zero-th term $(\rhAst(S)^0, S/p, M_{\rhAst(S)^0})$ is naturally isomorphic to \[
	(\rhAst(S),S/p, M_{\rhAst(S)})\simeq (\rAcrys(S) \{\frac{u}{[\pi^\flat]}-1\}, S/p, M_{\rhAst(S)}).\]
	The explicit construction of $(D^\bullet, \mathcal{O}_K/p, M_{D^\bullet})$ in \Cref{Cech for log crys} further provides an identification 
	\[
	(\rhAst(S)^\bullet, S/p, M_{\rhAst(S)^\bullet}) \simeq (\rhAst(S)\{t_1,\ldots, t_\bullet\}, S/p, M_{\rhAst(S)^\bullet}),
	\]
	such that the two coface maps $p_0, p_1:\rhAst(S) \to \rhAst(S)^1\simeq \rhAst(S)\{t_1\}$ send a power series $f(u) \in \rhAst(S)= \rAcrys\{u/[\pi^\flat]-1\}$ onto the elements $f(u)$ and $f(\frac{u}{t_1+1})$ respectively in $\rhAst(S)\{t_1\}=\rAcrys(S)\{u/[\pi^\flat]-1,t_1\}$.
	
	Now similar to \Cref{const: monodromy on D}, since the difference map $p_0-p_1$ has image in $t_1\cdot \rhAst(S)\{t_1\}$, we define the $\rAcrys(S)$-linear endomorphism $N_{\rhAst(S)}$ as the composite of the following maps: 
    \[
    \rhAst(S) \stackrel{p_0 - p_1}{\longrightarrow} t_1\cdot \rhAst(S)\{t_1\} \stackrel{\mod t_1^{[\geq 2]}}{\longrightarrow} t_1 \cdot \rhAst(S) \simeq \rhAst(S)
    \]

    When $\rhAst(S)$ is viewed as a subring of the power series ring $\mathrm{B}^+_\crys(S)[\![u]\!]$, the endomorphism $N_{\rhAst(S)}$ is given by the formula $f(u) \mapsto u f'(u)$, similar to the endomorphism $N_D$ in \cref{const: monodromy on D}.
	In particular, the kernel of $N_{\rhAst(S)}$ is $\rAcrys(S)$, and by induction, the submodule of nilpotent elements is given by $\rAcrys(S)[\log{\frac{u}{[\pi^\flat]}}]$, and is isomorphic to the polynomial of one variable over $\rAcrys(S)$.    
	Moreover, by explicit calculations, one check immediately that Frobenius endomorphism $\varphi_{\rhAst}$ and monodromy operator $N_{\rhAst(S)}$ satisfy the formula that $N_{\rhAst}\varphi_{\rhAst}=p\varphi_{\rhAst}N_{\rhAst}$.
\end{construction}

\begin{definition}
    \label{def: Ast}
    Let $S$ be a $p$-torsionfree perfectoid $\mathcal{O}_C$-algebra.
    We define $(\rAst(S),M_{\rAst(S)})$ to be the log pair given by the ring $\rAcrys(S)[\log{\frac{u}{[\pi^\flat]}}]$ together with the log structure induced by $M_{\rAinf(S)}$.
    It is equipped with an $\rAcrys(S)$-linear nil endomorphism $N_{\rAst(S)}$ that vanishes on $\rAcrys(S)$, together with a natural Frobenius endomorphism $\varphi_{\rAst}$ that is compatible with $\varphi_{\rhAst}$, satisfying $N_{\rAst}\varphi_{\rAst}=p\varphi_{\rAst}N_{\rAst}$.
\end{definition}

\begin{remark}
	\label{rmk: descending a sheaf from C to K}
    The subring $\rAst(S) \subseteq \rhAst(S)$ is $\Gal_K$-invariant because $\sigma \in \Gal_K$ acts on $\rhAst$ by  
    $$ \log{\frac{u}{[\pi^\flat]}} \mapsto \log{\frac{u}{[\pi^\flat]}} - \chi(g)\cdot \log{\epsilon}, $$
    where $\chi$ denotes the cyclotomic character. 
    Note that $\chi(g)\cdot \log{\epsilon} \in \rAcrys(\mathcal{O}_C)$. 
\end{remark}

Finally, we summarize the relation of various horizontal period rings in the following.
\begin{proposition}
	\label{prop: relation of horizontal period rings}
	Let $S$ be a $p$-torsionfree perfectoid $\mathcal{O}_C$-algebra.
	Then there are natural maps of horizontal period rings 
	\[
	\rAinf(S) \longrightarrow \rAcrys(S) \longrightarrow \rhAst(S) \longrightarrow \mathrm{B}^+_\dR(S),
	\]
	that are functorial with respect to $S$, and are compatible with Frobenius structures and log structures whenever defined.
	Moreover, the first two maps are injective, and the last map induces an injection from $\rAst(S)$ into $\mathrm{B}^+_\dR(S)$.	
\end{proposition}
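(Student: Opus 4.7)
My strategy is to construct the three maps via universal properties, verify functoriality and compatibility with Frobenius and log structures, and then establish the injectivity claims in order of increasing difficulty.

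The first two maps are structural. The map $\rAinf(S) \to \rAcrys(S)$ is the canonical morphism into the $p$-complete PD envelope of $\ker(\theta)$. The map $\rAcrys(S) \to \rhAst(S)$ is the inclusion of constants coming from the identification $\rhAst(S) \simeq \rAcrys(S)\{u/[\pi^\flat]-1\}$ established in \Cref{Explicit formula of Ast}. For the third map $\rhAst(S) \to \mathrm{B}^+_\dR(S)$, I will extend the tautological inclusion $\rAinf(S) \hookrightarrow \mathrm{B}^+_\dR(S)$ by sending the Breuil--Kisin variable $u$ to $\pi \in K \subset \mathrm{B}^+_\dR(S)$. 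This choice is $\Gal_K$-equivariant since $\pi$ is fixed by the Galois action; and since $\pi - [\pi^\flat] \in \ker(\theta)$ while $\mathrm{B}^+_\dR(S)$ is a $\mathbb{Q}_p$-algebra complete in the $\ker(\theta)$-adic topology (so every element of $\ker(\theta)$ admits divided powers), the morphism extends uniquely to the exact localization and then to the $p$-complete log PD envelope $\rhAst(S)$. Functoriality in $S$ follows since all constructions are universal, and compatibility with Frobenius (defined on the first three rings but not on $\mathrm{B}^+_\dR$) and log structures is built into the definitions.

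For the injectivity of $\rAinf(S) \to \rAcrys(S)$ and $\rAcrys(S) \to \rhAst(S)$, I will invoke the explicit divided-power descriptions. The ring $\rAcrys(S)$ is the $p$-completion of the PD polynomial algebra $\rAinf(S)\{\xi\}$, so it is $p$-completely free over $\rAinf(S)$ with basis $\{\xi^{[n]}\}_{n \ge 0}$, giving a topological splitting of the inclusion. Similarly, $\rhAst(S) \simeq \rAcrys(S)\{u/[\pi^\flat]-1\}$ is $p$-completely free over $\rAcrys(S)$ on the basis $\{(u/[\pi^\flat]-1)^{[n]}\}_{n \ge 0}$, which yields the second injection.

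The principal obstacle is the injectivity of $\rAst(S) \to \mathrm{B}^+_\dR(S)$. Under the map constructed above, $\log(u/[\pi^\flat]) \mapsto \log(\pi/[\pi^\flat]) \in \Fil^1 \mathrm{B}^+_\dR(S)$, and since $\pi - [\pi^\flat]$ generates $\ker(\theta)$ in $\mathrm{B}^+_\dR(S)$ up to a unit (as $\pi$ is a uniformizer of $\sO_K$), its image $\log(\pi/[\pi^\flat])$ has a unit leading coefficient in $\gr^1 \mathrm{B}^+_\dR(S) \simeq S[1/p]\cdot\xi$. As $\rAst(S) = \rAcrys(S)[\log(u/[\pi^\flat])]$ is a polynomial ring in one variable over $\rAcrys(S)$, injectivity reduces to proving that $\log(\pi/[\pi^\flat])$ is transcendental over $\rAcrys(S)$ inside $\mathrm{B}^+_\dR(S)$. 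Because $S$ is $p$-torsionfree so are all intermediate rings, and it therefore suffices to establish transcendence after inverting $p$, reducing the claim to the standard embedding of the semi-stable period ring $\rAst(S)[1/p] = \rAcrys(S)[1/p][\log(u/[\pi^\flat])]$ into $\mathrm{B}^+_\dR(S)$. I plan to prove this either by adapting Fontaine's classical filtration argument, namely analyzing leading terms in $\gr^\bullet \mathrm{B}^+_\dR(S)$ and invoking the Galois action on $\log(u/[\pi^\flat])$ recorded in \Cref{rmk: descending a sheaf from C to K} to reduce the degree of any hypothetical polynomial relation, or alternatively by deducing the relative transcendence from the absolute case over $\sO_C$ via base change along $\mathrm{B}^+_\dR(\sO_C) \to \mathrm{B}^+_\dR(S)$. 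This transcendence argument is the technical core of the proposition.
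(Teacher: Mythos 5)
Your map constructions match the paper's: for the third map you send the Breuil--Kisin variable $u$ to $\pi$, note that $\pi/[\pi^\flat]-1 \in \ker\theta$ admits divided powers since $\mathrm{B}^+_\dR(S)$ is a $\IQ$-algebra, and extend through the PD envelope; and your second injection via $p$-complete freeness of $\rhAst(S)\simeq \rAcrys(S)\{u/[\pi^\flat]-1\}$ over $\rAcrys(S)$ is correct because $u/[\pi^\flat]-1$ is a genuinely new PD variable. But your argument for the \emph{first} injection is wrong: $\rAcrys(S)$ is \emph{not} the $p$-completion of the free PD polynomial algebra $\rAinf(S)\{\xi\}$; it is the $p$-completed PD envelope of the ideal $(\xi)$ \emph{inside} $\rAinf(S)$, a proper quotient of that polynomial algebra. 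Concretely $\xi^2\cdot 1 = 2!\cdot\xi^{[2]}$ is a nontrivial $\rAinf(S)$-linear relation among your claimed basis elements (the coefficient $\xi^2$ lies in $\rAinf(S)$), and $\rAinf(S)/p \to \rAcrys(S)/p$ is not even injective (since $\xi^p = p!\,\xi^{[p]} \equiv 0 \bmod p$ in the target but not in the source), so no topological splitting of any kind exists. The correct argument, which the paper uses, is simply that $\xi$ is a non-zero-divisor, so the uncompleted PD envelope embeds into $\rAinf(S)[1/p]$ as a subring containing $\rAinf(S)$; $p$-completion preserves this injectivity because $\rAinf(S)$ is already $p$-adically complete and $p$-torsionfree.

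On the transcendence of $\log(\pi/[\pi^\flat])$ over $\rAcrys(S)$: you correctly identify the reduction but leave the argument as a plan. The paper settles it by citing \cite[Prop.~2.27]{Shi22}, which proves transcendence over the larger ring $\mathrm{B}^+_{\mathrm{max}}(S) \supseteq \rAcrys(S)$ by a filtration/leading-term analysis; your first sketched route is essentially that argument in embryonic form and would work if carried out. Your second route, ``base change from the absolute case $\sO_C$,'' does not work as stated: a hypothetical polynomial relation $\sum_i a_i \log(\pi/[\pi^\flat])^i = 0$ has coefficients $a_i \in \rAcrys(S)$ that need not lie in the image of $\rAcrys(\sO_C)$, so the relation neither descends to nor lifts from the point case; and the obvious degree-lowering trick via the monodromy operator $N$ (which would annihilate the $a_i$) is unavailable because $N$ is not defined on $\mathrm{B}^+_\dR(S)$, where the relation is posed.
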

\begin{proof}
The first map exists as $\rAcrys(S)$ is the $p$-complete divided power envelope of $\rAinf(S)$ with respect to $\theta$ (cf. \Cref{def roman horizontal period rings}).
The injectiveness of the first map follows from the fact that the ideal $\ker(\theta)$ is generated by a non-zero-divisor \cite[Lem.~6.3]{Sch13}.
The second map together with its injectiveness follows from the explicit formula \Cref{Explicit formula of Ast}.
To construct the third map, we first notice that by construction, the map $\rAinf(S)\to \mathrm{B}^+_\dR(S)$ naturally extends to an injection $\rAcrys(S)\to \mathrm{B}^+_\dR(S)$ (\cite[Prop.~2.23]{TT19}), which further extends to a map from $\rAcrys(S)$-polynomial ring
\[
f:\rAcrys(S)[u]\longrightarrow \mathrm{B}^+_\dR(S),\quad u\longmapsto [\pi^\flat],
\]
where $\ker(f)=(u-[\pi^\flat])$.
Notice that since the canonical surjection $\theta:\mathrm{B}^+_\dR(S)\to S[1/p]$ sends $[\pi^\flat]$ onto the invertible element $\pi$, the map $f$ naturally extends to a map
\[
g:\rAcrys(S)[\frac{u}{[\pi^\flat]}-1] \longrightarrow \mathrm{B}^+_\dR(S).
\]
Moreover, the image of $u/[\pi^\flat]-1$ under the map $g$ admits all $p$-adic divided power power series. 
Hence the map $g$ naturally extends to a map 
\[
\rhAst(S) \simeq \rAcrys(S)\{\frac{u}{[\pi^\flat]}-1\} \longrightarrow \mathrm{B}^+_\dR(S).
\]
The latter in particular sends the element $\log{\frac{u}{[\pi^\flat]}}$ onto $\log{\frac{\pi}{[\pi^\flat]}}\in \mathrm{B}^+_\dR(S)$.
Finally, to check the induced map $\rAst(S)\to \mathrm{B}^+_\dR(S)$ is injective, by the explicit formula in \Cref{const: monodromy on hAst}, it suffices to check that the element $\log{\frac{\pi}{[\pi^\flat]}}\in \mathrm{B}^+_\dR(S)$ is transcendental over the subring $\rAcrys(S)$.
This then follows from \cite[Prop.~2.27]{Shi22}, where the ring $\mathrm{B}^+_\mathrm{max}(S)$ (which in loc.~cit. is denoted as $\mathbb{B}^+_\mathrm{max}(\Lambda)$ for $\Lambda=S$) contains our $\rAcrys(S)$ as a subring (\cite[Lem.~2.34]{Shi22}). 
\end{proof}

Now we are ready to sheafify the above constructions. 
We fix a $p$-torsionfree $p$-adic formal scheme $X$ that is topologically of finite type over $\mathcal{O}_K$, together with the standard log structure $M_X$ as in \Cref{log of framed semi-stable reduction}.
In particular, the log structure $M_{X_\eta}$ on the generic fiber $X_\eta$ is trivial.
Here we emphasize that $X$ is not assumed to be regular.
	
We implicitly assume the equivalence for the category of topoi $\Shv(X_{\eta, \pe}) \simeq \Shv(\Perfd/X_{\eta,\pe})$, where $\Perfd/X_{\eta,\pe}$ is the site defined over the category of affinoid perfectoid objects over $X_\eta$ and is equipped with the pro-\'etale topology (cf.~\cite[Lem.~4.6, Prop.~4.8]{Sch13}).
We first recall various structure sheaves on $X_{\eta, \pe}$. Let $\nu : X_{\eta, \pe} \to X_{\eta, \et}$ be the natural projection. Let $\sO_{X_\eta} \colonequals \nu^* \sO_{X_{\eta, \et}}$ be the uncompleted structure sheaf on $X_{\eta, \pe}$, with subring of integral elements $\sO^+_{X_\eta}$. Then we define 
    $$\what{\sO}^+_{X_{\eta}} \colonequals \varprojlim_{n \in \IN} \sO^+_{X_\eta} / p^n, ~\what{\sO}_{X_{\eta}} = \what{\sO}^+_{X_{\eta}}[1/p], \text{ and } \sO^{\flat+}_{X_\eta} = \varprojlim_{x \mapsto x^p} \sO^+_{X_\eta} / p.$$ For an object $V \in \Perfd/X_{\eta, \pe}$ with $\widehat{V} = \Spa(S[1/p],S)$, we have (\cite[Lem.~4.10, 5.10]{Sch13})
    $$ \what{\sO}^+_{X_\eta}(\what{V}) = S, ~\what{\sO}_{X_\eta}(\what{V}) = S[1/p], \text{ and }\sO^{\flat+}_{X_\eta}(\what{V}) = S^\flat.  $$
We also define a natural log structure on  $\what{\sO}^+_{X_\eta}$ by $M_{\what{\sO}^+_{X_\eta}} \colonequals \what{\sO}^+_{X_\eta} \cap (\what{\sO}_{X_\eta})^\times$, which by construction is compatible with $M_X$.

\begin{definition}
\label{def: horizontal period sheaves}
    The sheaf of rings $\IA_{\inf}$ together with its log structure $M_{\IA_{\inf}}$ is defined as follows: Given any object $V \in \Perfd/X_{\eta,\pe}$ with $\widehat{V} = \Spa(S[1/p],S)$, we set (cf. \cite[Def.~6.1,~Thm.~6.5]{Sch13})
$$ (\IA_{\inf}, M_{\IA_{\inf}})(V) = (\rAinf(S), M_{\rAinf(S)}). $$
    In a completely analogous way, we define sheaves $$? = \IB^+_\dR, \IB_\dR, \IA_\crys, \IB^+_\crys, \IB_\crys, \IA_\st, \what{\IB}^+_\st, \what{\IB}_\st,  \IB^+_\st, \IB_\st$$ on $X_{\eta, \pe}$ from corresponding constructions on $p$-torsionfree perfectoid $\sO_C$-algebras.\footnote{These constructions a priori only define pro-\'etale sheaves over $X_{\eta,C}$. But the resulting sheaves carry natural $\Gal_K$-actions, with which they are naturally viewed as pro-\'etale sheaves on $X_\eta$.}
    We denote the log structures on $?$ by $M_?$, the Frobenius operators on $? = \IA_\crys, \IB^+_\crys, \IB_\crys, \IA_\st,  \what{\IB}^+_\st, \what{\IB}_\st, \IB^+_\st, \IB_\st$ by $\varphi_?$, and monodromy operator on $? = \IA_\st,  \what{\IB}^+_\st, \what{\IB}_\st, \IB^+_\st, \IB_\st$ by $N_?$. 
\end{definition}
\begin{remark}
	\label{rmk: log of X and perfectoid}
	Recall that the log structure $M_{X_\eta}$ over the rigid space $X_\eta$ is trivial by assumption.
	So for any affine open subscheme $\Spf(R)\subseteq X$ and any affinoid perfectoid space $\Spa(S[1/p],S)$ over $\Spf(R)_\eta$, the structure map $R\to S$ canonically lifts to a map of log pairs $(R,M_R)\to (S,M_S)$.
	In particular, we get the following natural diagram of ringed sites with log structures
	\[
	\begin{tikzcd}
		\bigl(X_\mathrm{Zar},(\mathcal{O}_X, M_X) \bigr) \longleftarrow \bigl(X_{\eta,\pe}, (\widehat{\mathcal{O}}^+_{X_\eta}, M_{\widehat{\mathcal{O}}^+_{X_\eta}}) \bigr) \longrightarrow \bigl( (X_{\eta, \pe}, (\mathbb{A}_{\inf}, M_{\mathbb{A}_{\inf}}) \bigr).
	\end{tikzcd}
	\]
\end{remark}

    \cref{prop: relation of all period sheaves} sheafifies to the following statement. 

\begin{corollary}
\label{cor: relation of horizontal period sheaves}
Let $(X,M_X)$ be a topological finite type $p$-adic formal scheme over $(\mathcal{O}_K,M_{\mathcal{O}_K})$, such that $M_{X_\eta}$ is trivial.
There are natural injections of period sheaves over $X_{\eta,\pe}$ that are compatible with Frobenius structures and log structures whenever defined:
\[
\Ainf \longrightarrow \Acrys \longrightarrow \Ast\simeq \Acrys[\log{\frac{u}{[\pi^\flat]}}] \longrightarrow \mathbb{B}^+_\dR.
\]
Similar statements hold for their localizations after we inverting $p$ or $\mu$.

\end{corollary}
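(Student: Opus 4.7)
The plan is to deduce this sheaf-level statement directly from its pointwise counterpart, namely \Cref{prop: relation of horizontal period rings} for a single $p$-torsionfree perfectoid $\sO_C$-algebra $S$. Since each period sheaf in \Cref{def: horizontal period sheaves} is defined by evaluating on the basis of affinoid perfectoid objects $V \in \Perfd/X_{\eta,\pe}$ with $\what{V} = \Spa(S[1/p], S)$ and returning the corresponding horizontal period ring (together with its log/Frobenius structure), the functoriality of the assignment $S \mapsto \rAinf(S), \rAcrys(S), \rhAst(S), \mathrm{B}^+_\dR(S)$ in $S$ means the natural ring homomorphisms in \Cref{prop: relation of horizontal period rings} sheafify at face value to morphisms of presheaves on the basis, which then extend uniquely to morphisms of sheaves on $X_{\eta,\pe}$.

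The first step is to unwind that Frobenius and log structure compatibility is pointwise: the Frobenius operators $\varphi_{\rAcrys}, \varphi_{\rhAst}$ on the period rings are functorial in $S$ by construction (they come from $\varphi_{\rAinf(S)} \tensor \varphi_\fS$ via the universal property of log PD-envelopes), and the log structures $M_{\rAinf(S)}, M_{\rAcrys(S)}, M_{\rhAst(S)}$ are built from $\theta^{-1}(M_S)$ and $u$ via the same PD-envelope recipe; hence they glue to the claimed log structures $M_?$ on $X_{\eta,\pe}$ and the ring maps respect them on sections. Similarly, localizing at $p$ or $\mu$ is a local operation on the site, so the localized injections $\IB^+_\crys \to \IB^+_\st \to \IB_\dR$ and their counterparts after inverting $\mu$ follow immediately.

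The second step is injectivity. Since a map of sheaves on $X_{\eta,\pe}$ is injective if and only if it is injective on sections over a basis of the site, and affinoid perfectoids form such a basis, the injectivity claims reduce to the corresponding injectivity statements in \Cref{prop: relation of horizontal period rings}: $\rAinf(S) \hookrightarrow \rAcrys(S)$ because $\ker(\theta)$ is generated by a non-zero-divisor in $\rAinf(S)$, the formula $\rhAst(S) \simeq \rAcrys(S) \{ u/[\pi^\flat] - 1 \}$ of \Cref{Explicit formula of Ast} realizes $\rAcrys(S)$ as a subring of $\rhAst(S)$, and $\rAst(S) \hookrightarrow \mathrm{B}^+_\dR(S)$ via $\log(u/[\pi^\flat]) \mapsto \log(\pi/[\pi^\flat])$, using the transcendence result of \cite[Prop.~2.27]{Shi22} invoked in \Cref{prop: relation of horizontal period rings}.

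I do not expect any serious obstacle: the entire content of the corollary is the observation that the constructions of \Cref{def roman horizontal period rings}, \Cref{Explicit formula of Ast}, and \Cref{def: Ast} are all functorial in the perfectoid input $S$, and all sheaves are defined pointwise on affinoid perfectoids. The only mildly delicate point is that the log structure $M_{\rhAst(S)}$ involves the pushout monoid described in \Cref{Explicit formula of Ast}, and one must check that this pushout commutes with the sheafification procedure; but this is immediate because sheafification and monoidal pushout along injections of monoids both commute with filtered colimits and are computed pointwise on a basis.
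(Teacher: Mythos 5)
Your proposal is correct and matches the paper's approach: the paper dispatches the corollary in one line by saying that the pointwise result (\cref{prop: relation of horizontal period rings}) sheafifies, and you spell out exactly the three points that make that remark work, namely that the period sheaves of \cref{def: horizontal period sheaves} are by definition computed on the basis of affinoid perfectoid objects by the rings of \cref{def roman horizontal period rings}, \cref{Explicit formula of Ast}, and \cref{def: Ast}, that the ring maps of \cref{prop: relation of horizontal period rings} are functorial in $S$ and compatible with Frobenius and log structures, and that injectivity of a map of sheaves can be checked on a basis. One small simplification: since the period sheaves have prescribed values on the affinoid perfectoid basis, the morphisms of presheaves on the basis already constitute sheaf morphisms without any extension step, so the side remark about pushout monoids commuting with sheafification is unnecessary.
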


\subsection{Period sheaves with connection}
\label{sub period sheaves with conn}
		In this subsection, we recall the construction of the crystalline and the de Rham period sheaves with connection, and introduce their semi-stable analogue,
		for a smooth and affine $p$-adic formal scheme.
		This will be used to define an explicit functor sending crystalline and semi-stable local systems to their associated $F$-isocrystals.
		
		We start by fixing the setup for the $p$-adic formal scheme in this subsection.
		\begin{convention}
			\label{conv of smooth affine}
			We fix a smooth and affine $p$-adic formal scheme $X=\Spf(R)$ over $\mathcal{O}_K$.
			By deformation theory, there is a $p$-adic formal scheme $X_0=\Spf(R_0)$ that is smooth affine over $W$, together with a base change isomorphism $R_0\otimes_W \mathcal{O}_K\simeq R$.
			Moreover, by the complete smoothness of $R_0$ over $W$, the Frobenius endomorphism of $R_k\colonequals R\otimes_{\mathcal{O}_K} k= R_0\otimes_W k$ lifts to an endomorphism $\varphi_{R_0}:R_0\to R_0$ that is semi-linear with respect to $\varphi_W$.
			As in \Cref{log of framed semi-stable reduction}, we let $M_X$ and $M_{X_0}$ be the standard log structures on $X$ and $X_0$ respectively.
            Here we note that $M_X$ and $M_{X_0}$ restricts to the same log structure $M_{X_s} = (0^\IN)^a$ on the reduced special fiber $X_s$.
		    Then we have a natural isomorphism of log pairs $(X_0,M_{X_0}) \times_{(\Spf(W),M_W)} (\Spf(\mathcal{O}_K), M_{\mathcal{O}_K}) \simeq (X,M_X)$. 
			We also let $X_\eta$ be the generic fiber of $X$, where the induced log structure $M_{X_\eta}$ is trivial; similarly for $X_{0,\eta}$ and $M_{X_{0,\eta}}$.

            In addition, we say $X$ is \emph{small} if there is a $p$-completely \'etale morphism onto a $p$-adic torus $\Spf(\mathcal{O}_K\langle x_1^{\pm1},\ldots,x_r^{\pm1} \rangle)$.
		Here we note that for a general smooth $p$-adic formal scheme, the collection of its small affine open sub formal schemes form a basis for the Zariski topology.
		\end{convention} 
		
		We now recall the construction of crystalline period sheaves with connection, essentially following \cite{TT19} with some adjustments due to the ramification.
		\begin{definition}
			\label{def OAcrys}
			Assume the setup of smooth affine $X$ in \Cref{conv of smooth affine}.
			\begin{enumerate}[label=\upshape{(\roman*)}]
				\item The sheaf $\OA_{\crys,R_0}$ in $\Shv(X_{\eta,\pe})$ with its log structure $M_{\OA_{\crys,R_0}}$, connection $\nabla$, and Frobenius structure $\varphi_{\OA_{\crys,R_0}}$, is given as follows
				\begin{itemize}
					\item The sheaf $\OA_{\crys,R_0}$ is the $p$-completion of the PD-envelope of the surjection
					\[
					\bigr(\mathbb{A}_{\inf} \otimes_W R_0 \bigr)^\wedge_p \xrightarrow{\theta\otimes \iota} \widehat{\mathcal{O}}^+_{X_\eta},
					\]
					where $\iota$ is the composed structure map $R_0\to R\to \widehat{\mathcal{O}}^+_{X_\eta}$.
					\item The log structure $M_{\OA_{\crys,R_0}}$ on $\OA_{\crys,R_0}$ is induced from the tensor product log structure of $\bigl( (\Ainf, M_{\Ainf})\otimes_W (R_0,M_{R_0}) \bigr)^\wedge_p$.
					\item The connection $\nabla:\OA_{\crys,R_0} \to \OA_{\crys,R_0}\otimes_{R_0} \Omega^1_{R_0/W}$ is the one induced from the differential map $R_0\to \Omega^1_{R_0/W}$.
					\item The Frobenius structure $\varphi_{\OA_{\crys,R_0}}$ is induced from the tensor product Frobenius $\varphi_{\Ainf}\otimes_{\varphi_W} \varphi_{R_0}$ on $(\Ainf\otimes_W R_0)^\wedge_p$.
					It is horizontal with respect to $\nabla$.
				\end{itemize}
				\item The sheaf $\OB^+_{\crys,R_0}$ in $\Shv(X_{\eta,\pe})$ is defined by inverting the element $p$ on $\OA_{\crys,R_0}$,  with the induced log structure, the connection, and the Frobenius structure.
				\item The sheaf $\OB_{\crys,R_0}$ in $\Shv(X_{\eta, \pe})$ is given on an object $V \in (\Perfd/X_{\eta,C})_\pe$ with $\widehat{V} = \Spa(S[1/p],S)$, by $\OB_{\crys,R_0}(S[1/p],S)=\OA_{\crys,R_0}(S[1/p],S)[1/\mu]$. 
				It is equipped with the induced log structure, the connection, and the Frobenius structure.
			\end{enumerate}
		\end{definition}
	\begin{remark}
		\label{OAcrys of TT}
		By construction (cf. \cite[Def.~2.9]{TT19}), the sheaf $\OA_{\crys,R_0}\in \Shv(X_{\eta,\pe})$ is nothing but the pullback (or restriction) of the sheaf $\OA_{\crys}\in \Shv((X_{0,\eta})_\pe)$ of loc.\ cit. along the natural \'etale covering map of rigid spaces $X_\eta \to X_{0,\eta}$.
		In particular, various structural results of $\OA_{\crys}$ in loc.~cit. applies naturally to our $\OA_{\crys,R_0}$.
	\end{remark}
	By \Cref{def: horizontal period sheaves} and \Cref{def OAcrys}, there is a natural map of log pairs of sheaves 
	\[
	(\Acrys, M_{\Acrys}) \longrightarrow (\OA_{\crys,R_0}, M_{\OA_{\crys,R_0}}),
	\]
	which is compatible with Frobenius structure and has image in $(\OA_{\crys,R_0})^{\nabla=0}$.
	The following results of Tan--Tong and Guo--Li give a more concrete description of the relation.
	\begin{proposition}
		\label{prop: Acrys and OAcrys}
		Let $X$ be as in \Cref{conv of smooth affine}.
		\begin{enumerate}[label=\upshape{(\roman*)}]
			\item\cite[Cor.~2.17]{TT19} 
			\label{prop: Acrys and OAcrys conn} The connection $\nabla$ of $\OA_{\crys,R_0}$ naturally induces an isomorphism in $\Shv(X_{\eta,\pe})$:
			\[
			\Acrys \simeq (\OA_{\crys,R_0})^{\nabla=0}.
			\]
			\item\cite[Prop.~2.13]{TT19}  
			\label{prop: Acrys and OAcrys explicit} Assume $X$ is $p$-completely \'etale over $\Spf(\mathcal{O}_K\langle x_1^{\pm1},\ldots, x_r^{\pm1} \rangle)$, and let $V$ be the affinoid perfectoid pro-\'etale cover of $X_\eta$ given by $\Spf(\mathcal{O}_C\langle x_i^{\pm1/p^\infty}\rangle)_\eta\times_{\Spf(\mathcal{O}_K\langle x_i^{\pm1}\rangle)_\eta} X_\eta$.
			When restricted onto $X_{\eta,\pe}|_V$, the natural map $\Acrys \to \OA_{\crys,R_0}$ induces an isomorphism of sheaves of rings 
			\[
			 \Acrys|_V \{ v_i\} \xrightarrow{\sim} \OA_{\crys,R_0}|_V, \text{ given by } v_i \mapsto \frac{ 1\otimes x_i}{[x_i^\flat]\otimes 1} -1\in \OA_{\crys,R_0}(V)
			\]
			for a fixed choice of $p$-power roots $x_i^\flat$ of $x_i$.
   		\item\cite[Prop.\ 3.16.(4), Thm.\ 3.21]{GL21}
		\label{prop: Acrys and OAcrys et} Let $R'_0$ be a $p$-completely \'etale algebra over $R_0$, and let $X'$ be the base change of $X$ along $R_0\to R_0'$.
		We let $\OA_{\crys,R'_0}$ be the sheaf in $\Shv(X'_{\eta,\pe})$ defined for the data $(X', R_0')$.
		Then we have a natural isomorphism of sheaves over $X'_{\eta,\pe}$ which respects the Frobenius structures and connections
		\[
		\OA_{\crys,R_0}|_{X'_\eta} \simeq \OA_{\crys,R'_0}.
		\]
		\end{enumerate}
	\end{proposition}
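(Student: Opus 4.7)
The plan is to deduce each of the three parts from the cited reference by invoking \cref{OAcrys of TT}: the sheaf $\OA_{\crys,R_0}$ on $X_{\eta,\pe}$ is the pro-\'etale pullback of Tan--Tong's sheaf $\OA_\crys$ on $(X_{0,\eta})_\pe$ along the finite \'etale map of rigid spaces $X_\eta \to X_{0,\eta}$ (induced by $W \hookrightarrow \sO_K$). Moreover, all the additional structures on $\OA_{\crys,R_0}$ (connection, Frobenius, log structure) are defined so as to be compatible with this pullback.

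For \ref{prop: Acrys and OAcrys conn}, note that $\Acrys$ is also the pullback of the corresponding Tan--Tong sheaf on $(X_{0,\eta})_\pe$, since its construction only uses perfectoid algebras and is therefore functorial in the rigid space. Since horizontal sections are computed by an equalizer and pullback of sheaves commutes with equalizers, the unramified isomorphism $\Acrys \simeq (\OA_\crys)^{\nabla=0}$ of \cite[Cor.~2.17]{TT19} over $(X_{0,\eta})_\pe$ yields the asserted isomorphism over $X_{\eta,\pe}$ after pulling back.

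For \ref{prop: Acrys and OAcrys explicit}, the $p$-completely \'etale framing $X \to \Spf(\sO_K\langle x_i^{\pm 1}\rangle)$ lifts to an \'etale framing $X_0 \to \Spf(W\langle x_i^{\pm 1}\rangle)$ by rigidity of $p$-completely \'etale morphisms. Applying \cite[Prop.~2.13]{TT19} to this unramified framing produces the explicit identification over the analogous pro-\'etale affinoid perfectoid cover $V_0$ of $X_{0,\eta}$, and pullback along the natural projection $V \to V_0$ gives the claimed description over $V$. The formula $v_i = (1\otimes x_i)/([x_i^\flat]\otimes 1) - 1$ is preserved under pullback because it is expressed purely in terms of tensor product operations.

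For \ref{prop: Acrys and OAcrys et}, since $R_0'$ is by hypothesis $p$-completely \'etale over $R_0$, one applies \cite[Prop.~3.16(4), Thm.~3.21]{GL21} directly to the \'etale map $X_0' = \Spf(R_0') \to X_0$ of unramified formal schemes to obtain the base change isomorphism for the Tan--Tong sheaves, and then pulls back along the \'etale map $X_\eta' \to X_{0,\eta}'$; the compatibility with Frobenius and connection follows from the matching local formulas. The main (and only) obstacle in any of these steps is purely bookkeeping: verifying that the relevant functorialities on the Tan--Tong and Guo--Li sheaves are preserved by the pullback along $X_\eta \to X_{0,\eta}$, which is straightforward from the local sheaf-level descriptions of the period sheaves on affinoid perfectoid objects.
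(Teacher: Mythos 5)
Your proposal is correct and takes essentially the same approach as the paper: the paper gives no explicit proof, but relies on the inline citations together with \Cref{OAcrys of TT}, which records precisely that $\OA_{\crys,R_0}$ is the pullback of Tan--Tong's $\OA_\crys$ on $(X_{0,\eta})_\pe$ along the finite \'etale map $X_\eta\to X_{0,\eta}$, so that the cited structural results transport to the ramified setting. Your elaboration (exactness of pullback applied to $\ker(\nabla)$ in (i), lifting the framing to $X_0$ in (ii), and base-changing the Guo--Li isomorphism through $X_0$ in (iii)) is just the detailed execution of that remark.
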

	
	We then recall the construction of $\OB_\dR$ of Scholze \cite[Def.~6.13]{Sch13} (see also \cite{Sch16}) and Brinon \cite{Bri08}. 
 
	\begin{definition}
		\label{def OBdR}
		Let $X$ be a topologically finite type $p$-adic formal scheme over $\mathcal{O}_X$ with smooth generic fiber.
		\begin{enumerate}[label=\upshape{(\roman*)}]
			\item The sheaf $\OB^+_\dR$ in $\Shv(X_{\eta,\pe})$ with its connection is given as follows
			\begin{itemize}
				\item The sheaf $\OB^+_\dR$ is the formal completion of the surjection
				\[
				(\mathbb{A}_{\inf}\otimes_W \mathcal{O}_{X})^\wedge_p[1/p] \xrightarrow{\theta\otimes \iota} \widehat{\mathcal{O}}_{X_\eta},
				\]
				where $\iota$ is the structure map $\mathcal{O}_X \to \mathcal{O}^+_{X_\eta} \to \widehat{\mathcal{O}}^+_{X_\eta} \to \widehat{\mathcal{O}}_{X_\eta}$.
				\item The connection $\nabla:\OB^+_\dR \to \OB^+_\dR\otimes_{\mathcal{O}_{X_\eta}} \Omega^1_{X_\eta/K}$ is the one induced from the continuous differential map $\mathcal{O}_{X_\eta} \to \Omega^1_{X_\eta/K}$.
			\end{itemize}
			\item The sheaf $\OB_\dR$ in $\Shv(X_{\eta, \pe})$ is given on an object $V \in (\Perfd/X_{\eta,C})_\pe$ with $\widehat{V} = \Spa(S[1/p],S)$, by $\OB_\dR(S[1/p],S)=\OA_\dR(S[1/p],S)[1/\mu]$.
			It is equipped with the induced connection.
		\end{enumerate}
	\end{definition}
	We also have the following structural results relating $\mathbb{B}^+_\dR$ and $\OB^+_\dR$ together.
	\begin{proposition}
		\label{prop: BdR and OBdR}
		Let $X$ be a $p$-adic topologically finite type formal scheme over $\mathcal{O}_X$ with smooth generic fiber.
		\begin{enumerate}[label=\upshape{(\roman*)}]
			\item\cite[Cor.~6.13]{Sch13} 
			 \label{prop: BdR and OBdR conn} The connection $\nabla$ of $\OB^+_\dR$ naturally induces an isomorphism in $\Shv(X_{\eta,\pe})$: 
			\[
			\mathbb{B}^+_\dR \simeq (\OB^+_\dR)^{\nabla=0}.
			\]
			\item\cite[Prop.~6.10]{Sch13} 
			\label{prop: BdR and OBdR explicit} Assume $X_\eta$ is \'etale over $\Spa(K\langle x_1^{\pm1},\ldots, x_r^{\pm1} \rangle)$, and let $V$ be the affinoid perfectoid pro-\'etale cover of $X_\eta$ given by the base change $\Spa(C\langle x_i^{\pm1/p^\infty}\rangle)\times_{\Spa(K\langle x_i^{\pm1}\rangle)} X_\eta$.
			When restricted onto $X_{\eta,\pe}|_V$, the natural map  $\IB_\dR^+ \to \sO \IB^+_\dR$ induces an isomorphism of sheaves of rings 
			\[
			\mathbb{B}^+_\dR \llbracket v_i \rrbracket \simeq \OB^+_\dR, \text{ given by } v_i \mapsto \frac{ 1\otimes x_i}{[x_i^\flat]\otimes 1} -1\in \OB^+_\dR(V)
			\]
			for a fixed choice of $p$-power roots $x_i^\flat$ of $x_i$.
			\item\cite[Cor.~2.24, Cor.~2.25]{TT19} 
			\label{prop: BdR and OBdR with crys}Assume $X$ is as in \Cref{conv of smooth affine}.
			There is a natural commutative diagram of injections for sheaves of rings in $\Shv(X_{\eta,\pe})$ that are compatible with the connections:
			\[
			\begin{tikzcd}
				\Acrys \arrow[r, hook] \arrow[d, hook] & \OA_{\crys,R_0} \arrow[d, hook]\\
				\mathbb{B}^+_\dR \arrow[r, hook] & \OB^+_\dR.
			\end{tikzcd}
			\]
			It is also compatible with the explicit identifications in \Cref{prop: Acrys and OAcrys}.\ref{prop: Acrys and OAcrys explicit} and \Cref{prop: BdR and OBdR}.\ref{prop: BdR and OBdR explicit} when restricted onto the affinoid perfectoid object $V$.
		\end{enumerate}
	\end{proposition}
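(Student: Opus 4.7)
The strategy is to establish part \ref{prop: BdR and OBdR explicit} first, and then deduce parts \ref{prop: BdR and OBdR conn} and \ref{prop: BdR and OBdR with crys} from the explicit local description together with the corresponding crystalline statements already available in \Cref{prop: Acrys and OAcrys}.

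For part \ref{prop: BdR and OBdR explicit}, I would work on the pro-\'etale cover $V$ and compute $\OB^+_\dR(V)$ by definition as the $\ker(\theta)$-adic completion, inverted at $p$, of $\bigl(\mathbb{A}_{\inf}(S) \what{\tensor}_W \mathcal{O}_{X}(U) \bigr)[1/p]$ for suitable affinoids $U \subseteq X_\eta$ mapping to $V$. The elements $v_i = 1 \otimes x_i - [x_i^\flat] \otimes 1$ lie in $\ker(\theta)$, since $\theta$ sends both generators to $x_i \in S$. Using that $X_\eta$ is \'etale over the torus and hence $\sO_{X_\eta}$ is topologically generated (after base change to $\sO_C$) by the $x_i$ plus elements that become identified with their Teichm\"uller lifts, one shows that the natural map $\mathbb{B}^+_\dR(V)[v_i] \to \OB^+_\dR(V)$ has image dense in the $\ker(\theta)$-adic topology. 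A length argument on the associated graded (with respect to the $\ker(\theta)$-filtration) then identifies $\gr^\bullet \OB^+_\dR(V)$ with $\gr^\bullet \mathbb{B}^+_\dR(V) [v_i]$, where the $v_i$ have degree one, yielding the desired isomorphism $\mathbb{B}^+_\dR \llbracket v_i \rrbracket \simeq \OB^+_\dR$ after taking the completion.

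For part \ref{prop: BdR and OBdR conn}, the containment $\mathbb{B}^+_\dR \subseteq (\OB^+_\dR)^{\nabla = 0}$ is built into the construction. The reverse containment is checked locally: under the identification of part \ref{prop: BdR and OBdR explicit}, the connection acts by $\nabla(v_i) = dx_i \in \OB^+_\dR \tensor_{\sO_{X_\eta}} \Omega^1_{X_\eta/K}$ and is $\mathbb{B}^+_\dR$-linear, so the horizontal sections form exactly the constant term $\mathbb{B}^+_\dR$ by a formal Poincar\'e lemma for power series. This local result glues since both sheaves $\mathbb{B}^+_\dR$ and $(\OB^+_\dR)^{\nabla = 0}$ are intrinsically defined on $X_{\eta, \pe}$.

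For part \ref{prop: BdR and OBdR with crys}, the vertical maps $\Acrys \to \mathbb{B}^+_\dR$ and $\OA_{\crys, R_0} \to \OB^+_\dR$ are the natural ones induced by passing from the $p$-complete PD-envelope to the $\ker(\theta)$-adic completion, and they are injective after inverting $p$ (divided powers embed into the formal completion in characteristic zero, cf.\ the appearance of $n!$). Commutativity and compatibility with the explicit local descriptions in \Cref{prop: Acrys and OAcrys}.\ref{prop: Acrys and OAcrys explicit} and \Cref{prop: BdR and OBdR}.\ref{prop: BdR and OBdR explicit} reduces to the tautological observation that both horizontal isomorphisms send the same element $v_i = 1 \otimes x_i / [x_i^\flat] \otimes 1 - 1$ to the corresponding formal/PD-variable. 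The expected main obstacle is part \ref{prop: BdR and OBdR explicit}: controlling the $\ker(\theta)$-adic filtration on the tensor product before completion requires a careful analysis using the smoothness of $X_\eta$ and the almost purity theorem applied to the perfectoid cover $V$, essentially reducing to a calculation on the associated graded.
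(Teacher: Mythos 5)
The paper does not prove this proposition; all three parts are recalled with citations to \cite{Sch13} (Cor.~6.13, Prop.~6.10) and \cite{TT19} (Cor.~2.24, 2.25), and there is no proof environment in the source. So what you have written is not a reconstruction of the paper's argument but an attempt to reprove the cited results from scratch.

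Your sketch follows the standard shape of Scholze's argument, but it glosses over precisely the point that is hard. For part \ref{prop: BdR and OBdR explicit}, the delicate step is not that $\IB^+_\dR(V)[v_1,\dots,v_r]$ maps densely into $\OB^+_\dR(V)$, nor that one can name the elements $v_i$; it is that the $\ker(\theta)$-adic filtration on $\OB^+_\dR$ is what one naively expects after interleaving the $p$-adic completion, the inversion of $p$, and the $\ker(\theta)$-adic completion. This is exactly the issue that forced Scholze's erratum \cite{Sch16}: with the original ordering of completions the associated graded of $\OB^+_\dR$ was not computed correctly, and the statement as originally proved was false. Your ``length argument on the associated graded'' presupposes the very control of the filtration that was in question, so as written it is circular. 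A correct proof has to use the formal \'etaleness of $\sO_K\langle T_i^{\pm}\rangle \to R$ to lift the coordinate map along the square-zero thickenings $\IB^+_\dR/\ker(\theta)^{n+1}\to\IB^+_\dR/\ker(\theta)^n$, producing the isomorphism with the power series ring directly rather than by passing through a graded comparison; the phrase ``elements that become identified with their Teichm\"uller lifts'' does not capture this mechanism.

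Two smaller points. For part \ref{prop: BdR and OBdR conn}, the connection does not act as $\partial/\partial v_i$: since $\nabla$ differentiates only the $\sO_X$-tensor factor and $[x_i^\flat]$ is horizontal, one computes $\nabla(v_i)=(1+v_i)\,d\log x_i$, so the ``formal Poincar\'e lemma'' has to be stated for this twisted connection (the conclusion is still the expected one, but the computation is not the untwisted case). For part \ref{prop: BdR and OBdR with crys}, the injectivity of $\Acrys\hookrightarrow\IB^+_\dR$ (and its $\sO$-version) is a genuine theorem of Fontaine type, not a formal consequence of ``divided powers embed into the formal completion''; the $p$-adic completion of the PD-envelope and the $\ker(\theta)$-adic completion of $\Ainf[1/p]$ interact nontrivially, and this is one of the reasons the paper cites Tan--Tong rather than rederiving it. Finally, note the minor inconsistency that you first introduce $v_i$ additively as $1\otimes x_i - [x_i^\flat]\otimes 1$ and later multiplicatively as $\tfrac{1\otimes x_i}{[x_i^\flat]\otimes 1}-1$; the proposition uses the multiplicative normalization throughout, to match the PD-variable in \Cref{prop: Acrys and OAcrys}.\ref{prop: Acrys and OAcrys explicit}.
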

	
	Finally, we introduce the semi-stable period sheaves with connections.
	\begin{definition}
		\label{def OAst}
		Let $X$ be as in \Cref{conv of smooth affine}.
		\begin{enumerate}[label=\upshape{(\roman*)}]
			\item The sheaf $\OA_{\st,R_0}$ in $\Shv(X_{\eta,\pe})$ with its log structure $M_{\OA_{\st,R_0}}$, connection $\nabla$, Frobenius structure $\varphi_{\OA_{\st,R_0}}$, and monodromy operator $N_{\OA_{\st,R_0}}$, is given as follows
			\begin{itemize}
				\item The sheaf $\OA_{\st,R_0}$  is defined as the tensor product 
				\[
				\OA_{\st,R_0} \colonequals \OA_{\crys,R_0} \otimes_{\Acrys} \Ast,
				\]
				\item The log structure $M_{\OA_{\st,R_0}}$ is defined as the induced log structure of $M_{\OA_{\crys,R_0}}$ along the canonical map $\OA_{\crys,R_0}\to \OA_{\st,R_0}$.
			\item The connection $\nabla:\OA_{\st,R_0}\to \OA_{\st,R_0}\otimes_{R_0} \Omega^1_{R_0/W}$ is the base change of $\nabla:\OA_{\crys,R_0}\to \OA_{\crys,R_0}\otimes_{R_0} \Omega^1_{R_0/W}$ along the horizontal map $\Acrys\to \Ast$.
			\item The Frobenius structure $\varphi_{\OA_{\st,R_0}}$ is defined as the tensor product $\varphi_{\OA_{\crys,R_0}}\otimes_{\varphi_{\Acrys}} \varphi_{\Ast}$.
			\item The monodromy operator $N_{\OA_{\st,R_0}}$ is defined as the base change of the $\Acrys$-linear endomorphism $N_{\Ast}$ of $\Ast$ along the map $\Acrys\to \OA_{\crys,R_0}$.
		\end{itemize}
		\item The sheaf $\OB^+_{\st,R_0}$ in $\Shv(X_{\eta,\pe})$ is defined by inverting the element $p$ at $\OA_{\st,R_0}$,  with the induced log structure, the connection, the Frobenius structure, and the monodromy operator.
        \item The sheaf $\OB_{\st,R_0}$ in $\Shv(X_{\eta, \pe})$ is given on an object $V \in (\Perfd/X_{\eta,C})_\pe$ with $\widehat{V} = \Spa(S[1/p],S)$, by $\OB_{\st,R_0}(S[1/p],S)=\OA_{\st,R_0}(S[1/p],S)[1/\mu]$.
It is equipped with the induced log structure, the connection, the Frobenius structure, and the monodromy operator.
	\end{enumerate}
\end{definition}
As quick consequences of the explicit construction in \Cref{const: monodromy on hAst}, we get the following properties of $\OA_{\st,R_0}$. 

\begin{corollary}
	\label{cor: property of OAst}
	Let $X$ be as in \Cref{conv of smooth affine}. 
	The restriction of $\OA_{\st,R_0}$ to $(X_{\eta, C})_\pe$ is $\Gal_K$-equivariantly identified with the polynomial ring of one variable $\OA_{\crys,R_0}[\logu]$ over $\OA_{\crys,R_0}$, such that the following properties hold. 
	\begin{enumerate}[label=\upshape{(\roman*)}]
		\item The Frobenius structure $\varphi_{\OA_{\st,R_0}}$ acts as $\varphi_{\OA_{\crys,R_0}}$ on $\OA_{\crys, R_0}$ and sends $\logu$ to $p \logu$. 
		\item The monodromy operator $N_{\OA_{\st,R_0}}$ is the unique $\OA_{\crys,R_0}$-linear endomorphism which vanishes on the subring $\OA_{\crys,R_0}$, and sends $\logu^i$ to $i \logu^{i - 1}$. 
		In particular $N_{\OA_{\st,R_0}}$ is a nil operator.
		\item Both $\varphi_{\OA_{\st,R_0}}$ and $N_{\OA_{\st,R_0}}$ are both horizontal with respect to the connection $\nabla$, and satisfy the formula that $$N_{\OA_{\st,R_0}} \varphi_{\OA_{\st,R_0}} = p \varphi_{\OA_{\st,R_0}} N_{\OA_{\st,R_0}}. $$
  		\item\label{cor: property of OAst et} Let $R'_0$ be a $p$-completely \'etale algebra over $R_0$, and let $X'$ be the base change of $X$ along $R_0\to R_0'$.
		We let $\OA_{\st,R'_0}$ be the sheaf in $\Shv(X'_{\eta,\pe})$ defined for the data $(X', R_0')$.
		Then we have a natural isomorphism of sheaves over $X'_{\st,\pe}$ identifying all the structures
		\[
		\OA_{\st,R_0}|_{X'_\eta} \simeq \OA_{\st,R'_0}.
		\]
	\end{enumerate}
    Moreover, assume $X$ is $p$-completely \'etale over $\Spf(\mathcal{O}_K\langle x_1^{\pm1},\ldots, x_r^{\pm1} \rangle)$, and let $V$ be the affinoid perfectoid pro-\'etale cover of $X$ given by  $\Spf(\mathcal{O}_C\langle x_i^{\pm1/p^\infty}\rangle)_\eta\times_{\Spf(\mathcal{O}_K\langle x_i^{\pm1}\rangle)_\eta} X_\eta$. 
    \begin{enumerate}[label=\upshape{(\roman*)},resume]
        \item\label{cor: property of OAst explicit} The restriction $\OA_{\st,R_0}|_V$ admits an isomorphism with $\Acrys|_V\{v_i\}[\logu]$, compatible with that in \Cref{prop: Acrys and OAcrys}.\ref{prop: Acrys and OAcrys explicit}.
    \end{enumerate}
\end{corollary}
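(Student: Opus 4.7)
The plan is to reduce everything to known properties of the horizontal period sheaves, exploiting the fact that by construction $\OA_{\st,R_0} = \OA_{\crys,R_0} \otimes_{\Acrys} \Ast$. First I would observe that by Definition \ref{def: Ast} we have $\Ast = \Acrys[\log(u/[\pi^\flat])]$ as a polynomial ring in one variable, hence sheafifying gives $\Ast = \Acrys[\logu]$ over $(X_{\eta,C})_\pe$. Base-changing along the flat map $\Acrys \to \OA_{\crys,R_0}$ (cf.\ \Cref{prop: Acrys and OAcrys}.\ref{prop: Acrys and OAcrys conn}) yields the $\Gal_K$-equivariant identification $\OA_{\st,R_0} \simeq \OA_{\crys,R_0}[\logu]$, where the nontrivial Galois action on $\logu$ (cf.\ \Cref{rmk: descending a sheaf from C to K}) is inherited from $\Ast$.

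Next, for \ref{cor: property of OAst et} claims (i)--(iii), I would verify each structure on $\Ast$ first and then propagate via the tensor product. For the Frobenius, the identity $\varphi_{\rAst}(\log(u/[\pi^\flat])) = \log((u/[\pi^\flat])^p) = p \log(u/[\pi^\flat])$ follows from the construction $\varphi_\fS(u)=u^p$ in \Cref{const: monodromy on hAst}, and $\varphi_{\OA_{\st,R_0}} = \varphi_{\OA_{\crys,R_0}} \otimes \varphi_{\Ast}$ then forces the claimed formula. For the monodromy, I would compute using \Cref{const: monodromy on hAst} that $N_{\rhAst}(u/[\pi^\flat]) = u/[\pi^\flat]$ (since $N(u^m) = mu^m$), so formally $N_{\rhAst}(\log(u/[\pi^\flat])) = 1$ inside $\rhAst$; restricting to $\rAst$ this gives $N_{\rAst}(\logu^i) = i \logu^{i-1}$, i.e.\ $N_{\rAst} = d/d(\logu)$. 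The $\OA_{\crys,R_0}$-linearity (and hence local nilpotence on polynomials of bounded degree) and the commutation $N\varphi = p \varphi N$ follow by base change. Horizontality of $\varphi$ and $N$ with respect to $\nabla$ is immediate since $\nabla$ is defined as the base change of the connection on $\OA_{\crys,R_0}$, which leaves $\logu$ fixed, so $\nabla(\logu) = 0$ and the three operators decouple.

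Finally, parts \ref{cor: property of OAst et} and \ref{cor: property of OAst explicit} follow formally by tensoring the corresponding identifications for $\OA_{\crys,R_0}$ with $\Ast$: for \ref{cor: property of OAst et}, \Cref{prop: Acrys and OAcrys}.\ref{prop: Acrys and OAcrys et} gives $\OA_{\crys,R_0}|_{X'_\eta} \simeq \OA_{\crys,R'_0}$, which is compatible with all structures, so tensoring over $\Acrys$ with $\Ast$ yields \ref{cor: property of OAst et}; for \ref{cor: property of OAst explicit}, \Cref{prop: Acrys and OAcrys}.\ref{prop: Acrys and OAcrys explicit} gives $\OA_{\crys,R_0}|_V \simeq \Acrys|_V\{v_i\}$, and then $\OA_{\st,R_0}|_V \simeq \Acrys|_V\{v_i\} \otimes_{\Acrys|_V} (\Acrys|_V[\logu]) \simeq \Acrys|_V\{v_i\}[\logu]$.

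The only genuine subtlety I anticipate is bookkeeping the compatibility of all the structures (log structure, connection, Frobenius, monodromy) under the tensor product, together with checking that the $\Gal_K$-action descends correctly even though $\logu$ is moved by $\Gal_K$; this is handled precisely as in \Cref{rmk: descending a sheaf from C to K}, since $\chi(g)\log\epsilon \in \rAcrys$ lies in the subring where $N$ already vanishes. Everything else is a routine unwinding of \Cref{def OAst} combined with the explicit formulas for $\Ast$.
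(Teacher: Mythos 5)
Your proof is correct and matches the approach the paper intends (the paper explicitly says these properties follow ``as quick consequences of the explicit construction in \Cref{const: monodromy on hAst}'' and omits the verification). One tiny quibble: you invoke flatness of $\Acrys \to \OA_{\crys,R_0}$ to obtain the identification $\OA_{\st,R_0} \simeq \OA_{\crys,R_0}[\logu]$, and cite \Cref{prop: Acrys and OAcrys}.\ref{prop: Acrys and OAcrys conn} for it, but that item only asserts $\Acrys \simeq (\OA_{\crys,R_0})^{\nabla=0}$; in any case no flatness is needed, since for any $\Acrys$-algebra $B$ one has $B \otimes_{\Acrys} \Acrys[\logu] = B[\logu]$ tautologically.
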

As in \Cref{prop: relation of horizontal period rings}, the various period sheaves with connections naturally relate to each other.
This is summarized in the following statement.
\begin{proposition}
	\label{prop: relation of all period sheaves}
	Let $X$ be as in \Cref{conv of smooth affine}.
	There is a natural commutative diagram of injections of period sheaves in $\Shv(X_{\eta,\pe})$
	\[
	\begin{tikzcd}
		\Acrys \arrow[r, hook] \arrow[d, hook] & \Ast \arrow[r, hook] \arrow[d, hook] & \Ast\otimes_W \mathcal{O}_K \arrow[r, hook] \arrow[d, hook] & \mathbb{B}^+_\dR \arrow[d, hook] \\
		\OA_{\crys,R_0} \arrow[r, hook] & \OA_{\st,R_0} \arrow[r, hook] & \OA_{\st,R_0}\otimes_W \mathcal{O}_K \arrow[r, hook] & \OB^+_\dR
		\end{tikzcd}
	\]
    with the following properties: 
    \begin{enumerate}[label=\upshape{(\roman*)}]
        \item The arrows in the bottom row respect connections, and the top row is obtained from the second row by taking horizontal sections. 
        \item The first square is a pushout diagram, and the arrows respect the Frobenius structures. In addition, the first column is obtained from the second by taking the $N = 0$ sections.
    \end{enumerate}
    In addition, there is an entirely similar conclusion after we invert $p$ or $\mu$ in the above diagram.
\end{proposition}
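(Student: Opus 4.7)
The plan is to assemble the diagram from constructions already in place and then use the explicit polynomial descriptions of $\Ast$ and $\OA_{\st,R_0}$ to read off the remaining compatibilities. The leftmost and rightmost vertical injections $\Acrys \hookrightarrow \OA_{\crys,R_0}$ and $\IB^+_\dR \hookrightarrow \OB^+_\dR$, and the compatibility between them, are already \Cref{prop: Acrys and OAcrys}.\ref{prop: Acrys and OAcrys conn}, \Cref{prop: BdR and OBdR}.\ref{prop: BdR and OBdR conn} and \Cref{prop: BdR and OBdR}.\ref{prop: BdR and OBdR with crys}. The top row of horizontal injections is the sheafification of \Cref{prop: relation of horizontal period rings}, and the outer two columns being compatible is tautological from construction. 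So it remains to treat the two middle vertical arrows, the bottom-row injections, and the pushout/horizontal-sections/$N=0$ structural properties.

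For the middle vertical arrows, by \Cref{def OAst} the sheaf $\OA_{\st,R_0}$ is \emph{defined} as $\OA_{\crys,R_0}\otimes_{\Acrys}\Ast$, so the first square is a pushout by design. Using the polynomial presentations $\Ast = \Acrys[\logu]$ from \Cref{def: Ast} and $\OA_{\st,R_0} = \OA_{\crys,R_0}[\logu]$ (over $\mathcal{O}_C$, then Galois-descended as in \Cref{cor: property of OAst}), the injection $\Ast\hookrightarrow\OA_{\st,R_0}$ reduces coefficient-by-coefficient to $\Acrys\hookrightarrow\OA_{\crys,R_0}$. After base change along $W\to\mathcal{O}_K$ (which is free of rank $e$), both the polynomial structure and the injectivity persist, so the third vertical arrow is again injective. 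Similarly the bottom-row inclusions $\OA_{\crys,R_0}\hookrightarrow\OA_{\st,R_0}\hookrightarrow\OA_{\st,R_0}\otimes_W\mathcal{O}_K$ follow at once from the polynomial description and $W$-flatness of $\OA_{\st,R_0}$ (which is inherited from that of $\Acrys$).

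The main obstacle, and the only step that requires a nontrivial input, is the injectivity of $\Ast\otimes_W\mathcal{O}_K\hookrightarrow\IB^+_\dR$ (and its de Rham counterpart). The plan is to split this into two steps. First, $\Acrys\otimes_W\mathcal{O}_K\hookrightarrow\IB^+_\dR$: since $\IB^+_\dR$ is naturally a $K$-algebra (as $\mathcal{O}_K\subseteq\mathcal{O}_C\subseteq\IB^+_\dR$), the elements $1,\pi,\dots,\pi^{e-1}$ are $K_0$-linearly independent in $\IB^+_\dR$; combined with $W$-flatness of $\Acrys$ this gives injectivity of the base-changed map from the known injection $\Acrys\hookrightarrow\IB^+_\dR$. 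Second, by the polynomial description $\Ast\otimes_W\mathcal{O}_K = (\Acrys\otimes_W\mathcal{O}_K)[\logu]$, it suffices to show that $\log(u/[\pi^\flat])\in\IB^+_\dR$ is transcendental over $(\Acrys\otimes_W\mathcal{O}_K)[1/p]$, which follows from the transcendence statement invoked in the proof of \Cref{prop: relation of horizontal period rings} (citing \cite[Prop.~2.27]{Shi22}). For the bottom analogue $\OA_{\st,R_0}\otimes_W\mathcal{O}_K\hookrightarrow\OB^+_\dR$, one can argue locally using \Cref{prop: BdR and OBdR}.\ref{prop: BdR and OBdR explicit} and \Cref{cor: property of OAst}.\ref{cor: property of OAst explicit}, which present both sides as power-series/polynomial rings in the same variables $v_i$ over the previous case, reducing injectivity to the horizontal one.

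Finally, I will deduce (i) and (ii) from the structural definitions. The arrows in the bottom row respect $\nabla$ by \Cref{def OAst}, since the connection on $\OA_{\st,R_0}$ is defined as the base change of that on $\OA_{\crys,R_0}$ and agrees with the connection on $\OB^+_\dR$ by construction (and the inclusion of $\mathcal{O}_K$ is flat with trivial connection). That the top row consists of the horizontal sections of the second follows by combining \Cref{prop: Acrys and OAcrys}.\ref{prop: Acrys and OAcrys conn} and \Cref{prop: BdR and OBdR}.\ref{prop: BdR and OBdR conn} with the polynomial structure: $(\OA_{\crys,R_0}[\logu])^{\nabla=0}$ is computed term-by-term, giving $\Acrys[\logu]=\Ast$. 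Compatibility of the first square with Frobenius follows from \Cref{def OAst}, where $\varphi_{\OA_{\st,R_0}}$ is defined as the tensor product $\varphi_{\OA_{\crys,R_0}}\otimes\varphi_{\Ast}$; and the first column being the $N=0$ sections of the second is immediate from the explicit formula for $N$ in \Cref{cor: property of OAst} as $\OA_{\crys,R_0}$-linear differentiation with respect to $\logu$, together with the analogous formula for $\Ast$. The conclusion after inverting $p$ or $\mu$ follows by applying the exact functor of localization to the whole diagram.
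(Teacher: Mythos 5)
Your proposal follows the paper's argument closely in most respects: the pushout structure of the first square, the citations for the outer columns, the polynomial presentations, the identification of the top row as horizontal sections and the first column as the $N=0$ locus, and the localization at the end all match the paper's proof. The genuine gap is in your two-step treatment of the injectivity of $\Ast\otimes_W\mathcal{O}_K\hookrightarrow\IB^+_\dR$.

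You assert that $\Acrys\otimes_W\mathcal{O}_K\hookrightarrow\IB^+_\dR$ is easy because $1,\pi,\ldots,\pi^{e-1}$ are $K_0$-linearly independent in $\IB^+_\dR$ and $\Acrys$ is $W$-flat. This does not work. Flatness of $\Acrys$ over $W$ together with $W$-freeness of $\mathcal{O}_K$ identifies the source with $\bigoplus_{i=0}^{e-1}\Acrys\cdot\pi^i$, but the map to $\IB^+_\dR$ is multiplication, and its injectivity is equivalent to $1,\pi,\ldots,\pi^{e-1}$ being linearly independent over $\Acrys$ (equivalently over $\IB^+_\crys$) inside $\IB^+_\dR$ --- a strictly stronger claim than $K_0$-linear independence. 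That $\IB^+_\crys$-linear independence is precisely the nontrivial content of the classical inclusion $\IB^+_\crys\otimes_{K_0}K\hookrightarrow\IB^+_\dR$; it amounts to controlling the intersection of $K$ with the fraction field of the crystalline period sheaf inside $\IB^+_\dR$, and no flatness or base-change formalism can substitute for it. The paper does not attempt to split off this step: the injectivity of $\Ast\otimes_W\mathcal{O}_K\to\IB^+_\dR$ (including the part without $\logu$) is reduced pro-\'etale-locally to the horizontal period sheaves via \Cref{cor: property of OAst}.\ref{cor: property of OAst explicit} and \Cref{prop: BdR and OBdR}.\ref{prop: BdR and OBdR explicit}, and then cited as a whole from \cite[Prop.~2.27]{Shi22}. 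Widening your citation of Shi22 to cover the $\otimes_W\mathcal{O}_K$ base change as well as the transcendence of $\logu$ closes the gap; with that change the rest of your proposal is sound.
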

\begin{proof}
	Let us first forget the third column and consider the rest of the diagram.
	By \Cref{def OAst}, the sheaf of rings $\OA_{\st,R_0}$ is defined as the pushout of $\OA_{\crys,R_0}$ and $\Ast$ over $\Acrys$.
	Moreover, both $\OA_{\crys,R_0}$ and $\Ast$ admit maps to $\OB^+_\dR$, through \Cref{prop: BdR and OBdR}.\ref{prop: BdR and OBdR with crys} and \Cref{cor: relation of horizontal period sheaves} separately.
	So to get a map from $\OA_{\st,R_0}$ to $\OB^+_\dR$, it suffices to check that the above two maps are compatible after restricted to $\Acrys$.
	This, together with the injectiveness, can then be done pro-\'etale locally, and follows from the explicit descriptions of $\OA_{\st,R_0}$ and $\OB^+_\dR$ as in \Cref{cor: property of OAst}.\ref{cor: property of OAst explicit} and \Cref{prop: BdR and OBdR}.\ref{prop: BdR and OBdR explicit} respectively.
	The rest of the claims follow from \Cref{cor: property of OAst}.
	
	As the sheaves $\mathbb{B}^+_\dR$ and $\OB^+_\dR$ are $\mathcal{O}_K$-linear, the natural map from $\Ast$ and $\OA_{\st,R_0}$ naturally extends to their $\mathcal{O}_K$-linear extension.
	To show the injectiveness of the maps $\Ast\otimes_W \mathcal{O}_K\to \mathbb{B}^+_\dR$ and $\OA_{\st,R_0}\otimes_W \mathcal{O}_K\to \OB_\dR$, by \Cref{cor: property of OAst}.\ref{cor: property of OAst explicit} and \Cref{prop: BdR and OBdR}.\ref{prop: BdR and OBdR explicit}  again it suffices to consider the map of horizontal period sheaves.
	This then follows from \cite[Prop.~2.27]{Shi22}, where $\mathbb{B}^+_\mathrm{max}$ contains $\mathbb{B}^+_\crys$ as a subring (\cite[Lem.~2.34]{Shi22}).
\end{proof}
\begin{remark}
	As in \cref{sub horizontal period sheaves}, we can consider the complete semi-stable period sheaf with connection $\mathcal{O}\widehat{\mathbb{A}}_{\st,R_0}$, defined as the complete coproduct of the log PD-thickenings $(\Acrys, \mathcal{O}^+_{X_\eta}/p, M_{\Acrys})$ and $(D_{R_0\llbracket u \rrbracket}(E), R/p, (u^\mathbb{N})^a)$ in $(R/p,M_{R/p})_\lcrys$. 
	Here $R_0\llbracket u \rrbracket$ is the higher dimensional Breuil--Kisin ring associated with the $W$-model $R_0$.
	Just like how we construct $\IA_\st$ in \cref{sub horizontal period sheaves}, we can then take the submodule of nilpotent elements under the monodromy operator of $\mathcal{O}\widehat{\mathbb{A}}_{\st,R_0}$.
	One can check that this latter construction coincides with $\OA_{\st,R_0}$ in \Cref{def OAst}.
\end{remark}

Finally, we consider the pro-\'etale cohomology for period sheaves with connections.
In the following, we let $\nu:X_{\eta,\pe}\to X_{\eta, \et}$ be the natural map of sites,
and by abuse of notation we denote the restriction of $\mathcal{O}_{X_{0,\eta}}$ to $X_\eta$ still by $\sO_{X_{0, \eta}}$ when this implication is clear. 
\begin{theorem}[Zero-th cohomology of period sheaves]
	\label{thm: coh of period sheaves}
	Let $X$ be as in \Cref{conv of smooth affine}. 
	\begin{enumerate}[label=\upshape{(\roman*)}]
        \item\label{thm: coh of period sheaves dR} There is a natural isomorphism of sheaves of rings over $X_\et$ that identifies their connections:
		\[
		\mathcal{O}_{X_\eta} \longrightarrow \nu_* \OB_\dR.
		\]
		\item\label{thm: coh of period sheaves crys} There is a natural Frobenius equivariant isomorphism of sheaves of rings over $X_\et$ that identifies their connections:
		\[
		\mathcal{O}_{X_{0,\eta}} \longrightarrow \nu_* \OB_{\crys,R_0}.
		\]
		\item\label{thm: coh of period sheaves st} There is a natural Frobenius equivariant isomorphism of sheaves of rings over $X_\et$ that identifies their connections:
		\[
		\mathcal{O}_{X_{0,\eta}} \longrightarrow \nu_* \OB_{\st,R_0}.
		\]
	\end{enumerate}
\end{theorem}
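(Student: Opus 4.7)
The three parts share a common strategy: establish (i) via Scholze's foundational computation, then deduce (ii) and (iii) from the embeddings of the crystalline and semi-stable period sheaves into the de Rham period sheaf provided by Proposition \ref{prop: relation of all period sheaves}.

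Part (i) is Scholze's result \cite[Prop.~6.16, Cor.~6.19]{Sch13}. The proof reduces to the small case (where $X$ is $p$-completely \'etale over a torus), uses the local trivialization $\OB^+_\dR|_V \simeq \mathbb{B}^+_\dR|_V\llbracket v_1,\ldots,v_r\rrbracket$ from Proposition \ref{prop: BdR and OBdR}.\ref{prop: BdR and OBdR explicit}, and combines it with the Hodge--Tate computation of $R\nu_*\widehat{\sO}^+_{X_\eta}$ to obtain $\nu_*\OB^+_\dR \simeq \sO_{X_\eta}$. Inverting $\mu$ preserves this identification since $\mu$ is a global section, and compatibility with the connection is immediate from functoriality.

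For part (ii), the embedding $\OB_{\crys,R_0} \hookrightarrow \OB_\dR$ of Proposition \ref{prop: BdR and OBdR}.\ref{prop: BdR and OBdR with crys} gives, after the left-exact functor $\nu_*$, a factorization
\[ \sO_{X_{0,\eta}} \hookrightarrow \nu_*\OB_{\crys,R_0} \hookrightarrow \nu_*\OB_\dR \simeq \sO_{X_\eta} \]
whose outer composite is the natural inclusion, forcing injectivity of the first arrow. For surjectivity, work locally in the small case and adapt Scholze's spectral-sequence argument to the explicit presentation $\OA_{\crys,R_0}|_V \simeq \Acrys|_V\{v_i\}$ from Proposition \ref{prop: Acrys and OAcrys}.\ref{prop: Acrys and OAcrys explicit}, using the analogous identification $\nu_*\mathbb{B}^+_\crys \simeq K_0$. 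Part (iii) then follows upon noting from Corollary \ref{cor: property of OAst} that, after base change to $X_{\eta,C}$, $\OB_{\st,R_0} \simeq \OB_{\crys,R_0}[\logu]$ as polynomial rings; the $\Gal_K$-action (Remark \ref{rmk: descending a sheaf from C to K}) sends $\logu \mapsto \logu - \chi(g)\log\epsilon$. Since $\nu_*$-sections on $X_\et$ must be Galois-equivariant, no non-constant polynomial in $\logu$ can descend, so $\nu_*\OB_{\st,R_0} \simeq \nu_*\OB_{\crys,R_0} \simeq \sO_{X_{0,\eta}}$.

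The main obstacle will be the local surjectivity in part (ii): translating Scholze's de Rham spectral sequence to the crystalline setting requires careful control of the interplay between the divided-power structure on the variables $v_i$ (rather than the completed power-series structure of the de Rham case) and the pro-\'etale pushforward of $\mathbb{B}^+_\crys$. Once this is achieved, compatibility with connections, Frobenius structures, and (for (iii)) the monodromy operator follows from functoriality of all constructions.
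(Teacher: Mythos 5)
Your part (i) matches the paper: both defer to Scholze. For parts (ii) and (iii), however, you have identified the right starting sandwich but then propose to prove surjectivity of $\mathcal{O}_{X_{0,\eta}} \to \nu_*\OB_{\crys,R_0}$ by ``adapting Scholze's spectral-sequence argument'' in the small case, and you correctly flag that the mismatch between the $v$-adically complete ring $\mathbb{B}^+_\dR\llbracket v_i\rrbracket$ and the divided-power polynomial ring $\Acrys\{v_i\}$ is a real obstacle. The paper does not do any such local cohomology computation; it sidesteps it with an algebraic trick that you are missing. Concretely, it establishes injections
\[
\mathcal{O}_{X_{0,\eta}} \hookrightarrow \nu_*\OB_{\crys,R_0} \hookrightarrow \nu_*\OB_{\st,R_0} \hookrightarrow \nu_*\OB_\dR,
\]
base-changes along $K_0 \to K$, and shows that the resulting map $\nu_*\OB_{\st,R_0}\otimes_{K_0}K \to \nu_*\OB_\dR$ is still \emph{injective}. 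Injectivity here is the genuine analytic input, and it is reduced, via the explicit local presentations of $\OA_{\crys,R_0}|_V$, $\OA_{\st,R_0}|_V$, $\OB^+_\dR|_V$ as polynomial/power-series rings in the $v_i$ with coefficients in the horizontal period rings, to the injectivity of $\mathbb{B}_\crys\otimes_{K_0}K \to \mathbb{B}_\st\otimes_{K_0}K \to \mathbb{B}_\dR$, which is \cite[Prop.~2.27]{Shi22}. Once this injectivity is in hand, the sandwich reads $\mathcal{O}_{X_\eta} \hookrightarrow \nu_*\OB_{\crys,R_0}\otimes_{K_0}K \hookrightarrow \mathcal{O}_{X_\eta}$, forcing equality in the middle, and faithful flatness of $K_0\to K$ then recovers $\nu_*\OB_{\crys,R_0} = \mathcal{O}_{X_{0,\eta}}$ on the nose. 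No spectral sequence, no computation of $\mathrm{H}^0$ of the divided-power ring.

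Your part (iii) via Galois descent (eliminating $\logu$ because $g$ shifts it by $\chi(g)\log\epsilon$) is a different route from the paper, which simply runs the same sandwich argument for $\OB_{\st,R_0}$ in parallel with $\OB_{\crys,R_0}$. Your idea is plausible but not self-contained: showing that ``no non-constant polynomial in $\logu$ descends'' amounts to showing that the cocycle $g\mapsto \chi(g)\log\epsilon$ is not (a multiple of) a coboundary valued in $\OB_{\crys,R_0}$-sections, which is essentially the transcendence of $\log(\pi/[\pi^\flat])$ over $\mathbb{B}_\crys$ -- i.e.\ the same Shimizu input. Moreover your (iii) reduces to your (ii), whose surjectivity step is the gap above. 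In short: replace the spectral-sequence plan with the sandwich-plus-base-change-to-$K$ argument, whose only nontrivial ingredient is the injectivity of $\mathbb{B}_\st\otimes_{K_0}K\to \mathbb{B}_\dR$.
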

\begin{proof}
	Part \ref{thm: coh of period sheaves dR} was shown in \cite[Cor.~6.19, Thm~7.6]{Sch13}.
	To get the other two parts, we first notice that by explicit constructions in \Cref{prop: Acrys and OAcrys}.\ref{prop: Acrys and OAcrys explicit} and \Cref{cor: property of OAst}.\ref{cor: property of OAst explicit}, as well as \Cref{prop: relation of all period sheaves}, there are natural injections
	\begin{equation}
        \label{eqn: coh of period sheaves eqn 1}
	    \mathcal{O}_{X_{0,\eta}} \hookrightarrow \OB_{\crys,R_0} \hookrightarrow \OB_{\st,R_0} \hookrightarrow \OB_\dR.
	\end{equation}
 Moreover, by \Cref{cor: property of OAst}.\ref{cor: property of OAst et} and the fact that $\nu_*$ is left exact, we get the following injections of sheaves on $X_{\eta,\et}$
 \begin{equation}
        \label{eqn: coh of period sheaves eqn 2}
	    \mathcal{O}_{X_{0,\eta}} \hookrightarrow \nu_* \OB_{\crys,R_0} \hookrightarrow \nu_* \OB_{\st,R_0} \hookrightarrow \nu_* \OB_\dR.
	\end{equation}
	The maps are compatible with connections and Frobenius structures whenever applicable.
	Moreover, since the sheaf of rings $\OB_\dR$ is $K$-linear, by tensoring (\ref{eqn: coh of period sheaves eqn 2}) with $K$ we obtain another sequence 
 \begin{equation}
        \label{eqn: coh of period sheaves eqn 3}
	    \mathcal{O}_{X_\eta}=\mathcal{O}_{X_{0,\eta}}\otimes_{K_0} K \hookrightarrow \nu_* \OB_{\crys,R_0}\otimes_{K_0}K \hookrightarrow \nu_*  \OB_{\st,R_0}\otimes_{K_0} K  \longrightarrow \nu_* \OB_\dR.
	\end{equation}
    
    The injectivity of all but the last arrow in (\ref{eqn: coh of period sheaves eqn 3}) is clear, and we claim that the last arrow is also injective.
    By the explicit constructions in \Cref{prop: Acrys and OAcrys}.\ref{prop: Acrys and OAcrys explicit}, \Cref{cor: property of OAst}.\ref{cor: property of OAst explicit}, and \Cref{prop: BdR and OBdR}.\ref{prop: BdR and OBdR explicit} we have isomorphisms
	\[
	\OB_{\crys,R_0} \otimes_{K_0} K \simeq \bigl(\mathbb{B}_\crys\otimes_{K_0} K \bigr) \{ v_i \},~\OB_{\st,R_0} \otimes_{K_0} K \simeq \bigl(\mathbb{B}_\st\otimes_{K_0} K \bigr) \{ v_i \},~~\OB_\dR \simeq \mathbb{B}_\dR \llbracket v_i \rrbracket,
	\]
	where the natural maps among them are compatible with the formal variables $v_i$ and are induced from $\mathbb{B}_\crys \to \mathbb{B}_\st \to \mathbb{B}_\dR$.
	So to show the claimed injectivity, it suffices to show the injectivity of the following maps
	\[\mathbb{B}_\crys \otimes_{K_0} K \longrightarrow \mathbb{B}_\st \otimes_{K_0} K \longrightarrow \mathbb{B}_\dR.
	\]
	This then follows for example from \cite[Prop.~2.27]{Shi22}, where $\mathbb{B}^+_\mathrm{max}$ contains $\mathbb{B}^+_\crys$ as a subring (\cite[Lem.~2.34]{Shi22}).

    Now, given that the morphisms in (\ref{eqn: coh of period sheaves eqn 3}) are all injective, we see that the algebra of global sections $\Gamma(X_\eta, \nu_* \OB_{\crys,R_0})$ is a $R_0[1/p]$-subalgebra of $\Gamma(X_\eta, \nu_* \OB_\dR) \simeq R[1/p]$ whose $K$-linearization is a subalgebra of $R[1/p]$ as well.
    Notice that since $R[1/p] = R_0[1/p] \tensor_{K_0} K$, the above implies that the inclusion $R_0[1/p]\to \Gamma(X_\eta, \nu_* \OB_{\crys,R_0})$ becomes an isomorphism after tensoring with $K$.
    As a consequence, thanks to the full faithfulness of the map $K_0\to K$, we see the ring $\Gamma(X_\eta, \nu_* \OB_{\crys,R_0})$ must be equal to $R_0[1/p]$. This proves \ref{thm: coh of period sheaves crys} and \ref{thm: coh of period sheaves st} follows similarly. 
\end{proof}
Here we note that by taking the submodules of horizontal sections, we get cohomology of horizontal period sheaves as below.
In the following, we let $\mathcal{O}_{\Spa(K_0)}$ be the preimage of the \'etale structure sheaf of $\Spa(K_0)$ along the map $X_{\eta,\et}\to \Spa(K_0)_\et$.

\begin{corollary}
	\label{cor: coh of horizontal period sheaves}
	Let $X$ be a smooth $p$-adic formal scheme over $\mathcal{O}_K$.
	\begin{enumerate}[label=\upshape{(\roman*)}]
		\item The natural map $\mathcal{O}_{\Spa(K_0)}\to  \nu_* \mathbb{B}_\crys$ is a Frobenius equivariant isomorphism.
		\item The natural map $\mathcal{O}_{\Spa(K_0)}\to  \nu_* \mathbb{B}_\st$ is a Frobenius equivariant isomorphism. 
		\item The natural map $\mathcal{O}_{\Spa(K_0)}\to  \nu_* \mathbb{B}_\dR$ is an isomorphism. 
	\end{enumerate}
\end{corollary}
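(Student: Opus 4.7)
My plan is to deduce this corollary from \Cref{thm: coh of period sheaves} by taking horizontal sections of the relevant period sheaves with connection. Since all the assertions are sheaf-theoretic and local on $X$, I may assume $X = \Spf(R)$ is small as in \Cref{conv of smooth affine}, and fix a smooth $W$-model $X_0 = \Spf(R_0)$ equipped with a Frobenius lift. This lets me work with the parametrized period sheaves $\OB_{\crys, R_0}$, $\OB_{\st, R_0}$, and $\OB_\dR$ of \Cref{sub period sheaves with conn}.

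The first step is to establish the identifications
\[
\IB_\crys \simeq (\OB_{\crys, R_0})^{\nabla = 0}, \quad \IB_\st \simeq (\OB_{\st, R_0})^{\nabla = 0}, \quad \IB_\dR \simeq (\OB_\dR)^{\nabla = 0},
\]
Frobenius equivariantly whenever Frobenius structures are present. The first and third are immediate from \Cref{prop: Acrys and OAcrys} and \Cref{prop: BdR and OBdR} after inverting $p$ and, for the first, also $\mu$. For the semi-stable identification, by \Cref{def OAst} the connection on $\OB_{\st, R_0}$ is the $\IB_\st$-linear base change of the connection on $\OB_{\crys, R_0}$, so $\IB_\st$ automatically lies in the kernel of $\nabla$; pro-\'etale locally, the explicit description $\OB_{\st, R_0}|_V \simeq \IB_\st|_V\{v_i\}$ of \Cref{cor: property of OAst} allows one to write any horizontal element as a polynomial in $\logu$ over $\OB_{\crys, R_0}$ with horizontal coefficients, which by the crystalline case lie in $\IB_\crys$, forcing the element to belong to $\IB_\crys[\logu] = \IB_\st$.

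Since $\nu_*$ is left exact and therefore commutes with the formation of kernels, applying $\nu_*$ to these identifications together with \Cref{thm: coh of period sheaves} yields
\[
\nu_* \IB_\crys \simeq (\nu_* \OB_{\crys, R_0})^{\nabla = 0} \simeq \mathcal{O}_{X_{0, \eta}}^{\nabla = 0},
\]
and similarly $\nu_* \IB_\st \simeq \mathcal{O}_{X_{0, \eta}}^{\nabla = 0}$ and $\nu_* \IB_\dR \simeq \mathcal{O}_{X_\eta}^{\nabla = 0}$, compatibly with Frobenius whenever defined. The Poincar\'e lemma for smooth rigid spaces then identifies $\mathcal{O}_{X_{0, \eta}}^{\nabla = 0}$ on the \'etale site with $\mathcal{O}_{\Spa(K_0)}$ and $\mathcal{O}_{X_\eta}^{\nabla = 0}$ with $\mathcal{O}_{\Spa(K)}$; because $K/K_0$ is finite separable, the colimits defining the two inverse image sheaves on $X_{\eta, \et}$ coincide, so $\mathcal{O}_{\Spa(K)} \simeq \mathcal{O}_{\Spa(K_0)}$ there. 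The only remaining bookkeeping is to check that the resulting isomorphisms are compatible with the distinguished inclusions $K_0 \hookrightarrow \IB_\crys \hookrightarrow \IB_\st \hookrightarrow \IB_\dR$ of \Cref{cor: relation of horizontal period sheaves}; this is straightforward from chasing the commutative squares in \Cref{prop: relation of all period sheaves}, so there is no essential obstacle beyond these formal identifications.
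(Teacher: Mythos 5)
Your proof is correct and follows essentially the same route as the paper: reduce to the small affine case, identify the horizontal period sheaves as the kernel of $\nabla$ on the $\OB$-sheaves (Proposition \ref{prop: relation of all period sheaves}.(i)), push down along $\nu_*$ using Theorem \ref{thm: coh of period sheaves} together with left-exactness, and then observe that horizontal sections of $\mathcal{O}_{X_{0,\eta}}$ (resp.\ $\mathcal{O}_{X_\eta}$) give the constant sheaf $\mathcal{O}_{\Spa(K_0)}$. The only cosmetic difference is that you re-derive in line the identification $\IB_\st = (\OB_{\st,R_0})^{\nabla=0}$, which the paper already cites from Proposition \ref{prop: relation of all period sheaves}; your extra remark that $\mathcal{O}_{\Spa(K)}$ and $\mathcal{O}_{\Spa(K_0)}$ agree as sheaves on $X_{\eta,\et}$ (since both are the preimage of the structure sheaf along a finite separable tower) is a useful clarification that the paper leaves implicit.
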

\begin{proof}
	As the maps above are canonically defined, to check the isomorphisms it suffices to assume that $X$ is smooth and affine, where the claims follow from \Cref{prop: relation of all period sheaves}, \Cref{thm: coh of period sheaves}, and the observation that $\mathcal{O}_{X_{0,\eta}}^{\nabla=0}=K_0$.
\end{proof}

\section{Crystalline and semi-stable local systems}
\label{sec def local system}
	In this section, we give two definitions of crystalline and semi-stable local systems over the generic fiber of a $p$-adic formal scheme $X$.
	The first definition is in the spirit of Faltings (cf. \cite{Fal89}), which is defined using horizontal period sheaves in \Cref{sub horizontal period sheaves} and in particular allows a general setup where $X$ is not regular.
	In the special case when $X$ is smooth, affine and when an unramified model is chosen (cf. \Cref{conv of smooth affine}), we give the second definition by introducing the functors $D_\crys$ and $D_\st$ on the category of $p$-adic local systems over $X_\eta$, using period sheaves with connections introduced in \Cref{sub period sheaves with conn}.
	The second definition in particular comes with a \emph{functoriality} for the $F$-isocrystals that are associated with the given crystalline or semi-stable local systems, which will be crucial for our eventual application.
	
\subsection{Crystallinity and semi-stability a la Faltings}
\label{sub def local system Faltings}
	This subsection is devoted to the definitions of crystalline local systems and semi-stable local systems for general $p$-adic formal scheme $X$, extending that of Faltings (cf. \cite[p.~67]{Fal89}).
	We fix a topologically finite type $p$-adic formal schemes $X$ over $\mathcal{O}_K$, and as in \Cref{log of framed semi-stable reduction} we let $M_X$ be its standard log structure. 
	
	We start with a construction of the vector bundles over horizontal period sheaves that arises from a given (locally free) $F$-isocrystal on the (log-)crystalline site (cf. \cite[Def.~2.25]{GR22}).

\begin{construction}
		\label{const: Bcrys(E)}
		\begin{enumerate}[label=\upshape{(\roman*)}]
			\item\label{const: Bcrys(E) crys}  Let $(\mathcal{E},\varphi_{\mathcal{E}}) \in \Isoc^\varphi(X_{k,\crys})$ be an $F$-isocrystal on $X_{k,\crys}$, which naturally extends to an $F$-isocrystal on $X_{p=0, \crys}$ by \Cref{Dwork's trick}.\ref{Dwork's trick crys}.
			Then we define the sheaves $\mathbb{B}^+_\crys(\mathcal{E})$ and $\mathbb{B}_\crys(\mathcal{E})$ in $\Shv(X_{\eta, \pe})$ as follows:
			Let $U \in \Perfd/(X_{\eta,C})_\pe$ be any object and set $\widehat{U} = \Spa(S[1/p],S)$. As the morphism $\rAcrys(S) \to S/p$ is a pro-PD-thickening with a Frobenius endomorphism in $X_{p=0,\crys}$, it makes sense to set
			\begin{itemize}
				\item $\mathbb{B}^+_\crys(\mathcal{E})(U) \colonequals \mathcal{E}(\rAcrys(S), S/p)[1/p] = \bigl(\lim_r \mathcal{E}(\rAcrys(S)/p^r,S/p,\gamma)\bigr)[1/p]$, where $\gamma$ is the canonical PD-structure and
				\item $\mathbb{B}_\crys(\mathcal{E})(U) \colonequals \mathbb{B}^+_\crys(\mathcal{E})(U)[1/\mu]$.
			\end{itemize}
			Here as in \cite[Lem.~2.26]{GR22}, $\mathbb{B}^+_\crys(\mathcal{E})$ (resp. $\mathbb{B}_\crys(\mathcal{E})$) forms a sheaf of modules over $\mathbb{B}^+_\crys$ (resp. $\mathbb{B}_\crys$) and the sheaf condition follows from the crystal property together with the sheaf condition of $\mathbb{B}^+_\crys$ (resp. $\mathbb{B}_\crys$). 
   One checks that $\IB_\crys^+(-)$ (resp. $\IB_\crys(-)$) defines a functor $\Isoc^\varphi(X_{k, \crys})$
			Moreover, the Frobenius structure $\varphi_\mathcal{E}$ naturally equips $\mathbb{B}^+_\crys(\mathcal{E})$ and $\mathbb{B}_\crys(\mathcal{E})$ with isomorphisms
			\[
			\mathbb{B}^+_\crys(\mathcal{E}) \otimes_{\mathbb{B}^+_\crys, \varphi} \mathbb{B}^+_\crys \xrightarrow{\sim} \mathbb{B}^+_\crys(\mathcal{E}),\quad \mathbb{B}_\crys(\mathcal{E}) \otimes_{\mathbb{B}_\crys, \varphi} \mathbb{B}_\crys \xrightarrow{\sim} \mathbb{B}_\crys(\mathcal{E}).
			\]
			
			\item\label{const: Bcrys(E) lcrys} Similarly, given a locally free $F$-isocrystal $(\mathcal{E},\varphi_{\mathcal{E}}) \in \Isoc^\varphi((X_k,M_{X_k})_\lcrys)$ (which by the equivalence in \Cref{Dwork's trick}.\ref{Dwork's trick log-crys} can be identified with an $F$-isocrystal on $(X_{p=0}, M_{X_{p=0}})_\lcrys$), we can consider the evaluation of $\mathcal{E}$ at the pro-log-PD-thickening $(\rAcrys(S),S/p,M_{\rAcrys(S)})$ in $(X_{p=0}, M_{X_{p=0}})_\lcrys$.
			This defines vector bundles $\mathbb{B}^+_\crys(\mathcal{E})$ and $\mathbb{B}_\crys(\mathcal{E})$ in $\Shv(X_{\eta,\pe})$ over $\mathbb{B}^+_\crys$ and $\mathbb{B}_\crys$ respectively, and both are equipped with Frobenius structures.
			
			\item\label{const: Bcrys(E) st and dR}  In the above two situations, we define $\mathbb{B}^+_\st(\mathcal{E})$ and $\mathbb{B}_\st(\mathcal{E})$ by
			\[
			\mathbb{B}^+_\st(\mathcal{E})\colonequals \mathbb{B}^+_\crys(\mathcal{E})\otimes_{\mathbb{B}^+_\crys} \mathbb{B}^+_\st, \quad  \mathbb{B}_\st(\mathcal{E})\colonequals \mathbb{B}_\crys(\mathcal{E})\otimes_{\mathbb{B}_\crys} \mathbb{B}_\st.
			\]
			By construction, $\mathbb{B}^+_\st(\mathcal{E})$ is equipped with the tensor product Frobenius structure $\varphi_{\IB^+_\crys(\sE)}\otimes \varphi_{\IB^+_\st}$ and the monodromy operator $\mathrm{id}\otimes N_{\mathbb{B}^+_\st}$, and similarly for $\mathbb{B}_\st(\mathcal{E})$.
			We also define $\mathbb{B}^+_\dR(\mathcal{E})$ and $\mathbb{B}_\dR(\mathcal{E})$ by 
			\[
			\mathbb{B}^+_\dR(\mathcal{E}) \colonequals \mathbb{B}^+_\crys(\mathcal{E}) \otimes_{\mathbb{B}^+_\crys} \mathbb{B}^+_\dR, \quad \mathbb{B}_\dR(\mathcal{E}) \colonequals \mathbb{B}_\crys(\mathcal{E}) \otimes_{\mathbb{B}_\crys} \mathbb{B}_\dR.
			\]
		\end{enumerate}
\end{construction}
\begin{remark}
\label{rmk: Bcrys and forgetful functor}
	Here we note that the natural forgetful functor of sites $(X_k,M_{X_k})_\lcrys \to X_{k,\crys}$ sends the log PD-thickening $(\rAcrys(S),S/p,M_{\rAcrys(S)})$ onto the PD-thickening $(\rAcrys(S),S/p)$.
	In particular, for a given $F$-isocrystal $(\mathcal{E},\varphi_\mathcal{E})\in \Isoc^\varphi(X_{k,\crys})$, the associated $\mathbb{B}^+_\crys$-vector bundle $\mathbb{B}^+_\crys(\mathcal{E})$ defined via \Cref{const: Bcrys(E)}.\ref{const: Bcrys(E) crys} coincides with the $\mathbb{B}^+_\crys$-vector bundle $\mathbb{B}^+_\crys(\mathcal{E}')$ defined via \Cref{const: Bcrys(E)}.\ref{const: Bcrys(E) lcrys}, 
	where $(\mathcal{E}',\varphi_{\mathcal{E}'})\in \Isoc^\varphi((X_k,M_{X_k})_\lcrys)$ is the $F$-isocrystal over the log crystalline site that is induced from $(\mathcal{E},\varphi_\mathcal{E})$. 
\end{remark}

\begin{remark}[Site-theoretic interpretation]
	\label{rmk:site-theoretic_construction_of_Bcrys(E)}
	A more conceptual way of constructing $\mathbb{B}_\crys(\mathcal{E})$ in \Cref{const: Bcrys(E)} is to consider a functor of ringed topoi 
	\[
	c_X\colon (X_{\eta,\pe}^\sim, \mathbb{B}_\crys) \longrightarrow (X_{p=0,\crys}^\sim, \mathcal{O}_\crys),
	\]
	sending an affinoid perfectoid space $\Spa(S[1/p],S)$ onto the pro-PD-thickening $(\rAcrys(S), S/p)$.
	The functor is cocontinuous and induces a morphism of ringed topoi $c_X$ in the same direction (\cite[\href{https://stacks.math.columbia.edu/tag/00XN}{Tag 00XN}]{stacks-project}).
	In particular, for an $F$-isocrystal $\mathcal{E}$ over $X_{p=0}$ (or equivalently over $X_k$ by \Cref{Dwork's trick}), the pullback $c_X^*\mathcal{E}$ is by construction equal to the vector bundle $\mathbb{B}_\crys(\mathcal{E})$ over the pro-\'etale site $X_{\eta,\pe}$.
	The same applies to the logarithmic setting, which we shall not repeat.
\end{remark}

Now we are ready to define the crystallinity and the semi-stability.
\begin{definition}\label{def crys/st Faltings}
	Let $X$ be a topologically finite type $p$-adic formal scheme over $\mathcal{O}_K$, let $M_X$ be its standard log structure as in \Cref{log of framed semi-stable reduction}, and let $T$ be a $\mathbb{Q}_p$-local system over $X_\eta$ that admits a $\mathbb{Z}_p$-lattice.
	\begin{enumerate}[label=\upshape{(\roman*)}]
		\item\label{def crys Faltings} We call $T$ is a \emph{crystalline local system} (with respect to $X$) if there is a locally free $F$-isocrystal $(\mathcal{E},\varphi_\mathcal{E})\in \Isoc^\varphi(X_{k,\crys})$, together with a Frobenius equivariant isomorphism of $\mathbb{B}_\crys$-vector bundles
		\[
		\vartheta:\mathbb{B}_\crys(\mathcal{E}) \xrightarrow{\sim} \mathbb{B}_\crys\otimes_{\mathbb{Z}_p} T.
		\]
		\item\label{def st Faltings} We call $T$ is a \emph{semi-stable local system} (with respect to $X$) if there is a locally free $F$-isocrystal $(\mathcal{E},\varphi_\mathcal{E})\in \Isoc^\varphi((X_k,M_{X_k})_\lcrys)$, together with a Frobenius equivariant isomorphism of $\mathbb{B}_\crys$-vector bundles
		\[
		\vartheta:\mathbb{B}_\crys(\mathcal{E}) \xrightarrow{\sim} \mathbb{B}_\crys\otimes_{\mathbb{Z}_p} T.
		\]
\end{enumerate}
We use $\Loc_{\mathbb{Z}_p}^\crys(X_\eta)$ or $\Loc_{\mathbb{Z}_p}^\st(X_\eta)$ to denote the full subcategory of $\mathbb{Z}_p$-local systems over $X_\eta$ that are crystalline or semi-stable with respect to the $X$ separately.
\end{definition}
From \Cref{def crys/st Faltings}, the only difference between the crystallinity and the semi-stability is that the $F$-isocrystal $(\mathcal{E},\varphi_\mathcal{E})$ comes from either the crystalline site or the log crystalline site of the special fiber.
In particular, under the natural inclusion functor (\Cref{equiv def log isoc}), it is clear that a crystalline local system is semi-stable.
\begin{remark}[Equivalent definition using $\mathbb{B}_\st$]
	\label{rmk: def st Faltings alternative}
	Assume the same setup as in \Cref{def crys/st Faltings}, the local system $T$ is semi-stable if and only if there is an $F$-isocrystal $(\mathcal{E},\varphi_\mathcal{E})\in \Isoc^\varphi((X_k,M_{X_k})_\lcrys)$, together with a Frobenius and monodromy equivariant isomorphism of $\mathbb{B}_\st$-vector bundles
\[
    \vartheta':\mathbb{B}_\st(\mathcal{E}) \xrightarrow{\sim} \mathbb{B}_\st\otimes_{\mathbb{Z}_p} T.
\]
	This follows as the monodromy operator on $\mathbb{B}_\st(\mathcal{E})=\mathbb{B}_\crys(\mathcal{E})\otimes_{\mathbb{B}_\crys} \mathbb{B}_\st$ is equal to $\mathrm{id}\otimes N_{\mathbb{B}_\st}$ (\Cref{const: Bcrys(E)}.\ref{const: Bcrys(E) st and dR}), and the kernel of $N_{\mathbb{B}_\st}:\mathbb{B}_\st\to \mathbb{B}_\st$ is $\mathbb{B}_\crys$ (by \Cref{def: Ast} and \Cref{const: monodromy on hAst}).
	\footnote{We thank Heng Du for explaining this equivalence to us.}
    We refer the reader to \Cref{lem:two monodromy on D times OBst} for a related discussion.
\end{remark}
\begin{remark}[Unnecessity of filtration]
	\label{rmk: def crys/st minimal version}
	The careful reader might notice that even when $X$ is smooth, our definition of crystalline or semi-stable local systems above does not mention anything about filtration, which is different from either Faltings (\cite{Fal89}) or Tan--Tong (\cite{TT19}).
	The idea is that the datum of filtration should be completely encoded and determined by the de Rhamness of the local system (where the latter is a purely generic condition and makes no reference to the choice of the integral model $X$), which is implied by the crystallinity or the semi-stability, as we will see very soon.
	In fact when $X$ is smooth over $\mathcal{O}_K$, it is shown in \cite[Prop.~2.38]{GR22} that the two versions of crystallinity coincide, and there is a canonical filtration on the $K$-linear base extension of the $F$-isocrystal such that it is unique in an appropriate sense.
\end{remark}

\begin{remark}[Independence of models]
	We emphasize that the definitions of the crystallinity and the semi-stability in \Cref{def crys/st Faltings} a priori depend on the choice of the integral model $X$.
	However, one readily tells from our criteria \cref{intro:thm pc} that  for local systems over a smooth rigid space, the crystallinity (resp. the semi-stability) is independent of the choices of smooth (resp. semi-stable) models. 
 In fact, this independence statement can already be deduced from purity results, for which we refer the reader to \cite[Cor.~5.5]{DLMS2}.
\end{remark}
We also comment on a natural variant for the de Rhamness.
\begin{remark}[de Rham local system a la Faltings]
	One can give a similar definition for the de Rham local system as in \Cref{def crys/st Faltings} and \Cref{const: Bcrys(E)} as well, without referring to $\OB_\dR$: Namely, a vector bundle $\mathcal{E}$ with an integrable connection over $X_\eta$ can be identified with a crystal $\mathcal{E}'$ over the infinitesimal site $(X_\eta/K)_{\inf}$ (cf. \cite{Guo21}), and since $(\mathbb{B}^+_\dR, \widehat{\mathcal{O}}_{X_\eta})$ is a pro-infinitesimal thickening (of sheaves) over $(X_\eta/K)_{\inf}$, one can naturally evaluate a crystal $\mathcal{E}'$ at the thickening to get a $\mathbb{B}^+_\dR$-vector bundle $\mathbb{B}^+_\dR(\mathcal{E}')$.
	When $\mathcal{E}$ is equipped with a locally split filtration satisfying Griffiths transversality, one can further enhance $\mathbb{B}_\dR(\mathcal{E}')$ with a natural filtration as well.
    \footnote{A better formulation, in light of recent advance of the stacky approach in $p$-adic geometry, is to identify $\mathcal{E}$ with a coherent sheaf over the \emph{analytic filtered de Rham stack} $X_\eta^{\dR+}$, where the latter naturally admits a $\mathbb{B}^+_\dR$-valued point.
	Taking the pullback functor onto the $\mathbb{B}^+_\dR$-point gives a filtered vector bundle over $\mathbb{B}^+_\dR$ (hence over $\mathbb{B}_\dR$).
    We leave the details to interested readers.}
	Then the de Rhamness can be defined by requiring a filtered $\mathbb{B}_\dR$-linear isomorphism $\mathbb{B}_\dR(\mathcal{E}') \simeq T\otimes_{\mathbb{Z}_p} \mathbb{B}_\dR$.
	One can check using the rigidity of de Rham local systems (\cite{LZ17}) that this notion coincides with the usual one, even when $X_\eta$ is not smooth over $K$.
	For our purposes, we would not pursue this definition further and will instead recall an alternative definition of Brinon and Scholze in next subsection.
\end{remark}

\subsection{$D_\crys$ and $D_\st$ functors: local formulae}
\label{sub D functors}
In this subsection, using the period sheaves with connection, we construct the $D_\crys$ and the $D_\st$ functors from $p$-adic local systems to their associated $F$-isocrystals, for a smooth affine $p$-adic formal scheme $X$.
This produces a local formula for the associated (log-) $F$-isocrystal in terms of a given $p$-adic local system $T$, and in particular is functorial with respect to $T$.
	
We start by recalling the following result of Liu--Zhu on $p$-adic Riemann--Hilbert correspondence.
Recall the natural map of sites $\nu:X_{\eta,\pe}\to X_{\eta,\et}$ as in \cite{Sch13}. 
\begin{theorem}\cite[Thm.~1.5,~Thm.~3.9.(iv)]{LZ17}
		\label{thm: LZ}
		Let $X_\eta$ be a smooth rigid space over $K$.
		\begin{enumerate}[label=\upshape{(\roman*)}]
			\item\label{thm: LZ functor} There is a natural functor
			\[
			D_\dR: \mathrm{Loc}_{\mathbb{Z}_p}(X_\eta) \longrightarrow \Vect^\nabla(X_\eta),\quad T\longmapsto \nu_*(T\otimes_{\mathbb{Z}_p} \OB_\dR),
			\]
			where $\Vect^\nabla(X_\eta)$ is the category of vector bundles with flat connection over $X_\eta$, and the connection of $D_\dR$ is equal to $\nu_*\bigl( T\otimes_{\mathbb{Z}_p} \OB_\dR\xrightarrow{\mathrm{id}_T\otimes \nabla_{\OB_\dR}} T\otimes_{\mathbb{Z}_p} \OB_\dR \otimes_{\mathcal{O}_{X_\eta}} \Omega^1_{X_\eta/K} \bigr)$.
			\item\label{thm: LZ rank} \emph{\cite[Prop.\ 8.2.5]{Bri08}} There is a natural injective linearization map of $\OB_\dR$-vector bundles that is compatible with their $\mathbb{B}_\dR$-linear connections 
			\[
			\alpha_{\dR,T}:D_\dR(T)\otimes_{\mathcal{O}_{X_\eta}} \OB_\dR \longrightarrow T\otimes_{\mathbb{Z}_p} \OB_\dR.
			\]
			Moreover, when $\mathrm{rank}_{\mathbb{Z}_p}(T)=\mathrm{rank}_{\mathcal{O}_Z}(D_\dR(T))$, the map is an isomorphism.
			\item\label{thm: LZ pullback} The functor $D_\dR$ is compatible with the pullback along any map of smooth rigid spaces.
   In particular, $D_\dR(T)$ is a coherent sheaf over $X_\eta$.
		\end{enumerate}
\end{theorem}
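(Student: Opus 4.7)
The plan for this Liu--Zhu theorem is to proceed in three stages corresponding to the three parts, with the heart of the argument lying in part \ref{thm: LZ functor}. For \ref{thm: LZ functor}, functoriality of $D_\dR$ is immediate from the definition, and the connection is manifestly induced by $\nu_*$ applied to $\mathrm{id}_T \otimes \nabla_{\OB_\dR}$; the substantive content is that $\nu_*(T \otimes_{\IZ_p} \OB_\dR)$ is a coherent locally free $\sO_{X_\eta}$-module. I would reduce to the case where $X_\eta$ is small affinoid étale over a torus $Y = \Spa(K \langle T_1^{\pm 1}, \ldots, T_d^{\pm 1}\rangle)$, and then pass to the affinoid perfectoid toric pro-\'etale cover $\wt{X}_\eta \to X_\eta$ with Galois group $\Gamma \simeq \IZ_p^d$. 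Over $\wt{X}_\eta$ the restriction of $T$ corresponds to a continuous $\Gamma$-representation $V$ on a finite free $\IZ_p$-module, and Scholze's local computation \Cref{prop: BdR and OBdR}.\ref{prop: BdR and OBdR explicit} identifies $\OB_\dR|_{\wt{X}_\eta} \simeq \IB_\dR^+\llbracket v_1, \ldots, v_d \rrbracket [1/t]$. Under this identification, $D_\dR(T)$ restricted to a small open $U \subset X_\eta$ becomes the $\Gamma$-invariants of a Taylor-type expansion of $V \otimes \IB_\dR$, where the $v_i$'s encode the infinitesimal deformation of the $\Gamma$-action at the identity.

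The key technical step, which is the main obstacle, is the relative Sen theory: showing that these $\Gamma$-invariants assemble into a locally free $\sO_{X_\eta}$-module of rank at most $\mathrm{rank}_{\IZ_p}(T)$. Following Liu--Zhu, I would first develop a decompletion along the cyclotomic direction to produce a $\mathrm{Sen}$-type module, then extend scalars along the perfectoid tower in the $v_i$-variables to pass from $\IB_\dR^+$-coefficients to $\OB_\dR$-coefficients. Compatibility of the induced connection with $\nabla_{\OB_\dR}$ is tautological once one checks that the $\Gamma$-action on the $v_i$'s matches the differentiation structure coming from $\nabla_{\OB_\dR}$.

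For \ref{thm: LZ rank}, the linearization $\alpha_{\dR, T}$ arises by adjunction $\nu^* \dashv \nu_*$: applying $\nu^*$ to the inclusion $\nu_*(T \otimes \OB_\dR) \into \nu_* \nu^*(T \otimes \OB_\dR)$ and then tensoring with $\OB_\dR$ over $\nu^* \sO_{X_\eta}$. Injectivity can be checked pro-\'etale locally, where it follows from the observation that horizontal sections of a flat $\OB_\dR$-module inject into the module, and the latter is $\OB_\dR$-linearly generated by them via formal Taylor expansion in the $v_i$-direction. When the ranks coincide, injectivity of a morphism between locally free $\OB_\dR$-modules of equal finite rank forces surjectivity, because the determinant, being injective, must be a unit up to multiplication by a section of $\OB_\dR^{\times}$ whose non-vanishing can be verified on the perfectoid stalks where $\IB_\dR$ is a field.

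For \ref{thm: LZ pullback}, given $f_\eta : X'_\eta \to X_\eta$, functoriality of $\OB_\dR$ under morphisms yields a natural map $f_\eta^* D_\dR(T) \to D_\dR(f_\eta^{-1} T)$ by $\nu_*$-adjunction and base change; this map is an isomorphism because pro-\'etale locally it reduces, via the same Taylor-expansion identification as in \ref{thm: LZ functor}, to pullback of the underlying Sen module, which commutes with base change. Coherence of $D_\dR(T)$ is then a consequence of its local freeness established in \ref{thm: LZ functor}.
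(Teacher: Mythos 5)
This statement is quoted from \cite[Thm.~1.5, Thm.~3.9.(iv)]{LZ17} and \cite[Prop.~8.2.5]{Bri08}; the paper itself offers no proof, so there is no in-paper argument to compare against. Your sketch of part~\ref{thm: LZ functor} is a reasonable summary of the Liu--Zhu strategy (reduce to a small toric chart, use the explicit description of $\OB_\dR$ on the standard perfectoid cover, run a relative Sen/decompletion argument), and parts~\ref{thm: LZ functor} and \ref{thm: LZ pullback} are at least plausible modulo the technical work you are deferring.

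However, your argument for the second half of part~\ref{thm: LZ rank} has a genuine gap. You claim that injectivity of a morphism between locally free $\OB_\dR$-modules of the same finite rank forces surjectivity. This is false over a general ring: multiplication by $p$ on a rank-one free $\IZ_p$-module is injective with nonzero cokernel. Your attempted repair --- that the determinant, being a nonzero section, must be a unit because ``non-vanishing can be verified on the perfectoid stalks where $\IB_\dR$ is a field'' --- does not hold up: for a general affinoid perfectoid $\what{V} = \Spa(S[1/p], S)$, neither $\mathrm{B}_\dR(S)$ nor $\OB_\dR(\what{V})$ is a field, so there is no pointwise criterion of this kind to invoke. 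The actual argument in \cite{LZ17} establishes the equal-rank isomorphism by first proving the corresponding statement at the graded (Higgs/Simpson) level via relative Sen theory and then propagating it to $\OB_\dR$; this is not a formal consequence of injectivity plus rank counting and you would need to import that input explicitly. Similarly, in the first half of \ref{thm: LZ rank}, the assertion that $T \otimes \OB_\dR$ is ``$\OB_\dR$-linearly generated by'' the horizontal sections is not something you may assume --- it is essentially equivalent to the equal-rank case and does not hold in general --- nor does generation give injectivity; the injectivity proof in \cite[Prop.~8.2.5]{Bri08} proceeds instead by a minimal-relation argument using that $(\OB_\dR)^{\nabla=0} = \IB_\dR$.
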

The above in particular gives an equivalent definition for a $p$-adic local system to be de Rham (cf. \cite[Def.~8.3]{Scholze}), which we recall as below.
\begin{definition}
	\label{def dR}
	Let $X_\eta$ be a smooth rigid space over $K$.
	A $\mathbb{Z}_p$-local system $T$ over $X_\eta$ is called \textbf{de Rham} if the rank of $D_\dR(T)$ is equal to $\mathrm{rank}_{\mathbb{Z}_p}(T)$.
\end{definition}

\begin{corollary}[Rigidity of de Rham local system]
	\label{cor:LZ rigidity}
	Let $X_\eta$ be a smooth connected rigid space over $K$.
	A $\mathbb{Z}_p$-local system $T$ over $X_\eta$ is de Rham if and only if there is a closed point $x\in X_\eta$ such that the restriction $T|_x$ is a de Rham representation.
\end{corollary}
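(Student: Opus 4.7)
The plan is to read the claim off the structural properties of the $p$-adic Riemann--Hilbert functor $D_\dR$ summarized in \Cref{thm: LZ}. The ``only if'' direction is immediate: if $T$ is de Rham, then $D_\dR(T)$ is a vector bundle on $X_\eta$ of rank $\mathrm{rank}_{\mathbb{Z}_p}(T)$, and by the pullback compatibility in \Cref{thm: LZ}.\ref{thm: LZ pullback} applied to the map $x \into X_\eta$, the fiber $x^* D_\dR(T)$ is canonically identified with $D_\dR(T|_x)$. Its $K(x)$-dimension is then $\mathrm{rank}_{\mathbb{Z}_p}(T)$, so by \Cref{def dR} the restriction $T|_x$ is de Rham.

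For the ``if'' direction, the starting point is that $D_\dR(T)$ is, by \Cref{thm: LZ}.\ref{thm: LZ functor}, a vector bundle on the smooth rigid space $X_\eta$, so its $\sO_{X_\eta}$-rank is locally constant and, by the connectedness of $X_\eta$, is a single integer $r$. The injectivity of the linearization map $\alpha_{\dR,T}$ in \Cref{thm: LZ}.\ref{thm: LZ rank} provides the pointwise upper bound $r \leq \mathrm{rank}_{\mathbb{Z}_p}(T)$. On the other hand, the pullback compatibility again gives $x^* D_\dR(T) \simeq D_\dR(T|_x)$, and by the de Rhamness hypothesis on $T|_x$ the latter has $K(x)$-dimension exactly $\mathrm{rank}_{\mathbb{Z}_p}(T)$. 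Hence $r = \mathrm{rank}_{\mathbb{Z}_p}(T)$, and $T$ is de Rham in the sense of \Cref{def dR}.

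The only place one needs to be slightly careful is ensuring that $D_\dR(T)$ genuinely lives in $\Vect^\nabla(X_\eta)$ rather than merely in the larger category of coherent $\sO_{X_\eta}$-modules with integrable connection, i.e., that it is locally free; this is implicit in the formulation of \Cref{thm: LZ}.\ref{thm: LZ functor}, and ultimately reflects the standard fact that a coherent sheaf equipped with an integrable connection on a smooth rigid-analytic space is automatically locally free. Modulo that input, the whole argument reduces to a three-line combination of (a) the pointwise upper bound coming from the injective linearization, (b) the rigidity of vector bundle ranks on a connected base, and (c) the functoriality of $D_\dR$ evaluated at the single closed point $x$.
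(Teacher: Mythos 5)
Your proof is correct. The paper does not actually give an independent proof of this corollary---it is a restatement of the rigidity theorem of Liu--Zhu, which is already cited inside the statement of \Cref{thm: LZ} (via \cite[Thm.~1.5]{LZ17})---but your reconstruction from the structural properties (i)--(iii) of $D_\dR$ is exactly how the argument goes in Liu--Zhu's own paper: the injective linearization $\alpha_{\dR,T}$ gives the upper bound $\mathrm{rank}\, D_\dR(T) \le \mathrm{rank}\, T$, the compatibility with pullback along $x \into X_\eta$ identifies the fiber $x^* D_\dR(T)$ with $D_\dR(T|_x)$, and local freeness together with connectedness of $X_\eta$ forces the rank to be constant. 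Your remark about local freeness is also the right thing to flag: a coherent sheaf with integrable connection on a smooth rigid space is automatically a vector bundle, which is how Liu--Zhu land in $\Vect^\nabla(X_\eta)$ in the first place, so the hypothesis is genuinely satisfied rather than an extra assumption.
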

We then consider variants of the $D_\dR$ functor that can be used to test the crystallinity and the semi-stability.
Below we use the equivalences in \Cref{equiv def isoc} and \Cref{equiv def log isoc} to interpret an $F$-isocrystal as a finitely projective $R_0[1/p]$-module with integrable connection, Frobenius structure, and nilpotent endomorphism, and we recall the notations therein.

\begin{theorem}[$D_\crys$ and $D_\st$ functors]
\label{thm: D functors}
Assume the setup of a smooth affine $p$-adic formal scheme $X=\Spf(R)$ as in \Cref{conv of smooth affine}.
\begin{enumerate}[label=\upshape{(\roman*)}]
	\item\label{thm: D functors crys} There is a natural left exact tensor functor
	\[
	D_{\crys,R_0}\colon : \Loc_{\mathbb{Z}_p}(X_\eta) \longrightarrow \mathrm{Vect}^{\varphi, \nabla_{X_0}}(X_{0, \eta}),\quad T\longmapsto \bigl(T\otimes_{\mathbb{Z}_p} \OB_{\crys,R_0} \bigr)(X_\eta),
	\]
	together with a functorial injection $\alpha_{\crys,T}: D_{\crys,R_0}(T)\otimes_{R_0[1/p]} \OB_{\crys,R_0}\to T\otimes_{\mathbb{Z}_p} \OB_{\crys,R_0}$ that is compatible with Frobenius structures and connections.
	\item\label{thm: D functors log crys} There is a natural left exact tensor functor
	\[
	D_{\st,R_0}\colon \Loc_{\mathbb{Z}_p}(X_\eta) \longrightarrow \mathrm{Vect}^{\varphi, \nabla_{X_0},N}(X_{0, \eta}),\quad T\longmapsto \bigl( T\otimes_{\mathbb{Z}_p} \OB_{\st,R_0} \bigr) (X_\eta),
	\]
	together with a functorial injection $\alpha_{\st,T}: D_{\st,R_0}(T)\otimes_{R_0[1/p]} \OB_{\st,R_0}\to T\otimes_{\mathbb{Z}_p} \OB_{\st,R_0}$ that is compatible with Frobenius structures, connections and nilpotent structures.
	\item\label{thm: D functors relation} There is a functorial Frobenius-equivariant inclusion of flat connections over $R_0[1/p]$
	\[
	D_{\crys,R_0}(T) \longrightarrow D_{\st,R_0}(T),
	\]
	which identifies $D_{\crys,R_0}(T)$ as the kernel of the nilpotent operator $N_{D_{\st,R_0}(T)}$ on $D_{\st,R_0}(T)$.
    Moreover, their base extensions along $K_0\to K$ naturally embed into the flat connection $D_\dR(T)(X_\eta)$.
\end{enumerate}
\end{theorem}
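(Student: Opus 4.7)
The plan is to define the functors directly via the given formulas and then establish the required structural properties by transferring information from the Liu--Zhu de Rham Riemann--Hilbert functor.

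First, I would set
\[
D_{\crys,R_0}(T) \colonequals \bigl(T\otimes_{\IZ_p} \OB_{\crys,R_0} \bigr)(X_\eta), \qquad D_{\st,R_0}(T) \colonequals \bigl(T\otimes_{\IZ_p} \OB_{\st,R_0} \bigr)(X_\eta),
\]
understood as global sections over $X_{\eta,\et}$ of the pushforwards along $\nu$. By \cref{thm: coh of period sheaves}, $\nu_*\OB_{\crys,R_0} \simeq \nu_*\OB_{\st,R_0} \simeq \mathcal{O}_{X_{0,\eta}}$, so these are naturally modules over $R_0[1/p] = \mathcal{O}_{X_{0,\eta}}(X_{0,\eta})$. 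The connection $\nabla$, Frobenius $\varphi$, and (for st) monodromy $N$ on $\OB_{?,R_0}$ induce corresponding structures on $D_{?,R_0}(T)$ satisfying the required compatibilities (horizontality of $\varphi$ and $N$ with respect to $\nabla$, and the relation $N\varphi = p\varphi N$), thanks to \cref{cor: property of OAst}. Functoriality in $T$, left-exactness (from flatness of $\OB_{?,R_0}$ over $\IZ_p$ plus left-exactness of $\nu_*$), and the tensor structure (from the ring structure on $\OB_{?,R_0}$) follow formally.

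Next, I would construct the linearization $\alpha_{?,T}$ by starting from the canonical inclusion $D_{?,R_0}(T) \subseteq (T \otimes \OB_{?,R_0})(X_\eta)$ and extending $\OB_{?,R_0}$-linearly (equivalently, via the adjunction $\nu^*\nu_* \Rightarrow \mathrm{id}$ combined with $\nu_*\OB_{?,R_0} = \mathcal{O}_{X_{0,\eta}}$). Compatibility with the additional structures is automatic from the construction. The essential content is to show injectivity of $\alpha_{?,T}$, as well as finite generation and projectivity of $D_{?,R_0}(T)$ over $R_0[1/p]$. The strategy is to exploit the chain of inclusions $\OB_{\crys,R_0} \into \OB_{\st,R_0} \into \OB_{\st,R_0}\otimes_W \mathcal{O}_K \into \OB_\dR$ from \cref{prop: relation of all period sheaves}; tensoring with $T$ and applying $\nu_*$, one obtains a natural injection $D_{?,R_0}(T)\otimes_{K_0} K \into D_\dR(T)(X_\eta)$ that fits into a commutative diagram
\[
\begin{tikzcd}[column sep=small]
D_{?,R_0}(T) \otimes_{R_0[1/p]} \OB_{?,R_0} \arrow[r, "\alpha_{?,T}"] \arrow[d] & T \otimes_{\IZ_p} \OB_{?,R_0} \arrow[d, hook] \\
D_\dR(T)\otimes_{\sO_{X_\eta}} \OB_\dR \arrow[r, "\alpha_{\dR,T}"', hook] & T\otimes_{\IZ_p} \OB_\dR.
\end{tikzcd}
\]
The injectivity of $\alpha_{\dR,T}$ (\cref{thm: LZ}.\ref{thm: LZ rank}) combined with the right vertical injection forces $\alpha_{?,T}\otimes_{K_0} K$ to be injective, hence $\alpha_{?,T}$ itself by flatness of $K/K_0$. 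Coherence of $D_\dR(T)$ (\cref{thm: LZ}.\ref{thm: LZ pullback}) then bounds $D_{?,R_0}(T) \otimes_{K_0} K$ by a finitely generated $R[1/p]$-module, which descends to finite generation of $D_{?,R_0}(T)$ over $R_0[1/p]$ along the finite free extension $R_0[1/p] \to R[1/p]$.

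For (iii), the Frobenius-equivariant inclusion $D_{\crys,R_0}(T) \into D_{\st,R_0}(T)$ comes tautologically from $\OB_{\crys,R_0} \into \OB_{\st,R_0}$. By \cref{cor: property of OAst}, $\OB_{\st,R_0} = \OB_{\crys,R_0}[\log(u/[\pi^\flat])]$ with $N$ acting as $d/d\log(u/[\pi^\flat])$, so $\OB_{\st,R_0}^{N=0} = \OB_{\crys,R_0}$; tensoring with $T$ and applying $\nu_*$ then gives $D_{\crys,R_0}(T) = \ker(N_{D_{\st,R_0}(T)})$, using left-exactness. The embedding $D_{\crys,R_0}(T)\otimes_{K_0} K \into D_{\st,R_0}(T)\otimes_{K_0} K \into D_\dR(T)(X_\eta)$ is the one already used for injectivity. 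The main obstacle in this plan is establishing the \emph{projectivity} of $D_{?,R_0}(T)$: finite generation and injectivity of $\alpha_{?,T}$ follow cleanly from the $\OB_\dR$ comparison, but proving local freeness requires supplementing the comparison argument with an étale-local trivialization of $T$ on a pro-étale cover, together with the explicit product descriptions of the period sheaves in \cref{prop: Acrys and OAcrys}.\ref{prop: Acrys and OAcrys explicit} and \cref{cor: property of OAst}.\ref{cor: property of OAst explicit}.
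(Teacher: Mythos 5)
Your plan follows the paper's structure closely---the defining formulas via $\nu_*$, the chain of period-sheaf inclusions from \Cref{prop: relation of all period sheaves}, and the comparison square against $\alpha_{\dR,T}$---but the injectivity argument has a genuine gap, and the ``obstacle'' you flag at the end is implicated in it. In the commutative square you claim that injectivity of $\alpha_{\dR,T}$ together with the right vertical injection forces $\alpha_{*,T}$ to be injective. This does not follow: a square with injective bottom and right arrows can still have a non-injective top arrow unless the \emph{left} vertical map $D_{*,R_0}(T)\otimes_{R_0[1/p]}\OB_{*,R_0}\to D_\dR(T)\otimes_{\sO_{X_\eta}}\OB_\dR$ is also shown to be injective. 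The paper proves this by factoring it through $D_{*,R_0}(T)\otimes_{R_0[1/p]}\OB_\dR$; the first factor is injective precisely because $D_{*,R_0}(T)$ is locally free (so tensoring with $\OB_{*,R_0}\into\OB_\dR$ preserves injectivity), and the second because $D_{*,R_0}(T)\otimes_{K_0}K\into D_\dR(T)$ and $\OB_\dR$ is $\sO_{X_\eta}$-flat. So local freeness is not a clean-up item to revisit later but a prerequisite for the very injectivity you want, and your proposed ordering of the argument is circular.

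The local freeness itself, moreover, needs no pro-\'etale trivialization. Once you know $D_{*,R_0}(T)$ is a finitely generated $R_0[1/p]$-module---it is a submodule of the finite module $D_\dR(T)$ over the Noetherian ring $R_0[1/p]$---and carries the integrable connection $\nabla$, local freeness is automatic: a coherent sheaf with integrable connection on a smooth rigid space over a characteristic-zero field is a vector bundle, and $\Spa(R_0[1/p])$ is smooth over $K_0$. This one-line observation, which the paper uses, dissolves the obstacle and must come before the injectivity argument rather than after it. Finally, a smaller omission: to land in $\Vect^{\varphi,\nabla_{X_0}}(X_{0,\eta})$ you must verify that the Frobenius linearization $\varphi^*_{X_0}D_{\crys,R_0}(T)\to D_{\crys,R_0}(T)$ is an isomorphism, not merely an endomorphism; given local freeness, this follows because an injective horizontal morphism between flat vector bundles of the same rank is necessarily an isomorphism.
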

\begin{remark}
	Before we discuss the proof, we warn the reader that for general $p$-adic local system $T$, unlike $D_\dR(T)$, which is a \emph{coherent sheaf} over the rigid space $X_\eta$, neither $D_{\crys,R_0}(T)$ or $D_{\st,R_0}(T)$ do not glue to a coherent sheaf in general. 
	However, we will see in \Cref{thm:pullback for D functors} that the ranks of $D_{\crys,R_0}(T)$ and $D_{\st,R_0}(T)$ will not decrease when restricted onto an open subspace, and both $D_{\crys,R_0}(T)$ and $D_{\st,R_0}(T)$ are analytic local if their ranks are equal to $\mathrm{rank}_{\mathbb{Z}_p}(T)$.
\end{remark}

\begin{proof}
Since $D_\dR(T)$ is a coherent sheaf and $X$ is assumed to be affine, below we do not distinguish between the sheaf $D_\dR(T)$ with its module of global sections $D_\dR(T)(X_\eta)$.

	We start with Part \ref{thm: D functors crys}.
	First we notice that there is a map $\nabla:D_{\crys,R_0}(T)\to D_{\crys,R_0}(T)\otimes_{R_0[1/p]} \Omega^1_{R_0[1/p]/K_0}$ defined by 
    $\nu_*\bigl( T\otimes_{\mathbb{Z}_p} \OB_{\crys,R_0} \to T\otimes_{\mathbb{Z}_p} \OB_{\crys,R_0}\otimes_{R_0[1/p]}\Omega^1_{R_0[1/p]/K_0} \bigr)(X_\eta)$.
    Indeed, since the coherent module $\Omega^1_{R_0[1/p]/K_0}$ over $R_0[1/p]$ is locally free, it follows from \Cref{thm: coh of period sheaves}.\ref{thm: coh of period sheaves crys} that there is a natural isomorphism 
    \begin{equation}
        \nu_*\bigl( T\otimes_{\mathbb{Z}_p} \OB_{\crys,R_0}\otimes_{R_0[1/p]}\Omega^1_{R_0[1/p]/K_0} \bigr)(X_\eta) \simeq D_{\crys,R_0}(T)\otimes_{R_0[1/p]} \Omega^1_{R_0[1/p]/K_0}.
    \end{equation}
	So we get a $K_0$-linear continuous map $\nabla:D_{\crys,R_0}(T)\to D_{\crys,R_0}(T)\otimes_{R_0[1/p]} \Omega^1_{R_0[1/p]/K_0}$.
    
	By the flatness of $T$ over $\mathbb{Z}_p$, the construction in \Cref{thm: LZ}.\ref{thm: LZ functor}, and the fact that the injection $\OB_{\crys,R_0}\to \OB_\dR$ is compatible with connections (\Cref{prop: relation of all period sheaves}), we see the map $\nabla$ is a $K_0$-linear integrable connection and $D_{\crys,R_0}(T)$ is a $R_0[1/p]$-submodule of $D_\dR(T)$ where the connections are compatible.
	Hence by the finiteness of $D_\dR(T)$ over $R_0[1/p]$, the submodule together with the integrable connection $(D_{\crys,R_0}(T), \nabla)$ is a flat connection over $R_0[1/p]$. 
    Note that since $\Spa(R_0[1/p])$ is a smooth rigid space over $K_0$, $D_{\crys, R_0}(T)$ is necessarily locally free, i.e., is a vector bundle.
	
	In addition, the Frobenius endomorphism of $\OB_{\crys,R_0}$ induces on $D_{\crys,R_0}(T) \colonequals \bigl( \nu_*(T\otimes_{\mathbb{Z}_p} \OB_{\crys,R_0})\bigr) (X_\eta)$ a $\varphi_{R_0}$-linear endomorphism $\wt{\varphi}_{D_{\crys, R_0}(T)}$, and we denote the linearization of which by $\varphi_{D_{\crys, R_0}(T)} : \varphi_{R_0}^* D_{\crys, R_0}(T) \to D_{\crys, R_0}(T)$. 
	As the Frobenius structure of  $\OB_{\crys,R_0}$ is compatible with its connection, we see the linearized Frobenius structure $\varphi_{D_{\crys,R_0}(T)}$ commutes with the connection of $D_{\crys,R_0}(T)$ as well. 
 Note that since $\varphi_{D_{\crys,R_0}(T)}$  is an injective horizontal morphism between two flat vector bundles of the same rank, it must be an isomorphism. 
    Hence the data $(D_{\crys,R_0}(T), \nabla, \varphi_{D_{\crys,R_0}(T)})$ defines an object in $\mathrm{Vect}^{\varphi, \nabla_{X_0}}(X_{0, \eta})$.
	Here we also remark that the construction of $D_{\crys,R_0}(T)$ together with its additional structures is functorial in $T$.
	
	By taking adjunction with respect to the map of ringed sites $\nu$,
 we get a natural diagram of sheaves over $X_{\eta,\pe}$, which is functorial with respect to $T$:
	\[
	\begin{tikzcd}
			D_{\crys,R_0}(T)\otimes_{R_0[1/p]} \OB_{\crys,R_0} \arrow[r, "\alpha_{\crys,T}"] \ar[d] & T\otimes_{\mathbb{Z}_p} \OB_{\crys,R_0} \ar[d]\\
			D_\dR(T)\otimes_{R[1/p]} \OB_\dR \arrow[r,"\alpha_{\dR,T}"] & T\otimes_{\mathbb{Z}_p} \OB_\dR.
	\end{tikzcd}
    \]
    So to show the injectiveness of $\alpha_{\crys,T}$, it suffices to show the other three arrows are injective.
    The right vertical map is an injection by \Cref{prop: relation of all period sheaves} and the flatness of $T$ over $\mathbb{Z}_p$.
    On the other hand, the injectiveness of $\alpha_{\dR,T}\colon D_\dR(T)\otimes_R \OB_\dR \to T\otimes_{\mathbb{Z}_p} \OB_\dR$ follows from \Cref{thm: LZ}.\ref{thm: LZ rank}.
    Finally, the left vertical map $D_{\crys,R_0}(T)\otimes_{R_0[1/p]} \OB_{\crys,R_0}\to D_\dR(T)\otimes_R \OB_\dR$ can be factored as 
    \[
    D_{\crys,R_0}(T)\otimes_{R_0[1/p]} \OB_{\crys,R_0}\longrightarrow D_{\crys,R_0}(T)\otimes_{R_0[1/p]} \OB_\dR  \longrightarrow  D_\dR(T)\otimes_{R[1/p]} \OB_\dR,
    \]
    where the first map is injective by the local freeness of $D_{\crys,R_0}(T)$ over $\mathcal{O}_{X_{0,\eta}}$ and \Cref{prop: relation of all period sheaves}, and the second map is injective since $D_{\crys,R_0}(T)\otimes_{K_0} K\to D_\dR(T)$ is an injective and $\OB_\dR$ is flat over $\mathcal{O}_{X_\eta}$.

	For Part \ref{thm: D functors log crys}, the $R_0[1/p]$-module $D_{\st,R_0}(T) \colonequals \nu_*(T\otimes_{\mathbb{Z}_p} \OB_{\st,R_0})$ is equipped with a Frobenius structure $\varphi_{D_{\st,R_0}(T)}$ by that of $\OB_{\crys,R_0}$, together with a map commuting with $\varphi_{D_{\st,R_0}(T)}$:
	\[
	\nabla \colonequals \nu_*\bigl( T\otimes_{\mathbb{Z}_p} \OB_{\st,R_0} \to T\otimes_{\mathbb{Z}_p} \OB_{\st,R_0}\otimes_{R_0[1/p]}\Omega^1_{R_0[1/p]/K_0} \bigr).
	\]
	By the same reasoning as in \ref{thm: D functors crys}, the last map above is identified with a natural $K_0$-linear flat connection $\nabla: D_{\st,R_0}(T) \to  D_{\st,R_0}(T)\otimes_{R_0[1/p]}\Omega^1_{R_0[1/p]/K_0}$ that commutes with $\varphi_{D_{\st,R_0}(T)}$.
	Moreover, the $\OB_{\crys,R_0}$-linear monodromy operator $N_{\OB_{\st,R_0}}:\OB_{\st,R_0}\to \OB_{\st,R_0}$ induces a $\nu_*\OB_{\crys,R_0} = R_0[1/p]$-linear nilpotent
 \footnote{Here the endomorphism is nilpotent because $D_{\st,R_0}(T)$ is finitely generated over $R_0[1/p]$ and $N_{\OB_{\st,R_0}}$ acts nilpotently on any section.}
 endomorphism $N_{D_{\st,R_0}(T)}$ on $D_{\st,R_0}(T)$, which commutes with the connection and the Frobenius structure.
	So we get an object in $\mathrm{Vect}^{\varphi, \nabla_{X_0},N}(X_{0, \eta})$, which by construction is functorial with respect to the local system $T$.
	Furthermore, similar to the proof for $D_{\crys,R_0}(T)$ above, the canonical map $\alpha_{\st,T}:D_{\st,R_0}(T)\otimes_{R_0[1/p]} \OB_{\st,R_0}\to T\otimes_{\mathbb{Z}_p} \OB_{\st,R_0}$ is an injection.

    Finally, we prove \ref{thm: D functors relation}. Since $T$ is locally free, the functor $T \tensor -$ is exact. As the functor $\nu_*$ is left exact, the exact sequence (cf. \cref{prop: relation of all period sheaves})
    \[\begin{tikzcd}
	0 & {\OB_{\crys, R_0}} & {\OB_{\st, R_0}} & {\OB_{\st, R_0}}
	\arrow[from=1-1, to=1-2]
	\arrow[from=1-2, to=1-3]
	\arrow["{N_{\OB_{\st, R_0}}}", from=1-3, to=1-4]
    \end{tikzcd}\]
    gives rise to another exact sequence 
    \[\begin{tikzcd}
	0 & {D_{\crys,R_0}(T)} & {D_{\st,R_0}(T)} & {D_{\st,R_0}(T)}
	\arrow[from=1-1, to=1-2]
	\arrow[from=1-2, to=1-3]
	\arrow["{N_{D_{\st,R_0}(T)}}", from=1-3, to=1-4].
    \end{tikzcd}\]
    This proves the first statement. The second statement follows from a similar argument together with the fact that the canonical $K$-linear map $\OB_{\st,R_0}\otimes_{K_0} K \to \OB_\dR$ is injective (\Cref{prop: relation of all period sheaves}). 
\end{proof}

\subsection{Crystallinity and semi-stability via $D$-functors}
\label{sub:crys_and_st_via_D_functor}

After introducing the $D_\crys$ and the $D_\st$ functors, we show that the crystallinity and the semi-stability, in the sense of Faltings, can be translated into a rank condition on the value of the $D$-functors.
This extends the well-known formula from the case of the point to the smooth affine setting.

To facilitate the discussion of crystalline and semi-stable local systems, we need the following observations on vector bundles over $\OB_{\crys,R_0}$ and $\OB_{\st,R_0}$.

\begin{lemma}
	\label{lem:two monodromy on D times OBst}
	Assume the setup of a smooth affine $p$-adic formal scheme $X=\Spf(R)$ as in \Cref{conv of smooth affine}.
	Let $\sM_0$ be a finite projective module over $R_0[1/p]$ equipped with a $R_0[1/p]$-linear nilpotent endomorphism $N_{\sM_0}$. 
    Define $\sM := \OB_{\st,R_0}\otimes_{R_0[1/p]} {\sM_0}$ and let $N : \sM \to \sM$ be the additive map defined by $N_{\OB_{\st,R_0}} \otimes \mathrm{id}_{\sM_0} + \mathrm{id}_{\OB_{\st,R_0}} \otimes N_{\sM_0}$. 
    Then $\sM$ is generated as an $\OB_{\st, R_0}$-module by $\ker(N_\sM)$. 
\end{lemma}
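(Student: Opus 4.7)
The plan is to exhibit enough elements of $\ker(N_\sM)$ explicitly, using the nilpotence of $N_{\sM_0}$ to turn a formal exponential into a finite sum. Concretely, for each local section $m_0$ of $\sM_0 \subset \sM$ I will set
\[
\eta(m_0) \;\colonequals\; \sum_{k \geq 0} \frac{(-\logu)^k}{k!}\, N_{\sM_0}^{k}(m_0) \;\in\; \sM,
\]
which is a finite sum since $N_{\sM_0}$ is nilpotent. I expect $\eta(m_0) \in \ker(N_\sM)$: indeed, by \Cref{cor: property of OAst}, on $\OB_{\st,R_0} = \OB_{\crys,R_0}[\logu]$ the operator $N_{\OB_{\st,R_0}}$ acts as the $\OB_{\crys,R_0}$-linear derivation $\partial_{\logu}$, so applying the Leibniz rule term by term and reindexing the two resulting series (one from differentiating the $\logu^k$ factor, one from applying $1\otimes N_{\sM_0}$) produces two sums that cancel identically.

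Once $\eta(m_0) \in \ker(N_\sM)$ is established, I will deduce that $\sM_0 \subseteq \OB_{\st,R_0}\cdot \ker(N_\sM)$ by induction on the smallest $r$ with $N_{\sM_0}^{r}(m_0)=0$. The base $r=1$ is immediate since then $\eta(m_0)=m_0$. For the inductive step, the identity
\[
m_0 \;=\; \eta(m_0) \;-\; \sum_{k \geq 1} \frac{(-\logu)^k}{k!}\, N_{\sM_0}^{k}(m_0)
\]
writes $m_0$ as the sum of $\eta(m_0)\in \ker(N_\sM)$ and $\OB_{\st,R_0}$-multiples of the elements $N_{\sM_0}^{k}(m_0)\in \sM_0$, each of which has strictly smaller nilpotence degree and so lies in $\OB_{\st,R_0}\cdot \ker(N_\sM)$ by the inductive hypothesis. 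Since $\sM = \OB_{\st,R_0}\cdot \sM_0$ by the definition of $\sM$ as a tensor product, this yields $\sM = \OB_{\st,R_0}\cdot \ker(N_\sM)$.

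There is no serious obstacle here; the only point requiring care is that $N_{\sM_0}$ extends only as an $\OB_{\st,R_0}$-linear operator $1 \otimes N_{\sM_0}$, while $N_{\OB_{\st,R_0}}\otimes 1$ is merely $\OB_{\crys,R_0}$-linear, so the cancellation in the verification of $N_\sM(\eta(m_0))=0$ takes place after one commutes $1\otimes N_{\sM_0}$ past the scalar $\logu$, which is permitted precisely because $1\otimes N_{\sM_0}$ is $\OB_{\st,R_0}$-linear.
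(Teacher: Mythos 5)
Your proposal is correct and follows essentially the same route as the paper: your $\eta(m_0)$ is exactly the paper's $\psi$ restricted to $\sM_0$, and your induction on the nilpotence degree of $m_0$ mirrors the paper's induction on $n(x_0)$ for $x_0 \in \sM_1 = \OB_{\crys,R_0}\otimes\sM_0$. The only (inessential) streamlining is that you check $\eta(m_0)\in\ker(N_\sM)$ directly rather than first establishing the full equality $\ker(N) = \mathrm{im}(\psi)$, and you generate over $\sM_0$ rather than $\sM_1$, which dispenses with the explicit decomposition $\sM = \bigoplus_i \sM_1\,\logu^i$.
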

\begin{proof}
	To simplify the notations, we let $N_1$ be the endomorphism $\mathrm{id}_{\OB_{\crys,R_0}} \otimes N_{\sM_0}$ on $\sM_1 := \OB_{\crys, R_0} \tensor_{R_0[1/p]} \sM_0$. 
	Then by \Cref{cor: property of OAst}, pro-\'etale locally the sheaf of ring $\OB_{\st,R_0}$ is naturally identified with the polynomial ring of one variable $\OB_{\crys,R_0}[\logu]$ over $\OB_{\crys,R_0}$. 
    This allows us to write any element $x\in \sM$ as a sum $\sum_{i=0}^n x_i \logu^i$, where $x_i \in \sM_1$.
    Choose $m \gg 0$ such that $N_{\sM_0}^m = 0$. 
    Since $N_{\OB_{\st,R_0}}$ kills $\OB_{\crys,R_0}$ and sends $\logu^i$ onto $i \logu^{i-1}$, by solving equations explicitly we find 
    \begin{equation}
        \label{eqn: kernel of N explicit}
        \ker(N) = \mathrm{im}(\psi : \sM_1 \to \sM), \textrm{ where } \psi : x_0 \mapsto \sum_{i=0}^m \frac{(-N_1)^i(x_0)}{i!} \logu^i . 
    \end{equation}
	
	Now we let $\sM'$ be the $\OB_{\st,R_0}$-submodule of $\sM$ generated by $\ker(N)$.
	To show the equality $\sM' = \sM$, it suffices to show that any $x_0\in \sM_1$  in contained in $\sM'$.
	We prove this by induction on the minimal number $n(x_0) \in \mathbb{N}$ such that $N_1^{n(x_0)+1}(x_0)=0$.
	If $n(x_0) =0$, then by definition $x_0$ is in $\sM'$.
    For general $x_0 \in \sM_1$, we have $\psi(x_0) \in \sM'$ and 
    $$ x_0 = \psi(x_0) - \sum_{i=1}^m \frac{(-N_1)^i(x_0)}{i!} \logu^i. $$
    Notice that $n(N_1^i (x_0)) < n(x_0)$ for each $i \ge 1$.
    So by induction, $N_1^i(x_0) \logu^{i} \in \sM'$ for each $i \ge 1$. Hence the above equality implies that $x_0 \in \sM'$. 
\end{proof}

\begin{proposition}
	\label{prop:OB vb from F-iso}
	Assume the setup of a smooth affine $p$-adic formal scheme $X=\Spf(R)$ as in \Cref{conv of smooth affine}. Let $(\mathcal{E},\varphi_\mathcal{E})$ be an $F$-isocrystal over $(X_k, (0^\IN)^a)_\lcrys$ and $(\sM_0, \nabla_{\sM_0}, \varphi_{\sM_0}, N_{\sM_0})$ be the associated object in $\Vect^{\varphi, \nabla_{R_0}, N}(X_{0, \eta})$ provided by \Cref{equiv def log isoc}. 
	\begin{enumerate}[label=\upshape{(\roman*)}]
		\item\label{prop:OB vb from F-iso crys} There is a natural isomorphism of the following $\OB_{\crys,R_0}$-vector bundles with connection and Frobenius structure
		\begin{itemize}
			\item $\bigl( \mathbb{B}_\crys(\mathcal{E})\otimes_{\mathbb{B}_\crys} \OB_{\crys,R_0},\ \mathrm{id}_{\mathbb{B}_\crys(\mathcal{E})}\otimes \nabla_{\OB_{\crys,R_0}},\ \varphi_{\mathbb{B}_\crys(\mathcal{E})}\otimes \varphi_{\OB_{\crys,R_0}} \bigr)$;
			\item $\bigl(\OB_{\crys,R_0}\otimes_{R_0[1/p]} \sM_0,\ \nabla_{\OB_{\crys,R_0}}\otimes \mathrm{id}_{\sM_0} + \mathrm{id}_{\OB_{\crys,R_0}}\otimes\nabla_{\sM_0},\ \varphi_{\OB_{\crys,R_0}}\otimes \varphi_{\sM_0} \bigr)$.
		\end{itemize}
		In particular, by taking the horizontal sections, we get
		\[
		\mathbb{B}_\crys(\mathcal{E}) \simeq (\OB_{\crys,R_0}\otimes_{R_0[1/p]} \sM_0)^{\nabla=0}.
		\]
		\item\label{prop:OB vb from F-iso st} There is a natural isomorphism of the following $\OB_{\st,R_0}$-vector bundles with connection, Frobenius structure, and monodromy operator
		\begin{itemize}
			\item $\bigl( \mathbb{B}_\crys(\mathcal{E})\otimes_{\mathbb{B}_\crys} \OB_{\st,R_0},\ \mathrm{id}_{\mathbb{B}_\crys(\mathcal{E})}\otimes \nabla_{\OB_{\st,R_0}},\ \varphi_{\mathbb{B}_\crys(\mathcal{E})}\otimes \varphi_{\OB_{\st,R_0}},\ \mathrm{id}_{\mathbb{B}_\crys(\mathcal{E})} \otimes N_{\OB_{\st,R_0}} \bigr)$;
			\item $\bigl(\OB_{\st,R_0}\otimes_{R_0[1/p]} \sM_0,\ \nabla_{\OB_{\st,R_0}}\otimes \mathrm{id}_{\sM_0} + \mathrm{id}_{\OB_{\st,R_0}}\otimes\nabla_{\sM_0},\ \varphi_{\OB_{\st,R_0}}\otimes \varphi_{\sM_0},\ N_{\OB_{\st,R_0}}\otimes \mathrm{id}_{\sM_0} + \mathrm{id}_{\OB_{\st,R_0}}\otimes N_{\sM_0} \bigr).$
		\end{itemize}
		In particular, by taking the horizontal sections for the kernel of the monodromy operators, we get
		\[
		\mathbb{B}_\crys(\mathcal{E}) \simeq (\OB_{\st,R_0}\otimes_{R_0[1/p]} \sM_0)^{\nabla=0,N=0}.
		\]
	\end{enumerate}
\end{proposition}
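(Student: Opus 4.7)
The plan is to derive both isomorphisms from the crystal property of $\mathcal{E}$ applied to the PD-thickening $(\OA_{\crys, R_0}(U), \widehat{\sO}^+_{X_\eta}(U)/p)$ over each affinoid perfectoid $U = \Spa(S[1/p], S)$ above $X_{\eta, C}$, combined with the explicit description $\OB_{\st, R_0} = \OB_{\crys, R_0}[\logu]$ of \Cref{cor: property of OAst}.\ref{cor: property of OAst explicit}. For part \ref{prop:OB vb from F-iso crys}, I first extend $\mathcal{E}$ from $X_{k, \crys}$ to $X_{p=0, \crys}$ via \Cref{Dwork's trick}.\ref{Dwork's trick crys}. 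The thickening $(\OA_{\crys, R_0}(U), \widehat{\sO}^+_{X_\eta}(U)/p)$ then receives canonical morphisms from both the ``Breuil--Kisin side'' $(R_0, X_s)$, whose evaluation gives $\sM_0$ after inverting $p$, and the ``perfectoid side'' $(\rAcrys(S), S/p)$, whose evaluation gives $\mathbb{B}_\crys(\mathcal{E})(U)$. The crystal property supplies canonical isomorphisms
\begin{equation*}
\sM_0 \otimes_{R_0[1/p]} \OB_{\crys, R_0}(U) \simeq \mathcal{E}(\OA_{\crys, R_0}(U))[1/p] \simeq \mathbb{B}_\crys(\mathcal{E})(U) \otimes_{\mathbb{B}_\crys(U)} \OB_{\crys, R_0}(U),
\end{equation*}
which after sheafification produce the desired identification. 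Compatibility with $\varphi$ is automatic from the $F$-isocrystal structure, and compatibility with $\nabla$ follows from the crystal-to-connection dictionary, since both sides of connections arise from comparing values of $\mathcal{E}$ on PD-envelopes of diagonals. Imposing $\nabla = 0$ and invoking $\OB_{\crys, R_0}^{\nabla = 0} = \mathbb{B}_\crys$ (\Cref{prop: Acrys and OAcrys}.\ref{prop: Acrys and OAcrys conn}) yields the last statement.

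For part \ref{prop:OB vb from F-iso st}, I apply part \ref{prop:OB vb from F-iso crys} to the underlying non-log $F$-isocrystal $\mathsf{F}(\mathcal{E})$ on $X_{s, \crys}$---by \ref{rmk: Bcrys and forgetful functor} we have $\mathbb{B}_\crys(\mathcal{E}) = \mathbb{B}_\crys(\mathsf{F}(\mathcal{E}))$, and the underlying module of $\sM_0$ is unchanged---and then base change along $\OB_{\crys, R_0} \into \OB_{\st, R_0}$. This produces an $\OB_{\st, R_0}$-linear, $\varphi$- and $\nabla$-compatible isomorphism $\widetilde{\phi}_0$. Under $\widetilde{\phi}_0$, however, the left-hand monodromy $\mathrm{id} \otimes N_{\OB_{\st, R_0}}$ corresponds to $N_{\OB_{\st, R_0}} \otimes \mathrm{id}_{\sM_0}$ on the right (since $N_{\OB_{\st, R_0}}$ vanishes on $\OB_{\crys, R_0}$), which misses the desired operator by the $\mathrm{id} \otimes N_{\sM_0}$ piece. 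To absorb this discrepancy, I introduce the $\OB_{\st, R_0}$-linear automorphism
\begin{equation*}
\psi(\omega \otimes m) \colonequals \sum_{k \geq 0} \frac{\logu^k}{k!} \cdot \omega \otimes N_{\sM_0}^k(m),
\end{equation*}
which is a finite sum thanks to the nilpotence of $N_{\sM_0}$, with inverse obtained by replacing $\logu$ with $-\logu$. A Leibniz computation using $N_{\OB_{\st, R_0}}(\logu) = 1$ and $N_{\OB_{\st, R_0}}|_{\OB_{\crys, R_0}} = 0$ yields
\begin{equation*}
(N_{\OB_{\st, R_0}} \otimes \mathrm{id}) \circ \psi = \psi \circ \bigl( N_{\OB_{\st, R_0}} \otimes \mathrm{id} + \mathrm{id} \otimes N_{\sM_0} \bigr),
\end{equation*}
so $\phi \colonequals \psi \circ \widetilde{\phi}_0$ intertwines the two monodromies. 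The horizontality of $\logu$ in $\OB_{\st, R_0}$ gives $\nabla$-equivariance of $\psi$. For Frobenius compatibility, the identities $\varphi_{\OB_{\st, R_0}}(\logu) = p \cdot \logu$ and $N_{\sM_0} \tilde{\varphi}_{\sM_0} = p \tilde{\varphi}_{\sM_0} N_{\sM_0}$ (iterated to $\tilde\varphi_{\sM_0} \circ N_{\sM_0}^k = p^{-k} N_{\sM_0}^k \circ \tilde\varphi_{\sM_0}$) cancel the $p^k$-factors arising from $\varphi(\logu^k) = p^k \logu^k$. Finally, taking $\nabla = 0$ collapses the right-hand side to $\mathbb{B}_\crys(\mathcal{E}) \otimes_{\mathbb{B}_\crys} \mathbb{B}_\st$ via $\OB_{\st, R_0}^{\nabla = 0} = \mathbb{B}_\st$, and imposing $N = 0$ on the $\mathbb{B}_\st$-factor then gives $\mathbb{B}_\crys(\mathcal{E})$ via $\mathbb{B}_\st^{N = 0} = \mathbb{B}_\crys$.

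The essential, and only genuinely nontrivial, step is the construction of the twist $\psi$: after the naive base change from part \ref{prop:OB vb from F-iso crys}, the perfectoid-side monodromy only sees the $\OB_{\st, R_0}$-factor and is blind to the $N_{\sM_0}$-datum encoded in the log structure of $\mathcal{E}$. The precise form of $\psi$ is forced by the axiom $N \varphi = p \varphi N$ and mirrors the role of $\log(u/[\pi^\flat])$ in classical Fontaine theory over a point; all other verifications amount to routine bookkeeping with the crystal property and the explicit decomposition $\OB_{\st, R_0} = \OB_{\crys, R_0}[\logu]$.
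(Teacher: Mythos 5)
Your overall strategy is correct and differs from the paper's in a way worth noting. The paper handles part (ii) by first citing the log-crystalline analogue of \cite[Prop.~2.36]{GR22} (or \Cref{equiv def log isoc}) to produce the underlying $\OB_{\st,R_0}$-linear isomorphism, and then invokes the abstract description $\ker(N) = \mathrm{im}(\psi)$ from \Cref{lem:two monodromy on D times OBst} to identify the monodromy operators, arguing that the natural (log-crystal) evaluation map sends $\IB_\crys(\sE)$ into $\ker(N)$ and that this determines the operator uniquely. You instead start from the \emph{non-log} base change $\widetilde{\phi}_0$, note it matches only the $N_{\OB_{\st,R_0}} \otimes \mathrm{id}_{\sM_0}$ piece, and make the correction explicit via a concrete unipotent automorphism $\psi$. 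This is a valid alternative: it avoids any appeal to the log-crystal interpretation and makes the twist fully transparent, at the cost of repeating (a variant of) the computation already performed inside the proof of \Cref{lem:two monodromy on D times OBst}, whose $\psi$ is the same operator up to sign.

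That said, your composition has a sign (or direction) slip. From the Leibniz identity you correctly derive
\[
(N_{\OB_{\st, R_0}} \otimes \mathrm{id}) \circ \psi = \psi \circ \bigl( N_{\OB_{\st, R_0}} \otimes \mathrm{id} + \mathrm{id} \otimes N_{\sM_0} \bigr),
\]
equivalently $\psi^{-1}(N_{\OB_{\st,R_0}} \otimes \mathrm{id}) = N \circ \psi^{-1}$ where $N$ is the full target monodromy. Combining with $\widetilde{\phi}_0 \circ (\mathrm{id} \otimes N_{\OB_{\st,R_0}}) = (N_{\OB_{\st,R_0}} \otimes \mathrm{id}) \circ \widetilde{\phi}_0$, the composite that intertwines $\mathrm{id}_{\IB_\crys(\sE)}\otimes N_{\OB_{\st,R_0}}$ with $N$ is $\phi \colonequals \psi^{-1} \circ \widetilde{\phi}_0$, \emph{not} $\psi \circ \widetilde{\phi}_0$. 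Equivalently you should define $\psi$ with alternating signs, $\psi(\omega \otimes m) = \sum_k \frac{(-\logu)^k}{k!}\,\omega \otimes N_{\sM_0}^k(m)$, which is precisely the $(-1)^i$ appearing in the section $\psi$ used in the paper's proof of \Cref{lem:two monodromy on D times OBst}. With this correction the Frobenius and connection verifications go through unchanged, and the argument is complete.
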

\begin{proof}
	When the $F$-isocrystal comes from $X_{k,\crys}$ (i.e., when $N_{\sM_0} = 0$), Part \ref{prop:OB vb from F-iso crys} was shown in the proof of \cite[Prop.~2.36.(i)]{GR22}, replacing the ring $R$ in loc. cit. by our $R_0$ (which is in particular smooth over $W$).
	Part \ref{prop:OB vb from F-iso st} follows by taking the base change along $\OB_{\crys,R_0}\to \OB_{\st,R_0}$.
	For $F$-isocrystal over $(X_k,M_{X_k})_\lcrys$, except for the monodromy operators, the proof for Part \ref{prop:OB vb from F-iso crys} follows from the equivalent description of the log $F$-isocrystals in \Cref{equiv def log isoc}.
 Alternatively, one can also prove it similarly to \cite[Prop.\ 2.36.(i)]{GR22}, with the only difference being that we regard Equation (6) in loc.\ cit.  as a diagram of log divided power thickenings: for $D(1)$ and $R$ their log structures are defined by the monoid $p^\mathbb{N}$, and for $\mathbb{A}_\crys$ and $\OA_{\crys,R_0}$ the log structures are defined in \Cref{def: horizontal period sheaves} and \Cref{def OAcrys} respectively.
	To see the identification of the two monodromy operators, we know from \Cref{lem:two monodromy on D times OBst} that $\OB_{\st,R_0}\otimes_{R_0[1/p]} \sM_0$ is generated by the kernel of the nilpotent endomorphism $N=N_{\OB_{\st,R_0}}\otimes \mathrm{id}_{\sM_0} + \mathrm{id}_{\OB_{\st,R_0}}\otimes N_{\sM_0}$ on  $\OB_{\st,R_0}\otimes_{R_0[1/p]} \sM_0$.
	In particular, since $\mathbb{B}_\crys(\mathcal{E})$ is sent into $\ker(N)$ under the natural morphism $\mathbb{B}_\crys(\mathcal{E})\to \OB_{\st,R_0}\otimes_{R_0[1/p]} \sM_0$, the restriction of $N$ on the $\OB_{\st,R_0}$-submodule generated by $\mathbb{B}_\crys(\mathcal{E})$ is exactly $\mathrm{id}_{\mathbb{B}_\crys(\mathcal{E})}\otimes N_{\OB_{\st,R_0}}$.
\end{proof}

As a consequence of \Cref{prop:OB vb from F-iso}, there is a natural commutative diagram of categories as below (the categories $\Vect^\varphi(\mathbb{B}_\crys)$ and $\Vect^{\varphi,\nabla_{R_0},N}(\OB_{\st,R_0})$ are defined in the obvious way):
\begin{equation}
	\label{eq:diag_of_vb_over_OBst}
	\begin{tikzcd}
		\Vect^\varphi(\mathbb{B}_\crys) \arrow[rr, "-\otimes_{\mathbb{B}_\crys} \OB_{\st,R_0}"] && \Vect^{\varphi,\nabla_{R_0},N}(\OB_{\st,R_0}) \\
		\Isoc^\varphi((X_k, (0^\mathbb{N})^a)_\lcrys) \arrow[u, "\mathbb{B}_\crys(-)"] \arrow[rr, "\sim", "\textrm{\Cref{equiv def log isoc}}"'] && \Vect^{\varphi, \nabla_{R_0},N}(R_0[1/p]), \arrow[u, "-\otimes_{R_0[1/p]} \OB_{\st,R_0}"']
	\end{tikzcd}
\end{equation}
where the left column does not depend on the choice of the unramified model $R_0$, and the rest of the diagram depends on the choice of $R_0$ together with the isomorphism $R_0[1/p]\otimes_{K_0} K\simeq R$.

In the following, we let $\Vect^\varphi(\mathbb{B}_\crys)^\circ$ and $\Vect^{\varphi,\nabla_{R_0},N}(\OB_{\st,R_0})^\circ$) be the full subcategories of $\Vect^\varphi(\mathbb{B}_\crys)$ and $\Vect^{\varphi,\nabla_{R_0},N}(\OB_{\st,R_0})$ defined by the essential image of the objects in the category $\Isoc^\varphi((X_k, (0^\mathbb{N})^a)_\lcrys)$ along the arrows in (\ref{eq:diag_of_vb_over_OBst}).
\begin{proposition}
	\label{prop:Isoc_to_Vect(Bcrys)_equiv}
	Let $X$ be a smooth $p$-adic formal scheme over $\mathcal{O}_K$.
	\begin{enumerate}[label=\upshape{(\roman*)}]
		\item\label{prop:Isoc_to_Vect(Bcrys)_equiv_1} The induced functor $\mathbb{B}_\crys(-)$ 
		\[
		\Isoc^\varphi((X_k, (0^\mathbb{N})^a)_\lcrys) \longrightarrow \Vect^\varphi(\mathbb{B}_\crys)^\circ
		\]
		is an exact equivalence.
		\item\label{prop:Isoc_to_Vect(Bcrys)_equiv_2} Assume $X$ is affine and admits an unramified model $R_0$ together with a Frobenius endomorphism.
		The tensor product functor $\mathbb{M}\mapsto \mathbb{M}\otimes_{\mathbb{B}_\crys} \OB_{\st,R_0}$ induces an exact equivalence of subcategories
		\[
		\Vect^\varphi(\mathbb{B}_\crys)^\circ \xrightarrow{\sim} \Vect^{\varphi,\nabla_{R_0},N}(\OB_{\st,R_0})^\circ.
		\]
	\end{enumerate}
\end{proposition}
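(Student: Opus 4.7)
The plan is to derive both parts from \Cref{prop:OB vb from F-iso}, which explicitly identifies $\mathbb{B}_\crys(\sE)$ with the $\nabla$- and $N$-horizontal sections of $\OB_{\st,R_0} \otimes_{R_0[1/p]} \sM_0$, combined with the coefficient calculation $\nu_* \OB_{\st,R_0} = \sO_{X_{0,\eta}}$ from \Cref{thm: coh of period sheaves}\ref{thm: coh of period sheaves st}. Essential surjectivity onto the target subcategories $\Vect^\varphi(\mathbb{B}_\crys)^\circ$ and $\Vect^{\varphi,\nabla_{R_0},N}(\OB_{\st,R_0})^\circ$ is built into their definitions via the diagram~(\ref{eq:diag_of_vb_over_OBst}), so the substance lies in full faithfulness, together with a routine exactness check.

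For part (ii), I plan to exhibit $\mathbb{N} \mapsto (\mathbb{N}^{N=0})^{\nabla=0}$ as a quasi-inverse to the base change $\mathbb{M} \mapsto \mathbb{M} \otimes_{\mathbb{B}_\crys} \OB_{\st,R_0}$. On $\mathbb{M} \otimes \OB_{\st,R_0}$ the connection and monodromy live only on the second factor, so $\ker(N_{\OB_{\st,R_0}}) = \OB_{\crys,R_0}$ (from \Cref{cor: property of OAst}) followed by $(\OB_{\crys,R_0})^{\nabla=0} = \mathbb{B}_\crys$ (from \Cref{prop: Acrys and OAcrys}\ref{prop: Acrys and OAcrys conn}) recovers $\mathbb{M}$. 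Conversely, for $\mathbb{N} \simeq \OB_{\st,R_0} \otimes_{R_0[1/p]} \sM_0$ arising from $\sE \in \Isoc^\varphi((X_k,(0^\IN)^a)_\lcrys)$, \Cref{prop:OB vb from F-iso}\ref{prop:OB vb from F-iso st} identifies $\mathbb{N}^{N=0,\nabla=0}$ with $\mathbb{B}_\crys(\sE)$, and tensoring back returns $\mathbb{N}$. For full faithfulness on morphisms, the key observation is that \Cref{lem:two monodromy on D times OBst} (and its obvious module-coefficient variant) ensures $\mathbb{B}_\crys(\sE)$ generates $\OB_{\st,R_0} \otimes \sM_0$ as an $\OB_{\st,R_0}$-module, so that any $\OB_{\st,R_0}$-linear, $\nabla$- and $N$-horizontal morphism $g$ is determined by its restriction $g|_{\mathbb{B}_\crys(\sE)} : \mathbb{B}_\crys(\sE) \to \mathbb{B}_\crys(\sE')$, which is automatically $\mathbb{B}_\crys$-linear and $\varphi$-equivariant.

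For part (i), I will first reduce to the affine case with an unramified model as in (ii) by Zariski descent, using that $\Vect^\varphi(\mathbb{B}_\crys)^\circ$ is characterized by a Zariski-local condition on $X$. Invoking \Cref{equiv def log isoc} to identify $\Isoc^\varphi((X_k,(0^\IN)^a)_\lcrys)$ with $\Vect^{\varphi, \nabla_{R_0}, N}(R_0[1/p])$ and composing the functor of (i) with that of (ii), the resulting composition is, by the isomorphism of \Cref{prop:OB vb from F-iso}\ref{prop:OB vb from F-iso st}, the base change $\sM_0 \mapsto \OB_{\st,R_0} \otimes_{R_0[1/p]} \sM_0$ from $\Vect^{\varphi, \nabla_{R_0}, N}(R_0[1/p])$ to $\Vect^{\varphi, \nabla_{R_0}, N}(\OB_{\st,R_0})^\circ$. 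Since (ii) is an equivalence, it suffices to show this base change is fully faithful. Faithfulness follows from $\Gamma(X, \nu_* \OB_{\st,R_0}) = R_0[1/p]$: if $\mathrm{id} \otimes f = 0$, applying $\nu_*$ and taking $X$-global sections forces $f = 0$. For fullness, given $g : \OB_{\st,R_0} \otimes \sM_0 \to \OB_{\st,R_0} \otimes \sM_0'$, I will set $f \colonequals \Gamma(X_\eta, \nu_* g)$ and verify that $f$ inherits the $\varphi$-equivariance and $\nabla, N$-compatibility from $g$ (the $N$-part using that $N$ vanishes on $\nu_* \OB_{\st,R_0} = \sO_{X_{0,\eta}}$), after which $g = \mathrm{id} \otimes f$ follows from $\OB_{\st,R_0}$-linearity and agreement on the generating $\mathbb{B}_\crys$-submodule $\mathbb{B}_\crys(\sE) \subset \OB_{\st,R_0} \otimes \sM_0$.

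The main subtlety I foresee is the bookkeeping needed to reconcile the two parallel presentations of the $N$-operator on $\mathbb{B}_\crys(\sE) \otimes_{\mathbb{B}_\crys} \OB_{\st,R_0} \simeq \OB_{\st,R_0} \otimes_{R_0[1/p]} \sM_0$: via the left factorization $N$ is $\mathrm{id} \otimes N_{\OB_{\st,R_0}}$, whereas via the right it is $N_{\OB_{\st,R_0}} \otimes \mathrm{id} + \mathrm{id} \otimes N_{\sM_0}$. That these are intertwined by the isomorphism is precisely the content of \Cref{prop:OB vb from F-iso}\ref{prop:OB vb from F-iso st}, and it is exactly what allows $N$-compatibility conditions to be translated consistently between the two sides. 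The exactness statements are routine: base change along $\mathbb{B}_\crys \to \OB_{\st,R_0}$ preserves short exact sequences of locally free modules, and $\mathbb{B}_\crys(-)$ is exact on locally free $F$-isocrystals (whose local freeness is supplied by \Cref{thm: log crys = crys + N}) since evaluation at the pro-log-PD-thickening $(\rAcrys(S), S/p, M_{\rAcrys(S)})$ preserves exactness after inverting $p$.
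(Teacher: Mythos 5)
Your proposal is correct and follows essentially the same route as the paper: both parts are derived from \Cref{prop:OB vb from F-iso}, \Cref{lem:two monodromy on D times OBst}, and the period-sheaf cohomology in \Cref{thm: coh of period sheaves}.\ref{thm: coh of period sheaves st}, with the commutative diagram (\ref{eq:diag_of_vb_over_OBst}) providing the skeleton. One small point in your favor: for part (ii), the paper's proof writes the quasi-inverse tersely as $\mathcal{M} \mapsto \mathcal{M}^{\nabla=0}$, which read literally would yield a $\mathbb{B}_\st$-module $\mathbb{B}_\crys(\sE)\otimes_{\mathbb{B}_\crys}\mathbb{B}_\st$ rather than $\mathbb{B}_\crys(\sE)$; your formulation $\mathcal{M} \mapsto (\mathcal{M}^{N=0})^{\nabla=0}$ is the precise version, consistent with \Cref{prop:OB vb from F-iso}.\ref{prop:OB vb from F-iso st} which explicitly takes both $\nabla=0$ and $N=0$ sections. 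One minor inaccuracy: you cite \Cref{lem:two monodromy on D times OBst} as ensuring that $\mathbb{B}_\crys(\sE)$ generates $\OB_{\st,R_0}\otimes\sM_0$, whereas the lemma directly gives only that $\ker(N)$ generates; the stronger statement you actually use (which is true) is most cleanly obtained directly from the isomorphism $\OB_{\st,R_0}\otimes_{R_0[1/p]}\sM_0 \simeq \mathbb{B}_\crys(\sE)\otimes_{\mathbb{B}_\crys}\OB_{\st,R_0}$ of \Cref{prop:OB vb from F-iso}.\ref{prop:OB vb from F-iso st}, whose right-hand side is visibly generated by $\mathbb{B}_\crys(\sE)$.
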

\begin{proof}
	For Part \ref{prop:Isoc_to_Vect(Bcrys)_equiv_2}, since the connection of an object in $\Vect^{\varphi,\nabla_{R_0},N}(\OB_{\st,R_0})^\circ$ is solvable, by \Cref{lem:two monodromy on D times OBst}, the tensor product functor $\mathbb{M}\mapsto \mathbb{M}\otimes_{\mathbb{B}_\crys} \OB_{\st,R_0}$ admits an inverse functor defined by taking the horizontal section, namely
	\[
	\mathcal{M} \longmapsto \mathcal{M}^{\nabla=0}.
	\]
	
	For Part \ref{prop:Isoc_to_Vect(Bcrys)_equiv_1}, since the functor is local with respect to the Zariski topology of $X$, to check that it is an equivalence, it suffices to assume $X$ is affine and admits an unramified model with a Frobenius endomorphism.
	Then we notice that by the projection formula of finite projective modules and by \Cref{thm: coh of period sheaves}.\ref{thm: coh of period sheaves st}, for a flat connection $\mathcal{M}_0$ over $R_0[1/p]$, the direct image functor along the map of sites $\nu: X_{\eta, \pe}\to X_{\eta,\et}$ induces a natural isomorphism of flat connections
	\[
	\mathcal{M}_0 \simeq \nu_*(\mathcal{M}_0\otimes_{R_0[1/p]} \OB_{\st,R_0}).
	\]
	This gives an inverse to the right vertical arrow in (\ref{eq:diag_of_vb_over_OBst}), namely the functor
	\[
	\Vect^{\varphi, \nabla_{R_0},N}(R_0[1/p]) \longrightarrow \Vect^{\varphi,\nabla_{R_0},N}(\OB_{\st,R_0})^\circ.
	\]
	As a consequence, the diagram (\ref{eq:diag_of_vb_over_OBst}) can be refined into the following commutative diagram of equivalences
	\begin{equation}
		\label{eq:diag_of_vb_over_OBst_2}
		\begin{tikzcd}
			\Vect^\varphi(\mathbb{B}_\crys)^\circ \arrow[rr, "-\otimes_{\mathbb{B}_\crys} \OB_{\st,R_0}","\sim"'] && \Vect^{\varphi,\nabla_{R_0},N}(\OB_{\st,R_0})^\circ  \arrow[d, "(-)(Y_\eta)", bend left=50]\\
			\Isoc^\varphi((X_k, (0^\mathbb{N})^a)_\lcrys) \arrow[u, "\mathbb{B}_\crys(-)"] \arrow[rr, "\sim", "\textrm{\Cref{equiv def log isoc}}"'] && \Vect^{\varphi, \nabla_{R_0},N}(R_0[1/p]), \arrow[u, "-\otimes_{R_0[1/p]} \OB_{\st,R_0}"]
		\end{tikzcd}
	\end{equation}
	where the two vertical arrows on the right are equivalences.
	So Part \ref{prop:Isoc_to_Vect(Bcrys)_equiv_1} follows.
\end{proof}

Another useful observation is that the maps $\alpha_{\crys,T}$ and $\alpha_{\st,T}$ are automatically isomorphic when the ranks of $F$-isocrystals are maximal, extending the well-known result in the case of a point.
\begin{theorem}
	\label{thm:rank max implies alpha is iso}
	Assume the setup of a smooth affine $p$-adic formal scheme $X=\Spf(R)$ that is small as in \Cref{conv of smooth affine}, and let $T\in \Loc_{\mathbb{Z}_p}(X_\eta)$ be of rank $d$.
	\begin{enumerate}[label=\upshape{(\roman*)}]
		\item If the rank of $D_{\crys,R_0}(T)$ is $d$, then the map $\alpha_{\crys,T}$ is an isomorphism.
		\item If the rank of $D_{\st,R_0}(T)$ is $d$, then the map $\alpha_{\st,T}$ is an isomorphism.
	\end{enumerate}
\end{theorem}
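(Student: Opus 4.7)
My plan is to reduce to the de Rham case (handled by \Cref{thm: LZ}) and then to a classical admissibility statement for the period rings. I treat both assertions in parallel; the semi-stable case requires the additional bookkeeping for the monodromy operator via \Cref{lem:two monodromy on D times OBst}.

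\emph{Step 1 (de Rham reduction).} The rank hypothesis forces $T$ to be de Rham. Indeed, \Cref{thm: D functors}.\ref{thm: D functors relation} provides a chain of $R[1/p]$-module inclusions $D_{\crys, R_0}(T) \otimes_{K_0} K \hookrightarrow D_{\st, R_0}(T) \otimes_{K_0} K \hookrightarrow D_\dR(T)$. When the domain is locally free of rank $d$, the generic rank of the coherent $R[1/p]$-module $D_\dR(T)$ (which is always bounded above by $d$) must equal $d$, so $\alpha_{\dR, T}$ is an isomorphism by \Cref{thm: LZ}.\ref{thm: LZ rank}. Fitting $\alpha_{\crys, T}$ (resp.\ $\alpha_{\st, T}$) into the obvious commutative diagram with $\alpha_{\dR, T}$, the cokernel of $\alpha_{\crys, T}$ (resp.\ $\alpha_{\st, T}$) vanishes after base change to $\OB_\dR$.

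\emph{Step 2 (reduction to a statement over $\IB_\crys$).} Via \Cref{equiv def isoc} (resp.\ \Cref{equiv def log isoc}), the flat connection $D_{\crys, R_0}(T)$ (resp.\ $D_{\st, R_0}(T)$) arises from an $F$-isocrystal $\sE$ of rank $d$. By \Cref{prop:OB vb from F-iso}, the source of $\alpha_{\crys, T}$ (resp.\ $\alpha_{\st, T}$) is canonically identified with $\IB_\crys(\sE) \otimes_{\IB_\crys} \OB_{\crys, R_0}$ (resp.\ $\IB_\crys(\sE) \otimes_{\IB_\crys} \OB_{\st, R_0}$). Applying \Cref{prop: Acrys and OAcrys}.\ref{prop: Acrys and OAcrys conn} together with the $\nabla$-horizontal section functor (and, in the $\st$ case, the additional kernel-of-monodromy operation provided by \Cref{lem:two monodromy on D times OBst}), it suffices to show that the induced $\varphi$-equivariant injection
\[
\beta : \IB_\crys(\sE) \hookrightarrow T \otimes_{\IZ_p} \IB_\crys
\]
of rank-$d$ $\IB_\crys$-locally free sheaves on $X_{\eta, \pe}$ is an isomorphism. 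Once this is established, base change back to $\OB_{\crys, R_0}$ (resp.\ $\OB_{\st, R_0}$) using the explicit local models in \Cref{prop: Acrys and OAcrys}.\ref{prop: Acrys and OAcrys explicit} and \Cref{cor: property of OAst}.\ref{cor: property of OAst explicit} recovers the original statement.

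\emph{Step 3 (admissibility; main obstacle).} By Step 1, $\beta$ becomes an isomorphism after base change to $\IB_\dR$. For an injection of locally free sheaves of the same rank, passing to the determinant reduces the problem to the rank-$1$ case. Trivializing pro-\'etale locally on the affinoid perfectoid cover $V$ from \Cref{conv of smooth affine}, the question then becomes the following concrete claim: an element $f \in \IB_\crys|_V$ satisfying $\varphi(f) = u \cdot f$ for some unit $u \in (\IB_\crys|_V)^\times$ (arising from the Frobenius actions on the determinants of $\sE$ and $T$) and invertible in $\IB_\dR|_V$, is itself invertible in $\IB_\crys|_V$. In Fontaine's classical point case this is the $(\varphi, \Gal_K)$-regularity of $\IB_\crys$; the relative version over a perfectoid $V$ can be obtained by specializing to geometric points of $V$ and sheafifying. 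The semi-stable case is handled analogously using the $(\varphi, N, \Gal_K)$-regularity of $\IB_\st$, together with the explicit polynomial structure of $\OB_{\st, R_0}$ over $\OB_{\crys, R_0}$ from \Cref{cor: property of OAst}.
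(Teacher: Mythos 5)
Your Steps 1 and 2 are essentially correct and follow the same overall strategy as the paper: reduce to the de Rham case via Liu--Zhu, pass to horizontal sections to get a $\varphi$-equivariant injection $\beta : \IB_\crys(\sE) \hookrightarrow T \otimes_{\IZ_p} \IB_\crys$ of $\IB_\crys$-bundles, and then pass to top exterior powers so that everything reduces to a rank-one statement. These reductions match the first half of the paper's proof (which works with a transition matrix $M$ and its determinant $m$ over the maximal pro-finite-\'etale cover $\tilde{X}_\eta$).

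The genuine gap is in Step 3, which is the entire crux of the theorem. You write that the relative analogue of Fontaine's $(\varphi, \Gal_K)$-regularity ``can be obtained by specializing to geometric points of $V$ and sheafifying.'' This does not work, for several reasons. First, the cover you trivialize over must be $\tilde{X}_\eta$ (the maximal pro-finite-\'etale cover, \Cref{const:Galois group of rigid space}), not the toric perfectoid cover $V$ used for \Cref{prop: Acrys and OAcrys}.\ref{prop: Acrys and OAcrys explicit}, since $T$ does not trivialize over $V$ in general. Second, and more seriously, at a geometric point $\Spa(C') \to \tilde{X}_\eta$ there is no residual Galois action, so Fontaine's point-case argument cannot even be set up there; and checking invertibility of $m$ only at rank-one (classical) specializations does not imply invertibility of $m$ in the non-noetherian ring $\IB_\crys(\tilde{X}_\eta)$. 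Indeed, the paper explicitly flags this as the obstacle in the remark following \Cref{thm:rank max implies alpha is iso}: the naive transplant of the Brinon--Conrad regularity argument would work if the category of continuous flat connections over $S_0[1/p]$ were abelian, which is unknown precisely because $S_0[1/p]$ is non-noetherian. What actually carries the proof is: (a) a Key Lemma, using the filtration on $\OB_\dR$, Hensel's lemma, Brinon's description of $\sO\IC$, and relative Sen theory (via the fact that the inertia $I_X$ cannot act through a finite quotient on a nonzero element of $\sO\IC(i)$ for $i \neq 0$), to show that $m t^r$ lands in the ``unramified'' subring $S_0[1/p]$; and (b) Liu--Zhu's $p$-adic Simpson correspondence to finish, by comparing the rank-one flat connection $D_\dR(\wedge^d T)$ with its Higgs counterpart. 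Your proposal replaces both of these with a one-sentence appeal to pointwise specialization, which is not a valid substitute.
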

Here we note that if either of the two assumptions above is true, then the $K$-linear extension of $D_{\crys,R_0}(T)$ (resp. $D_{\st,R_0}(T)$) is naturally isomorphic to $D_\dR(T)(X_\eta)$, thanks to the injectivity in \Cref{thm: D functors}.\ref{thm: D functors relation} and the fact that $D_\dR(T)$ has at most rank $d$ over $X_\eta$.

For the convenience of the discussion, we recall a construction of the absolute Galois group for rigid spaces as below.

\begin{construction}
	\label{const:Galois group of rigid space}
	Let $X = \Spf(R)$ be a connected smooth affine $p$-adic formal scheme over $\sO_K$. 
    Let $\overline{R}$ be the integral closure of $R$ in a fixed connected maximal pro-finite-\'etale extension of $R[1/p]$ and let $S = \overline{R}^\wedge$ be its $p$-completion. 
    Let $\tilde{X}$ be the formal scheme $\Spf(S)$.
	By assumption, the generic fiber $\tilde{X}_\eta=\Spa(S[1/p],S)$ is an affinoid perfectoid space and is a pro-Galois cover of $X_\eta$, and we denote by $G_{X_\eta} = \Gal(\overline{R}/R)$ the Galois group,\footnote{In \cite{Bri08} (resp. \cite{Shi22}), $G_{X_\eta}$ is denoted by $\mathcal{G}_R$ (resp. $\mathcal{G}_{R_K}$).}
 which is naturally isomorphic to the algebraic fundamental group of $X_\eta$ (cf. \Cref{const: pi_1}).
 Moreover, we let $S_0$ be the $p$-completion of the connected maximal pro-finite-\'etale extension of $R$ inside of $S$.
	Then $S_0\to R$ is a $p$-complete pro-Galois cover whose Galois group, which we denote as $G_{X_k}$.
    We let $I_X$ be the kernel of the surjection $G_{X_\eta}\to G_{X_k}$ and call it the inertial subgroup.
\end{construction}

\begin{proof}[Proof of \Cref{thm:rank max implies alpha is iso}]
	
	Following \Cref{const:Galois group of rigid space}, since the local system $T$ and the period sheaves $\OB_{\crys,R_0}$ and $\OB_{\st,R_0}$ are all trivializable when restricted onto the pro-finite-\'etale cover $\tilde{X}_\eta$, we reduce the question to showing that the natural injection of free continuous $G_{X_\eta}$-representations of the same rank over $\OB_{\crys,R_0}(\tilde{X}_\eta)$ below is an isomorphism
	\begin{equation}
 \label{eq:evaluation of D at universal cover}
		D_{\crys,R_0}(T)\otimes_{R_0[1/p]} \OB_{\crys,R_0}(\tilde{X}_\eta) \longrightarrow T(\tilde{X}_\eta)\otimes_{\mathbb{Z}_p} \OB_{\crys,R_0}(\tilde{X}_\eta);
	\end{equation}
	Similarly for $D_{\st,R_0}(T)$.
	
	We let $e_1,\ldots, e_d$ be a basis of $T(\tilde{X}_\eta)[1/p]$ over $\mathbb{Q}_p$, and let $f_1,\ldots, f_d$ be a basis of $D_{\crys,R_0}(T)$ over $R_0[1/p]$.
	Then there is a $d\times d$ matrix $M$ in $\OB_{\crys,R_0}(\tilde{X}_\eta)$ (resp. $\OB_{\st,R_0}(\tilde{X}_\eta)$) such that $(f_i)=(e_i)M$.
	We let $e=\wedge_{i=1}^d e_i$, let $f=\wedge_{i=1}^d f_i$, and let $m=\det M$, then we have the equality $f=me$, and it suffices to show that $m$ is an invertible element in $\OB_{\crys,R_0}(\tilde{X}_\eta)$ (resp. $\OB_{\st,R_0}(\tilde{X}_\eta)$); equivalently, the top wedge product of \Cref{eq:evaluation of D at universal cover} is an isomorphism of rank one representations of $G_{X_\eta}$.
	Here we note that by construction, since the group $G_{X_\eta}$ acts trivially on $f$, we have 
	\[
	g(m)g(e)=g(me)=g(f)=f=me,~\forall g\in G_{X_\eta}.
	\]
	In particular, since the group $G_{X_\eta}$ preserves $\wedge^d T(\tilde{X}_\eta)[1/p]=\mathbb{Q}_p\cdot e$, the equality above implies that the $\mathbb{Q}_p$-vector space $\mathbb{Q}_p\cdot m$ is a $G_{X_\eta}$-sub-representation of $\OB_{\crys,R_0}(\tilde{X}_\eta)$ (resp. $\OB_{\st,R_0}(\tilde{X}_\eta)$).

	To proceed, we notice that since the element $t \in \mathrm{B}_\crys(\mathcal{O}_C)$ is invertible in both $\OB_{\crys,R_0}(\tilde{X}_\eta)$ and $\OB_{\st,R_0}(\tilde{X}_\eta)$, for some $r \in \IN$, $b\colonequals mt^r$ is contained in the $\Fil^0$ part of $\OB_\dR(\tilde{X}_\eta)$ but not in the $\Fil^1$ part. 
 We then want to prove the following. 
\begin{lemma}
The element $b$ is contained in $S_0[1/p]$. 
\end{lemma}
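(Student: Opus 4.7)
The plan is to show that the Galois character $\eta$ acting on the line $\IQ_p \cdot b$ is trivial on the inertia subgroup $I_X$, and then to identify $\OB_\dR^+(\tilde X_\eta)^{I_X}$ with $S_0[1/p]$; the crystalline and semi-stable cases can be treated uniformly, since a rank-$1$ semi-stable character is automatically crystalline ($N$ being zero on a rank-$1$ module).

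First, I extract the character data. The $G_{X_\eta}$-stability of $\IQ_p \cdot m$ yields a continuous character $\chi \colon G_{X_\eta} \to \IQ_p^\times$ with $g(m) = \chi(g) m$, so $\eta \colonequals \chi \cdot \chi_{\mathrm{cyc}}^r$ governs the action on $b$. Galois equivariance of the identity $f = m \cdot e$ — using that $f \in \bigwedge^d D_{\crys, R_0}(T)$ is $G_{X_\eta}$-invariant (being in the Galois invariants defining $D_{\crys, R_0}$), while $e = \bigwedge_i e_i$ transforms by the monodromy character $\chi_{\det T}$ of $\det T \colonequals \bigwedge^d T$ — forces $\chi = \chi_{\det T}^{-1}$, hence $\eta = \chi_{\det T}^{-1} \cdot \chi_{\mathrm{cyc}}^r$.

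Second, I pin down $r$ via the filtration and invoke a rank-$1$ classification. The top exterior power of the de Rham comparison isomorphism $\alpha_\dR$ from \Cref{thm: LZ}.\ref{thm: LZ rank} (applicable since the maximal rank hypothesis on $D_{\crys, R_0}(T)$ forces $D_\dR(T)$ to have rank $d$) identifies the filtration degree of $f \tensor 1$ in $D_\dR(\det T) \tensor \OB_\dR$ with that of $m \cdot e$ in $\det T \tensor \OB_\dR$, which equals the filtration degree of $m$ in $\OB_\dR$; combined with $b \in \Fil^0 \smallsetminus \Fil^1$, this forces $r$ to equal the negative of the Hodge--Tate jump of $\det T$. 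The maximal rank hypothesis also makes $\det T$ a rank-$1$ crystalline $\IZ_p$-local system on $X_\eta$. By Fontaine's classification of rank-$1$ crystalline characters — applicable in our relative setting via restriction to a classical point of $X_\eta$ combined with the rigidity of $\chi_{\det T}$ as a continuous $\IQ_p^\times$-valued character of $G_{X_\eta}$ — the character $\chi_{\det T} \cdot \chi_{\mathrm{cyc}}^{-r}$ agrees with some unramified character $\psi$ of $G_{X_\eta} \twoheadrightarrow G_{X_k}$. Consequently $\eta = \psi^{-1}$ is trivial on $I_X$, and $b \in \OB_\dR^+(\tilde X_\eta)^{I_X}$.

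Finally, I identify $\OB_\dR^+(\tilde X_\eta)^{I_X} \simeq S_0[1/p]$. The canonical $G_{X_\eta}$-equivariant embedding $S_0 \into \OB_\dR^+(\tilde X_\eta)$ is obtained by extending the structural map $R \to \OB_\dR^+(\tilde X_\eta)$ along the pro-finite-\'etale cover $R \to S_0$. That this embedding induces an isomorphism onto the $I_X$-invariants follows by successive approximation in $\Fil^\bullet$, using that $(\Fil^n/\Fil^{n+1})^{I_X} \simeq (\what\sO_{X_\eta}(\tilde X_\eta)(n))^{I_X} = 0$ for $n \neq 0$ — a relative form of Tate's theorem via Scholze's Hodge--Tate comparison~\cite{Sch13}. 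Applying this to $b$ yields $b \in S_0[1/p]$. The main obstacle is the invocation of the rank-$1$ crystalline classification in the relative setting, which requires a careful reduction to Fontaine's pointwise theorem via the rigidity of $\IQ_p^\times$-valued continuous characters — distinct from, and not circular with, the pointwise criterion being established in the main theorem.
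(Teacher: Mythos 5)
Your argument has a circularity at the step where you assert that ``the maximal rank hypothesis also makes $\det T$ a rank-$1$ crystalline $\IZ_p$-local system on $X_\eta$.'' The hypothesis of \cref{thm:rank max implies alpha is iso} is only that $D_{\crys,R_0}(T)$ (resp.\ $D_{\st,R_0}(T)$) has rank $d$; the implication that this makes $T$ (hence $\det T$) a crystalline \emph{local system} is exactly \cref{thm:equivalent def of crys and st}, whose proof invokes \cref{thm:rank max implies alpha is iso} --- the very theorem this lemma is a part of. Your attempted escape via ``restriction to a classical point together with rigidity of $\chi_{\det T}$'' does not close the gap: even granting that $\eta$ restricted to a single $\Gal_{K(x_\eta)}$ is unramified, it does not follow that $\eta$ is trivial on the \emph{global} inertia $I_X$, because $I_X$ is only topologically generated by the union $\cup_{x\in\sC}\sI_x$ over an effective set of points (\cref{def:eff set}), never by the inertia of a single point. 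Promoting pointwise unramifiedness to global triviality on $I_X$ is precisely the pointwise criterion proved in \cref{sec pc}, and its proof depends on \cref{thm:rank max implies alpha is iso}.

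The paper's proof is engineered to avoid exactly this. What can be extracted unconditionally from $b\in\Fil^0\smallsetminus\Fil^1$ is only that $\IQ_p\cdot\bar{b}\subseteq\sO\IC(\tilde{X}_\eta)$ underlies a Hodge--Tate representation of weight zero, whence Shimizu's theorem yields the strictly weaker conclusion that $I_X$ acts on $\IQ_p\cdot\bar{b}$ through a \emph{finite} quotient, not trivially. This places $\bar{b}$, and then $b$ itself after the relative Sen argument, only in a finite \'etale extension $S_1[1/p]$ of $S_0[1/p]$. The descent from $S_1[1/p]$ to $S_0[1/p]$ is achieved by a different mechanism: $b=mt^r$ lies in $\OB_{\crys,R_0}(\tilde{X}_\eta)$ (resp.\ $\OB_{\st,R_0}(\tilde{X}_\eta)$) by construction, and \cref{thm: coh of period sheaves}, which computes $\nu_*\OB_{\crys,R_0}\simeq\sO_{X_{0,\eta}}$ and likewise for $\OB_{\st,R_0}$, shows that $\OB_{\crys,R_0}(\tilde{X}_\eta)\cap S_1[1/p]=S_0[1/p]$ inside $\OB_\dR(\tilde{X}_\eta)$. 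Your proof never uses the membership $b\in\OB_{\crys,R_0}(\tilde{X}_\eta)$, which is the lever that distinguishes $S_0[1/p]$ from larger unramified subrings, and instead places all the weight on a crystallinity assertion about $\det T$ that is not available at this stage. (Secondarily, the identification $(\OB_\dR^+(\tilde{X}_\eta))^{I_X}\simeq S_0[1/p]$ would need its own justification --- the degree-zero graded piece $\sO\IC(\tilde{X}_\eta)\simeq S[1/p][v_1/\overline{\xi},\ldots,v_r/\overline{\xi}]$ does not obviously have $I_X$-invariants equal to $S_0[1/p]$ --- but this is moot given the circularity above.)
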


\begin{proof}
By assumption, there is a $p$-complete \'etale morphism from $X$ onto $\Spf(\mathcal{O}_K\langle x_1^{\pm1},\ldots,x_n^{\pm1} \rangle)$.
    We start by taking the reduction $\bar{b}$ of the element $b$ in $$\gr^0\OB_\dR(\tilde{X}_\eta)=\mathcal{O}\mathbb{C}(\tilde{X}_\eta)\simeq S[1/p][\frac{v_1}{\overline{\xi}},\ldots,\frac{v_n}{\overline{\xi}}]$$ (\cite[Cor.\ 6.15]{Sch13}, cf. \Cref{prop: BdR and OBdR}.\ref{prop: BdR and OBdR explicit}).
 Then the representation $\mathbb{Q}_p\cdot b$ of $G_{X_\eta}$ becomes trivial after tensoring with $\sO \IC(\tilde{X}_\eta)$ (i.e., after reducing the $\Fil^1$-part).
	So by \cite[proof of Thm.\ 5.15]{Shi18}, the $\mathbb{Q}_p$-local system that corresponds to $\mathbb{Q}_p\cdot b$ is a Hodge--Tate representation of zero Hodge--Tate weight, and the inertial subgroup $I_X$ acts on $\mathbb{Q}_p\cdot b$ via a finite quotient. 
	In particular, there is a connected finite \'etale extension $S_1[1/p]$ of the maximal unramified extension $S_0[1/p]$ within $\mathcal{O}\mathbb{C}(\tilde{X}_\eta)$ such that $\bar{b}$ is contained in $S_1[1/p]$.
	On the other hand, by Hensel's lemma, the finite \'etale extension $S_1[1/p]$ is naturally a subring of $\OB^+_\dR(\tilde{X}_\eta)$.

 Now we let $b'$ be the element in $S_1[1/p]$ that maps onto $\bar{b}$ along the surjection $\Fil^0\OB_\dR(\tilde{X}_\eta)\to \mathcal{O}\mathbb{C}(\tilde{X}_\eta)$.
	Now we claim that the element $b$ is in fact equal to $b'\in S_1[1/p]$.
	This is because otherwise $I_X$ acts on $\mathbb{Q}_p\cdot (b-b')$ through finite quotient as well. 
	If $b - b' \neq 0$, then for $i \ge 1$, $b - b'$ lies in $\Fil^i \sO \IB_\dR(\wt{X}_\eta)$ but not $\Fil^{i + 1} \sO \IB_\dR(\wt{X}_\eta)$, and hence has nonzero image in $\sO\IC(\wt{X}_\eta)(i)$. However, by relative Sen theory \cite{Shi18}, $I_X$ cannot act on a nonzero element of $\sO\IC(\wt{X}_\eta)(i)$ through a finite quotient, which yields a contradiction. Indeed, by \cite[Lem.\ 3.3.1]{Bri08}, there is a $G_{X_\eta}$-equivariant embedding $\sO\IC(\wt{X}_\eta)(i) \hookrightarrow \prod_{\mathfrak{p}\in T_{X_\eta}} \sO\IC(\Spa(\overline{L}^\wedge_p))(i)$, where $T_{X_\eta}$ is the set of prime ideals of $S$ that are above the ideal $\pi R\subset R$, and $\Spa(\overline{L}^\wedge_p)$ is a complete algebraic extension of the Shilov point $\Spa(L)$ of $X_\eta$.
				If there is an element $c\in \sO\IC(\wt{X}_\eta)(i)$ such that $I_{X_\eta}$ acts through the finite quotient, for each $\mathfrak{p}\in T_{X_\eta}$, the initial subgroup of $\Gal_L$ would act on the $\mathfrak{p}$-th factor $c_\mathfrak{p}\in \sO\IC(\Spa(\overline{L}^\wedge_p))(i)$ through a finite quotient as well.
				However, the latter was impossible for $i\neq 0$, by Sen's theory on Hodge--Tate representations of weight $i$, which in this generality was proved in \cite[Prop.\ 6]{Bri03}.

    The preceeding paragraph implies that the element $b$ in $\OB_\dR(\tilde{X}_\eta)$ is contained in the subring $S_1[1/p]$.
	Moreover, thanks to \Cref{thm: coh of period sheaves}.\ref{thm: coh of period sheaves crys} and \Cref{thm: coh of period sheaves}.\ref{thm: coh of period sheaves st}, we know the intersection of $\OB_{\crys,R_0}(\tilde{X}_\eta)$ (res. $\OB_{\st,R_0}$) and $S_1[1/p]$ within the ring $\OB_\dR(\tilde{X}_\eta)$ is the maximal unramified subring $S_0[1/p]$.
	Hence the element $b$ is contained in $S_0[1/p] \subset \OB_{\crys,R_0}(\tilde{X}_\eta)$.
\end{proof}
	
	By the preceeding lemma, the determinant $m$ of the transition matrix $M$ is an element in $S_0[1/p]\cdot t^r$.
	So we have the following commutative diagram of $G_{X_\eta}$-equivariant injections
	\[
	\begin{tikzcd}
		\wedge^d D_{\crys,R_0}(T)\otimes_{R_0[1/p]} S_0[1/p] \cdot t^r \arrow[rr,"\iota"] \ar[d] && \wedge^d T(\tilde{X}) \otimes_{\mathbb{Z}_p} S_0[1/p]\cdot t^r \ar[d]\\
		\wedge^d D_{\crys,R_0}(T)\otimes_{R_0[1/p]} \OB_{\crys,R_0}(\tilde{X}_\eta) \arrow[rr, "\alpha_{\crys,\wedge^d T}(\tilde{X}_\eta)"]&& T(\tilde{X}) \otimes_{\mathbb{Z}_p} \OB_{\crys,R_0}(\tilde{X}_\eta),
	\end{tikzcd} 
    \]
    where the second line is the base change of the first line along $S_0[1/p]\to \OB_{\crys,R_0}(\tilde{X}_\eta)$ and one has a completely similar diagram for $D_{\st,R_0}$.
    
    Now we let $\mathcal{O}_{K_{\infty}}$ be the $p$-complete extension of $\mathcal{O}_K$ by adding a compatible system of $p^{1/n}$-roots of $\pi$, and let $S_2$ be the $p$-completion of the ring $S_1\otimes_{\mathcal{O}_K} \mathcal{O}_{K_\infty} [x_1^{\pm1/p^\infty}, \ldots, x_n^{\pm1/p^\infty}]$.
    Then there is a natural injection of rings $S_0\to S_2$ such that it splits as a $S_0$-module.
    So to show that $\iota$ is an isomorphism, by the split injectivity of $$S_0[1/p]\to S_2[1/p][\frac{v_1}{\bar{\xi}},\ldots, \frac{v_d}{\bar{\xi}}]=\mathcal{O}\mathbb{C}(\Spf(S_1)_\eta), $$ it suffices to show the Galois equivariant tensor product $\iota\otimes_{S_0[1/p]} \mathcal{O}\mathbb{C}(\Spf(S_1)_\eta)$ is an isomorphism.
    This then follows from Liu--Zhu's p-adic Simpson correspondence \cite[Thm.\ 2.1.(ii)]{LZ17}, since the underlying module of the rank one flat connection $\wedge^d D_{\crys,R_0}(T)\otimes_{R_0[1/p]} R[1/p]=D_{\crys,R_0}(\wedge^d T)\simeq D_\dR(\wedge^d T)(X_\eta)$ coincides with the underlying module of the rank one Higgs field $\mathcal{H}(\wedge^d T)(X_\eta)$ by \cite[Cor.\ 3.12.(ii), Thm.\ 3.8.(ii)]{LZ17}.
    One treats $D_{\st,R_0}(\wedge^d T)$ in a completely analogous way.
\end{proof}
\begin{remark}
\Cref{thm:rank max implies alpha is iso} should not be surprising: 
  the classical case for $X=\Spf(\mathcal{O}_K)$ was known long time ago and follows from an algebraic fact that the period rings $\mathrm{B}_\crys(\mathcal{O}_C)$ and $\mathrm{B}_\st(\mathcal{O}_C)$ are $(K_0, \Gal(K))$-regular (cf.\ \cite[Def.\ 5.1.1, Prop.\ 9.1.6]{BC09}), and we learned the general strategy from loc.~cit.
  Actually, by considering the extra data of connection, one can try to use a modified argument of loc.~cit. to prove the higher-dimensional analogues, and it would follow if the category of continuous flat connections over $S_0[1/p]$ is abelian, which we do not know due to the non-noetherianity of $S_0[1/p]$.

  In fact, the careful reader might notice that in many past treatments of the relative $p$-adic Hodge theory before Liu--Zhu (for example \cite{Bri08}, \cite{AI12}, and \cite{TT19}), it was not clear if the map $\alpha_{\crys,T}: D_{\crys,R_0}(T)\otimes \OB_{\crys,R_0} \to T\otimes \OB_{\crys,R_0}$ is automatically an isomorphism once the source and target have the same rank. 
\end{remark}
Using the above constructions, we are able to give an equivalent definition of crystalline and semi-stable local systems over an affine base with good reduction.
Precisely, we have the following statement.
\begin{theorem}
	\label{thm:equivalent def of crys and st}
    Let $X = \Spf(R)$ be as in \cref{conv of smooth affine}. 
	\begin{enumerate}[label=\upshape{(\roman*)}]
		\item\label{thm:equivalent def of crys} A $\mathbb{Z}_p$-local system $T$ of rank $d$ over $X_\eta$ is crystalline with respect to $X$ if and only if $D_{\crys,R_0}(T)$ is of rank $d$ over $R_0[1/p]$. 
        Moreover, if $T$ is crystalline, then $T$ is naturally associated with $D_{\crys,R_0}(T)$ via the map $\alpha_{\crys,T}^{\nabla=0}$ and the equivalence in \Cref{prop:OB vb from F-iso}.
		\item\label{thm:equivalent def of st} A $\mathbb{Z}_p$-local system $T$ of rank $d$ over $X_\eta$ is semi-stable with respect to $X$ if and only if $D_{\st,R_0}(T)$ is of rank $d$ over $R_0[1/p]$.
        Moreover, if $T$ is semi-stable, then $T$ is naturally associated with $D_{\st,R_0}(T)$ via the map $\alpha_{\st,T}^{\nabla=0, N=0}$.
	\end{enumerate}
\end{theorem}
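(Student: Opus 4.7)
The proof of both parts follows a parallel pattern, so I will describe part~(i) in detail and indicate the modifications for part~(ii). Since the statements are Zariski-local on $X$, and small affine opens form a basis for the Zariski topology of any $X$ as in \Cref{conv of smooth affine}, I will assume without loss of generality that $X$ is small. The compatibility of $D_{\crys,R_0}$ and $D_{\st,R_0}$ with $p$-complete \'etale base change, which follows from \Cref{prop: Acrys and OAcrys}.\ref{prop: Acrys and OAcrys et} and \Cref{cor: property of OAst}.\ref{cor: property of OAst et}, ensures that the local pieces glue compatibly.

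For the necessity direction ($\Rightarrow$) of part~(i), suppose $T$ is crystalline with witness data $((\mathcal{E},\varphi_\mathcal{E}), \vartheta)$ where $\vartheta\colon \mathbb{B}_\crys(\mathcal{E}) \xrightarrow{\sim} T \otimes_{\IZ_p} \mathbb{B}_\crys$ is Frobenius-equivariant. Let $(\sM_0,\nabla_{\sM_0},\varphi_{\sM_0})$ be the flat $F$-connection over $R_0[1/p]$ corresponding to $\mathcal{E}$ via \Cref{equiv def isoc}. Tensoring $\vartheta$ over $\mathbb{B}_\crys$ with $\OB_{\crys,R_0}$ and applying \Cref{prop:OB vb from F-iso}.\ref{prop:OB vb from F-iso crys} yields a Frobenius-equivariant isomorphism $\sM_0 \otimes_{R_0[1/p]} \OB_{\crys,R_0} \simeq T \otimes_{\IZ_p} \OB_{\crys,R_0}$ compatible with connections. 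Applying $\nu_*$, together with the projection formula for the finite locally free $R_0[1/p]$-module $\sM_0$ and \Cref{thm: coh of period sheaves}.\ref{thm: coh of period sheaves crys}, gives a natural isomorphism $\sM_0 \simeq D_{\crys,R_0}(T)$; in particular $D_{\crys,R_0}(T)$ has rank~$d$.

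For the sufficiency direction ($\Leftarrow$), assume $D_{\crys,R_0}(T)$ has rank~$d$. By \Cref{thm: D functors}.\ref{thm: D functors crys}, this object carries the structure of a flat $F$-connection in $\Vect^{\varphi,\nabla_{X_0}}(X_{0,\eta})$; via \Cref{equiv def isoc} it corresponds to a locally free $F$-isocrystal $(\mathcal{E},\varphi_\mathcal{E}) \in \Isoc^\varphi(X_{k,\crys})$. Since $X$ is small, \Cref{thm:rank max implies alpha is iso} implies that the comparison map $\alpha_{\crys,T}$ is an isomorphism of $\OB_{\crys,R_0}$-modules compatible with Frobenius and connection. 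Combining $\alpha_{\crys,T}$ with the isomorphism provided by \Cref{prop:OB vb from F-iso}.\ref{prop:OB vb from F-iso crys}, I obtain a Frobenius- and connection-equivariant isomorphism $\mathbb{B}_\crys(\mathcal{E}) \otimes_{\mathbb{B}_\crys} \OB_{\crys,R_0} \simeq T \otimes_{\IZ_p} \OB_{\crys,R_0}$. Passing to $\nabla = 0$ sections and using the identification $\OB_{\crys,R_0}^{\nabla = 0} = \mathbb{B}_\crys$ from \Cref{prop: Acrys and OAcrys}.\ref{prop: Acrys and OAcrys conn} yields the Frobenius-equivariant isomorphism $\mathbb{B}_\crys(\mathcal{E}) \simeq T \otimes_{\IZ_p} \mathbb{B}_\crys$, exhibiting $T$ as crystalline with respect to $X$.

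For part~(ii), the argument is entirely analogous, with $\OB_{\crys,R_0}$, $\mathbb{B}_\crys(\mathcal{E})$, $\Isoc^\varphi(X_{k,\crys})$, and \Cref{prop:OB vb from F-iso}.\ref{prop:OB vb from F-iso crys} replaced by their semi-stable counterparts $\OB_{\st,R_0}$, the log variant, $\Isoc^\varphi((X_k,M_{X_k})_\lcrys)$, and \Cref{prop:OB vb from F-iso}.\ref{prop:OB vb from F-iso st}. The only additional subtlety is the bookkeeping of the monodromy operator: in the necessity direction, one uses the $N$-equivariance already built into \Cref{prop:OB vb from F-iso}.\ref{prop:OB vb from F-iso st}, while in the sufficiency direction, one must check that the horizontal sections of $\OB_{\st,R_0} \otimes_{R_0[1/p]} D_{\st,R_0}(T)$ recover $\mathbb{B}_\crys(\mathcal{E})$ rather than a larger module; this is exactly the content of \Cref{lem:two monodromy on D times OBst}, which guarantees that taking $\ker N$ on the $\OB_{\st,R_0}$-side corresponds to the $\mathbb{B}_\crys$-structure. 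I anticipate the main bookkeeping effort to lie in verifying that all the identifications in the chain simultaneously respect $\varphi$, $\nabla$, and $N$, but the necessary compatibilities have all been established earlier in the section; the theorem itself requires essentially no new input beyond organizing \Cref{thm:rank max implies alpha is iso}, \Cref{prop:OB vb from F-iso}, and \Cref{thm: coh of period sheaves} into the claimed equivalence.
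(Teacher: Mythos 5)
Your proof is correct and follows the same route as the paper's: in the necessity direction you tensor $\vartheta$ with $\OB_{\crys,R_0}$, apply \Cref{prop:OB vb from F-iso}, push forward by $\nu_*$ and invoke \Cref{thm: coh of period sheaves} to recover $\sM_0\simeq D_{\crys,R_0}(T)$; in the sufficiency direction you use \Cref{thm:rank max implies alpha is iso} to make $\alpha_{\crys,T}$ an isomorphism and then pass to horizontal sections via \Cref{prop: Acrys and OAcrys}. One thing you caught that the paper is silent about: \Cref{thm:rank max implies alpha is iso} is stated under a smallness hypothesis, while \Cref{thm:equivalent def of crys and st} is not, so an explicit reduction to the small case (or an addition of a smallness hypothesis) is needed to deploy it. Your sentence ``ensures that the local pieces glue compatibly'' is, however, terser than the argument deserves. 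The issue is that for a small open $U_\alpha=\Spf(S_\alpha)$ with compatible unramified model $S_{0,\alpha}$, \Cref{thm:pullback for D functors} a priori gives only an \emph{injection} $D_{\crys,R_0}(T)|_{U_\alpha}\hookrightarrow D_{\crys,S_{0,\alpha}}(T|_{U_\alpha})$, not an isomorphism, so you cannot directly identify the restriction of $\alpha_{\crys,T}$ with $\alpha_{\crys,T|_{U_\alpha}}$. The rank-maximality assumption rescues this: once $D_{\crys,R_0}(T)$ has rank $d$, both sides embed into $D_\dR(T)|_{U_\alpha}$ after extension of scalars to $K$ and hence agree there, and the injection between two finite projective $S_{0,\alpha}[1/p]$-modules that becomes an isomorphism after the faithfully flat extension $S_{0,\alpha}[1/p]\to S_\alpha[1/p]$ is already an isomorphism. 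With that in hand, $\alpha_{\crys,T}$ is locally an isomorphism and hence globally one, and the rest of your argument goes through. So this is the same proof as the paper, plus a worthwhile observation about smallness whose supporting detail you should expand.
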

\begin{proof}
 Assuming that $K$ is unramified, it was proved in \cite[Prop.\ 3,21]{TT19} that the local system $T$ is crystalline if and only if $D_{\crys,R_0}(T)$ is of rank $d$ and the map $\alpha_\crys$ is an isomorphism. 
 Despite the assumption of loc.\ cit. being stronger, the same strategy can be carried to our setting, as we shall explain.
 
	We first assume $T$ is crystalline and is associated with an $F$-isocrystal $(\mathcal{E},\varphi_\mathcal{E})$ via the map $\vartheta:\mathbb{B}_\crys(\mathcal{E})\simeq T\otimes_{\mathbb{Z}_p} \mathbb{B}_\crys$.
	By taking the base change along $\mathbb{B}_\crys\to \OB_{\crys,R_0}$ and 
 then the direct image along $\nu:X_{\eta,\pe}\to X_{\eta,\et}$, the map $\vartheta$ induces an isomorphism 
	\[
	D_{\crys,R_0}(T)\simeq \bigl( \nu_*(\mathbb{B}_\crys(\mathcal{E})\otimes_{\mathbb{B}_\crys} \OB_{\crys,R_0})\bigr)(X_\eta),
	\]
	where by \Cref{prop:OB vb from F-iso}.\ref{prop:OB vb from F-iso crys}, the latter together with its Frobenius structure and connection is naturally isomorphic to
	\[
	\bigl( \nu_*(\sM_0 \otimes_{R_0[1/p]} \OB_{\crys,R_0}) \bigr)(X_\eta),
	\]
	where $\sM_0$ is the associated $R_0[1/p]$-vector bundle corresponding to $(\mathcal{E},\varphi_\mathcal{E})$ under \Cref{equiv def isoc}.
	Moreover, by the calculation in \Cref{thm: coh of period sheaves}.\ref{thm: coh of period sheaves crys} and the local freeness of $\sM_0$ over $R_0[1/p]$, we have $\bigl( \nu_*(\sM_0 \otimes_{R_0[1/p]} \OB_{\crys,R_0}) \bigr) (X_\eta)\simeq \sM_0$.
	This shows that the $F$-isocrystal $D_{\crys,R_0}(T)$ is naturally isomorphic to $\sM_0$.
	Furthermore, by \Cref{prop:OB vb from F-iso}.\ref{prop:OB vb from F-iso crys} again, the induced map $\alpha_{\crys,T}: \sM_0 \otimes_{R_0[1/p]} \OB_{\crys,R_0} \to T\otimes_{\mathbb{Z}_p} \OB_{\crys,R_0}$ is naturally identified with $\vartheta\otimes_{\mathbb{B}_\crys} \OB_{\crys,R_0}\colon \mathbb{B}_\crys(\mathcal{E})\otimes_{\mathbb{B}_\crys}\OB_{\crys,R_0} \to T\otimes_{\mathbb{Z}_p} \OB_{\crys,R_0}$, which by assumption is an isomorphism.
	
	Conversely, assume $D_{\crys,R_0}(T)$ is of the same rank as that of $T$. 
	By \Cref{thm:rank max implies alpha is iso}, the map $\alpha_{\crys,T}: \sM_0 \otimes_{R_0[1/p]} \OB_{\crys,R_0} \to T\otimes_{\mathbb{Z}_p} \OB_{\crys,R_0}$ is an isomorphism.
	We let $(\mathcal{E},\varphi_\mathcal{E})$ be the $F$-isocrystal over $X_{k,\crys}$ associated with $D_{\crys,R_0}(T)$ under \Cref{equiv def isoc}.
	Then by \Cref{prop:OB vb from F-iso}, we may replace $\sM_0 \otimes_{R_0[1/p]} \OB_{\crys,R_0}$ by $\mathbb{B}_\crys(\mathcal{E})\otimes_{\mathbb{B}_\crys} \OB_{\crys,R_0}$ to get a Frobenius-equivariant isomorphism of $\OB_{\crys,R_0}$-vector bundles
	\[
	\mathbb{B}_\crys(\mathcal{E})\otimes_{\mathbb{B}_\crys} \OB_{\crys,R_0} \simeq  T\otimes_{\mathbb{Z}_p} \OB_{\crys,R_0}.
	\]
	In particular, by taking the horizontal sections and \Cref{prop: Acrys and OAcrys}.\ref{prop: Acrys and OAcrys conn} we get a Frobenius-equivariant isomorphism of $\mathbb{B}_\crys$-vector bundles
	\[
	\mathbb{B}_\crys(\mathcal{E})=(\mathbb{B}_\crys(\mathcal{E})\otimes_{\mathbb{B}_\crys} \OB_{\crys,R_0})^{\nabla=0}\simeq (T\otimes_{\mathbb{Z}_p} \OB_{\crys,R_0})^{\nabla=0} =T\otimes_{\mathbb{Z}_p} \mathbb{B}_\crys.
	\]
	Hence the $p$-adic local system $T$ is associated with $(\mathcal{E},\varphi_\mathcal{E})$, in the sense of \Cref{def crys/st Faltings}.
	
	To check \ref{thm:equivalent def of st}, the above argument goes without change, except that we use the input from \Cref{prop:OB vb from F-iso}.\ref{prop:OB vb from F-iso st} instead.	
\end{proof}

As a quick consequence of the local formulae for the associated $F$-isocrystals above, we see the association of Faltings in \Cref{def crys/st Faltings} can be made functorial.
\begin{corollary}\label{functoriality of being associated}
	Assume the setup of a smooth affine $p$-adic formal scheme $X=\Spf(R)$ as in \Cref{conv of smooth affine}.
	For a $\mathbb{Z}_p$-local system $T$ over $X_\eta$, we let $\mathcal{E}_{\crys,R_0,T}$ (resp. $\mathcal{E}_{\st,R_0,T}$) be the $F$-isocrystal over $X_{k,\crys}$ (resp. $(X_k,M_{X_k})_\lcrys$) corresponding to $D_{\crys,R_0}(T)$ via \Cref{equiv def isoc} (resp. $D_{\st,R_0}(T)$ via \Cref{equiv def log isoc}).
	\begin{enumerate}[label=\upshape{(\roman*)}]
		\item There are natural Frobenius equivariant injections of $\mathbb{B}_\crys$-vector bundles as below that are functorial in $T\in \Loc_{\mathbb{Z}_p}(X_\eta)$
		\[
		\mathbb{B}_\crys(\mathcal{E}_{\crys,R_0,T}) \hookrightarrow \mathbb{B}_\crys(\mathcal{E}_{\st,R_0,T}) \hookrightarrow T\otimes_{\mathbb{Z}_p} \mathbb{B}_\crys.
		\]
		\item The second arrow, which we denote as $\vartheta_{\st,R_0,T}$, is an isomorphism when $T\in \Loc_{\mathbb{Z}_p}^\st(X_\eta)$.
		\item The composition, which we denote as $\vartheta_{\crys,R_0,T}$, is an isomorphism when $T\in \Loc_{\mathbb{Z}_p}^\crys(X_\eta)$.
	\end{enumerate}
\end{corollary}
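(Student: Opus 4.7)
The plan is to realize both arrows as the horizontal and monodromy-invariant sections of maps already constructed in the previous sections, using the identifications provided by Proposition \ref{prop:OB vb from F-iso}:
\begin{align*}
\mathbb{B}_\crys(\mathcal{E}_{\crys,R_0,T}) &\simeq \bigl(D_{\crys,R_0}(T) \otimes_{R_0[1/p]} \OB_{\crys,R_0}\bigr)^{\nabla=0}, \\
\mathbb{B}_\crys(\mathcal{E}_{\st,R_0,T}) &\simeq \bigl(D_{\st,R_0}(T) \otimes_{R_0[1/p]} \OB_{\st,R_0}\bigr)^{\nabla=0,\, N=0}.
\end{align*}

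To construct the first injection, I would take the canonical injections $D_{\crys,R_0}(T) \hookrightarrow D_{\st,R_0}(T)$ from Theorem \ref{thm: D functors}.\ref{thm D functors relation} (identifying the source as $\ker N_{D_{\st,R_0}(T)}$) and $\OB_{\crys,R_0} \hookrightarrow \OB_{\st,R_0}$ from Proposition \ref{prop: relation of all period sheaves}, tensor them, and then pass to $(-)^{\nabla=0,\,N=0}$. The image lies in the $N = 0$ part since $N$ annihilates both tensor factors on the target. The injectivity of the tensored morphism follows by composing with $\alpha_{\st,T}$: by naturality of the $\alpha$-maps, the resulting composite is the $\nabla=0,\,N=0$-reduction of
\[
D_{\crys,R_0}(T) \otimes_{R_0[1/p]} \OB_{\crys,R_0} \xrightarrow{\alpha_{\crys,T}} T \otimes_{\IZ_p} \OB_{\crys,R_0} \hookrightarrow T \otimes_{\IZ_p} \OB_{\st,R_0},
\]
which is an injection by Theorem \ref{thm: D functors} and the flatness of the period sheaf inclusion. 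For the second injection, I would apply $(-)^{\nabla=0,\,N=0}$ to $\alpha_{\st,T}$. The source becomes $\mathbb{B}_\crys(\mathcal{E}_{\st,R_0,T})$, and the target is $T \otimes_{\IZ_p} (\OB_{\st,R_0})^{\nabla=0,\,N=0} = T \otimes_{\IZ_p} \IB_\crys$, using that sections of $T$ are horizontal, that $(\OB_{\st,R_0})^{\nabla=0} = \IB_\st$ (Proposition \ref{prop: relation of all period sheaves}), and that $\IB_\st^{N=0} = \IB_\crys$ (the sheafification of Definition \ref{def: Ast} and Construction \ref{const: monodromy on hAst}). Functoriality in $T$ and Frobenius compatibility are inherited from the underlying $D$-functors and period sheaves.

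For parts (ii) and (iii) the heavy lifting is already done by Theorem \ref{thm:equivalent def of crys and st}: the map $\alpha_{\st,T}$ (resp.\ $\alpha_{\crys,T}$) is an isomorphism precisely when $T$ is semi-stable (resp.\ crystalline). Since taking $(-)^{\nabla=0,\,N=0}$ preserves isomorphisms, (ii) is immediate. For (iii), the key observation is that by naturality, the composition $\vartheta_{\crys,R_0,T}$ equals the $\nabla=0$-reduction of $\alpha_{\crys,T}$ after identifying $(T \otimes_{\IZ_p} \OB_{\crys,R_0})^{\nabla=0} = T \otimes_{\IZ_p} \IB_\crys$ via Proposition \ref{prop: Acrys and OAcrys}.\ref{prop: Acrys and OAcrys conn}; thus it is an isomorphism when $T$ is crystalline. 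No substantial technical obstacle is expected: the statement is essentially a bookkeeping consequence of the previously established theory, and the only care required is tracking which period sheaves and invariant sections are used at each step.
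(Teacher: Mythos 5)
Your proposal is correct and follows the same route the paper intends when it calls this a ``quick consequence of the local formulae'': you realize $\mathbb{B}_\crys(\mathcal{E}_{\crys,R_0,T})$ and $\mathbb{B}_\crys(\mathcal{E}_{\st,R_0,T})$ as $(\nabla=0)$- and $(\nabla=0,N=0)$-invariants of $D_{\crys,R_0}(T)\otimes\OB_{\crys,R_0}$ and $D_{\st,R_0}(T)\otimes\OB_{\st,R_0}$ via \Cref{prop:OB vb from F-iso}, build the first arrow by tensoring the injections $D_{\crys,R_0}(T)\into D_{\st,R_0}(T)$ (from \Cref{thm: D functors}.\ref{thm: D functors relation}) and $\OB_{\crys,R_0}\into\OB_{\st,R_0}$, build the second arrow as $\alpha_{\st,T}^{\nabla=0,N=0}$ landing in $T\otimes\IB_\crys$, and then invoke \Cref{thm:equivalent def of crys and st} together with the naturality square relating $\alpha_{\crys,T}$ and $\alpha_{\st,T}$ to obtain (ii) and (iii). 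The only points you should state more carefully are that the image of the tensored map automatically lies in $\ker N$ because \emph{both} summands of the total monodromy $N_{\OB_{\st,R_0}}\otimes\mathrm{id}+\mathrm{id}\otimes N_{D_{\st,R_0}(T)}$ vanish on $D_{\crys,R_0}(T)\otimes\OB_{\crys,R_0}$, and that Frobenius equivariance holds because each ingredient ($\alpha_{\crys,T}$, $\alpha_{\st,T}$, and the inclusions of period sheaves) is Frobenius-compatible by construction; but these are easy to supply and your outline is sound.
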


\begin{remark}
	\label{rmk:filtration on D functors}
	As in Tan-Tong \cite{TT19} and Andreatta--Iovita \cite{AI12}, we can enhance $D_{\crys,R_0}(T)$ and $D_{\st,R_0}(T)$ by an additional data of filtration (after base changing to $K$).
	One can then check that the isomorphisms $\alpha_{\crys,T}$ and $\alpha_{\st,T}$ in \Cref{thm:equivalent def of crys and st} are in fact filtered isomorphisms after base changing to $K$.
	We refer the reader to \Cref{rmk: def crys/st minimal version} and \cite[\S 2]{GR22} for discussions.
\end{remark}
\begin{remark}
	As a consequence of \Cref{thm:equivalent def of crys and st}, we get an explicit local candidate for the associated $F$-isocrystal of a crystalline or semi-stable local system.
	If one is willing to consider the additional filtration as in \Cref{rmk:filtration on D functors}, it can be shown as in \cite[Cor.\ 2.35]{GR22} that the associated filtered $F$-isocrystal of a given crystalline or semi-stable local system is unique up to unique isomorphism.
\end{remark}

\section{Global Crystalline Riemann--Hilbert functors}
\label{sec pullback}
In this section, we analyze the behavior of the crystallinity, the semi-stability, and the functors $D_{\crys}$ and $D_\st$ for arbitrary $p$-adic local systems with respect to the pullback along a map of $p$-adic formal schemes.
As consequences, we introduce crystalline analogues of the $p$-adic Riemann--Hilbert correspondence on $\mathbb{Z}_p$-local systems over rigid spaces with good reduction, and give a quick proof of the purity result for crystalline and semi-stable local systems.

\subsection{Pullback compatibility for general $p$-adic formal schemes}
\label{sub general pullback}
The first subsection is devoted to showing that the crystallinity or semi-stability of a local system in the sense of Faltings is preserved under pullback along a map of $p$-adic formal schemes, without any regularity assumption.

We start with a preparation on the pullback of an $F$-isocrystal versus its associated $\mathbb{B}_\crys$-vector bundle defined in \Cref{const: Bcrys(E)}. 
\begin{lemma}\label{lem: pullback formula on crystalline side}
	Let $f:X'\to X$ be a map of $p$-adic formal schemes equipped with standard log structures (\Cref{log of framed semi-stable reduction}), and let $\mathcal{E}$ be an isocrystal over either $X_{p=0,\crys}$ or $(X_{p=0},M_{X_{p=0}})_\lcrys$.
	Then there is a natural isomorphism of $\mathbb{B}_{\crys,X'}$-vector bundles over $X'_{\eta,\pe}$:
 \begin{equation}
     \label{eqn: lem: pullback formula on crystalline side}
     f_{\eta,\mathbb{B}_\crys}^*\bigl( \mathbb{B}_{\crys,X}(\mathcal{E}) \bigr) \simeq  \mathbb{B}_{\crys,X'}(f_{p=0}^*\mathcal{E}).
 \end{equation}
	The isomorphism is functorial in $\mathcal{E}$, and is Frobenius equivariant when $\mathcal{E}$ has a Frobenius structure.
\end{lemma}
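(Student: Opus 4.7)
The plan is to upgrade both sides of (\ref{eqn: lem: pullback formula on crystalline side}) to pullbacks along morphisms of ringed topoi and then check a strict commutativity at the level of sites, following the site-theoretic interpretation of $\mathbb{B}_\crys(-)$ recorded in \Cref{rmk:site-theoretic_construction_of_Bcrys(E)}.

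Recall that $\mathbb{B}_{\crys,X}(\mathcal{E})$ is by construction the pullback $c_X^*\mathcal{E}$ along the morphism of ringed topoi $c_X \colon ((X_{\eta,\pe})^\sim, \mathbb{B}_\crys) \to ((X_{p=0,\crys})^\sim, \mathcal{O}_{\crys})$, which sends an affinoid perfectoid $\Spa(S[1/p],S)$ over $X_\eta$ to the pro-PD-thickening $(\rAcrys(S), S/p)$. The map $f\colon X'\to X$ induces morphisms of ringed topoi $\tilde f_\eta\colon (X'_{\eta,\pe})^\sim\to (X_{\eta,\pe})^\sim$ and $\tilde f_{p=0,\crys}\colon (X'_{p=0,\crys})^\sim\to (X_{p=0,\crys})^\sim$, coming from the obvious cocontinuous functors between the respective sites. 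The first step is to produce the strictly commutative diagram
\[
\begin{tikzcd}
((X'_{\eta,\pe})^\sim, \mathbb{B}_\crys) \arrow[r, "c_{X'}"] \arrow[d, "\tilde f_\eta"'] & ((X'_{p=0,\crys})^\sim, \mathcal{O}_{\crys}) \arrow[d, "\tilde f_{p=0,\crys}"] \\
((X_{\eta,\pe})^\sim, \mathbb{B}_\crys) \arrow[r, "c_X"] & ((X_{p=0,\crys})^\sim, \mathcal{O}_{\crys}).
\end{tikzcd}
\]
On the level of underlying sites, both composites send an affinoid perfectoid $U = \Spa(S[1/p],S) \to X'_\eta$ to the PD-thickening $(\rAcrys(S), S/p)$ viewed as an object of $X_{p=0,\crys}$ via the structural map $U \to X'_\eta \to X_\eta$, so the square commutes tautologically.

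Granting this, the desired isomorphism (\ref{eqn: lem: pullback formula on crystalline side}) reads
\[
\tilde f_\eta^{\,*}\,\mathbb{B}_{\crys,X}(\mathcal{E}) = \tilde f_\eta^{\,*}\,c_X^*\mathcal{E} \;\simeq\; c_{X'}^*\,\tilde f_{p=0,\crys}^{\,*}\mathcal{E} = \mathbb{B}_{\crys,X'}(f_{p=0}^*\mathcal{E}),
\]
and naturality in $\mathcal{E}$ is built into the formalism. Frobenius-equivariance is automatic: the horizontal arrows $c_X$, $c_{X'}$ carry the Frobenius on $\mathbb{A}_\crys$ to the absolute Frobenius on the crystalline structure sheaf, and the vertical arrows $\tilde f_\eta$, $\tilde f_{p=0,\crys}$ intertwine these Frobenius structures, so the composite isomorphism respects them.

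For the logarithmic case, I would simply repeat the argument with the log analogue $c_X^{\log}\colon ((X_{\eta,\pe})^\sim,\mathbb{B}_\crys)\to (((X_{p=0},M_{X_{p=0}})_\lcrys)^\sim,\mathcal{O}_{\lcrys})$ sending $\Spa(S[1/p],S)$ to the log pro-PD-thickening $(\rAcrys(S), S/p, M_{\rAcrys(S)})$, which is a valid object of the absolute log-crystalline site by \Cref{rmk: log envelope is envelope}. The canonical log structures $M_S = S\cap S[1/p]^\times$ on perfectoids (\Cref{log of perfectoid}) are functorial in $S$, and a map of formal schemes with standard log structures induces compatible maps of these log structures on the $\mathbb{A}_\crys$-level; hence the logarithmic square commutes for the same tautological reason and the rest of the argument goes through verbatim. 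The main conceptual point, rather than a real obstacle, is recognizing that the $\mathbb{B}_\crys$-level pullback is governed by a morphism of sites, so that the proof reduces to the formal functoriality of pullbacks along morphisms of ringed topoi.
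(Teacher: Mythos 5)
Your proof is correct and takes essentially the same route as the paper. The paper's own proof unwinds the same commuting square explicitly at the level of affinoid perfectoid objects (computing $\mathbb{B}^+_{\crys,X'_\eta}(f_{p=0}^*\mathcal{E})(S'[1/p],S') = \mathcal{E}(\rAcrys(S'), S'/p)$ directly and then producing the base change isomorphism by hand) rather than appealing to the morphism-of-ringed-topoi formalism of \Cref{rmk:site-theoretic_construction_of_Bcrys(E)}, but both arguments rest on the single observation that the assignment $\Spa(S[1/p],S)\mapsto(\rAcrys(S),S/p)$ intertwines the forgetful functors of sites along $f_\eta$ and $f_{p=0}$.
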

For later applications, here we note that the $F$-isocrystal $f_{p=0}^*\mathcal{E}$ is naturally identified with $f_k^*(\mathcal{E})$ under the equivalence in \Cref{Dwork's trick}.
In particular, the statement applies to both the setting of the mod $p$ special fibers and that of the reduced special fibers as well.
\begin{proof}
	We first notice that the map of mod $p$ reduction $X'_{p=0}\to X_{p=0}$ naturally induces a functor of sites $X'_{p=0,\crys} \to X_{p=0, \crys}$, sending a $p$-complete divided power thickening $(A,A/I,g:\Spec(A/I)\to X'_{p=0})$ over $X'_{p=0}$ onto the same divided power thickening with the structure morphism composed by $f_{p=0}$, namely
	\[
	(A,A/I,f_{p=0}\circ g:\Spec(A/I)\to X_{p=0}) \in X_{p=0, \crys},
	\]
	and is compatible with the log structures when $(A,A/I,g)$ is equipped with one.
	On the other hand, the pullback isocrystal $f_{p=0}^*\mathcal{E}$ is defined by sending a $p$-complete divided power thickening $(A,A/I,g)\in X'_{p=0,\crys}$ onto the $A$-module $\mathcal{E}(A,A/I, f_{p=0}\circ g)$.
	So for any perfectoid algebra $(S'[1/p],S')$ over $X'_\eta$, since the divided power thickening $(\rAcrys(S'), S'/p)$ is naturally an object over  $X'_{p=0,\crys}$, we see by \Cref{const: Bcrys(E)} that 
	\[
	\mathbb{B}^+_{\crys,X'_\eta}(f_{p=0}^*\mathcal{E}) (S'[1/p],S') = \mathcal{E}(\rAcrys(S'), S'/p),
	\]
	where we regard $(\rAcrys(S'), S'/p)$ as a divided power thickening over $X_{p=0}$ by composition with the map $f_{p=0}$ as above.
	Moreover, if there is another perfectoid algebra $(S[1/p],S)$ over $X_\eta$ together with a morphism $(S[1/p],S)\to (S'[1/p],S')$ that is compatible with $f$, then the above formula implies a natural base change isomorphism
	\[
	\mathbb{B}^+_{\crys,X_\eta}(\mathcal{E})(S[1/p],S)\otimes_{\mathrm{A}_{\crys}(S)} \mathrm{A}_\crys(S') \xrightarrow{\sim} \mathbb{B}^+_{\crys,X'_\eta}(f_{p=0}^*\mathcal{E})(S'[1/p],S'),
	\]
	which by construction is compatible with the Frobenius structure when $\mathcal{E}$ underlies an $F$-isocrystal, and holds without change for ($F$-)isocrystals over the log crystalline sites.
	Hence by considering the above constructions over all affinoid perfectoid objects over $X_\eta$ and $X'_\eta$, and inverting the elements $p$ and $\mu$ locally, we get a natural pullback formula
	\[
	f_{\eta, \mathbb{B}_\crys}^* \mathbb{B}_{\crys,X_\eta}(\mathcal{E}) \xrightarrow{\sim} \mathbb{B}_{\crys,X'_\eta}(f_{p=0}^*\mathcal{E}),
	\]
	which is functorial with respect to $\mathcal{E}$ and is compatible with Frobenius structures.
\end{proof}

Similarly, we have the following compatibility of \'etale pullbacks.
\begin{lemma}\label{lem: pullback formula on etale side}
	Let $f:X'\to X$ be a map of $p$-adic formal schemes, and let $T$ be a $\mathbb{Z}_p$-local system over $X_{\eta,\pe}$.
	Then there is a natural isomorphism of $\mathbb{B}_{\crys,X'_\eta}$-vector bundles over $X'_{\eta,\pe}$:
	\begin{equation}
    \label{eqn: pullback formula on etale side}
	    f_{\eta,\mathbb{B}_{\crys}}^* (T\otimes_{\mathbb{Z}_p} \mathbb{B}_{\crys,X_\eta}) \simeq f_\eta^{-1} T\otimes_{\mathbb{Z}_p} \mathbb{B}_{\crys,X'_\eta}.
	\end{equation}
\end{lemma}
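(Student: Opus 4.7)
The plan is to unpack the definition of the pullback functor $f^*_{\eta, \IB_\crys}$ on the ringed pro-\'etale site and then reduce the isomorphism to a tautological manipulation of tensor products, using the pro-\'etale local constancy of $T$. By definition, for any sheaf of $\IB_{\crys, X_\eta}$-modules $\sF$ on $X_{\eta, \pe}$,
\[
f^*_{\eta, \IB_\crys}\sF \;=\; f_\eta^{-1}\sF \otimes_{f_\eta^{-1} \IB_{\crys, X_\eta}} \IB_{\crys, X'_\eta},
\]
where the module structure on the right uses the natural morphism $f_\eta^{-1}\IB_{\crys, X_\eta} \to \IB_{\crys, X'_\eta}$. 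This latter morphism exists by the functoriality of the construction of $\IB_\crys$ as sheafification of $S \mapsto \IB_\crys(S)$ over affinoid perfectoid objects (see \Cref{def: horizontal period sheaves}), together with the fact that if $V'\in \Perfd/X'_{\eta,\pe}$ maps to $V \in \Perfd/X_{\eta,\pe}$, then the associated perfectoid rings admit an obvious compatibility.

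First I would apply the above formula to $\sF = T \tensor_{\IZ_p} \IB_{\crys, X_\eta}$. I claim there is a canonical isomorphism
\[
f_\eta^{-1}\bigl( T \tensor_{\IZ_p} \IB_{\crys, X_\eta} \bigr) \;\simeq\; f_\eta^{-1} T \tensor_{\IZ_p} f_\eta^{-1}\IB_{\crys, X_\eta}
\]
of sheaves of $f_\eta^{-1}\IB_{\crys, X_\eta}$-modules on $X'_{\eta, \pe}$. Since $T$ is a $\IZ_p$-local system, pro-\'etale locally on $X_\eta$ it is a finite free direct sum of copies of the constant sheaf $\IZ_p$, and pulling back along $f_\eta$ is exact and commutes with finite direct sums; therefore the claimed isomorphism reduces to the trivial identity $f_\eta^{-1}\IB_{\crys, X_\eta} = f_\eta^{-1}\IB_{\crys, X_\eta}$ after restricting to such a pro-\'etale cover. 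More precisely, if $\{V_i \to X_\eta\}$ is a pro-\'etale cover trivializing $T$, then $\{V_i \times_{X_\eta} X'_\eta \to X'_\eta\}$ is a pro-\'etale cover of $X'_\eta$ trivializing $f_\eta^{-1}T$, and over this cover both sides are direct sums of $f_\eta^{-1}\IB_{\crys, X_\eta}$ indexed by a basis of $T$, which glue to give the global isomorphism.

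Finally, combining the two identifications yields
\[
f^*_{\eta, \IB_\crys}\bigl( T \tensor_{\IZ_p} \IB_{\crys, X_\eta} \bigr)
\;\simeq\; \bigl( f_\eta^{-1}T \tensor_{\IZ_p} f_\eta^{-1}\IB_{\crys, X_\eta}\bigr) \tensor_{f_\eta^{-1}\IB_{\crys, X_\eta}} \IB_{\crys, X'_\eta}
\;\simeq\; f_\eta^{-1} T \tensor_{\IZ_p} \IB_{\crys, X'_\eta},
\]
as desired. There is no substantial obstacle here: the whole argument is a formal consequence of (i) the functoriality of $\IB_\crys$, supplying the transition morphism $f_\eta^{-1}\IB_{\crys, X_\eta} \to \IB_{\crys, X'_\eta}$, and (ii) the pro-\'etale local constancy of $T$, which ensures compatibility of $f_\eta^{-1}$ with the tensor product $T \tensor_{\IZ_p} (-)$. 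The only point requiring a small care is to verify that the isomorphism we construct is independent of the pro-\'etale trivialization of $T$ chosen, but this is immediate from the naturality of the construction.
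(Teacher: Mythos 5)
Your proof is correct and takes essentially the same approach as the paper: both arguments reduce to a pro-\'etale local computation (the paper works directly with sections on affinoid perfectoid objects, you with a pro-\'etale cover trivializing $T$) and then conclude by a formal manipulation of tensor products. The only organizational difference is that you separate the argument into the abstract pullback formula $f^*_{\eta,\IB_\crys}\sF = f_\eta^{-1}\sF \otimes_{f_\eta^{-1}\IB_{\crys,X_\eta}} \IB_{\crys,X'_\eta}$ plus the commutation of $f_\eta^{-1}$ with $T\otimes_{\IZ_p}(-)$, whereas the paper fuses these into a single presheaf computation on affinoid perfectoids; the content is the same.
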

\begin{proof}
	By definition, the sheaf $T\otimes_{\mathbb{Z}_p} \mathbb{B}_{\crys,X_\eta}$ is defined as the pro-\'etale sheaf associated to the presheaf which sends an affinoid perfectoid object $U=\Spa(S[1/p],S)$ in $X_{\eta,\pe}$ to
	\[
	T(U)\otimes_{\mathbb{Z}_p(U)} \mathbb{B}_\crys(S[1/p],S).
	\]
	Its $\mathbb{B}_\crys$-linear pullback, when restricted onto the object $U$, is the sheaf associated to the presheaf which sends an affinoid perfectoid object $V=\Spa(S'[1/p],S')$ in $X'_{\eta, \pe}|_U$ to 
	\[
	\bigl(T(U)\otimes_{\mathbb{Z}_p(U)} \mathbb{B}_\crys(S[1/p],S) \bigr) \otimes_{\mathbb{B}_\crys(S[1/p],S)} \mathbb{B}_\crys(S'[1/p],S'),
	\]
	which is naturally isomorphic to
	\[
	\bigl( T(U)\otimes_{\mathbb{Z}_p(U)} \mathbb{Z}_p(V)  \bigr) \otimes_{\mathbb{Z}_p(V)} \mathbb{B}_\crys(S'[1/p],S').
	\]
    Hence (\ref{eqn: pullback formula on etale side}) holds because it holds when evaluated on affinoid perfectoid objects.
\end{proof}

The main statement this subsection is the following.
\begin{theorem}\label{thm: pullback formula Faltings}
	Let $f:X'\to X$ be a map of topologically finite type $p$-adic formal schemes over $\mathcal{O}_K$.
	Assume $T$ is a $\mathbb{Z}_p$-local system over $X_\eta$ that is crystalline  (resp. semi-stable) with respect to $X$, and is associated to the $F$-isocrystal $(\mathcal{E}, \varphi_\mathcal{E})$ over $X_{k,\crys}$ (resp. over $(X_k,M_{X_k})_\lcrys$) via an isomorphism of vector bundles over $\mathbb{B}_{\crys,X_\eta}$
	\[
	\vartheta:\mathbb{B}_{\crys,X_\eta}(\mathcal{E}) \xrightarrow{\sim} T\otimes_{\mathbb{Z}_p} \mathbb{B}_{\crys,X_\eta}.
	\]
	Then the \'etale preimage $f_\eta^{-1} T=T|_{X'_\eta}$ is  crystalline  (resp. semi-stable) with respect to $X'$, and is associated to the $F$-isocrystal $f_k^*(\mathcal{E}, \varphi_\mathcal{E})$ via the isomorphism of vector bundles over $\mathbb{B}_{\crys,X'_\eta}$ obtained by the following composition
	\[
	 \mathbb{B}_{\crys,X'_\eta}(f_k^*\mathcal{E}) \stackrel{(\ref{eqn: lem: pullback formula on crystalline side})}{\simeq} f_{\eta,\mathbb{B}_\crys}^*\mathbb{B}_{\crys,X_\eta}(\mathcal{E}) \stackrel{f_{\eta,\mathbb{B}_\crys}^*\vartheta}{\simeq} f_{\eta,\mathbb{B}_\crys}^*(T\otimes_{\mathbb{Z}_p} \mathbb{B}_{\crys,X_\eta}) \stackrel{(\ref{eqn: pullback formula on etale side})}{\simeq} f_\eta^{-1} T \otimes_{\mathbb{Z}_p} \mathbb{B}_{\crys,X_\eta}.
	\]
\end{theorem}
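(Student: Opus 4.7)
The plan is to simply assemble the two pullback compatibility lemmas just proved (\Cref{lem: pullback formula on crystalline side} and \Cref{lem: pullback formula on etale side}) with the pullback of the given isomorphism $\vartheta$, and then verify that the resulting composite isomorphism respects all the relevant structures.

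First I would check that the ingredients on the $X'$-side are well-defined. On the \'etale side, $f_\eta^{-1}T$ is automatically a $\mathbb{Z}_p$-local system on $X'_\eta$. On the crystalline side, $f_k^*(\mathcal{E},\varphi_\mathcal{E})$ is a locally free $F$-isocrystal on $X'_{k,\crys}$ (resp. on $(X'_k,M_{X'_k})_{\lcrys}$) because pullback of locally free modules along a morphism of ringed topoi remains locally free, and pullback is a tensor functor so Frobenius structures transport. In the semi-stable case, one needs to note that the standard log structure from \Cref{log of framed semi-stable reduction} is functorial: any map $f:X'\to X$ of $p$-adic formal schemes over $\mathcal{O}_K$ upgrades canonically to a morphism of log pairs $(X',M_{X'})\to (X,M_X)$, since elements of $\mathcal{O}_X\cap \lambda_*\mathcal{O}_{X_\eta}^\times$ pull back to units on $X'_\eta$. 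Also, via \Cref{Dwork's trick}, the pullback $f_{p=0}^*\mathcal{E}$ appearing in \Cref{lem: pullback formula on crystalline side} corresponds precisely to $f_k^*\mathcal{E}$, so the two descriptions agree.

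Next, I would write down the composition claimed in the statement:
\[
\mathbb{B}_{\crys,X'_\eta}(f_k^*\mathcal{E}) \xrightarrow{\sim} f_{\eta,\mathbb{B}_\crys}^*\mathbb{B}_{\crys,X_\eta}(\mathcal{E}) \xrightarrow{\sim} f_{\eta,\mathbb{B}_\crys}^*(T\otimes_{\mathbb{Z}_p}\mathbb{B}_{\crys,X_\eta}) \xrightarrow{\sim} f_\eta^{-1}T\otimes_{\mathbb{Z}_p}\mathbb{B}_{\crys,X'_\eta}.
\]
The first arrow is the inverse of the isomorphism from \Cref{lem: pullback formula on crystalline side}, which is functorial in $\mathcal{E}$ and Frobenius-equivariant. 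The middle arrow is obtained by applying the exact symmetric monoidal functor $f_{\eta,\mathbb{B}_\crys}^*$ to the associating isomorphism $\vartheta$, and therefore remains Frobenius-equivariant. The final arrow is the isomorphism of \Cref{lem: pullback formula on etale side}, which is visibly Frobenius-equivariant since it is induced by the natural map of pro-\'etale sheaves. Composing, we obtain a Frobenius-equivariant isomorphism of $\mathbb{B}_{\crys,X'_\eta}$-vector bundles, which is precisely the data required by \Cref{def crys/st Faltings} to witness the crystallinity (resp. semi-stability) of $f_\eta^{-1}T$ with respect to $X'$ and the association to $f_k^*(\mathcal{E},\varphi_\mathcal{E})$.

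Since essentially nothing is left to verify beyond what the two preparatory lemmas already provide, I do not foresee a serious obstacle. The only subtlety that deserves care is the semi-stable case, where one must check that the isomorphism of \Cref{lem: pullback formula on crystalline side} extends to the log-crystalline setting: this is explicitly stated in the lemma, and the proof goes through verbatim because the log structures on $\rAcrys(S)$ and $\rAcrys(S')$ are compatible by functoriality, and the log structure on the special fibers $(X_{p=0},M_{X_{p=0}})$ pulls back along $f_{p=0}$ to the log structure on $(X'_{p=0},M_{X'_{p=0}})$ by the discussion in the first paragraph.
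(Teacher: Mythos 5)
Your proof is correct and follows the same approach as the paper: pull back $\vartheta$ along the map of ringed pro-\'etale sites and then invoke \Cref{lem: pullback formula on crystalline side} and \Cref{lem: pullback formula on etale side} to identify the source and target. You spell out the routine checks (Frobenius equivariance, functoriality of the standard log structure, the $f_{p=0}^* \leftrightarrow f_k^*$ translation via \Cref{Dwork's trick}) that the paper leaves implicit, but the argument is the same.
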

\begin{proof}
	We take the $\mathbb{B}_{\crys}$-linear pullback of the isomorphism $\vartheta$ along the map of ringed sites $(X'_{\eta, \pe}, \mathbb{B}_{\crys,X'}) \to (X_{\eta, \pe},\mathbb{B}_{\crys,X})$, to get
	\[
	f_{\eta,\mathbb{B}_{\crys}}^*\mathbb{B}_{\crys,X}(\mathcal{E}) \simeq f_{\eta,\mathbb{B}_{\crys}}^* (T\otimes_{\mathbb{Z}_p} \mathbb{B}_{\crys,X}).
	\]
	The rest follows from the isomorphisms in \Cref{lem: pullback formula on crystalline side} and \Cref{lem: pullback formula on etale side}.
\end{proof}

As a quick consequence, we see the restriction at closed (i.e., classical) points always preserves the crystallinity and the semi-stability. 
\begin{corollary}
	\label{cor:pull back to closed points}
	Let $X$ be a topologically finite type $p$-adic formal scheme over $\mathcal{O}_K$, and let $T$ be a $\mathbb{Z}_p$-local system over $X_\eta$ that is crystalline (resp. semi-stable) with respect to $X$.

	Then for any closed point $x\in X_\eta$, the restriction $T|_x$ is a crystalline (resp. semi-stable) representation of $\Gal_{K(x)}$.
\end{corollary}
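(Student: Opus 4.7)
The plan is to reduce the statement to a direct application of \cref{thm: pullback formula Faltings}, after extending the closed point to an integral morphism of $p$-adic formal schemes.

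First I would extend the closed point $x \in X_\eta$, which corresponds to a continuous map of $K$-algebras $R[1/p] \to K(x)$ from some affine open $\Spf(R) \subseteq X$, to a morphism of $p$-adic formal schemes $\tilde{x}: \Spf(\sO_{K(x)}) \to X$. This is essentially automatic: since $K(x)/K$ is finite, $\sO_{K(x)}$ is a topologically finite type $\sO_K$-algebra, and every element of $R$ has power-bounded image in $K(x)$ so lies in $\sO_{K(x)}$. Note that the generic fiber $\tilde{x}_\eta$ recovers the original closed point $\Spa(K(x)) \to X_\eta$.

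Next, I would apply \cref{thm: pullback formula Faltings} to the map $f = \tilde{x}$ and to the $F$-isocrystal $\mathcal{E}$ associated to $T$. The theorem yields that $\tilde{x}_\eta^{-1} T = T|_x$ is crystalline (resp. semi-stable) with respect to the formal scheme $\Spf(\sO_{K(x)})$, associated to the $F$-isocrystal $\tilde{x}_{k(x)}^*\mathcal{E}$ over $\Spec(k(x))_\crys$ (resp. over $(\Spec(k(x)), M_{\Spec(k(x))})_\lcrys$). Here $k(x)$ denotes the residue field of $K(x)$, which is perfect since $K(x)/K$ is finite.

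Finally, I would verify that in the case $Y = \Spf(\sO_{K(x)})$, \cref{def crys/st Faltings} recovers the classical notion of crystallinity (resp. semi-stability) for $\Gal_{K(x)}$-representations. Indeed, when $Y$ is a point, an $F$-isocrystal over $Y_{k(x),\crys}$ is just a $\varphi$-module over $K(x)_0 \colonequals W(k(x))[1/p]$, an $F$-isocrystal over $(Y_{k(x)}, M_{Y_{k(x)}})_\lcrys$ is a $(\varphi, N)$-module over $K(x)_0$ (by \cref{equiv def log isoc}), and the sheaves $\IB_\crys, \IB_\st$ on $Y_{\eta, \pe} = \Spa(K(x))_\pe$ are pro-\'etale sheafifications of the usual period rings $\mathrm{B}_\crys(\sO_{\overline{K(x)}})$ and $\mathrm{B}_\st(\sO_{\overline{K(x)}})$. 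The Faltings isomorphism in this setting becomes, after taking $\Gal_{K(x)}$-invariants, the classical identification of the associated $(\varphi, N)$-module with $D_\crys(T|_x)$ (resp. $D_\st(T|_x)$), so \cref{def crys/st Faltings} over a point reduces to the standard definition. This completes the proof. There is essentially no obstacle here: the content of the corollary is entirely packaged in the functoriality of \cref{thm: pullback formula Faltings} together with the compatibility of definitions at a point.
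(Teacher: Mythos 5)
Your proposal is correct and follows essentially the same route as the paper's own proof: extend the closed point to a morphism $\tilde{x}: \Spf(\sO_{K(x)}) \to X$ of $p$-adic formal schemes, then invoke \Cref{thm: pullback formula Faltings}. Your additional verification that \Cref{def crys/st Faltings} over a point recovers the classical notion of a crystalline (resp.\ semi-stable) $\Gal_{K(x)}$-representation is a reasonable and correct unpacking of what the paper leaves implicit (and which is also recorded in \Cref{intro:thm:RH}.\ref{intro:thm:RH point}).
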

Here for a finite field extension $K'/K$, we implicitly identify a $\mathbb{Z}_p$-local system over a point $\Spa(K',\mathcal{O}_{K'})$ with a continuous $\mathbb{Z}_p$-representation of the Galois group $\Gal_{K'}$.
\begin{proof}
	By replacing $X$ by an affine open subscheme, the closed immersion of rigid space $x\to X_\eta$ can be obtained from a map of $p$-adic formal schemes $\tilde{x}:\Spf(\mathcal{O}_{K(x)}) \to X$.
	The rest follows from \Cref{thm: pullback formula Faltings}.
\end{proof}

\subsection{Pullback injectivity of $D_\crys$ and $D_\st$}
\label{sub pullback of D functors}
Next we study the behavior of the pullbacks of the $D_\crys$ and the $D_\st$ functors for general $p$-adic local systems, along a map of smooth affine $p$-adic formal schemes with a compatible choice of Frobenius structures.
We will show that the functors $D_\crys$ and $D_\st$, when viewed as taking values in $F$-isocrystals over the special fiber, are compatible with the pullback functors in an appropriate sense (\Cref{thm:pullback for D functors}).

\begin{theorem}
	\label{thm:pullback for D functors}
	We assume the following setup:
	\begin{enumerate}[label=\upshape{(\alph*)}]
		\item Let $K'/K$ be a finite extension with perfect residue fields $k'/k$.
		\item\label{item: pullback for D functors assump b} Let $X_0 = \Spf(R_0)$ and $X_0' = \Spf(R_0')$ be as in \cref{conv of smooth affine} and $f_0:X'_0 \to X_0$ be a Frobenius equivariant map over $\sO_{K_0}$. 
		\item\label{item: pullback for D functors assump c} Let $X'=X'_0\times_{\mathcal{O}_{K_0}} \mathcal{O}_{K'}$, let $X=X_0\times_{\mathcal{O}_{K_0}} \mathcal{O}_{K}$, and let $f:X'=\Spf(R')\to X=\Spf(R)$ be the natural extension of the map $f_0$.
	\end{enumerate}
Then we have the following. 
\begin{enumerate}[label=\upshape{(\roman*)}]
	\item\label{thm:pullback for D functors crys} There is a natural Frobenius equivariant injection of flat connections over $R'_0[1/p]$ that is functorial with respect to $T\in \Loc_{\mathbb{Z}_p}(X_\eta)$
	\[
	\iota_{\crys,f}\colon (f_0)_\eta^*D_{\crys,R_0}(T) \longrightarrow D_{\crys,R'_0}(f_\eta^{-1}T).
	\]
	\item\label{thm:pullback for D functors st} There is a natural Frobenius and monodromy equivariant injection of flat connections over $R'_0[1/p]$ that is functorial with respect to $T\in \Loc_{\mathbb{Z}_p}(X_\eta)$
	\[
	\iota_{\st,f}\colon (f_0)_\eta^*D_{\st,R_0}(T) \longrightarrow D_{\st,R'_0}(f_\eta^{-1}T).
	\]
 \item\label{thm:pullback for D functors fet}
			Assume the map $f$ (thus the map $f_0$) is finite \'etale.
			Then the above injections are isomorphisms.
\end{enumerate}
Moreover, the injections above are compatible with the composition, in the sense that if there is a further map $g:X''\to X'$ arising from a Frobenius equivariant map of smooth $p$-adic formal schemes $g'_0:\Spf(R''_0)\to X'_0$, then we have an equality of maps
\[
\iota_{\crys,g\circ f}=\iota_{\crys,g}\circ (g_0)_\eta^* \iota_{\crys,f},
\] and similarly for the semi-stable analogues.
\end{theorem}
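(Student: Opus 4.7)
The plan is to construct $\iota_{\crys,f}$ and $\iota_{\st,f}$ by functoriality of the period sheaves, verify the structural compatibilities, then prove injectivity via the pullback of the injections $\alpha_{\crys,T}$ and $\alpha_{\st,T}$ provided by \Cref{thm: D functors}, and finally handle the finite \'etale case and the composition compatibility as formal consequences.

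For the construction, I will exploit the universal property of the $p$-complete (log) PD-envelopes in \Cref{def OAcrys} and \Cref{def OAst}. The Frobenius-equivariant ring map $R_0 \to R_0'$ over $W$, together with its base change to $\sO_{K'}$, naturally fits into the input data defining $\OA_{\crys,R_0}$ and $\OA_{\st,R_0}$, thereby inducing morphisms of period sheaves
\[
f_{\eta}^{-1}\OB_{\crys,R_0} \longrightarrow \OB_{\crys,R_0'}, \quad f_{\eta}^{-1}\OB_{\st,R_0} \longrightarrow \OB_{\st,R_0'}
\]
on $X'_{\eta,\pe}$ respecting connections, Frobenius, log structures, and the monodromy operator in the semi-stable case. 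Tensoring with $T$, using $f_\eta^{-1}(T \tensor_{\mathbb{Z}_p} \OB) = f_\eta^{-1}T \tensor_{\mathbb{Z}_p} f_\eta^{-1}\OB$, applying $\nu_*$, and taking global sections over $X'_\eta$ produces the desired $R_0'[1/p]$-linear maps $\iota_{\crys,f}$ and $\iota_{\st,f}$, functorial in $T$ and compatible with Frobenius, connection, and monodromy. The cocycle compatibility with composition reduces to the transitivity of the period-sheaf morphisms above, itself a direct consequence of the universal property.

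For injectivity, I will consider the composition
\[
(f_0)_\eta^* D_{\crys,R_0}(T) \tensor_{R_0'[1/p]} \OB_{\crys,R_0'} \longrightarrow D_{\crys,R_0'}(f_\eta^{-1}T) \tensor_{R_0'[1/p]} \OB_{\crys,R_0'} \hookrightarrow f_\eta^{-1}T \tensor_{\mathbb{Z}_p} \OB_{\crys,R_0'},
\]
where the first arrow is $\iota_{\crys,f} \tensor \mathrm{id}$ and the second is the injection $\alpha_{\crys,f_\eta^{-1}T}$ supplied by \Cref{thm: D functors}. By construction this composition is the pullback of $\alpha_{\crys,T}$ along the period-sheaf morphism from the preceding step. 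Since $D_{\crys,R_0}(T)$ is a vector bundle on the affinoid rigid space $X_{0,\eta}$, it is a finitely generated projective (hence flat) $R_0[1/p]$-module, and by \Cref{thm: coh of period sheaves}.\ref{thm: coh of period sheaves crys} we have $R_0'[1/p] \hookrightarrow \OB_{\crys,R_0'}(V)$ for any affinoid perfectoid $V$ over $X'_\eta$; the above composite is then injective on sections over a sufficiently large pro-finite-\'etale perfectoid cover of $X'_\eta$, forcing $\iota_{\crys,f}$ itself to be injective by left-cancellation. The argument for $\iota_{\st,f}$ is entirely parallel, using \Cref{thm: coh of period sheaves}.\ref{thm: coh of period sheaves st}.

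Finally, when $f$ is finite \'etale, the induced map $R_0 \to R_0'$ is $p$-completely \'etale, so \Cref{prop: Acrys and OAcrys}.\ref{prop: Acrys and OAcrys et} and \Cref{cor: property of OAst}.\ref{cor: property of OAst et} upgrade the period-sheaf morphisms from the first step to isomorphisms; combined with the fact that finite \'etale pullback commutes with $\nu_*$ and global sections, $\iota_{\crys,f}$ and $\iota_{\st,f}$ become isomorphisms. I expect the hardest step to be the injectivity argument: the main subtlety is that one would naively want flatness of $\OB_{\crys,R_0'}$ over $f_\eta^{-1}\OB_{\crys,R_0}$, which is unclear for these large non-Noetherian period sheaves, and the strategy above bypasses this by reducing to injectivity at the level of sections over a pro-finite-\'etale perfectoid cover, where the flatness of $D_{\crys,R_0}(T)$ over $R_0[1/p]$ together with the containment $R_0'[1/p] \hookrightarrow \OB_{\crys,R_0'}(V)$ make things work.
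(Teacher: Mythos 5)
Your construction of $\iota_{\crys,f}$ and $\iota_{\st,f}$ via the universal property of PD-envelopes is in substance the same as the paper's (which phrases it as a commutative square of ringed sites and adjunction), and your appeal to Galois descent after \Cref{prop: Acrys and OAcrys}.\ref{prop: Acrys and OAcrys et} and \Cref{cor: property of OAst}.\ref{cor: property of OAst et} for part \ref{thm:pullback for D functors fet} is a reasonable sketch, though the assertion that ``finite \'etale pullback commutes with $\nu_*$ and global sections'' should really be replaced by the Galois descent argument $M^H \tensor_{R_0[1/p]} R'_0[1/p] \sto M$ applied to $M = (T(\tilde{X}_\eta)\otimes_{\IZ_p}\OB_{\crys,R_0}(\tilde{X}_\eta))^{G_{X'_\eta}}$ that the paper uses.

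However, the injectivity argument has a genuine gap, and you have pinpointed its location yourself without actually repairing it. Your chain only establishes that the first arrow
\[
(f_0)_\eta^* D_{\crys,R_0}(T) \longrightarrow (f_0)_\eta^* D_{\crys,R_0}(T) \tensor_{R'_0[1/p]} \OB_{\crys,R'_0}(V)
\]
is injective, using flatness of $D_{\crys,R_0}(T)$ over $R_0[1/p]$ and the containment $R'_0[1/p] \into \OB_{\crys,R'_0}(V)$. But ``left-cancellation'' then requires the whole composite into $f^{-1}_\eta T \tensor_{\IZ_p}\OB_{\crys,R'_0}(V)$ to be injective, i.e.\ you need the base change of the injection $\alpha_{\crys,T}$ along $f_\eta^{-1}\OB_{\crys,R_0} \to \OB_{\crys,R'_0}$ (evaluated on $V$) to stay injective. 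Injectivity is not preserved by arbitrary base change, and nothing in your proposal supplies flatness or split-injectivity of $\OB_{\crys,R'_0}(V)$ over $\OB_{\crys,R_0}(\tilde X_\eta)$. This is precisely the subtlety you flag, but passing to sections over a perfectoid cover does not bypass it. The paper's proof of \Cref{thm:pullback for D functors}.\ref{thm:pullback for D functors crys} takes a different route: it builds a commutative square comparing $\iota_\crys \tensor_{K_0} K$ with the analogous map $\iota_\dR$ for $D_\dR$, uses \Cref{thm: D functors}.\ref{thm: D functors relation} to inject $D_{\crys,R'_0}(f_\eta^{-1}T)\tensor_{K_0}K$ into $D_{\dR,X'_\eta}(f_\eta^{-1}T)$, and then invokes Liu--Zhu's pullback compatibility (\Cref{thm: LZ}.\ref{thm: LZ pullback}) to know $\iota_\dR$ is an isomorphism; injectivity of $\iota_\crys$ then follows from injectivity of $f_\eta^*(D_{\crys,R_0}(T)\tensor_{K_0}K \into D_\dR(T))$, which is a pullback of a map of vector bundles on $X_\eta$. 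This use of $D_\dR$ is not optional decoration --- it is what makes the injectivity go through --- and your proposal omits it. (The paper also needs to treat the case $K'/K$ totally ramified with $R_0 = R'_0$ separately, via injectivity of restriction of sections along a finite \'etale cover of rigid spaces, a case distinction your proposal does not mention.)
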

Below we give the proof of the theorem for $D_\crys$; the analogues for $D_\st$ can be proved by simply replacing $\OB_{\crys}$ and $D_\crys$ by $\OB_\st$ and $D_\st$ respectively in the following arguments.
By the coherence of $D_\dR(T)$ and the affineness of the rigid spaces involved, we also temporarily abuse the notation $D_\dR(T)$ (similarly for $D_\dR(f_\eta^{-1}(T))$) with its global section $D_\dR(T)(X_\eta)$.

We start with the case for an arbitrary map $f_0$.
\begin{proof}[Proof of \Cref{thm:pullback for D functors}.\ref{thm:pullback for D functors crys}]	
	We first assume $K'=K$ for the moment, and thus $X'=X'_0\times_{\Spf(\mathcal{O}_{K_0})} \Spf(\mathcal{O}_K)$.
	By assumption, there is a natural commutative diagram of ringed sites that are equivariant with respect to Frobenius structures and connections:
	\[
	\begin{tikzcd}
		(X'_{\eta,\pe}, \OB_{\crys,R'_0}) \arrow[d, "{\nu'}"] \arrow[r, "f_{\eta,\pe}"] & (X_{\eta,\pe}, \OB_{\crys,R_0}) \arrow[d, "{\nu}"]\\
		(X'_{\eta,\et}, R'_0) \arrow[r, "f_0"] & (X_{\eta,\et}, R_0).
	\end{tikzcd}
    \]
    The diagram then induces a natural transformation 
    \[
    (f_0)_\eta^*\nu_*(\OB_{\crys,R_0}\otimes_{\mathbb{Z}_p} T) \longrightarrow \nu'_* f_{\eta,\pe}^*(\OB_{\crys,R_0}\otimes_{\mathbb{Z}_p} T).
    \]
    Similar to the proof of \Cref{lem: pullback formula on etale side}, the target sheaf $\nu'_* f_{\eta,\pe}^*(\OB_{\crys,R_0}\otimes_{\mathbb{Z}_p} T)$ is canonically isomorphic to $\nu'_* (\OB_{\crys,R'_0}\otimes_{\mathbb{Z}_p} f_\eta^{-1}T)$.
    So by the definition of the functor $D_\crys$, the above map can be identified with a natural Frobenius-equivariant map of flat connections over $R'_0[1/p]$ 
    \[
    \iota_\crys : (f_0)_\eta^*D_{\crys,R_0}(T) \longrightarrow D_{\crys,R'_0}(f_\eta^{-1}T),
    \]
    which is functorial with respect to $T\in \Loc_{\mathbb{Z}_p}(X_\eta)$.
    Analogously, we have a natural map of flat connections over $X'_\eta$
    \[
    \iota_\dR :f_\eta^*D_{\dR,X_\eta}(T) \longrightarrow D_{\dR,X'_\eta}(f_\eta^{-1}T),
    \]
    which fits into a naturally commutative diagram with $\iota\otimes_{K_0} K$, namely
    \begin{equation}
    	\label{eq:Dcrys and DdR}
    	    \begin{tikzcd}
    		\bigl((f_0)_\eta^*D_{\crys,R_0}(T)\bigr)\otimes_{K_0} K \arrow[r, "\iota_\crys \otimes_{K_0} K"]  \arrow[d] & D_{\crys,R'_0}(f_\eta^{-1}T)\otimes_{K_0} K \ar[d]\\
    		f_\eta^*D_{\dR,X_\eta}(T) \arrow[r, "\iota_\dR"] & D_{\dR,X'_\eta}(f_\eta^{-1}T).
    	\end{tikzcd}
    \end{equation}
    Here we note that by \Cref{thm: D functors}.\ref{thm: D functors relation} the right vertical map is an injection, and by \Cref{thm: LZ}.\ref{thm: LZ pullback} the bottom map is an isomorphism.
    
    To show the injectivity of the map $\iota_\crys$, it suffices to show the injectivity of the base change $\iota_\crys \otimes_{K_0} K$.
    Notice that since we assumed $K=K'$, we have $X'=X'_0\times_{\Spf(\mathcal{O}_{K_0})} \Spf(\mathcal{O}_K)$.
    So the flat connection $\bigl((f_0)_\eta^*D_{\crys,R_0}(T)\bigr)\otimes_{K_0} K$ is naturally isomorphic to $f_\eta^*(D_{\crys,R_0}(T)\otimes_{K_0} K)$, and the map $\iota_\crys \otimes_{K_0} K$ is identified with the following map of flat connections over $R[1/p]$
    \[
    f_\eta^*(D_{\crys,R_0}(T)\otimes_{K_0} K) \longrightarrow D_{\crys,R'_0}(f_\eta^{-1}T) \otimes_{K_0} K.
    \] 
    So to show the injectivity of $\iota\otimes_{K_0} K$, by the commutative diagram (\ref{eq:Dcrys and DdR}) above, it suffices to show the injectivity of the following map of flat connections 
    \[
    f_\eta^*(D_{\crys,R_0}(T)\otimes_{K_0} K) \longrightarrow f_\eta^*D_{\dR,X_\eta}(T).
    \]
    This then follows from \Cref{thm: D functors}.\ref{thm: D functors relation}, where we showed that $ D_{\crys,R_0}(T)\otimes_{K_0} K\to D_{\dR,X_\eta}(T)$ is an injection of flat connections over $X_\eta$.

    Next, to extend the proof of \ref{thm:pullback for D functors crys} to the case of a general base extension $K'/K$, we can now reduce the original question to the setting when $K'/K$ is totally ramified, $R_0=R_0'$, and $R'=R\otimes_{\mathcal{O}_K} \mathcal{O}_{K'}$. In particular, $f_\eta:X'_\eta=X_\eta\times_{\Spa(K)} \Spa(K')\to X_\eta$ is nothing but a finite \'etale cover given by extension of scalars.
    In this case, the question amounts to showing the injectivity of the map
\begin{equation}
	\label{eq:Dcrys pullback K'/K}
	    \bigl( \nu_*(\OB_{\crys,R_0}\otimes_{\mathbb{Z}_p} T) \bigr)(X_\eta) \longrightarrow \bigl( \nu'_*(\OB_{\crys,R_0}\otimes_{\mathbb{Z}_p} f_\eta^{-1}T) \bigr)(X'_\eta).
\end{equation}
    We let $\mathcal{F}$ be the \'etale sheaf $\nu_*(\OB_{\crys,R_0}\otimes_{\mathbb{Z}_p} T)$ on $X_{\eta,\et}$.
    Then the injectivity of the map in Equation (\ref{eq:Dcrys pullback K'/K}) is equivalent to the injectivity of 
    \[
    \mathcal{F}(X_\eta) \longrightarrow (f_\eta^{-1}\mathcal{F})(X'_\eta),
    \]
    where the latter is nothing but $\mathcal{F}(X'_\eta)$.
    Hence the claim follows from the assumption that $X'_\eta\to X_\eta$ is an \'etale cover.

    Finally, we check that the injections $\iota_{\crys,f}$ are compatible with the composition $X''\xrightarrow{g} X'\xrightarrow{f} X$, namely $\iota_{\crys,g\circ f}= \iota_{\crys,g}\circ (g_0)_\eta^* \iota_{\crys,f}$. As the natural map $D_{\crys,R_0}(T) \to D_{\dR,X_\eta}(T)$ is injective, the equality $\iota_{\crys,g\circ f}= \iota_{\crys,g}\circ (g_0)_\eta^* \iota_{\crys,f}$ follows from the commutativity of the diagram (\ref{eq:Dcrys and DdR}) and the equality $\iota_{\dR ,g\circ f}= \iota_{\dR ,g}\circ (g_0)_\eta^* \iota_{\dR,f}$. 
\end{proof}

\begin{proof}[Proof of \Cref{thm:pullback for D functors}.\ref{thm:pullback for D functors fet}]
		Thanks to the injectivity shown in \ref{thm:pullback for D functors crys}, in order to prove the base change isomorphism, we are free to replace $X_0'$ by a further finite \'etale cover.
		So we assum $f_0:X_0'\to X_0$ is a finite Galois cover with the Galois group $H$.
		Before we discuss the two cases separately, we recall some general construction.
		
		As in \Cref{const:Galois group of rigid space}, we let $S$ be the $p$-completion of the integral closure of $R$ in a fixed connected maximal pro-finite-\'etale extension of $R[1/p]$, and let $\tilde{X}$ be the formal scheme $\Spf(S)$.
		By assumption, the generic fiber $\tilde{X}_\eta=\Spa(S[1/p],S)$ is an affinoid perfectoid space and is a pro-Galois cover of $X_\eta$ with the Galois group $G_{X_\eta}$.
		Then both the $p$-adic local system $T$ and the sheaf $T\otimes_{\mathbb{Z}_p} \OB_{\crys,R_0}$ are trivialized at $\tilde{X}_\eta$, and their sections at $\tilde{X}_\eta$ admit continuous actions of $G_{X_\eta}$.
		So by translating the global section of the pro-\'etale sheaf into the continuous group cohomology with respect to the $G_{X_\eta}$-cover $\tilde{X}_\eta\to X_\eta$ (cf.\ \cite{Sch13}), we have
		\begin{equation}
			\label{eq:Dcrys via G-inv}
			D_{\crys,R_0}(T)=\bigl(T\otimes_{\mathbb{Z}_p} \OB_{\crys,R_0} \bigr)(X_\eta) \simeq \bigl( T(\tilde{X}_\eta)\otimes_{\mathbb{Z}_p} \OB_{\crys,R_0}(\tilde{X}_\eta) \bigr)^{G_{X_\eta}}.
		\end{equation}
		
		In this case, $X'_\eta\to X_\eta$ is a finite Galois cover of rigid spaces with $H\simeq G_{X_\eta}/G_{X'_\eta}$.
		We let $\tilde{X}=\Spf(S)$ be as in \Cref{const:Galois group of rigid space}, then the affinoid perfectoid space $\tilde{X}_\eta$ is covering both $X'_\eta$ and $X_\eta$ under the pro-\'etale topology, with the Galois group being $G_{X'_\eta}$ and $G_{X_\eta}$ respectively.
		Moreover, we have the following formulae for $D_{\crys,R_0}(T)$ and $D_{\crys,R_0'}(f^{-1}T)$:
		\begin{align}
			\label{eq:formulae for Dcrys fet}
			D_{\crys,R_0}(T)  &\simeq \bigl( T(\tilde{X}_\eta)\otimes_{\mathbb{Z}_p} \OB_{\crys,R_0}(\tilde{X}_\eta) \bigr)^{G_{X_\eta}};\\
			D_{\crys,R'_0}(f_\eta^{-1}T)  \simeq &\bigl( T(\tilde{X}_\eta)\otimes_{\mathbb{Z}_p} \OB_{\crys,R'_0}(\tilde{X}_\eta) \bigr)^{G_{X'_\eta}} \simeq \bigl( T(\tilde{X}_\eta)\otimes_{\mathbb{Z}_p} \OB_{\crys,R_0}(\tilde{X}_\eta) \bigr)^{G_{X'_\eta}},
		\end{align}
		where the last isomorphism follows from the equality that $\OA_{\crys,R_0}|_{X'_\eta} \simeq \OA_{\crys,R'_0}$ in \Cref{prop: Acrys and OAcrys}.\ref{prop: Acrys and OAcrys et}.
		We let $M$ be the finite $R'_0[1/p]$-module 
		\[
		\bigl( T(\tilde{X}_\eta)\otimes_{\mathbb{Z}_p} \OB_{\crys,R_0}(\tilde{X}_\eta) \bigr)^{G_{X'_\eta}}.
		\]
		Then by Equation (\ref{eq:formulae for Dcrys fet}) we get
		\[
		D_{\crys,R_0}(T)\simeq M^H; \quad D_{\crys,R'_0}(f_\eta^{-1}T) \simeq M.
		\]
		Hence to show the surjection of $f_{0,\eta}^* D_{\crys,R_0}(T) \longrightarrow D_{\crys,R'_0}(f_\eta^{-1}T)$, it suffices to prove that for a finite $R'_0[1/p]$-module $M$ with a semi-linear action by the finite group $H$, the following natural map is an isomorphism
		\[
		M^H\otimes_{R_0[1/p]} R'_0[1/p] \longrightarrow M.
		\]
		The latter follows from Galois descent, so we are done.
\end{proof}

	\subsection{Weak $F$-isocrystals}
	\label{sub: weak F-isoc}
	
	To prepare for the construction of thecrystalline Riemann--Hilbert functor later, in this subsection, we introduce the \emph{weak $F$-isocrystal}, defined as the sheaf of $F$-isocrystals that satisfies an injectivity condition.
	
	\begin{definition}
		\label{def: sheaf of F-isocrystals}
		Let $Z$ be a scheme of finite type over $k$. 
		A \textit{sheaf} of $(F$-)isocrystals $\mathcal{E}$ on $Z$ is the datum which assigns to each open subscheme $U \subseteq Z$ an object $\mathcal{E}_U \in \Isoc^\varphi(Z_{\crys})$, and assigns to each inclusion of open subscheme $U' \into U$ a restriction morphism $\mathrm{res}_{U', U} : \mathcal{E}_U|_{U'} \to \mathcal{E}_{U'}$, such that the following two conditions hold: 
		\begin{enumerate}[label=\upshape{(\alph*)}]
			\item For a sequence of inclusions $U'' \into U' \into U$, we have $\mathrm{res}_{U'', U'} \circ \mathrm{res}_{U', U} = \mathrm{res}_{U'', U}$. 
			\item If $\{ U_\alpha \}_{\alpha \in I}$ is a Zariski cover of $U$, and $(T,V)$ is a $p$-torsionfree pro-PD-thickening in $Z_{\crys}$ such that $T_{p=0}$ is flat over $Z$, then the following natural sequence 
			\begin{equation}
				\label{eqn: sheaf axioms}
				0 \to \mathcal{E}_U(T,V) \to \prod_{\alpha \in I} \mathcal{E}_{U_\alpha}(T|_{V_\alpha},V_\alpha) \rightrightarrows \prod_{\alpha, \beta \in I} \mathcal{E}_{U_{\alpha, \beta}}(T|_{V_{\alpha, \beta}}, V_{\alpha, \beta})
			\end{equation}
			of $\sO_T$-modules is exact, where $U_{\alpha, \beta} = U_{\alpha} \cap U_\beta$, $V_\alpha = V \cap U_\alpha$ and $V_{\alpha, \beta} = V \cap U_{\alpha, \beta}$. 
		\end{enumerate}
		Denote the category of sheaves of crystalline ($F$-)isocrystals by $\Shv(Z_{\mathrm{Zar}}, \Isoc^{(\varphi)})$, whose morphisms are defined in the obvious way.
		If $M$ is a log structure on $Z$, in a completely analogous way we define the sheaf of log crystalline $F$-isocrystals on $(Z, M)_\lcrys$,  and in the above notations we replace $Z$ by $(Z, M)$.
		Similarly for the category $\Shv(Z_{\mathrm{Zar}}, \Isoc^{\varphi,N})$ of objects that are equipped with nilpotent endomorphisms.
	\end{definition}
	
	\begin{remark}
 		\label{rmk:Sheaf_of_isoc_bases_change}
		Let $Z_\mathrm{Zar}'$ be a basis of the Zariski topology of $Z$; for example, the subcategory of affine open subschemes in $Z$.
		Then the inclusion functor of sites $Z_\mathrm{Zar}'\to Z_\mathrm{Zar}$ naturally defines an equivalence of categories
		\[
		\Shv(Z'_\mathrm{Zar}, \Isoc^\varphi) \simeq \Shv(Z_\mathrm{Zar}, \Isoc^\varphi).
		\]
		So to give a sheaf of $F$-isocrystal, it suffices to produce the data on a slightly smaller subcategory.
	\end{remark}
	\begin{remark}
	The usual notion of the $F$-isocrystal satisfies the Zariski descent.
	In particular, the category $\Isoc^{(\varphi)}(Z)$ is naturally viewed as the full subcategory of $\Shv(Z_{\mathrm{Zar}}, \Isoc^{(\varphi)})$ of \textit{coherent} objects, i.e., those whose restriction morphisms are all isomorphisms. 
	\end{remark}
	
 We also note that the pullback functor naturally extends to the notion of weak isocrystals.
 
 \begin{definition}
		\label{def:pullback_of_wIsoc}
		Let $f : Z' \to Z$ be a morphism between finite type $k$-schemes. 
		Define the pullback functor $f^*\colon \Shv(Z_\mathrm{Zar}, \Isoc^{(\varphi)}) \to \Shv(Z'_\mathrm{Zar}, \Isoc^{(\varphi)})$ by the Zariski sheafification of the crystalline pullback functor for isocrystals.
		Define the logarithmic analogue similarly. 
	\end{definition}

	For the sake of constructing the crystalline Riemann-Hilbert functors, it is helpful to consider a slightly weaker notion: 
	\begin{definition}
		\label{def:weak_F-isoc}
		We say that $\mathcal{E} \in \Shv(Z_{\mathrm{Zar}}, \Isoc^{(\varphi)})$ is a \emph{weak ($F$-)isocrystal} if all restriction morphisms are injective, and we denote the full subcategory of weak ($F$-)isocrystals by $\mathrm{wIsoc}^{(\varphi)}(Z_\crys)$. 
		The log crystalline analogues are defined similarly, and we denote the corresponding category by $\mathrm{wIsoc}^{(\varphi)}((Z,M)_\lcrys)$.
	\end{definition}
	
	Note that $\Isoc^{(\varphi)}(Z_\crys)$ is a full subcategory of $\mathrm{wIsoc}^{(\varphi)}(Z_\crys)$ and there is a natural projection $\mathrm{wIsoc}^{(\varphi)}(Z_\crys) \to \Isoc^{(\varphi)}(Z_\crys)$ given by ``taking global sections'' 
	\[
	\mathcal{E} \mapsto \mathcal{E}_Z.
	\]
	
	To analyze the discrepancy between a weak isocrystal and an actual isocrystal, we introduce the following notions.  
	\begin{definition}
 		\label{def:various_notions_of_wIsoc}
		Let $Z$ be a connected smooth variety over $k$. 
		Let $\mathcal{E}$ be either in $\wIsoc^\varphi(Z_\crys)$ or in $\wIsoc^\varphi((Z, (0^\IN)^a)_\lcrys)$.\footnote{Recall that by \cref{thm: log crys = crys + N} $\sE$ is always locally free.}
		\begin{itemize}
			\item We let $\mathrm{rank}^+ (\mathcal{E}) \colonequals \max_U \mathrm{rank}(\mathcal{E}_U)$ and $\mathrm{rank}^-(\mathcal{E}) \colonequals \min_U \mathrm{rank} ( \mathcal{E}_U)$, where $U$ runs through all affine open subschemes of $Z$.  
			\item We let $U_i$ be the union of all affine open subschemes $V \in Z_\mathrm{Zar}$ such that $\mathrm{rank}(\mathcal{E}_V)\geq i$.
			Then the \emph{rank stratification} of $\mathcal{E}$ is defined as the finite chain of open immersions
			\[
			Z=U_{\mathrm{rank}^-(\mathcal{E})} \supseteq U_{\mathrm{rank}^-(\mathcal{E})+1} \supseteq \cdots \supseteq U_{\mathrm{rank}^+(\mathcal{E})}.
			\]
			\item We call $U_\mathcal{E}\colonequals U_{\mathrm{rank}^+(\mathcal{E})}$ the \emph{pure locus} of $\mathcal{E}$. 
			\item Given a point $z \in Z$, we say that a connected open $U$ containing $z$ is a \textit{stable open neighborhood} if $\mathrm{rank} (\mathcal{E}_U)$ achieves the maximum among all open neighborhoods of $z$. 
		\end{itemize}
	\end{definition}

We give an explicit description of the pullback functor for weak F-isocrystals.

	\begin{remark}
		Let $f : Z' \to Z$ be a morphism between smooth connected varieties over $k$. Given $\mathcal{E} \in \wIsoc^\varphi(Z_\crys)$, the pullback $f^* \mathcal{E} \in \wIsoc^\varphi(Z')$ can be explicitly described as follows: 
		For every point $z' \in Z'$, choose a stable open neighborhood $U_{z'}$ of $f(z')$. 
		Then $f^* \mathcal{E} \in \wIsoc^\varphi(Z'_\crys)$ is the unique Zariski sheaf of isocrystals such that $(f^* \mathcal{E})_{f^{-1}(U)} = g_\crys^*\mathcal{E}_U$, where $g$ is the map of $k$-schemes $f^{-1}(U)\to U$, and $g_\crys^*$ is the usual pullback functor of isocrystals.
		It is not hard to check to that $f^* \mathcal{E}$ lies in $\wIsoc^\varphi(Z)$ and is independent of the choice of the stable open neighborhoods $U_{z'}$. 
	\end{remark}
Notice that for every $\mathcal{E} \in \wIsoc^\varphi(Z_\crys)$, the restriction $i^* \mathcal{E} $ is an object in $\Isoc^\varphi(
	U_\mathcal{E})$ for the open immersion  $i : U_\mathcal{E} \into Z$, where $U_\mathcal{E}$ is the pure locus.  
	In particular, a weak isocrystal $\mathcal{E}$ is an isocrystal if and only if $\mathrm{rank}^+ (\mathcal{E})=\mathrm{rank}^- (\mathcal{E})$.
	Below we provide a simple example that is not an isocrystal.
	\begin{example}
		If $i : U \into Z$ is the inclusion of an open subscheme, then there is a natural functor $i_! : \wIsoc^\varphi(U) \to \wIsoc^\varphi(Z_\crys)$ such that for every $\mathcal{E} \in \wIsoc^\varphi(U_\crys)$, we define $(i_! \mathcal{E})_V = \mathcal{E}_V$ when $V \subseteq U$ and $(i_! \mathcal{E})_V = 0$ otherwise. 
		If $U \neq Z$, then for every nonzero $\mathcal{E} \in \Isoc^\varphi(U_\crys)$, the pure locus $i_! \mathcal{E}$ is equal to $U$.
		In particular, the object $i_! \mathcal{E}$ is always in $\wIsoc^\varphi (Z_\crys)$ but not in $\Isoc^\varphi(Z_\crys)$. 
	\end{example}

\subsection{Global Riemann--Hilbert functor}
\label{sec: global RH}
In this subsection, we show that the $D_\crys$ and the $D_\st$ functors, once translated into $F$-isocrystals over the special fiber, are independent of the choice of the unramified models.
This will in particular allows us to obtain a global version of the Riemann--Hilbert functor, sending $\mathbb{Z}_p$-local systems to weak $F$-isocrystals.

We start with the independence of models for the $D$-functors.
\begin{theorem}
		\label{cor:D is independent of R_0}
		Let $X=\Spf(R)$ be a smooth affine $p$-adic formal scheme over $\mathcal{O}_K$ as in \Cref{conv of smooth affine}, and assume $X$ is small.
		Then the $F$-isocrystals corresponding to $D_{\crys,R_0}(T)$ and $D_{\st,R_0}(T)$ are independent of the choice of the unramified model $R_0$ and its Frobenius structure.
\end{theorem}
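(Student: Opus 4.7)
The plan is to show that for any two unramified models $(R_0, \varphi_{R_0})$ and $(R_0', \varphi_{R_0'})$ of $R$, the flat connections $D_{\crys, R_0}(T)$ and $D_{\crys, R_0'}(T)$ correspond to the same $F$-isocrystal on $X_{k, \crys}$ under the equivalence of \Cref{equiv def isoc}. The strategy is to identify both with a common value on a ``bi-framed'' intermediate thickening, using the crystal property.

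First, I would introduce the bi-framed ring $R_0^{(2)} \colonequals (R_0 \ctensor_W R_0')^\wedge_p$ equipped with the tensor Frobenius $\varphi^{(2)} \colonequals \varphi_{R_0} \ctensor \varphi_{R_0'}$. The two multiplication maps $R_0, R_0' \to R$ combine into a surjection $R_0^{(2)} \to R_k$, and we let $D^{(2)}$ denote its $p$-complete PD-envelope, which inherits a Frobenius endomorphism from $\varphi^{(2)}$. Both natural projections $R_0, R_0' \to D^{(2)}$ are Frobenius-equivariant, but crucially there is in general no Frobenius-equivariant morphism $R_0 \to R_0'$ lifting the identity on $R$, which is why we must pass through $D^{(2)}$.

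Next, I would construct a bi-framed period sheaf $\sO \IB_\crys^{(R_0, R_0')}$ on $X_{\eta, \pe}$ as the $p$-complete PD-envelope of the surjection $(\IA_{\inf} \ctensor_W R_0 \ctensor_W R_0')^\wedge_p[1/p] \to \what{\sO}_{X_\eta}$, equipped with the tensor Frobenius, the induced connection, and natural Frobenius-equivariant maps from both $\sO\IB_{\crys, R_0}$ and $\sO\IB_{\crys, R_0'}$. The key structural results for this sheaf — an explicit local description on the toric pro-\'etale cover analogous to \Cref{prop: Acrys and OAcrys}\ref{prop: Acrys and OAcrys explicit} and the cohomology computation $\nu_* \sO\IB_\crys^{(R_0, R_0')} \simeq D^{(2)}[1/p]$ analogous to \Cref{thm: coh of period sheaves}\ref{thm: coh of period sheaves crys} — can be established by essentially identical arguments as in Tan-Tong \cite{TT19}, with the single framing replaced by a tensor product of two framings.

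Granting these ingredients, the projection formula applied to the local system $T$ and the flatness of $\sO\IB_\crys^{(R_0, R_0')}$ over each of $\sO\IB_{\crys, R_0}$ and $\sO\IB_{\crys, R_0'}$ yield canonical Frobenius-equivariant isomorphisms
\[
D_{\crys, R_0}(T) \tensor_{R_0} D^{(2)} \simeq \nu_*(T \tensor \sO\IB_\crys^{(R_0, R_0')})(X_\eta) \simeq D_{\crys, R_0'}(T) \tensor_{R_0'} D^{(2)},
\]
compatible with connections. This resulting identification is precisely the Taylor-series comparison isomorphism witnessing that the two flat connections over $R_0[1/p]$ and $R_0'[1/p]$ arise from evaluating the same $F$-isocrystal on $X_{k, \crys}$ at two different pro-PD-thickenings; applying the cocycle/descent formalism of \Cref{equiv def isoc} concludes the proof. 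The semi-stable analogue follows identically by substituting $\sO\IB_\st$ (with its log structure and monodromy operator) for $\sO\IB_\crys$ throughout, and invoking \Cref{equiv def log isoc} in place of \Cref{equiv def isoc}.

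The main obstacle is technical rather than conceptual: verifying that the bi-framed period sheaf $\sO\IB_\crys^{(R_0, R_0')}$ admits the expected local structure and computation of direct image. This in turn reduces to checking that, pro-\'etale locally on a toric cover, $\sO\IB_\crys^{(R_0, R_0')}$ is a PD-polynomial ring in two sets of variables (one for each framing) over $\IA_\crys$, from which flatness and the cohomology calculation follow from the smoothness of $R_0$ and $R_0'$ by arguments parallel to those already in \Cref{sub period sheaves with conn}.
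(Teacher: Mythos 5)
Your proposal follows essentially the same route as the paper's own proof: the paper also introduces the $p$-complete PD-envelope $A$ of $(R_0\otimes_W R_0')^\wedge_p \to R_k$ (your $D^{(2)}$), forms the associated ``bi-framed'' period sheaf $\OB_{\crys,A}$ (your $\sO\IB_\crys^{(R_0,R_0')}$), shows $D_{\crys,R_0}(T)\otimes_{R_0[1/p]}A[1/p]\simeq D_{\crys,A}(T)\simeq D_{\crys,R_0'}(T)\otimes_{R_0'[1/p]}A[1/p]$, and then verifies the cocycle condition via a triple-product thickening $A_{0,1,2}$. The one place where your sketch is lighter than the paper is the base-change isomorphism $\nu_*(T\otimes\sO\IB_\crys^{(R_0,R_0')}) \simeq D_{\crys,R_0}(T)\otimes_{R_0}D^{(2)}$: since $D^{(2)}$ is a $p$-completed PD-polynomial ring (not finite over $R_0$), one cannot simply invoke the projection formula; the paper isolates this point as \Cref{lem:p-complete_direct_sum_and_coh}, showing $\lim_n \mathrm{H}^1(X_\eta,\mathcal{F}\otimes_{R_0}M)[p^n]$ vanishes for $M$ a free $R_0$-module and $\mathcal{F}$ a $p$-complete $p$-torsionfree pro-\'etale sheaf. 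This vanishing, rather than flatness alone, is the technical crux; you should make it explicit, but your overall strategy is the same.
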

\begin{proof}
		Let $R_0$ and $R_0'$ be two unramified models of $X$ that are equipped with Frobenius endomorphisms.
		We let the ring $A$ be the $p$-complete divided power envelope of the surjection
		\[
		(R_0\otimes_W R_0')^\wedge_p \longrightarrow R_k,
		\]
		and let $p_1:R_0\to A$ and $p_2:R'_0\to A$ be the two structure maps.
		Then the ring $A$ is equipped with a continuous differential $\nabla:A\to A\otimes_{(R_0\otimes_W R_0')^\wedge_p} \Omega^1_{(R_0\otimes_W R_0')^\wedge_p/W}$ over $W$ and a Frobenius endomorphism $\varphi_A$, which are compatible with that of $R_0$ and $R_0'$.
		In addition, by the smallness of $X$, the map $p_1$ identifies $A$ with the $p$-complete divided power polynomial ring $R_0\{u_1,\ldots,u_r\}$.
  Here $u_i$ is the image of the element $x_i\otimes 1 - 1\otimes x_i'\in R_0\otimes_W R_0'$, and $x_i$ and $X_i'$ are local coordinates in $R_0$ and $R_0'$ respectively whose images in $R_k$ coincide.
            
		Here we note that the explicit formula above in particular implies that the continuous relative differential over $R_0$, namely the natural map
		\[
		\nabla_{A/R_0}:A\to A\otimes_{(R_0\otimes_W R_0')^\wedge_p} \Omega^1_{(R_0\otimes_W R_0')^\wedge_p/R_0},
		\]
		has the kernel equal to $R_0$.
		The same holds for the map $p_2$ and $\nabla_{A/R_0'}$ as well.
		
		Next, we define the the pro-\'etale sheaf $\OA_{\crys,A}$ over $X_{\eta,\pe}$ to be the $p$-complete divided power envelope for the surjection
		\[
		(\mathbb{A}_{\inf}\otimes_W A)^\wedge_p \longrightarrow \widehat{\mathcal{O}}^+,
		\]
		and let $\OB_{\crys,A}$ be its localization defined by pro-\'etale locally inverting the element $\mu$, where the latter is naturally a sheaf of algebras over $\mathbb{B}_\crys$.
		Analogous to \Cref{def OAcrys}, there are a natural Frobenius endomorphism and an $\mathbb{A}_\crys$-linear connection on $\OA_{\crys,A}$ that are compatible with that of $\OA_{\crys,R_0}$ and $\OA_{\crys,R_0'}$.
		Moreover, the relative differential $\nabla_{A/R_0}$ on $A$ induces a differential morphism on $\OA_{\crys,A}$ that is linear over $\OA_{\crys,R_0}$, for which we use the same notation $\nabla_{A/R_0}$.
		Here by the smallness and the explicit construction of the divided power envelope, the sheaf $\OA_{\crys,A}$ is naturally identified with the complete divided power polynomial rings, namely we have
	\begin{equation}
\label{eq:OBcrys_A_formula}
\OA_{\crys,R_0} \{u_1,\ldots,u_r\} \xrightarrow{\sim} \OA_{\crys,A}.
	\end{equation}
		The latter implies that the kernel of the relative differential $\nabla_{A/R_0}: \OA_{\crys,A} \to \OA_{\crys,A} \otimes_{(R_0\otimes_W R_0')^\wedge_p} \Omega^1_{(R_0\otimes_W R_0')^\wedge_p/R_0}$ is equal to $\OA_{\crys,R_0}$.
		The same applies to $R_0'$ as well, which we shall not repeat.
		We also let $\OA_{\st,A}$ be the tensor product $\OA_{\crys,A}\otimes_{\mathbb{A}_\crys} \mathbb{A}_\st$, and similarly define its localization $\OB_{\st,A}$.
		By \Cref{def OAst} and the discussion analogous to above, one naturally obtains an absolute connection, two relative connections, and a Frobenius structure on $\OA_{\st,A}$ that are compatible with the previous constructions.
		Furthermore, the nil operator on $\Ast$ induces a nil operator on $\OA_{\st,A}$ whose kernel is $\OA_{\crys,A}$.
		
		We then let $T$ be a $\mathbb{Z}_p$-local system over $X_\eta$.
		Similar to what we did for \Cref{thm: D functors}, we define the following Frobenius equivariant connections over $A[1/p]$ 
		\[
		D_{\crys,A}(T)\colonequals (T\otimes_{\mathbb{Z}_p} \OB_{\crys,A})(X_\eta),\quad D_{\st,A}(T)\colonequals (T\otimes_{\mathbb{Z}_p} \OB_{\st,A})(X_\eta),
		\]
		where the second term admits a natural nil endomorphism $N_{D_{\st,A}(T)}$ that is induced from $N_{\OB_{\st,A}}$, and there is a natural injection $D_{\crys,A}(T) \to  D_{\st,A}(T)$ identifies the source as the kernel of $N_{D_{\st,A}(T)}$.
		Moreover, the canonical injection of period sheaves $\OB_{\crys,R_0}\to \OB_{\crys,A}$ induces the equivariant map of abelian groups
		\[
		D_{\crys,R_0}(T) \longrightarrow D_{\crys,A}(T),
		\]
		and similar ones for $R_0'$, and $D_{\st,*}$ for $*\in\{R_0,R_0'\}$.
		  
		Now we claim that the linearizations below are isomorphisms 
		\[
		D_{\crys,R_0}(T)\otimes_{R_0[1/p]} A[1/p] \longrightarrow D_{\crys,A}(T), \quad D_{\st,R_0}(T)\otimes_{R_0[1/p]} A[1/p] \longrightarrow D_{\st,A}(T).
		\]
		For the first map, since $\OB_{\crys,R_0}$ is the union of the subsheaves $\cup_{n \in \IN} \OA_{\crys,R_0}\cdot \mu^{-n}$, and the same is true for $\OB_{\crys,A}$, it suffices to show that the following map is an isomorphism
		\footnote{Here by the finiteness of $T\otimes_{\mathbb{Z}_p} \OA_{\crys,R_0}\cdot \mu^{-n}(X_\eta)$ over $R_0$, we implicitly identify the left hand side below with the non-complete tensor product.}
            \begin{equation}
                \label{eqn: check iso 1}
                \bigl( (T\otimes_{\mathbb{Z}_p} \OA_{\crys,R_0}\cdot\frac{1}{\mu^n})(X_\eta) \otimes_{R_0} A \bigr)^\wedge_p \longrightarrow (T\otimes_{\mathbb{Z}_p} \OA_{\crys,A}\cdot\frac{1}{\mu^n})(X_\eta).
            \end{equation}
		Similarly, for the second map, since $\OA_{\st,*}$ is locally the union of the direct sum $\bigoplus_{i=0}^m\OA_{\crys,*}\cdot \logu^i$ for $m\in \mathbb{N}$, it suffices to show that the following map is an isomorphism
        \begin{equation}
        \label{eqn: check iso 2}
            \bigl( \bigoplus_{i=0} (T\otimes_{\mathbb{Z}_p} \OA_{\crys,R_0}\cdot\frac{1}{\mu^n}\cdot \log(\frac{u}{[\pi^\flat]})^i )(X_\eta) \otimes_{R_0} A \bigr)^\wedge_p \longrightarrow \bigl(  \bigoplus_{i=0} (T\otimes_{\mathbb{Z}_p} \OA_{\crys,A}\cdot\frac{1}{\mu^n}\cdot \log(\frac{u}{[\pi^\flat]})^i) \bigr)(X_\eta)
        \end{equation}
		By the explicit formula of $\OA_{\crys,A}$, the pro-\'etale sheaf is $p$-completely free over $\OA_{\crys,R_0}$; namely there is a free $R_0$-module $M=\bigoplus_I R_0$, where $I$ is some index set, such that 
		\[
		\OA_{\crys,A} \simeq \bigl( \OA_{\crys,R_0} \otimes_{R_0} M \bigr)^\wedge_p.
		\]
		We then make the observation, so that by setting the $\mathcal{F}$ below to be the pro-\'etale sheaf $\bigoplus_{i=0} T\otimes_{\mathbb{Z}_p} \OA_{\crys,R_0}\cdot \mu^{-n} \cdot \logu^i$, one checks that (\ref{eqn: check iso 1}) and (\ref{eqn: check iso 2}) are both isomorphisms. 
\begin{lemma}
			\label{lem:p-complete_direct_sum_and_coh}
			We let $\mathcal{F}$ be a $p$-complete and $p$-torsionfree $R_0$-linear pro-\'etale sheaf over $X_{\eta,\pe}$, and let $M=\oplus_I R_0$ be a free $R_0$-module. 
			There is a natural isomorphism of $p$-complete $R_0$-modules
			\[
			\bigl( \mathcal{F}(X_\eta)\otimes_{R_0} M \bigr)^\wedge_p \simeq \bigl( \mathcal{F}\otimes_{R_0} M \bigr)^\wedge_p (X_\eta).
			\]
\end{lemma}
\begin{proof}
		By translating the two terms into limits, we reduce to establishing the following isomorphism
		\[
		\lim_n \bigl( \mathcal{F}(X_\eta)/p^n\otimes_{R_0/p^n} M/p^n \bigr) \simeq \lim_n \bigl( (\mathcal{F}/p^n\otimes_{R_0/p^n} M/p^n) (X_\eta) \bigr).
		\]
		By the universal coefficient theorem, the above is an injection where the cokernel is the limit 
		\[
		\lim_n \mathrm{H}^1(X_\eta, \mathcal{F}\otimes_{R_0} M)[p^n].
		\]
		Since the cohomology commutes with the filtered colimit, the cokernel is isomorphic to 
		\[
		\lim_n \bigl(\mathrm{H}^1(X_\eta, \mathcal{F})\otimes_{R_0} M \bigr)[p^n].
		\]
		To show the vanishing of the latter, we note that the direct sum $\bigoplus_I R_0$ is contained in the direct product $\prod_I R_0$. Hence it suffices to show the vanishing of the following
		\[
		\lim_n \bigl( \prod_I \mathrm{H}^1(X_\eta, \mathcal{F}) \bigr)[p^n].
		\]
		The last term is further isomorphic to the limit
		\[
		\lim_n \prod_I \bigl(  \mathrm{H}^1(X_\eta, \mathcal{F}) [p^n] \bigr),
		\]
		which, by exchanging the order of the limit functors, is isomorphic to 
		\[
		\prod_I \lim_n \bigl(  \mathrm{H}^1(X_\eta, \mathcal{F}) [p^n] \bigr).
		\]
		Finally, since the pro-\'etale sheaf $\mathcal{F}$ is $p$-complete and $p$-torsionfree, we know $\lim_n \bigl(  \mathrm{H}^1(X_\eta, \mathcal{F}) [p^n] \bigr)$ vanishes, which finishes the proof. \end{proof}
		We let $\mathcal{E}_{\crys,*} \in \Isoc^\varphi(X_{k,\crys})$ for $*\in\{R_0,R_0',A\}$ be the $F$-isocrystals defined by the Frobenius equivariant flat connections $D_{\crys,*}$.
		Then by the Frobenius equivariant isomorphisms of the flat connections above, we obtain the isomorphism of $F$-isocrystals
		\[
		q_{R_0,R_0'}: \mathcal{E}_{\crys,R_0} \xrightarrow{\sim} \mathcal{E}_{\crys,A} \xrightarrow{\sim} \mathcal{E}_{\crys,R_0'},
		\]
		and similarly for $\mathcal{E}_{\st,*}$.
		This shows the independence of the $F$-isocrystals associated to the $p$-adic local system $T$.
		
		Finally, the isomorphism $q_{R_0,R_0'}$ constructed above in fact satisfies the cocycle condition. 
		To see this, let $R_{i}$ for $i=0,1,2$ be three unramified models with Frobenius structures.
		Similarly to the first paragraph above, for any $i\neq j$ in $\{0,1,2\}$, we form the coproduct of the divided power thickenings $A_{i,j}$, and let $A_{0,1,2}$ be the coproduct of the three.
		By the previous discussions, we get isomorphic $F$-isocrystals
		\[
		q_{R_{i},R_{j}} \colon \mathcal{E}_{\crys,R_{i}} \xrightarrow{\sim} \mathcal{E}_{\crys,A_{i,j}} \xrightarrow{\sim} \mathcal{E}_{\crys,R_{j}}.
		\]
		In addition, by the equality of the compositions 
		\[
		\begin{tikzcd}[column sep=small,row sep=small]
		& A_{i,j} \ar[rd] &\\
		R_{j}  \ar[ru] \ar[rd] && A_{0,1,2},\\
		& A_{j,k} \ar[ru] &
		\end{tikzcd}
	    \]
	    the two compositions of isomorphisms among $F$-isocrystals below are naturally identified
	    \[
	    \begin{tikzcd}[column sep=small,row sep=small]
	    	& \mathcal{E}_{\crys,A_{i,j}}  \arrow[rd, "\sim"] &\\
	    	\mathcal{E}_{\crys,R_{j}}   \arrow[ru, "\sim"] \arrow[rd, "\sim"] && \mathcal{E}_{\crys,A_{0,1,2}}.\\
	    	& \mathcal{E}_{\crys,A_{j,k}}  \arrow[ru, "\sim"] &
	    \end{tikzcd}
        \]
        Here in a way completely analogous to the first several paragraphs above, we define $\mathcal{E}_{\crys,A_{0,1,2}}$ to be the $F$-isocrystal associated to the flat connection $D_{\crys,A_{0,1,2}}(T)$ over $A_{0,1,2}[1/p]$.
        As a consequence, by replacing the middle arrows with the isomorphism towards and outwards $\mathcal{E}_{\crys,A_{0,1,2}}$, we see that the composition $q_{R_{i},R_{j}} \circ q_{R_{j},R_{k}}$ is equal to the isomorphism $q_{R_{i},R_{k}}$, which finishes the proof.
\end{proof}

\begin{remark}
	Analogous to \cite[\S 13]{BMS1}, another approach to prove \Cref{cor:D is independent of R_0} is to consider the collection $\mathrm{Frame}_R$ of all finite subsets $\Sigma\subset R^\times$, such that 
	\begin{itemize}
			\item the map $k[(x_u^{\pm1})_{u\in \Sigma}]\xrightarrow{x_u\mapsto u\text{~mod~}\pi} R_k$ is surjective;
			\item there exists a subset $\{u_1,\ldots,u_r\}\subset \Sigma$ such that the map $k[x_{u_1}^{\pm1},\ldots,x_{u_r}^{\pm1}] \to R_k$ is \'etale.
	\end{itemize}
	Here the collection $\mathrm{Frame}_R$ is non-empty after replacing $R$ by some ($p$-complete) Zariski localization, is closed under finite unions, and in particular has no reference to any specific coordinates.
	For each $\Sigma\in \mathrm{Frame}_R$, we can then let $A\coloneq A_\Sigma$ be the $p$-complete $p$-adic divided power envelope for the surjection 
	\[
	W\langle (x_u^{\pm1})_{u\in \Sigma} \rangle \longrightarrow R_k.
	\]
	As in the proof of \Cref{cor:D is independent of R_0}, one can produce the period sheaf $\OA_{\crys,A_\Sigma}$, the flat connection $D_{\crys,A_\Sigma}(T)$, and the $F$-isocrystal $\mathcal{E}_{\crys,A_\Sigma}$, for each $\Sigma\in \mathrm{Frame}_R$.
	Moreover, any unramified model $R_0$ admits a map to some object $A_\Sigma$, identifying the latter as a $p$-complete divided power polynomial ring over $R_0$.
	Hence arguing similarly as in the last paragraphs of \Cref{cor:D is independent of R_0}, we get the equivalences of $F$-isocrystals $\mathcal{E}_{\crys,A_\Sigma}$ for $\Sigma\in \mathrm{Frame}_R$.	
\end{remark}

	We let $X_\mathrm{Zar}^w$ be the category of small affine open subsets for a given smooth $p$-adic formal scheme $X$, and equip it with the $p$-complete Zariski topology.
	Then $X_\mathrm{Zar}^w$ forms a basis of the Zariski site $X_\mathrm{Zar}$, and by \Cref{rmk:Sheaf_of_isoc_bases_change} the inclusion functor of the sites induces an equivalence of the categories
	\[
	\Shv(X_{\mathrm{Zar}}, \Isoc^\varphi) \simeq \Shv(X_\mathrm{Zar}^w, \Isoc^\varphi),
	\]
	and a similar statement holds for the logarithmic analogue, or the one with nilpotent endomorphisms.
	For a $\mathbb{Z}_p$-local system $T$ over $X_\eta$, the construction in \Cref{sub D functors} (cf. \Cref{functoriality of being associated}) and \Cref{cor:D is independent of R_0} allow us to functorially define two presheaves of $F$-isocrystals on $X_\mathrm{Zar}^w$, whose values at an object $U$ are given by
	\begin{align*}
			\mathcal{E}_{\crys,T} \colon  U &\longmapsto \mathcal{E}_{\crys,U,T} \in \Isoc^\varphi(U_{k,\crys});\\
			\mathcal{E}_{\st,T} \colon U &\longmapsto \mathcal{E}_{\st,U,T} \in \Isoc^{\varphi,N}(U_{k,\crys}).
	\end{align*}
    Here $\mathcal{E}_{\crys,U,T}$ canonically embeds into $\mathcal{E}_{\st,U,T}$ as the kernel of the nilpotent endomorphism on the target.
    Our next result shows that in fact the above presheaves are weak $F$-isocrystals.
    
\begin{proposition}
    	\label{thm:E_* are weak F-isoc}
    	Let $X$ be a smooth $p$-adic formal scheme over $\mathcal{O}_K$, and let $T$ be a $\mathbb{Z}_p$-local system over $X_\eta$.
    	Then the presheaves of $F$-isocrystals (resp. with a nilpotent morphism) $\mathcal{E}_{\crys,T}$ (resp. $\mathcal{E}_{\st,T}$) is a weak $F$-isocrystal over $X_{k,\crys}$.
\end{proposition}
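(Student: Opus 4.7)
The plan is to verify the two defining properties of a weak $F$-isocrystal (resp.\ weak $F$-isocrystal with monodromy) on $X_\mathrm{Zar}^w$: well-definedness and injectivity of the restriction morphisms, and the sheaf axiom (\ref{eqn: sheaf axioms}). For the first property, given a Zariski open immersion $j : U' \into U$ of small affines in $X_\mathrm{Zar}^w$, we take $R_0^{U'}$ to be the Zariski localization of $R_0^U$ along $U' \into U$, equipped with the induced Frobenius lift, so that the pair $(R_0^U, R_0^{U'})$ satisfies the hypotheses of \Cref{thm:pullback for D functors}. The theorem then provides a canonical Frobenius and connection equivariant injection $\iota_{\crys, j} : (j_0)_\eta^* D_{\crys, R_0^U}(T|_{U_\eta}) \into D_{\crys, R_0^{U'}}(T|_{U'_\eta})$, and similarly an injection $\iota_{\st, j}$ which is also monodromy equivariant. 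Translating these injections to isocrystals via \Cref{equiv def isoc} and \Cref{equiv def log isoc}, we obtain the desired injective restriction morphism $\mathcal{E}_{\crys, U, T}|_{U'_{k,\crys}} \into \mathcal{E}_{\crys, U', T|_{U'_\eta}}$ (respectively for $\mathcal{E}_{\st, T}$), whose independence of auxiliary choices and compatibility with compositions follow from the cocycle statement in \Cref{thm:pullback for D functors}.

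For the sheaf axiom, fix a Zariski cover $\{U_\alpha\}$ of some $U \in X_\mathrm{Zar}^w$ by objects of $X_\mathrm{Zar}^w$, and a $p$-torsionfree pro-PD-thickening $(T, V)$ in $U_\crys$ with $T_{p=0}$ flat over $U$. The key input is \Cref{prop: Acrys and OAcrys}\ref{prop: Acrys and OAcrys et} (resp.\ \Cref{cor: property of OAst}\ref{cor: property of OAst et}), which asserts that the pro-\'etale period sheaves localize well: $\OB_{\crys, R_0^U}|_{U_{\alpha, \eta}} \simeq \OB_{\crys, R_0^{U_\alpha}}$ and similarly for $\OB_{\st, R_0^U}$. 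Applying $\nu_*$ of the pro-\'etale sheaf $T \otimes_{\mathbb{Z}_p} \OB_{\crys, R_0^U}$ to the Zariski cover $\{U_{\alpha, \eta}\}$ of $U_\eta$ yields the exact equalizer sequence
\[
0 \to D_{\crys, R_0^U}(T|_{U_\eta}) \to \prod_\alpha D_{\crys, R_0^{U_\alpha}}(T|_{U_{\alpha, \eta}}) \rightrightarrows \prod_{\alpha, \beta} D_{\crys, R_0^{U_{\alpha\beta}}}(T|_{U_{\alpha\beta, \eta}}),
\]
over $R_0^U[1/p]$, and entirely analogously for $D_{\st, R_0}$. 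To pass from this equalizer of flat connections to the sheaf axiom (\ref{eqn: sheaf axioms}) at the thickening $(T, V)$, we use the formal smoothness of the unramified models $R_0^U$ over $W$ to lift the closed immersion $U_s \into V$ to a morphism $g : R_0^U \to \sO_T$ (equipped with Frobenius up to parallel transport), which restricts to compatible morphisms $g_\alpha : R_0^{U_\alpha} \to \sO_T|_{V_\alpha}$. The value $\mathcal{E}_{\crys, U, T}(T, V)$ is then computed by base change along $g$ (and similarly for each $U_\alpha$), so the sheaf axiom follows by tensoring the above exact sequence over $R_0^U[1/p]$ with the $\sO_T[1/p]$-modules arising from $V$ and its cover.

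The main obstacle will be making rigorous this final tensor step: one must justify why base change along the (non-flat in general) morphism $g$ preserves the exactness obtained from the pro-\'etale argument, so that one may glue the local equalizers into the global sheaf axiom over the thickening $V$. This should follow from the hypothesis that $T_{p=0}$ is flat over $U$ combined with $p$-torsion-freeness of $T$, since after inverting $p$ the restriction of the evaluation of the crystal to $V_{\alpha}$ becomes flat base change of coherent sheaves on a rigid cover, for which descent is standard. Once this is handled, the semi-stable case proceeds verbatim by replacing $\OB_{\crys, R_0}$ with $\OB_{\st, R_0}$, using \Cref{cor: property of OAst}\ref{cor: property of OAst et} in place of \Cref{prop: Acrys and OAcrys}\ref{prop: Acrys and OAcrys et}, and interpreting all of the equalizer data as objects in $\Isoc^{\varphi, N}$ via \Cref{equiv def log isoc}, so that the monodromy operator $N$ is automatically respected by the construction.
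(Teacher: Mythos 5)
Your proposal follows essentially the same route as the paper's: the injectivity of the restriction morphisms is produced by the pullback injectivity of \Cref{thm:pullback for D functors} (the paper first invokes the independence of the unramified model via \Cref{cor:D is independent of R_0}, but this is a presentational detail), and the equalizer condition (\ref{eqn: sheaf axioms}) at the unramified model is obtained from pro-\'etale descent using the localization \Cref{prop: Acrys and OAcrys}.\ref{prop: Acrys and OAcrys et}, exactly as you propose. The one item worth commenting on is the ``obstacle'' you flag at the end, because the mechanism you suggest for resolving it---``after inverting $p$ \ldots flat base change of coherent sheaves on a rigid cover''---is not quite the right argument and would be awkward to make precise.

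The point is that the base change is flat already at the \emph{integral} level, and this is exactly what the hypotheses in \Cref{def: sheaf of F-isocrystals} are designed to guarantee. That definition only asks for the sheaf condition at pro-PD-thickenings $(T, V)$ that are $p$-torsionfree and for which $T_{p=0}$ is flat over the base variety. By formal smoothness of $R_0$ over $W$ one chooses a lift $g : R_0 \to \sO_T$ compatible with $V \to U_k$. Since $\sO_T$ is $p$-torsionfree and $p$-adically complete, $R_0$ is Noetherian and $p$-adically complete, and $\sO_T / p$ is flat over $R_0 / p = R_k$ by hypothesis, the local criterion for flatness (see e.g.\ \cite[\href{https://stacks.math.columbia.edu/tag/0912}{Tag 0912}]{stacks-project}) shows that $g$ is flat. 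Consequently, the equalizer sequence you established at $(\Spf(R_0), U_k)$---which is a weakly final covering object of the crystalline site of $U_k$---remains exact after the crystal base change along $g$ to $(T, V)$, and the sheaf axiom follows for all flat thickenings. This is exactly the content of the paper's terse appeal to the ``weak initial property'' of the unramified model. With this clarification your proposal coincides with the paper's.
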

\begin{proof}
	In the following we focus on the proof for $\mathcal{E}_{\crys,T}$; the argument showing $\mathcal{E}_{\st,T}\in \Shv(Z_{\mathrm{Zar}}, \Isoc^{\varphi,N})$ is the same and we will not repeat.
	Let $U$ be an object in $X_\mathrm{Zar}^w$, and let $\{U_i\}_{i\in I}$ be a Zariski cover of $U$ in $X_\mathrm{Zar}^w$.
	By the deformation theory, we let $\Spf(R_0)$ be an unramified model of $U$ that is equipped with a Frobenius structure $\varphi_{R_0}$.
	Then by the $p$-complete \'etaleness of the maps $U_i\to U$, the given data $(R_0,\varphi_{R_0})$ uniquely induces the unramified models with Frobenius structures $(R_{0,i}, \varphi_{R_{0,i}})$ for each $U_i$, such that the latter are compatible with $(R_0,\varphi_{R_0})$; similarly for the fiber produces $U_i\times_U U_j$.
	Here we also note that by taking the generic fiber, the collection of rigid spaces $\{(U_i)_\eta\}_{i\in I}$ forms an analytic cover of $U_\eta$.
	
	Now we recall from the construction in \Cref{thm: D functors} that we have  
	\begin{align*}
		D_{\crys,R_0}(T) &\colonequals \bigl( T\otimes_{\mathbb{Z}_p} \OB_{\crys,R_0} \bigr)(U_\eta),\\
		D_{\crys,R_{0,i}}(T) &\colonequals \bigl( T\otimes_{\mathbb{Z}_p} \OB_{\crys,R_{0,i}} \bigr)((U_i)_\eta),\\
		D_{\crys,R_{0,i,j}}(T) &\colonequals \bigl( T\otimes_{\mathbb{Z}_p} \OB_{\crys,R_{0,i,j}} \bigr)((U_{i,j})_\eta).
	\end{align*}
    On the other hand, by the localization formula in \Cref{prop: Acrys and OAcrys}.\ref{prop: Acrys and OAcrys et}, the pro-\'etale sheaf $ T\otimes_{\mathbb{Z}_p} \OB_{\crys,R_{0,i}}$ on $(U_i)_{\eta,\pe}$ is naturally identified with the restriction $ \bigl(T\otimes_{\mathbb{Z}_p} \OB_{\crys,R_0}\bigr)|_{(U_i)_\eta}$, and similarly for each $T\otimes_{\mathbb{Z}_p} \OB_{\crys,R_{0,i,j}}$.
    We let $\mathcal{F}$ be the pro-\'etale sheaf $T\otimes_{\mathbb{Z}_p} \OB_{\crys,R_0}$ over $U_{\eta,\pe}$.
    Then for each open subspace $V\subset U$, by the tautological formula $\mathcal{F}|_V(V)=\mathcal{F}(V)$, we get
    \[
    D_{\crys,R_0}(T)=\mathcal{F}(U_\eta),\quad D_{\crys,R_{0,i}}(T) = \mathcal{F}((U_i)_\eta), \quad D_{\crys,R_{0,i,j}}(T) =\mathcal{F}((U_i\times_U U_j)_\eta).
    \]
    
    As a consequence, since a pro-\'etale sheaf over $U_\eta$ in particular satisfies the descent with respect to the analytic cover $\{(U_i)_\eta \to U_\eta\}_{i\in I}$, we get a short exact sequence of abelian groups that are compatible with Frobenius and the connection structures
    \[
    \mathcal{F}(U_\eta) \rightarrow \prod_{i\in I} \mathcal{F}((U_i)_\eta) \rightrightarrows \prod_{i,j\in I} \mathcal{F}((U_i\times_U U_j)_\eta).
    \]
	Finally, by the weak initial property of the unramified model $(R_0,R_k)$ in the crystalline site $U_{k,\crys}$, we get the sheaf condition for general flat pro-PD-thickenings in $U_{k,\crys}$ (cf. \Cref{def: sheaf of F-isocrystals}).
	This finishes the proof of the sheaf property of $\mathcal{E}_{\crys,T}$.
	
	For the injectivity in \Cref{def:weak_F-isoc}, we let $U'\subseteq U$ be any open immersion of small affine open subspaces in $X$.
	By the fact that $\mathcal{E}_{\crys,U,T}$ and $\mathcal{E}_{\crys,U',T}$ are independent of the unramified models (\Cref{cor:D is independent of R_0}), we may translate the problem into the injectivity of a map between flat connections.
	We let $\Spf(R_0)$ be a Frobenius equivariant unramified model of $U$.
	Similarly to the above, the open subspace $U'$ admits a Frobenius equivariant unramified model $R_0'$ that is compatible with $R_0$.
	So the claim follows from \Cref{thm:pullback for D functors}.\ref{thm:pullback for D functors crys}.
\end{proof}

By taking the pullback along the morphism of the ringed topoi $c_X$ as in \Cref{rmk:site-theoretic_construction_of_Bcrys(E)}, a weak $F$-isocrystal $\mathcal{E}$ over $X_{k,\crys}$ naturally produces a $\mathbb{B}_\crys$-linear sheaf over $X_{\eta,\pe}$
\[
\mathbb{B}_\crys(\mathcal{E})\colonequals c_X^* \mathcal{E},
\]
and naturally extends to objects in $\Isoc^{\varphi,N}(X_{k,\crys})$ or $\Isoc^\varphi((X_k,(0^\mathbb{N})^a)_\lcrys)$ in a compatible way.
Moreover, by \Cref{def:various_notions_of_wIsoc} the sheaf $\mathbb{B}_\crys(\mathcal{E})$ is analytic locally (on $X_\eta$) given by locally free sheaves: namely given a closed point $x\in X$ and a stable open neighbohrood $U_x$ of $x$ with respect to $\mathcal{E}$, the restriction of $\mathbb{B}_\crys(\mathcal{E})$ onto $(U_x)_\eta$ is naturally isomorphic to the $\mathbb{B}_\crys|_{(U_x)_\eta}$-vector bundle $\mathbb{B}_\crys(\mathcal{E}_{U_x})$, defined for the actual $F$-isocrystal $\mathcal{E}_{U_x}$ over $(U_x)_{k,\crys}$ as in \Cref{const: Bcrys(E)}.

Specified to the case when $\mathcal{E}$ is $\mathcal{E}_{\crys,T}$ (resp. $\mathcal{E}_{\st,T}$) for a $\mathbb{Z}_p$-local system $T\in \Loc_{\mathbb{Z}_p}(X_\eta)$, we have the following injectivity result, extending the local case of \Cref{functoriality of being associated}.
\begin{proposition}
	\label{prop:inj_of_Bcrys_vb_in_general}
	Let $X$ be a smooth $p$-adic formal scheme over $\mathcal{O}_K$, and let $T\in \Loc_{\mathbb{Z}_p}(X_\eta)$.
	There are natural injections of $\mathbb{B}_\crys$-linear sheaves on $X_{\eta,\pe}$
	\[
	\mathbb{B}_\crys(\mathcal{E}_{\crys,T}) \hookrightarrow \mathbb{B}_\crys(\mathcal{E}_{\st,T}) \hookrightarrow T\otimes_{\mathbb{Z}_p} \mathbb{B}_\crys,
	\]
	where the first arrow identifies the source as the kernel of the endomorphism $N_{\mathbb{B}_\crys(\mathcal{E}_{\st,T})}$ on the target.
\end{proposition}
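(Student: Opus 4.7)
My plan is to reduce, via a local description of $\mathcal{E}_{\crys,T}$ and $\mathcal{E}_{\st,T}$, to the affine case handled in \Cref{functoriality of being associated}, and then glue the local injections. First I would work on the basis $X_\mathrm{Zar}^w$ of small affine opens $U = \Spf(R)$ admitting Frobenius-equivariant unramified models $R_0$ as in \Cref{conv of smooth affine}. By the construction of the presheaves $\mathcal{E}_{\crys,T}$ and $\mathcal{E}_{\st,T}$ (cf. the paragraph preceding \Cref{thm:E_* are weak F-isoc}) together with the model-independence in \Cref{cor:D is independent of R_0}, their restrictions to $U$ are the honest (log) $F$-isocrystals $\mathcal{E}_{\crys,R_0,T|_{U_\eta}}$ and $\mathcal{E}_{\st,R_0,T|_{U_\eta}}$ associated via \Cref{equiv def isoc} and \Cref{equiv def log isoc} with $D_{\crys,R_0}(T)$ and $D_{\st,R_0}(T)$. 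In particular each such $U$ is a stable open neighborhood of each of its points for both weak $F$-isocrystals, so the discussion preceding the proposition identifies $\mathbb{B}_\crys(\mathcal{E}_{\crys,T})|_{U_\eta}$ with $\mathbb{B}_\crys(\mathcal{E}_{\crys,R_0,T|_{U_\eta}})$ and similarly in the semi-stable case.

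On each such $U$, I would invoke \Cref{functoriality of being associated} to obtain the desired Frobenius-equivariant and $T$-functorial chain of injections locally. To glue them to a global morphism of pro-\'etale sheaves, I need compatibility with restriction $V \subseteq U$ of small affine opens, which I would refine if necessary so that $V$ admits its own Frobenius-equivariant unramified model $R_0'$ receiving a Frobenius-compatible map from $R_0$ (available by deformation theory for the $p$-completely \'etale morphism $V \to U$). The compatibility then follows from the naturality of the construction $\vartheta_{\crys,-}, \vartheta_{\st,-}$ in \Cref{functoriality of being associated} combined with the base change formulas \Cref{lem: pullback formula on crystalline side} and \Cref{lem: pullback formula on etale side}; any residual ambiguity coming from different choices of unramified model is resolved by the cocycle-satisfying canonical isomorphisms produced in \Cref{cor:D is independent of R_0}. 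Injectivity of the resulting global maps is a local property inherited from the local case.

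The second claim --- the kernel identification --- is the main obstacle, and I would deduce it as follows. By construction (see the paragraph preceding \Cref{thm:E_* are weak F-isoc}), the first inclusion fits into a tautological exact sequence
\[
0 \to \mathcal{E}_{\crys,T} \to \mathcal{E}_{\st,T} \xrightarrow{N_{\mathcal{E}_{\st,T}}} \mathcal{E}_{\st,T}
\]
in $\Shv(X_\mathrm{Zar}^w, \Isoc^\varphi)$. Applying the pullback $c_X^* = \mathbb{B}_\crys(-)$ of \Cref{rmk:site-theoretic_construction_of_Bcrys(E)}, which pro-\'etale locally amounts to evaluation on the flat pro-PD-thickening $(\mathrm{A}_\crys(S), S/p)$ followed by inverting $p$ and $\mu$, preserves this exact sequence and yields the analogous exact sequence of pro-\'etale $\mathbb{B}_\crys$-sheaves
\[
0 \to \mathbb{B}_\crys(\mathcal{E}_{\crys,T}) \to \mathbb{B}_\crys(\mathcal{E}_{\st,T}) \xrightarrow{N_{\mathbb{B}_\crys(\mathcal{E}_{\st,T})}} \mathbb{B}_\crys(\mathcal{E}_{\st,T}),
\]
which establishes the kernel property. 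The subtle point here is ensuring that evaluation at $\mathrm{A}_\crys(S)$ respects the kernel of $N$; this is clear locally because, under the identifications of the previous paragraphs, $N_{\mathbb{B}_\crys(\mathcal{E}_{\st,R_0,T})}$ corresponds to the $\rAcrys(S)$-linear monodromy on $D_{\st,R_0}(T)$ and its kernel is identified with $D_{\crys,R_0}(T)$ by \Cref{thm: D functors}.\ref{thm: D functors relation}.
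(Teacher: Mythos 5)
Your overall strategy matches the paper's: build the injections locally over small affine opens using \Cref{functoriality of being associated}, show model-independence via the PD-envelope coproduct trick from \Cref{cor:D is independent of R_0}, and glue using the compatibility under restriction. The kernel identification via exactness of $c_X^*$ applied to the tautological kernel sequence is a mild repackaging of what the paper does locally, and the local verification you cite is the right ingredient.

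However, there is a genuine gap. You assert that each small affine open $U$ admitting a Frobenius-equivariant unramified model ``is a stable open neighborhood of each of its points for both weak $F$-isocrystals.'' This does not follow and is false in general. The fact that the \emph{value} $(\mathcal{E}_{\crys,T})_U = \mathcal{E}_{\crys,U,T}$ is an honest $F$-isocrystal on $U$ is automatic from the construction, but that is irrelevant to stability. Stability (\Cref{def:various_notions_of_wIsoc}) is the requirement that $\mathrm{rank}(\mathcal{E}_{\crys,U,T})$ be \emph{maximal} among all open neighborhoods of each point of $U$. For a general $T$, the rank can strictly increase under shrinking: the restriction morphisms of the weak $F$-isocrystal are only injective, not surjective, which is precisely the content of \Cref{thm:pullback for D functors}.\ref{thm:pullback for D functors crys} and is illustrated, for instance, by $i_! \mathcal{F}$ in \Cref{sub: weak F-isoc}. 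Thus, without stability, the identification $\mathbb{B}_\crys(\mathcal{E}_{\crys,T})|_{U_\eta} \simeq \mathbb{B}_\crys(\mathcal{E}_{\crys,U,T})$ fails, and your local construction of the map into $T\otimes\mathbb{B}_\crys$ does not land where you claim. The fix is exactly what the paper does: fix a closed point $x$ and \emph{choose} $U$ to be a small affine that is a stable open neighborhood of $x$ for both weak $F$-isocrystals simultaneously (such opens exist and form a neighborhood basis, and their finite intersections are again stable, so the gluing argument via \Cref{thm:pullback for D functors} goes through). Once you make this replacement, the rest of your argument is sound.
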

\begin{proof}
	We let $x$ be a closed point of $X$, and let $U$ be a small affine open subspace of $X$ that is stable with respect to the both weak $F$-isocrystals $\mathcal{E}_{\crys,T}$ and $\mathcal{E}_{\st,T}$.
	Under the assumption, the restriction of the $\mathbb{B}_\crys$-linear sheaves onto $U_{\eta}$ are all vector bundles.
	Moreover, for a given choice of an unramified model $R_0$ of $U$ and a Frobenius structure $\varphi_{R_0}$, by \Cref{functoriality of being associated} we have the injective maps
	\begin{equation}
		\label{eq:inj_diagram_of_B_crys-sheaves_0}
		\mathbb{B}_\crys(\mathcal{E}_{\crys,U,T}) \xhookrightarrow{u_{R_0}} \mathbb{B}_\crys(\mathcal{E}_{\st,U,T}) \xhookrightarrow{v_{R_0}} T\otimes_{\mathbb{Z}_p} \mathbb{B}_\crys|_{U_\eta}.
	\end{equation}
	We now show that the maps defined in loc.\ cit. do not depend on the choice of the data $(R_0,\varphi_{R_0})$, using the strategy proving \Cref{cor:D is independent of R_0}.
	We will refer the reader for the notations introduced in the proof of loc.\ cit. therefore.
	
	We let $R_0$ and $R_0'$ be any two unramified models of $U$ that are equipped with the Frobenius endomorphisms. 
	We let divided power envelope ring $A=A_{R_0,R_0'}$ and the period sheaf $\OB_{\crys,A}$ be as in the first paragraphs of the proof of \Cref{cor:D is independent of R_0}, so that both $\OB_{\crys,R_0}$ and $\OB_{\crys,R'_0}$ admit injections into $\OB_{\crys,A}$ that are compatible with the additional structures.
	As in loc.\ cit., the structural map $R_0\to A$ induces an isomorphism of flat connections $D_{\crys,R_0}(T)\otimes_{R_0[1/p]} A[1/p]\xrightarrow{\sim} D_{\crys,A}(T)$, and similarly for $D_{\st,*}(T)$.
	So by the construction of $D_{\crys,*}(T)$ and $D_{\st,*}(T)$, there is an equivariant commutative diagram of pro-\'etale sheaves 
	\begin{equation}
		\label{eq:inj_diagram_of_OB_crys-sheaves}
	\begin{tikzcd}
		D_{\crys,R_0}(T)\otimes_{R_0[1/p]} \OB_{\crys,R_0} \ar[r] \ar[d] &D_{\st,R_0}(T)\otimes_{R_0[1/p]} \OB_{\st,R_0} \ar[r] \ar[d] & T\otimes \OB_{\st,R_0} \ar[d]\\
		D_{\crys,A}(T)\otimes_{A[1/p]} \OB_{\crys,A} \ar[r] & D_{\st,A}(T)\otimes_{A[1/p]} \OB_{\st,A} \ar[r] &T\otimes \OB_{\crys,A},
	\end{tikzcd}
	\end{equation}
    such that the vertical maps become isomorphic after tensoring the sources with $\OB_{\crys,A}$.
    As a consequence, by \Cref{prop: Acrys and OAcrys}.\ref{prop: Acrys and OAcrys conn}, the explicit formula in (\ref{eq:OBcrys_A_formula}), together with \Cref{prop:OB vb from F-iso}, the horizontal sections of the diagram (\ref{eq:inj_diagram_of_OB_crys-sheaves}) gives a commutative diagram
    \begin{equation}
    	\label{eq:inj_diagram_of_B_crys-sheaves}
    	\begin{tikzcd}
    	\mathbb{B}_\crys(\mathcal{E}_{\crys,R_0,T}) \arrow[r,"u_{R_0}"] \arrow[d,"\sim"'] &\mathbb{B}_\crys(\mathcal{E}_{\st,R_0,T}) \arrow[r,"v_{R_0}"] \arrow[d,"\sim"'] & T\otimes \mathbb{B}_\crys|_{U_\eta} \arrow[d,equal]\\
    	\mathbb{B}_\crys(\mathcal{E}_{\crys,A,T}) \arrow[r,"u_{A}"] & \mathbb{B}_\crys(\mathcal{E}_{\st,A,T}) \arrow[r,"u_{A}"] & T\otimes \mathbb{B}_\crys|_{U_\eta},\\
    	\mathbb{B}_\crys(\mathcal{E}_{\crys,R_0,T}) \arrow[r,"{u_{R_0'}}"] \arrow[u,"\sim"] &\mathbb{B}_\crys(\mathcal{E}_{\st,R_0,T}) \arrow[r,"{v_{R_0'}}"] \arrow[u,"\sim"] & T\otimes \mathbb{B}_\crys|_{U_\eta} \arrow[u,equal],
    \end{tikzcd}
    \end{equation}
    where each vertical arrows are isomorphisms.
    The arrows in the first and the last rows by \Cref{functoriality of being associated} are all injective, hence ones in the second row.
    As a consequence, for $*\in \{R_0,R_0',A\}$, the images of $v_{*}\circ u_{*}$ (resp. $u_{*}$) in (\ref{eq:inj_diagram_of_B_crys-sheaves}) are equal to each other, hence are independent of the choice of the unramified models.
    This in particular allows us to write the arrows in (\ref{eq:inj_diagram_of_B_crys-sheaves_0}) as $u_U$ and $v_U$ respectively.
	
	Finally, to see that the maps $u_U$ (resp. $v_U$) for different $U$ indeed glue, it suffices to show that for any affine open subspace $V\subset U$, the maps $u_U$ and $u_V$ (resp. $v_U$ and $v_V$) are compatible under the restriction.
	To this end, since the above maps can be constructed using any unramified models, we may fix an unramified model $(R_0,\varphi_{R_0})$ of $U$ and let $(S_0,\varphi_{S_0})$ be the unique unramified model of $V$ that is compatible with the former.
	It is then left to check the compatibility of $u_{R_0}$ and $u_{S_0}$ (resp. the maps $v_*$), which follows from \Cref{thm:pullback for D functors}.
\end{proof}
	
We are now ready to show the pullback compatibility for a map of smooth $p$-adic formal schemes, without any assumption on the unramified models.
\begin{proposition}
	\label{prop:pullback_in_general}
	Let $f:Y\to X$ be a map of smooth $p$-adic formal schemes over $\mathcal{O}_K$,  with $f_s$ the induced map on reduced special fibers, and let $T\in \Loc_{\IZ_p}(X_\eta)$.
	There are natural injections of weak $F$-isocrystals
	\[
	\iota_{\crys, f} : f_s^*\mathcal{E}_{\crys,T} \hookrightarrow \mathcal{E}_{\crys,f_\eta^{-1}T},\quad \iota_{\st, f} : f_s^*\mathcal{E}_{\st,T} \hookrightarrow \mathcal{E}_{\st,f_\eta^{-1}T},
	\]
	that are compatible to each other via the nilpotent operators.
\end{proposition}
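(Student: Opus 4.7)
The plan is to construct the pullback maps Zariski-locally using a divided-power bridge, and then globalize via the sheaf property and the model-independence result already established. Since both sides of the sought-after injection are Zariski sheaves by Proposition \ref{thm:E_* are weak F-isoc} and the pullback functor on weak isocrystals (Definition \ref{def:pullback_of_wIsoc}) is local, I reduce to small affine opens $V = \Spf(S) \subseteq Y$ and $U = \Spf(R) \subseteq X$ with $f(V) \subseteq U$. Pick Frobenius-equivariant unramified models $(R_0, \varphi_{R_0})$ and $(S_0, \varphi_{S_0})$, and let $\bar{f}: R_s \to S_s$ be the induced reduction. Theorem \ref{thm:pullback for D functors} does not apply directly, because $\bar{f}$ need not lift to a Frobenius-equivariant morphism $R_0 \to S_0$.

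To bypass this, exactly as in the proof of Theorem \ref{cor:D is independent of R_0}, let $A$ denote the $p$-complete PD envelope of the surjection $(R_0 \ctensor_W S_0)^\wedge_p \twoheadrightarrow S_k$ defined by $r \otimes s \mapsto \bar{f}(\bar{r}) \cdot \bar{s}$. The diagonal Frobenius extends to $A$, so both structure maps $p_1 : R_0 \to A$ and $p_2 : S_0 \to A$ are Frobenius-equivariant; moreover $A$ is a complete PD-polynomial ring over $S_0$ in finitely many variables (generated by the bridge elements $y_i \otimes 1 - 1 \otimes \tilde{y}_i$ for any set of lifts $\tilde{y}_i \in S_0$ of $\bar{f}(\bar{y}_i)$). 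Mimicking Definitions \ref{def OAcrys} and \ref{def OAst}, one builds pro-\'etale period sheaves $\OB_{\crys, A}$ and $\OB_{\st, A}$ on $V_{\eta, \pe}$ with their Frobenius, connection, and monodromy structures, and sets $D_{\crys, A}(-) \colonequals \nu_*(- \otimes \OB_{\crys, A})(V_\eta)$ and $D_{\st, A}(-)$ analogously. The inclusions $\OB_{\crys, S_0} \hookrightarrow \OB_{\crys, A}$ (via $p_2$) and $f_\eta^{-1} \OB_{\crys, R_0} \hookrightarrow \OB_{\crys, A}$ (via $p_1$, well-defined because the composition $R_0 \to R \to S \to \widehat{\sO}^+_{V_\eta}$ factors through $A$) produce
\[
    f_{0, \eta}^* D_{\crys, R_0}(T) \stackrel{\alpha}{\longrightarrow} D_{\crys, A}(f_\eta^{-1} T) \stackrel{\beta}{\longleftarrow} D_{\crys, S_0}(f_\eta^{-1} T),
\]
and repeating the argument of Theorem \ref{cor:D is independent of R_0} (using Lemma \ref{lem:p-complete_direct_sum_and_coh} and the PD-polynomial presentation of $\OA_{\crys, A}$ over $\OA_{\crys, S_0}$) shows that $\beta$ identifies $D_{\crys, A}(f_\eta^{-1} T)$ with $D_{\crys, S_0}(f_\eta^{-1} T) \tensor_{S_0[1/p]} A[1/p]$. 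Inverting this identification and projecting along the natural augmentation $A[1/p] \to S_0[1/p]$ yields the local pullback morphism $\iota^{\mathrm{loc}}_{\crys, f} : f_{0, \eta}^* D_{\crys, R_0}(T) \to D_{\crys, S_0}(f_\eta^{-1} T)$; Frobenius- and connection-equivariance, as well as injectivity (via a diagram chase through $D_\dR$ as in the proof of Theorem \ref{thm:pullback for D functors}.\ref{thm:pullback for D functors crys}), are inherited from $\alpha$ and $\beta$. The semi-stable version $\iota^{\mathrm{loc}}_{\st, f}$ is constructed in exactly the same way with $\OB_{\st, *}$ in place of $\OB_{\crys, *}$; compatibility with the monodromy operators, and hence the identification $\iota_{\crys, f} = \ker(\iota_{\st, f})$ under the embedding of Proposition \ref{prop:inj_of_Bcrys_vb_in_general}, is automatic because every period-sheaf map used respects them.

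To globalize, one must verify that $\iota^{\mathrm{loc}}_{\crys, f}$ and $\iota^{\mathrm{loc}}_{\st, f}$ are independent of the choice of $(R_0, \varphi_{R_0})$, $(S_0, \varphi_{S_0})$ and of the auxiliary lifts used in building $A$, and that they are compatible with further Zariski restrictions. Independence is handled by a simultaneous threefold PD-envelope construction: given two choices on either side, one bridges them through a common PD envelope mapping into the two pairwise bridges, and verifies the resulting cocycle diagram in exactly the same manner as in the final paragraphs of the proof of Theorem \ref{cor:D is independent of R_0}. Compatibility with open restrictions follows from Theorem \ref{thm:pullback for D functors}.\ref{thm:pullback for D functors crys} applied to the Frobenius-equivariant open immersions of unramified models. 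Together with the sheaf property of Proposition \ref{thm:E_* are weak F-isoc}, this allows the local morphisms to glue into $\iota_{\crys, f}$ and $\iota_{\st, f}$, and injectivity survives gluing by Definition \ref{def:weak_F-isoc}. The hardest step is precisely the independence verification: once $\iota^{\mathrm{loc}}$ is known to be intrinsic, gluing is automatic, but the intrinsicness demands chasing a threefold bridge diagram and checking its cocycle relation, which is the technically heaviest part of the argument.
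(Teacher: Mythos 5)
Your proposal is mathematically sound in outline, but it takes a genuinely different route from the paper, and a less economical one. You rebuild the pullback morphism from scratch via a divided-power bridge $A$ between an unramified model $R_0$ of an open $U \subseteq X$ and an unramified model $S_0$ of an open $V \subseteq Y$, then establish independence of all auxiliary choices and glue. The paper instead exploits work it has already done: Proposition~\ref{prop:inj_of_Bcrys_vb_in_general} shows that $\IB_{\crys,Y}(\sE_{\crys,T})$ and $\IB_{\crys,Y}(\sE_{\st,T})$ are \emph{canonical subsheaves} of $T\otimes_{\IZ_p}\IB_{\crys}$, independent of any choices of unramified models. Pulling the ambient inclusions back along $f_\eta$ (via Lemmas~\ref{lem: pullback formula on crystalline side} and~\ref{lem: pullback formula on etale side}) exhibits $\IB_{\crys,Y}(f_s^*\sE_{*,T})$ and $\IB_{\crys,Y}(\sE_{*,f_\eta^{-1}T})$ as two subsheaves of the same target $(f_\eta^{-1}T)\otimes\IB_{\crys,Y}$, so defining $\iota_{*,f}$ reduces to proving a \emph{containment} of subsheaves. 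Since containment is an intrinsic property, not a construction, the independence-of-choices verification and the threefold cocycle diagram you flag as ``the technically heaviest part'' simply do not arise; one verifies the containment Zariski-locally by tensoring with $\OB_{\st,R_0}$, taking $\nu_*$-sections, and invoking the equivalence $\Isoc^\varphi \simeq \Vect^\varphi(\IB_\crys)^\circ$ of Proposition~\ref{prop:Isoc_to_Vect(Bcrys)_equiv}. In essence, the paper front-loaded the bridge argument into the construction of the crystalline Riemann--Hilbert functor itself, whereas you re-derive it at the level of base-change morphisms; your version therefore duplicates work that Proposition~\ref{prop:inj_of_Bcrys_vb_in_general} already performed. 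Two spots where you wave your hands a little: (1) the map $R_0 \to \widehat{\sO}^+_{V_\eta}$ does not literally factor through $A$ (the PD envelope is over $S_k$, while $\widehat{\sO}^+_{V_\eta}$ is not a $p$-torsion ring); what you need is that the map $(\IA_{\inf}\ctensor_W R_0\ctensor_W S_0)^\wedge_p \to \widehat{\sO}^+_{V_\eta}$ admits a PD envelope isomorphic to $\OA_{\crys,S_0}\{u_i\}$, exactly as in the proof of Theorem~\ref{cor:D is independent of R_0}. (2) Your final map is a composite through an augmentation $A[1/p]\twoheadrightarrow S_0[1/p]$, and you assert injectivity survives, but a quotient need not preserve injectivity; the argument you would need is that $\alpha\circ\beta^{-1}$ is a horizontal morphism of flat connections over $A[1/p]$, so its injectivity can be tested at a single closed point by \cite[Thm.~4.1]{Ogu84}, and that point can be chosen to factor through the augmentation. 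These are fixable, and your outline can be made rigorous, but the paper's route is cleaner.
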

\begin{proof}
	We first notice that by taking the pullback of the injections of $\mathbb{B}_\crys$-sheaves in \Cref{prop:inj_of_Bcrys_vb_in_general} along the map $f_\eta:Y_\eta\to X_\eta$, and by the canonical identifications in \Cref{lem: pullback formula on crystalline side} and \Cref{lem: pullback formula on etale side}, we get natural injections of $\mathbb{B}_\crys$-sheaves over $Y_{\eta,\pe}$
	\begin{equation}
		\label{eq:B_crys-pullback_injection_1}
		\mathbb{B}_{\crys,Y}(f_s^* \mathcal{E}_{\crys,T}) \hookrightarrow \mathbb{B}_{\crys,Y}(f_s^* \mathcal{E}_{\st,T}) \hookrightarrow  (f_\eta^{-1}T)\otimes \mathbb{B}_{\crys,Y}.
	\end{equation}
    On the other hand, applying \Cref{prop:inj_of_Bcrys_vb_in_general} at the local system $f_\eta^{-1}T$ over $Y_\eta$, we have the injections
    \begin{equation}
    	\label{eq:B_crys-pullback_injection_2}
    	\mathbb{B}_{\crys,Y}(\mathcal{E}_{\crys,f_\eta^{-1} T}) \hookrightarrow \mathbb{B}_{\crys,Y}(\mathcal{E}_{\st,f_\eta^{-1} T}) \hookrightarrow  (f_\eta^{-1}T)\otimes \mathbb{B}_{\crys,Y}.
    \end{equation}
    So to show the claim, it suffices to show that the subsheaf $\mathbb{B}_{\crys,Y}(f_s^* \mathcal{E}_{*,T})$ of $f_\eta^{-1}T\otimes \mathbb{B}_{*,Y}$ is contained in another subsheaf $ \mathbb{B}_{\crys,Y}(\mathcal{E}_{\crys,f_\eta^{-1} T})$, for $*\in \{\crys,\st\}$.
    
    As the containment can be checked locally, we may assume $Y$ is affine and admits an unramified model $R_0$ with a Frobenius endomorphism.
    In addition, by shrinking $Y$ onto an open neighborhood of a fixed point $y\in Y$ if necessary, we assume that $Y$ is the stable neighborhood of all the weak $F$-isocrystals discussed above, so that each of them are actual $F$-isocrystals over $Y_s$.
    Under the assumption, by the equivalence of categories in \Cref{prop:Isoc_to_Vect(Bcrys)_equiv}, since both $\mathbb{B}_{\crys,Y}(f_s^* \mathcal{E}_{*,T})$ and $ \mathbb{B}_{\crys,Y}(\mathcal{E}_{\crys,f_\eta^{-1} T})$ come from $F$-isocrystals, we may use the commutative diagram (\ref{eq:diag_of_vb_over_OBst_2}) and translate the question into flat connections.    
    
    Now we apply the tensor product of the maps in (\ref{eq:B_crys-pullback_injection_1}) with $\OB_{\st,R_0}$, to get a natural equivariant injections of $\OB_{\st,R_0}$-vector bundles over $Y_{\eta,\pe}$
    \[
    \mathbb{B}_{\crys,Y}(f_s^* \mathcal{E}_{\st,T}) \otimes_{\mathbb{B}_{\crys,Y}} \OB_{\st,R_0} \hookrightarrow (f_\eta^{-1}T)\otimes \OB_{\st,R_0}.
    \]
    By further taking the section of this injection at $Y_\eta$, and by the definition of $D_{\st,R_0}(f_\eta^{-1}T)$ in \Cref{thm: D functors}, we get an injection of flat connections over $R_0[1/p]$ 
    \[
    \bigl( \mathbb{B}_{\crys,Y}(f_s^* \mathcal{E}_{\st,T}) \otimes_{\mathbb{B}_{\crys,Y}} \OB_{\st,R_0} \bigr)(Y_\eta) \hookrightarrow D_{\st,R_0}(f_\eta^{-1}T),
    \]
    where the left hand side is equal to the flat connection $f_s^* \mathcal{E}_{\st,T}(R_0,R_k)$ with additional structures (cf. the commutative diagram (\ref{eq:diag_of_vb_over_OBst_2})).
    This gives the required injection, thanks again to the equivalences of various categories of loc.\ cit..
\end{proof}

Assembling the results in this subsection, we obtain the \emph{crystalline Riemann--Hilbert functors} in general.
\begin{theorem}[Crystalline Riemann--Hilbert functor]
		\label{thm:gluing of D functors}
		Let $X$ be a smooth $p$-adic formal scheme over $\mathcal{O}_K$, and let $T\in \Loc_{\mathbb{Z}_p}(X_\eta)$ be of rank $d$.
		\begin{enumerate}[label=\upshape{(\roman*)}]
			\item\label{thm:gluing of D functors modules} There is a natural weak $F$-isocrystal $\mathcal{E}_{\crys,T}$ (resp. $\mathcal{E}_{\st, T}$) of rank $\leq d$ over $X_{k,\crys}$ (resp. over $(X_k, (0^\IN)^a)_\lcrys$) that is functorial in $T$, such that $\mathcal{E}_{\crys,T}$ is the kernel of the nilpotent endomorphism $N_{\mathcal{E}_{\st,T}}$ on $\mathcal{E}_{\st,T}$, under the equivalence in \Cref{thm: log crys = crys + N}.
			\item\label{thm:gluing of D functors modules local formula}
			For any affine open subscheme $U=\Spf(R)\subset X$ with an unramified model $\Spf(R_0)$ and a Frobenius endomorphism, the evaluation induces isomorphisms of $R_0[1/p]$-vector bundles that are compatible with connections, Frobenius structures, and monodromy operators
			\[
			\mathcal{E}_{\crys,T}(U,U_k)\simeq D_{\crys,R_0}(T),\quad \mathcal{E}_{\st,T}(U,U_k, (0^\IN)^a) \simeq D_{\st,R_0}(T).
			\]
			\item\label{thm:gluing of D functors pullbacks} Assume that $f:Y\to X$ is a map of smooth $p$-adic formal schemes over $\mathcal{O}_K$, with $f_s$ the induced map on reduced special fibers.
			Then we have natural injections of weak $F$-isocrystals 
			\[
			\iota_{\crys, f} : f_s^*\mathcal{E}_{\crys,T} \hookrightarrow \mathcal{E}_{\crys,f_\eta^{-1}T},\quad \iota_{\st, f} : f_s^*\mathcal{E}_{\st,T} \hookrightarrow \mathcal{E}_{\st,f_\eta^{-1}T}.
			\]
   The maps are isomorphisms if $f$ is finite \'etale.
			
   			\item\label{thm:gluing of D functors smooth base change} In the setting of \ref{thm:gluing of D functors pullbacks}, assume $f$ is $p$-completely smooth.
			Then both $\iota_{\crys, f}$ and $\iota_{\st, f}$ are isomorphisms of weak $F$-isocrystals.
   \item\label{thm:gluing of D functors crys/st} The local system $T$ is crystalline (resp. semi-stable) if and only if the weak $F$-isocrystal $\mathcal{E}_{\crys,T}$ (resp. $\mathcal{E}_{\st,T}$) is an $F$-isocrystal of rank $d$. 

   Under the assumptions, the map $\iota_{\crys, f}$ (resp. $\iota_{\st, f}$) in \ref{thm:gluing of D functors pullbacks} is an isomorphism for any morphism $f:Y\to X$ of smooth $p$-adic formal schemes.
		\end{enumerate}
\end{theorem}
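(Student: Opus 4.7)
Parts (i) and (ii) will mostly consist of packaging results already established in earlier sections. The presheaves $U\mapsto D_{\crys,R_0}(T)$ and $U\mapsto D_{\st,R_0}(T)$ on the basis $X_\mathrm{Zar}^w$ of small affine open subschemes equipped with an unramified model descend to well-defined presheaves of $F$-isocrystals by the independence statement \Cref{cor:D is independent of R_0}, and they form weak $F$-isocrystals by \Cref{thm:E_* are weak F-isoc}. The rank bound $\mathrm{rank}^+\le d$ is forced by the injectivity of $\alpha_{\crys,T}$ and $\alpha_{\st,T}$ provided by \Cref{thm: D functors}, and the identification $\mathcal{E}_{\crys,T} = \ker(N_{\mathcal{E}_{\st,T}})$ follows from \Cref{thm: D functors}.\ref{thm: D functors relation} through the equivalence \Cref{equiv def log isoc}. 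The local formula in (ii) then holds by construction.

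For (iii), the pullback injection in general was constructed in \Cref{prop:pullback_in_general}. To upgrade to an isomorphism in the finite \'etale case, I would work Zariski-locally on $X$: choose a small affine open $U\subseteq X$ with an unramified model $(R_0,\varphi_{R_0})$, then use that $f^{-1}(U)\to U$ is finite \'etale to equip $f^{-1}(U)$ with the unique compatible unramified model $(R_0',\varphi_{R_0'})$ (Frobenius lifts being unique along a $p$-completely \'etale morphism). The desired isomorphism is then the finite \'etale isomorphism \Cref{thm:pullback for D functors}.\ref{thm:pullback for D functors fet} translated through the local formula in (ii).

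The main new work lies in (iv), smooth base change. Working Zariski-locally on $Y$, I would choose compatible unramified models $R_0$ of $X$ and $R_0'$ of $Y$ with Frobenius structures, which exist because $R_0\to R_0'$ is $p$-completely smooth and any Frobenius lift on $R_0$ extends (non-uniquely) to one on $R_0'$. Via the local formula in (ii), the claim reduces to showing that the canonical Frobenius- and connection-equivariant injection
\[
D_{\crys,R_0}(T)\otimes_{R_0[1/p]} R_0'[1/p] \longrightarrow D_{\crys,R_0'}(f_\eta^{-1}T)
\]
of \Cref{thm:pullback for D functors} is an isomorphism, and similarly for $D_\st$. I would proceed by factoring $f$ Zariski-locally as a $p$-completely \'etale morphism followed by a projection of the form $X\times_{\Spf(\sO_K)}\Spf(\sO_K\langle z_1^{\pm 1},\ldots,z_n^{\pm 1}\rangle)\to X$. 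The \'etale part reduces to the \'etale case via the identification $\OB_{\crys,R_0}|_{Y_\eta}\simeq \OB_{\crys,R_0'}$ from \Cref{prop: Acrys and OAcrys}.\ref{prop: Acrys and OAcrys et}. For the torus projection, where $R_0'=R_0\langle z_1^{\pm 1},\ldots,z_n^{\pm 1}\rangle$ with $\varphi(z_i)=z_i^p$, the sheaf $\OB_{\crys,R_0'}$ acquires over $\OB_{\crys,R_0}|_{Y_\eta}$ additional divided-power framing variables of the form $z_i/[z_i^\flat]-1$, analogous to those in \Cref{prop: Acrys and OAcrys}.\ref{prop: Acrys and OAcrys explicit}; taking $\nu_*$ should eliminate these extra variables and leave exactly $D_{\crys,R_0}(T)\otimes_{R_0[1/p]} R_0'[1/p]$, modulo a cohomological vanishing argument in the spirit of \Cref{lem:p-complete_direct_sum_and_coh}. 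I expect this last cohomological step, together with the non-coherence of $\OB_\crys$, to be the main technical obstacle.

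Finally, part (v) follows once (i)-(iv) are in hand. The local criterion \Cref{thm:equivalent def of crys and st} says that crystallinity (resp.\ semi-stability) of $T$ with respect to $X$ is Zariski-locally equivalent to $D_{\crys,R_0}(T)$ (resp.\ $D_{\st,R_0}(T)$) having rank $d$, which by (ii) is the same as $\mathcal{E}_{\crys,T}$ (resp.\ $\mathcal{E}_{\st,T}$) being a bona fide $F$-isocrystal of rank $d$. To produce the required global Frobenius-equivariant isomorphism $\IB_\crys(\mathcal{E}_{\crys,T})\simeq T\otimes_{\IZ_p}\IB_\crys$, I would glue the local isomorphisms $\vartheta_{\crys,R_0,T}$ from \Cref{functoriality of being associated}, compatibility across overlaps being automatic from the independence of $R_0$ already proved. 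For the last sentence of (v), if $T$ is crystalline then $f_\eta^{-1}T$ is crystalline by \Cref{thm: pullback formula Faltings}, so both $f_s^*\mathcal{E}_{\crys,T}$ and $\mathcal{E}_{\crys,f_\eta^{-1}T}$ are $F$-isocrystals of rank $d$, forcing the injection $\iota_{\crys,f}$ of (iii) to be an isomorphism for arbitrary $f$.
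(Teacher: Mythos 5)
Parts (i), (ii), (iii), and (v) of your proposal match the paper's route closely: the packaging via \Cref{cor:D is independent of R_0} and \Cref{thm:E_* are weak F-isoc}, the rank bound from the injectivity of $\alpha_{\crys,T}$ and $\alpha_{\st,T}$, the reduction to \Cref{thm:pullback for D functors}.\ref{thm:pullback for D functors fet} in the finite \'etale case, and for (v) the reformulation through \Cref{thm:equivalent def of crys and st} together with \Cref{thm: pullback formula Faltings} for the final sentence (with the implicit use of Ogus's rigidity \cite[Thm.\ 4.1]{Ogu84} to upgrade an injection of $F$-isocrystals of equal rank to an isomorphism). These are fine.

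The substantive issue is part (iv), where your route diverges from the paper's and, by your own admission, is left open. You propose to reduce to a torus projection and then attack $D_{\crys,R_0}(T)\tensor_{R_0[1/p]}R_0'[1/p]\to D_{\crys,R_0'}(f_\eta^{-1}T)$ directly, by analyzing $\OB_{\crys,R_0'}$ as $\OB_{\crys,R_0}$ with extra divided-power framing variables and then ``eliminating'' them via a vanishing argument after applying $\nu_*$. That reduction is plausible in spirit, but the key cohomological step is precisely the hard part, and you do not close it. The non-coherence of $\OB_{\crys}$ over $\sO_{X_\eta}$ and the need to control $R^1\nu_*$ for an infinite divided-power algebra are genuine obstacles, and \Cref{lem:p-complete_direct_sum_and_coh} as stated controls the $p$-complete direct-sum situation, not the general divided-power-polynomial situation you would need after restricting along an open torus embedding; so you cannot simply quote it. The paper sidesteps this entirely. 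It reduces (after the \'etale case) to a projection $f_0:X_0\times\what{\IA}^n_W\to X_0$, and then uses the zero section $s_0:X_0\to X_0\times\what{\IA}^n_W$ together with two facts already established: compatibility of $\iota_{\crys,-}$ with composition, and Ogus's point-detection theorem \cite[Thm.\ 4.1]{Ogu84} for $F$-isocrystals on smooth varieties (after shrinking to a stable open neighborhood so that all weak $F$-isocrystals in sight are honest ones). Since $f_0\circ s_0=\mathrm{id}$, the composition law identifies $\iota_{\crys,\mathrm{id}_X}$ with $\iota_{\crys,s}\circ s_s^*\iota_{\crys,f}$; both factors are injective and their composite is an isomorphism, so $s_s^*\iota_{\crys,f}$ is an isomorphism. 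Then Ogus's theorem propagates the isomorphism from the section to all of $Y_s$. This is a short, computation-free argument that replaces the explicit cohomological step you expected to be the hard part. I would recommend adopting the section trick rather than trying to make the direct calculation work.
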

\begin{proof}
	Part \ref{thm:gluing of D functors modules} and Part \ref{thm:gluing of D functors modules local formula} follow from \Cref{thm:E_* are weak F-isoc} and its local construction.
	On the other hand, Part \ref{thm:gluing of D functors pullbacks} follows from \ref{prop:pullback_in_general}, and Part \ref{thm:gluing of D functors crys/st} follows from \Cref{thm:equivalent def of crys and st}.
	So it is left to check Part \ref{thm:gluing of D functors smooth base change}.
	
	We first notice that if the map $f$ is $p$-completely \'etale, to check that the injections $\iota_{*, f}$ are isomorphic, by deformation theory, it suffices to assume $X$ is smooth affine as in \Cref{conv of smooth affine} and $f$ is induced from a map of Frobenius equivariant unramified models $f_0:\Spf(R_0')\to \Spf(R_0)$.
	Under the assumption, when the map $f$ is finite \'etale, the claim follows from \Cref{thm:pullback for D functors}.\ref{thm:pullback for D functors fet}.
	So to extend the claim to general $p$-complete \'etale morphisms, we may assume the map $f$ to be a ($p$-complete) open immersion.
	In this case, we let $x$ be a fixed closed point of $X$.
	By shrinking $X$ (and hence $Y$) to a neighborhood of $x$ if necessary, since the statement is Zariski local with respect to $X$, we may assume $X$ and $Y$ are stable open neighborhoods of $x$ for all the weak $F$-isocrystals in the statement (cf. \Cref{def:various_notions_of_wIsoc}).
	Under the assumption, the claim follows since by definition, the stable open neighborhood means that the $F$-isocrystals above satisfy the restriction formula with respect to further open immersions.
	
	For a general $p$-completely smooth morphism $f$, thanks to the cases above, we may reduce to considering the case when the map $f$ admits an Frobenius equivariant unramified model $f_0$, and the latter is given by the projection $X_0 \times \what{\IA}^{n}_W \to X_0$, where $\what{\IA}^{n}_W = \Spf(W\< x_1, \cdots, x_n \>)$. 
	We let $s_0:X_0\to X_0\times \what{\IA}^{n}_W$ be the fiber at the origin. 
	By shrinking around any fixed point of $X$ if necessary, we assume again that $X$ is the stable open neighborhood of that point for all the weak $F$-isocrystals in the statement.
	So all the weak $F$-isocrystals are now actual $F$-isocrystals, and the pullback functors in the statement are the usual pullback functors for $F$-isocrystals.
	Here we also notice that the composition $f_0\circ s_0$ is equal to the identity map $\mathrm{id}_{X_0}$.
	If the injective map of flat connections $\iota_{\crys,f}$ is not an isomorphism, by \cite[Thm.\ 4.1]{Ogu84} so is the pullback of $\iota_{\crys,f}$ along $(s_0)_\eta^*$.
	On the other hand,
	notice that the map of $F$-isocrystals $\iota_{\crys,\mathrm{id}_X}$ is an isomorphism.
	The latter forces the map $\iota_{\crys,f}$ to be surjective, thanks to the fact that the map of isocrystals $\iota_{\crys}$ is compatible with the composition of $p$-adic formal schemes.
	This finishes the proof of $\iota_{\crys,f}$, and the case for $\iota_{\st, f}$ follows similarly.
\end{proof}

\begin{remark}
	Here different from \cite{TT19}, we make no assumption on the ramification degree of $K$.
	In particular, the formal scheme $X$ does not have to be defined over an unramified model.
	Moreover, we emphasize that we make no assumption on the $p$-adic local system $T$.
\end{remark}

\begin{remark}
\label{rmk: base change diagram}
    Consider the setting of \cref{thm:gluing of D functors}.\ref{thm:gluing of D functors pullbacks} (or \Cref{prop:pullback_in_general}). 
    We refer the morphisms $\iota_{*, f}$ ($* = \crys, \st$) as the \emph{base change morphisms} for $T$ along the map $f$. This terminology is inspired by \cite[\href{https://stacks.math.columbia.edu/tag/02N6}{Tag 02N6}]{stacks-project}.  
    Note that thanks to the identification offered by \cref{lem: pullback formula on crystalline side}, $\IB_{\crys, Y}(\iota_{\st, f})$ is naturally viewed as an injection $f_{\eta, \IB_{\crys}}^*(\IB_{\crys, X}(\sE_{\st, T})) \into \IB_{\crys,Y}(\sE_{\st, f^{-1}_\eta(T')})$. 
    Moreover, in the special case when the map $f$ comes from a map of unramified models as in \Cref{thm:pullback for D functors}, we then obtain a commutative diagram: 
    \begin{equation}
    \label{eqn: base change diagram}
    \begin{tikzcd}
	{f_{\eta, \IB_{\crys}}^*(\IB_{\crys, X}(\sE_{\st, T}))} && {f^*_{\eta, \IB_\crys}(T \tensor_{\IZ_p} \IB_{\crys, X})} \\
	{\IB_{\crys,Y}(\sE_{\st, f^{-1}_\eta(T')})} && {f_\eta^{-1}(T) \tensor_{\IZ_p} \IB_{\crys, Y}}
	\arrow["{f_\eta^* \alpha_{\st, T}^{\nabla = 0, N= 0}}", hook, from=1-1, to=1-3]
	\arrow["{\IB_{\crys, Y}(\iota_{\st, f})}"', hook, from=1-1, to=2-1]
	\arrow["{\textrm{Lem.~\ref{lem: pullback formula on etale side}}}", "\simeq"', from=1-3, to=2-3]
	\arrow["{\alpha_{\st, f_{\eta}^{-1}(T)}^{\nabla = 0, N= 0}}", hook, from=2-1, to=2-3]
    \end{tikzcd}
    \end{equation}
    There is a completely analogous diagram for $\iota_{\crys, f}$, which we do not spell out.
\end{remark}

\section{Normalizations and finite covers}
\label{sec nor}
In the proof of the pointwise criteria later, one of the key ingredients is to descend the crystallinity or the semi-stability along suitable finite covers. 
As a preparation, we discuss normalizations of rigid analytic spaces and formal schemes in this section. Then we will introduce the notion of effective sets and prove an $\ell$-adic analogue of the main theorems. 

\subsection{Preliminaries}
\label{sub pril}
We first recall some basic facts from \cite{Conrad99}. 
As before, we let $K$ be a $p$-adic field with perfect residue field $k$ and the uniformizer $\pi$. 
Unless otherwise noted, in this section we reserve the notation $(-)^\wedge$ for completion with respect to $p$-adic topology. 

To begin, recall that a topologically of finite type $K$-algebra $A$ is an excellent ring.
Moreover, if $Z = \Spa(A)$ and $z \in Z$ corresponds to a maximal ideal $\mathfrak{m} \subseteq A$, then the analytic localization $\mathcal{O}_{Z, z}$ is also excellent (\cite[Thm~1.1.3]{Conrad99}) and there is a natural local morphism $A_{(\mathfrak{m})} \to \mathcal{O}_{Z, z}$ of Noetherian rings which becomes an isomorphism after completion. 

We let $\mathsf{FS}_{\mathcal{O}_K}$ be the category of locally Noetherian $p$-adic formal schemes $X$ over $\Spf(\mathcal{O}_K)$ such that the reduced special fiber $X_{s}$ is a scheme locally of finite type over $k$.
Then $X\in \mathsf{FS}_{\mathcal{O}_K}$ if and only if it is a $p$-adic formal scheme that is locally topologically of finite type over $\mathcal{O}_K$, and an affine formal scheme $\Spf(R)$ is an object of $\mathsf{FS}_{\mathcal{O}_K}$ if and only if $R$ is of a quotient of $\mathcal{O}_K \< x_1, \cdots, x_m \>$ for some $m, n \ge 0$ (\cite[\S 7.1]{deJong}). 
As a classical convention (cf. \cite[\S7.4]{BoschLectures}), we call a formal scheme \emph{admissible} if it is in $\mathsf{FS}_{\mathcal{O}_K}$ and is flat over $\mathbb{Z}_p$.
We also let $\mathsf{Rig}_K$ be the category of rigid analytic spaces over $K$. 
As usual, we denote the generic fiber functor $\mathsf{FS}_{\mathcal{O}_K} \to \mathsf{Rig}_K$ by $X \mapsto X_\eta$. 

\begin{proposition}
\label{prop: extend classical points}
    Let $X \in \mathsf{FS}_{\sO_K}$ be an admissible formal scheme over $\Spf(\sO_K)$. Then every classical point $x_\eta \in X_\eta$ extends to a morphism $x : \Spf(\sO_{K(x_\eta)}) \to X$ such that $x_\eta$ is given by the rigid generic fiber of $x$. 
\end{proposition}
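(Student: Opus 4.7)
The strategy is to reduce to the affine case via the reduction map, and then use the fact that a classical point on an affinoid rigid space corresponds to a continuous $K$-algebra map to a finite extension $K'/K$, which by a power-bounded-elements argument must carry the integral subring into $\sO_{K'}$.

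First I would choose an affine open formal subscheme $U = \Spf(R) \subseteq X$ containing the image $x_s \in X_s$ of $x_\eta$ under the specialization map $X_\eta \to X$. Since $X \in \mathsf{FS}_{\sO_K}$ is admissible, $R$ is flat and topologically of finite type over $\sO_K$, and $x_\eta \in U_\eta = \Spa(R[1/p])$ by construction. Unwinding definitions, the classical point $x_\eta$ with residue field $K' \colonequals K(x_\eta)$ corresponds to a continuous $K$-algebra homomorphism $\phi : R[1/p] \to K'$, and the problem is to upgrade $\phi$ to a morphism of $p$-adic formal $\sO_K$-schemes $\Spf(\sO_{K'}) \to U$ whose adic generic fiber recovers $x_\eta$.

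Next I would show that $\phi$ restricts to a continuous $\sO_K$-algebra homomorphism $R \to \sO_{K'}$. Presenting $R$ as a quotient of $\sO_K\langle y_1, \ldots, y_n \rangle$, the images $\phi(y_i) \in K'$ are power-bounded, since they are images of power-bounded elements under the continuous map $\phi$ between Banach $K$-algebras; hence each $\phi(y_i)$ lies in $\sO_{K'}$. Because $\sO_{K'}$ is a closed, $\pi$-adically complete $\sO_K$-subalgebra of $K'$, and $R$ is topologically generated over $\sO_K$ by the $y_i$, it follows that $\phi(R) \subseteq \sO_{K'}$. The resulting map $R \to \sO_{K'}$ is continuous for the $p$-adic topologies on both sides, and therefore defines a morphism $x_U : \Spf(\sO_{K'}) \to U$ of $p$-adic formal $\sO_K$-schemes whose generic fiber is $x_\eta$.

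Finally I would verify that the composition $x : \Spf(\sO_{K'}) \to U \hookrightarrow X$ is independent of the choice of $U$. Given a second affine open $U' = \Spf(R') \subseteq X$ containing $x_s$, one can find an affine open $V = \Spf(S) \subseteq U \cap U'$ containing $x_s$; the same power-bounded-elements construction applied to $V$ factors both $x_U$ and $x_{U'}$ through $V$, so the two resulting maps to $X$ agree. The only technical input is the standard fact that continuous $K$-algebra maps between affinoid Banach algebras preserve power-bounded elements, so I do not anticipate a real obstacle.
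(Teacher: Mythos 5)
Your proposal is correct and essentially mirrors the paper's argument. The paper reduces to the affine case $X = \Spf(A)$ and cites \cite[\S8.3~Prop.~7]{BoschLectures} for the fact that the image of $A$ under the character $A_K \to K(x_\eta)$ lands inside $\sO_{K(x_\eta)}$; your proof inlines the same fact via the observation that topological generators of $R$ over $\sO_K$ are power-bounded and hence map to $\sO_{K(x_\eta)}$, with a small additional check that the construction glues over different affine opens.
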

\begin{proof}
    Assume that $X = \Spf(A)$ is affine and let $\fm \subseteq A_K$ be the maximal ideal which gives $x_\eta$, i.e., $K(x_\eta) = A_K / \fm$. Let $B$ denote the image of $A$ under the reduction map $A_K \to K(x_\eta) =: K'$. Then by \cite[\S8.3~Prop.~7]{BoschLectures} and its proof, $B$ is a local integral domain of dimension $1$ such that $\sO_K \subseteq B \subseteq \sO_{K'}$. The point $x$ is given by the composite $A \to B \into \sO_{K'}$. 
\end{proof}

\begin{definition}
    \label{def: reduction of points}
    In the setting of \cref{prop: extend classical points}, we call the closed point on $X_s$ given by the image of $x_s$ the \textbf{reduction} of $x_\eta$. 
\end{definition}

Let $X$ be an object of $\mathsf{FS}_{\mathcal{O}_K}$ or of $\mathsf{Rig}_K$. 
Then $X$ is said to have property $P$ at a closed point $x\in X$ for
\[
P\in \{\text{regular, reduced, normal, Gorenstein, Cohen-MacCaulay}\}
\]
if the local ring $\mathcal{O}_{X, x}$ has property P. 
If $X = \Spf(A) \in \mathsf{FS}_{\mathcal{O}_K}$ is affine  (resp.\ $X = \mathrm{Spa}(A) \in \mathsf{Rig}_K$ is affinoid), then $X$ has property $P$ at every closed point if and only if the ring $A$ does (\cite[Lem.~1.2.1]{Conrad99}).
Moreover, by the \textit{normalization} of an affine (resp. affinoid) object $X$, we mean $\Spf(A^\nu)$ (resp.\ $\mathrm{Spa}(A^\nu)$), where $A^\nu$ is the integral closure of the ring $A/\mathrm{Nil}(A)$ in its total ring of fractions, where $\mathrm{Nil}(A)$ is the nilpotent radical of the ring $A$.

We will freely make use of the following fact: 
\begin{proposition}\cite[Cor.~1.2.3, Thm~2.1.3]{Conrad99}
	\label{prop: Conrad} 
	If the map $\Spa(A') \to \Spa(A)$ (resp. $\Spf(A') \to \Spf(A)$) is an open immersion, then the fiber product $\Spa(A' \otimes_A A^\nu)$ (resp.\ $\Spf(A' \otimes_A A^\nu)$) is the normalization of $\Spa(A')$ (resp.\ $\Spf(A)$). 
If $X \in \mathsf{FS}_{\mathcal{O}_K}$ and $X^\nu \to X$ is the normalization map of $X$, then $(X^\nu)_\eta \to X_\eta$ is the normalization of $X_\eta$.
\end{proposition}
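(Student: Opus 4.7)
The proposition combines two results: compatibility of normalization with open immersions in both the rigid and formal categories, and compatibility of the normalization functor with the generic fiber functor $\mathsf{FS}_{\sO_K} \to \mathsf{Rig}_K$. My plan in both cases is to reduce to commutative algebra at closed points, using the excellence of topologically of finite type algebras over $K$ or $\sO_K$.

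For the first part, I would work locally at each maximal ideal $\fm' \subseteq A'$ with image $\fm \subseteq A$. An open immersion induces a flat local morphism of Noetherian local rings $A_{(\fm)} \to A'_{(\fm')}$ whose analytic (or $\fm$-adic) completion is an isomorphism, and excellence of $A$ and $A'$ promotes this to a \emph{regular} morphism. Normality therefore ascends, so if $A^\nu$ is normal at $\fm$, then $A' \otimes_A A^\nu$ is normal at $\fm'$. Together with finiteness of $A^\nu$ over $A$ (again by excellence) and the fact that the morphism is generically an isomorphism over $A'$, this identifies $A' \otimes_A A^\nu$ with the normalization of $A'$. The formal-scheme case proceeds in exactly the same way, replacing analytic localizations by their formal counterparts.

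For the second part, I first invoke excellence to see that $A^\nu$ is module-finite over $A$ for any admissible affine $X = \Spf(A)$, so that $A^\nu_K$ is module-finite over $A_K$ and the map $A_K/\mathrm{Nil}(A_K) \to A^\nu_K$ is finite and birational. To see $A^\nu_K$ is normal, I would check at each maximal ideal $\mathfrak{n} \subseteq A^\nu_K$: the corresponding prime $\mathfrak{p} \subseteq A^\nu$ lies over a prime of $A$ not containing $p$ and is hence non-maximal, so $A^\nu_{\mathfrak{p}}$ is already normal since $A^\nu$ is. Excellence then ensures normality ascends along the flat localization $A^\nu_{\mathfrak{p}} \to (A^\nu_K)_{\mathfrak{n}}$.

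The main obstacle is the regularity-ascent statement needed to conclude that normality is preserved along analytic localizations and along inverting $p$. Flatness is not enough by itself; one needs the base change to be a regular morphism, which is precisely what excellence buys. The verification of regularity for analytic localizations is the technical heart of \cite{Conrad99} and rests on Kiehl's classical results on affinoid algebras and their completions.
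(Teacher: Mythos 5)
The paper does not give its own proof of this proposition; it simply cites \cite[Cor.~1.2.3, Thm~2.1.3]{Conrad99}. Your reconstruction tracks Conrad's arguments faithfully: the entire matter reduces to the excellence of topologically finite type $K$- and $\sO_K$-algebras, which buys (i) module-finiteness of normalizations and (ii) regularity of the local maps along which one wants normality to ascend. That is exactly the structure of Conrad's \S 1--2, so the approach is the right one and the argument is sound in outline.

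Two small imprecisions in the write-up are worth flagging. First, in the open-immersion case your sentence ``if $A^\nu$ is normal at $\fm$, then $A'\otimes_A A^\nu$ is normal at $\fm'$'' is garbled: the morphism along which normality ascends is not $A_{(\fm)}\to A'_{(\fm')}$ applied to $A^\nu$ at a point, but rather the base change $A^\nu \to A'\otimes_A A^\nu$ of the regular morphism $A\to A'$; regular morphisms are stable under Noetherian base change, and then normality of the normal ring $A^\nu$ ascends along the regular map $A^\nu\to A'\otimes_A A^\nu$. You should also record that birationality of $A'/\mathrm{Nil}(A')\to A'\otimes_A A^\nu$ uses that the regular morphism has geometrically reduced fibers (so $\mathrm{Nil}(A')=A'\otimes_A\mathrm{Nil}(A)$) together with the locus where the normalization map is an isomorphism being preserved under flat base change. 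Second, in the generic-fiber case your final step is vacuous: if $\mathfrak{n}\subset A^\nu_K$ is maximal and $\fp\subset A^\nu$ is its contraction, then $p\notin\fp$ and so $(A^\nu_K)_\mathfrak{n}=(A^\nu[1/p])_{\fp[1/p]}=A^\nu_\fp$ on the nose. Thus normality of $(A^\nu_K)_\mathfrak{n}$ is literally normality of $A^\nu_\fp$, and no excellence is needed for that passage --- it is only needed earlier to get finiteness of $A^\nu$ over $A$. You also implicitly use that $\mathrm{Nil}(A_K)=\mathrm{Nil}(A)_K$, which requires $p$-torsion-freeness of $A$ (guaranteed by admissibility); worth stating.
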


Let $\sfC_K$ denote the category of admissible \textit{normal} affine formal schemes $X = \Spf(R)$ over $\mathcal{O}_K$ such that the reduced special fiber $(X_k)_{\mathrm{red}}$ is irreducible and $X_\eta$ is a smooth rigid space over $K$,
and we require the morphism to be a morphism of $p$-adic formal schemes whose mod $p$ reduction is generically finite.
Let $\sfC^{\mathrm{sm}}_K \subseteq \sfC_K$ be the subcategory of those $X$ that are smooth over $\mathcal{O}_K$. 
For each $X \in \sfC_K$, we define the following objects: 
\begin{itemize}
	\item $F \colonequals \,$the fraction field of the reduced special fiber $(X_k)_{\mathrm{red}}$. 
	\item $\fp\colonequals \,$the prime ideal of $R$ corresponding to the generic point of $(X_k)_{\mathrm{red}}$, i.e., $(X_k)_{\mathrm{red}}$ is the integral affine scheme $\Spec(R/\fp)$. 
	Note that if $X \in \sfC^{\mathrm{sm}}_K$, then $\fp(X) = \pi R$. 
	\item $\mathcal{O}_L \colonequals (R_{\fp})^\wedge$, which is a discrete valuation ring with residue field $F$. 
	\item $L \colonequals \,$the fraction field of $\mathcal{O}_L$, i.e., $\mathcal{O}_L[\pi^{-1}]$. 
\end{itemize}
Note that as $R$ is normal, Serre's criterion \cite[031S]{stacks-project} implies that $R_{\fp}$ is a discrete valuation ring; moreover, if $\varpi$ is its uniformizer of $R_{\fp}$, then we have $\pi = \varpi^m$ for some $m \in \mathbb{N}_{\ge 1}$, and $m = 1$ if and only if the mod $\pi$ reduction $X_k$ is generically reduced. 
The notation $R^\wedge_\fp$ can be viewed as either $p$-adic or $\varpi$-adic completion. 
When we need to emphasize the dependency on $X$, we shall write $F(X), \fp(X), \mathcal{O}_L(X), L(X)$ for $F, \fp, \mathcal{O}_L, L$ respectively. 
It is then clear that $\mathcal{O}_L(-)$ (resp. $L(-)$) defines a functor from $\sfC_K$ to the category of complete discrete valuation rings (resp. complete discrete valuation fields).

\begin{convention}
	\label{conv:shrink}
	Given $X = \Spf(R) \in \sfC_K$ and $f \in R \smallsetminus \fp$, we set $R_f$ to be the localization $R[1/f]$ and let $R_\bf$ be the $p$-comopletion $(R_f)^\wedge$. 
	By \textbf{shrinking $X$}, we mean that we replace $X$ by $X_f \colonequals \Spf(R_\bf)$ for some $f\in R\smallsetminus \fp$. 
	If there is any other formal scheme $X'$ that admits a map to $X$ in the context, then we replace $X'$ by the open sub formal scheme $X' \times_X X_f$ accordingly. 
\end{convention} 

\begin{remark}
	We shall often implicitly use the fact that the functors $\sO_L(-)$, and hence $L(-)$, are invariant under shrinking: 
	Given $X = \Spf(R) \in \sfC_K$ and $X_f$ for $f \in R \smallsetminus \fp$, the open immersion $X_f \to X$ induces an identification $\sO_L(X_f) = \sO_L(X)$. 
	To see this, as $\fp R_\bf$ is a prime ideal in $R_\bf$, it suffices to show that the natural morphism $R \to R_\bf$ induces an isomorphism $$R_\fp / \pi^n \to (R_\bf)_{\fp R_\bf} / \pi^n$$ for every $n \in \IN$. 
	The latter holds since the mod $\pi^n$ reduction of $f$ does not belong to the ideal $\fp R/\pi^n$.
\end{remark}

\subsection{A Reduced fiber theorem}
\label{sub reduced fiber} 
We give a variant of the reduced fiber theorem of \cite{BLR95} for formal schemes in terms of normalizations, which is more similar to the classical version for schemes in  \cite[\href{https://stacks.math.columbia.edu/tag/09IL}{Tag 09IL}]{stacks-project}. 
The benefit of doing so is that normalizations have universal properties which allow us to construct morphisms to them. 

We start with an elementary lemma: 
\begin{lemma}
	\label{lem: pc reduced special fiber} 
	Let $R$ be a normal integral domain over $\sO_K$ and set $X \colonequals \Spec(R)$. 
	Suppose that the mod $\pi$ reduction $X_k$ is irreducible and the set 
	$$\{ x \in X_k \mid x = (\wt{x})_k \textit{ for some } \wt{x} \in X(\sO_{K'}) \textit{ where $K'/K$ is a finite unramified extension}  \}$$ 
	is Zariski dense in $X_k$. 
	Then $X_k$ is generically reduced. 
\end{lemma}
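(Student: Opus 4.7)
The plan is to reduce generic reducedness of $X_k$ to a valuation-theoretic statement about the discrete valuation ring $R_\fp$, where $\fp \colonequals \sqrt{\pi R}$ is the unique minimal prime over $(\pi)$ afforded by irreducibility of $X_k$. Assuming $\pi \neq 0$ in $R$ (else the conclusion is vacuous), $R$ is flat over $\sO_K$, so by Krull's Hauptidealsatz $\fp$ has height $1$; normality of $R$ then forces $R_\fp$ to be a DVR by Serre's criterion. Fixing a uniformizer $\varpi \in R_\fp$, we may write $\pi = u \varpi^m$ with $u \in R_\fp^\times$ and $m \ge 1$, and generic reducedness of $X_k$ is exactly the assertion that $m = 1$. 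I will argue by contradiction, assuming $m \ge 2$.

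The next step is a routine denominator-clearing. Writing $\varpi$, $u$ and $u^{-1}$ as fractions of elements of $R$ over denominators in $R \setminus \fp$, and using that every equality in $R_\fp$ is witnessed after multiplication by some element of $R \setminus \fp$, I can produce a single $f_0 \in R \setminus \fp$ with the properties that $\varpi \in R[f_0^{-1}]$, $u \in R[f_0^{-1}]^\times$, and the equation $\pi = u \varpi^m$ already holds in $R[f_0^{-1}]$. This denominator-collecting step is the most bookkeeping-heavy part of the argument; once it is completed, the remainder reduces to a single valuation comparison.

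Finally, I will exploit the density hypothesis. Since $f_0 \notin \fp$, its image $\overline{f_0}$ in $R/\fp$ is nonzero, so $V(\overline{f_0})$ is a proper closed subset of $(X_k)_{\mathrm{red}} = \Spec(R/\fp)$, which has the same underlying topological space as $X_k$. By Zariski density of the prescribed set of closed points, there exist a finite unramified extension $K'/K$ and an $\sO_K$-algebra homomorphism $\wt x \colon R \to \sO_{K'}$ whose mod-$\pi$ reduction avoids $V(\overline{f_0})$; equivalently, $\wt x(f_0) \in \sO_{K'}^\times$, so $\wt x$ extends uniquely to $R[f_0^{-1}] \to \sO_{K'}$. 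Applying $\wt x$ to the relation $\pi = u \varpi^m$ gives $\pi = \wt x(u) \cdot \wt x(\varpi)^m$ in $\sO_{K'}$. Since $K'/K$ is unramified, $\pi$ remains a uniformizer of $\sO_{K'}$ while $\wt x(u)$ is a unit, so comparing the normalized valuation $v_{K'}$ yields $1 = m \cdot v_{K'}(\wt x(\varpi))$, which is impossible for integer-valued $v_{K'}$ once $m \ge 2$. This contradiction forces $m = 1$, completing the proof.
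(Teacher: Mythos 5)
Your proof is correct and follows essentially the same route as the paper's: pass to the DVR $R_\fp$, write $\pi = u\varpi^m$, clear denominators so that $u, u^{-1}, \varpi$ are all integral over a localization $R[f_0^{-1}]$, and use Zariski density to find an unramified point hitting the non-vanishing locus of $f_0$, where comparing $\pi$-adic valuations forces $m=1$. The one small difference is that you work directly with finite unramified extensions $K'/K$ instead of first reducing to $k$ algebraically closed; this is marginally cleaner since it sidesteps the (unjustified) base-change reduction in the paper's proof, and your endgame (showing $1 = m\cdot v_{K'}(\wt{x}(\varpi))$ has no integer solution for $m\ge 2$) avoids the paper's mildly awkward case split on $n$.
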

\begin{proof}
	It suffices to assume that $k$ is algebraically closed.
	Let $\eta_k \in X$ be the generic point of the reduced special fiber $(X_k)_{\mathrm{red}}$, which corresponds to a prime ideal $\fp \subseteq A$. 
	By the normality assumption, $X$ is regular in codimension $1$, so the local ring $A_{(\fp)}$ is a discrete valuation ring, and we denote its uniformizer by $\varpi$.
	Let $i : \sO_K \into A$ be the structural morphism. 
	Then $i(\pi) = u \varpi^m$ for some $m \in \IN$ and $u \in A_{(\fp)}$. 
	Our goal is to show that $m = 1$. 
	
	Suppose for the sake of contradiction that $m > 1$. 
	Up to replacing $X$ by an affine open neighborhood of $\eta_k$, we may assume that the elements $u, u^{-1}$ and $\varpi$ all lie in $A$. 
	By the density assumption, after shrinking there still exists a morphism $\wt{x} : A \to \sO_K$ such that $\wt{x} \circ i  = \mathrm{id}_{\sO_K}$. 
	Then by assumption, we have $\wt{x}(\varpi) = v \pi^n$ for some $n \ge 0$ and $v \in \sO_K^\times$, and 
	\[
	\pi = \wt{x}(i(\pi)) = \wt{x}(u) \wt{x}(\varpi)^m = \wt{x}(u) v \pi^{mn}.
	\] 
	If $n = 0$, then $\pi = \wt{x}(u) v$ becomes a unit in $\sO_K$, which is a contradiction. 
	But if $n  > 1$, then $mn > 1$ and $1 = \wt{x}(u) v \pi^{mn - 1}$, so that $\pi$ again becomes a unit in $\sO_K$, which is a contradiction. 
	Therefore, we must have $m= 1$ as desired. 
\end{proof}
Now we state the version of the reduced fiber theorem that will be used later.
\begin{theorem}
	\emph{(Reduced Fiber Theorem)}
	\label{thm: reduced fiber theorem}
	Let $X$ be an admissible $p$-adic formal scheme over $\Spf(\sO_K)$, with $X_\eta$ a smooth rigid space and $(X_k)_{\mathrm{red}}$ an irreducible scheme. 
	Then there exists a finite extension $K'$ of $K$ with residue field $k'$, such that the special fiber $((X_{\sO_{K'}})^\nu)_{k'}$ of the normalization $(X_{\sO_{K'}})^\nu$ is generically reduced along every irreducible component that dominates $(X_k)_{\mathrm{red}}$.  
\end{theorem}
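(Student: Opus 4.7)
The plan is to invoke the classical reduced fiber theorem of Bosch--L\"utkebohmert--Raynaud \cite{BLR95} and identify the resulting finite cover with the normalization in our setting. First, I would reduce to the affine case $X=\Spf(R)$, which is harmless since normalization commutes with open immersions (\Cref{prop: Conrad}). Since $X_\eta$ is smooth (hence reduced), the reduced fiber theorem supplies a finite extension $K'/K$ and a finite morphism of admissible $\sO_{K'}$-algebras $R \otimes_{\sO_K} \sO_{K'} \to R'$ such that the map becomes an isomorphism after inverting $\pi$, and the special fiber $R' \otimes_{\sO_{K'}} k'$ is reduced.

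Next I would verify that $R'^\nu$ agrees with $(R \otimes_{\sO_K} \sO_{K'})^\nu$. Observe that $R'$ is reduced: its generic fiber $R' \otimes_{\sO_{K'}} K' = R \otimes_{\sO_K} K'$ is smooth, its special fiber is reduced by construction, and $R'$ is $\sO_{K'}$-flat by BLR. Hence $R'^\nu$ is the integral closure of $R'$ in $R \otimes_{\sO_K} K'$, which is itself normal (being smooth over $K'$). Being finite, normal, and birational over $R \otimes_{\sO_K} \sO_{K'}$, the ring $R'^\nu$ coincides with the normalization $(R \otimes_{\sO_K} \sO_{K'})^\nu$ by the universal property.

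Finally, I would conclude generic reducedness along the relevant components. Let $\mathfrak{q} \subset R'$ be the prime corresponding to the generic point of any irreducible component of $\Spec(R'_{k'})$ dominating $(X_k)_\mathrm{red}$. The localization $R'_\mathfrak{q}$ is a one-dimensional flat local $\sO_{K'}$-algebra whose reduction $R'_\mathfrak{q}/\pi R'_\mathfrak{q}$ is a field (by reducedness of the special fiber at a generic point); therefore $R'_\mathfrak{q}$ is a DVR, hence normal. Consequently the finite map $R' \to R'^\nu$ is an isomorphism at $\mathfrak{q}$, and since every irreducible component of $((X_{\sO_{K'}})^\nu)_{k'}$ dominating $(X_k)_\mathrm{red}$ must arise from such a $\mathfrak{q}$ (via the finite surjection $\Spec(R'^\nu) \to \Spec(R')$), generic reducedness along those components is preserved.

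The only substantial input is the reduced fiber theorem itself, used as a black box; modulo this, the remaining steps are formal manipulations with the universal property of normalization together with the basic fact that a one-dimensional flat local $\sO_{K'}$-algebra with reduced residue ring is a discrete valuation ring. The minor bookkeeping to be careful with is tracking the correspondence between irreducible components of $\Spec(R'_{k'})$ and of $\Spec(R'^\nu_{k'})$ that dominate $(X_k)_\mathrm{red}$, which is where the normality of $R'$ at each such generic point is used.
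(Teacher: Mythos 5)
Your approach is genuinely different from the paper's, and arguably more direct. The paper does not attempt to identify the normalization of $X_{\sO_{K'}}$ with the normalization of the BLR model $Y$; instead it constructs an auxiliary morphism $\epsilon : Y^{\mathrm{sm}} \to (X_{\sO_{K'}})^\nu$ from the smooth locus of $Y$ (using that $Y^{\mathrm{sm}}$ is normal), proves via specialization maps that $\epsilon_k$ is dominant onto each component of the target dominating $(X_k)_\mathrm{red}$, and then applies \cref{lem: pc reduced special fiber}, a density-of-unramified-points criterion. Your route short-circuits this: you observe that $Y$ and $X_{\sO_{K'}}$ have the same total ring of fractions, so the same normalization, and that $Y$ is already normal (indeed a DVR) at the generic points of $Y_{k'}$, so the normalization map changes nothing there. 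What the paper's route buys is that \cref{lem: pc reduced special fiber} is reused elsewhere (\cref{thm: effectivity}), and the specialization argument sidesteps any height bookkeeping; what your route buys is brevity and avoiding the dominance lemma.

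However, there is a genuine gap in your final step. You assert that ``every irreducible component of $((X_{\sO_{K'}})^\nu)_{k'}$ dominating $(X_k)_\mathrm{red}$ must arise from such a $\mathfrak{q}$,'' and attribute this to the finite surjection $\Spec(R'^\nu) \to \Spec(R')$. But finiteness of that morphism alone does not do the job: for a finite extension $A \subset B$ and a height-$1$ prime $\mathfrak{p} \subset B$, incomparability gives $\mathrm{ht}_A(\mathfrak{p} \cap A) \ge \mathrm{ht}_B(\mathfrak{p}) = 1$, which is the \emph{wrong} direction — a priori $\mathfrak{p} \cap R'$ could have height $\ge 2$ and fail to be a generic point of $\Spec(R'_{k'})$, in which case your argument that $R'$ is a DVR there does not apply. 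What saves the argument is precisely the hypothesis, so far unused in your last step, that $(X_k)_\mathrm{red}$ is irreducible: letting $\tilde{\mathfrak{p}} \subset R$ denote the unique height-$1$ prime above $\pi$, a component of $(R'^\nu)_{k'}$ dominating $(X_k)_\mathrm{red}$ has generic point $\mathfrak{p}$ with $\mathfrak{p} \cap R = \tilde{\mathfrak{p}}$. Now apply incomparability to the finite \emph{injection} $R \hookrightarrow R'$: any chain below $\mathfrak{q} := \mathfrak{p} \cap R'$ in $R'$ contracts to a chain of distinct primes below $\tilde{\mathfrak{p}}$ in $R$, so $\mathrm{ht}_{R'}(\mathfrak{q}) \le \mathrm{ht}_R(\tilde{\mathfrak{p}}) = 1$. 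Since $\pi'$ is a non-zero-divisor of $R'$ and $\pi' \in \mathfrak{q}$, also $\mathrm{ht}_{R'}(\mathfrak{q}) \ge 1$, hence $\mathfrak{q}$ is a minimal prime of $R'_{k'}$ as you need. With this fix (which invokes $R \to R'$ and the irreducibility hypothesis, rather than $R' \to R'^\nu$), your argument is correct.

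Two smaller remarks: the sanity checks in Step 3 — that $R \otimes_{\sO_K} \sO_{K'}$ and $R'$ are reduced and share their total ring of fractions — are right and worth spelling out, since the paper's definition of normalization passes through the nilradical. And the reduction to the affine case needs a word on choosing a single $K'$ that works globally (take a compositum over a finite affine cover); this is harmless but should be said.
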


\begin{proof} 
	The main theorem of \cite{BLR95} tells us that there exists some finite extension $K'$ and a flat topologically finite type $p$-adic $\sO_{K'}$-formal scheme $Y$ that has the reduced special fiber, together with a finite morphism $Y \to X_{\sO_{K'}}$ inducing an isomorphism on the rigid generic fibers. 
	We fix this $K'$ for the moment. 
	For simplicity, below we use $\wt{X}$ to abbreviate the normalization $(X_{\sO_{K'}})^\nu$. 
	Without loss of generality, we may assume that $K'$ is totally ramified over $\sO_K$ so that the residue field $k'$ of $\sO_{K'}$ is the same as $k$. 
	
	Note that the natural morphism $\wt{X}_\eta \to X_{\eta,K'}$ is an isomorphism as it is a normalization map (\cite[Thm~2.1.3]{Conrad99}) with the target being already normal. 
	In particular, the rigid generic fibers of $X_{\sO_{K'}}, Y$ and $\wt{X}$ are naturally identified. 
	Let $Y^{\mathrm{sm}}$ be the maximal open formal subscheme of $Y$ which is smooth over $\sO_{K'}$. 
	The reducedness of $Y_{k}$ implies that it is generically smooth along every irreducible component, so that $(Y^{\mathrm{sm}})_{k} \colonequals Y^{\mathrm{sm}} \times_{\Spf(\sO_{K'})} \Spec(k)$ is Zariski dense in $Y_{k}$. 
	As $Y^{\mathrm{sm}}$ is normal, its morphism to $X_{\sO_{K'}}$ factors through a morphism $\epsilon : Y^{\mathrm{sm}} \to \wt{X}$.
	In summary, we get a commutative diagram of $p$-adic formal schemes as below
	\[\begin{tikzcd}
	{Y^{\mathrm{sm}}} & {\wt{X}} \\
	Y & {X_{\sO_{K'}}}.
	\arrow["\epsilon", from=1-1, to=1-2]
	\arrow[hook, from=1-1, to=2-1]
	\arrow[from=1-2, to=2-2]
	\arrow[from=2-1, to=2-2]
\end{tikzcd}\]
	
	In the following, we call an irreducible component of $(\wt{X}_k)_{\mathrm{red}}$ \textit{good} if it dominates $(X_k)_{\mathrm{red}}$. 
	We then claim that the special fiber $\epsilon_k : (Y^{\mathrm{sm}})_k \to \wt{X}_{k}$ of the map $\epsilon$ is dominant on each good irreducible component of $\wt{X}_k$. 
	First, we note that the map $Y_k \to X_k$ is surjective on closed points. 
	Recall that for an admissible $p$-adic formal scheme, the specialization map from classical points on the rigid-analytic generic fiber to the closed points on the special fiber is surjective (\cite[\S 8.3~Prop.~8]{BoschLectures}).
	Let $\wt{x} \in X_{\eta,K'}$ be a classical point which specializes to $x\in X_k$. 
	By identifying $\wt{x}$ as a closed point in $Y_{\eta,K'}\simeq X_{\eta,K'}$, the point $\wt{x}$ specializes to a point $y \in Y_{k}$. 
	Since specialization maps are compatible with morphisms between admissible $p$-adic formal schemes, by contemplating the generic and the special fibers of $Y\to X_{\mathcal{O}_{K'}}$, we see the image of $y$ under the map $Y_{k} \to X_{k}$ is equal to $x$. 
	Moreover, as the map $Y_k \to X_k$ is finite and $Y^{\mathrm{sm}}_k \subseteq Y_k$ is Zariski dense, there exists an open dense subscheme $U \subseteq X_k$ such that $U \times_{X_k} Y_k \subseteq Y^{\mathrm{sm}}_k$. 
	Since $\wt{X}_k \to X_k$ is also finite, on each good irreducible component $\wt{X}_0$ of $(\wt{X}_k)_\mathrm{red}$, the image of a general closed point $x_0 \in \wt{X}_0$ along the map $\wt{X}_k\to X_k$ lies in $U$. 
	So to show the dominance, it suffices to find a closed point $y \in Y^{\mathrm{sm}}_k$ such that $\epsilon_k(y) = x_0$, for which we apply the above argument again: 
	Choose a classical point $\wt{x}_0$ on $\wt{X}_{\eta,K'} \simeq Y_{\eta,K'}$ which specializes to $x_0$, and we let $y$ be the specialization of $\wt{x}_0\in Y_{\eta,K'}$ to $Y_k$. 
	Then we claim that $y$ is contained in $Y^\mathrm{sm}_k$ with $\epsilon_k(y)=x_0$.
	To see this, we notice that by the compatibility of specialization maps, as the image of $x_0$ (denoted as $x$) along $\wt{X}_k\to X_k$ is in $U$, the image of $y$ along $Y_k\to X_k$, which is equal to $x$, is contained in $U$ as well.
	In particular, the point $y$ is inside of $U\times_{X_k} Y_k$ and thus in $Y^{\mathrm{sm}}_k$.
	As a consequence, by the choice of $y_0$ and the compatibility of specialization maps again, we see $\epsilon_k(y)=x_0$.
    The relation of various points in the special fiber is summarized in the following diagram:
 \[
 \begin{tikzcd}
 & x_0 \in \wt{X}_0 = \text{reduction of $\wt{x}_0\in \wt{X}_{\eta, K'}$} \arrow[d, mapsto]\\
 y_0 \in Y_k^\mathrm{sm} = \text{reduction of $\wt{x}_0\in \wt{Y}_{\eta, K'}$} \arrow[ur, dashed, mapsto] \arrow[r, mapsto] & x \in U \subset X_k.
 \end{tikzcd}
 \]
	
	By the smoothness, every closed point on $Y^{\mathrm{sm}}_k$ lifts to an $\sO_{K''}$-point on $Y^{\mathrm{sm}}$ for some $K''/K'$ finite and \textit{unramified}. 
	The preceding paragraph then implies that the same can be said about general closed points on a good irreducible component of $(\wt{X}_k)_{\mathrm{red}}$. 
	Hence we can conclude by \cref{lem: pc reduced special fiber}. 
\end{proof}
\begin{remark}
	\label{rmk: of reduced fiber thm}
	Let $K''/K'$ be any finite extension of $p$-adic fields.
	As in the first paragraph of the proof for \Cref{thm: reduced fiber theorem}, the conditions on $Y$ relative to $\sO_{K'}$ remains unchanged if we replace $Y\to X_{\sO_{K'}}$ by their base change along $\sO_{K'}\to \sO_{K''}$.
	In particular, \Cref{thm: reduced fiber theorem} still holds if we replace $K'$ by its any finite extension, including those $K''$ that are Galois over $K$.
\end{remark}

\subsection{Normal covers} 
\label{sub normal curve}
\begin{definition}
	\label{def: normal covers}
	Given $X \in \sfC_K$, we say that a finite morphism $X' \to X$ is a \emph{normal cover} if $X' \in \sfC_K$ and the map of generic fibers $X'_\eta \to X_\eta$ is a finite \'etale cover. 
	Denote the category of normal covers of $X$ by $\mathsf{N}(X)$, where the morphisms between objects are morphisms between $p$-adic formal $X$-schemes. 
\end{definition}

\begin{lemma}
	\label{lem: simple observation}
	Let $X = \Spf(R) \in \sfC_K$ and $X' = \Spf(R') \in \sfN(X)$. 
	Let $L \colonequals L(X)$, $L' \colonequals L(X')$, $\fp \colonequals \fp(X)$ and $\fp' \colonequals \fp(X')$, and let $\sO_{L'}$ be the integral closure of $\sO_L$ in $L'$. 
	Then we have: 
	\begin{enumerate}[label=\upshape{(\roman*)}]
		\item\label{lem: simple observation alg} There are canonical identifications 
		\begin{equation*}
			R' \tensor_R \sO_L = (R' \tensor_R R_{\fp})^\wedge = (R'_{\fp'})^\wedge = \sO_L(X') = \sO_{L'}.
		\end{equation*}
		\item\label{lem: simple observation localization} For every $f \in R \smallsetminus \fp$, the map $X' \to X'|_{X_f} \colonequals X' \times_{X} X_f$ forms an object of $\sfN(X_f)$. In particular, the restriction $(-)|_{X_f}$ defines a natural functor $\sfN(X) \to \sfN(X_f)$. 
		\item\label{lem: simple observation integral closure} $R'$ is the integral closure of $R$ in both $R'_{\fp'}[\pi^{-1}]$ and $R'[\pi^{-1}]$.
            \item\label{rmk: Galois action on integral model} The natural morphism $\Aut(X'/X) \to \Aut(X'_\eta/X_\eta)$ is an isomorphism. 
	\end{enumerate}
\end{lemma}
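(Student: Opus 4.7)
The key structural input is that for $X, X' \in \sfC_K$ both $R$ and $R'$ are normal integral domains (integrality follows from normality, admissibility, and irreducibility of the reduced special fiber via a product decomposition argument), that $R \to R'$ is a finite surjection, and that $\fp, \fp'$ are the unique minimal primes above $\pi$ in $R$ and $R'$ respectively. The linchpin is to first establish that $\fp'$ is the \emph{unique} prime of $R'$ lying above $\fp$: by going-up for the finite integral extension $R \to R'$, the contraction $\fp' \cap R$ is a minimal prime of $R$ containing $\pi$ and hence equals $\fp$; and since $\sqrt{\pi R'} = \fp'$ we obtain $\sqrt{\fp R'} = \fp'$, so the fiber $R' \tensor_R \kappa(\fp)$ is a local Artinian $\kappa(\fp)$-algebra with unique prime corresponding to $\fp'$.

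With this observation, part (i) unfolds as follows. The equality $R' \tensor_R \sO_L = (R' \tensor_R R_\fp)^\wedge = (R'_\fp)^\wedge$ is compatibility of tensor product with $p$-adic completion for finitely generated modules. Because $R'_\fp$ has unique maximal ideal $\fp' R'_\fp$ (by the uniqueness above), further localizing at $\fp'$ is a no-op, yielding $R'_\fp = R'_{\fp'}$; moreover the $p$-adic and $\fp'$-adic topologies agree on $R'_{\fp'}$ since $\sqrt{\pi R'} = \fp'$, so $(R'_\fp)^\wedge = (R'_{\fp'})^\wedge = \sO_L(X')$. Finally $\sO_L(X')$ is the completion of the DVR $R'_{\fp'}$, hence itself a DVR; being finite over $\sO_L$, with fraction field $L'$, and integrally closed in $L'$, it coincides with $\sO_{L'}$. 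This paragraph carries the main technical friction: one must simultaneously juggle the prime-theoretic identification $R'_\fp = R'_{\fp'}$, the comparison of $p$-adic and $\fp'$-adic topologies on $R'_{\fp'}$, and the compatibility of tensor products with completion, all of which hinge on the uniqueness of $\fp'$ above $\fp$ together with $\sqrt{\pi R'} = \fp'$.

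For part (ii), $X_f = \Spf(R_\bf)$ remains in $\sfC_K$---normality is preserved under Zariski localization followed by $p$-adic completion of an excellent ring, and $f \notin \fp$ guarantees that the reduced special fiber is the non-empty irreducible open $D(\bar f) \subseteq (X_k)_\mathrm{red}$---and the same properties transfer to the base change $X' \times_X X_f = \Spf(R'_\bf)$ by observing that $f \notin \fp$ if and only if $f \notin \fp'$, together with flatness of base change and stability of normality. For part (iii), $R'$ is a normal integral domain with fraction field $L' = R'_{\fp'}[1/\pi] \supseteq R'[1/\pi]$, so any element of either of these localizations that is integral over $R$ is \emph{a fortiori} integral over $R'$ inside $L'$, hence already in $R'$ by normality. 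Part (iv) is then formal: any $R[1/\pi]$-algebra automorphism $\sigma$ of $R'[1/\pi]$ must preserve the integral closure of $R$ inside $R'[1/\pi]$, which by (iii) equals $R'$, so $\sigma$ restricts to an automorphism of $R'$; injectivity of $\Aut(X'/X) \to \Aut(X'_\eta/X_\eta)$ follows from the inclusion $R' \into R'[1/\pi]$ (which is injective as $R'$ is a domain and $\pi \neq 0$).
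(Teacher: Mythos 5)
Your argument is essentially correct and follows the same skeleton as the paper's proof, hinging on the same key observation that $\fp'$ is the unique prime of $R'$ lying above $\fp$, so that $R' \smallsetminus \fp'$ is the saturation of $R \smallsetminus \fp$ and $R'_\fp = R'_{\fp'}$. A few points of comparison and one slip to flag.

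For part (i) you invoke compatibility of finite tensor products with $I$-adic completion over a Noetherian ring in a single stroke, applied to $R'_\fp$ as a finite $R_\fp$-module; the paper instead first establishes that $R'\tensor_R (R_\fp)^\wedge$ is already $p$-complete (via Krull's intersection theorem plus a Stacks-project criterion) and then checks the comparison map is an isomorphism modulo each $\pi^n$. These are two routes to the same identification, and yours is cleaner provided you are a little more careful about where finiteness enters: $R'$ is finite over $R$, not over $R_\fp$, so one should first rewrite $R'\tensor_R(R_\fp)^\wedge = R'_\fp \tensor_{R_\fp}(R_\fp)^\wedge$ before quoting the standard result. Your detour through $\sqrt{\fp R'} = \fp'$ and the Artinian fiber is also a slightly circuitous way to extract uniqueness of the prime over $\fp$; a direct appeal to incomparability together with $\fp' = \sqrt{\pi R'}$ would suffice. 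Neither affects correctness.

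For part (iii) there is a notational error: you write that $R'$ has fraction field $L' = R'_{\fp'}[1/\pi]$, but $L' = L(X')$ is by definition the fraction field of the \emph{completion} $\sO_L(X') = (R'_{\fp'})^\wedge$, which strictly contains $\mathrm{Frac}(R') = R'_{\fp'}[1/\pi]$. What you actually use is only the latter, so the mathematics goes through once one replaces $L'$ with $\mathrm{Frac}(R')$. Interestingly, your argument for (iii) is genuinely a bit lighter than the paper's: you only use that $R'$ is integrally closed in its fraction field and that it sits inside both localizations, whereas the paper routes through finiteness of the integral closure (Stacks 03GH). The finiteness statement is true here but not needed for the equality you want.

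For part (ii), you substitute the paper's citation of Conrad's compatibility of normalization with open immersions in $\mathsf{FS}_{\sO_K}$ by a direct appeal to excellence and preservation of normality under $p$-adic completion. This is the same technical input, just packaged differently (and indeed the paper remarks at the end of its proof that (ii) and (iii) ``eventually reduce to the technical statement that $R$ is an excellent and thus Nagata ring''). Part (iv) matches the paper's observation that it follows formally from (iii).
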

\begin{proof}
	We start with \ref{lem: simple observation alg}.
	Recall by definition that $\sO_L = (R_\fp)^\wedge$. 
	Consider the natural map $\varepsilon : R' \tensor_R R_\fp \to R' \tensor_R (R_\fp)^\wedge$ obtained by base changing $R_\fp \to R_\fp^\wedge$ along $R \to R'$. Since $(R_\fp)^\wedge$ is a Noetherian local ring and $R' \tensor_R (R_\fp)^\wedge$ is finite over $(R_\fp)^\wedge$, Krull's intersection theorem \cite[00IP]{stacks-project} ensures that $R' \tensor_R (R_\fp)^\wedge$ is still $p$-adically separated. 
	Then we apply \cite[\href{https://stacks.math.columbia.edu/tag/031B}{Tag 031B}]{stacks-project} to deduce that $R' \tensor_R (R_\fp)^\wedge$ is already $p$-complete. 
	Moreover, by taking completion, we obtain the map $\varepsilon^\wedge : (R' \tensor_R R_\fp)^\wedge \to R' \tensor_R (R_\fp)^\wedge $. 
	It is easy to see that for each $n \in \IN$, $\varepsilon^\wedge / \pi^n$ is an isomorphism, and hence so is $\varepsilon^\wedge$. 
	This together with the completeness above gives the first identification in \ref{lem: simple observation alg}. 
	For the second, we note that as $\fp' \subseteq R'$ is the unique prime above $\fp$, the multiplicative set $R' \smallsetminus \fp'$ is the saturation of the multiplicative set $R \smallsetminus \fp$ inside $R'$. 
	Therefore, $R' \tensor_R R_{\fp} = R'_{\fp'}$, whose $p$-completion is $\sO_L(X')$ by definition. 
	Finally, to see that $\sO_L(X') = \sO_{L'}$, recall that the normality of $X'$ implies that $R'_{\fp'}$ (and hence $\sO_L(X')$) is a discrete valuation ring. 
	In particular, $\sO_L(X')$ is integrally closed in its faction field $L'$. 
	Since $\sO_L(X')$ is finite over $\sO_L$, the ring $\sO_L(X')$ is in fact the integral closure of $\sO_L$ in $L'$, i.e., the subring $\sO_{L'}$. 
	
	Part \ref{lem: simple observation localization} follows from \cref{prop: Conrad}. 
	For \ref{lem: simple observation integral closure}, simply note that by \cite[\href{https://stacks.math.columbia.edu/tag/03GH}{Tag 03GH}]{stacks-project}, the integral closure of $R$ in either $R'_{\fp'}[\pi^{-1}]$ or $R'[\pi^{-1}]$ is finite over $R$, and hence finite over $R'$.
	But notice that $R'$ is already integral closed, hence it coincides with the integral closure of $R$. In fact, both \ref{lem: simple observation localization}  and  \ref{lem: simple observation integral closure} eventually reduce to the technical statement that $R$ is an excellent and thus Nagata ring. 
    Finally, \ref{rmk: Galois action on integral model} is a direct consequnece of \ref{lem: simple observation integral closure}. 
\end{proof}

As an application of Gabber--Ramero's approximation result, we have the following spreading-out equivalence on finite extensions of the generic point. 
Below we use $\FEt(*)$ to denote the category of finite \'etale $*$-algebras.
\begin{theorem}
	\label{thm: normal covers}
	Let $X \in \sfC_K$, and let $L \colonequals L(X)$.
	Denote by $\mathsf{DVF}$ the category of fraction fields of discrete valuation rings.
	The functor $L(-) : \sfC_K \to \mathsf{DVF}$ then induces an equivalence of categories 
	$$ L_X : \varinjlim \sfN(X_f) \stackrel{\sim}{\to} \FEt(L)^\circ  $$
	where the colimit is taken over $f \in R \smallsetminus \fp$ and $\FEt(L)^\circ$ denotes the category of the connected objects of $\FEt(L)$, i.e., finite field extensions of $L$.
\end{theorem}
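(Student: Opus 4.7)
The plan is to establish essential surjectivity and full faithfulness of $L_X$ separately, both via approximation arguments exploiting the filtered colimit description $R_\fp = \varinjlim_{f \in R \smallsetminus \fp} R_f$ together with the $p$-adic density of $R_\fp$ in $\sO_L = (R_\fp)^\wedge$.

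For essential surjectivity, given a finite separable extension $L'/L$, write $L' = L[x]/(g(x))$ with $g \in \sO_L[x]$ monic and separable. Using $p$-adic density and Krasner's lemma, one replaces $g$ by an approximation $\tilde g \in R_\fp[x]$ which still yields $L[x]/(\tilde g(x)) \simeq L'$. Realizing $\tilde g \in R_f[x]$ for some $f \in R \smallsetminus \fp$ and then shrinking further to invert the discriminant of $\tilde g$ and to isolate the connected component of $\Spec(R_\bf[1/\pi][x]/(\tilde g(x)))$ whose base change along $R_\bf[1/\pi] \to L$ is $L'$, one obtains a finite \'etale $R_\bf[1/\pi]$-algebra $A_0$ with $A_0 \tensor_{R_\bf[1/\pi]} L \simeq L'$. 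Let $R' \subseteq A_0$ be the integral closure of $R_\bf$ in $A_0$; the Nagata property (from excellence) of $R_\bf$ ensures $R'$ is finite over $R_\bf$, and by construction $R'$ is normal with $R'[1/\pi] = A_0$. Since $A_0$ is \'etale over the smooth $K$-algebra $R_\bf[1/\pi]$, the rigid space $\Spa(A_0) = \Spf(R')_\eta$ is smooth over $K$. A further shrinking ensures that $(\Spec(R'/\pi))_\mathrm{red}$ is irreducible, so $X' \colonequals \Spf(R') \in \sfN(X_f)$ with $L(X') \simeq L'$.

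For full faithfulness, let $X'_1 = \Spf(R'_1)$ and $X'_2 = \Spf(R'_2)$ be objects of $\sfN(X)$ with $L'_i \colonequals L(X'_i)$. Given an $L$-algebra morphism $\phi_L \colon L'_2 \to L'_1$, Lemma~\ref{lem: simple observation}\ref{lem: simple observation alg} lets us restrict $\phi_L$ uniquely to a local $\sO_L$-algebra morphism $\sO_{L'_2} \to \sO_{L'_1}$. Choose a presentation $R'_2 = R[y_1, \ldots, y_m]/I$; the images $z_i \in \sO_{L'_1}$ of the $y_j$ along $R'_2 \to \sO_{L'_2} \to \sO_{L'_1}$ satisfy the relations of $I$. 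Approximating each $z_i$ by $\tilde z_i \in (R'_1)_{\fp_1}$ with $p$-adic error small enough that Hensel's lemma, applied to the finite \'etale morphism $R[1/\pi] \to R'_2[1/\pi]$ at the complete local ring $\sO_{L'_1}$, lifts the approximations to an honest $R$-algebra map $\psi \colon R'_2 \to (R'_1)_{\fp_1}$. Since $(R'_1)_{\fp_1} = \varinjlim_f (R'_1)_f$, the map $\psi$ factors through $(R'_1)_f$ for some $f$ and, after $p$-completion, yields a morphism $X'_1|_{X_{f'}} \to X'_2|_{X_{f'}}$ in $\sfN(X_{f'})$ (for a suitable $f'$) whose associated $L$-algebra map recovers $\phi_L$. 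Uniqueness in the colimit of $\sfN(X_f)$ follows because two such morphisms inducing the same $\phi_L$ agree after base change to $\sO_{L'_1}$, hence, by \cref{lem: simple observation}\ref{lem: simple observation integral closure} together with the $p$-torsion-freeness of $R'_1$, agree in some $(R'_1)_g$ with $g \in R \smallsetminus \fp$.

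The principal technical obstacle is the Hensel-type approximation underlying full faithfulness: one must choose the $\tilde z_i$ with sufficient $p$-adic precision (quantitatively controlled by the relations in $I$ and by the different of the \'etale morphism $R'_2[1/\pi]/R[1/\pi]$) so that Hensel's lemma in the complete local setting promotes the approximate solutions to an exact $R$-algebra map. This is exactly the kind of approximation supplied by Gabber--Ramero, which is the titular input of the theorem. Essential surjectivity is then comparatively routine, combining a Krasner-type argument for approximating $g$ by $\tilde g \in R_\fp[x]$ with the standard spreading-out of \'etale algebras over filtered colimits, followed by normalization within the excellent category $\sfC_K$.
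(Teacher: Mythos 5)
Your essential surjectivity argument via Krasner's lemma and $p$-adic density of $R_\fp$ in $\sO_L$ is a valid and more elementary route than the paper's (which derives it directly from Gabber--Ramero), but your full faithfulness argument has a genuine gap. You apply Hensel's lemma over the complete local ring $\sO_{L'_1}$ and assert that this produces an exact $R$-algebra map $\psi : R'_2 \to (R'_1)_{\fp_1}$. That is not what Hensel's lemma yields: applied over $\sO_{L'_1}$, it produces a solution in $\sO_{L'_1}$ (which by uniqueness is the original $(z_i)$), not a solution in the non-complete localization $(R'_1)_{\fp_1}$. The ring $(R'_1)_{\fp_1}$ is a Noetherian DVR but is neither complete nor Henselian, so there is no mechanism by which an approximate solution in $(R'_1)_{\fp_1}$ together with an exact solution in the completion forces an exact solution in $(R'_1)_{\fp_1}$ itself; the base change $\FEt((R'_1)_{\fp_1}[1/\pi]) \to \FEt(L'_1)$ is in general not full.

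The paper's key structural observation, which your proposal omits, is that one must replace $\varinjlim R_f$ by the $p$-completed localizations $T \colonequals \varinjlim R_\bf$ with $R_\bf = (R_f)^\wedge_p$. One checks that $T^\wedge = \sO_L$ still holds, and---crucially---that $(T, \pi T)$ is a Henselian pair by \cite[\href{https://stacks.math.columbia.edu/tag/0FWT}{Tag 0FWT}]{stacks-project} (each $(R_\bf, \pi R_\bf)$ is Henselian since $R_\bf$ is $p$-complete Noetherian, and a filtered colimit of Henselian pairs is Henselian). This is precisely the hypothesis needed for Gabber--Ramero's approximation theorem \cite[Prop.~5.4.53]{GR03}, which gives the equivalence $\FEt(T[\pi^{-1}]) \simeq \FEt(L)$ in one stroke; combined with $\FEt(T[\pi^{-1}]) \simeq \varinjlim \FEt(R_\bf[\pi^{-1}])$, one spreads out both objects and morphisms simultaneously. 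You acknowledge Gabber--Ramero in your closing paragraph as the ``titular input,'' but it does not actually enter the argument you give, and the elementary Hensel/Krasner surrogate does not supply the Henselian pair structure that makes the lifting of morphisms work.
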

We view $R \smallsetminus \fp$ as a filtered direct system such that $f \le g$ if the open immersion $X_\bg \to X$ factors through $X_\bf$ (e.g., when $f \mid g$). 
We refer the reader to \cite[III.3]{ArtinGT} for the definition of a filtered colimit of categories that is used here. 
\begin{proof}
	We first prove \textit{essential surjectivity}. 
	Let $L \subseteq L'$ be a finite field extension. 
	We need to show that, up to shrinking $X$, there exists a normal cover $X'$ of $X$ such that $L' \cong L(X')$. 
	
	First, one quickly checks that $\sO_L$, which by definition is $(\varinjlim R_f)^\wedge$, is also equal to $(\varinjlim R_\bf)^\wedge$. 
	Indeed, the natural morphism $\varepsilon : \varinjlim R_f \to \varinjlim R_\bf$ induces a morphism between their completions, and the latter is an isomorphism since it is true for each reduction $\varepsilon / \pi^n$ and colimits commute with tensor products. 
	Below we write $T$ for the filtered colimit $\varinjlim R_\bf$, so that $T^\wedge = \sO_L$. 
	The reason to consider $T$ instead of $\varinjlim R_f$ is that $(T, (\pi))$ is now a Henselian pair by \cite[\href{https://stacks.math.columbia.edu/tag/0FWT}{Tag 0FWT}]{stacks-project}, so that we we may apply Gabber--Ramero's approximation result (extending that of Elkik) \cite[Prop.~5.4.53]{GR03}, which tells us the following: Base change along the map $T[\pi^{-1}] \to T^\wedge [\pi^{-1}] = L$ induces an equivalence of categories $\FEt(T[\pi^{-1}]) \simeq \FEt(L)$.
	Since $L' \in \FEt(L)$, we can find some $V \in \FEt(T[\pi^{-1}])$ such that $L \tensor_{T[\pi^{-1}]} V = L'$. 
	As colimits commute with localization, $T[\pi^{-1}] = \varinjlim (R_\bf [\pi^{-1}])$. 
	Now we have 
	\begin{equation}
		\label{eqn: apply Gabber-Ramero}
		\FEt(L) \simeq \FEt(T[\pi^{-1}]) = \FEt(\varinjlim (R_\bf[\pi^{-1}])) \simeq \varinjlim \, \FEt(R_\bf[\pi^{-1}]).
	\end{equation}
	For a reference of the second isomorphism, see \cite[III.3]{ArtinGT}. 
	This implies that for some $f \in R \smallsetminus \fp$, there exists an object $V_f \in \FEt(R_\bf[\pi^{-1}])$ such that $V = V_f \tensor_{R_\bf[\pi^{-1}]} T[\pi^{-1}]$ (so that $L' = L \tensor_{R_\bf[\pi^{-1}]} V_f$). 
	
	We now shrink $X$ to $\Spf(R_\bf)$ (i.e., replace $R$ by $R_\bf$), after which we have a finite \'etale $R[\pi^{-1}]$-algebra $V$ inside $L'$ such that $L' = L \tensor_R V$. 
	Let $R'$ be the integral closure of $R$ in $V$. 
	Then we have $R'[\pi^{-1}] = V$ and that $R'$ is finite over $R$. 
	Note also that by construction $R'$ is flat (as $R' \subseteq L'$) and topologically of finite type over $\sO_K$, so $X' \colonequals \Spf(R')$ is admissible. 
	Moreover, we claim that $(X_k')_{\mathrm{red}}$ has at most a single irreducible component that dominates $X_k$. 
	Suppose not. Then there exists an $f_0 \in R \smallsetminus \pi R$ such that for all $f_0 \mid f$, the fiber product $X' \times_X X_\bf$ is disconnected, which would imply $\varinjlim (R' \tensor_R R_\bf) = R' \tensor_R R_\fp$ is not an integral domain. 
	On the other hand, as $R_\fp$ is a discrete valuation ring, the morphism $R_\fp \to R_\fp^\wedge$ is faithfully flat and in particular injective, and hence so is its base change along $R \to R'$, i.e., $R' \tensor_R R_\fp \to R' \tensor_R R_\fp^\wedge$. 
	But by \cref{lem: simple observation}.\ref{lem: simple observation alg} $R' \tensor_R R_\fp^\wedge = R' \tensor_R \sO_L$ is a subalgebra of $L'$, which must be an integral domain. 
	This gives the desired contradiction to affirm the claim. Therefore, we may shrink $X$ again to make $(X_k')_{\mathrm{red}}$ integral. 
	Now $X'$ becomes an object of $\sfN(X)$ such that $L(X') = L'$, which is what we are looking for. 
	
	Finally we show that the functor $L_X$ is \textit{fully faithful}. 
	Faithfulness follows from \cref{lem: simple observation}.\ref{lem: simple observation alg}. 
	The fullness amounts to the following statement: Suppose for $i = 1, 2$ we are given the data $X'_{i} \in \sfN(X_{f_i})$, $L_i \colonequals L(X'_{i})$ and $\tau \in \mathrm{Hom}_L(L_1, L_2)$. Then there exists $g \in R \smallsetminus \fp$ divisible by both $f_1$ and $f_2$ such that $\tau$ is induced by a morphism $\wt{\tau} : X'_{1}|_{X_g} \to X'_{2}|_{X_g}$ in $\sfN(X_g)$. 
	To prove this statement, we are allowed to shrink $X$, so we may simply assume that $X = X_{f_1} = X_{f_2}$. 
	Suppose that $X_i = \Spf(R_i)$ and $\fp_i \colonequals \fp(X_i) \subseteq R_i$. 
	By considering the equivalence of categories in (\ref{eqn: apply Gabber-Ramero}) again, we find some $g \in R \smallsetminus \fp$ such that $\tau$, which is a morphism in $\FEt(L)$, comes from a morphism $\wt{\tau} : (R_1 \tensor_R R_\bg)[\pi^{-1}] \to (R_2 \tensor_R R_\bg)[\pi^{-1}]$. 
	As $X_i|_{X_g}$ is nothing but $\Spf((R_i \tensor_R R_\bg)^\wedge)$, by \cref{lem: simple observation}.\ref{lem: simple observation localization} $\wt{\tau}$ restricts to a morphism $X_1|_{X_g} \to X_2|_{X_g}$ as desired. 
\end{proof}

\subsection{Primitive inseparable covers}
\label{sub primitive insep cover} 
Let $f : Z' \to Z$ be a morphism of schemes in characteristic $p$. 
We say that $f$ is \textit{purely inseparable} if it is injective on the underlying topological spaces and for every point $z \in Z'$, the residue field extension $k(z) \into k(f(z))$ is purely inseparable. 
If $Z$ is normal, Noetherian, and integral and $Z'$ is integral, then a finite morphism $f : Z' \to Z$ is purely inseparable if and only if the induced extension between their fraction fields is purely inseparable (\cite[p.\ 208]{LiuAG} together with Nagata's compactification result).

\begin{definition}
	\label{def: purely inseparable cover}
	Let $X, Y \in \sfC^{\mathrm{sm}}_K$. We say that a finite flat morphism $f : Y \to X$ is a \textit{primitive inseparable cover} if the special fiber $f_k$ is purely inseparable of degree $p$ and the generic fiber $f_\eta$ is a Galois cover with group $G \cong \IZ / p \IZ$. 
\end{definition}  

We recall a simple consequence of the Cohen structure theorem. 
\begin{lemma}
	\label{lem: Cohen}
	Let $(V, (\pi))$ be a complete discrete valuation ring with residue field $k$. 
	Let $(R, \fm)$ be a complete local $V$-algebra with Krull dimension $n + 1$ for some $n\geq 0$. 
	Suppose that the natural morphism of residue fields $V / \pi V \to R/ \fm$ is an isomorphism, and the ring $R/ \pi R$ is regular of dimension $n$. 
	Then we have $R \cong V[\![t_1, \cdots, t_n]\!]$. 
\end{lemma}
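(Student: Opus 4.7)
The plan is to reduce to Cohen's structure theorem applied modulo $\pi$ and then lift to $R$ via a direct construction. First I would apply Cohen's structure theorem to the complete regular local ring $R/\pi R$: since its residue field is $k = V/\pi V$ and it has dimension $n$, we obtain an isomorphism $R/\pi R \simeq k[\![t_1, \ldots, t_n]\!]$. Pick lifts $\wt{t}_i \in \fm$ of the $t_i$. Then by construction $\fm = (\pi, \wt{t}_1, \ldots, \wt{t}_n)$, so $\fm$ is generated by $n+1$ elements. Since $\dim R = n+1$, this means $R$ is a regular local ring, hence an integral domain, and $\pi$ is a nonzero divisor in $R$.

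Next I would construct a $V$-algebra map $\phi : V[\![T_1, \ldots, T_n]\!] \to R$ sending $T_i \mapsto \wt{t}_i$. The universal property applies because $R$ is $\fm$-adically complete and $\wt{t}_i \in \fm$: any polynomial expression in the $T_i$ with coefficients in $V$ converges $\fm$-adically in $R$. The map $\phi$ is local since it sends the maximal ideal $(\pi, T_1, \ldots, T_n)$ into $\fm$.

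To conclude I would argue that $\phi$ is an isomorphism. For surjectivity, the image of $\phi$ contains the generators $\pi, \wt{t}_1, \ldots, \wt{t}_n$ of $\fm$, hence contains the $V$-subalgebra $V[\wt{t}_1, \ldots, \wt{t}_n]$, which is $\fm$-adically dense in $R$; since $\phi$ factors through the $\fm$-adic completion and $R$ is $\fm$-adically complete, $\phi$ is surjective. For injectivity, both $V[\![T_1, \ldots, T_n]\!]$ and $R$ are regular local rings of dimension $n+1$, so $\ker \phi$ is an ideal in a regular (in particular, catenary and equidimensional) local ring whose quotient has the same Krull dimension as the source; hence $\ker \phi$ has height $0$, and since the source is a domain, $\ker \phi = 0$.

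No step here is a genuine obstacle: the argument is a standard application of Cohen's structure theorem together with the coincidence of dimensions between a regular local ring and its quotient by a regular parameter. The only point requiring a moment of care is confirming that $R$ is itself regular (hence a domain) before invoking the dimension comparison, which is handled by the generator count for $\fm$ in the first paragraph.
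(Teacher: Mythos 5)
Your proof is correct and follows essentially the same route as the paper: apply Cohen's structure theorem to $R/\pi R$, lift coordinates to build a map $V[\![T_1,\dots,T_n]\!]\to R$, show surjectivity by a complete-Nakayama argument (the paper cites \cite[\href{https://stacks.math.columbia.edu/tag/0315}{Tag 0315}]{stacks-project} where you spell out the density/completeness argument directly), and conclude injectivity from the dimension count over the domain $V[\![T_1,\dots,T_n]\!]$. The extra step showing $R$ is regular via the generator count for $\fm$ is harmless but not needed — the paper's injectivity argument only requires that the source is a Noetherian local domain, so that a nonzero kernel would strictly drop the dimension of the quotient.
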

\begin{proof}
	By Cohen structure theorem \cite[\href{https://stacks.math.columbia.edu/tag/0C0S}{Tag 0C0S}]{stacks-project}, we know $R / \pi R \cong k[\![x_1, \cdots, x_n]\!]$. 
	Fix such an isomorphism and lift $x_i$'s to $\wt{x}_i \in \fm$. 
	Then there is a natural morphism $V[\![t_1, \cdots, t_n]\!] \to R$ sending $t_i$ to $\wt{x}_i$, which is surjective by applying \cite[\href{https://stacks.math.columbia.edu/tag/0315}{Tag 0315}]{stacks-project} onto the ideal $(\pi, t_1, \cdots, t_n)$. 
	On the other hand, as $V[\![t_1, \cdots, t_n]\!]$ is an integral domain and has the same dimension as $R$, the morphism must also be injective---otherwise the quotient would have strictly smaller dimension. 
\end{proof}

\begin{proposition}
	\label{lem: find point}
	Let $X, Y \in \sfC^{\mathrm{sm}}_K$. 
	Suppose that $f : Y \to X$ be a primitive inseparable cover. 
	Then up to shrinking $X$ and replacing $Y$ by its restriction, there exists for every closed point $x_0 \in X_s$, 
	\begin{itemize}
		\item a finite unramified extension $K'/K$,
		\item a totally ramified extension $K''/K'$ of degree $p$,
		\item a point $x \in X(\sO_{K'})$ lifting $x_0$, 
        \end{itemize}
	such that the fiber product $Y\times_X x$ is a single $\mathcal{O}_{K''}$-point of $Y$.
\end{proposition}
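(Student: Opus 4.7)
The strategy is to use the $G = \mathbb{Z}/p\mathbb{Z}$-Galois structure of $f_\eta$ to produce a local Eisenstein-norm presentation of $Y/X$ and then pick a lift of $x_0$ whose fiber is described by an Eisenstein polynomial. I construct the triple $(K', K'', x)$ pointwise, after shrinking $X$ if necessary.

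\emph{Setup.} Since $X$ is normal, the $G$-action on $Y_\eta$ extends canonically to $Y$ with $Y^G = X$ (by \cref{lem: simple observation}\ref{rmk: Galois action on integral model}); because $Y_s \to X_s$ is purely inseparable and admits no nontrivial automorphisms, $G$ acts trivially on $Y_s$. In particular, at the unique preimage $y_0$ of $x_0$, $G$ acts trivially on the purely inseparable extension $k(y_0)/k(x_0)$, forcing $k(y_0) = k(x_0)$. Let $K' := K_{x_0}$ be the unramified extension of $K$ with residue field $k(x_0)$, and identify $\sO_{X,x_0}^\wedge = \sO_{K'}[\![t_1,\ldots,t_n]\!]$ and $\sO_{Y,y_0}^\wedge = \sO_{K'}[\![s_1,\ldots,s_n]\!]$ using smoothness. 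Since $f_s$ is a degree-$p$ purely inseparable cover between smooth varieties of the same dimension, $\Omega^1_{Y_s/X_s}$ has rank $1$, so the cotangent map $\fm_{X_s,x_0}^\wedge/\fm_{X_s,x_0}^{\wedge,2} \to \fm_{Y_s,y_0}^\wedge/\fm_{Y_s,y_0}^{\wedge,2}$ has a $1$-dimensional kernel. As $k(x_0)$ is perfect, we may, possibly after a linear change of the $(s_i)$-coordinates and shrinking $X$, choose $s_n \in \sO_{Y_s,y_0}^\wedge \setminus \sO_{X_s,x_0}^\wedge$ so that $\bar s_n^p \in \sO_{X_s,x_0}^\wedge$ is a regular parameter; after adjusting the $(t_i)$-coordinates, we may then assume $t_n \equiv s_n^p \pmod{\pi_{K'}}$. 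The norm $N(s_n) := \prod_{\tau \in G}\tau(s_n) \in \sO_{X,x_0}^\wedge$ also reduces to $s_n^p$ modulo $\pi_{K'}$, so a further $\pi_{K'}$-adic adjustment of $t_n$ allows us to assume $t_n = N(s_n)$ exactly; in particular, $\sO_{Y,y_0}^\wedge = \sO_{X,x_0}^\wedge[s_n]$ with minimal polynomial $N(s_n) - t_n$.

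\emph{The Eisenstein lift.} Define $x \in X(\sO_{K'})$ lifting $x_0$ by $t_i \mapsto 0$ for $i < n$ and $t_n \mapsto c\pi_{K'}$ for any unit $c \in \sO_{K'}^\times$. The fiber becomes
\[
Y \times_X x \;\simeq\; \Spf\bigl(\sO_{K'}[\![s_n]\!]/(P(s_n))\bigr), \qquad P(s_n) := N(s_n)\bigl|_{s_j = 0,\, j < n} - c\pi_{K'}.
\]
Let $\sigma$ be a generator of $G$ and $m \geq 1$ maximal with $\sigma \equiv \mathrm{id} \pmod{\pi_{K'}^m}$. Writing $\sigma^i(s_n) = s_n + \epsilon_i$ with $\epsilon_0 = 0$ and $\epsilon_i \in (\pi_{K'}^m)$ for $i \geq 1$, the expansion
\[
N(s_n) \;=\; \prod_{i=0}^{p-1}(s_n + \epsilon_i) \;=\; \sum_{k=0}^{p} e_k(\epsilon_0,\ldots,\epsilon_{p-1})\, s_n^{p-k}
\]
shows that the coefficient of $s_n^{p-k}$ in $P$ lies in $(\pi_{K'}^{mk}) \subseteq (\pi_{K'})$ for $1 \leq k < p$, while $e_p = \prod_i \epsilon_i = 0$ because $\epsilon_0 = 0$. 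Hence the constant term of $P$ is exactly $-c\pi_{K'}$, of valuation $1$, so $P$ is Eisenstein of degree $p$. Therefore $\sO_{K'}[\![s_n]\!]/(P(s_n)) = \sO_{K''}$ where $K''/K'$ is totally ramified of degree $p$, and $Y \times_X x$ is the desired single $\sO_{K''}$-point of $Y$.

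\emph{Main obstacle.} The principal technical step is the choice of coordinate $s_n \in \sO_{Y_s,y_0}^\wedge$ in the Setup, ensuring that $\bar s_n^p$ becomes a regular parameter of $\sO_{X_s,x_0}^\wedge$. This rests on the smoothness hypothesis through the rank-$1$ property of $\Omega^1_{Y_s/X_s}$ (which yields the $1$-dimensional kernel of the cotangent map) and uses the perfection of $k(x_0)$ for $p$-th root extraction; once this local normal form is secured, the Eisenstein verification is a direct computation.
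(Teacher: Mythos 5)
Your proof takes a genuinely different route from the paper. The paper works directly with a generator $g$ of the function-field extension $F(Y_s)/F(X_s)$ satisfying $g^p = t$, lifts $g$ and $t$ to local coordinates on $Y$ and $X$, and runs a valuation estimate on the resulting relation $\wt{g}^p + \pi h = f_0$. You instead exploit the $G$-action on $Y$ (which extends that on $Y_\eta$ because $X$ is normal), select a coordinate $s_n$ capturing the inseparable direction, and normalize the $X$-coordinate $t_n$ to be \emph{exactly} the norm $N(s_n) = \prod_\tau\tau(s_n)$. This is cleaner and more structural, and it buys you the freedom to take any unit $c$ in the Eisenstein lift, whereas the paper must exclude $u$ with $u \equiv h_0 \pmod\pi$ because its lift $t_0$ of $t$ is only determined modulo $\pi$. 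Your normalization $t_n = N(s_n)$ is valid: $N(s_n)$ is $G$-invariant, hence lies in $\sO_{X,x_0}^\wedge$, and it agrees with the original $t_n$ modulo $\pi$, so a coordinate change is permitted.

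However, there is a rigor gap in the Eisenstein verification. The expansion $N(s_n) = \prod_i(s_n + \epsilon_i) = \sum_k e_k(\epsilon)\, s_n^{p-k}$, and in particular the phrase ``the coefficient of $s_n^{p-k}$ in $P$,'' implicitly treat the $\epsilon_i = \sigma^i(s_n) - s_n$ as scalars. But they are elements of $\sO_{Y,y_0}^\wedge$ that in general depend on $s_n$ itself. After setting $s_j = 0$ for $j < n$, the element $N(s_n)|_{s_j=0}$ is therefore a genuine power series in $s_n$, not a degree-$p$ polynomial, and the displayed identity does not exhibit $P$ as a polynomial with scalar coefficients. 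Your conclusion is nonetheless correct and can be recovered in two ways. (i) One has $N(s_n)|_{s_j=0} \equiv s_n^p \pmod{\pi_{K'}}$ (since $G$ acts trivially mod $\pi_{K'}$) and $N(s_n)|_{s_j=0,\,s_n=0} = 0$, so $P$ has Weierstrass degree $p$ and constant term of valuation one; Weierstrass preparation then yields $\sO_{K'}[\![s_n]\!]/(P) \simeq \sO_{K''}$. (ii) Avoid power series entirely: $\sO_{Y,y_0}^\wedge \cong \sO_{X,x_0}^\wedge[T]/(Q(T))$, where $Q(T) = \prod_\tau(T - \tau(s_n))$ is the characteristic polynomial of $s_n$ — a monic degree-$p$ polynomial with coefficients in $\sO_{X,x_0}^\wedge$, constant term $\pm N(s_n) = \pm t_n$, and $Q(T) \equiv T^p - s_n^p \pmod{\pi_{K'}}$. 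Specializing $Q$ at $x$ gives a genuine Eisenstein polynomial over $\sO_{K'}$ directly. (A related slip: ``minimal polynomial $N(s_n) - t_n$'' is off — the minimal polynomial is the full $Q(T)$, of which $\pm t_n$ is only the constant term.)

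Two smaller points. First, $\Omega^1_{Y_s/X_s}$ has rank one only generically; you should shrink $X$ once, so that $\Omega^1_{Y_s/X_s}$ is locally free of rank $1$, \emph{before} fixing $x_0$, since the statement requires a single shrinking that works simultaneously for all closed points. Second, $k(y_0) = k(x_0)$ should be deduced from perfectness of $k$ (a finite purely inseparable extension of a perfect field is trivial) rather than from triviality of the $G$-action: knowing that $G$ acts trivially on $k(y_0)$ gives $k(y_0)^G = k(y_0)$, which does not by itself force $k(y_0) = k(x_0)$.
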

\begin{proof}
	Suppose that $X = \Spf(A)$ and $Y = \Spf(B)$. 
	Then we may view $f$ as a map between $\sO_K$-algebras $A \to B$. 
	By abuse of notation, we shall also use $f$ to denote other morphisms between algebras which are naturally induced by $A \to B$. 
	
	By assumption, the fraction field $F(Y) = \mathrm{Frac}(B_k)$ is generated over $F(X) = \mathrm{Frac}(A_k)$ via the embedding $f : F(X) \into F(Y)$ by an element $g$ such that $g^p = t$ for some $t \in F(X)$. 
	Up to shrinking $X$ and $Y$, below we shall assume that $t \in A_k$ and $g \in B_k$.
	Then we note that $dt \in \Omega^1_{A_k/k}$ is necessarily nonzero. 
	Indeed, Cartier isomorphism tells us that $$\ker(F_*(d) : F_*(A_k) \to F_*(\Omega_{X_k/k})) = \sO_{X^{(p)}_k},$$ where $F : X_k \to X_k^{(p)}$ is the relative Frobenius of $X_k$. 
	Hence if $dt = 0$, then $t$ has to be a $p$th power, which implies that $g \in f(F(X))$ and contradicts our assumption. 
	Moreover, by the noetherian assumption on $A$, we know the $p$-completely flat morphism $A\to B$ is actually flat (\cite[\href{https://stacks.math.columbia.edu/tag/0912}{Tag 0912}]{stacks-project}).
	
	Up to further shrinking $X$, we shall assume that $dt$ is nowhere vanishing at closed points of $X$. 
	Let $x_0 \in X(k')$ be any point for a finite extension $k'/k$ of residue fields, and let $K'$ be the unramified extension of $K$ lifting $k'/k$. 
	Let $\fm \subseteq A$ be the maximal ideal defined by $x_0$. 
	Let $A^\wedge_\fm$ denote the completion of $A$ at $\fm$.
	Let $n$ be the relative dimension of $X$ and $Y$ over $\sO_K$. 
	Then by \cref{lem: Cohen}, we have an isomorphism of complete local rings $A^\wedge_\fm \cong \sO_{K'}[\![t_0, \cdots, t_n]\!]$, which we fix for now. 
	Similarly, let $y_0 \in Y(k')$ be the unique pre-image of $x_0$ and $\fn \subseteq B$ be the maximal ideal given by $y_0$. 
	We also fix an isomorphism $B^\wedge_\fn \cong \sO_{K'}[\![s_0, \cdots, s_n]\!]$. 
	Below we shall view $A$ and $B$ as subalgebras of $A^\wedge_\fm$ and $B^\wedge_\fn$ respectively and the same applies to their mod $\pi$ reductions. 
	Up to replacing $g$ by $g - g(y_0)$ and $t$ by $t - g(y_0)^p$, we assume that $t(x_0) = 0$ and $g(y_0) = 0$. 
	This implies that when viewed as elements in $k'[\![t_0, \cdots, t_n]\!]$ and $k'[\![s_0, \cdots, s_n]\!]$ respectively, $t$ and $g$ have no constant terms. 
	
	We then claim that there exists an automorphism of $\sO_{K'}[\![t_0, \cdots, t_n]\!]$ which sends $t_0$ to an element $\wt{t}$ such that $\wt{t} \mod \pi  = t$. 
	By the assumption that $dt$ does not vanish at $x_0$, we know $\partial t / \partial t_i \neq 0$ for some $i$. 
	Up to renaming variables we assume $i = 0$. 
	Let $\wt{t} \in \sO_{K'}[\![t_0, \cdots, t_n]\!]$ be any lift of $t$ with no constant term.
	Then we consider the endomorphism $\sO_{K'}[\![t_0, \cdots, t_n]\!]$ which sends $t_0$ to $\wt{t}$ and fixes $t_i$ for each $i > 0$. 
	As a consequence, since $\partial t /\partial t_0\neq 0$, the endomorphism has an invertible Jacobian (as a matrix with $\sO_{K'}$-entries) and hence is an automorphism by the inverse function theorem on power series rings. 
	
	The above allows us to assume that the mod $\pi$ reduction of $t_0$ is $t$. 
	Similarly, let $\wt{g} \in B^\wedge_\fn$ be an element with no constant term such that $\wt{g} \mod \pi = g$. 
	Note that $f$ induces a morphism $\sO_{K'}[\![t_0, \cdots, t_n]\!] \to \sO_{K'}[\![s_0, \cdots, s_n]\!]$ and we shall denote by $f_i$ the image of $t_i$. 
	Then our assumption that $g^{p} = t$ translates to $\wt{g}^{p} + \pi h = f_0$ for some $h \in \sO_{K'}[\![s_0, \cdots, s_n]\!]$. 
	Let $h_0$ be the constant term of $h$.

	Let $u\in \sO_{K'}$ be an element such that $u \in \sO_{K'}^\times$ and $u \neq h_0$ if $h_0 \in \sO_{K'}^\times$. 
	We set $x$ to be the $\mathcal{O}_{K'}$-point of $X$ given by sending $t_0$ to $u \pi$ and $t_i$ to $0$ for $i > 0$, and write $x_\eta \in X_\eta(K)$ for its generic fiber. 
	Then by assumption, the point $x$ specializes to $x_0$ in the special fiber.
	Let $y_\eta \in Y_\eta$ be any closed point in the preimage of $x_\eta$ and set $K'' \colonequals K(y)$. Then the surjection $y_\eta : B[1/p] \to K''$ restricts to a surjection $y : B \to \sO_{K''}$, i.e., $y_\eta$ extends to a point $y \in Y(\sO_{K''})$ (cf. \cite[\S8.3,~Prop.~8]{BoschLectures}).
	As the point $y$ specializes to $y_0$, the corresponding map of $y_0$ lifts to a morphism $y\colon \sO_{K'}[\![s_0, \cdots, s_n]\!] \to \sO_{K''}$. 
	Hence we get a commutative diagram 
	\[\begin{tikzcd}
		{\sO_{K'}[\![t_0, \cdots, t_n]\!]} & {\sO_{K'}[\![s_0, \cdots, s_n]\!]} \\
		{\sO_{K'}} & {\sO_{K''}}
		\arrow["{t_i \mapsto f_i}", from=1-1, to=1-2]
		\arrow["x"', from=1-1, to=2-1]
		\arrow[hook, from=2-1, to=2-2]
		\arrow["y", from=1-2, to=2-2]
	\end{tikzcd}\]
	Suppose that $y$ is given by $s_i \mapsto \beta_i$ for some pesudo-uniformizer $\beta_i \in \sO_{K''}$. 
	Then by looking at the images of $t_0\in\sO_{K'}[\![t_0, \cdots, t_n]\!]$ under the maps above, the diagram yields the following equation in $\sO_{K''}$: 
	\begin{equation}
		\label{eqn: pi-adic val}
		\wt{g}(\beta_1, \cdots, \beta_n)^{p} + \pi h(\beta_1, \cdots, \beta_n) = f_0(\beta_1, \cdots, \beta_n) = u \pi. 
	\end{equation}
	As $\wt{g}$ has no constant term, $u \neq h_0$, and $\mathrm{val}_\pi(\beta_i) > 0$ for each $i$, the above equation tells us that for some $i$, $\mathrm{val}_\pi(\beta_i) \le p^{-1}$. 
	Indeed, otherwise $\mathrm{val}_\pi(\wt{g}(\beta_1, \cdots, \beta_n)^{p}) > 1$, but by the choice of the element $u$, we know $\mathrm{val}_\pi(u \pi - \pi h(\beta_1, \cdots, \beta_n)) = 1$. 
	As a consequence, we see $[K'' : K'] \ge p$. 
	On the other hand, as $f_\eta$ is a finite Galois cover of degree $p$, we have $[K'':K'] \le p$. 
	So the above equality must be achieved, i.e., $[K'' : K'] = p$ and we have $\mathrm{val}_\pi(\beta_i) = p^{-1}$. 
	
	Finally, to see that the point $y$ is the fiber product $Y\times_X x$, we first notice that this is clear for the generic fiber: because $Y_\eta\to X_\eta$ is a finite \'etale cover of degree $p$, and $K(y_\eta)=K'$ is a degree $p$ ramified cover of $K=K(x_\eta)$.
	The claim in the level of formal schemes then follows from the flatness assumption of $A\to B$: the base change $Y\times_X x$ is flat of degree $p$ over $x=\Spf(\mathcal{O}_{K'})$ and contains $y=\Spf(\mathcal{O}_{K''})$ as a closed subscheme, hence must equal to $y$ itself.
 \end{proof}

\begin{remark}
	\label{ex: basic example}
	Here is a prototypical example for \cref{lem: find point}: Suppose that $\sO_K$ contains all $p$th roots of unity and consider the one-dimensional torus $X \colonequals \Spf( \sO_K \< t^\pm\>)$. Its endomorphism given by $t \mapsto t^p$ satisfies the hypothesis of \cref{lem: find point}. Note that $X(K)$ is identified with $\{ x \in \sO_K \mid \mathrm{val}_\pi(x) = 0 \}$. The proof suggests that, to find a point $x\in X(K)$ such that the cover is ramified at $x$,  we may take $x = 1 + \pi$. Indeed, the unique $y \in X_\eta$ over $x$ has the generic residue field $K''\colonequals K(\alpha)$ for $\alpha^p = 1 + \pi$, and the minimal polynomial of $\alpha - 1$ is a degree $p$ Eisenstein polynomial, so $[K'' : K] = p$ as desired. 
\end{remark}

\subsection{Towers of covers} 
\label{sub towers}
In this subsection, we show that up to replacing the base field by a finite extension and shrinking the base $p$-adic formal scheme $X$, a finite extension of the field $L(X)$ always comes from a tower of finite covers of $X$ consisting of a finite Galois cover followed by a tower of primitive inseparable covers.

\begin{proposition}
	\label{prop: tower}
	Suppose that $X, X' \in \sfC^{\mathrm{sm}}_K$ are of the same relative dimension over $\sO_K$ and $X' \to X$ is a finite morphism such that the induced morphism $L(X) \to L(X')$ is a Galois extension.
 Then up to shrinking $X$, there exists a tower of finite covers $$X' = X_m \to X_{m - 1} \to \cdots \to X_0 \to X$$ for some $m \in \IN$, such that $X_0 \to X$ is a Galois \'etale cover and $X_{i+1}\to X_i$ are primitive inseparable covers for $i\geq 1$. 
\end{proposition}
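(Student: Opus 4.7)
The plan is to decompose the Galois group $G = \Gal(L(X')/L(X))$ into a separable and a purely inseparable piece on the residue field side, and then further split the purely inseparable piece into a chain of $\IZ/p\IZ$-layers. The crucial starting observation is that, since both $X$ and $X'$ lie in $\sfC^{\mathrm{sm}}_K$, one has $\fp(X) = \pi R$ and $\fp(X') = \pi R'$, so $\pi$ is a uniformizer of both complete discrete valuation rings in the extension $\sO_L(X) \to \sO_L(X')$, giving ramification index $e = 1$. Consequently $|G| = [F(X'):F(X)] = f_s \cdot f_i$, and the natural map $G \twoheadrightarrow \Gal(F(X')^{\mathrm{sep}}/F(X))$ coming from the action on residue fields has kernel $I \trianglelefteq G$ of order $f_i$, which is automatically a power of $p$.

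First, I would set $L_0 \colonequals L(X')^I$, a Galois extension of $L(X)$ with group $G/I$ whose residue field extension is precisely the separable closure $F(X')^{\mathrm{sep}}/F(X)$. Invoking Theorem~\ref{thm: normal covers}, after shrinking $X$, I obtain $X_0 \in \sfN(X)$ with $L(X_0) = L_0$. Since the DVR extension $\sO_L(X) \to \sO_L(X_0)$ has $e = 1$ and separable residue extension of full degree, the morphism $X_0 \to X$ is \'etale at the generic point $\eta_{X_{0,s}}$ of the special fiber. Etaleness being an open condition, I shrink $X$ further to arrange that $X_0 \to X$ is finite \'etale; by Lemma~\ref{lem: simple observation}.\ref{rmk: Galois action on integral model} this cover is then Galois with group $G/I$, and in particular $X_0 \in \sfC^{\mathrm{sm}}_K$.

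Next, since $I$ is a finite $p$-group, I choose a composition series $1 = I_0 \trianglelefteq I_1 \trianglelefteq \cdots \trianglelefteq I_m = I$ with $I_{j+1}/I_j \cong \IZ/p\IZ$, and set $L_j \colonequals L(X')^{I_{m-j}}$. Each successive extension $L_{j+1}/L_j$ is Galois of degree $p$, and its residue field extension, as a subextension of the purely inseparable $F(X')/F(X')^{\mathrm{sep}}$, is purely inseparable of degree $p$. Applying Theorem~\ref{thm: normal covers} again and shrinking $X$ as needed, each $L_j$ is realized as $L(X_j)$ for some $X_j \in \sfN(X_0)$; Lemma~\ref{lem: simple observation}.\ref{rmk: Galois action on integral model} then assembles the $X_j$'s into a tower $X' = X_m \to X_{m-1} \to \cdots \to X_0 \to X$.

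The main obstacle will be verifying that each intermediate $X_j$ actually lies in $\sfC^{\mathrm{sm}}_K$, since smoothness is not generally preserved under finite flat quotients by $p$-groups in characteristic $p$. To handle this, I plan to check smoothness only at the generic point $\eta_{X_{j,s}}$ of the special fiber and then use shrinking. Because $L(X_j)/K$ is squeezed between $L(X)/K$ and $L(X')/K$, which both have $e = 1$, the image of $\pi$ is a uniformizer of $\sO_L(X_j)$ as well; combined with the fact that $F(X_j)/k$ is separably generated (since $k$ is perfect), this yields smoothness of $X_j$ at $\eta_{X_{j,s}}$. The non-smooth locus of each $X_j$ is then a closed subscheme of $X_j$ disjoint from $\eta_{X_{j,s}}$, whose image in $X$ is a proper closed subset not containing $\eta_{X_s}$; after shrinking $X$ to the complement of the union of these images over all $j$, every $X_j$ becomes smooth over $\sO_K$. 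The remaining verifications that each $X_{j+1} \to X_j$ is a primitive inseparable cover---finite flatness of degree $p$, generic fiber Galois of group $\IZ/p\IZ$, and special fiber purely inseparable of degree $p$---then follow directly from the Galois-theoretic construction.
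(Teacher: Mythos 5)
Your proof is correct and follows essentially the same route as the paper: split off the maximal unramified layer $L_0$ (you realize it as the fixed field of the inertia subgroup $I$, the paper describes it directly as the maximal unramified subextension of $\sO_{L'}/\sO_L$ --- these are the same thing), then refine the weakly unramified, residually purely inseparable $p$-group part via a composition series, and handle smoothness of the intermediate models by noting that $e=1$ propagates through the tower and then shrinking. Two small remarks: the assembly of the chain $L_0 \subseteq \cdots \subseteq L_m$ into a tower of formal schemes should be attributed to the full faithfulness in \cref{thm: normal covers} rather than to \cref{lem: simple observation}.\ref{rmk: Galois action on integral model} (the latter only identifies automorphism groups); and your claim that finite flatness of $X_{j+1} \to X_j$ ``follows directly'' does hold, but it is worth making explicit --- either by miracle flatness (finite morphism between regular schemes of the same dimension), or as the paper does, by first arranging flatness of the map on special fibers after further shrinking and then invoking \cite[\href{https://stacks.math.columbia.edu/tag/0912}{Tag 0912}]{stacks-project}.
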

\begin{proof}
	Let $L\colonequals L(X)$, let $L'\colonequals L(X')$, and denote their residue fields by $F$ and $F'$. 
	By the smoothness assumptions on $X$ and $X'$, the element $\pi$ is an uniformizer for both $\sO_L$ and $\sO_{L'}$. 
	In particular, the extension $\sO_L \subseteq \sO_{L'}$ is weakly unramified in the sense of \cite[09E4]{stacks-project}. Let $\sO_{L_0} \subseteq \sO_{L'}$ be the maximal subextension unramified over $\sO_L$ and $F_0$ be its residue field. Then $F_0$ is the maximal separable extension of $F$ in $F'$, so that $F'/F$ is purely inseparable. 
	We briefly recall the argument for the reader's convenience: Since $\sO_L$ is a complete discrete valuation ring, any finite separable extension $F''$ of $F$ gives an unramified extension $\sO_{L''}$ of $\sO_L$. 
	Moreover, $\sO_{L''}$ is necessarily of the form $\sO_L[\alpha]/ (f(\alpha))$ where $f(x) \in \sO_L[x]$ is a polynomial whose reduction in $F''[x]$ is separable. 
	By Hensel's lemma, if $F''$ embeds into $F'$, then the equation $f(x) = 0$ is solvable in $\sO_{L'}$ and hence $\sO_{L''}$ embeds into $\sO_{L'}$. 
	
	Let $G \colonequals \Gal(L'/ L_0)$. Since $L'/L_0$ is weakly unramified and $F'/F_0$ is purely inseparable, $G$ is a $p$-group. 
	By Jordan-H\"older theorem, we may take a composition series $$1 \unlhd G_{m -1} \unlhd \cdots \unlhd G = G_0$$ for some $m \in \IN$. Then being a simple $p$-group, $H_i \colonequals G_{i} / G_{i+1}$ is necessarily isomorphic to $\IZ / p \IZ$. 
	The compoistion series gives a filtration $$L_0 \subseteq L_1 \subseteq L_2 \subseteq \cdots \subseteq L_m = L'$$ such that $\Gal(L'/L_i) = G_i$. 
	Let $F_i$ be the residue field of $\sO_{L_i}$. 
	Then since each $\sO_{L_i} \subseteq \sO_{L_{i + 1}}$ is weakly unramified, the purely inseparable extension $F_{i + 1}/F_i$ has degree exactly $p$. 
	
	Finally, we note that for any $X'' \in \sfN(X)$ such that $\sO_L(X'')$ is weakly unramified over $\sO_L$, the special fiber $X''_k$ is generically reduced, and hence generically smooth over $k$. 
	After shrinking $X$, the cover $X''$ becomes smooth over $\sO_K$. 
 	Moreover, by the generic smoothness of the maps of special fibers, we can assume that the map $X''_k\to X_k$ is flat.
  	This implies that the map $X''\to X$ is $p$-completely flat, and by the noetherian assumption on $X$, we know the $p$-completely flat morphism $X''\to X$ is actually flat (\cite[\href{https://stacks.math.columbia.edu/tag/0912}{Tag 0912}]{stacks-project}).
  Now the conclusion follows easily from \cref{thm: normal covers} and the preceding paragraph. 
\end{proof} 

In practice, \cref{prop: tower} will be used in conjunction with \cref{thm: pre-tower} below.   
\begin{theorem}
	\label{thm: pre-tower} Suppose that $X \in \sfC^{\mathrm{sm}}_K$ has a geometrically connected special fiber and let $L'$ be any finite Galois extension of $L\colonequals L(X)$. 
	Then up to shrinking $X$, there exists
	\begin{itemize}
		\item a finite Galois extension $\sK/K$,
		\item a commutative diagram of finite Galois field extensions as below
		\begin{equation}
			\label{eqn: diagram of Gal ext}
			\begin{tikzcd}
				{L \tensor_K \sK} & {\sL'} \\
				L & {L'},
				\arrow[hook, from=2-1, to=2-2]
				\arrow[hook, from=2-1, to=1-1]
				\arrow[hook, from=1-1, to=1-2]
				\arrow[hook, from=2-2, to=1-2]
			\end{tikzcd}
		\end{equation}
	\item 	and a normal cover $\sX'$ of $\sX \colonequals X \tensor_{\sO_K} \sO_\sK \in \sfC^{\mathrm{sm}}_\sK$,
	\end{itemize}
 such that $\sX'$ is smooth over $\sO_\sK$ and $\sO_L(\sX') = \sO_{\sL'}$. 
\end{theorem}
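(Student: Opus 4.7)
The plan is to realize $L'$ as the function field of a normal cover of $X$ and then pass to a suitable Galois base extension to force smoothness. By \Cref{thm: normal covers}, after shrinking $X$ there exists a normal cover $X' \in \sfN(X)$ with an $L$-algebra isomorphism $L(X') \cong L'$; this $X'$ is admissible and normal with irreducible reduced special fiber, but typically fails to be smooth over $\sO_K$. To remedy this, apply \Cref{thm: reduced fiber theorem} to $X'$, enlarging to a Galois closure via \Cref{rmk: of reduced fiber thm}, and obtain a finite Galois extension $\sK/K$ (with residue field $\mathfrak{k}$) such that the normalization $Y \colonequals (X' \times_{\sO_K} \sO_\sK)^\nu$ has generically reduced special fiber along every irreducible component dominating $(X'_k)_\mathrm{red}$. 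Decompose $Y = \coprod_j Y_j$ into its finitely many connected normal components, which via \Cref{thm: normal covers} correspond to the decomposition of the finite \'etale $(L \tensor_K \sK)$-algebra $L' \tensor_K \sK = L' \tensor_L (L \tensor_K \sK)$ as a product of fields. Since $L'/L$ is Galois and $L \tensor_K \sK$ is a field, each factor is isomorphic to the composite $\sL' \colonequals L' \cdot (L \tensor_K \sK)$ inside a fixed algebraic closure, and $\sL'/(L \tensor_K \sK)$ is Galois. Fix one component $\sX' \colonequals Y_{j_0}$, so that $L(\sX') \cong \sL'$.

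The main technical step is to arrange, via further shrinking of $X$, for $\sX'$ to be smooth over $\sO_\sK$. First, any irreducible component of the special fiber $\sX'_{\mathfrak{k}}$ whose image in $X_k$ is a proper closed subset can be excised by a preliminary shrinking $X \rightsquigarrow X_f$; after this, every irreducible component of $\sX'_{\mathfrak{k}}$ dominates $(X'_k)_\mathrm{red}$, hence is generically reduced by our choice of $\sK$. Together with the normality (and thus regularity in codimension one) of $\sX'$, this implies $\sX'$ is regular---and therefore smooth over $\sO_\sK$ by flatness---at the generic points of its special fiber. The non-smooth locus $Z \subseteq \sX'$ is then a closed subset disjoint from these generic points, so its image in $X$ under the finite morphism $\sX' \to \sX \to X$ is a proper closed subset not containing $\fp(X)$. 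A final shrinking $X \rightsquigarrow X_f$ removes this image, making $\sX'$ smooth over $\sO_\sK$. Since $\sX'$ was chosen as a connected component, the topological space of $\sX'_{\mathfrak{k}}$ is connected; being smooth over $\mathfrak{k}$, it is therefore integral, so $(\sX'_{\mathfrak{k}})_\mathrm{red}$ is irreducible.

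It remains to verify the requested properties. By the above, $\sX' \in \sfC^{\mathrm{sm}}_\sK$, and $\sX' \to \sX$ is finite with finite \'etale generic fiber (inherited from $X'_\eta \to X_\eta$), so it defines a normal cover $\sX' \in \sfN(\sX)$. The equality $\sO_L(\sX') = \sO_{\sL'}$ follows from \cref{lem: simple observation}.\ref{lem: simple observation alg} under the identification $L(\sX') \cong \sL'$. Finally, all four sides of the diagram in (\ref{eqn: diagram of Gal ext}) are Galois: $L'/L$ by hypothesis, $(L \tensor_K \sK)/L$ as the base change of $\sK/K$, $\sL'/(L \tensor_K \sK)$ by construction, and $\sL'/L'$ because it is the composite of $L'$ with the Galois extension $L \tensor_K \sK$ of $L$ inside an algebraic closure. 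The crux of the argument is the smoothing step: upgrading the generic reducedness of \Cref{thm: reduced fiber theorem} to genuine smoothness of $\sX'$ over $\sO_\sK$ requires combining normality, flatness, and the flexibility to shrink $X$ along a suitable $X_f$; without either ingredient neither smoothness nor irreducibility of $(\sX'_{\mathfrak{k}})_\mathrm{red}$ can be guaranteed in general.
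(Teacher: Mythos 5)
Your proof follows the same overall strategy as the paper's: realize $L'$ as $L(X')$ for a normal cover via \cref{thm: normal covers}, apply \cref{thm: reduced fiber theorem} to find a Galois $\sK/K$ making the normalization of $X'_{\sO_\sK}$ generically reduced, decompose into connected components, and identify one with the composite $\sL' = L' \cdot (L \tensor_K \sK)$ using the fact that both $L'/L$ and $(L\tensor_K \sK)/L$ are Galois. Two points, however, deserve attention.

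First, you assert that the connected components of $Y = (X'_{\sO_\sK})^\nu$ ``correspond via \cref{thm: normal covers}'' to the factors of $L' \tensor_K \sK$. This is the right statement but it is not a formal consequence of \cref{thm: normal covers}: one needs to know that the function fields $L(Y_j)$ are precisely the fields appearing in the decomposition of $L' \tensor_K \sK$, which amounts to showing that $(R'_{\sO_\sK} \tensor_R \sO_L)^\nu$ agrees with $\prod_j \sO_L(Y_j)$. The paper establishes this as a dedicated sublemma by carefully commuting normalization and $p$-adic completion (invoking \cite[\href{https://stacks.math.columbia.edu/tag/0BG9}{Tag 0BG9}]{stacks-project} and \cite[\href{https://stacks.math.columbia.edu/tag/0307}{Tag 0307}]{stacks-project}); this is a genuine step you are skipping, not mere bookkeeping.

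Second, in the smoothing step you argue that normality gives regularity of $\sX'$ in codimension one, ``and therefore smooth over $\sO_\sK$ by flatness.'' Regularity of the total space together with flatness over $\sO_\sK$ does \emph{not} imply smoothness of the morphism (think of a totally ramified DVR extension, which is regular and flat but not smooth). The correct input is that the special fiber is generically reduced over the residue field $\kappa$ of $\sO_\sK$, which is \emph{perfect}; a reduced finite type scheme over a perfect field is generically geometrically regular, and flatness plus geometric regularity of the fiber is what yields smoothness at the generic points. This is indeed how the paper argues in the analogous step of \cref{prop: tower}. Your conclusion is correct, but as written the justification is faulty.
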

It will be clear in the proof that $L \tensor_K \sK$ is indeed a field and is nothing but $L(\sX)$. 
Before proving \Cref{thm: pre-tower}, we recall an elementary fact on field extensions.
\begin{lemma}
\label{lem: split Galois extensions}
    Let $E \subseteq F$ be a finite Galois field extension and $E \subseteq E'$ be any other finite separable extension. Set $d \colonequals [F : E]$. Then there is a decomposition $F \tensor_E E' \simeq \prod_{i = 0}^{m - 1} F_i$ for some $m \mid d$, such that each $F_i$ is a Galois extension of $E'$ of degree $d/m$. 
\end{lemma}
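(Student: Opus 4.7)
The plan is to reduce everything to a factorization of a single minimal polynomial using the primitive element theorem, and then exploit the Galois hypothesis on $F/E$ to show all factors have the same degree and produce Galois extensions of $E'$.

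First I would choose a primitive element $\alpha$ for $F/E$ (available since $F/E$ is separable), with minimal polynomial $f \in E[x]$ of degree $d$, so that $F \simeq E[x]/(f)$. Base changing gives
\[
F \tensor_E E' \simeq E'[x]/(f).
\]
Since $f$ is separable, over $E'$ it factors as $f = f_0 f_1 \cdots f_{m-1}$ into pairwise coprime irreducible polynomials, and the Chinese Remainder Theorem yields the decomposition
\[
F \tensor_E E' \simeq \prod_{i=0}^{m-1} E'[x]/(f_i) =: \prod_{i=0}^{m-1} F_i,
\]
with each $F_i$ a field extension of $E'$ of degree $\deg(f_i)$.

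Next I would use the Galois hypothesis on $F/E$ to show that every $F_i$ is Galois over $E'$ and that all the $f_i$ have a common degree. Fix $i$ and let $\alpha_i \in F_i$ be the image of $x$, so $F_i = E'(\alpha_i)$. Since $\alpha_i$ is a root of $f$, mapping $\alpha \mapsto \alpha_i$ defines an $E$-embedding $F \into F_i$; because $F/E$ is Galois, $F$ is a splitting field of $f$ over $E$, so all roots of $f$ lie in the image, and $f$ (hence $f_i$) splits completely in $F_i$. Therefore $F_i$ is a splitting field of the separable polynomial $f_i$ over $E'$, and so $F_i/E'$ is Galois. The same argument shows any two of the $F_i$ are abstractly isomorphic as $E'$-algebras (each being the compositum $E' \cdot F$ inside a chosen algebraic closure of $E'$), so they all have the same degree $d/m$ over $E'$. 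Comparing dimensions over $E'$ on both sides of the tensor product decomposition gives $\sum_i \deg(f_i) = d$, i.e.\ $m \cdot (d/m) = d$, confirming $m \mid d$.

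There is no real obstacle here; the statement is a classical consequence of the structure of $F \tensor_E E'$ for $F/E$ Galois, and the only thing to be careful about is verifying that each factor $F_i$ is genuinely Galois (not merely normal) over $E'$, which is automatic because $f_i$ is separable as a factor of the separable polynomial $f$.
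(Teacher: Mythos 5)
Your proof is correct, but takes a genuinely different route from the paper's. You pick a primitive element $\alpha$ for $F/E$ with minimal polynomial $f$, factor $f$ over $E'$, and apply CRT to the resulting factorization of $E'[x]/(f) \simeq F \tensor_E E'$; you then establish that each $F_i$ is Galois over $E'$ by observing that $F$ embeds into $F_i$ (via $\alpha \mapsto \alpha_i$), so $f_i$ splits in $F_i$ and $F_i$ is its own splitting field over $E'$. The paper instead picks a primitive element for $E'/E$, so that $E' \simeq E[x]/(f)$, and decomposes $F \tensor_E E' \simeq F[x]/(f)$; it then factors $f$ over $F$, lets $G = \Gal(F/E)$ act on the set of irreducible factors, and identifies $m$ with the index of the stabilizer $H$ of one factor. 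This has the advantage of exhibiting the Galois group $\Gal(F_i/E')$ explicitly as a subgroup $H_i \le G$, whereas your argument only establishes that $F_i/E'$ is Galois of the right degree without naming the group. Both approaches are valid; yours is slightly more elementary, the paper's slightly more informative. One small point in your writeup worth tightening: the claim that the $F_i$ are pairwise isomorphic as $E'$-algebras can be justified cleanly by noting that each $F_i$ is isomorphic to $E'(\alpha')$ for any root $\alpha'$ of $f_i$ in a fixed algebraic closure, and normality of $F/E$ forces $E(\alpha') = F$, so every such $E'(\alpha')$ is the same compositum $E'F$; your phrasing gestures at this but a reader might want the sentence about why $E(\alpha') = F$ made explicit.
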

\begin{proof}
    Let $f(x) \in E[x]$ be a monic irreducible polynomial such that $E' \simeq E[x]/ (f(x))$. 
    Let $f_0(x)$ be a factor of $f(x)$ in $F[x]$ and let $H$ be the stabilizer of $f_0(x)$ under the natural action of $G \colonequals \Gal(F/E)$ on $F[x]$.
    Let $\sigma_0 = 1, \cdots, \sigma_{m-1}$ be representatives of $G/H$. Then we must have
    $$ f(x) = \prod_{i = 0}^{m - 1} f_i(x) \text{ where $f_i(x) \colonequals \sigma_i(f(x))$.} $$
    Indeed, the right hand side is defined over $E$ and divides $f(x)$. But $f(x)$ is monic and irreducible in $E[x]$, so they must be equal. 
    Since $E'/E$ is separable, roots of $f(x)$ (in some algebraic closure of $E$) are distinct, so that $f_i(x)$'s are pairwise coprime in $F[x]$. 
    Therefore, the Chinese remainder theorem gives a canonical decomposition $F \tensor_E E' \simeq \prod_{i = 0}^{m - 1} F_i$ for $F_i \colonequals F[x]/(f_i(x))$. 
    The action of $H_i$ on $F_i$ fixes $E'$, so that $H_i \le \Aut(F_i/ E')$. However, one easily checks that $|H_i| = [F_i : E'] = d/m$. 
    Hence $F_i / E'$ is Galois with $\Gal(F_i/E') = H_i$. 
\end{proof}

\begin{proof}[Proof of \Cref{thm: pre-tower}]
	By \cref{thm: normal covers}, after shrinking $X$ we can find a normal cover $X'$ over $X$ such that $L'$ can be identified with $L(X')$. 
	By \cref{thm: reduced fiber theorem}, there exists a finite extension $\sK$ over $K$ such that the special fiber $(X'_{\sO_\sK})^\nu_\kappa$ of the normalization $(X'_{\sO_\sK})^\nu$ of $X'_{\sO_\sK} \colonequals X' \tensor_{\sO_K} \sO_\sK$ is generically reduced along every irreducible component which dominates $X_k$, where $\kappa$ is the residue field of $\sK$. 
	Note that by \cref{rmk: of reduced fiber thm}, we are allowed to replace $\sK$ by its normal closure over $K$ and assume that $\sK$ is Galois over $K$. 
	After shrinking $X$ again, we may then assume that $(X'_{\sO_\sK})^\nu$ is smooth over $\mathcal{O}_\sK$. 
	Let $(X'_{\sO_\sK})^\nu = \coprod_{i = 0}^{m - 1} \sX_i$ be its decomposition into connected components. Then each $\sX_i$ is an object of $\sfC^{\mathrm{sm}}_\sK$. 
Below we shall suppose that $X = \Spf(R)$, and  $X'=\Spf(R')$, so that $\sX \colonequals X_{\sO_\sK} = \Spf(\sR)$ is a smooth $p$-adic formal scheme over $\sO_\sK$ for $\sR \colonequals R \tensor_{\sO_K} \sO_\sK$. 
	We also let $\varpi$ be the uniformizer of $\sO_\sK$. 

Recall that $X_k$ is assumed to be geometrically connected, so that $\sX$ is also connected. 
By the smoothness of $\sX$ over $\sO_\sK$, 
 the ring $\sR$ is an integral domain and $\sR_{(\varpi)} = \sR \tensor_R R_{(\pi)}$ is a discrete valuation ring. 
	Moreover, we have (cf. \cref{lem: simple observation}.(a)) $$ \sO_L \tensor_{\sO_K} \sO_\sK = (R_{(\pi)})^\wedge \tensor_{\sO_K} \sO_\sK = (R_{(\pi)} \tensor_{\sO_K} \sO_\sK)^\wedge = (\sR_{(\varpi)})^\wedge. $$
	This implies that the tensor product $\sL \colonequals L \tensor_K \sK$ is isomorphic to $L(\sX)$. 
In particular, $\sL \colonequals L \tensor_K \sK$ is a field. 
 Let $\sR_i$ be the finite $\sR$-algebra such that $\sX_i = \Spf(\sR_i)$. Set $\sO_{\sL_i} = \sO_L(\sX_i) = (\sR'_{i, (\varpi)})^\wedge$ and $\sL_i = L(\sX_i) = \sO_{\sL_i}[\varpi^{-1}]$. 
 As one would expect, we have: 
\begin{lemma}
		$L' \tensor_K \sK = \prod_{i = 0}^m \sL_i$. 
\end{lemma}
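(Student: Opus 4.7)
The plan is to trace the tensor product through the normalization of $X'_{\sO_\sK}$ after localizing at the generic point of the special fiber.

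Set $S \colonequals R' \otimes_R \sR$, so that $X'_{\sO_\sK} = \Spf(S)$ and $(X'_{\sO_\sK})^\nu = \Spf(\prod_i \sR_i)$. Applying \cref{lem: simple observation}.\ref{lem: simple observation alg} to the normal cover $X' \to X$ identifies $\sO_{L'} \otimes_{\sO_L} \sO_\sL = R' \otimes_R \sO_\sL = S \otimes_\sR \sO_\sL$. Inverting $\varpi$ then gives
\[
L' \otimes_K \sK \;=\; L' \otimes_L \sL \;=\; \bigl( \sO_{L'} \otimes_{\sO_L} \sO_\sL \bigr)[\varpi^{-1}] \;=\; \bigl(S \otimes_\sR \sO_\sL\bigr)[\varpi^{-1}].
\]

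Next, I would show that the natural morphism $\iota \colon S \to \prod_i \sR_i$ becomes an isomorphism after inverting $\varpi$. Indeed, $S[\varpi^{-1}] = R'[\pi^{-1}] \otimes_{R[\pi^{-1}]} \sR[\varpi^{-1}]$ is a finite \'etale algebra over the normal Noetherian ring $\sR[\varpi^{-1}]$, hence reduced and normal, so it agrees with its own normalization $\prod_i \sR_i[\varpi^{-1}]$. Tensoring $\iota$ with $\sO_\sL$ over $\sR$ and inverting $\varpi$ therefore yields
\[
L' \otimes_K \sK \;=\; \prod_i \bigl(\sR_i \otimes_\sR \sO_\sL\bigr)[\varpi^{-1}].
\]

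The last step is to identify each factor with $\sL_i$, which amounts to the analogue of \cref{lem: simple observation}.\ref{lem: simple observation alg} for the finite morphism $\sX_i \to \sX$: namely $\sR_i \otimes_\sR \sO_\sL = \sO_{\sL_i}$. The content is to check that the prime structure is favorable enough for the argument to go through. Since $\sX_i \to \sX$ is a finite morphism between irreducible $\sO_\sK$-formal schemes of the same Krull dimension (both equal to one more than the relative dimension of $X$ over $\sO_K$), dimension counting shows it is surjective, so $\sR \hookrightarrow \sR_i$ is injective. The smoothness of $\sX_i$ over $\sO_\sK$ with irreducible special fiber then implies that $\fp(\sX_i) = \varpi \sR_i$ is the unique prime of $\sR_i$ lying over $\fp_\sR = \varpi \sR$; equivalently, every element of $\sR_i \setminus \varpi \sR_i$ becomes invertible after inverting $\sR \setminus \varpi \sR$. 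The computation of \cref{lem: simple observation}.\ref{lem: simple observation alg} then goes through verbatim with $(R,R')$ replaced by $(\sR, \sR_i)$, giving $\sR_i \otimes_\sR \sO_\sL = \sO_{\sL_i}$.

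The only mild obstacle is this prime-structure verification, since the map $R \to R'$ need not be flat; but all the necessary facts --- dominance of $\sX_i \to \sX$, integrality of $\sR$ and $\sR_i$, and irreducibility of the special fibers --- are guaranteed by our standing hypothesis that $X_k$ is geometrically connected, together with the smoothness of $\sX$ and each $\sX_i$ over $\sO_\sK$ ensured by the application of \cref{thm: reduced fiber theorem} and the shrinking of $X$ performed just before the statement of the lemma.
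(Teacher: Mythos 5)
Your proof is correct and rests on the same essential inputs as the paper's, namely the normality of the generic fiber $X'_{\eta, \sK}$ (finite \'etale over a smooth rigid space is normal) and \cref{lem: simple observation}.\ref{lem: simple observation alg}, but the bookkeeping is organized differently. The paper first localizes $R'_{\sO_\sK}$ at $\fp$ to form $T$, and then must reconcile normalization with $p$-adic completion via Stacks Project Tag 0BG9, the hypothesis of which (normality of $T[\pi^{-1}]$) is where the \'etale structure enters. You instead work with the full ring $S = R'_{\sO_\sK}$, use \'etaleness up front to identify $S[\varpi^{-1}]$ with the localization $\prod_i \sR_i[\varpi^{-1}]$ of its normalization, and only then tensor up to $\sO_\sL$ and invert $\varpi$; this sidesteps the commutation of normalization with completion entirely. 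The cost is a second invocation of \cref{lem: simple observation}.\ref{lem: simple observation alg}, now for $\sX_i \to \sX$, which is legitimate since each $\sX_i \in \sfC^{\mathrm{sm}}_\sK$ and $\sX_i \to \sX$ is finite with finite \'etale generic fiber, hence a normal cover in the sense of \cref{def: normal covers}; your final paragraph re-verifying the prime structure by hand is therefore not strictly necessary, as you could cite the lemma directly. Both routes are valid and of comparable length.
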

\begin{proof}
		Let $T \colonequals R'_{\sO_\sK} \tensor_{R} R_{(\pi)}$. 
		As the ring $R'_{\sO_\sK}$ is finite over $R$, we have $T^\wedge = R'_{\sO_\sK} \tensor_{R} (R_{(\pi)})^\wedge = R'_{\sO_\sK} \tensor_{R} \sO_L$. 
		Since $T$ is a finite $R_{(\pi)}$-algebra and $T[\pi^{-1}]$ is normal (in fact a product of fields), we may switch the order of normalization and $p$-completion thanks to \cite[\href{https://stacks.math.columbia.edu/tag/0BG9}{Tag 0BG9}]{stacks-project}
		\footnote{Here to apply the result, we implicitly use the fact that for an $R_{(\pi)}$-algebra, the completion with respect to the maximal ideal of $R_{(\pi)}$ is just $p$-adic completion. }, and get $$ (T^\wedge)^\nu \simeq T^\nu \tensor_{R_{(\pi)}} (R_{(\pi)})^\wedge = (T^\nu)^\wedge.$$

		On the other hand, as normalization commutes with localization \cite[\href{https://stacks.math.columbia.edu/tag/0307}{Tag 0307}]{stacks-project}, we have $$T^\nu = (R'_{\sO_\sK})^\nu \tensor_R R_{(\pi)} = \prod_{i = 1}^m \sR_{i, (\varpi)}. $$ 
		Combined with the previous isomorphism that $(T^\wedge)^\nu \simeq (T^\nu)^\wedge$, we then get 
		\begin{equation}
			\label{eqn: T wedge nu}
			(R'_{\sO_\sK} \tensor_{R} \sO_L)^\nu = (T^\wedge)^\nu \simeq  (T^\nu)^\wedge = \prod_{i = 0}^{m - 1} (\sR_{i, (\varpi)})^\wedge = \prod_{i = 0}^{m - 1} \sO_{\sL_i}.
		\end{equation}
		Inverting $\varpi$ and using that normalization commutes with localization again, the left end term in \Cref{eqn: T wedge nu} becomes
		\begin{equation}
			\label{eqn: invert varpi}
			(R'_{\sO_\sK} \tensor_{R} \sO_L)^\nu [\varpi^{-1}] = (\sO_\sK[\varpi^{-1}] \tensor_{\sO_K} R' \tensor_{R} \sO_L)^\nu = (\sK \tensor_K L')^\nu = \sK \tensor_K L',
		\end{equation}
		where we used $R' \tensor_{R} \sO_L = \sO_{L'}$ in the second equality. 
		Now combine (\ref{eqn: T wedge nu}) and (\ref{eqn: invert varpi}), we get the result.
\end{proof}

Note that the extensions $L \subseteq L'$ and $\sL \subseteq \sL \tensor_K \sK$ are both finite and Galois.
    Therefore, by \cref{lem: split Galois extensions}, $\sL_0$ is a Galois extension of both $L'$ and $\sL \tensor_K \sK$. Hence we obtain the diagram (\ref{eqn: diagram of Gal ext}) by setting $\sL' \colonequals \sL_0$.
    Accordingly, we set $\sX' \colonequals \sX_0$. Then being a connected component of $(X'_{\mathcal{O}_{\sK}})^\nu$, $\sX'$ is smooth over $\mathcal{O}_{\sK}$. 
    Moreover, \cref{eqn: T wedge nu} implies that $\sO_L(\sX') = \sO_{\sL'}$, so we are done.
\end{proof}

\begin{remark}
\label{rmk: pi_0}
    It is sometimes convenient to apply \cref{thm: pre-tower} together with the following observation: Suppose that $X$ is a connected $p$-adic formal scheme of finite type over $\Spf(\sO_K)$. Then there exists a finite unramified extension $K'$ of $K$ such that the structure morphism $X \to \Spf(\sO_K)$ factors through $\Spf(\sO_{K'}) \to \Spf(\sO_K)$, where $X$ has geometrically connected special fiber as a $p$-adic formal scheme over $\Spf(\sO_{K'})$. To see this, since $X_s$ is a connected $k$-variety, $\pi_0(X_s) = \Spec(k')$ for some finite extension $k'$ of $k$, so that $X_s$ is geometrically connected over $k'$ (\cite[\S10.2~Prop.~2.18]{LiuAG}). 
    Let $K'$ be the corresponding finite unramified extension of $K$. Then the formal \'etaleness of $\Spf(\sO_{K'}) \to \Spf(\sO_K)$ implies that the structure morphism $X_s \to \Spec(k')$ lifts unqiuely to a morphism  $X \to \Spf(\sO_{K'})$. 
\end{remark}

\subsection{Effective sets and an $\ell$-adic Analogue} 
\label{sub:effective}
In this section, we introduce the notion of the \emph{effective set} and prove the $\ell$-adic analogue of the pointwise criterion.

We start with the following two lemmas that check that the purity result of branch locus for schemes also hold true for suitable formal schemes.
\begin{lemma}
    Let $X$ be a connected and admissible $p$-adic formal scheme over $\Spf(\sO_K)$. If the mod $\pi$ special fiber $X_k$ is reduced, then $X_\eta$ is connected. 
\end{lemma}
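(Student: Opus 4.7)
The plan is to reduce to the affine case by a gluing argument and then to carry out a short computation using reducedness of $X_k$.

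First I would reduce to the case $X = \Spf(R)$ affine. Since $X$ is admissible, by replacing $X$ with the union of a finite affine open covering (using quasi-compactness), we may cover $X$ by finitely many open affine formal subschemes $\{U_i\}_{i=1}^n$; each $U_i$ is admissible with reduced special fiber $(U_i)_k = U_i \cap X_k$, and its rigid generic fiber $(U_i)_\eta$ is an admissible open of $X_\eta$. Suppose toward contradiction that $X_\eta = V_1 \sqcup V_2$ is a nontrivial decomposition into admissible opens. Restricting, we obtain decompositions $(U_i)_\eta = ((U_i)_\eta \cap V_1) \sqcup ((U_i)_\eta \cap V_2)$ for each $i$. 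Granting the affine case (see the next paragraph), each such decomposition lifts uniquely to a formal-scheme decomposition $U_i = U_i^1 \sqcup U_i^2$ with $(U_i^a)_\eta = (U_i)_\eta \cap V_a$. On overlaps $U_i \cap U_j$, both induced decompositions recover $V_a \cap (U_i \cap U_j)_\eta$ on the generic fiber, so by uniqueness they agree; hence the $U_i^a$'s glue to a decomposition $X = X^1 \sqcup X^2$. Since $V_1$ and $V_2$ are nonempty admissible opens of the quasi-compact rigid space $X_\eta$ they contain classical points; these points lie in some $(U_i)_\eta$, forcing both $U_i^1$ and $U_i^2$ (hence $X^1$ and $X^2$) to be nonempty, contradicting the connectedness of $X$.

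For the affine case $X = \Spf(R)$, the hypothesis says $R$ is $\pi$-torsion-free, $\pi$-adically complete, topologically of finite type over $\sO_K$, and $R/\pi R$ is reduced. Connectedness of $X$ is the statement that $|\Spf(R)| = \Spec(R/\pi R)$ has no nontrivial idempotent; since $R$ is $\pi$-adically complete, Hensel's lemma shows that idempotents of $R$ correspond bijectively to idempotents of $R/\pi R$, so $R$ itself has no nontrivial idempotents. Now the connected components of the affinoid $X_\eta$ correspond to idempotents of $\Gamma(X_\eta, \sO_{X_\eta}) = R[1/\pi]$; it therefore suffices to show that any idempotent $e \in R[1/\pi]$ lies in $R$. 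Write $e = r/\pi^n$ with $r \in R$ and $n \ge 0$ minimal. The equation $e^2 = e$ gives $r^2 = \pi^n r$ in $R$. If $n \ge 1$, reducing modulo $\pi$ yields $\bar r^2 = 0$ in $R/\pi R$; by reducedness $\bar r = 0$, so $r = \pi s$ for some $s \in R$, and then $\pi^2 s^2 = \pi^{n+1} s$, which by $\pi$-torsion-freeness gives $s^2 = \pi^{n-1} s$. This exhibits $e = s/\pi^{n-1}$, contradicting minimality of $n$. Hence $n = 0$, $e \in R$, and so $e \in \{0, 1\}$.

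I expect no essential obstacle beyond book-keeping. The mild technical point is the gluing step: one must know that an open-and-closed decomposition of an affine admissible formal scheme (produced in the affine case through its idempotent in $R$) uniquely realises any given decomposition of its rigid generic fiber, and that these local decompositions satisfy the cocycle condition on overlaps. Both are immediate from the functoriality of $(-)_\eta$ on admissible opens and the uniqueness of idempotent lifts, so the argument causes no real difficulty. The heart of the proof is the two-line computation with $r^2 = \pi^n r$ in the affine case, which is precisely where the hypothesis that $X_k$ is reduced is used.
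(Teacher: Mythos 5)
Your proof is correct and follows the same overall strategy as the paper: reduce to the affine case $X=\Spf(R)$, then show that connectedness of $\Spec(R/\pi R)$ together with $\pi$-torsion-freeness and reducedness forces $R[1/\pi]$ to have no nontrivial idempotents. The paper handles the affine case entirely by citation (Conrad's notes for the equivalence of connectedness of $\Spa(R_K)$ and $\Spec(R_K)$, and \cite[055J]{stacks-project} for the connectedness of the generic fiber), and does not justify the reduction to the affine case; your write-up supplies both a self-contained idempotent computation for the affine step and the gluing argument needed for the reduction, so it is the same route with the details made explicit rather than a genuinely different proof.
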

\begin{proof}
    It suffices to consider the case when $X = \Spf(R)$ is affine. Then $X_\eta = \Spa(R_K)$ is connected if and only if $\Spec(R_K)$ is connected (\cite[Ex.~2.4.9]{ConradNotes}). Therefore, we reduce to showing that the scheme $\Spec(R)$ has connected generic fiber, which follows from \cite[055J]{stacks-project}. 
\end{proof}

We remark that the reducedness assumption on $X_k$ is important. Otherwise $\Spf(\sO_K[x]/(x^2 - \pi^2))$ would produce a counterexample.

\begin{lemma}[Purity of branch locus]
\label{lem: purity of branch locus}
    Let $X$ be a regular admissible formal scheme over $\sO_K$ with an irreducible and normal reduced special fiber $X_s$, and let $U \subseteq X$ be a non-empty formal open subscheme. Then we have the following:
    \begin{enumerate}[label=\upshape{(\roman*)}]
        \item\label{item: purity 1} The restriction functor $\FEt(X) \to \FEt(U)$ is fully faithful. 
        \item\label{item: purity 2} The restriction functor $\FEt(X) \to \FEt(X_\eta)$ is faithful. 
        \item\label{item: purity 3} The restriction functor $\FEt(X_\eta) \to \FEt(U_\eta)$ is faithful. 
        \item\label{item: purity 4} The induced functor 
        \begin{equation}
        \label{eqn: purity 4}
        \FEt(X) \to \FEt(X_\eta) \times_{\FEt(U_\eta)} \FEt(U)
        \end{equation}
        is an equivalence of categories. 
    \end{enumerate}
\end{lemma}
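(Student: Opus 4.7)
The strategy is to handle the faithfulness statements (ii) and (iii) by density arguments, to handle the fullness in (i) by a standard graph-closure technique, and to handle the essential surjectivity in (iv) via the Zariski--Nagata purity of branch locus. The following observations will be used throughout: since $X$ is admissible and regular with $X_s$ irreducible, $X$ is itself irreducible (any decomposition of $X$ would split $X_s$), so that $U$ is dense in $X$, $X_\eta$ is normal and connected, and $U_\eta$ is dense in $X_\eta$; moreover $\dim X = \dim X_s + 1$, so that the closed subset $X_s \smallsetminus U_s$ of the irreducible $X_s$ has codimension $\geq 2$ in $X$. Locally on $X = \Spf(R)$, the ring $R$ is regular and excellent, and the underlying scheme $\Spec(R)$ is regular, which is what allows the classical purity theorem to intervene.

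For (ii), observe that for any $Y = \Spf(S) \in \FEt(X)$, the ring $S$ is $\pi$-torsionfree (since $Y$ is flat over $\sO_K$), so $S \to S[1/\pi]$ is injective; hence two ring homomorphisms $S_2 \to S_1$ agreeing after inverting $\pi$ coincide. For (iii), the equalizer of two $X_\eta$-morphisms $f,g\colon Y_1 \to Y_2$ between finite \'etale covers is a clopen subrigid-space of $Y_1$ (the diagonal $Y_2 \to Y_2 \times_{X_\eta} Y_2$ being a clopen immersion by \'etaleness); as $Y_1$ is a finite disjoint union of connected normal rigid spaces each dominating $X_\eta$, every such component meets the non-empty open $Y_1|_{U_\eta}$, so the equalizer is all of $Y_1$. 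Part (i) is handled similarly: faithfulness follows by the same clopen-equalizer argument, now using the density of $U$ in the irreducible $X$. For fullness, given $h\colon Y_1|_U \to Y_2|_U$, consider $P \colonequals Y_1 \times_X Y_2$, which is finite \'etale over the regular $X$, hence normal, and is a disjoint union of its irreducible components. The graph $\Gamma_h \subseteq P|_U$ is clopen, and its closure $\overline{\Gamma_h}$ in $P$ is therefore a union of connected components of $P$, in particular finite \'etale over $X$. The projection $\overline{\Gamma_h} \to Y_1$ has locally constant degree equal to $1$ over the dense open $U$, hence is an isomorphism; composing its inverse with the other projection gives the required extension of $h$.

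The main content is (iv). Full faithfulness of the functor \ref{eqn: purity 4} is immediate from (i)--(iii), so we only need essential surjectivity. Given a compatible pair $(V \to X_\eta, W \to U)$, define $Y \to X$ affine-locally as the normalization of $X$ in $V$: writing $V|_{X_{\eta,0}} = \Spa(A)$ over an affine open $X_0 = \Spf(R) \subseteq X$ with $A$ finite \'etale over $R[1/\pi]$, let $\wt A$ be the integral closure of $R$ in $A$, which is finite over $R$ by excellence, so that $\Spf(\wt A)$ is a normal admissible formal scheme finite over $X_0$. These local constructions glue along $X$ by the universal property of normalization, producing a finite morphism $Y \to X$ with $Y$ normal. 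By construction $Y_\eta = V$ (since $V$ is already normal), and $Y|_U$ equals the normalization of $U$ in $V|_{U_\eta} = W_\eta$; since $W \to U$ is finite \'etale, $W$ is itself normal, so $Y|_U = W$. Thus $Y \to X$ is \'etale over both $U$ and $X_\eta$.

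It remains to verify that $Y \to X$ is \'etale on all of $X$. Every codimension-$1$ point of $X$ either has characteristic zero (hence lies in $X_\eta$), or equals the unique codimension-$1$ point of $X$ above $(\pi)$, namely the generic point of the irreducible $X_s$, which lies in $U_s$ since $U_s$ is a non-empty open of the irreducible $X_s$. Consequently $Y \to X$ is \'etale at every codimension-$1$ point of $X$. Zariski--Nagata purity of the branch locus, applied affine-locally to $\Spec(\wt A) \to \Spec(R)$ with $R$ regular and $\wt A$ normal and finite over $R$, then forces the branch locus to be empty, so $Y \to X$ is \'etale everywhere. The principal technical point (and the step that requires the most care) is verifying that the normalization construction glues consistently across the affine cover; this follows from the uniqueness of normalization together with the full faithfulness established in (i), applied to affine overlaps.
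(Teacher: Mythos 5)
Your proposal is correct and, for the main content (part (iv)), follows essentially the same route as the paper: normalize $X$ in the given finite \'etale cover of $X_\eta$, observe that the resulting finite morphism is \'etale at every codimension-one point of $\Spec R$ (the characteristic-zero ones because $R[1/\pi]\to\widetilde A[1/\pi]$ is \'etale, and the unique height-one prime over $(\pi)$ because $U_s$ contains the generic point of the irreducible $X_s$), and invoke Zariski--Nagata purity. The paper verifies this via the Stacks Project form \cite[\href{https://stacks.math.columbia.edu/tag/0BMB}{Tag 0BMB}]{stacks-project} and spells out the codimension-one check; you invoke the same principle more informally.

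The auxiliary parts diverge a little in method. For (i) the paper uses the invariance $\FEt(X)\simeq\FEt(X_s)$ and $\FEt(U)\simeq\FEt(U_s)$ and then cites \cite[\href{https://stacks.math.columbia.edu/tag/0BQI}{Tag 0BQI}]{stacks-project}, whereas you work directly on the formal schemes with a graph-closure argument; both are valid, and the reduction to special fibers is somewhat shorter. For (ii)--(iii) the paper reduces to injectivity of $R'\to R'[1/p]$ and $R'\to R'_{\{f\}}$, while you use a clopen-equalizer argument for (iii); again both work.

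Two small cautions. First, your concluding sentence locates the ``principal technical point'' at the gluing of normalizations; that gluing does require that normalization commutes with the flat localizations $R\to R_{\{f\}}$ (which the paper isolates separately via \cref{prop: Conrad} following \cite{Conrad99}), but it does not rely on the full faithfulness in (i) as you suggest. Second, and more substantively, the step ``Consequently $Y\to X$ is \'etale at every codimension-one point of $X$'' elides the passage from the assertion ``$Y|_U\to U$ is \'etale as a morphism of formal schemes'' to the assertion ``$\Spec(\widetilde A)\to\Spec(R)$ is \'etale at the height-one prime $\mathfrak p_0$ over $(\pi)$''. Since $R_{\{f\}}$ is a completion and not merely a localization, one cannot identify the local ring $R_{\mathfrak p_0}$ with a localization of $R_{\{f\}}$; the paper handles this by noting that $\widetilde A_{\{f\}}\simeq\widetilde A\otimes_R R_{\{f\}}$ (by finiteness) and invoking base-change invariance of the unramified locus \cite[\href{https://stacks.math.columbia.edu/tag/0475}{Tag 0475}]{stacks-project}, together with the fact that $\mathfrak p_0$ lies in the image of $\Spec R_{\{f\}}\to\Spec R$. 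This extra step should be spelled out, but it is not a gap in the argument's logic.
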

\begin{proof}
    Part \ref{item: purity 1} follows from the simple facts that $\FEt(X) = \FEt(X_s)$, $\FEt(U) = \FEt(U_s)$ and $\FEt(X_s) \to \FEt(U_s)$ is fully faithful (\cite[\href{https://stacks.math.columbia.edu/tag/0BQI}{Tag 0BQI}]{stacks-project}). 
    Then we consider \ref{item: purity 2} and \ref{item: purity 3} simultanenously. This can be checked Zariski-locally on $X$, so we may assume that $X = \Spf(R)$ is affine. Take some $U$ open in $R$ and $X' = \Spf(R') \in \FEt(X)$. 
    We may assume that $U = \Spf(R_{ \{ f \} })$ for some $f \in R$ which does not vanish on $X_s$. Now \ref{item: purity 2} (resp. \ref{item: purity 3}) follows from the fact that $R' \to R'_{\{ f \}}$ (resp. $R' \to R'[1/p]$) is injective.  

    Finally, we consider \ref{item: purity 4}. The full-faithfulness of (\ref{eqn: purity 4}) follows from \ref{item: purity 1}, \ref{item: purity 2}, and \ref{item: purity 3}. 
    Then, by standard gluing arguments, to prove essential surjectivity we reduce to considering the case when $X = \Spf(R)$ is affine as before.
    Let $X'_\eta \in \FEt(X_\eta)$ be any object, which we may assume to be connected. 
    Let $X'$ be the normalization of $X$ in $X'_\eta$, so that $X' = \Spf(R')$ for some finite $R$-algebra $R'$.
    By \cref{prop: Conrad}, the given hypotheses tells us that $R[1/p] \to R'[1/p]$ is \'etale and for some $f$ which does not vanish on $X_s$, the restriction $R_{\{f\}} \to R'_{\{f\}}$ is \'etale. We shall view $X'$ and $X$ as formal completions of the schemes $\wt{X}' := \Spec(R')$ and $\wt{X} := \Spec(R)$ along their corresponding special fibers.
    Then it suffices to show that the morphism between the schemes $\wt{X}' \to \wt{X}$ is \'etale. 
    As both schemes are locally noetherian, we are allowed to apply \cite[\href{https://stacks.math.columbia.edu/tag/0BMB}{Tag 0BMB}]{stacks-project}, for which the assumptions (1)-(4) in \textit{loc. cit.} are clear, and we check item (5) below.
    
    Let $x' \in \wt{X}'_k = X'_k$ be a closed point such that $f(x') = 0$ and let $y \in \wt{X}'$ be any point which specializes to $x'$ and is of codimension $1$. 
    If the characteristic of the residue field of $y$ is zero, then we are done because $R[1/p] \to R'[1/p]$ is \'etale. Otherwise, $y$ must be the generic point of an irreducible component of $\wt{X}'_k = X'_k$, and since the function $f$ does not vanish on $X_s$, we know $f(y) \neq 0$. 
    Since the map $R \to R'$ is finite, the ring $R' \tensor_{R} R_{\{ f \}}$ is thus $p$-complete and the natural morphism $R' \tensor_{R} R_{\{ f \}} \to R'_{\{ f \}}$ is an isomorphism. 
    Finally, as the unramified locus of a morphism locally of finite type is invariant under arbitrary base change \cite[\href{https://stacks.math.columbia.edu/tag/0475}{Tag 0475}]{stacks-project}, the fact that $R_{\{f\}} \to R'_{\{f\}}$ is unramifed at $y$ implies that $R \to R'$ is also unramified at $y$.
\end{proof}

\begin{construction}
\label{const: pi_1}
    Let $X$ be a connected smooth $p$-adic formal scheme over $\sO_K$. Let $b \in X_\eta$ be a classical point. Let $b_0 \in X_k$ be the closed point given by the reduction of $b$ and choose geometric points $\bar{b}, \bar{b}_0$ over $b, b_0$ respectively. We write $\pi_1^\mathrm{alg}(X_\eta, \bar{b})$ for the \textbf{algebraic fundamental group} of the rigid analytic space (cf. \cite{deJong_fund}). Namely, $\pi_1^\mathrm{alg}(X_\eta, \bar{b})$ is the automorphism group of the fiber functor from the category of finite \'etale covers of $X_\eta$ to that of sets. Then there is a natural specialization map $\pi_1^\mathrm{alg}(X_\eta, \bar{b}) \to \pi_1^\et(X_k, \bar{b}_0)$, and we define the kernel of which to be the \textbf{global inertia group} and denote it by $I_X$ as before. Now for each classical point $x \in X_\eta$, we choose a geometric point $\bar{x}$ over $x$. Moreover, we choose an \'etale path connecting $\bar{x}$ and $\bar{b}$, thereby obtaining an isomorphism $\pi_1^\mathrm{alg}(X_\eta, \bar{x}) \simeq \pi_1^\mathrm{alg}(X_\eta, \bar{b})$. Then we define $I_x$ to be the inertia subgroup of $\Gal_{K(x)} = \pi_1^\mathrm{alg}(x, \bar{x})$, which up to change of base points admits a map to $I_X$. 
    Let $\sI_x$ denote the union of conjugates of $\mathrm{im}(I_x \to I_X)$. It is clear that $\sI_x$ depends only on $x$ and not on the choice of $\bar{x}$ or the \'etale path connecting $\bar{x}$ and $\bar{b}$. 
\end{construction}

\begin{lemma}
\label{lem: surj of pi_1}
    In \cref{const: pi_1}, the specialization map $\pi_1^{\mathrm{alg}}(X_\eta, \bar{b}) \to \pi_1(X_k, \bar{b}_0)$ is surjective. As a consequence, a finite \'etale cover $X'_\eta$ over $X_\eta$ extends to a finite \'etale cover of $X$ if and only if the monodromy representation $\pi_1^{\mathrm{alg}}(X_\eta, \bar{b}) \to \mathrm{Aut}(X'_{\eta, \bar{b}})$ vanishes on $I_X$. 
\end{lemma}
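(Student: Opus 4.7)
The plan is to prove surjectivity first and then deduce the ``as a consequence'' assertion by the standard Galois correspondence between finite \'etale covers and continuous actions of the fundamental groups.

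For the surjectivity, by the Galois correspondence a continuous map of profinite groups is surjective iff the induced functor on finite continuous sets preserves connected objects. Translated through the fiber functors at $\bar{b}$ and $\bar{b}_0$, this means it suffices to show: for every connected $Y_k \in \FEt(X_k)$, the pullback to $X_\eta$ remains connected. Topological invariance of the \'etale site gives an equivalence $\FEt(X) \simeq \FEt(X_k)$, so such $Y_k$ lifts uniquely to a finite \'etale cover $Y \to X$. The formal scheme $Y$ is then a connected admissible $p$-adic formal scheme over $\sO_K$ (flatness comes from \'etaleness), and since $X$ is smooth over $\sO_K$ its special fiber $X_k$ is reduced, so $Y_k$ (being \'etale over $X_k$) is reduced as well. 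Hence the first (unnumbered) lemma of \Cref{sub:effective} applies to $Y$ and yields that $Y_\eta$ is connected.

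For the consequence, identify a finite \'etale cover $X'_\eta \to X_\eta$ with its fiber $S \colonequals X'_{\eta,\bar{b}}$, regarded as a finite continuous $\pi_1^{\mathrm{alg}}(X_\eta, \bar{b})$-set. If $X'_\eta$ extends to some $X' \in \FEt(X)$, then $X'$ corresponds via $\FEt(X)\simeq \FEt(X_k)$ to a cover $Y_k \to X_k$, and compatibility of fiber functors shows that the $\pi_1^{\mathrm{alg}}(X_\eta, \bar{b})$-action on $S$ is the pullback of the $\pi_1^{\et}(X_k, \bar{b}_0)$-action on $S = Y_{k, \bar{b}_0}$ along the specialization map; in particular it kills $I_X$. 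Conversely, if the monodromy kills $I_X$, then by the surjectivity proved in the first paragraph the action factors through $\pi_1^{\mathrm{alg}}(X_\eta, \bar{b})/I_X \cong \pi_1^{\et}(X_k, \bar{b}_0)$; the resulting continuous $\pi_1^{\et}(X_k, \bar{b}_0)$-action on $S$ produces a finite \'etale cover $Y_k \to X_k$, which lifts uniquely to $Y \in \FEt(X)$. By construction $Y_\eta$ and $X'_\eta$ are finite \'etale covers of the connected rigid space $X_\eta$ with isomorphic monodromy representations, hence are isomorphic.

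I do not anticipate any genuine obstacle: the entire argument is formal once the connectedness lemma at the start of \Cref{sub:effective} (applied to the \'etale cover $Y$) and the topological invariance $\FEt(X)\simeq \FEt(X_k)$ are in place. The only mild bookkeeping point is verifying that the chosen \'etale path between $\bar{x}$ and $\bar{b}$ does not affect the definition of $I_X$, which is subsumed in the set-up of \Cref{const: pi_1}.
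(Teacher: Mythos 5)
Your proof is correct, but the route is genuinely different from the paper's. You reduce surjectivity to the criterion that connected objects of $\FEt(X_k)$ pull back to connected objects of $\FEt(X_\eta)$, lift a connected $Y_k\in\FEt(X_k)$ to $Y\in\FEt(X)$ by topological invariance, observe that $Y$ is connected, admissible, and smooth (hence $Y_k$ reduced), and then invoke the unnumbered connectedness lemma at the start of \cref{sub:effective} to conclude that $Y_\eta$ is connected. The paper instead shrinks $X$ via a commutative square comparing the specialization maps for $U\subset X$ (using surjectivity of $\pi_1^\et(U_k)\to\pi_1^\et(X_k)$), reduces to the affine case $X=\Spf(A)$, and then identifies $\pi_1^{\mathrm{alg}}(X_\eta)$ with the \'etale fundamental group of the associated affine scheme to invoke the scheme-theoretic specialization surjectivity from the Stacks project. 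Your argument is more self-contained: it uses the preceding connectedness lemma (which is proved precisely for this purpose) and topological invariance of $\FEt$, and in particular avoids appealing to the comparison between the algebraic fundamental group of a rigid affinoid and the \'etale fundamental group of the underlying scheme, which is the least elementary ingredient in the paper's version. Your second paragraph matches the paper's "easily implies the second" and is complete.
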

\begin{proof}
        It suffices to prove the first statement, which easily implies the second. Let $U \subseteq X$ be any non-empty open formal subscheme which contains the geometric point $\bar{b}$ (and hence $\bar{b}_0$). Then we have a commutative diagram 
        \[\begin{tikzcd}
	{\pi_1^{\mathrm{alg}}(U_\eta, \bar{b})} & {\pi_1^{\mathrm{alg}}(X_\eta, \bar{b})} \\
	{\pi_1^\et(U_k, \bar{b}_0)} & {\pi_1^\et(X_k, \bar{b}_0)}
	\arrow[from=1-1, to=1-2]
	\arrow[from=1-1, to=2-1]
	\arrow[from=1-2, to=2-2]
	\arrow[from=2-1, to=2-2] 
    \end{tikzcd}. \]
    As the bottom arrow is surjective by \cite[\href{https://stacks.math.columbia.edu/tag/0BQI}{Tag 0BQI}]{stacks-project}, it suffices to show that the left vertical arrow is surjective. Therefore, we may shrink $X$ and assume that $X = \Spf(A)$ is affine. In this case, $\pi_1^{\mathrm{alg}}(X_\eta, \bar{b}) = \pi_1^\et(\Spec(A), \bar{b})$, so that the conclusion follows from \cite[\href{https://stacks.math.columbia.edu/tag/0BQI}{Tag 0BQI}]{stacks-project} again. 
\end{proof}

\begin{convention}
    We use $\sC(X)$ to the denote the set of classical points on the rigid generic fiber of a $p$-adic formal scheme $X$. 
    Let $\sC^{\mathrm{ur}} \subseteq \sC$ denote the subset of points defined over unramified extensions of $K$.
\end{convention}

\begin{definition}
    Let $X$ be a smooth $p$-adic formal scheme over $\sO_K$. We say that a morphism $Y \to X$ is \textit{semi-\'etale} if the induced map on generic map $Y_\eta \to X_\eta$ is \'etale, and the map $Y \to X$ can be factored as $Y \to U_{\sO_{K'}} \to X$, such that 
    \begin{itemize}
        \item $U$ is an open sub formal scheme of $X$;
        \item $K'$ is a finite extension of $K$; 
        \item the map $Y\to  U_{\sO_{K'}}$ is finite;
        \item the composition $Y\to U_{\sO_{K'}}\to \Spf(\sO_{K'})$ makes $Y$ a smooth $p$-adic formal scheme over $\sO_{K'}$.
    \end{itemize}
    
    The various properties are summarized in the diagram below.
    \[\begin{tikzcd}
	{Y_\eta} && {X_\eta} \\
	Y & {U_{\sO_{K'}}} & X \\
	& {\Spf(\sO_{K'})} & {\Spf(\sO_K)}
	\arrow[from=2-2, to=2-3]
	\arrow[from=2-2, to=3-2]
	\arrow[from=2-3, to=3-3]
	\arrow[from=3-2, to=3-3]
	\arrow[draw=none, from=2-2, to=3-3]
	\arrow["{\mathrm{finite}}", from=2-1, to=2-2]
	\arrow["{\mathrm{smooth}}"', from=2-1, to=3-2]
	\arrow[from=1-1, to=2-1]
	\arrow["{\text{{\'e}tale}}", from=1-1, to=1-3]
	\arrow[from=1-3, to=2-3]
    \end{tikzcd}\]
    We shall call the intermediate field extension $K'$ the \textit{field of definition} for $Y$. If $Y$ is connected, then $K'$ is the compositum of all subalgebras of $\Gamma(\sO_Y)$ which are finite \'etale over $K$.
\end{definition}

\begin{definition}
\label{def:eff set}
    We say that a subset $\sC \subseteq \sC(X)$ is \textit{dense in inertia} if the subgroup generated by $\cup_{x \in \sC} \sI_x$ is dense in the topological group $I_X$ (i.e., $\cup_{x \in \sC} \sI_x$ topologically generates $I_X$). 
    We say that $\sC$ is \textit{effective} if it is semi-\'etale-locally dense in inertia, i.e., for every semi-\'etale morphism $Y \to X$, the pullback $\sC|_{Y_\eta}$ of $\sC$ to $Y_\eta$ is dense in inertia as a subset of $\sC(Y)$. 
\end{definition}

Recall that when $G$ is a profinite group, a (normal) subgroup $H \le G$ is dense in $G$ if and only if the only open (normal) subgroup of $G$ which contains $H$ is $G$ itself. This immediately implies the following lemma. 

\begin{lemma}
\label{lem: dense in inertia}
Let $X$ be a connected smooth $p$-adic formal scheme over $\sO_K$. A subset $\sC \subseteq \sC(X)$ is dense in inertia if and only if it has the following property: For every connected finite Galois cover $X'_\eta \to X_\eta$, the normalization $X'$ of $X$ in $X'_\eta$ is \'etale over $X$ if and only if $X'_\eta|_x$ is unramified over $K(x)$ (i.e., extends to a finite \'etale $\sO_{K(x)}$-scheme) for every $x \in \sC$.
\end{lemma}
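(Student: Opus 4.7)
The plan is to translate both sides of the claimed equivalence into the language of open normal subgroups of $G \colonequals \pi_1^{\mathrm{alg}}(X_\eta)$, then appeal to the standard description of closed normal subgroups of a profinite group. Throughout, set $I \colonequals I_X$ and $S \colonequals \cup_{x\in\sC}\sI_x$.

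First, using \cref{lem: surj of pi_1}, I will translate the right-hand side of the lemma. A connected finite Galois cover $X'_\eta\to X_\eta$ corresponds to an open normal subgroup $N \triangleleft_{\mathrm{open}} G$ of finite index, and the normalization $X'$ of $X$ in $X'_\eta$ is finite \'etale over $X$ precisely when the monodromy vanishes on $I$, i.e.\ $I\subseteq N$. For a classical point $x\in\sC$, the pullback $X'_\eta|_x$ extends to a finite \'etale $\sO_{K(x)}$-scheme precisely when the image of the inertia $I_x\subseteq \Gal_{K(x)}$ sits inside $N$; since $N$ is normal in $G$, this is equivalent to $\sI_x\subseteq N$. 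Thus the right-hand side of the claimed equivalence reads: for every $N\triangleleft_{\mathrm{open}} G$, one has $I\subseteq N$ iff $S\subseteq N$. The direction $\Rightarrow$ is automatic since $S\subseteq I$, so the nontrivial content is the implication ``$S\subseteq N\Rightarrow I\subseteq N$'' for all such $N$.

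Second, I will reformulate this as density. The decisive observation is that $S$ is $G$-stable by construction, being a union of $G$-conjugates; consequently, the closure $\overline{\langle S\rangle}$ of the subgroup generated by $S$ inside $G$ is itself a closed $G$-stable, hence normal, subgroup of $G$, and it lies in the closed subgroup $I$ since $S\subseteq I$. Now I will invoke the standard fact that every closed normal subgroup of a profinite group equals the intersection of the open normal subgroups containing it. Applied to $\overline{\langle S\rangle}$, this yields
\[ \overline{\langle S\rangle} \;=\; \bigcap\bigl\{N\triangleleft_{\mathrm{open}} G : N\supseteq S\bigr\}. \]
Hence the implication from the previous step is equivalent to the inclusion $I\subseteq \overline{\langle S\rangle}$, which combined with $\overline{\langle S\rangle}\subseteq I$ gives the equality $\overline{\langle S\rangle}=I$. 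Since $I$ carries the subspace topology from $G$, this last equality is precisely the statement that $S=\cup_{x\in\sC}\sI_x$ topologically generates $I$, i.e.\ the definition of $\sC$ being dense in inertia.

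The argument is essentially a formal rephrasing, and I anticipate no substantial obstacle. The only point worth underlining is the necessity of the $G$-stability of $S$: without it one could only conclude that $\overline{\langle S\rangle}$ is closed in $I$, and one would have to test against arbitrary open subgroups of $I$, which need not be of the form $N\cap I$ for $N\triangleleft_{\mathrm{open}} G$. The $G$-stability of $S$ promotes $\overline{\langle S\rangle}$ to a closed normal subgroup of $G$, at which point the profinite description as an intersection of open normal subgroups of $G$ can be applied directly.
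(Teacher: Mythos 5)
Your proof is correct and follows essentially the same route as the paper's, namely the group-theoretic translation via \cref{lem: surj of pi_1} and the profinite characterization of density mentioned in the remark immediately preceding the lemma. Your spelling-out of the role of $G$-stability of $S=\bigcup_{x\in\sC}\sI_x$ is a useful clarification: the paper's terse proof implicitly relies on the fact that one only needs to test against open normal subgroups of $\pi_1^{\mathrm{alg}}(X_\eta,\bar{b})$ rather than against all open (normal) subgroups of $I_X$, and it is precisely the normality of $\overline{\langle S\rangle}$ in the full $\pi_1^{\mathrm{alg}}(X_\eta,\bar{b})$ that justifies this reduction. No changes needed.
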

\begin{proof}
    Assume that $X$ is connected and choose base points $\bar{b}, \bar{b}_0$ as above. Then $X_\eta'$ is defined by an open subgroup $H \le \pi_1^\mathrm{alg}(X_\eta, \bar{b})$. It extends to a connected finite \'etale cover $X' \to X$ if and only if there exists an open subgroup $H_0 \le \pi_1^\mathrm{alg}(X_k, \bar{b}_0)$ whose pre-image in $\pi_1^\mathrm{alg}(X_\eta, \bar{b})$ is $H$. Clearly, this is satisfied if and only if $\sI \le H$, thanks to \cref{lem: surj of pi_1}. 
\end{proof}

\begin{theorem}
\label{thm: effectivity}
     Let $X$ be a connected smooth $p$-adic formal scheme over $\sO_K$.
     If $\sC \subseteq \sC(X)$ is effective, then for every semi-\'etale morphism $Y \to X$ with a primitive inseparable cover $f : Y' \to Y$, there exists some $y \in \sC|_{Y_\eta}$ such that $f^{-1}(y)$ is a single point defined over a totally ramified extension of $K(y)$. 
     If the reduction of $\sC^{\mathrm{ur}}$ is Zariski dense, the converse is also true. 
\end{theorem}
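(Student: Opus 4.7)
The plan is to establish both directions via \cref{lem: dense in inertia}, which reformulates density in inertia in terms of extending Galois covers to \'{e}tale covers combined with unramifiedness of fibers at classical points.

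For the forward direction, given a semi-\'{e}tale $Y \to X$ and a primitive inseparable cover $f : Y' \to Y$, I first observe that $f$ cannot be \'{e}tale since its special fiber is purely inseparable of degree $p > 1$. The effectivity of $\sC$ and \cref{lem: dense in inertia} applied to $Y$ then produce $y \in \sC|_{Y_\eta}$ such that $Y'_\eta|_y$ is not unramified over $K(y)$. Since the \'{e}tale $K(y)$-algebra $Y'_\eta|_y$ has prime rank $p$, non-unramifiedness forces $Y'_\eta|_y = \Spa(L)$ for a single extension $L/K(y)$ that is necessarily totally ramified. Flatness of $f$ ensures that the integral fiber $f^{-1}(y) = Y' \times_Y \Spf(\sO_{K(y)})$ is a flat rank-$p$ $\sO_{K(y)}$-algebra containing the integral closure $\Spf(\sO_L)$ of the same rank, forcing equality.

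For the converse, assume the hypothesis on primitive inseparable covers and the Zariski density of the reduction of $\sC^{\mathrm{ur}}$. By the contrapositive of \cref{lem: dense in inertia}, it suffices to show: for every semi-\'{e}tale $Y \to X$ and connected finite Galois cover $Y'_\eta \to Y_\eta$ whose normalization $Y' \to Y$ is not \'{e}tale, there exists $y \in \sC|_{Y_\eta}$ with $Y'_\eta|_y$ ramified over $K(y)$. After reducing to $Y$ having geometrically connected special fiber via \cref{rmk: pi_0}, I would apply \cref{thm: pre-tower} to the Galois extension $L(Y')/L(Y)$ to obtain a finite Galois extension $\sK/K'$ together with a smooth cover $Z'$ of $Z := Y_{\sO_\sK}$ with $\sO_L(Z') = \sO_{\sL'}$ for some Galois extension $\sL'$ of $L(Z)$ containing $L(Y')$; \cref{prop: tower} (up to shrinking) then decomposes $Z' \to Z$ as a tower $Z' = Z_m \to \cdots \to Z_0 \to Z$ in $\sfC^{\mathrm{sm}}_\sK$ with $Z_0 \to Z$ Galois \'{e}tale and each $Z_{i+1} \to Z_i$ primitive inseparable.

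When $m \ge 1$, the composition $Z_0 \to X$ is semi-\'{e}tale, so applying the hypothesis to the primitive inseparable cover $Z_1 \to Z_0$ produces $z_0 \in \sC|_{Z_{0,\eta}}$ such that $Z_{1,\eta}|_{z_0}$ is a single totally ramified extension $\Spa(M)/\sK(z_0)$. As $Z'_\eta$ dominates $Z_{1,\eta}$, every factor of $Z'_\eta|_{z_0}$ is ramified over $\sK(z_0)$. Projecting $z_0$ to $y \in \sC|_{Y_\eta}$, each factor of $Z'_\eta|_{z_0}$ embeds into some $L_i \tensor_{K'(y)} \sK(z_0)$ where $L_i$ is a factor of $Y'_\eta|_y$; since unramifiedness is preserved under base change of fields, the ramified factor must come from some ramified $L_i/K'(y)$, yielding the desired ramified fiber. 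The edge case $m = 0$, where $Z' \to Z$ is itself \'{e}tale, corresponds to the ramification of $Y'/Y$ being entirely absorbed by the base change $\sK/K'$; in this situation, I would use the Zariski density of the reduction of $\sC^{\mathrm{ur}}$ to select $y \in \sC^{\mathrm{ur}}|_{Y_\eta}$ with $K'(y)/K'$ unramified, so $K'(y) \not\supseteq \sK$ and the cover $Y'_\eta|_y$ inherits nontrivial ramification from $L(Y')/L(Y)$. The main obstacle is carrying out the absorption-transfer in this edge case rigorously, particularly verifying that Zariski density of $\sC^{\mathrm{ur}}$'s reduction on $X_s$ yields, via the semi-\'{e}tale structure, enough classical points on $Y_\eta$ whose reductions land in the branch locus of $Y' \to Y$ on $Y_s$.
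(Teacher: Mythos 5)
Your forward direction is correct and amounts to the same argument as the paper's, just repackaged through \cref{lem: dense in inertia} rather than arguing directly with the open subgroup $H\le\pi_1^{\mathrm{alg}}(Y_\eta)$ defining $Y'_\eta$.

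For the converse, your route through \cref{thm: pre-tower} diverges from the paper's and runs into a genuine gap, which you yourself flag. By introducing the (possibly ramified) base extension $\sK/K'$ from \cref{thm: pre-tower}, you create the $m=0$ scenario where $Z'\to Z$ is already \'etale—i.e., all the non-\'etaleness of $Y'\to Y$ is absorbed by the ramified base change $\sK/K'$. Your sketch for handling this case is not sound: choosing $y\in\sC^{\mathrm{ur}}|_{Y_\eta}$ with $K'(y)\not\supseteq\sK$ does not by itself force $Y'_\eta|_y$ to be ramified; a priori $Y'_\eta|_y$ could be an unramified extension of $K'(y)$ even while the Shilov-point extension $L(Y')/L(Y)$ is ramified, and the ``absorption-transfer'' you describe would require, e.g., a non-trivial fact relating ramification of the cover at the generic point of the special fiber to ramification at classical points near the branch locus.

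The paper avoids this entirely by never invoking \cref{thm: pre-tower}: instead of working contrapositively, it assumes $Y'_\eta|_y$ is unramified over $K(y)$ for every $y\in\sC|_{Y_\eta}$ and then \emph{directly} establishes that $Y'$ is smooth over $\sO_{K'}$ after shrinking. The key step you are missing is the combination of (a) the observation that every $y'\in\sC^{\mathrm{ur}}|_{Y'_\eta}$ is then defined over an \emph{unramified} extension of $K'$, and (b) the hypothesis that $\sC^{\mathrm{ur}}$ has Zariski-dense reduction, which together feed into \cref{lem: pc reduced special fiber} to conclude that $Y'_k$ is generically reduced. Once $Y'\in\sfC^{\mathrm{sm}}_{K'}$ after shrinking, \cref{prop: tower} applies with no intermediate field extension at all, and the hypothesis on primitive inseparable covers immediately forces the tower to have length $m=0$ (since $Y_0\to X$ is semi-\'etale). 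Because no $\sK$ ever enters, the problematic $m=0$-with-ramified-$\sK$ case simply does not arise. If you want to salvage your contrapositive framing, you should still prove the generic reducedness of $Y'_k$ first—note that in the contrapositive you do not get the unramifiedness hypothesis at all $y\in\sC|_{Y_\eta}$ for free, which is precisely the leverage the paper's direct argument exploits.
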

\begin{proof}
    Assume that $\sC$ is effective and let $Y' \stackrel{f}{\to} Y \to X$ be as in the first statement. By assumption $\sC|_{Y_\eta}$ is dense in inertia. 
    By replacing the base point of the fundamental group if necessary, we may choose a lift of $\bar{b}$ to $Y$, which we still denote by $\bar{b}$. Now $f_\eta$ is defined by an index $p$ open normal subgroup $H \le \pi_1^\mathrm{alg}(Y_\eta, \bar{b})$. 
    We first claim that $I_Y$ is not contained in $H$. 
    Otherwise $Y'_\eta$ would extend to a finite \'etale cover $\wt{Y}'$ of $Y$, and we must have $\wt{Y}' = Y'$, which is not possible.
    Note that the fiber $Y'_\eta|_y$, being a Galois \'etale cover of $y$ of degree $p$, is either a single point defined over a degree $p$ totally ramified extension of $K(y)$, or unramified over $K(y)$.
    In the latter case, $\sI_y$ is contained in $H$. Therefore, if there is no closed point $y$ in $\sC|_{Y_\eta}$ such that $Y'_\eta|_y$ is a single ramified point, then $\cup_{y \in \sC|_{Y_\eta}} \sI_y$ is contained in $H \cap I_Y$, which is properly contained in $I_Y$---this contradicts our assumption. 

    Now we prove the converse assuming that the reduction of $\sC^{\mathrm{ur}}$ is Zariski dense in the special fiber. 
    Note that by \cref{rmk: pi_0} we may assume that the special fiber $X_k$ is geometrically connected.
    Let $Y \to X$ be a semi-\'etale morphism and $Y'_\eta \to Y_\eta$ be an arbitrary Galois \'etale cover. Define $Y'$ to be the normalization of $Y$ in $Y'_\eta$.  By \cref{lem: dense in inertia}, it suffices to show that if $Y'_\eta |_{y}$ is unramified for every $y \in \sC|_{Y_\eta}$, then $Y' \to Y$ is \'etale. Using the assumption that the reduction of $\sC^{\mathrm{ur}}$ is Zariski dense, one quickly infers that $\sC^{\mathrm{ur}}|_{Y'_\eta}$ (as a subset of $\sC(Y')$) has also Zariski dense reduction. 
    If $K'$ is the field of definition of $Y$, then points in $\sC^{\mathrm{ur}}|_{Y'_\eta}$ are by assumption defined over unramified extensions of $K'$. Therefore, by \cref{lem: pc reduced special fiber}, the special fiber of the $p$-adic formal scheme $Y'$ over $\Spf(\sO_{K'})$ is generically reduced. 
    After we shrink $X$ (or $Y$), $Y'$ becomes smooth but possibly disconnected, in which case we shall replace $Y'$ be a connected component and assume that $Y' \in \sfC^\mathrm{sm}_{K'}$; moreover, applying \cref{prop: tower}, we may further assume that $Y' \to Y$ admits a factorization $Y' = Y_m \stackrel{f_m}{\to}\cdots \stackrel{f_1}{\to} Y_0 \stackrel{f_0}{\to} Y$ such that $f_0$ is \'etale and $f_j$ is a primitive inseparable cover for every $j > 0$. However, here our hypothesis forces $m = 0$, i.e., $Y'= Y_0$; otherwise by assumption, we can find some $y \in f_0^{-1}(\sC|_{Y_\eta})$ such that $f_1^{-1}(y)$ is ramified over $K(y)$, but then $Y'_\eta|_y$ cannot be unramified over $K(y)$. 
\end{proof}

\begin{corollary}
\label{cor: effectivity of all points}
    Let $X$ be a smooth $p$-adic formal scheme over $\sO_K$. If there exists a Zariski dense subset $\sC_s$ of closed points of $X_k$ such that $\sC \subseteq \sC(X)$ contains all the classical points which specialize to a point in $\sC_s$, then $\sC$ is effective. In particular, $\sC(X)$ itself is effective.
\end{corollary}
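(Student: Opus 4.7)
My plan is to deduce this from the converse direction of \cref{thm: effectivity}, which characterizes effectivity via the existence of ramified fibers under primitive inseparable covers, under the auxiliary assumption that the reduction of $\sC^{\mathrm{ur}}$ is Zariski dense. So I first need to verify this auxiliary density, and then establish the ramified-fiber condition for every semi-\'etale $Y \to X$ equipped with a primitive inseparable cover $f : Y' \to Y$.

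The Zariski density of the reduction of $\sC^{\mathrm{ur}}$ is a quick consequence of smoothness: for each $x_0 \in \sC_s$, the smoothness of $X$ over $\sO_K$ allows me to lift $x_0$ to a point $\tilde{x} \in X(\sO_{K''})$, where $K''/K$ is the finite unramified extension with residue field $k(x_0)$. The generic fiber of $\tilde{x}$ is an unramified classical point of $X_\eta$ whose reduction is $x_0$, so it lies in $\sC^{\mathrm{ur}}$ by the hypothesis on $\sC$. Hence the reduction of $\sC^{\mathrm{ur}}$ contains the Zariski dense subset $\sC_s$.

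For the main step, I fix a semi-\'etale $Y \to X$ with field of definition $K'$ and a factorization $Y \to U_{\sO_{K'}} \to X$, and a primitive inseparable cover $f : Y' \to Y$. After passing to connected components of $Y$ and $U$ (and noting that both Theorem~\ref{thm: effectivity} and the condition on $\sC$ are checked component-wise), I may assume $Y$ is connected, $Y \to U_{\sO_{K'}}$ is dominant and finite, and the induced map $Y_s \to U_s$ is surjective on closed points. I then apply \cref{lem: find point} to $f : Y' \to Y$, viewed as a primitive inseparable cover in $\sfC^{\mathrm{sm}}_{K'}$: after a generic shrinking of $Y$ (and restriction of $Y'$), for every closed point $y_0$ in the shrunken $Y_s$, there exist a finite unramified extension $K''/K'$ and a lift $y \in Y(\sO_{K''})$ of $y_0$ such that $f^{-1}(y_\eta)$ is a single classical point defined over a totally ramified degree-$p$ extension of $K(y_\eta) = K''$.

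The only remaining task is to choose $y_0$ so that its image in $X_s$ lies in $\sC_s$; this is the single place where the Zariski density of $\sC_s$ enters, and I expect it to be the only nontrivial point to verify cleanly. Since the image of the shrunken $Y_s$ in $X_s$ still contains a non-empty Zariski open subset of $U_s \subseteq X_s$, and $\sC_s$ is Zariski dense in $X_s$, the intersection $\sC_s \cap \mathrm{im}(Y_s \to X_s)$ is non-empty; pick any $x_0$ in it and lift it to $y_0 \in Y_s$. Then the generic fiber $y_\eta$ produced by \cref{lem: find point} maps to a classical point $x_\eta \in X_\eta$ with reduction $x_0 \in \sC_s$, so $x_\eta \in \sC$ by hypothesis and thus $y_\eta \in \sC|_{Y_\eta}$. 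Applying the converse of \cref{thm: effectivity} concludes the proof; the ``in particular'' assertion follows immediately by taking $\sC_s$ to be the set of all closed points of $X_s$.
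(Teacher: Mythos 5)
Your proof is correct and takes essentially the same approach as the paper, which simply cites \cref{lem: find point} and \cref{thm: effectivity} without supplying further details; you have filled in exactly the steps that are implicitly being invoked (density of the reduction of $\sC^{\mathrm{ur}}$ via smoothness, and producing a ramified fiber over a point of $\sC|_{Y_\eta}$ by combining \cref{lem: find point} with the density of $\sC_s$). The only minor imprecision is the assertion that the image of the shrunken $Y_s$ contains a non-empty open of $U_s$—it is really a non-empty open of (a union of irreducible components of) $U_s$, hence a non-empty open of $X_s$—but this is exactly what is needed to intersect $\sC_s$, so the argument goes through.
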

\begin{proof}
    This follows directly from \cref{lem: find point} and \cref{thm: effectivity}. 
\end{proof}
We immediately infer the $\ell$-adic analogue of \cref{intro:thm pc}. 
\begin{theorem}[$\ell$-adic pointwise criterion]
\label{Thm:l-adic PC}
    Let $X$ be a smooth $p$-adic formal scheme over $\sO_K$. 
    Let $\ell$ be a prime ($\ell = p$ is allowed). If an $\IZ_\ell$-local system $T$ on $X_\eta$ is unramified at all classical points, then $T$ is unramified with respect to $X$. 
\end{theorem}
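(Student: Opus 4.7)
My plan is to reduce the statement to the finite monodromy covers associated with the reductions $T / \ell^n T$, and then invoke the effectivity of $\sC(X)$ established in \cref{cor: effectivity of all points}. Since the result can be checked on connected components, I may assume $X$ is connected; then $X_\eta$ is connected as well. Fix compatible geometric base points $\bar b \in X_\eta$ and $\bar b_0 \in X_k$ as in \cref{const: pi_1}, and let $\rho \colon \pi_1^\mathrm{alg}(X_\eta, \bar b) \to GL_d(\IZ_\ell)$ denote the continuous representation corresponding to $T$. Unramifiedness with respect to $X$ amounts to $\rho$ factoring through the specialization map $\pi_1^\mathrm{alg}(X_\eta, \bar b) \twoheadrightarrow \pi_1^\et(X_k, \bar b_0)$, i.e.\ to $\rho(I_X) = 1$. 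Because $\rho$ is continuous and $I_X$ is profinite, it suffices to show that the mod $\ell^n$ reduction $\rho_n$ is trivial on $I_X$ for every $n \geq 1$.

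Fix $n$ and let $X'_{n,\eta}$ be the connected finite Galois \'etale cover of $X_\eta$ corresponding to the open normal subgroup $\ker(\rho_n) \leq \pi_1^\mathrm{alg}(X_\eta, \bar b)$; its Galois group is $\mathrm{im}(\rho_n)$. For any classical point $x \in \sC(X)$, the hypothesis that $T|_x$ is unramified means that the $\Gal_{K(x)}$-representation $\rho|_{\Gal_{K(x)}}$ (for any choice of an \'etale path joining $\bar x$ to $\bar b$) is trivial on the inertia subgroup $I_x \subseteq \Gal_{K(x)}$. As this conclusion is insensitive to the choice of path, we have $\sI_x \subseteq \ker(\rho) \subseteq \ker(\rho_n)$, or equivalently, the fiber $X'_{n,\eta}|_x$ is unramified over $K(x)$.

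By \cref{cor: effectivity of all points}, the full set $\sC(X)$ of classical points is effective, and in particular dense in the inertia $I_X$. Hence, applying the criterion of \cref{lem: dense in inertia} to the connected finite Galois cover $X'_{n,\eta}\to X_\eta$, the normalization $X'_n$ of $X$ in $X'_{n,\eta}$ is finite \'etale over $X$. This forces $I_X \subseteq \ker(\rho_n)$, i.e.\ $\rho_n(I_X) = 1$. Letting $n \to \infty$ yields $\rho(I_X) = 1$, and consequently $T$ extends to a $\IZ_\ell$-local system on $X_\et$, as desired.

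There is essentially no hard step left once the effectivity framework is in place: the $\ell$-adic setting is purely about extending finite \'etale covers, which is exactly what \cref{lem: dense in inertia} and \cref{cor: effectivity of all points} are designed to handle. The only delicate point is to formulate the hypothesis ``unramified at $x$'' in terms of $\sI_x$ rather than a single inertia subgroup, so that it is intrinsic and can be fed directly into \cref{lem: dense in inertia}; this is taken care of by \cref{const: pi_1}.
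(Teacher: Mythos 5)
Your proof is correct and takes essentially the same approach as the paper: both reduce to the connected case, invoke \cref{cor: effectivity of all points} to conclude that $\sC(X)$ is dense in inertia, and then deduce $\rho(I_X)=1$ from continuity. You route through the finite covers associated with the mod-$\ell^n$ reductions and \cref{lem: dense in inertia}, whereas the paper goes directly via \cref{lem: surj of pi_1}, but these are equivalent packagings of the same density argument.
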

\begin{proof}
    We may assume that $X$ is connected and use notations in \cref{const: pi_1}. From \cref{cor: effectivity of all points}, we know in particular that $\sC(X)$ itself is dense in inertia. Then we conclude by \cref{lem: surj of pi_1}. 
\end{proof}

\begin{remark}
    \label{rmk: two views on pi_1}
In the previous sections, we had taken a more algebraic point of view on $\pi_1$. 
Suppose that $X = \Spf(R) \in \sfC^{\mathrm{sm}}_K$.
As mentioned in \cref{const:Galois group of rigid space}, the algebraic fundamental group $\pi_1^\mathrm{alg}(X_\eta, \overline{L})$ is isomorphic to the Galois group $G_{X_\eta}$ of the maximal connected pro-finite-\'etale cover $\Spf((\overline{R})^\wedge_p)_\eta\to X_\eta$.
This identification amounts to choosing an embedding $\overline{R} \into \overline{L}$ as the base point
and in particular induces a choice of an object $\fp \in T_X$, where $T_X$ is the set of prime ideals of $\overline{R}$ above $(\pi)$. 
More concretely, the choice of the prime ideal $\fp$ is equivalent to choosing, for each connected finite \'etale cover $X'_\eta \to X_\eta$, an irreducible component of the reduced special fiber $X'_s$ in a compatible way (where $X'$ is the normalization of $X$ in $X'_\eta$). 
Likewise, one checks that the choice of $\overline{R} \into \overline{L}$ also induces an identification $G_{X_k} \simeq \pi_1^\et(X, \overline{L})$. 

Suppose now $U = \Spf(V)$ is an affine open formal subscheme of $X$. 
Then we may choose compatible embeddings $\overline{R} \into \overline{V}$ and $\overline{V} \into \overline{L}$ whose composition is the chosen $\overline{R} \into \overline{L}$. 
Note that $\overline{R} \into \overline{V}$ in particular induces a map $T_U \to T_X$. 
It is known that $G_{X_\eta}$ acts transitively on $T_X$, and the natural map $\Gal_L \to G_{X_\eta}$ surjects onto the stabilizer $G_{X_\eta}(\fp)$ of $\fp$ in $G_{X_\eta}$ (\cite[Lem.~3.3.1]{Bri08}, cf. \cite[\S6.1]{Shi22}). 
As the same is true for $U$, one quickly checks that $\pi_1^{\mathrm{alg}}(X_\eta, \overline{L})$ is the \textit{normal closure} of the image of $\pi_1^{\mathrm{alg}}(U_\eta, \overline{L})$. 
Alternatively, one can see this using the fact that $\sC(U)$ is an effective subset of $\sC(X)$. 

We caution the reader that the image of $\pi_1^{\mathrm{alg}}(U_\eta, \overline{L})$ is in general not the full $\pi_1^{\mathrm{alg}}(X_\eta, \overline{L})$. The reason is that the natural map $T_U \to T_X$ is in general not surjective, which happens whenever there exists some connected $X'_\eta \in \FEt(X_\eta)$ such that the restriction $U'_\eta := U_\eta \times_{X_\eta} X'_\eta$ is not connected. 
To illustrate this, we present below an example when $\sO_K$ is replaced by $\IC[\![t]\!]$, but the reader should find no difficulty in finding a similar example over $\sO_K$. 
\end{remark}

\begin{example}
    Let $E$ be a union of three lines on $\IP^2$ over $\IC$ which intersect transversely at nodes $Q_1, Q_2, Q_3$, and let $E'$ be a general (smooth) plane cubic. Suppose that $E$ and $E'$ are defined by $F$ and $F'$ in $\mathrm{H}^0(\sO_{\IP^2}(3))$ respectively. 
    Set $B := \mathrm{Proj}(\IC[t, s])$ and consider the pencil $\sE \subseteq \IP^2 \times B$ defined by $t F + s F'$, so that $E = \sE_b$ for $b = [1 : 0]$. 
    Let $\what{B}$ be the formal complection of $B$ at $b$, with (rigid) generic point $\eta$. Set $\what{\sE} := \sE \times_B \what{B}$, i.e., the formal completion of $\sE$ along $E = \sE_b$. One checks that $\what{\sE} \to \what{B}$ has semi-stable reduction. In particular, $\sE_\eta$ is smooth over $\eta$.
    
    Pick a point $\infty \in \IP^2$ and a line $L \subseteq \IP^2$ in general positions. Then projection from $\infty$ to $L$ defines a finite morphism $\psi : \what{\sE} \to \what{L} := L \times \what{B}$ of degree $3$. 
    There exist formal open subschemes $X \subseteq \what{L}$ and $X' \subseteq \what{\sE}$ such that $\psi$ restricts to an \'etale cover $X'_\eta \to X_\eta$, and $X_b$ contains $Q_i$'s. By standard Bertini-type arguments, one finds an open formal subscheme $U \subseteq X$ an open formal subscheme such that $U_b$ does not contain the images of $Q_i$'s.
    Then $X'_\eta$ is connected, whereas $U_\eta' := U \times_X X'_\eta$ is not. In fact, $U_\eta$ has $3$ connected components. 
\end{example}

\section{Pointwise criteria}
\label{sec pc}
	In this section, we prove the pointwise criteria for the crystallinity and the semi-stability.
	Namely, for a smooth rigid space $X_\eta$ that has good (resp. semi-stable) reduction, and a $p$-adic local system over $X_\eta$, the property of being crystalline (resp. semi-stable) with respect to a given semi-stable model can be checked by its restrictions on an effective subset $\mathcal{C}$ of classical points in $X_\eta$. 
	
	Throughout the section, we use $K$ to denote a discretely valued $p$-adic field with perfect residue field $k$ as before.
 We also recall that for a $p$-adic formal scheme $X$ over $\mathcal{O}_K$, we use $X_s$ to denote the reduced special fiber $(X_k)_\mathrm{red}$, and use $X_k$ to denote the mod $\pi$ fiber.
\subsection{Statement and first reductions}
\label{sub pc first reduction}
	
In the following, we recall that by a \emph{classical point} of a rigid space $X_\eta$, we mean a point $x_\eta\in X_\eta$ that locally defined by a maximal ideal of $A$, where $\Spa(A)\subset X_\eta$ is an affinoid neighborhood of $x_\eta$.
Moreover, given such a classical point, we use $\Gal_{x_\eta}$ to denote the absolute Galois group of the residue field $K(x_\eta)$ of $X_\eta$ at $x_\eta$, where $K(x_\eta)$ is a finite extension of $K$. 
As explained in \cref{prop: extend classical points}, if $X_\eta$ admits an integral model $X$ which is flat and topologically of finite type over $\sO_K$, then $x_\eta$ also admits an integral model $x \in X(\sO_{K(x_\eta)})$.

	To begin, we extend the notion of the effective set to the semi-stable reduction.
	\begin{definition}
		Let $X$ be a semi-stable formal scheme over $\sO_K$. We say that a subset $\sC \subseteq \sC(X)$ is effective if the restriction of $\sC$ to the smooth locus of $X$ is effective. 
	\end{definition}
	Now we can state the exact form of the statement we want to prove
		\begin{theorem}
		\label{thm:PC for semi-stable reduction}
		Let $X$ be a smooth (resp. semi-stable) $p$-adic formal scheme over $\mathcal{O}_K$, and let $\mathcal{C}\subset X_\eta$ be an effective subset of classical points.
		Then a $\mathbb{Z}_p$-local system $T$ over $X_\eta$ is crystalline (resp. semi-stable) with respect to $X$ if and only if the restriction $T|_{x_\eta}$ is crystalline (resp. semi-stable) for each $x_\eta\in \mathcal{C}$.
	\end{theorem}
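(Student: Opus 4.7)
The ``only if'' direction is immediate: for every $x_\eta \in \mathcal{C}$, \Cref{prop: extend classical points} produces an integral model $x\colon \Spf(\sO_{K(x_\eta)}) \to X$, and then \Cref{cor:pull back to closed points} implies that $T|_{x_\eta}$ inherits the crystallinity (resp. semi-stability) of $T$. For the converse, assume first that $X$ is only semi-stable. On the rigid generic fiber one has $X_\eta = X^{\mathrm{sm}}_\eta$, so the restriction of $T$ to $(X^{\mathrm{sm}})_\eta$ is the same local system, tested on the effective set $\mathcal{C}|_{X^{\mathrm{sm}}}$. Once I know this restriction is crystalline (resp. semi-stable) with respect to the smooth locus, the purity result for semi-stable local systems of \cite[Cor.~5.5]{DLMS2} extends the conclusion across the non-smooth locus to $X$ itself. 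Thus it suffices to treat the good reduction case, which I assume henceforth.

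Working locally and assuming $X$ is connected affine, \Cref{cor:LZ rigidity} shows that the single-point de Rham hypothesis already forces $T$ to be globally de Rham on $X_\eta$. Restricting $T$ to the Shilov point $\Spa(L)$ of $X_\eta$ yields a de Rham $\Gal_L$-representation, which by the relative Sen theory of Morita \cite{Mor14} and Ohkubo \cite{Ohk13} is in fact potentially semi-stable. Fix a finite Galois extension $L'/L$ over which $T|_{\Spa(L)}$ becomes semi-stable. Combining \Cref{rmk: pi_0}, \Cref{thm: pre-tower} and \Cref{prop: tower}, after enlarging $L'/L$ if needed and shrinking $X$ around a prescribed closed point, $L'/L$ arises as the extension of Shilov fields attached to a tower
\[
X'' = X_m \xrightarrow{g_m} \cdots \xrightarrow{g_1} X_1 = X'_{\sO_{K'}} \xrightarrow{g_0} X_0 = X' \xrightarrow{f} X,
\]
where $f$ is $p$-completely \'etale, $K'/K$ is a totally ramified finite Galois extension, and each $g_i$ ($i\geq 1$) is a primitive inseparable cover in the sense of \Cref{def: purely inseparable cover}. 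Pullback preservation (\Cref{thm: pullback formula Faltings}) together with a further application of the purity theorem of \cite{DLMS2} then upgrades the semi-stability of $T|_{\Spa(L'/L)}$ to semi-stability of $T|_{X''_\eta}$ with respect to $X''$.

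The heart of the argument is an inductive descent of semi-stability back along the tower to $X$. The steps along $g_0$ and $f$ are handled by \Cref{thm:pullback for D functors}.\ref{thm:pullback for D functors fet} together with standard Galois descent. For each primitive inseparable cover $g_i\colon X_i \to X_{i-1}$ with Galois group $G_i \simeq \IZ/p\IZ$, functoriality of the crystalline Riemann--Hilbert functor (\Cref{thm:gluing of D functors}) equips $\mathcal{E}_{\st, T|_{X_{i,\eta}}}$ with a natural $G_i$-action. Via the base change injection \Cref{thm:gluing of D functors}.\ref{thm:gluing of D functors pullbacks} and the local evaluation formula \Cref{thm:gluing of D functors}.\ref{thm:gluing of D functors modules local formula}, this action is compatible with the tautological $\Gal(K(y_\eta)/K(x_\eta))$-action on $D_{\st, K(y_\eta)}(T|_{y_\eta})$ at any classical point $x_\eta \in X_{i-1,\eta}$ having a single ramified preimage $y_\eta \in X_{i,\eta}$. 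By effectivity of $\mathcal{C}$ and \Cref{thm: effectivity}, such an $x_\eta$ can be chosen inside $\mathcal{C}|_{X_{i-1,\eta}}$. The crystalline (resp. semi-stable) hypothesis at $x_\eta$ then forces the $G_i$-action on $D_{\st, K(y_\eta)}(T|_{y_\eta})$ to be trivial, with vanishing monodromy in the crystalline case. Faithfulness of the restriction of $F$-isocrystals along $y_s \to X_{i,s}$ \cite[Thm.~4.1]{Ogu84} propagates this triviality from the classical fiber to the entire weak $F$-isocrystal $\mathcal{E}_{\st, T|_{X_{i,\eta}}}$, after which a Galois descent for $\mathbb{B}_\crys$-vector bundles transports the semi-stable structure from $X_{i,\eta}$ back to $X_{i-1,\eta}$, completing the inductive step.

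The main obstacle is the descent step through each primitive inseparable cover. The map $X_{i,s} \to X_{i-1,s}$ is only purely inseparable rather than an isomorphism, so trivializing the $G_i$-action on $\mathcal{E}_{\st, T|_{X_{i,\eta}}}$ does not immediately produce a vector bundle on the smaller rigid space $X_{i-1,\eta}$. One must work systematically on the crystalline side, reducing via Dwork's trick (\Cref{Dwork's trick}) to the special fiber where the purely inseparable morphism induces an equivalence of $F$-isocrystal categories, and then combine Ogus's faithfulness theorem with the local formulas for $\OB_{\crys,R_0}$ and $\OB_{\st,R_0}$ developed in \Cref{sub period sheaves with conn} to transfer the descent from the crystalline to the pro-\'etale side. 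This intertwining of pointwise Galois-theoretic triviality with purely inseparable descent on $F$-isocrystals is the conceptual heart of the proof.
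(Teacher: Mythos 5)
Your proposal follows the paper's strategy closely: reduce to the smooth affine case via purity, apply Liu–Zhu rigidity to get de Rhamness, pass to the Shilov point and invoke Morita–Ohkubo for potential semi-stability, spread out the Galois extension of the Shilov field to a tower of primitive inseparable covers on top of a base-change/étale bottom via \Cref{prop: tower} and \Cref{thm: pre-tower}, then descend the semi-stability back down the tower using the crystalline Riemann--Hilbert functor, Ogus's faithfulness, and descent for $\mathbb{B}_\crys$-vector bundles. Two points warrant correction.

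First, the assertion ``on the rigid generic fiber one has $X_\eta = X^{\mathrm{sm}}_\eta$'' is false for a genuinely semi-stable $X$: in the model case $\Spf(\sO_K\langle x,y\rangle/(xy-\pi))$, the tube over the singular closed point of $X_s$ is a nonempty open annulus, so $(X^{\mathrm{sm}})_\eta$ is a proper open subspace of $X_\eta$. The reduction to the smooth case must instead be justified exactly as the paper does: the purity theorem of \cite{DLMS2} reduces semi-stability of $T$ with respect to $X$ to semi-stability near the Shilov points of $X_\eta$, and these Shilov points lie over the smooth locus, so it suffices to prove the statement for $X^{\mathrm{sm}}$. Your subsequent step --- that $\sC|_{X^{\mathrm{sm}}}$ is effective for $X^{\mathrm{sm}}$ by definition --- is fine, but the displayed equality of generic fibers is not the right justification.

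Second, the final step ``The steps along $g_0$ and $f$ are handled by \Cref{thm:pullback for D functors}.\ref{thm:pullback for D functors fet} together with standard Galois descent'' compresses a genuine argument. The finite \'etale leg is indeed covered by \Cref{thm:pullback for D functors}.\ref{thm:pullback for D functors fet}. But the totally ramified base-change leg $X_{\sO_{K'}} \to X$ is not merely ``standard Galois descent'': one must show that the $\Gal(K'/K)$-action on $\mathcal{E}_{\st, T'}$ is trivial, which requires the observation --- implicit in effectivity --- that $\bigcap_{x_\eta\in\mathcal{C}} K(x_\eta)$ is finite \emph{unramified} over $K$, together with the pointwise restriction to $D_\st$ at classical points. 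This is precisely the content of \Cref{thm:descend along base extension}, which does real work analogous to \Cref{thm:descend along primitive insep}, and should be invoked explicitly rather than absorbed into a generic descent claim.
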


\begin{proof}
By \Cref{cor:pull back to closed points}, it suffices to prove the ``if'' statement. 
First we notice that thanks to the recent purity result for semi-stable local systems in \cite[Thm.\ 1.1]{DLMS2}, since the set of Shilov points is contained in the good reduction locus, it suffices to assume $X$ is smooth, affine, and connected, with an unramified model $R_0$ as in \Cref{conv of smooth affine}.
By \Cref{thm:gluing of D functors}.\ref{thm:gluing of D functors crys/st}, we only need to show that the associated weak $F$-isocrystal $\mathcal{E}_{\crys,T}$ (resp. $\mathcal{E}_{\st,T}$) is of rank $d \colonequals \mathrm{rank\,}T$ everywhere. 
Here we note that by the purity result again,  we are allowed to shrink $X$ further by an open formal subscheme whenever needed.
In particular, we may assume the weak $F$-isocrystals above are actual $F$-isocrystals.
Moreover, since for any finite unramified extension of the base field $K'/K$ the canonical morphism $X_{\sO_{K'}} \to X$ is finite \'etale, \Cref{thm:pullback for D functors}.\ref{thm:pullback for D functors fet} allows us to replace $X$ by a connected component of $X_{\sO_{K'}}$. 
Therefore, we may assume that $X_s$ is geometrically connected.

Following the convention and notation as in \Cref{sub pril}, we let $\mathcal{O}_L$ be the complete discrete valuation ring with imperfect residue field, defined by the $p$-complete localization of $R$ at the ideal $\pi R$, with $L=L(X)$ being the fraction field of $\sO_L$.
There is then a natural map of $p$-adic formal scheme $\Spf(\mathcal{O}_L)\to X$ that induces a map of adic spaces $\Spa(L)\to X_\eta$, identifying $\Spa(L)$ as the Shilov point of $X_\eta$ (\cite[Prop.\ 2.2]{BH22}).

Next we note that since $T|_{x_\eta}$ is a crystalline and hence de Rham representation of $\Gal_{x_\eta}$ for some $x_\eta\in \mathcal{C}$, by Liu--Zhu's rigidity theorem of de Rham local systems (\cite{LZ17}; cf. \Cref{cor:LZ rigidity}), we know that $T$ is a de Rham local system over $X_\eta$.
In particular, the restriction $T|_{\Spa(L)}$ corresponds to a de Rham representation of $\Gal_L$, which by the work of Morita (\cite{Mor14}) and Ohkubo (\cite{Ohk13}) is potentially semi-stable.
Namely, there is a finite Galois extension $L''/L$ such that the restriction $T|_{\Spa(L'')}$ corresponds to a semi-stable representation of $\Gal_{L''}$.

Now thanks to \Cref{prop: tower} and \Cref{thm: pre-tower}, by shrinking $X$ and replacing $L''$ by a further finite extension if necessary, there exists a map of $p$-adic formal schemes $X''\to X$ which factors as a composite 
\[
X''=X_m\xrightarrow{g_m} \cdots \xrightarrow{g_2} X_1 \xrightarrow{g_1} X_0 \xrightarrow{g_0} X' \colonequals X_{\mathcal{O}_{K'}} \to X,
\]
such that 
\begin{itemize}
    \item the extension $L \into L''$ is identified with $L(X) \into L(X'')$;
    \item $K'$ is a finite Galois extension of $K$ and $X_{\sO_{K'}} \to X$ is the canonical map given by extension of scalars; 
    \item $g_0$ is finite \'etale, and for each $i > 0$, the map $g_i:X_i\to X_{i-1}$ is a primitive inseparable cover between objects in $\mathsf{C}^{\mathrm{sm}}_{K'}$ (cf. \Cref{def: purely inseparable cover}).
\end{itemize}

 Let us finish the proof of the pointwise criterion for crystalline (resp. semi-stable) local systems. 
 By the purity of semi-stable local systems in \cite[Thm.\ 1.1]{DLMS2}, we know the preimage $T|_{X''_\eta}$ is semi-stable with respect to $X''$. 
 Then by iteratively applying \cref{thm: effectivity} and \cref{thm:descend along primitive insep} below to each map $g_i$ for $i\geq 1$, we deduce that $T|_{X_{0, \eta}}$ is crystalline (resp. semi-stable). 
 In addition, by \Cref{thm:gluing of D functors}.\ref{thm:gluing of D functors pullbacks}, we know $T|_{X'_\eta}$ is crystalline (resp. semi-stable). 
 So it is left to descend the crystallinity (resp. the semi-stability) from $T|_{X'_\eta}$ to $T$.
 For the latter, we notice that $\cap_{x_\eta \in \sC} K(x_\eta)$ is a finite unramified extension of $K$: if not, then the image $\cup_{x_\eta \in \sC} \sI_{x_\eta}$ of the inertial subgroup of points along the surjection $I_X \to I_{K}$ would be contained in a proper open subgroup, contradicting our assumption that $\cup_{x_\eta \in \sC} \sI_{x_\eta}$ topologically generate $I_X$. 
 Therefore, we may conclude by \Cref{thm:descend along base extension}.\end{proof}

\subsection{Descending crystallinity and semi-stability}
\label{sub pc descend}
We prove the two descent results \cref{thm:descend along primitive insep} and \cref{thm:descend along base extension} on the crystallinity and the semi-stability used in the previous subsection. 

We start with a preparation about the Galois action on the associated $F$-isocrystal.
\begin{lemma}
	\label{lem:Galois on F-isoc}
    Let $K$ be a $p$-adic field, and let $Z$ be a smooth $p$-adic formal scheme over $\mathcal{O}_K$.
    Assume $g:Z'\to Z$ is a map satisfying either of the following two assumptions:
    \begin{enumerate}[label=\upshape{(\alph*)}]
    	\item\label{item: primitive assump a} It is a primitive inseparable cover between two smooth $p$-adic formal schemes over $\mathcal{O}_K$. 
    	\item\label{item: ramif ext assump b} There is a finite Galois and totally ramified extension $K'/K$ such that $Z'=Z_{\mathcal{O}_{K'}}$ and $g$ is the extension map. 
        \end{enumerate} In either case, the generic fiber $g_\eta: Z'_\eta \to Z_\eta$ is a finite Galois cover and we denote by $H$ the Galois group.
    Let $T\in \Loc_{\mathbb{Z}_p}(Z_\eta)$, and let $T'\colonequals g_\eta^{-1}T$ be the preimage.
    \begin{enumerate}[label=\upshape{(\roman*)}]
    	\item\label{lem:Galois on F-isoc general} There is a natural $H$-action on the $F$-isocrystal $\mathcal{E}_{\crys,T'} \in \Isoc^\varphi(Z'_{s,\crys})$ (resp. $\mathcal{E}_{\st,T'} \in \Isoc^\varphi((Z'_s, (0^\IN)^a )_\lcrys)$) that is functorial in $T$.
    	\item\label{lem:Galois on F-isoc affine} In case \ref{item: ramif ext assump b}, assume that $\Spf(R_0)$ is an unramified model of $Z$ (and hence $Z'$) as in \Cref{conv of smooth affine}.
    	Then the $H$-action in \ref{lem:Galois on F-isoc general} is compatible with the $H$-action on $D_{\crys,R_0}(T')$ and $D_{\st,R_0}(T')$ via \Cref{thm:gluing of D functors}.\ref{thm:gluing of D functors modules local formula}. 
     Moreover, $D_{*, R_0}(T) = D_{*, R_0}(T')^H$ for $*\in \{\st, \crys\}$. 
    \end{enumerate}
\end{lemma}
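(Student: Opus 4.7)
The plan is to build the $H$-action from the functoriality of the crystalline Riemann--Hilbert functor (\Cref{thm:gluing of D functors}.\ref{thm:gluing of D functors pullbacks}), and then match it with the Galois action on $D$-functors pro-\'etale-locally in case \ref{item: ramif ext assump b}.

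First I would observe that in both cases \ref{item: primitive assump a} and \ref{item: ramif ext assump b}, the induced map on reduced special fibers $h_s\colon Z'_s\to Z'_s$ is the identity. Indeed, in case \ref{item: primitive assump a}, the map $g_s\colon Z'_s\to Z_s$ is purely inseparable, so $\mathrm{Aut}(Z'_s/Z_s)$ is trivial; in case \ref{item: ramif ext assump b}, $K'/K$ is totally ramified, so $Z'_s = Z_s$ canonically and the $H$-action on $Z'$ descends trivially to $Z'_s$. Meanwhile, on the generic fiber, $g\circ h = g$ yields the canonical identification $h_\eta^{-1}T' = (g_\eta\circ h_\eta)^{-1}T = T'$. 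Applying \Cref{thm:gluing of D functors}.\ref{thm:gluing of D functors pullbacks} to $h$, using that $h$ is an isomorphism of smooth formal schemes (so $\iota_{\crys,h}$ is an iso by \Cref{thm:gluing of D functors}.\ref{thm:gluing of D functors crys/st} or by direct argument), we obtain an automorphism $\iota_{\crys,h}\colon \mathcal{E}_{\crys,T'}\xrightarrow{\sim} \mathcal{E}_{\crys,T'}$, and similarly for $\mathcal{E}_{\st,T'}$. The cocycle identity $\iota_{\crys,h\circ h'} = \iota_{\crys,h'}\circ h'^{*}_s\iota_{\crys,h} = \iota_{\crys,h'}\circ \iota_{\crys,h}$ promotes these automorphisms into a group action of $H$, functorial in $T$ by the functoriality of $\iota_{\crys,-}$ and $\iota_{\st,-}$. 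This proves \ref{lem:Galois on F-isoc general}.

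For \ref{lem:Galois on F-isoc affine}, I would note that since $R_0 = R'_0$ in case \ref{item: ramif ext assump b}, both $D_{*,R_0}(T) = (T\otimes_{\IZ_p}\OB_{*,R_0})(Z_\eta)$ and $D_{*,R_0}(T') = (T'\otimes_{\IZ_p}\OB_{*,R_0})(Z'_\eta)$ carry canonical $H$-actions induced by the $H$-action on $Z'_\eta$ (using that $g_\eta^{-1}\OB_{*,R_0} \simeq \OB_{*,R_0}|_{Z'_\eta}$ by \Cref{cor: property of OAst}.\ref{cor: property of OAst et}). The compatibility of the $H$-action on $\mathcal{E}_{*,T'}$ constructed above with the Galois action on $D_{*,R_0}(T')$ under the local formula $\mathcal{E}_{*,T'}(R_0,R_s) \simeq D_{*,R_0}(T')$ of \Cref{thm:gluing of D functors}.\ref{thm:gluing of D functors modules local formula} can be read off from the base-change commutative diagram of \Cref{rmk: base change diagram}: the map $\iota_{*,h}$ is defined precisely as the base change of $\alpha_{*,T'}$ along $h_{\eta,\IB_\crys}^*$, which unwinds to the Galois translation on sections over $Z'_\eta$.

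Finally, for the identity $D_{*,R_0}(T) = D_{*,R_0}(T')^H$, I would invoke Galois descent along the finite \'etale Galois cover $g_\eta\colon Z'_\eta\to Z_\eta$: for any pro-\'etale sheaf $\mathcal{F}$ on $Z_{\eta,\pe}$ one has $\mathcal{F}(Z_\eta) = \mathcal{F}(Z'_\eta)^H$; applied to $\mathcal{F} = T\otimes_{\IZ_p}\OB_{*,R_0}$ and using the identification of the sections mentioned above, this yields the desired equality. I expect the main obstacle to be Step \ref{lem:Galois on F-isoc affine}, namely checking that the two \emph{a priori} different $H$-structures on $D_{*,R_0}(T')$ -- one coming from the crystalline side via $\iota_{*,h}$ and the other from the pro-\'etale side -- literally coincide under the natural isomorphism. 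This boils down to unwinding the construction of $\iota_{*,h}$ through the diagram in \Cref{rmk: base change diagram} and the explicit description of $\alpha_{*,T'}$ in the proof of \Cref{thm: D functors}.
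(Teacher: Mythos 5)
Your proposal is correct and, for part \ref{lem:Galois on F-isoc general}, follows essentially the same route as the paper: identify $h_s$ with the identity on $Z'_s$ (purely inseparable in case \ref{item: primitive assump a}, totally ramified in case \ref{item: ramif ext assump b}), combine the base-change map $\iota_{\crys,h}$ with the canonical identification $h_\eta^{-1}T' = T'$, and verify the cocycle condition. One small slip: $\iota_{\crys,h}$ is an isomorphism not by \Cref{thm:gluing of D functors}.\ref{thm:gluing of D functors crys/st} (which requires $T$ crystalline or semi-stable, an assumption we don't have here), but rather because $h$ is finite \'etale, so \Cref{thm:gluing of D functors}.\ref{thm:gluing of D functors pullbacks} (or \Cref{thm:gluing of D functors}.\ref{thm:gluing of D functors smooth bc}) applies.

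For part \ref{lem:Galois on F-isoc affine}, your route differs from the paper's in a way worth flagging. You invoke \Cref{rmk: base change diagram}, but that remark is stated for maps $f$ coming from an $\sO_{K_0}$-linear map of unramified models $f_0$ and an extension of the base field from $K$ to $K'$; the resulting $f$ is then $\sO_K$-linear and (on both sides) $\sO_{K'}$-linear. The automorphism $h$, however, is only $\sO_K$-linear and is $\Gal(K'/K)$-semi-linear over $\sO_{K'}$, so it does not literally fit the hypotheses of that remark as stated. The paper avoids this subtlety by going back to the pro-\'etale $G_{Z_\eta}$-cover $\widetilde Z_\eta$ directly: it writes $D_{\crys,R_0}(T') = \bigl(T(\widetilde Z_\eta)\otimes_{\mathbb{Z}_p}\OB_{\crys,R_0}(\widetilde Z_\eta)\bigr)^{G_{Z'_\eta}}$ and reads off the $H$-action as the residual $G_{Z_\eta}/G_{Z'_\eta}$-action, which makes both the compatibility with the $\mathcal{E}$-side and the equality $D_{*,R_0}(T) = D_{*,R_0}(T')^H$ immediate (the latter is exactly your Galois-descent step, which is fine). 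Your underlying claim -- that the $\iota$-side and the Galois-translation side agree -- is correct and is in fact the same compatibility the paper later spells out as diagram (\ref{eqn: descent diagram to show}) in the proof of \Cref{thm:descend along primitive insep}; but to be rigorous you would need to extend \Cref{rmk: base change diagram} to semi-linear automorphisms rather than cite it verbatim, or else argue via the pro-\'etale cover as the paper does.
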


We remark that the common feature of assumptions \ref{item: primitive assump a} and \ref{item: ramif ext assump b} is that $g_s$ is a homeomorphism, so that it induces an equivalence $\Isoc^\varphi(Z_s) \simeq \Isoc^\varphi(Z'_s)$ by \cite[Thm.~4.6]{Ogu84}. 
The natural $H$-action on $D_{*, R_0}(T')$ will be explained in the proof.

\begin{proof}
	We first notice that for any element $h\in H=\Gal(Z'_\eta/Z_\eta)$, we have a natural identity of maps $h_\eta\circ g_\eta=g_\eta$.
	Since the local system $T'$ is defined as the preimage, we have the following isomorphisms as a part of the descent data:
	\[
	\sigma_{h,\eta}:h_\eta^{-1}T'\xrightarrow{\sim} T'.
	\]
	So by applying the crystalline Riemann--Hilbert functor, we get an isomorphism of $F$-isocrystals in $\Isoc^\varphi(X'_{s,\crys})$ that is compatible with the group structure on $H$
	\[
	\sigma_{h,s} \colonequals \mathcal{E}_{\crys, \sigma_{h,\eta}} \colon \mathcal{E}_{\crys,h_\eta^{-1}T'} \xrightarrow{\sim} \mathcal{E}_{\crys,T'}.
	\]
	On the other hand, by \cref{lem: simple observation}\ref{rmk: Galois action on integral model}, the Galois group $H=\Gal(Z'_\eta/Z_\eta)$ naturally acts on $Z'$ in the category of $p$-adic formal schemes over $Z$.
	So by identifying the elements $h\in H$ with the automorphisms $h:Z'\to Z'$ and by the base change thoerem of the crystalline Riemann--Hilbert functor in \Cref{thm:gluing of D functors}.\ref{thm:gluing of D functors pullbacks}, we get an isomorphism of $F$-isocrystals in $\Isoc^\varphi(Z'_{s,\crys})$:
	\[
	h_s^* \mathcal{E}_{\crys,T'} \simeq \mathcal{E}_{\crys,h_\eta^{-1}T'}.
	\]
	Notice that by assumption the map of the reduced special fiber $g_s:Z'_s\to Z_s$ is purely inseparable or equal to the identity.
	Hence the induced action of $H$ on $Z'_s$ is trivial, and we have identifications $h_s^* \mathcal{E}_{\crys,T'} = \mathrm{id}_{Z'_s}^* \mathcal{E}_{\crys,T'}= \mathcal{E}_{\crys,T'}$.
	As a consequence, $\sigma_{h,s}$ a natural action of $H$ on $\mathcal{E}_{\crys,T'} \in \Isoc^\varphi(Z'_{s,\crys})$:
	\begin{equation}
		\label{eq:group action on F-isoc}
			\sigma_{h,s}: \mathcal{E}_{\crys,T'} \simeq \mathcal{E}_{\crys,h_\eta^{-1}T'} \xrightarrow{\sim} \mathcal{E}_{\crys,T'}.
	\end{equation}

    Now we focus on the setup of \ref{lem:Galois on F-isoc affine}. We note that by taking the evaluation at the pro-PD-thickening $(R_0,R_k)$, the above induces an action of $H$ on $D_{\crys,R_0}(T')$.
        To see the $H$-action above is compatible with the $H$-action on $D_{\crys,R_0}(T')$ via \Cref{thm:gluing of D functors}.\ref{thm:gluing of D functors modules local formula}, we recall the $H$-action on the latter.
	For simplicity, we assume $Z=\Spf(R)$ is the base change of of $Z_0=\Spf(R_0)$.
	As in \Cref{const:Galois group of rigid space}, we let $\tilde{Z}=\Spf(S)$ be the integral perfectoid formal scheme over $R_0$ defined by the maximal connected pro-finite-\'etale cover of $Z'_\eta$ (and hence of $Z_\eta$).
	Then we have 
	\[
	D_{\crys,R_0}(T')\colonequals \bigl(T(\tilde{Z}_\eta)\otimes_{\mathbb{Z}_p} \OB_{\crys,R_0}(\tilde{Z}_\eta) \bigr)^{G_{Z'_\eta}},
	\]
	where the $H$-action is induced by the action of $G_{Z_\eta}$ on $T(\tilde{Z}_\eta)\otimes_{\mathbb{Z}_p} \OB_{\crys,R_0}(\tilde{Z}_\eta)$.
	So the compatibility of the two $H$-action follows from the translation between the Galois action and the descent data of pro-\'etale sheaves. 
 The claim that $D_{\crys, R_0}(T) = D_{\crys, R_0}(T')^H$ is clear.
	
	Finally, for $\mathcal{E}_{\st,T'}$, the proof works the same and we do not repeat it here.
\end{proof}

Another preparation is about the descent of $\mathbb{B}_\crys$-vector bundles.
In the following, for a given smooth rigid space $Z_\eta$, we let $\mathbb{B}_{\crys,Z_\eta}$ be the horizontal period sheaf $\mathbb{B}_\crys$ over the pro\'etale site $Z_{\eta,\pe}$ introduced in \cref{sec period sheaves}.
\begin{definition}
	Let $g:Z'_\eta \to Z_\eta$ be a finite \'etale cover of smooth affinoid rigid spaces over $K$.
	\begin{enumerate}
		\item Let $\sM'$ be a $\IB_{\crys, Z'_\eta}$-vector bundle. 
		Then a \emph{descent datum} for $\sM'$ with respect to $Z_\eta$ is an isomorphism $\epsilon : p_{1,\mathbb{B}_\crys}^* \sM' \sto p_{2,\mathbb{B}_\crys}^* \sM'$, where $p_i : Z'_\eta \times_{Z_\eta} Z'_\eta \to Z'_\eta$ are the two projections, such that $\epsilon$ satisfies the cocycle condition. 
		\item Define $\mathrm{Des}(\IB_{\crys, Z'_\eta})_{Z_\eta}$ to be the category whose objects are the pairs $(\sM', \epsilon)$ where $\sM' \in \Vect(\IB_{\crys, Z_\eta})$ and $\epsilon$ is a descent datum on $\sM'$ with respect to $Z_\eta$, and whose morphisms are defined in the natural way.
	\end{enumerate}
\end{definition}
Note that in the special case when $Z'_\eta\to Z_\eta$ is a finite Galois cover with the Galois group $H$, a descent datum can be translated as isomorphisms of $\mathbb{B}_{\crys,Z'_\eta}$-vector bundles
\[
\sigma_h\colon h_{\mathbb{B}_\crys}^* \sM'  \simeq \sM',~\forall h\in H,
\]
that are compatible with the group structure on $H$ (cf. \cite[\href{https://stacks.math.columbia.edu/tag/0CDQ}{Tag 0CDQ}]{stacks-project}).
\begin{lemma}
\label{lem: fully faithful}
Let $g: Z'_\eta \to Z_\eta$ be a finite \'etale cover of smooth rigid spaces over $K$.
The pullback functor $$ g_{\mathbb{B}_\crys}^* : \Vect(\IB_{\crys, Z_\eta}) \to \mathrm{Des}(\IB_{\crys, Z'_\eta})_{Z_\eta}$$ is fully faithful. 
\end{lemma}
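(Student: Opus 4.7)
The plan is to reduce the statement to the descent property of $\IB_{\crys, Z_\eta}$-vector bundles with respect to the cover $g : Z'_\eta \to Z_\eta$. Given $\sM_1, \sM_2 \in \Vect(\IB_{\crys, Z_\eta})$, the internal Hom $\sH om_{\IB_{\crys, Z_\eta}}(\sM_1, \sM_2)$ is again a $\IB_{\crys, Z_\eta}$-vector bundle, and since $\sM_1, \sM_2$ are locally free, the formation of internal Hom commutes with the pullback $g_{\IB_\crys}^*$. Unwinding definitions, a morphism $(g^*\sM_1, \mathrm{can}) \to (g^*\sM_2, \mathrm{can})$ in $\mathrm{Des}(\IB_{\crys, Z'_\eta})_{Z_\eta}$ is the same as a global section of $g_{\IB_\crys}^* \sH om(\sM_1, \sM_2)$ on $Z'_\eta$ whose two pullbacks to $Z'_\eta \times_{Z_\eta} Z'_\eta$ along $p_1, p_2$ agree. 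Thus the full faithfulness will follow once I show that for every $\sN \in \Vect(\IB_{\crys, Z_\eta})$ the sequence
\[
0 \to \Gamma(Z_\eta, \sN) \to \Gamma(Z'_\eta, g_{\IB_\crys}^* \sN) \rightrightarrows \Gamma(Z'_\eta \times_{Z_\eta} Z'_\eta, (g\times g)_{\IB_\crys}^* \sN)
\]
is exact.

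To verify the displayed sheaf condition, I will use that a finite \'etale cover of rigid spaces is in particular a covering in the pro-\'etale site. Concretely, $g: Z'_\eta \to Z_\eta$ induces a covering of $Z_\eta$ by a single object in $\Perfd/Z_{\eta, \pe}$ (after passing to an affinoid perfectoid pro-\'etale cover which trivializes $g$), and the same for iterated fiber products. Since $\IB_{\crys}$ is by construction a sheaf on $X_{\eta, \pe} \simeq \Shv(\Perfd/X_{\eta, \pe})$ for any smooth rigid space $X_\eta$, it satisfies the sheaf axiom for the covering $Z'_\eta \to Z_\eta$. For a general $\IB_{\crys, Z_\eta}$-vector bundle $\sN$, I can work pro-\'etale-locally where $\sN$ is a finite free $\IB_{\crys, Z_\eta}$-module, and then the required exact sequence reduces to finitely many copies of the corresponding sequence for $\IB_{\crys, Z_\eta}$ itself, which holds by the sheaf property. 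Taking global sections (and noting that global sections commute with finite direct sums) completes the argument.

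The only subtlety to be careful about is the identification of the pullback $g_{\IB_\crys}^*\sN$ along the morphism of ringed sites $(Z'_{\eta, \pe}, \IB_{\crys, Z'_\eta}) \to (Z_{\eta, \pe}, \IB_{\crys, Z_\eta})$ with the restriction of $\sN$ to the slice site over $Z'_\eta$; this uses that $g$ being finite \'etale makes the natural map $g^{-1}\IB_{\crys, Z_\eta} \to \IB_{\crys, Z'_\eta}$ an isomorphism of pro-\'etale sheaves of rings (as both are computed by evaluating on affinoid perfectoid covers). Granting this identification, the pullback is just restriction and the sheaf property discussed above gives exactly the required equalizer description, so no serious obstacle is anticipated.
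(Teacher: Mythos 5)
Your proposal is correct and follows essentially the same route as the paper: reduce to the exactness of an equalizer sequence via internal Hom, use the sheaf property of $\IB_{\crys}$ for affinoid perfectoid covers (where the finite \'etaleness of $g$ ensures the fiber products $U \times_{Z_\eta} Z'_\eta$ stay affinoid perfectoid, cf.\ \cite[Lem.~4.5(i)]{Sch13}), and pass from $\IB_\crys$ to $\sN$ by local freeness. A minor phrasing quibble: the affinoid perfectoid cover should be chosen to trivialize the vector bundle $\sN$ rather than $g$, and the final sentence (``Taking global sections\ldots completes the argument'') elides the local-to-global gluing step that the paper spells out explicitly via the sheaf property of $\sM$ and $\sM'$; but this is routine and does not constitute a gap.
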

\begin{proof}
By considering the internal Homs, we reduce the problem to showing that if $\sM \in \mathrm{Vect}(\IB_{\crys, Z_\eta})$, $\sM' = g_{\mathbb{B}_\crys}^* \sM$ and $\epsilon$ is the canonical descent datum, then an element $m' \in \Gamma(Z'_{\eta,\pe}, \sM')$ comes from $\Gamma(Z_{\eta,\pe}, \sM)$ if and only if $\epsilon(p_{1,\mathbb{B}_\crys}^*(m')) = p_{2,\mathbb{B}_\crys}^*(m')$. 

Let $\{ U_\alpha = \Spa(R_\alpha, R^+_\alpha) \}_\alpha$ be an affinoid perfectoid cover of $Z_\eta$ in $Z_{\eta,\text{pro\'et}}$ which trivializes $\sM$. 
Let $U'_\alpha \colonequals g_{\mathbb{B}_\crys}^* U_\alpha$ and let $m'_\alpha$ be the restriction of $m'$ to $U'_\alpha$. 
Now using the assumption that the map $g$ itself is finite \'etale, we see $\{ U'_\alpha \}$ is also a set of affinoid perfectoid objects in $Z_{\eta,\text{pro\'et}}$ that covers $\{ U_\alpha \}$ (cf. \cite[Lem.~4.5(i)]{Sch13}). 
So by the sheaf property of $\IB_{\crys, Z_\eta}$ on $Z_{\eta,\text{pro\'et}}$, for each index $\alpha$, the sequence  \begin{equation}
	0 \to {\IB_{\crys}(R_\alpha, R_\alpha^+)} \to {\IB_{\crys}(R'_\alpha, R'^+_\alpha)} \rightrightarrows {\IB_{\crys}(R'_\alpha \widehat{\tensor}_{R_\alpha} R'_\alpha, (R'_\alpha \widehat{\tensor}_{R_\alpha} R'_\alpha)^+)}
\end{equation}
is an equalizer diagram. 
By the local freeness of $\sM$ over $\IB_\crys$, the sequence remains exact after we tensor with $\sM$. 
This implies that 
\begin{equation}
	0 \to {\sM(U_\alpha)} \to {\sM(U'_\alpha)} \rightrightarrows {\sM(U'_\alpha \times_{U_\alpha} U'_\alpha)}
\end{equation}
is also an equalizer diagram. 

Now, assume the element $m'$ satisfies the condition $\epsilon(p_{1,\mathbb{B}_\crys}^*(m')) = p_{2,\mathbb{B}_\crys}^*(m')$.
Then the element $m'_\alpha$ lies in the equalizer of the double arrows, and in particular is the image of some $m_\alpha \in \sM(U_\alpha)$. 
Conversely, if we start with an element $m_\alpha\in \sM(U_\alpha)$, then by the short exact sequence again, we see its image $m'_\alpha \colonequals g_{\mathbb{B}_\crys}^*m_\alpha$ satisfies the equation $\epsilon(p_{1,\mathbb{B}_\crys}^*(m'_\alpha)) = p_{2,\mathbb{B}_\crys}^*(m'_\alpha)$.
Combining the observations above, we see the functor from $\IB_{\crys,Z_\eta}$-locally free sheaves to the category of descent data over $Z'_\pe$ is fully faithful when restricted to each $U_\alpha$.
Finally, since the gluing of $m'_\alpha$ for all $\alpha$ can be checked on $\{ U'_\alpha \}$, the claim for $m$ follows from the sheaf property of $\sM$ and $\sM'$ and the above naturally globalizes to entire $Z_\eta$. 
\end{proof}

Assembling the above preparations, we now prove the first descent result.
\begin{theorem}
	\label{thm:descend along primitive insep}
	Let $K$ be a $p$-adic field, and let $g:Z'\to Z$ be a primitive inseparable cover between connected smooth $p$-adic formal schemes over $\mathcal{O}_K$.
	Let $T\in \Loc_{\mathbb{Z}_p}(Z_\eta)$, and let $T'\colonequals g_\eta^{-1}T$ be the preimage.
	Assume we have
	\begin{itemize}
		\item the local system $T'$ is semi-stable with respect to $Z'$;
		\item there is a classical point $x_\eta \in Z_\eta$ such that $g_\eta^{-1}(x_\eta)$ is one point that is ramified over $x_\eta$, and the restriction $T|_{x_\eta}$ is a crystalline (resp. semi-stable) representation of $\Gal_{K(x_\eta)}$.
	\end{itemize}
    Then the local system $T$ is crystalline (resp. semi-stable) with respect to $Z$.
\end{theorem}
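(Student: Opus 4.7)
\emph{Strategy.} The semi-stability of $T'$ with respect to $Z'$ gives, via the crystalline Riemann--Hilbert functor of \Cref{thm:gluing of D functors}, an honest (log) $F$-isocrystal $\mathcal{E}_{\st,T'}$ on $(Z'_s,(0^\IN)^a)_\lcrys$ of rank $d=\mathrm{rank}(T)$, together with a Frobenius-equivariant isomorphism $\vartheta_{\st,T'}\colon \IB_\crys(\mathcal{E}_{\st,T'})\xrightarrow{\sim}T'\otimes_{\mathbb{Z}_p}\IB_\crys$ that is functorial in $T'$. By \Cref{lem:Galois on F-isoc} the Galois group $H\colonequals\Gal(Z'_\eta/Z_\eta)\cong\mathbb{Z}/p\mathbb{Z}$ acts naturally on $\mathcal{E}_{\st,T'}$, and functoriality makes $\vartheta_{\st,T'}$ intertwine this action with the tautological action of $H$ on $T'\otimes\IB_\crys$. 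My plan is then: (i) show the $H$-action on $\mathcal{E}_{\st,T'}$ is trivial; (ii) descend $\mathcal{E}_{\st,T'}$ along the purely inseparable morphism $g_s$ to an $F$-isocrystal $\mathcal{F}$ on $(Z_s,(0^\IN)^a)_\lcrys$; (iii) descend $\vartheta_{\st,T'}$ along $g_\eta$ to a Frobenius-equivariant isomorphism $\IB_\crys(\mathcal{F})\simeq T\otimes\IB_\crys$, thereby exhibiting $T$ as semi-stable with respect to $Z$. The crystalline case will differ only in additionally tracking the vanishing of the monodromy operator.

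For step (i), \Cref{prop: extend classical points} extends $x_\eta$ and $y_\eta\colonequals g_\eta^{-1}(x_\eta)$ to integral points $x\in Z(\mathcal{O}_{K(x_\eta)})$ and $y\in Z'(\mathcal{O}_{K(y_\eta)})$ forming an $H$-equivariant commutative square over $g$. The base change of \Cref{thm:gluing of D functors}.\ref{thm:gluing of D functors pullbacks} combined with the point case (\Cref{lem:Galois on F-isoc}.\ref{lem:Galois on F-isoc affine}) produces an $H$-equivariant injection of $(\varphi,N)$-modules
\[
y_s^*\mathcal{E}_{\st,T'}\;\hookrightarrow\;D_{\st,K(y_\eta)}(T'|_{y_\eta}),
\]
which is an isomorphism since $T'|_{y_\eta}$ is semi-stable of rank $d$. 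As $K(y_\eta)/K(x_\eta)$ is totally ramified of degree $p$, we have $K(y_\eta)_0=K(x_\eta)_0$, and the assumption that $T|_{x_\eta}$ is semi-stable (resp.\ crystalline) implies $D_{\st,K(y_\eta)}(T'|_{y_\eta})=D_{\st,K(x_\eta)}(T|_{x_\eta})$ with \emph{trivial} $H=\Gal(K(y_\eta)/K(x_\eta))$-action (and $N=0$ in the crystalline subcase). Hence, for every $h\in H$, the endomorphism $\sigma_h-\mathrm{id}\in\mathrm{End}(\mathcal{E}_{\st,T'})$ vanishes upon restriction to $y_s$; since $Z'_s$ is a smooth connected $k$-variety, \cite[Thm.~4.1]{Ogu84} forces $\sigma_h=\mathrm{id}$. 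In the crystalline subcase, the same rigidity applied to $N_{\mathcal{E}_{\st,T'}}$ yields $N_{\mathcal{E}_{\st,T'}}=0$.

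For steps (ii) and (iii), since $g_s$ is purely inseparable between smooth $k$-varieties, \cite[Thm.~4.6]{Ogu84} gives an equivalence $g_s^*\colon\Isoc^\varphi((Z_s,(0^\IN)^a)_\lcrys)\xrightarrow{\sim}\Isoc^\varphi((Z'_s,(0^\IN)^a)_\lcrys)$. Therefore there is a unique $\mathcal{F}$ with $g_s^*\mathcal{F}\simeq\mathcal{E}_{\st,T'}$, and in the crystalline subcase $N_\mathcal{F}=0$, so $\mathcal{F}$ lifts to $\Isoc^\varphi(Z_{s,\crys})$ via \Cref{equiv def log isoc}. Combining \Cref{lem: pullback formula on crystalline side} with $\vartheta_{\st,T'}$ yields a Frobenius-equivariant isomorphism
\[
g_{\eta,\IB_\crys}^*\IB_\crys(\mathcal{F})\xrightarrow{\sim}\IB_\crys(\mathcal{E}_{\st,T'})\xrightarrow{\vartheta_{\st,T'}}g_\eta^{-1}T\otimes\IB_\crys,
\]
and both sides carry canonical descent data with respect to the finite Galois cover $g_\eta\colon Z'_\eta\to Z_\eta$: on the source because $\mathcal{F}$ is defined over $Z_s$, on the target because $T\otimes\IB_\crys$ is defined over $Z_\eta$. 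The compatibility of these descent data amounts precisely to the $H$-equivariance of $\vartheta_{\st,T'}$, where on the source the $H$-action is given by $\sigma_h$, which is trivial by step (i). The full faithfulness of $g_{\eta,\IB_\crys}^*$ from \Cref{lem: fully faithful} then descends the composite to a Frobenius-equivariant isomorphism $\IB_\crys(\mathcal{F})\simeq T\otimes\IB_\crys$, which proves that $T$ is semi-stable (resp.\ crystalline) with respect to $Z$.

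The conceptual crux will be step (i). The $H$-action on $\mathcal{E}_{\st,T'}$ is a priori a global object, whereas the hypothesis about $T|_{x_\eta}$ is purely local at one ramified point; the technical heart of the argument is Ogus's rigidity \cite[Thm.~4.1]{Ogu84}, which propagates the pointwise triviality of $\sigma_h-\mathrm{id}$ (coming from total ramification of $K(y_\eta)/K(x_\eta)$ together with the classical behaviour of $D_\st$) into global triviality. Once step (i) is in hand, steps (ii)--(iii) proceed by formal manipulations: the equivalence of $F$-isocrystal categories along purely inseparable morphisms and the fully faithful descent of $\IB_\crys$-vector bundles along finite \'etale covers.
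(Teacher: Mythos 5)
Your proposal is correct and follows the same two-stage architecture as the paper: first kill the $H$-action on $\mathcal{E}_{\st,T'}$ by restricting to the ramified point and invoking rigidity on $Z'_s$, then descend both the $F$-isocrystal (along the universal homeomorphism $g_s$) and the comparison isomorphism (along the finite \'etale cover $g_\eta$, via the full faithfulness of \Cref{lem: fully faithful}). The one place where your route diverges slightly from the paper's is the treatment of the monodromy operator in the crystalline subcase. You propose to apply ``the same rigidity'' \cite[Thm.~4.1]{Ogu84} to $N_{\mathcal{E}_{\st,T'}}$, but as stated Ogus's theorem concerns morphisms \emph{of $F$-isocrystals}, and $N$ is not one: it satisfies the twisted relation $N\varphi_{\mathcal{E}} = p\,\varphi_{\mathcal{E}}N$. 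To make your argument go through one must first reinterpret $N$ as a Frobenius-equivariant morphism $\mathcal{E}_{\st,T'}\to\mathcal{E}_{\st,T'}(-1)$ (i.e.\ absorb the factor of $p$ into a Tate twist) and then apply Ogus's rigidity; this is a standard manoeuvre, but it is a genuine extra step and should be stated. The paper itself sidesteps this by translating $\sigma_{h,s}$ and $N$ into horizontal endomorphisms of a flat connection over $Z'_{0,\eta}$ and using that the image of such an endomorphism is again a coherent module with flat connection, hence locally free, hence determined by its fibre at a point — and only remarks in a footnote that Ogus's rigidity gives a shortcut for $\sigma_{h,s}$. What your route buys is a cleaner, purely crystalline argument; what the paper's route buys is not having to worry about the Frobenius-normalization of $N$. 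Your sketch of step (iii) compresses the precise bookkeeping: you need to identify the tautological descent datum on $g^*_{\eta,\IB_\crys}\IB_\crys(\mathcal{F})$ (which is the identity once one observes that $h_s=\mathrm{id}$, since $g_s$ is a universal homeomorphism) with the one transported through $\vartheta_{\st,T'}$ from $T'\simeq g_\eta^{-1}T$, and that identification is literally the equality $\sigma_{h,s}=\mathrm{id}$; your phrase ``$H$-equivariance of $\vartheta_{\st,T'}$ where on the source the $H$-action is given by $\sigma_h$'' states this correctly but conflates the two descent data that the paper keeps carefully separate. None of these are gaps; they are just points where the paper spells out what you leave implicit.
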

\begin{proof}
    As the statement is local with respect to the Zariski topology of $Z$ and $Z'$, we assume that both $Z$ and $Z'$ are affine and there is an unramified model $Z_0'$ of $Z'$ with a lift of Frobenius $\varphi_{Z_0'}$ as in \cref{conv of smooth affine}.
     We separate the proof into two steps. In step 1, we show that the natural Galois action on the $F$-isocrystal $\mathcal{E}_{\st,T'}$ is trivial. In step 2, we show that $\mathcal{E}_{\st,T'}$ descends to an object in $\Isoc^\varphi((Z_s, (0^\IN)^a)_\lcrys)$ and the descent is associated with the original local system $T$.

		\textbf{Step 1: }
		Assume the local system $T$ is of rank $d$.
		By \Cref{lem:Galois on F-isoc}, there is a natural action of the Galois group $H=\Gal(Z'_\eta/Z_\eta)$ on the log $F$-isocrystal $\mathcal{E}_{\st,T'}\in \Isoc^\varphi((Z'_s,(0^\IN)^a)_\lcrys)$.
		We first claim that the $H$ action is trivial.

		To see this, we let $x:\Spf(\mathcal{O}_{K(x_\eta)})\to Z$ be the point of the $p$-adic formal scheme $Z$ whose generic fiber is $x_\eta$.
		We let $y_\eta\colonequals g_\eta^{-1}(x_\eta)$ be the preimage of $x_\eta$, which by assumption is a single point with $K(y_\eta)$ a degree $p$ ramified Galois extension of $K(x_\eta)$.
		In particular the Galois group of the extension $K(y_\eta)/K(x_\eta)$ is the same as $H$.
		We let $y \colonequals \Spf(\mathcal{O}_{K(y_\eta)})\to Z'$ be the induced map of $p$-adic formal schemes.
		Then by construction, the $H$-action on the generic fiber naturally induces an action on $y$, and we get an $H$-equivariant map of $p$-adic formal schemes $y\rightarrow Z'$ that is compatible with $x\rightarrow Z$ and the generic fibers, namely the commutative diagram:
		\[
		\begin{tikzcd}
			\Spf(\mathcal{O}_{K(y_\eta)}) \arrow[rr, "{H\text{-equivariant}}", "y"'] \arrow[d]  && Z' \arrow[d, "g"]\\
			\Spf(\mathcal{O}_{K(x_\eta)}) \arrow[rr,"x"'] && Z.
		\end{tikzcd}
		\]
		
		Now we apply \Cref{thm:gluing of D functors}.\ref{thm:gluing of D functors crys/st}, to get an $H$-equivariant isomorphism
		\begin{equation}
			\label{eq:descend iso eq}
					y_s^*\mathcal{E}_{\st,T'} \simeq \mathcal{E}_{\st,y_\eta^{-1}T'},
		\end{equation}
		where the latter is an $F$-isocrystal over $(k,M_k)_\lcrys$.
		Moreover by \Cref{thm:gluing of D functors}.\ref{thm:gluing of D functors modules local formula}, the right hand side of \Cref{eq:descend iso eq} can be identified with the usual $K_0$-vector space $D_\st(V_{y_\eta})$ with Frobenius and nilpotent operators, where $V_{y_\eta}=y_\eta^{-1}T'(\Spa(C,\mathcal{O}_C))$ is the  $\Gal_{K(y_\eta)}$-representation over $\mathbb{Z}_p$ that corresponds to the local system $y_\eta^{-1}T'$.
		Here by the commutative diagram above, the local system $y_\eta^{-1}T'$ is equal to the restriction of $T|_{x_\eta}$ along the finite Galois cover $y_\eta\to x_\eta$.
		On the other hand, by the assumption that the restriction $T|_{x_\eta}$ is semi-stable, for each $h\in H$, the action of $h$ on $D_\st(V_{y_\eta})$ is trivial.
		By \Cref{eq:descend iso eq}, the isomorphism $\sigma_{h,s}: \mathcal{E}_{\st,T'} \to \mathcal{E}_{\st,T'}$ is an endomorphism of an the log $F$-isocrystal $\sE_{\st, T'}$ whose pullback along $y_s:\Spec(k)\to Z'_s$ is the identity map.
		This is only possible when the map $\sigma_{h,s}$ itself is the identity. To see this,  we can translate $\sigma_{h,s}$ to a Frobenius and monodromy equivariant isomorphism of flat connections over $Z'_{0,\eta}$ such that its pullback to a point is the identity (\Cref{prop:equiv def of crys vs pullback}).
		Since the category of flat connections over $Z'_{0,\eta}$ is abelian, both flat connections $\ker(\mathrm{id}-\sigma_{h,s})$ and $\mathrm{coker}(\mathrm{id}-\sigma_{h,s})  $ vanish when restricted to a point, and thus vanish themselves.
		Hence the automorphism $\sigma_{h,s}$ on $\mathcal{E}_{\st,T'}$ (and hence on $\mathcal{E}_{\crys,T'}$) is equal to the identity, for every $h\in H$.\footnote{Here the triviality of the action $\sigma_{h,s}$ can also be deduced from \cite[Thm~4.1]{Ogu84}.} 
		
		Suppose now that the restriction $T|_{x_\eta}$ is crystalline. Then the monodromy operator on $D_\st(V_{y_\eta})$ acts as zero.
		Notice that the monodromy operator commutes with the flat connection under the identification in \Cref{equiv def log isoc}, and is compatible with the pullback isomorphism in \Cref{eq:descend iso eq}.
		So by the fact that the category of flat connections over $Z'_{0,\eta}$ is abelian and using the same idea last paragraph, we see the monodromy operator on $\mathcal{E}_{\st,T'}(Z'_0, Z'_k, (0^\IN)^a)$ is trivial as well.
  Therefore, $\sE_{\st, T'}$ is the image of $\sE_{\crys, T'}$ under the morphism $\Isoc^\varphi(Z_{k, \crys}) \to \Isoc^\varphi((Z_k, (0^\IN)^a)_\lcrys)$ induced by the natural forgetful functor $(Z_k, (0^\IN)^a)_\lcrys \to Z_{k, \crys}$, so that $\IB_\crys(\sE_{\st, T'}) \simeq \IB_{\crys}(\sE_{\crys, T'})$ (cf. \cref{rmk: Bcrys and forgetful functor}). 

  		\textbf{Step 2: }
	We then claim that both $\mathcal{E}_{\crys,T'}$ and $\mathcal{E}_{\st,T'}$ descend to $Z'_s$: since $Z'_s\to Z_s$ is a purely inseparable map of varieties in characteristic $p$, we know from \cite[Thm.\ 4.6]{Ogu84}
		\footnote{More precisely, the self fiber products $(Z'_s)^n$ of $Z'_s$ over $Z_s$ are infinitesimal extensions of $Z'_s$, and in particular the pullback functor along $(Z'_s)^n\to Z'_s$ induces an equivalence on $F$-isocrystals (\cite[Rmk.\ 2.7.2]{Ogu84}). 
			Hence the category of $F$-isocrystals of the \v{C}ech nerve of the proper surjective map $Z'_s\to Z_s$ is equivalent to the constant category $\Isoc^\varphi(Z'_{s,\crys})$.} 
		that the pullback functor induces an equivalence of categories
		\[
		\Isoc^\varphi(Z_{s,\crys}) \xrightarrow{\sim} \Isoc^\varphi(Z'_{s,\crys}).
		\]
		Moreover, thanks to \Cref{equiv def log isoc}, we have an analogous equivalence
		\[
		\Isoc^\varphi((Z_s, (0^\IN)^a)_\lcrys) \xrightarrow{\sim} \Isoc^\varphi((Z'_s,(0^\IN)^a)_\lcrys).
		\]
        As a consequence, we obtain $\mathcal{F}_{\crys,T}\in \Isoc^\varphi(Z_{s,\crys})$ and $\mathcal{F}_{\st,T}\in \Isoc^\varphi((Z_s, (0^\IN)^a)_\lcrys)$ together with isomorphisms $f_s^* \mathcal{F}_{\crys, T} \simeq \sE_{\crys, T}$ and $f_s^* \mathcal{F}_{\st, T} \simeq \sE_{\st, T}$, which are unique up to unique isomorphism. 
        Here we equip $\mathcal{F}_{\crys,T}$ and $\mathcal{F}_{\st,T}$ with trivial $H$-actions.
		
		We then show that the $F$-isocrystal $\mathcal{F}_{\st,T}$ in $(Z_s, (0^\IN)^a)_\lcrys$ is in fact naturally associated to the local system $T$, which by \Cref{def crys/st Faltings} will imply that the local system $T$ is semi-stable with respect to $Z$. 
        We do this by descending the isomorphism of vector bundles over $\IB_{\crys, Z'_\eta}$
        $$ \alpha_{\st, T'}^{\nabla = 0, N = 0} : \IB_{\crys, Z'_\eta}(\sE_{\st, T'}) \stackrel{\sim}{\to} T' \tensor_{\IZ_p} \IB_{\crys, Z'_\eta} $$
        to an isomorphism $\IB_{\crys, Z_\eta}(\sF_{\st, T}) \simeq T \tensor_{\IZ_p} \IB_{\crys, Z_\eta}$ of vector bundles over $\IB_{\crys, Z_\eta}$. 
        By \cref{lem: fully faithful}, this amounts to comparing the following two descent data: 
        \begin{itemize}
            \item The descent data $\delta_{h, \eta}: h_\eta^* \bigr( T' \tensor_{\IZ_p} \IB_{\crys, Z'_\eta} \bigr) \stackrel{\sim}{\to} T' \tensor_{\IZ_p} \IB_{\crys, Z'_\eta}$ induced by $f_\eta^{-1} T \simeq T'$. 
            \item The descent data 
            $\delta_{h, s} : h_\eta^* \IB_{\crys}(\sE_{\st, T'}) \stackrel{\sim}{\to} \IB_{\crys}(\sE_{\st, T'})$ induced by $f_s^*\sF_{\st, T} \simeq \sE_{\st, T'}$. 
        \end{itemize}
        More precisely, what we need to show is the commutativity of the following diagram: 
        \begin{equation}
        \label{eqn: descent diagram to show}
            \begin{tikzcd}
	{h_\eta^* \IB_{\crys}(\sE_{\st, T'}) } & {h_\eta^* \bigr( T' \tensor_{\IZ_p} \IB_{\crys, Z'_\eta} \bigr)} \\
	{\IB_{\crys}(\sE_{\st, T'}) } & { T' \tensor_{\IZ_p} \IB_{\crys, Z'_\eta}}
	\arrow["{h_\eta^* \alpha_{\st, T'}^{\nabla = 0, N=0}}", from=1-1, to=1-2]
	\arrow["{\delta_{h, s}}"', from=1-1, to=2-1]
	\arrow["{\delta_{h, \eta}}", from=1-2, to=2-2]
	\arrow["{\alpha_{\st, T'}^{\nabla = 0, N=0}}", from=2-1, to=2-2]
            \end{tikzcd}
        \end{equation}

        To begin, we remind the reader that \cref{lem: pullback formula on crystalline side} and \cref{lem: pullback formula on etale side} provide us with the following identifications  
        $$ \IB_{\crys}(h_s^* \sE_{\st, T'}) \simeq h_\eta^*\IB_{\crys}(\sE_{\st, T'}) \textrm{ and }h_\eta^{-1}(T') \tensor_{\IZ_p} \IB_{\crys, Z'_\eta} \simeq h_\eta^*(T' \tensor_{\IZ_p} \IB_{\crys, Z'_\eta}), $$
        which we shall use implicitly for the content below.

        Now, let us describe the two sets of descent data. First we shall temporarily suppress the assumption that $f_s$ is purely inseparable and clarify some tautology. Let $\sigma_{h, \eta} : h_\eta^* T' \stackrel{\sim}{\to} T'$ be the descent data given by $f_\eta^{-1} (T) \simeq T'$. 
        Then $\delta_{h, \eta}$ is nothing but $\sigma_{h, \eta} \tensor_{\IZ_p} \IB_{\crys, Z'_\eta}$. 
        Similarly, let $\nu_{h, s} : h_s^* \sE_{\st, T'} \simeq \sE_{\st, T'}$ be the descent data given by $f_s^* \sF_{\st, T} \simeq \sE_{\st, T'}$. 
        Then $\delta_{h, s}$ is characterized as the unique isomorphism such that the following diagram commutes (cf. \cref{rmk: base change diagram}): 
        \begin{equation}
        \label{eqn: define delta_hs}
            \begin{tikzcd}
	{ \IB_{\crys}(h_s^* \sE_{\st, T'}) } & {h_\eta^{-1} (T') \tensor_{\IZ_p} \IB_{\crys, Z'_\eta} } \\
	{\IB_{\crys}(\sE_{\st, T'}) } & { T' \tensor_{\IZ_p} \IB_{\crys, Z'_\eta}}
	\arrow["{ h_\eta^* \alpha_{\st, T'}^{\nabla = 0, N=0}  }", from=1-1, to=1-2]
	\arrow["{\IB_{\crys}(\nu_{h, s})}"', from=1-1, to=2-1]
	\arrow["{\delta_{h, s}}", from=1-2, to=2-2]
	\arrow["{\alpha_{\st, T'}^{\nabla = 0, N=0}}", from=2-1, to=2-2]
            \end{tikzcd}
        \end{equation}
   
	    Recall that $\sigma_{h, s} = \mathcal{E}_{\st,\sigma_{h,\eta}}  : \mathcal{E}_{\st,h_\eta^{-1}T'} \xrightarrow{\sim} \mathcal{E}_{\st,T'}$ is the map induced by the functoriality of the crystalline Riemann--Hilbert functor. The functoriality of the injection $\alpha_{\st, T}$ defined in \cref{thm: D functors}.\ref{thm: D functors log crys} gives us a commutative diagram (cf. \cref{thm:equivalent def of crys and st}) 
	    \begin{equation}
	    	\label{eq:descend diagram associated}
	    	\begin{tikzcd}
	    		    	{\mathbb{B}_{\crys,Z'_\eta}(\mathcal{E}_{\st,h_\eta^{-1}T'})} && {h_\eta^{-1}T'\otimes_{\mathbb{Z}_p}\mathbb{B}_{\crys,Z'_\eta}} \\
	{\mathbb{B}_{\crys,Z'_\eta}(\mathcal{E}_{\st,T'})} && {T'\otimes_{\mathbb{Z}_p}\mathbb{B}_{\crys,Z'_\eta}.}
	\arrow["{\alpha_{\st,h_\eta^{-1}T'}^{\nabla=0, N=0}}", from=1-1, to=1-3]
	\arrow["{\mathbb{B}_\crys(\sigma_{h, s})}"', from=1-1, to=2-1]
	\arrow["{\sigma_{h,\eta}\otimes {\mathbb{B}_{\crys,Z'_\eta}}}", from=1-3, to=2-3]
	\arrow["{\alpha_{\st,T'}^{\nabla=0, N=0}}",  from=2-1, to=2-3]
	    	\end{tikzcd}
	    \end{equation}
     By comparing diagrams (\ref{eqn: descent diagram to show}) with (\ref{eqn: define delta_hs}) and (\ref{eq:descend diagram associated}), as well as the diagram in \cref{rmk: base change diagram}, we deduce that the commutativity of (\ref{eqn: descent diagram to show}) is equivalent to the equality $\sigma_{h, s} = \nu_{h, s}$. 

     Finally, we make use of the assumption that $f_s$ is purely inseparable, which forces $h_s$ and $\nu_{h, s}$ to be the identity maps, and turns the collection $\{ \sigma_{h, s} \}_{h \in H}$ into an $H$-action on $\sE_{\st, T'}$. 
     Therefore, we reduce to showing that the $H$-action on $\sE_{\st, T'}$ is trivial, which is precisely what we showed in Step 1. Now we have shown that $\sF_{\st, T}$ is associated to $T$, so that $T$ is semi-stable. If we additionally assume that $T|_{x_\eta}$ is crystalline for some $x_\eta \in Z_\eta$, then by the last paragraph of Step 1, $\sF_{\crys, T}$ is associated to $T$ and hence $T$ is crystalline. 
\end{proof}

We then consider the another descent result along an extension of the base ring. 
\begin{theorem}
	\label{thm:descend along base extension}
	Let $K'/K$ be a finite Galois and totally ramified extension of $p$-adic field, let $Z$ be a connected smooth $p$-adic formal schemes over $\mathcal{O}_K$, and let $g:Z_{\mathcal{O}_{K'}} \to Z$ be the extension map.
	Let $T\in \Loc_{\mathbb{Z}_p}(Z_\eta)$, and let $T'\colonequals g_\eta^{-1}T$ be the preimage.
	Assume we have
	\begin{enumerate}[label=\upshape{(\alph*)}]
	\item\label{item: descend thm assump a} the local system $T'$ is semi-stable with respect to $Z'$;
    \item\label{item: descend thm assump b} there exists a subset $\sC$ of classical points in $Z_\eta$ such that the intersection $\cap_{x_\eta \in \sC} K(x_\eta)$ is an unramified extension over $K$ and the restriction $T|_{x_\eta}$ is a crystalline (resp. semi-stable) representation of $\Gal_{K(x_\eta)}$. 
	\end{enumerate}
	Then the local system $T$ is crystalline (resp. semi-stable) with respect to $Z$.
\end{theorem}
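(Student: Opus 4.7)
The plan is to adapt the strategy of \Cref{thm:descend along primitive insep}, exploiting the fact that since $K'/K$ is totally ramified we have $k' = k$, so the reduction map $g_s: Z'_s \to Z_s$ is the identity. After the usual reductions (shrinking $Z$ via the purity result of \cite{DLMS2} and using \Cref{thm:gluing of D functors}.\ref{thm:gluing of D functors pullbacks}), we may assume that $Z$ is affine and connected with geometrically irreducible special fiber and an unramified model $R_0$ equipped with a Frobenius lift as in \Cref{conv of smooth affine}. The semi-stability of $T'$ yields a bona fide log $F$-isocrystal $\mathcal{E}_{\st, T'}$ on $(Z_s, (0^\IN)^a)_\lcrys$ of rank $d = \mathrm{rank}(T)$ together with an isomorphism $\alpha_{\st, T'}^{\nabla=0, N=0}: \mathbb{B}_\crys(\mathcal{E}_{\st, T'}) \stackrel{\sim}{\to} T' \otimes_{\IZ_p} \mathbb{B}_{\crys, Z'_\eta}$, and by \Cref{lem:Galois on F-isoc} the Galois group $H = \Gal(K'/K)$ acts on $\mathcal{E}_{\st, T'}$ through isomorphisms $\sigma_{h, s}$, recovering at the level of evaluation the usual functorial $H$-action on $D_{\st, R_0}(T')$.

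The main obstacle, and the heart of the proof, is to show that this $H$-action is trivial. The key combinatorial input is the following consequence of assumption \ref{item: descend thm assump b}: since $F \colonequals \bigcap_{x_\eta \in \mathcal{C}} K(x_\eta)$ is unramified over $K$ while $K'/K$ is totally ramified, we have $F \cap K' = K$, and hence $\bigcap_{x_\eta \in \mathcal{C}}(K(x_\eta) \cap K') = K$. By Galois correspondence the decomposition subgroups $H_{x_\eta} \colonequals \Gal(K'/K(x_\eta) \cap K')$ for $x_\eta \in \mathcal{C}$ therefore generate $H$, and since $h \mapsto \sigma_{h, s}$ is a group homomorphism it suffices to show $\sigma_{h, s} = \mathrm{id}$ for each $h$ lying in some $H_{x_\eta}$. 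Fixing such $h$ and $x_\eta$, choose a component $y_\eta \in g_\eta^{-1}(x_\eta)$ stabilized by $h$; then $K(y_\eta) = K(x_\eta) \cdot K'$ is a totally ramified Galois extension of $K(x_\eta)$ with Galois group $H_{x_\eta}$. Through the $H$-equivariant diagram of formal schemes analogous to the one in the proof of \Cref{thm:descend along primitive insep}, pulling $\sigma_{h, s}$ back along the closed embedding $y_s \to Z_s$ is identified, using \Cref{thm:gluing of D functors}.\ref{thm:gluing of D functors pullbacks} and the point case \ref{thm:gluing of D functors modules local formula}, with the classical $h$-action on $D_\st(V_{y_\eta})$. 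But $V_{y_\eta}$ is the restriction of the semi-stable representation $V_{x_\eta}$, and the totally ramified extension $K(y_\eta)/K(x_\eta)$ satisfies $K_0(y_\eta) = K_0(x_\eta)$, so this action is trivial. Ogus's rigidity theorem \cite[Thm~4.1]{Ogu84} on the connected $Z_s$ then promotes the pointwise triviality of $\sigma_{h, s}$ to $\sigma_{h, s} = \mathrm{id}$. The same rigidity argument applied to the monodromy operator shows that if some $T|_{x_\eta}$ is crystalline then the monodromy on $\mathcal{E}_{\st, T'}$ vanishes, placing it in the image of $\Isoc^\varphi(Z_{s, \crys}) \to \Isoc^\varphi((Z_s, (0^\IN)^a)_\lcrys)$ via \Cref{equiv def log isoc}.

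For the concluding descent step, we mirror Step 2 of \Cref{thm:descend along primitive insep}. Because $g_s$ is the identity there is no descent of $F$-isocrystals along the special fiber to perform, and $\mathcal{E}_{\st, T'}$ itself serves directly as the candidate $\mathcal{F}_{\st, T}$ on $(Z_s, (0^\IN)^a)_\lcrys$. It remains to descend the isomorphism $\alpha_{\st, T'}^{\nabla=0, N=0}$ along the finite Galois \'etale cover $g_\eta: Z'_\eta \to Z_\eta$ by applying \Cref{lem: fully faithful}; this amounts to checking that under $\alpha_{\st, T'}^{\nabla=0, N=0}$ the canonical descent datum on $T' \otimes_{\IZ_p} \mathbb{B}_{\crys, Z'_\eta}$ coming from $g_\eta^{-1}T \simeq T'$ matches the descent datum on $\mathbb{B}_\crys(\mathcal{E}_{\st, T'})$ induced by the $H$-action $\{\sigma_{h, s}\}_{h \in H}$. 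Via the functoriality of $\alpha_{\st, T'}^{\nabla=0, N=0}$ and the base change compatibility recorded in \Cref{rmk: base change diagram}, this comparison reduces precisely to the equality $\sigma_{h, s} = \mathrm{id}$ established in the previous paragraph, exactly as in the diagram (\ref{eq:descend diagram associated}). The resulting isomorphism on $Z_{\eta, \pe}$ exhibits $\mathcal{F}_{\st, T}$ as a rank-$d$ log $F$-isocrystal associated to $T$, and \Cref{thm:equivalent def of crys and st} concludes that $T$ is semi-stable, respectively crystalline in the additional crystalline case. The principal subtlety we navigate is that no single $x_\eta \in \mathcal{C}$ need witness the full group $H$: different elements of $H$ are handled by different classical points whose decomposition subgroups jointly generate $H$, which is why hypothesis \ref{item: descend thm assump b} is phrased as an intersection condition rather than a pointwise condition.
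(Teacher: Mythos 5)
Your overall strategy matches the paper's proof closely: both you and the paper reduce to showing the $H=\Gal(K'/K)$-action on $\sE_{\st,T'}$ is trivial, both decompose $H$ via the subgroups $\mathrm{im}(\Gal(K(y_\eta)/K(x_\eta))\to H)$ generated by assumption (b), and both use restriction to a classical point together with Ogus's rigidity on the connected special fiber. Your final descent-datum paragraph mirrors Step 2 of \cref{thm:descend along primitive insep}, whereas the paper opts for a shorter route through the identity $D_{\st,R_0}(T)=D_{\st,R_0}(T')^H$ from \cref{lem:Galois on F-isoc}.\ref{lem:Galois on F-isoc affine} combined with \cref{thm:rank max implies alpha is iso}; both are valid.

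However, there is one concrete error in your write-up. You assert that ``$K(y_\eta)=K(x_\eta)\cdot K'$ is a totally ramified Galois extension of $K(x_\eta)$'' and deduce $K_0(y_\eta)=K_0(x_\eta)$ from this. The compositum of a totally ramified extension with an arbitrary finite extension need not be totally ramified over that extension: take $K=\IQ_p$, $K'=\IQ_p(p^{1/2})$, and $K(x_\eta)=\IQ_p((up)^{1/2})$ for $u\in\IZ_p^\times$ a nonsquare. Then $K(y_\eta)=\IQ_p(u^{1/2},p^{1/2})$ contains the unramified quadratic extension $\IQ_p(u^{1/2})$, so $K_0(y_\eta)\ne K_0(x_\eta)$ and $K(y_\eta)/K(x_\eta)$ is not totally ramified. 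Such $x_\eta$ can certainly appear in an effective $\sC$ so long as $\bigcap K(x_\eta)$ is unramified (it only requires that some other $x'_\eta\in\sC$ has $K(x'_\eta)$ contained in the unramified tower). Since your argument for triviality of $\sigma_{h,s}$ at $y_s$ invokes $K_0(y_\eta)=K_0(x_\eta)$, this step has a gap. The triviality is nonetheless correct, but the reason is subtler: the $K_0(y_\eta)$-linear automorphism $y_s^*\sigma_{h,s}$ is the linear discrepancy between the descent datum on $D_{\st,K(y_\eta)}(V_{y_\eta})$ and the canonical one $\mathrm{id}\otimes h$ on $D_{\st,K(x_\eta)}(V_{x_\eta})\otimes_{K_0(x_\eta)}K_0(y_\eta)$; when $V_{x_\eta}$ is semi-stable the descent datum is canonical, so the linear part vanishes regardless of whether $h$ acts trivially on $K_0(y_\eta)$. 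You should replace the ``totally ramified'' step with this argument (or avoid the pointwise $D_\st$ identification altogether and directly compare the $\sigma_{h,s}$-invariants with $D_{\st,K(x_\eta)}(V_{x_\eta})$, using that both have the correct rank).
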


\begin{proof}
	We let $H$ be the Galois group $\Gal(K'/K)$, and assume $T$ is of rank $d$. 
    Apply \cref{lem:Galois on F-isoc} under the assumption \ref{item: ramif ext assump b} in loc.\ cit.. 
    Then we know that for both $*\in \{\crys, \st\}$, there is a natural $H$-action on $\sE_{*, T'}$, and $\sE_{*, T}$ is the $H$-invariant part $\sE_{*, T'}^H$ of $\sE_{*, T'}$. The latter statement can be checked Zariski-locally on $Z$, and follows from \cref{lem:Galois on F-isoc}.\ref{lem:Galois on F-isoc affine}.

    We first show that $T$ is semi-stable, for which we may assume that $Z$ is affine. Using \cref{thm:rank max implies alpha is iso} and \cref{lem:Galois on F-isoc}.\ref{lem:Galois on F-isoc affine}, we reduce to showing that the action of $H$ on $\sE_{\st, T'}$ is trivial. 
    As before, we think of $\sE_{\st, T'}$ as an F-isocrystal on $Z_s = Z'_s$ with a nil endomorphism $N$. 
    Recall that we are allowed to replace $K$ by a finite unramified extension, so that we may assume that  $\cap_{x_\eta \in \sC} K(x_\eta) = K$, or equivalently, $\Gal_K$ is generated by $\cup_{x_\eta \in \sC} \Gal_{K(x_\eta)}$. 
    By \cref{lem: split Galois extensions}, for each closed point $y_\eta \in g_\eta^{-1}(x_\eta)$, $K(y_\eta)$ is a finite Galois extension of $K(x_\eta$) and $K' \tensor_K K(x_\eta)$ is the product of such $K(y_\eta)$'s. 
    Note that an element in $\Gal(K(y_\eta)/K(x_\eta))$ naturally restricts to one in $H = \Gal(K' / K)$. One quickly deduces that 
    \begin{equation}
        \label{eqn: generated H}
     \textrm{$H$ is generated by }\bigcup_{x_\eta \in \sC} \bigcup_{y_\eta \in g^{-1}_\eta(x_\eta)} \mathrm{im}(\Gal(K(y_\eta)/K(x_\eta)) \to H).
     \end{equation}
    Now, for each $x_\eta \in \sC$ and $y_\eta \in g^{-1}_\eta(x_\eta)$, the assumption that $T|_{x_\eta}$ is a semi-stable representation of $\Gal_{K(x_\eta)}$ implies that the residual $\Gal(K(y_\eta)/K(x_\eta))$-action on $D_\st(T|_{y_\eta})$ is trivial. As $\sE_{\st, T'}$ is already semi-stale, the base change theorem \cref{thm:gluing of D functors}.\ref{thm:gluing of D functors pullbacks} implies that the image of $\Gal(K(y_\eta)/K(x_\eta))$ in $H$ acts trivially on $\sE_{\st, T'}$. Hence the conclusion follows from (\ref{eqn: generated H}). 

    Finally, we need to argue that if in assumption \ref{item: descend thm assump b} we additionally assume that $T|_{x_\eta}$ is crystalline for all $x_\eta \in \sC$, then $T$ is moreover crystalline. Just as in the proof \cref{thm:descend along primitive insep}, this amounts to showing that $N = 0$, which one may check by restricting to any point in $\sC$. 
\end{proof}

\begin{example}
    \label{counterexample of Liu-Zhu}
    The example in \cref{ex: basic example} gives a prototypical counterexample for the naive analogue of Liu-Zhu's rigidity theorem, when the base is a smooth scheme over $\sO_K$. 
    Namely, consider $X = \Spf(\sO_K \< t^\pm \>)$ and the endomorphism $f$ on $X$ defined by $t \mapsto t^p$. Then we consider the local system $T$ given by $f_* \IZ_p$. 
    Note that for each classical point $x_\eta$ on $X_\eta$, $T|_{x_\eta}$ is crystalline, or equivalently, semi-stable\footnote{It is a coincidence when the inertia group acts through a finite quotient. See Prop.~7.17 of Fontaine and Ouyang's notes available at \url{www.imo.universite-paris-saclay.fr/~fontaine/galoisrep.pdf}}, if and only if $f_\eta^{-1}(x_\eta)$ is a union of $p$ distinct point each defined over $K(x_\eta)$. 
    One can check by hand that (a) for every closed point $x_s \in X_s$, $T$ is crystalline or semi-stale at some classical point $x_\eta \in X_\eta$ which specializes to $x_s$; and (b) for every $x_\eta \in X_\eta$ where $T$ is crystalline or semi-stable, $T$ remains crystalline or semi-stable at any $x'_\eta \in X_\eta$ which is sufficiently $p$-adically close to $x_\eta$. Nonetheless, $T$ is neither crystalline nor semi-stable. 
\end{example}

\subsection{Relative semi-stable comparison}
\label{sub application}
Finally, we prove the relative semi-stable comparison theorems in \cref{sec: intro rel Cst}. 
For the reader's convenience, we recall the statement below.

\begin{theorem}
    \label{thm:relative Cst}
    Let $X$ be a semi-stable $p$-adic formal scheme over $\sO_K$ with the standard log structure $M_X$, let $(Y, M_Y)$ be a fine and saturated $p$-adic formal log scheme over $\sO_K$.
    Let $f:(Y, M_Y) \to (X, M_X)$ be a proper and log smooth morphism with Cartier type mod $\pi$ reduction, and let $f_\eta$ be the induced morphism between generic fibers.
    
    Assume one of the following conditions is true. 
    \begin{enumerate}[label=\upshape{(\alph*)}]
        \item\label{thm: rel CN17} The map $f$ is algebrizable.
        \item\label{thm: rel CK19} The underlying map between formal schemes of $f$ is pointwise weakly semi-stable. 
    \end{enumerate}
    Then the higher direct image $R^i f_{\eta, \ket *} \IQ_p$ is a semi-stable local system over $X_\eta$.  
\end{theorem}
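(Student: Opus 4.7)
The plan is to deduce the statement from the pointwise criterion \Cref{thm:PC for semi-stable reduction} applied to the $\IQ_p$-local system $R^i f_{\eta, \ket *} \IQ_p$ on $X_\eta$, reducing semi-stability over the base $X$ to the absolute semi-stable comparison theorems at classical points.

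First I would verify that $R^i f_{\eta, \ket *} \IQ_p$ is indeed a $\IQ_p$-local system on $X_\eta$. In case \ref{thm: rel CK19}, the log structure $M_Y$ is trivial on $Y_\eta$, so $f_\eta$ is a proper smooth morphism between smooth rigid spaces and $R^i f_{\eta, \ket *} \IQ_p = R^i f_{\eta, \et *} \IQ_p$ is a $\IQ_p$-local system on $X_\eta$ by Scholze's relative $p$-adic Hodge theory. In case \ref{thm: rel CN17}, since $M_{X_\eta}$ is trivial and $f_\eta$ arises from a proper and log smooth algebraic morphism of log schemes, log-smooth and log-proper base change in the Kummer-\'etale topology yields that $R^i f_{\eta, \ket *} \IQ_p$ is likewise a $\IQ_p$-local system on $X_\eta$.

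Next I would apply the pointwise criterion. By \Cref{cor: effectivity of all points} and the definition of effectivity for semi-stable $X$, the set $\sC$ of classical points on $X_\eta$ whose reductions lie in the smooth locus of $X_s$ forms an effective subset (the smooth locus of $X_s$ being Zariski dense). Therefore, by \Cref{thm:PC for semi-stable reduction}, it suffices to verify that $(R^i f_{\eta, \ket *} \IQ_p)|_{x_\eta}$ is a semi-stable $\Gal_{K(x_\eta)}$-representation for each $x_\eta \in \sC$. For such a point, let $x : \Spf(\sO_{K(x_\eta)}) \to X$ be the integral extension provided by \Cref{prop: extend classical points}, and let $(Y_x, M_{Y_x})$ denote the fiber of $(Y, M_Y)$ along $x$. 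Log-proper base change for Kummer-\'etale cohomology then identifies
\[
(R^i f_{\eta, \ket *} \IQ_p)|_{x_\eta} \simeq H^i_{\ket}((Y_x)_\eta, \IQ_p)
\]
as $\Gal_{K(x_\eta)}$-representations.

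Finally I would invoke the absolute log semi-stable comparison theorem in each case. In case \ref{thm: rel CN17}, $(Y_x, M_{Y_x})$ is the $p$-adic completion of a proper log smooth scheme over $(\sO_{K(x_\eta)}, M_{\sO_{K(x_\eta)}})$ whose mod $\pi$ reduction inherits the Cartier-type property from $f$, so that Colmez--Nizio\l's log semi-stable comparison theorem yields that $H^i_{\ket}((Y_x)_\eta, \IQ_p)$ is semi-stable. In case \ref{thm: rel CK19}, the pointwise weak semi-stability hypothesis ensures that $Y_x$ is weakly semi-stable over $\sO_{K(x_\eta)}$ with log structure $M_{Y_x}$ matching the standard divisorial one, and \v{C}esnavi\v{c}ius--Koshikawa's semi-stable comparison theorem for semi-stable $p$-adic formal schemes---together with a standard argument reducing weakly semi-stable to semi-stable models via an appropriate refinement---yields that $H^i_\et((Y_x)_\eta, \IQ_p)$ is semi-stable as a $\Gal_{K(x_\eta)}$-representation. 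The main obstacle is the bookkeeping of log structures when passing to the fiber $(Y_x, M_{Y_x})$ and, in case \ref{thm: rel CK19}, the need to reconcile ``weakly semi-stable'' with ``semi-stable'' at the level of the absolute comparison theorem; both of these are controlled by the Cartier-type assumption, which guarantees that the log structure on the fiber is of the expected divisorial shape.
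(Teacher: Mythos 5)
Your proposal follows essentially the same strategy as the paper's proof: reduce to the pointwise criterion (\Cref{thm:PC for semi-stable reduction}), identify the fiber of the local system at a classical point with a Kummer-\'etale or \'etale cohomology group of the fiber of $f$ via (log) proper base change, and conclude with the absolute comparison theorems of Colmez--Nizio\l\ in case \ref{thm: rel CN17} and \v{C}esnavi\v{c}ius--Koshikawa in case \ref{thm: rel CK19}. The only nonessential differences are some hand-waving over the exact references (in case \ref{thm: rel CN17} one needs the Diao--Lan--Liu--Zhu comparison between the Kummer-\'etale cohomology of the log-rigid fiber and the ordinary \'etale cohomology of the open locus where the log structure is trivial before applying \cite[Cor.~5.15]{CN17}, and in case \ref{thm: rel CK19} the paper simply cites \cite[Thm.~9.5]{CK19} directly without the ``standard refinement'' step you allude to), but the skeleton is the same.
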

\begin{proof}
    By \cref{intro:thm pc}, it suffices to show that $T \colonequals R^i f_{\eta, \ket *} \IQ_p$ is a $p$-adic local system and for each classical point $x_\eta \in X_\eta$, $T|_{x_\eta}$ is semi-stable. Let $x \in X(\sO_{K(x_\eta)})$ be its integral extension and let $\overline{x}_\eta$ be a geometric point over $x_\eta$. 

    First, we note that under assumption \ref{thm: rel CK19}, since $M_Y|_{Y_\eta}$ is trivial, $Y_\eta$ is proper and smooth over $X_\eta$ and $T$ is the usual etale cohomology $R^i f_{\eta, \et*} \IQ_p$. In particular, by \cite[Thm.\ 10.5.1]{SW20} and the proper base change theorem \cite[Thm.\ 4.1.1]{Hub96}, $T$ is a local system and $T|_{x_\eta}$ can be identified with the $\Gal_{K(x_\eta)}$-representation $\mathrm{H}^i_\et(Y_{\overline{x}_\eta}, \IQ_p)$. 
    Then by \cite[Thm.~9.5]{CK19}, $\mathrm{H}^i_\et(Y_{\overline{x}_\eta}, \IQ_p)$ is semi-stable. 
    
    The above argument applies in situation \ref{thm: rel CN17} as well with a little modification. 
    As the map $f$ is assumed to be algebraic, we may apply \cite[Thm.~13.1]{Nak17} to deduce that $T$ is indeed a local system. 
    Then, the proper base change theorem in the logarithmic setting (\cite[Thm.~5.1]{Nak97}) allows us to identify $T|_{x_\eta}$ with the $\Gal_{K(x_\eta)}$-representation $\mathrm{H}^i_\ket((Y, M_Y)_{\overline{x}_\eta}, \IQ_p)$. 
    As the log structure on $x_\eta$ is trivial, the assumption that $(Y_{x_\eta}, M_{Y}|_{Y_{x_\eta}})$ being log smooth implies that $Y_{x_\eta}$ is smooth over $x_\eta$ and the log structure $M_{Y}|_{Y_{x_\eta}}$ is given by a normal crossing divisor (cf. \cite[Cor.~3.3.11]{DLLZ1}). 
    Then, by Cor.~6.3.4 in \textit{loc. cit.}, there is a canonial isomorphism $$\mathrm{H}^i_\ket((Y_{\overline{x}_\eta}, M_Y|_{Y_{\overline{x}_\eta}}), \IQ_p) \simeq \mathrm{H}^i_\et(U_{\overline{x}_\eta}, \IQ_p), $$
    where $U_{\overline{x}_\eta}$ is the open subvariety of $Y_{\overline{x}_\eta}$ on which the log structure is trivial (or equivalently, the open complement of the normal crossing divisor). 
    Finally, \cite[Cor.~5.15]{CN17} tells us that the right hand side is semi-stable. 
\end{proof}

\begin{remark}
    \label{rmk: saturated}
    We clarify how condition \ref{thm: rel CK19} above is related to log structures of the fiber at a point. 
    Denote $K(x_\eta)$ by $K'$ and let $\pi'$ be its uniformizer. 
    Then the morphism $x : \Spf(\sO_{K'}) \to X$ necessarily extends to a (not necessarily exact) closed immersion $(\Spf(\sO_{K'}), (\pi')^\IN) \to (X,M_X)$. 
    By \cite[Prop.~II.2.14]{Tsu19}, the special fiber $f_k$ is saturated. In particular, the fibers of morphism between the underlying shcemes of $f_k$ are reduced by \cite[Thm.~II.4.2]{Tsu19}. 
    As the latter statement is also true for the generic fiber $f_\eta$, by \cite[Thm.~II.4.2]{Tsu19} again the map $f$ is saturated. 
    So thanks to \cite[Prop.~II.2.13]{Tsu19}, the base change $(Y_x, M_{Y_x}) := x^* (Y, M_Y)$ is saturated. 
    Now, note that $(Y_x, M_{Y_x})$ is regular in the sense of Kato. Indeed, by \cite[Thm.~8.2]{Kat94b}, the fact that $(\sO_{K'}, (\pi')^\IN)$ is regular implies that $(Y_x, M_{Y_x})$ is also regular. 
    Then we know from \cite[Thm.~11.6]{Kat94b} that $M_{Y_x}$ is necessarily the standard log structure. 
\end{remark}

Before proving the association statement, we give a little lemma. 

\begin{lemma}
\label{lem: Galois descent at a point}
    Let $S$ be a smooth and connected rigid space over $K$. Let $M, N$ be two vector bundles with flat connections on $S$. Let $K'$ be a finite totally ramified extension of $K$ and set $S' \colonequals S \tensor_K K'$. Let $\alpha : M|_{S'} \to N|_{S'}$ be a morphism between flat vector bundles. Let $F$ be a finite unramified extension and set $F' \colonequals F \tensor_K K'$. 
    
    Suppose that for $s \in S(F)$ and $s' \colonequals s_{F'}$, the fiber $\alpha_{s'} : M_{s'} \to N_{s'}$ descends to a morphism $M_s \to N_s$. Then $\alpha$ descends to a morphism $M \to N$. 
\end{lemma}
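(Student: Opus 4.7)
The plan is to reduce the descent problem to a rigidity statement about horizontal sections on a connected smooth rigid space. Let $\pi : S' \to S$ denote the finite flat projection induced by $K \into K'$, and fix a $K$-basis $e_0 = 1, e_1, \ldots, e_{d-1}$ of $K'$ with $d = [K' : K]$. Since $M$ is a vector bundle and $\pi$ is flat, pushforward gives a canonical identification
\[
\pi_* \Hom(M|_{S'}, N|_{S'}) \simeq \Hom(M, N) \otimes_K K' \simeq \bigoplus_{i=0}^{d-1} \Hom(M, N) \cdot e_i,
\]
compatibly with the connections, where the connection on the right is the $K'$-linear extension of the one on $\Hom(M, N)$. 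We may therefore write $\alpha = \sum_{i=0}^{d-1} \alpha_i \cdot e_i$ with each $\alpha_i \in \Gamma(S, \Hom(M, N))$ a horizontal section, and $\alpha$ descends to a morphism $M \to N$ on $S$ if and only if $\alpha_i = 0$ for every $i \geq 1$.

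The fiber hypothesis then translates into vanishing of $(\alpha_i)_s$ for $i \geq 1$. Indeed, since $F/K$ is unramified and $K'/K$ is totally ramified, $F' = F \otimes_K K'$ is a field in which the $e_i$ form an $F$-basis. Using the identifications $M_{s'} = M_s \otimes_F F'$ and $N_{s'} = N_s \otimes_F F'$, we get
\[
\Hom_{F'}(M_{s'}, N_{s'}) = \Hom_F(M_s, N_s) \otimes_F F' = \bigoplus_{i=0}^{d-1} \Hom_F(M_s, N_s) \cdot e_i,
\]
and the fiber $\alpha_{s'}$ decomposes as $\sum_i (\alpha_i)_s \cdot e_i$ under this splitting. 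The hypothesis that $\alpha_{s'}$ descends to a morphism $M_s \to N_s$ is equivalent to saying that $\alpha_{s'}$ lies in the $e_0$-summand $\Hom_F(M_s, N_s)$, which forces $(\alpha_i)_s = 0$ for every $i \geq 1$.

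The main remaining step is the rigidity statement: a horizontal global section $\beta$ of a flat vector bundle on the connected smooth rigid space $S$ that vanishes at a classical point $p$ must vanish identically on $S$. The zero locus of $\beta$ is clearly closed analytic in $S$. For openness, choose a local trivialization of $\Hom(M, N)$ and local coordinates centered at any zero $p$; horizontality $\nabla \beta = 0$ is then a linear first-order ODE system, and iterated application of $\nabla$ expresses every partial derivative of $\beta$ at $p$ in terms of the value $\beta(p) = 0$, so all Taylor coefficients of $\beta$ at $p$ vanish and hence $\beta$ vanishes on an analytic polydisk neighborhood of $p$. By the connectedness of $S$, the zero locus of $\beta$ is either empty or all of $S$. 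Applying this to each $\alpha_i$ with $i \geq 1$, we obtain $\alpha_i = 0$ on $S$, and consequently $\alpha$ descends to a morphism $M \to N$, as required.
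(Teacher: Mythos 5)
Your proof is correct, and it takes a genuinely different route from the paper's. The paper first replaces $K'$ by its normal closure and invokes Galois descent, reducing to the identity $\sigma\alpha\sigma^{-1} = \alpha$ for all $\sigma$ in the Galois group, and then appeals to a rigidity statement (Ogus) to conclude. You instead fix a $K$-basis $e_0 = 1, e_1, \ldots, e_{d-1}$ of $K'$, use the canonical splitting $\pi_*\Hom(M|_{S'}, N|_{S'}) \simeq \Hom(M,N)\otimes_K K'$ to write $\alpha = \sum_i \alpha_i e_i$ with each $\alpha_i$ a horizontal section on $S$, translate the fiber hypothesis into $(\alpha_i)_s = 0$ for $i \geq 1$, and conclude by a direct Taylor-coefficient rigidity argument. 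Your approach has a real advantage: it never passes to the normal closure. The normal closure of a totally ramified extension need not be totally ramified (e.g.\ $K' = \IQ_p(p^{1/4})$ over $K = \IQ_p$ with $p \equiv 3\ \mathrm{mod}\ 4$ has normal closure $\IQ_p(p^{1/4}, i)$, whose maximal unramified subextension is nontrivial), in which case $F\otimes_K K'$ would no longer be a field and the identification $\Gal(F'/F) = \Gal(K'/K)$ used in the paper would break down; your basis decomposition works verbatim for any finite totally ramified $K'/K$, which is exactly what the hypothesis provides. One small caveat on the last step: a classical point $p$ need not be $K$-rational, so ``local coordinates centered at $p$'' should be interpreted as working in the formal completion $\what{\sO}_{S,p} \simeq K(p)[\![y_1,\ldots,y_n]\!]$, where horizontality forces all coefficients of $\beta$ to vanish once $\beta(p) = 0$; then Noetherianity and faithful flatness of completion give vanishing on a Zariski (hence analytic) neighborhood, and connectedness finishes the argument exactly as you say.
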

\begin{proof}
    To prove the statement we may replace $K'$ by its normal closure over $K$. By Galois descent, we reduce to showing that $\sigma \alpha \sigma^{-1} = \alpha$ for all $\sigma \in \Gal(K'/K)$. Note that $\sigma \alpha \sigma^{-1}$ is also a morphism between vector bundles with flat connections. Since a totally ramified extension and an unramified extension of $K$ are linearly disjoint, $F'$ is a field and $\Gal(F'/F) = \Gal(K'/K)$. The hypothesis implies that $\sigma \alpha \sigma^{-1} - \alpha$ vanishes at $s'$, and hence everywhere, thanks to \cite[Thm.\ 4.1]{Ogu84}.
\end{proof}

Now we prove the association statement \cref{intro:thm:association} in a more precise form. 

\begin{theorem}
\label{thm:association}
    In the setting of \Cref{thm:relative Cst}.\ref{thm: rel CK19}, 
    assume $X$ is in addition smooth over $\sO_K$.
    Let $f_s : (Y_s, M_{Y_s}) \to (X_s, M_{X_s})$ be the reduced special fiber of $f$. Then there is a canonical isomorphism of $F$-isocrystals on $(X_s, M_{X_s})_\lcrys$
    \begin{equation}
        \label{eqn: identify log F-isocrystals}
        \sE_{\st, T} \simeq R^i f_{s, \crys*} \sO_{(Y_s, M_{Y_s})/W}[1/p]
    \end{equation}
    such that, when evaluated on the divided power thickeing $(X,X_{p=0})$, there is a commutative diagram of flat connections over $X_\eta$
    \begin{equation}
    \label{diag: relative Cst}
        \begin{tikzcd}
	{\sE_{\st, T}(X, X_{p = 0}, M_X)} & {(R^i f_{s, \crys} \sO_{(Y_s, M_{Y_s})/W}[1/p])(X, X_{p = 0}, M_X)} \\
	{D_\dR(R^i f_{\eta, *} \IZ_p)} & {R^i f_{\eta *} \Omega^\bullet_{Y_\eta/X_\eta}}
	\arrow["\sim","(\ref{eqn: identify log F-isocrystals})"', from=1-1, to=1-2]
	\arrow["\textrm{\Cref{thm: D functors}.\ref{thm: D functors relation}}"', "\simeq", from=1-1, to=2-1]
	\arrow["\textrm{\cref{thm:relative Hyodo-Kato}}", "\simeq"', from=1-2, to=2-2]
	\arrow["\sim", "\textrm{\cite{Sch13}}"', from=2-1, to=2-2].
        \end{tikzcd}.
    \end{equation}
    Here $f_{s, \crys}$ is the morphism of topoi $(Y_s, M_{Y_s})_\lcrys^\sim \to (X_s, M_{X_s})_\lcrys^\sim$. 
\end{theorem}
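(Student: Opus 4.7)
The plan is to construct the isomorphism (\ref{eqn: identify log F-isocrystals}) Zariski-locally by descent from its $K$-linear avatar, which I will define as the composition of Scholze's relative de Rham comparison and the inverse of the relative Hyodo--Kato isomorphism.

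First I reduce to the case where $X = \Spf(R)$ is small affine admitting a Frobenius-equivariant unramified lift $X_0 = \Spf(R_0)$ over $W$. Under the equivalence of \Cref{equiv def log isoc}, both $\sE_{\st, T}$ (well-defined by \Cref{thm:gluing of D functors}, since $T$ is semi-stable by \Cref{thm:relative Cst}) and $\sG := R^i f_{s, \crys*}\sO_{(Y_s, M_{Y_s})/W}[1/p]$ (an $F$-isocrystal by \Cref{prop: relative log crys}.\ref{is an F-isocrystal}) correspond to objects of $\mathrm{Vect}^{\varphi, \nabla_{X_0}, N}(X_{0,\eta})$; denote their underlying $R_0[1/p]$-modules by $\sF_0$ and $\sG_0$ respectively. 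The goal is to produce a canonical $K_0$-linear isomorphism $\phi_0 : \sF_0 \stackrel{\sim}{\to} \sG_0$ intertwining Frobenius, connection, and monodromy.

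After $K$-base change I construct a candidate $\phi_K : \sF_0 \tensor_{K_0} K \stackrel{\sim}{\to} \sG_0 \tensor_{K_0} K$ as the composition of three canonical arrows: (i) the isomorphism $\sF_0 \tensor_{K_0} K \stackrel{\sim}{\to} D_\dR(T)$ of $R[1/p]$-modules with connection, which combines \Cref{thm: D functors}.\ref{thm: D functors relation}, \Cref{thm:equivalent def of crys and st} (so that the source has full rank by semi-stability of $T$), and the Breuil equivalence \Cref{cor: Breuil} that relates the evaluations of $\sE_{\st,T}$ on $(X_0, X_s, (0^\IN)^a)$ and on $(X, X_{p=0}, M_X)$; (ii) Scholze's relative de Rham comparison $D_\dR(T) \stackrel{\sim}{\to} R^i f_{\eta*} \Omega^\bullet_{Y_\eta/X_\eta}$ from \cite{Sch13}; and (iii) the inverse of the relative Hyodo--Kato isomorphism from \Cref{thm:relative Hyodo-Kato}. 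To descend $\phi_K$ to $K_0$, I invoke \Cref{lem: Galois descent at a point} applied to the totally ramified extension $K/K_0$; the lemma reduces the descent problem to a check at a single classical point. Pick $x_\eta \in X_\eta$ defined over a finite unramified extension $K'/K$ with residue field $k'$, equivalently a $W(k')$-point $x_0$ of $X_0$. Since $T|_{x_\eta}$ is semi-stable by \Cref{cor:pull back to closed points}, \Cref{thm:gluing of D functors}.\ref{thm:gluing of D functors pullbacks} and \Cref{thm:gluing of D functors}.\ref{thm:gluing of D functors crys/st} together identify $\sF_0|_{x_s}$ with Fontaine's $D_\st(T|_{x_\eta})$, while \Cref{prop: relative log crys}.\ref{log crys coh BC} identifies $\sG_0|_{x_s}$ with $\mathrm{H}^i_\lcris((Y_{x_s}, M_{Y_{x_s}})/W(k'))[1/p]$. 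After identifying $T|_{x_\eta}$ with $\mathrm{H}^i_\ket(Y_{x, \overline{\eta}}, \IQ_p)$ via proper base change, the classical absolute semi-stable comparison theorem (cf. \cite{CK19} for the weakly semi-stable case of $Y_x$, as well as \cite{CN17}) produces a canonical isomorphism between these two fibers as filtered $(\varphi, N)$-modules, compatible with the fiberwise Hyodo--Kato isomorphism \Cref{thm:relative Hyodo-Kato}.\ref{thm:relative Hyodo-Kato point} and with Scholze's de Rham comparison restricted to the point. Therefore \Cref{lem: Galois descent at a point} yields the desired $\phi_0$ as a horizontal $R_0[1/p]$-linear morphism.

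To conclude, the compatibility of $\phi_0$ with Frobenius and monodromy amounts to the vanishing of certain $R_0[1/p]$-linear morphisms between objects of $\mathrm{Vect}^{\varphi, \nabla_{X_0}}(X_{0, \eta})$; by the previous paragraph this vanishing holds at a classical point, and hence everywhere by \cite[Thm.~4.1]{Ogu84}. The canonicity of $\phi_K$ ensures that $\phi_0$ is independent of the choice of unramified lift $X_0$ and glues over overlaps, yielding a canonical isomorphism (\ref{eqn: identify log F-isocrystals}) of log $F$-isocrystals on $(X_s, M_{X_s})_\lcrys$, whose compatibility with diagram (\ref{diag: relative Cst}) is built into the construction of $\phi_K$. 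The main technical obstacle will be verifying the compatibility at classical points between the classical absolute semi-stable comparison and the relative Hyodo--Kato and Scholze isomorphisms established in the paper: although this is implicit in the construction of the absolute comparison theorem, it will require careful bookkeeping of the various log-crystalline and de Rham period sheaves to spell out rigorously.
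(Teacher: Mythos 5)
Your proposal follows essentially the same route as the paper's proof: construct the $K$-linear candidate isomorphism as (inverse relative Hyodo--Kato) $\circ$ (Scholze de~Rham comparison) $\circ$ ($K$-linearization of $D_{\st}(T)$ via Breuil's equivalence), then descend to $K_0$ via \Cref{lem: Galois descent at a point} by checking at an unramified classical point against the absolute semi-stable comparison of \cite{CK19}, and finally verify Frobenius and monodromy compatibility at a point via \cite[Thm.~4.1]{Ogu84}. The ``technical obstacle'' you flag at the end --- the compatibility at points between the absolute semi-stable comparison, the fiberwise Hyodo--Kato isomorphism, and Scholze's de~Rham comparison --- is resolved in the paper by citing \cite[Rmk~9.6]{CK19}, which records exactly this compatibility.
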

\begin{proof}
Recall that we already know that $T$ is semi-stable, so that $\sE_{\st, T}$ is an actural F-isocrystal on $(X_k, M_{X_s})_\lcrys$. 
It remains to construct the isomorphism (\ref{eqn: identify log F-isocrystals}) and show that (\ref{diag: relative Cst}) commutes.  In fact, we will do these tasks in reverse order. But first we shall explain why the verical arrows in (\ref{diag: relative Cst}) exist. For simplicity, below we write $\sE$ for $R^i f_{s, \crys} \sO_{(Y_s, M_{Y_s})/W}[1/p]$.

First we shall explain why the verical arrows in (\ref{diag: relative Cst}) exist. 
For simplicity, below we write $\sE$ for $R^i f_{s, \crys} \sO_{(Y_s, M_{Y_s})/W}[1/p]$. 
Zariski-locally there exists a $p$-completely smooth $W$-model $X_0$ such that $X \simeq X_0 \tensor_W \sO_K$ and the Frobenius endomorphism lifts to $X_0$.
So thanks to \cref{prop: relative log crys}, we know $\sE$ is a locally free F-isocrystal on $(X_s, M_{X_s})_\lcrys$. 
Note that by assumption we have $M_{X_s} = (0^\IN)^a$ and $M_{X_{p = 0}} = (\pi^\IN)^a$. 
By the equivalence $\Isoc^\varphi((X_s, (0^\IN)^a)_\lcrys) \simeq \Isoc^\varphi((X_{p = 0}, (\pi^\IN)^a)_\lcrys)$ (\cref{equiv def log isoc}) it makes sense to evaluate the log F-isocrystals $\sE_{\st, T}$ and $\sE$ on $(X, X_{p = 0}, M_X)$. 
Moreover, in the course of proving \cref{thm:relative Hyodo-Kato}, we constructed a canonical isomorphism (cf. \cref{eqn: Breuil to evaluate E})
\begin{equation}
    \label{eqn: K-linearization}
    \sE(X_0, X_s, (0^\IN)^a) \tensor_{K_0} K \simeq \sE(X, X_{p = 0}, M_X).
\end{equation}
This explains why \cref{thm:relative Hyodo-Kato}.\ref{thm:relative Hyodo-Kato map} gives the right vertical arrow in (\ref{diag: relative Cst}). 
Here we note that the construction of (\ref{eqn: K-linearization}) works for any object in $\Isoc^{\varphi}(X_s, (0^\IN)^a)$, and in particular for $\sE_{\st, T}$, so that $\sE_{\st, T}(X_0, X_s, (0^\IN)^a) \tensor_{K_0} K \simeq \sE_{\st, T}(X, X_{p = 0}, M_X)$. 
Recall that by construction of the crystalline Riemann-Hilbert functors (\Cref{thm:gluing of D functors}.\ref{thm:gluing of D functors modules}), we have $\sE_{\st,T}(X_{0}, X_s, (0^\IN)^a) = D_{\st,X_0}(T)$. 
Since $T$ is known to be semi-stable, the natural injection $D_{\st.X_0}(T) \tensor_{K_0} K \into D_\dR(T)$ is an isomorphism (cf. the comment below \cref{thm:rank max implies alpha is iso}). 
Therefore, \cref{thm: D functors}.\ref{thm: D functors relation} gives the left vertical arrow in (\ref{diag: relative Cst}). 
The independence of the choice of $X_0$ for the left vertical arrow was proved in \cref{cor:D is independent of R_0}, and that for the right vertical arrow can be easily deduced from the proof of \cref{thm:relative Hyodo-Kato}.

Let $\alpha : \sE_{\st, T}(X, X_s, M_X) \simeq \sE(X, X_s, M_X)$ be the unique isomorphism which makes (\ref{diag: relative Cst}) commute. 
We shall argue $\alpha$ decends to an isomorphism $\alpha_0 : \sE_{\st, T}(X_0, X_s, (0^\IN)^a) \simeq \sE(X_0, X_s, (0^\IN)^a)$. 
Let $F_0$ be a finite unramified extension of $K_0$ such that $X_0(\sO_{F_0}) \neq \emptyset$ and take some $x_0 \in X_0(\sO_{F_0})$. Let $F$ be the composite extension $ K \tensor_{K_0} F_0$ and let $x$ be the base change of $x_0$ to $\sO_F$. 
Let $x_s$ be the closed point of $x$ (or equivalently, $x_0$) and let $\overline{x}_\eta$ be the geometric point over $x_\eta$ given by an embedding $K \into C$.

Using suitable base change theorems, including \cref{thm:pullback for D functors}, \cref{thm:relative Hyodo-Kato} and \cite[Thm.~3.9]{LZ17}, we may identify the base change of (\ref{diag: relative Cst}) along the closed immersion $x\to X$ with the following diagram:  
\begin{equation}
    \label{diag: rel Cst bc}
    \begin{tikzcd}
	{D_\st(\mathrm{H}^i_\et(Y_{\overline{x}_\eta}, \IQ_p)) \tensor_{K_0} K } & {\mathrm{H}^i_{\lcris}((Y_{x_s}, M_{Y_x}|_{Y_{x_s}})/(W, (0^\IN)^a)) \tensor_{K_0} K} \\
	{D_\dR(\mathrm{H}^i_\et(Y_{\overline{x}_\eta}, \IQ_p))} & {\mathrm{H}^i_\dR(Y_{x_\eta}/F)},
	\arrow[from=1-1, to=1-2]
	\arrow["{\textrm{\Cref{thm: D functors}.\ref{thm: D functors relation}}}"',"\simeq", from=1-1, to=2-1]
	\arrow["{\textrm{\cite[Thm.~5.1]{HK94}}}","\simeq"', from=1-2, to=2-2]
	\arrow["{\textrm{\cite{Sch13}}}","\sim"', from=2-1, to=2-2]
\end{tikzcd}
\end{equation}
where we recall that the restriction of $M_Y$ to $Y_{x_0}$ is the standard log structure $M_{Y_x}$ on $Y_{x_0}$ (see \cref{rmk: saturated}).
On the other hand, applying \cite[Thm.~9.5]{CK19}, we obtain an isomorphism 
\begin{equation}
    \label{eqn: abs st comparison}
    D_\st(\mathrm{H}^i_\et(Y_{\overline{x}_\eta}, \IQ_p))  \simeq \mathrm{H}^i_{\lcris}((Y_x, M_{Y_x})/(W, (0^\IN)^a))
\end{equation}
which is compatible with the Frobenius and monodromy operators. 
Since the semi-stable comparison isomorphism in \cite[Thm.~9.5]{CK19} is compatible with the de Rham comparison isomorphism given by \cite{Sch13} (see \cite[Rmk~9.6]{CK19}), we see that the top horizontal arrow of (\ref{diag: rel Cst bc}) is necessarily given by (\ref{eqn: abs st comparison}) via extension of scalars along $K_0 \into K$.
As a consequence, by \cref{lem: Galois descent at a point} above, $\alpha$ descends to an isomorphism $\alpha_0 : \sE_{\st, T}(X_0, X_s, (0^\IN)^a) \simeq \sE(X_0, X_s, (0^\IN)^a)$ as desired. 
It remains to show that $\alpha$ respects the Frobenius and monodromy operators. However, these properties can also be checked after restriction to a point. Since $\alpha_x$ has these properties, we are done. 
\end{proof}

Below we give some remarks on the technical conditions in \cref{thm:relative Cst} and \cref{thm:association} (i.e., \cref{intro:thm:relative Cst} and \cref{intro:thm:association} respectively). 
\begin{remark}
\label{rmk: assumptions in rel Cst}
    \begin{enumerate}[label=\upshape{(\alph*)}]
        \item The strategy of proving \Cref{thm:relative Cst} is to combine our \Cref{intro:thm pc} with the semi-stable comparison theorems in the absolute setting, where we use \cite{CN17} for \Cref{thm:relative Cst}.\ref{thm: rel CN17} and \cite{CK19} for \Cref{thm:relative Cst}.\ref{thm: rel CK19}.
        In particular, the technical constraints in \Cref{thm:relative Cst} originate from the requirement for algebraicity in \cite[Cor.~5.15]{CN17}, and that for weak semi-stability in \cite[Thm.~9.5]{CK19} respectively. 

        We also remark that, using the prismatic methods, one can extend the strategy of the first named author and Reinecke in \cite{GR22} to remove the algebraicity assumption in \Cref{thm:relative Cst}.\ref{thm: rel CN17}. 
        This can achieved for example by combining the main results of \cite{KY23}
        \footnote{We thank Koshikawa for kindly drawing our attention to their work.}
        with \cite{DLMS2}. On the other hand, using the pointwise criteria, one can show that $R^i f_{\eta, \ket *} \IQ_p$ is semi-stable as long as the generic fiber $f_\eta$ is proper and log smooth, with \textit{each fiber} satisfying either of the assumptions in \Cref{thm:relative Cst}.
        \item We assume the setup of \Cref{thm:relative Cst}.\ref{thm: rel CK19} for \cref{thm:association}, only because our proof of the latter requires certain compatibility with Scholze's de Rham comparison isomorphism \cite{Sch13}, which in the absolute setting was explicitly checked in \cite[Thm.~9.5]{CK19}.

        \item Inspired by work of Kedlaya \cite[Thm.~6.4.5]{Ked1}, we find it plausible that when $X$ is semi-stable, the restriction functor from the category of F-isocrystals on $(X_k, M_X|_{X_k})_\lcrys$ (with suitable convergence properties) to those on $(X^{\mathrm{sm}}_k, M_X|_{X^{\mathrm{sm}}_k})_\lcrys$ is fully faithful, where $X^{\mathrm{sm}}_k$ is the maximal smooth locus of $X_k$. If we assume this claim, then for \cref{thm:association} the case of a semi-stable base follows rapidly from that of a smooth base. 
    \end{enumerate}
\end{remark}

Finally, we explain an example about the family of stable curves. 
Recall that $\sM_{g, n}$ (resp. $\overline{\sM}_{g, n}$) is the moduli stack over $\IZ$ of $n$-marked smooth (resp. stable) curves of genus $g$, i.e., $\overline{\sM}_{g, n}$ is the Deligne-Mumford compactification $\sM_{g, n}$. 
Let $\sD \colonequals \overline{\sM}_{g, n} \smallsetminus \sM_{g, n}$ be the boundary.  

\begin{proposition}
\label{eg:moduli_of_curve}
    Let $S$ be a semi-stable $p$-adic formal scheme over $\sO_K$. 
    Suppose that $f : C \to S$ is a stable curve given by a morphism $\gamma : S \to \overline{\sM}_{g, n}$ such that 
    \begin{itemize}
        \item the special fiber $\gamma_k$ factors through the boundary $\sD_k$;
        \item the generic fiber $\gamma_\eta$ factors through the analytification of the interior $\sM_{g, n, K}$.
    \end{itemize} 
    Let $\sigma_1, \cdots, \sigma_n: S \to C$ be the marked points, let $C^\circ \colonequals C \smallsetminus \cup_{i = 1}^n \sigma_i(S)$ be the open complement, and let $f^\circ: C^\circ \to S$ be restriction of $f$ on $C^\circ$. 
    
    Then $R^1 f^\circ_{\eta, \et *} \IQ_p$ is a semi-stable local system on $S_\eta$.
\end{proposition}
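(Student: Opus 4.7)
The plan is to apply \Cref{thm:relative Cst}.\ref{thm: rel CK19} to the compactified family $f : C \to S$ with a canonical log structure coming from the nodes, obtaining semi-stability of $R^1 f_{\eta, \et *} \IQ_p$ as a local system on $S_\eta$; then pass to the open family $f^\circ$ via the Gysin exact sequence along the marked sections, and close up the resulting extension using the pointwise criterion \cref{intro:thm pc}.

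First, equip $C$ with the canonical fine and saturated log structure $M_C$ making $f : (C, M_C) \to (S, M_S)$ log smooth: \'etale-locally near a node this is the toroidal structure $\IN^2 \to \sO_C$, $(a, b) \mapsto x^a y^b$, compatible with $xy = t \in M_S$, and global existence is F.~Kato's theory of log smooth stable curves. Since the nodes live only in the special fiber of $f$, the restriction $M_C|_{C_\eta}$ is trivial. The morphism $f$ is proper and log smooth, and is of Cartier type on the mod-$\pi$ reduction by the standard chart-level check. To verify pointwise weak semi-stability of the underlying map $C \to S$: for any classical $s_\eta \in S_\eta$ with integral extension $s \in S(\sO_{K(s_\eta)})$ (cf.\ \cref{prop: extend classical points}), the hypothesis that $\gamma_\eta$ factors through $\sM_{g, n, K}$ forces $C_{s_\eta}$ to be a smooth curve, so $C_s$ is a stable curve over $\sO_{K(s_\eta)}$ with smooth generic fiber, whose nodes in the special fiber admit an \'etale-local chart of the form $\Spf \sO_{K(s_\eta)} \langle x, y \rangle / (xy - t)$ with $0 \neq t \in \fm_{K(s_\eta)}$; this matches the local model in \cref{def st models}. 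Invoking \Cref{thm:relative Cst}.\ref{thm: rel CK19} then yields that $R^1 f_{\eta, \et *} \IQ_p$ is a semi-stable local system on $S_\eta$.

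To descend to $f^\circ$, set $D \colonequals \bigsqcup_{i = 1}^n \sigma_i(S_\eta) \subseteq C_\eta$: this is a smooth relative divisor disjoint from the nodes in each fiber, with $f|_D$ equal to $n$ copies of $\mathrm{id}_{S_\eta}$. Absolute purity and the resulting Gysin exact sequence for the open immersion $j : C^\circ_\eta \into C_\eta$ in the rigid-analytic setting give
\[
0 \longrightarrow R^1 f_{\eta, \et *} \IQ_p \longrightarrow R^1 f^\circ_{\eta, \et *} \IQ_p \longrightarrow \ker\bigl(\IQ_p(-1)^{\oplus n} \xrightarrow{\partial} R^2 f_{\eta, \et *} \IQ_p\bigr) \longrightarrow 0,
\]
where the left term is semi-stable by the preceding paragraph, and the right term is a subsystem of $\IQ_p(-1)^{\oplus n}$, hence semi-stable (subsystems of semi-stable local systems are semi-stable, by pointwise reduction to the absolute setting, where semi-stability is closed under subquotients). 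Restricting to any classical $s_\eta \in S_\eta$, the fiber of $R^1 f^\circ_{\eta, \et *} \IQ_p$ is an extension of two semi-stable $\Gal_{K(s_\eta)}$-representations, which is itself semi-stable since semi-stability is closed under extensions in the absolute setting. Applying \cref{intro:thm pc} concludes that $R^1 f^\circ_{\eta, \et *} \IQ_p$ is semi-stable on $S_\eta$.

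\textbf{Anticipated obstacle.} The one delicate point is ensuring in the first step that the node log structure $M_C$ is genuinely a log structure over $(S, M_S)$---i.e., that the local defining parameters $t$ of the nodes pull back from sections of the monoid of $M_S$. This is forced by the hypotheses that $\gamma_k$ factors through $\sD_k$ together with the semi-stability of $S$ (both boundary and node structures being toroidal), but it requires careful chart-level bookkeeping \'etale-locally on $S$, possibly after Zariski-localizing; once this compatibility is arranged, the remainder of the argument reduces to standard machinery.
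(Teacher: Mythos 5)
There is a genuine gap in the final step of your argument: the claim that ``semi-stability is closed under extensions in the absolute setting'' is false for $p$-adic Galois representations. The functor $D_\st$ is only left exact, and it is well known that the category of semi-stable (even of de Rham) representations is \emph{not} stable under extensions; for instance, a nontrivial extension of $\IQ_p$ by $\IQ_p$ can fail to be de Rham (there are more extensions in $H^1(\Gal_K,\IQ_p)$ than in the subspace $H^1_{\st}$). Consequently, having presented $R^1 f^\circ_{\eta,\et *}\IQ_p|_{s_\eta}$ as an extension of two semi-stable representations does not suffice to conclude that it is semi-stable. What actually makes the fiber semi-stable is geometry: $C^\circ_{s_\eta}$ is the complement of a smooth divisor in a smooth proper curve $C_{s_\eta}$ with a semi-stable integral model $C_s$, so one needs the \emph{absolute} semi-stable comparison for open varieties (e.g.\ \cite[Cor.~5.15]{CN17}) to conclude, not a general stability principle.

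This is also why the route you chose is more indirect than necessary. The Gysin sequence is superfluous once you decide to invoke the pointwise criterion, since you still have to verify semi-stability of each fiber representation by an absolute comparison theorem; you might as well apply that comparison directly to $C^\circ_{s_\eta}$ and skip the sheaf-theoretic extension entirely. The paper instead equips $(C,M_C)\to(S,M_S)$ with F.~Kato's \emph{basic} log structure, which incorporates the marked sections into $M_C$ (so that $R^1 f_{\eta,\ket *}\IQ_p$ already computes the \'etale cohomology of the open curve $C^\circ_\eta$), checks compatibility of $\gamma^*M_X$ with the standard log structure $M_S$ using the hypothesis that $\gamma^{-1}\sD$ is supported on $S_k$, and applies \Cref{thm:relative Cst}.\ref{thm: rel CN17} via the algebraicity of the universal stable curve. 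That side-steps both the Gysin sequence and the pointwise extension argument, and it resolves the ``anticipated obstacle'' you flagged about matching the local node parameters to sections of $M_S$ by appealing to the moduli-theoretic description rather than chart-level bookkeeping.
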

\begin{proof}
    We equip $C \to S$ with the basic log structure $(C, M^{\mathrm{bas}}_C) \to (S, M^{\mathrm{bas}}_S)$, as is characterized in \cite[Prop.~2.3]{FKato}. 
    It can be checked from the constructions in \textit{loc. cit.} that this family is proper, log smooth, and its mod $\pi$ reduction is of Carteier type. 
    
    Let $(X, D)$ be the $p$-completion of $(\overline{\shM}_{g, n}, \sD) \tensor \sO_K$.
    Let $M_X$ be the log structure on $X$ given by $D$. 
    By \cite[Thm.~4.5]{FKato}, the universal curve over $X$ can be equipped with the structure of a log stable curve, which is in particular proper and log smooth; moreover, $\gamma^* M_X$ is nothing but $M_S^{\mathrm{bas}}$. 

    Recall that locally $M_X$ is given by sections of $\sO_X$ which are units on the complement $X \smallsetminus D$. Similarly, $M_S$ on $S$ is given by sections of $\sO_S$ which are units on $S_\eta$. Since $\gamma_\eta^{-1} D$ is supported on $S_k$ by assumption, we know the image of $\gamma^* M_X$ in $\sO_S$ is contained in that of $M_S$.
    Hence we obtain a map of log pairs $(S, M_S) \to (S, \gamma^* M_X)$. 
    
    Finally, by pulling back $(C, M^{\mathrm{bas}}_C)$ along $(S, M_S) \to (S, \gamma^* M_X = M^{\mathrm{bas}}_S)$, we obtain a proper, log smooth, and algebraic morphism $(C, M_C) \to (S, M_S)$ which has Cartier type mod $\pi$ reduction\footnote{This can also be seen by applying \cite[Thm.~4.2]{Tsu19}---it is part of the definition of log stable curves that the geometric fibers of the underlying morphism between schemes are reduced.}. So the claim follows by applying \cref{thm:relative Cst}.\ref{thm: rel CN17}. 
\end{proof}

Below we also mention that one can easily construct a family of stable curves $\gamma$ as above that satisfies the assumptions in \Cref{eg:moduli_of_curve}.   
\begin{example}
\label{ex: lift boundary}
    Let $X$ be a smooth $p$-adic formal scheme over $\sO_K$, let $D \subseteq X$ be a relative normal crossing divisor that is flat over $\sO_K$, and let $M_X$ be the log structure on $X$ induced by $\sD$. 
    We claim that for every point $z \in D_s$, there is an open neighborhood $U_s$ of $z$ in $D_s$ such that $U_s$ lifts to a semi-stable $p$-adic formal scheme $U \subseteq X$ over $\sO_K$ that satisfies the property $U_\eta \subseteq X_\eta \smallsetminus D_\eta$. 
    
    The question is \'etale local, so without loss of generality we may assume that $X = \Spf(R)$ is an open formal subscheme of $\Spf(\sO_K\langle x_1, \cdots, x_r, x_{r+1}^\pm, \cdots, x_n^\pm \rangle)$, $D = V(x_1 x_2 \cdots x_r) \subseteq X$, and $z = (0, \cdots, 0, 1, \cdots, 1)$. 
    Then we can simply take $U_s = D_s$ and $U = V(x_1 x_2 \cdots x_r - \pi)$.  
\end{example}

\bibliographystyle{amsalpha}
\bibliography{newref}

\end{document}